\theoremstyle{plain}
	\newtheorem{thm}{Theorem}[section]
	\newtheorem{cor}[thm]{Corollary}
	\newtheorem{lem}[thm]{Lemma}
	\newtheorem{prop}[thm]{Proposition}
\theoremstyle{definition}
	\newtheorem{dfn}[thm]{Definition}
\theoremstyle{remark}
	\newtheorem{rem}[thm]{Remark}
	\newtheorem{ex}[thm]{Example}
\numberwithin{equation}{section}
\newcommand{\B}{\mathbb{B}}
\newcommand{\C}{\mathbb{C}}
\newcommand{\G}{\mathbb{G}}
\newcommand{\N}{\mathbb{N}}
\renewcommand{\P}{\mathbb{P}}
\newcommand{\R}{\mathbb{R}}
\renewcommand{\S}{\mathbb{S}}
\newcommand{\cA}{\mathcal{A}}
\newcommand{\cB}{\mathcal{B}}
\newcommand{\cC}{\mathcal{C}}
\newcommand{\cD}{\mathcal{D}}
\newcommand{\cE}{\mathcal{E}}
\newcommand{\cF}{\mathcal{F}}
\newcommand{\cG}{\mathcal{G}}
\newcommand{\cH}{\mathcal{H}}
\newcommand{\cI}{\mathcal{I}}
\newcommand{\cJ}{\mathcal{J}}
\newcommand{\cK}{\mathcal{K}}
\newcommand{\cL}{\mathcal{L}}
\newcommand{\cM}{\mathcal{M}}
\newcommand{\cN}{\mathcal{N}}
\newcommand{\cO}{\mathcal{O}}
\newcommand{\cP}{\mathcal{P}}
\newcommand{\cQ}{\mathcal{Q}}
\newcommand{\cR}{\mathcal{R}}
\newcommand{\cS}{\mathcal{S}}
\newcommand{\cT}{\mathcal{T}}
\newcommand{\cV}{\mathcal{V}}
\newcommand{\cW}{\mathcal{W}}
\newcommand{\fC}{\mathfrak{C}}
\newcommand{\fP}{\mathfrak{P}}
\newcommand{\fp}{\mathfrak{p}}
\newcommand{\odiag}{\obullet{\diag}_{\brackets{A},\eta}}
\newcommand{\oK}{\obullet{K}}
\newcommand{\oux}{\obullet{\underline{x}}}
\newcommand{\ouy}{\obullet{\underline{y}}}
\newcommand{\ug}{\underline{g}}
\newcommand{\uh}{\underline{h}}
\newcommand{\ul}{\underline{\ell}}
\newcommand{\um}{\underline{m}}
\newcommand{\un}{\underline{n}}
\newcommand{\uP}{\underline{P}}
\newcommand{\up}{\underline{p}}
\newcommand{\uq}{\underline{q}}
\newcommand{\ur}{\underline{r}}
\newcommand{\us}{\underline{s}}
\newcommand{\ut}{\underline{t}}
\newcommand{\ux}{\underline{x}}
\newcommand{\uy}{\underline{y}}
\newcommand{\uz}{\underline{z}}
\newcommand{\ualpha}{\underline{\alpha}}
\newcommand{\ubeta}{\underline{\beta}}
\newcommand{\ugamma}{\underline{\gamma}}
\newcommand{\uLambda}{\underline{\Lambda}}
\newcommand{\uphi}{\underline{\phi}}
\newcommand{\uvarphi}{\underline{\varphi}}
\newcommand{\uSigma}{\underline{\Sigma}}
\newcommand{\utau}{\underline{\tau}}
\renewcommand{\epsilon}{\varepsilon}
\renewcommand{\geq}{\geqslant}
\renewcommand{\leq}{\leqslant}
\renewcommand{\tilde}{\widetilde}
\newcommand{\acts}{\curvearrowright}
\newcommand{\cov}[2]{\Cov\parentheses*{#1, #2}}
\newcommand{\cCM}{\cC^{0,\infty}(\cM \times \Omega)}
\newcommand{\cwedge}{\curlywedge 2}
\newcommand{\diag}{\mathlarger{\mathlarger{\boxslash}}}
\newcommand{\dx}{\dmesure\!}
\newcommand{\esp}[2][]{\mathbb{E}_{#1}\squarebrackets*{#2}}
\newcommand{\espcond}[3][]{\mathbb{E}_{#1}\squarebrackets*{#2 \mvert #3}}
\newcommand{\gauss}[1]{\mathcal{N}\parentheses*{0,#1}}
\newcommand{\gr}[2]{\Gr\parentheses*{#1,#2}}
\newcommand{\jac}[1]{\Jac\parentheses*{#1}}
\newcommand{\mvert}{\mathrel{}\middle|\mathrel{}}
\newcommand{\obullet}[1]{\accentset{\bullet}{#1}}
\newcommand{\one}{\mathbf{1}}
\newcommand{\overgroup}[1]{\overbracket[0.4pt]{#1}}
\newcommand{\simplex}{\mathlarger{\mathlarger{\blacktriangle}}}
\newcommand{\trans}[1]{\,\prescript{\text{t}}{}{\! #1}}
\newcommand{\var}[1]{\Var\parentheses*{#1}}
\DeclareMathOperator{\card}{Card}
\DeclareMathOperator{\conv}{Conv}
\DeclareMathOperator{\Cov}{Cov}
\DeclareMathOperator{\dist}{dist}
\DeclareMathOperator{\dmesure}{d}
\DeclareMathOperator{\ev}{ev}
\DeclareMathOperator{\Gr}{Gr}
\DeclareMathOperator{\Id}{Id}
\DeclareMathOperator{\Jac}{Jac}
\DeclareMathOperator{\Span}{Span}
\DeclareMathOperator{\sym}{Sym}
\DeclareMathOperator{\Tr}{Tr}
\DeclareMathOperator{\Var}{Var}
\DeclareMathOperator{\Vol}{Vol}
\DeclarePairedDelimiter{\brackets}{\{}{\}}
\DeclarePairedDelimiter{\floor}{\lfloor}{\rfloor}
\DeclarePairedDelimiter{\norm}{\lvert}{\rvert}
\DeclarePairedDelimiter{\Norm}{\lVert}{\rVert}
\DeclarePairedDelimiter{\parentheses}{(}{)}
\DeclarePairedDelimiterX{\prsc}[2]{\langle}{\rangle}{#1\,, #2}
\DeclarePairedDelimiter{\squarebrackets}{[}{]}
\DeclarePairedDelimiterX{\ssquarebrackets}[2]{\llbracket}{\rrbracket}{#1,#2}
\newcounter{hypotheses}
\newcommand{\hypitem}[1]{\item[#1.] \refstepcounter{hypotheses}}
\newcommand{\hypDC}[2]{(\hyperref[hyp: decay of correlations]{\textbf{DC}($#1,#2$)})}
\newcommand{\hypDCL}[2]{(\hyperref[hyp: decay of correlations limit]{\textbf{DCL}($#1,#2$)})}
\newcommand{\hypND}[1]{(\hyperref[hyp: non-degeneracy]{\textbf{ND}($#1$)})}
\newcommand{\hypNND}[1]{(\hyperref[hyp: nabla-non-degeneracy]{$\nabla$-\textbf{ND}($#1$)})}
\newcommand{\hypReg}[1]{(\hyperref[hyp: regularity]{\textbf{Reg}($#1$)})}
\newcommand{\hypScL}[1]{(\hyperref[hyp: scaling limit]{\textbf{ScL}($#1$)})}
\author{Michele Ancona\,\thanks{Université Côte d’Azur, CNRS, Laboratoire J.A. Dieudonné; \url{michele.ancona@unice.fr}.} \and Louis Gass\,\thanks{University of Luxembourg; \url{louis.gass@uni.lu}.} \and Thomas Letendre\,\thanks{Université Paris-Saclay, CNRS, Laboratoire de Mathématiques d’Orsay; \url{letendre@math.cnrs.fr}.} \and Michele Stecconi\,\thanks{University of Luxembourg; \url{michele.stecconi@uni.lu}\\ This work was supported by the Luxembourg National Research Fund (Grant number: 021/16236290/HDSA).}}
\date{\today}
\title{Zeros and critical points of Gaussian fields: cumulants asymptotics and limit theorems}
\begin{document}

\maketitle

\begin{abstract}
Let $f:\R^d \to \R^k$ be a smooth centered stationary Gaussian field and $\cB \subset \R^d$ be a bounded Borel set. In this paper, we determine the asymptotics as $R \to \infty$ of all the cumulants of the $(d-k)$-dimensional volume of $f^{-1}(0) \cap R\cB$. When $k=1$, we obtain similar asymptotics for the number of critical points of $f$ in $R\cB$. Our main hypotheses are some regularity and non-degeneracy of the field, as well as mild integrability conditions on the first derivatives of its covariance kernel. As corollaries of these cumulants estimates, we deduce a strong Law of Large Numbers and a Central Limit Theorem for the nodal volume (resp.~the number of critical points) of a regular and non-degenerate enough field whose covariance decays fast enough at infinity. Our results hold more generally for a one-parameter family $(f_R)$ of Gaussian fields admitting a stationary local scaling limit as $R \to \infty$, for example Kostlan polynomials in the large degree limit. They also hold for the random measures of integration over the vanishing locus of $f_R$ as $R \to +\infty$.
\end{abstract}

\paragraph{Keywords:} cumulants; Gaussian fields; Kac--Rice formula; Kergin interpolation; limit theorems; random submanifolds.

\paragraph{MSC 2020:} 60D05; 60F05; 60F15; 60F25; 60G15; 60G60.


\tableofcontents


\section{Introduction}
\label{sec: Introduction}


\subsection{Zero set of a stationary Gaussian field in a growing set}
\label{subsec: zero set of a stationary Gaussian field in a growing set}

Let us begin by describing our results in a model case. The general framework we consider will be introduced in Section~\ref{subsec: cumulants asymptotics} below. Let $d \in \N^*$ and $k \in \brackets{1,\dots,d}$. Let $f:\R^d \to \R^k$ be a centered stationary $\cC^2$ Gaussian field. Under mild non-degeneracy assumptions, its zero set $Z=f^{-1}(0)$ is almost-surely a codimension-$k$ submanifold in $\R^d$. The Euclidean metric of $\R^d$ then induces a volume measure on $Z$, which is its $(d-k)$-dimensional Hausdorff measure. Let $\B$ denote the unit ball in $\R^d$. For all $R>0$, we let $R\B = \brackets{Rx\mid x \in \B}$. It is a classical problem in probability to study the asymptotic distribution as $R \to +\infty$ of the random variables $\Vol_{d-k}\parentheses*{Z \cap R\B}$, where the volume is computed with respect to the $(d-k)$-dimensional Hausdorff measure. More generally, we can consider the same question where the unit ball $\B$ is replaced by any bounded Borel set~$\cB$.

Letting $\esp{X}$ denote the expectation of a random variable $X$, the classical Kac--Rice formula, cf.~\cite{AT2007,AW2009,Kac1943,Ric1944}, implies that:
\begin{equation}
\label{eq: volume expectation}
\forall R >0, \qquad \esp{\frac{\Vol_{d-k}\parentheses*{Z \cap R\cB}}{R^d}} = \gamma_1(f) \Vol_d(\cB),
\end{equation}
where the volume of $\cB$ is computed with respect to the Lebesgue measure on $\R^d$, and $\gamma_1(f)$ is a constant depending only on the distribution of $f$. Variance asymptotics and a Central Limit Theorem (CLT) were obtained for the length of random curves ($d=2$ and $k=1$) in~\cite{KL2001}. Similar results for the volume of random hypersurfaces ($k=1$) were obtained in~\cite{KV2018}, see also~\cite{DEL2021}. Proving that $\Vol_{d-k}\parentheses*{Z \cap \cB}$ admits a finite $p$-th moment is already a delicate problem, that received a lot of attention, in several contexts, over the past years, see for instance~\cite{AAD+2023,BCW2019a,BMM2024,MV1993}. In~\cite{AL2025,GS2024}, the authors gave simple sufficient conditions on the field $f$ ensuring that $\Vol_{d-k}\parentheses*{Z \cap R\cB}$ admits a finite $p$-th moment for any bounded Borel set $\cB$ and $R>0$. Our first result establishes the large $R$ asymptotics of the $p$-th central moment of $\Vol_{d-k}\parentheses*{Z \cap R\cB}$, see Theorem~\ref{thm: moments asymptotics volume}.

Let us introduce some notation in order to state our main hypotheses. Given a multi-index $\alpha=(\alpha_1,\dots,\alpha_d) \in \N^d$, we denote by $\norm{\alpha}=\alpha_1+\dots+\alpha_d$ and by $\partial^\alpha$ the partial derivative $\partial_1^{\alpha_1}\dots\partial_d^{\alpha_d}$. If $\alpha,\beta \in \N^d$ then $(\alpha,\beta) \in \N^{2d}$ and we simply let $\partial^{\alpha,\beta}=\partial^{(\alpha,\beta)}$. For all $x,y \in \R^d$, we denote by $r(x,y)=\cov{f(x)}{f(y)}$ the covariance operator of $f(x)$ and $f(y)$, that is, the linear map from~$\R^k$ to itself whose matrix is equal to $\esp{f(x) \trans{f(y)}}$. If $f$ is~$\cC^q$ for some $q \in \N$, then its covariance kernel $r$ admits continuous partial derivatives $\partial^{\alpha,\beta}r$ on $\R^d \times \R^d$ for all $\alpha,\beta \in \N^d$ such that $\norm{\alpha}\leq q$ and $\norm{\beta}\leq q$. Moreover, since $f$ is stationary, we have $r^{\alpha,\beta}(x,y)=r^{\alpha,\beta}(0,y-x)$ for all $x,y \in \R^d$.

Let $q \in \N$ and $s \in [1,+\infty]$. Assuming that $f$ is at least of class $\cC^q$, we introduce the following hypotheses. We will comment on these after the statements of our main theorems, see Section~\ref{subsec: discussion of the main hypotheses}.

\begin{description}
\hypitem{Non-degeneracy (ND$(q)$)}
\label{hyp: non-degeneracy}
The field $f$ is \emph{$q$-non-degenerate}, that is: for any $l\in \brackets{1,\dots,q+1}$, any $m_1,\dots,m_l \in \N^*$ such that $m_1 + \dots + m_l =q+1$, and any family of pairwise distinct points $x_1,\dots,x_l \in \R^d$, the centered Gaussian vector $\parentheses*{\partial^\alpha f(x_i)}_{1 \leq i \leq l, \norm{\alpha} < m_i}$ is non-degenerate.

\hypitem{Decay of correlations in the limit (DCL$(q,s)$)}
\label{hyp: decay of correlations limit}
There exist $\omega>0$ and an even bounded function $g:\R^d \to [0,+\infty)$ such that:
\begin{itemize}
\item we have $g(x)\xrightarrow[\Norm{x}\to +\infty]{}0$;
\item the function $g_\omega:x \mapsto \sup_{\Norm{y}\leq \omega} g(x+y)$ belongs to $L^s(\R^d)$;
\item for all $z \in \R^d$, we have $\max_{\norm{\alpha} \leq q, \norm{\beta}\leq q} \ \Norm*{\strut \partial^{\alpha,\beta} r(0,z)}\leq g(z)$.
\end{itemize}
\end{description}

In the following, we denote by $\gauss{\lambda}$ the real-valued centered Gaussian distribution of variance $\lambda\geq 0$. Recalling that the expected volume of $Z$ in a growing set is given by Equation~\eqref{eq: volume expectation}, our first result is the following.

\begin{thm}[Central moments asymptotics for volumes]
\label{thm: moments asymptotics volume}
Let $p \geq 2$ be an integer and $Z$ be the zero set of a centered stationary Gaussian field $f:\R^d \to \R^k$ of class $\cC^{2p-1}$. We assume that~\hypND{2p-1} and~\hypDCL{2p-1}{2} hold. Then, there exists $\gamma_2(f) \geq 0$, depending only on the distribution of $f$, such that, for any bounded Borel subset $\cB \subset \R^d$, we have:
\begin{equation*}
\esp{\parentheses*{\frac{\Vol_{d-k}\parentheses*{Z \cap R\cB} - R^d \gamma_1(f)\Vol_d(\cB)}{R^\frac{d}{2}}}^p} \xrightarrow[R \to +\infty]{} \parentheses*{\gamma_2(f)\strut\Vol_d(\cB)}^\frac{p}{2}\ \esp{\strut\gauss{1}^p}.
\end{equation*}
\end{thm}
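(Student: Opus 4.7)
The plan is to prove the asymptotic
\begin{equation*}
\kappa_p(V_R) = R^d\,\gamma_p(f)\,\Vol_d(\cB) + o(R^d) \qquad \text{as } R \to +\infty
\end{equation*}
for every integer $p\geq 2$, where $V_R := \Vol_{d-k}(Z\cap R\cB)$ and $\gamma_p(f)$ is a constant depending only on the distribution of $f$, with $\gamma_2(f)\geq 0$. Theorem~\ref{thm: moments asymptotics volume} then follows from the moment--cumulant formula
\begin{equation*}
\esp{(V_R-\esp{V_R})^p} = \sum_{\pi \in \Pi^*_p} \prod_{B\in\pi} \kappa_{\norm{B}}(V_R),
\end{equation*}
where $\Pi^*_p$ is the set of partitions of $\brackets{1,\dots,p}$ with no singleton block. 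Each summand is $O(R^{d\norm{\pi}})$, and the constraint $\norm{B}\geq 2$ forces $\norm{\pi}\leq p/2$ with equality iff $\pi$ is a pair partition. After dividing by $R^{pd/2}$, only pair partitions survive in the limit (hence nothing survives when $p$ is odd); summing their $(p-1)!!$ equal contributions $\parentheses*{\gamma_2(f)\Vol_d(\cB)}^{p/2}$ reproduces Isserlis' formula for $\esp{\gauss{\gamma_2(f)\Vol_d(\cB)}^p}$.

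\textbf{Kac--Rice representation.} By the Kac--Rice formula, moments of $V_R$ admit the integral representation
\begin{equation*}
\esp{V_R^p} = \int_{(R\cB)^p} \rho_p(x_1,\dots,x_p)\dx x_1\cdots \dx x_p,
\end{equation*}
where the $p$-point correlation function
\begin{equation*}
\rho_p(x_1,\dots,x_p) = \espcond{\prod_{i=1}^p J_f(x_i)}{f(x_1)=\cdots=f(x_p)=0}\,\varphi_{x_1,\dots,x_p}(0)
\end{equation*}
involves the Gram--Jacobian factor $J_f$ from the coarea formula and the density $\varphi_{x_1,\dots,x_p}$ of $\parentheses*{f(x_1),\dots,f(x_p)}$. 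By inclusion--exclusion, $\kappa_p(V_R)$ admits the same representation with $\rho_p$ replaced by the truncated (connected) density
\begin{equation*}
\rho_p^T(x_1,\dots,x_p) = \sum_{\pi\in \Pi(p)} (-1)^{\norm{\pi}-1}(\norm{\pi}-1)!\prod_{B\in\pi}\rho_{\norm{B}}\parentheses*{(x_i)_{i\in B}}.
\end{equation*}
Hypothesis~\hypND{2p-1} is tailored to ensure that each $\rho_l$ for $l\leq p$ is well-defined and locally integrable through collisions of points, following the Kergin interpolation approach of~\cite{AL2025,GS2024}.

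\textbf{Cluster decay and scaling.} The analytic heart of the proof is a bound on $\rho_p^T$ that decays whenever $(x_1,\dots,x_p)$ splits into two well-separated subclusters. By Gaussian regression, the correlation between two subclusters separated by distance at least $\omega$ is controlled by the cross-covariance entries $\partial^{\alpha,\beta}r$ joining them, hence by the function $g$ from~\hypDCL{2p-1}{2}. The alternating signs defining $\rho_p^T$ then yield a tree-type bound
\begin{equation*}
\norm*{\rho_p^T(x_1,\dots,x_p)} \leq C\sum_T \prod_{\brackets{i,j}\in E(T)} g(x_i-x_j),
\end{equation*}
summed over spanning trees $T$ of the complete graph on $\brackets{1,\dots,p}$. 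By stationarity and the change of variables $y_i := x_i-x_1$, integration over $x_1 \in R\cB$ produces the factor $R^d\Vol_d(\cB)$ up to a lower-order boundary term. The $L^2$ integrability in~\hypDCL{2p-1}{2} is the natural Cauchy--Schwarz threshold when one bisects a spanning tree into two halves, and gives absolute convergence of
\begin{equation*}
\gamma_p(f) := \int_{\R^{d(p-1)}} \rho_p^T(0,y_2,\dots,y_p) \dx y_2\cdots \dx y_p,
\end{equation*}
whence $\kappa_p(V_R)/R^d \to \gamma_p(f)\Vol_d(\cB)$ by dominated convergence.

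\textbf{Main obstacle.} The main technical difficulty is to combine this long-distance cluster decay with the singular behavior of the individual $\rho_{\norm{B}}$ near their diagonals: each $\rho_{\norm{B}}$ blows up as points collide, yet the alternating combination in $\rho_p^T$ remains locally integrable, and proving the tree-type bound \emph{uniformly} across near-collisions is the heart of the matter. This is precisely what Kergin interpolation (signaled by the paper's keywords) is designed to handle, and it accounts for the differentiation order $2p-1$ appearing in both~\hypND{2p-1} and~\hypDCL{2p-1}{2}. Non-negativity of $\gamma_2(f)$ is automatic, as it is the limit of $R^{-d}\var{V_R}$.
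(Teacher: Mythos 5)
Your high-level plan — control all cumulants $\kappa_p(V_R)$ of the volume and invoke the moment--cumulant expansion to reduce to pair partitions — is exactly the paper's route (Corollary~\ref{cor: moment asymptotics} deduced from Theorem~\ref{thm: cumulants asymptotics for zero sets}). However, you overclaim the cumulant asymptotics: the exact limit $\kappa_p(V_R)\sim R^d\gamma_p(f)\Vol_d(\cB)$ for $p\geq 3$ is not available under \hypDCL{2p-1}{2} alone. The paper only proves it under the strictly stronger hypothesis $g_\omega\in L^{p/(p-1)}(\R^d)$, i.e.~\hypDCL{2p-1}{\frac{p}{p-1}} (Theorem~\ref{thm: cumulants asymptotics for zero sets}.\ref{item: cumulants asymptotics p}); note that $p/(p-1)<2$ for $p\geq 3$, so membership in $L^{p/(p-1)}\cap L^\infty$ is genuinely more demanding than membership in $L^2\cap L^\infty$. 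Under the $L^2$ hypothesis actually assumed, one only obtains $\kappa_p(V_R)=o(R^{pd/2})$ for $p\geq 3$ (Theorem~\ref{thm: cumulants asymptotics for zero sets}.\ref{item: cumulants asymptotics 2}), by approximating $g_\omega$ in $L^2$ by functions in $L^{p/(p-1)}\cap L^2$. This weaker bound nonetheless suffices: in the sum over singleton-free partitions, any block of size $\geq 3$ then contributes a factor $o(R^{\norm{B}d/2})$, so after dividing by $R^{pd/2}$ only the pair partitions survive, and each pair contributes $\gamma_2(f)\Vol_d(\cB)$ via \hypDCL{3}{2} (the case $p=2$ of the exact asymptotic, where $p/(p-1)=2$). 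You should replace your claim with this two-tiered control.

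The deeper and fatal gap is the spanning-tree bound $\norm{\rho_p^T}\lesssim\sum_T\prod_{\brackets{i,j}\in E(T)}g(x_i-x_j)$. A spanning tree on $p$ vertices has $p-1$ edges, and after the change of variables $y_i=x_i-x_1$ the integral of a single tree monomial over $\R^{d(p-1)}$ factorizes into $\Norm{g}_1^{p-1}$: convergence requires $g\in L^1$, not $g\in L^2$, and no Cauchy--Schwarz across a bisection of the tree rescues this. The correct estimate (Proposition~\ref{prop: key estimate FIJ} and Corollary~\ref{cor: key estimate graphs}) replaces spanning trees by minimally $2$-edge-connected multigraphs: because the cumulant density $F_{\cI,\cJ}$ vanishes to \emph{second} order — both in value and in transverse derivative — along every block-diagonal subspace (Lemma~\ref{lem: vanishing F I J}), each cut of the cluster structure contributes at least two cross-block covariance factors, and the refined Hadamard lemma (Proposition~\ref{prop: refined Hadamard lemma}) converts this into the $2$-edge-connected combinatorics. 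The minimal such graph on $p$ vertices is a $p$-cycle with $p$ edges, and the associated Hölder--Brascamp--Lieb inequality (Proposition~\ref{prop: HBL graphs}) yields convergence precisely at the $L^{p/(p-1)}$ threshold, which at $p=2$ is $L^2$ (a double edge between two vertices gives $g^2$). Without this second-order vanishing the cumulant integrals do not close at the $L^2$ level, and this — not merely the diagonal singularity — is the heart of the matter.
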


\begin{rem}
\label{rem: moments asymptotics volume}
Under the hypotheses of Theorem~\ref{thm: moments asymptotics volume}, we have $\gamma_1(f) >0$. The constant $\gamma_2(f)$ is non-negative as a limit variance, and we expect it to be positive for a generic field. Conditions ensuring that $\gamma_2(f) >0$ are stated in~\cite{Gas2025}. Under our assumptions, if $f=(f_1,\dots,f_k)$, where $f_1,\dots,f_k$ are independent real-valued stationary isotropic fields on $\R^d$, then $\gamma_2(f)>0$, see~\cite[Thm.~1.7]{Gas2025}. Recall also that $\esp{\gauss{1}^p}=0$ if and only if $p$ is odd.
\end{rem}

Let us now focus on the important case of gradient fields. Note that it is not directly covered by Theorem~\ref{thm: moments asymptotics volume} since, if $f:\R^d \to \R^d$ is the gradient of some Gaussian field $h:\R^d \to \R$, then $f$ does not satisfy~\hypND{q} for any $q \geq 1$. Indeed, $\parentheses*{\partial_jf(0)}_{1\leq j \leq d}=\parentheses*{\partial_i\partial_jh(0)}_{1 \leq i,j\leq d}$ is supported on the set of symmetric matrices, hence is degenerate as a Gaussian vector in $\R^{d\times d}$. In order to deal with gradient fields, we introduce a variation on our non-degeneracy hypothesis \hypND{q} adapted to these fields.

Let $q \in \N$. We assume that $f=\nabla h$, where $h:\R^d \to \R$ is a centered stationary Gaussian field at least of class $\cC^{q+1}$. The following condition depends only on $f$ but is more conveniently stated in terms of $h$.

\begin{description}
\hypitem{$\nabla$-non-degeneracy ($\nabla$-ND$(q)$)}
\label{hyp: nabla-non-degeneracy}
The field $h$ such that $\nabla h=f$ satisfies the following: for any $l\in \brackets{1,\dots,q+1}$, any $m_1,\dots,m_l \in \N^*$ such that $m_1 + \dots + m_l =q+1$, and any pairwise distinct~points $x_1,\dots,x_l \in \R^d$, the Gaussian vector $\parentheses*{\partial^\alpha h(x_i)}_{1 \leq i \leq l, 0<\norm{\alpha} \leq m_i}$ is non-degenerate.
\end{description}

This alternative non-degeneracy hypothesis allows us to prove the analogue of Theorem~\ref{thm: moments asymptotics volume} for the number of critical points of $h$. Note that, in this case, $Z$ is a locally finite set, and its $0$-dimensional Hausdorff measure is its counting measure.

\begin{thm}[Central moments asymptotics for critical points]
\label{thm: moments asymptotics critical points}
Let $p \geq 2$ be an integer and $h:\R^d \to \R$ be a $\cC^{2p}$ centered stationary Gaussian field. Let $f =\nabla h$ and $Z=f^{-1}(0)$. We assume that \hypNND{2p-1} and~\hypDCL{2p-1}{2} hold. Then, there exists $\gamma_2(f) > 0$, depending only on the distribution of $f$, such that, for any bounded Borel subset $\cB \subset \R^d$, we have:
\begin{equation*}
\esp{\parentheses*{\frac{\card\parentheses*{Z \cap R\cB} - R^d \gamma_1(f)\Vol_d(\cB)}{R^\frac{d}{2}}}^p} \xrightarrow[R \to +\infty]{} \parentheses*{\gamma_2(f)\strut\Vol_d(\cB)}^\frac{p}{2}\esp{\strut\gauss{1}^p},
\end{equation*}
where $\gamma_1(f)$ is the constant appearing in Equation~\eqref{eq: volume expectation}.
\end{thm}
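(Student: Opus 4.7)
The plan is to follow, mutatis mutandis, the cumulants strategy underlying Theorem~\ref{thm: moments asymptotics volume}: write the $p$-th cumulant of $\card\parentheses*{Z \cap R\cB}$ as a sum, indexed by partitions of $\brackets*{1,\dots,p}$, of integrals over $(R\cB)^p$ of Kac--Rice $p$-point intensities of the zero set of $f = \nabla h$; prove a linear-in-$R^d$ growth of the variance, which defines $\gamma_2(f)$; show that the higher cumulants are $o\parentheses*{R^{pd/2}}$; and conclude by the method of cumulants. The $p$-point Kac--Rice density at pairwise distinct points $x_1,\dots,x_p$ reads
\begin{equation*}
\rho_p(x_1,\dots,x_p) = \espcond{\prod_{i=1}^p \norm*{\det \nabla^2 h(x_i)}}{\nabla h(x_1) = \dots = \nabla h(x_p) = 0} \cdot p_{\nabla h(x_1),\dots,\nabla h(x_p)}(0,\dots,0),
\end{equation*}
and the whole analysis reduces to controlling these densities together with their diagonal behavior as subsets of the $x_i$'s coalesce.

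Theorem~\ref{thm: moments asymptotics volume} does not apply verbatim to $f = \nabla h$: the matrix $\nabla f(x)$ is the Hessian of $h$, hence symmetric, so the joint Gaussian vector of $f$ and its derivatives is supported on a proper linear subspace and \hypND{q} fails for every $q \geq 1$. The remedy is to rewrite every Kac--Rice quantity in terms of derivatives of $h$ of positive order, among which there is no such redundancy. At pairwise distinct points the non-degeneracy of $\parentheses*{\nabla h(x_i)}_i$, and hence well-posedness of $\rho_p$, is already ensured by \hypNND{2p-1}; but the proof of Theorem~\ref{thm: moments asymptotics volume} needs much more. Near the diagonal, for each cluster of coalescing $x_i$'s, one must perform a Kergin-interpolation / divided-difference expansion around a base point, replacing the degenerate conditioning on $\parentheses*{\nabla h(x_i)}_i$ by a non-degenerate conditioning on a finite family of higher-order derivatives of $h$ at that base point. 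For cumulants up to order $p$, the orders of derivation involved are at most $2p-1$: \hypNND{2p-1} is precisely the statement that this Gaussian family, expressed in terms of derivatives $\partial^\alpha h$ with $0 < \norm{\alpha} \leq m_i$, is non-degenerate, and it is therefore the exact substitute for \hypND{2p-1} in the gradient setting.

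The main obstacle is implementing this Kergin-type diagonal analysis for gradient fields and verifying that the resulting $\rho_p$ is locally integrable with tail bounds compatible with \hypDCL{2p-1}{2}. I would perform the divided-difference operation directly on the scalar field $h$ rather than on $f = \nabla h$, exploiting the fact that $\nabla h(x_i) = 0$ is a condition on first-order derivatives of $h$ and that $\det \nabla^2 h(x_i)$ is polynomial in second-order derivatives of $h$. The hypothesis \hypNND{2p-1} then supplies the non-degenerate Gaussian conditioning at the diagonal, while \hypDCL{2p-1}{2} controls correlations away from it. Once these density estimates are in hand, the cumulant-clustering argument, the definition and convergence of $\gamma_2(f)$, and the passage from cumulants to central moments transfer essentially verbatim from the proof of Theorem~\ref{thm: moments asymptotics volume}; the strict positivity $\gamma_2(f) > 0$ asserted here — stronger than the generic statement of Remark~\ref{rem: moments asymptotics volume} — should be extracted from a direct lower bound on the variance, reflecting the additional rigidity supplied by the Hessian-determinant factor in $\rho_p$.
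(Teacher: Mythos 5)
Your plan is the paper's: Theorem~\ref{thm: moments asymptotics critical points} is the stationary specialization ($f_R=f$, $\phi_i=\one_\cB$) of the gradient-field cumulants theorem, Theorem~\ref{thm: cumulants asymptotics for critical points}, which the paper proves by rerunning the entire Kac--Rice/Kergin/Hadamard/HBL machinery with $\R_q[X]^d$ replaced throughout by the subspace $\nabla\R_{q+1}[X]$ of polynomial gradients and \hypND{q} replaced by \hypNND{q} --- exactly the substitution you describe, phrased equivalently in terms of higher-order jets of $h$. The one place you and the paper part ways is the strict positivity $\gamma_2(f)>0$: you propose to extract it from a direct variance lower bound using the Hessian-determinant factor, whereas the paper quotes it from an external result, \cite[Thm.~1.7]{Gas2025}.
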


\begin{rem}
\label{rem: moments asymptotics critical points}
Under the hypotheses of Theorem~\ref{thm: moments asymptotics critical points}, we have $\gamma_1(f) >0$. Note that, in this case, we always have $\gamma_2(f) >0$.
\end{rem}

In the case where the field $f$ is smooth and non-degenerate at any order, we recover a CLT as $R \to +\infty$, by the method of moments. If $f$ is the gradient of an isotropic field, a similar result was proved by Nicolaescu~\cite[Thm.~2.2]{Nic2017}, under slightly different hypotheses.

\begin{thm}[Central Limit Theorem, stationary case]
\label{thm: central limit theorem stationary case}
Let $f:\R^d \to \R^k$ be a $\cC^\infty$ centered stationary Gaussian field such that \hypDCL{q}{2} holds for all $q \in \N$. We assume that, either \hypND{q} holds for all $q \in \N$, or $f$ is the gradient of some $h:\R^d \to \R$ and \hypNND{q} holds for all $q \in \N$. Let $Z=f^{-1}(0)$. Then, for any bounded Borel subset $\cB \subset \R^d$, we have the following convergence in distribution:
\begin{equation*}
\frac{\Vol_{d-k}\parentheses*{Z \cap R\cB} - R^d \gamma_1(f)\Vol_d(\cB)}{R^\frac{d}{2}} \xrightarrow[R \to +\infty]{\text{law}} \gauss{\gamma_2(f)\strut\Vol_d(\cB)}.
\end{equation*}
\end{thm}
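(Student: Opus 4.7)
The plan is to invoke the \emph{method of moments}. Write
\[
X_R := \frac{\Vol_{d-k}\parentheses*{Z \cap R\cB} - R^d \gamma_1(f)\Vol_d(\cB)}{R^{d/2}}
\]
and set $\sigma^2 := \gamma_2(f)\Vol_d(\cB)$. The Kac--Rice formula~\eqref{eq: volume expectation} yields $\esp{X_R}=0$ for every $R>0$, matching the mean of $\gauss{\sigma^2}$.

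Under the standing hypotheses, $f$ is $\cC^{\infty}$ and either \hypND{q} (in the general case) or \hypNND{q} (in the gradient case), together with \hypDCL{q}{2}, is satisfied for every $q \in \N$. Consequently, Theorem~\ref{thm: moments asymptotics volume} (resp.\ Theorem~\ref{thm: moments asymptotics critical points}) applies to \emph{every} integer $p \geq 2$, yielding
\[
\esp{X_R^p} \xrightarrow[R\to+\infty]{} \sigma^{p}\,\esp{\gauss{1}^p} = \esp{\gauss{\sigma^2}^p}.
\]
This, together with the trivial identities $\esp{X_R^0}=1$ and $\esp{X_R}=0$, gives convergence of every moment of $X_R$ towards the corresponding moment of $\gauss{\sigma^2}$.

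To conclude, I would use the classical fact that a centered Gaussian distribution is determined by its moments (its even moments $(2p-1)!!\,\sigma^{2p}$ satisfy Carleman's condition); the case $\sigma^2 = 0$ is covered similarly since the Dirac mass at $0$ is also moment-determined. The method of moments then promotes the above convergence of moments to convergence in distribution, $X_R \xrightarrow[R\to+\infty]{\text{law}} \gauss{\sigma^2}$, which is the claim. The analytical bulk of the argument is carried by Theorems~\ref{thm: moments asymptotics volume} and~\ref{thm: moments asymptotics critical points}; the only remaining ingredient is the moment-determinacy of the Gaussian, so no genuine obstacle is expected at this stage.
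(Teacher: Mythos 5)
Your proof is correct and follows essentially the same route as the paper. The paper derives Theorem~\ref{thm: central limit theorem stationary case} as the special case $\cR=(0,+\infty)$, $U=\Omega=\R^d$, $f_R=f$, $\phi=\one_\cB$ of the functional Central Limit Theorem~\ref{thm: central limit theorem}, which is itself proved by the method of cumulants (Proposition~\ref{prop: method of cumulants}); since the method of cumulants reduces to the method of moments (see the proof of Proposition~\ref{prop: method of cumulants}), and Theorems~\ref{thm: moments asymptotics volume} and~\ref{thm: moments asymptotics critical points} are themselves obtained from the cumulant asymptotics via Corollary~\ref{cor: moment asymptotics}, your more direct invocation of the moment asymptotics followed by moment-determinacy of the Gaussian is logically the same argument, expressed with slightly less machinery. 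Both of your peripheral observations are needed and correctly handled: the moments of $X_R$ are finite only for $R$ large enough depending on $p$, which is exactly the relaxed hypothesis the paper's version of the method of moments accommodates, and the degenerate case $\gamma_2(f)\Vol_d(\cB)=0$ must be treated since Theorem~\ref{thm: moments asymptotics volume} does not guarantee $\gamma_2(f)>0$.
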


The Borel--Cantelli Lemma and a monotonicity argument also yield a Law of Large Numbers as $R \to +\infty$ for the volume of $f^{-1}(0)$ in nice enough growing sets. Recall that a set $\cB \subset \R^d$ is said to be star-shaped with respect to $0$ if, for all $x \in \cB$ and $t \in [0,1]$, we have $tx \in \cB$.

\begin{thm}[Law of Large Numbers, stationary case]
\label{thm: law of large numbers stationary case}
Let $f:\R^d \to \R^k$ be a $\cC^3$ centered stationary Gaussian field such that \hypDCL{3}{2} holds. We assume that, either \hypND{3} holds, or $f$ is the gradient of some field $h:\R^d \to \R$ and \hypNND{3} holds. Let $Z=f^{-1}(0)$. Then, for any bounded Borel subset $\cB \subset \R^d$ which is star-shaped with respect to $0$, we have the following almost-sure convergence:
\begin{equation*}
\frac{\Vol_{d-k}\parentheses*{Z \cap R\cB}}{R^d} \xrightarrow[R \to +\infty]{\text{a.s.}} \gamma_1(f)\Vol_d(\cB).
\end{equation*}
\end{thm}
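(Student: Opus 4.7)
The plan is to extract a variance bound from the preceding moment theorems, run a Borel--Cantelli argument along a subsequence, and then extend by monotonicity. Observe that, for $p=2$, Theorem~\ref{thm: moments asymptotics volume} (in the case where \hypND{3} holds) and Theorem~\ref{thm: moments asymptotics critical points} (in the case where $f=\nabla h$ and \hypNND{3} holds) have exactly the hypotheses assumed here. Applying the relevant theorem yields $\var{V_R}=O(R^d)$ as $R\to+\infty$, where $V_R$ denotes $\Vol_{d-k}\parentheses*{Z\cap R\cB}$ in the non-degenerate case and $\card\parentheses*{Z\cap R\cB}$ in the gradient case. This variance estimate is the only non-trivial input needed for the rest of the argument.

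Writing $\mu:=\gamma_1(f)\Vol_d(\cB)$, I would then choose a sequence $R_n \to +\infty$ with $\sum_n R_n^{-d}<+\infty$ and $R_{n+1}/R_n\to 1$---for instance $R_n=n$ if $d\geq 2$ and $R_n=n^2$ if $d=1$. Chebyshev's inequality combined with the variance bound gives the summability of $\P\parentheses*{\norm*{V_{R_n}/R_n^d - \mu} > \epsilon}$ for every $\epsilon>0$, so the Borel--Cantelli Lemma together with a diagonal argument over $\epsilon = 1/m$ yields the almost-sure convergence $V_{R_n}/R_n^d \to \mu$ along the subsequence. To upgrade to arbitrary $R$, I would use that $\cB$ is star-shaped with respect to $0$, so that $R\cB \subset R'\cB$ whenever $R\leq R'$, and hence $R \mapsto V_R$ is non-decreasing. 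For $R_n \leq R \leq R_{n+1}$ this yields the sandwich
$$\parentheses*{\frac{R_n}{R_{n+1}}}^d \frac{V_{R_n}}{R_n^d} \leq \frac{V_R}{R^d} \leq \parentheses*{\frac{R_{n+1}}{R_n}}^d \frac{V_{R_{n+1}}}{R_{n+1}^d},$$
and both extremes converge almost surely to $\mu$ as $R_{n+1}/R_n\to 1$, which concludes.

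I do not expect any serious obstacle in this scheme: the hard analytic content has already been absorbed into the moment asymptotics. Two minor points merit attention. First, the variance bound from Theorems~\ref{thm: moments asymptotics volume}--\ref{thm: moments asymptotics critical points} is only asymptotic in $R$, but this only alters finitely many terms of the series and is harmless for Borel--Cantelli. Second, the degenerate case $\Vol_d(\cB)=0$ needs to be handled separately: Equation~\eqref{eq: volume expectation} gives $\esp{V_R}=0$, hence $V_R=0$ almost surely for each fixed $R$ since $V_R\geq 0$, and the same monotonicity in $R$ promotes this to $V_R=0$ for all $R>0$ simultaneously, almost surely.
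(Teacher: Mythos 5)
Your proof is correct and follows essentially the same two-step strategy as the paper: almost-sure convergence along a well-chosen subsequence via a moment bound and Borel--Cantelli, then extension to all $R>0$ by a sandwich argument exploiting the monotonicity of $R\mapsto \Vol_{d-k}(Z\cap R\cB)$ that the star-shape of $\cB$ provides. The only cosmetic difference is that the paper obtains the subsequence convergence by citing Theorem~\ref{thm: law of large numbers} with $p=1$ and $R_n=n^2$, whereas you rederive it by hand from the $p=2$ variance asymptotics of Theorems~\ref{thm: moments asymptotics volume}/\ref{thm: moments asymptotics critical points} combined with Chebyshev (and you also note, harmlessly, that the degenerate case $\Vol_d(\cB)=0$ is automatic); since Theorem~\ref{thm: law of large numbers} with $p=1$ is itself proved via the $2p=2$ central moment bound, the underlying analytic input is the same.
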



\subsection{Cumulants asymptotics}
\label{subsec: cumulants asymptotics}

The results from Section~\ref{subsec: zero set of a stationary Gaussian field in a growing set} are special cases of results obtained in a general framework that we now introduce. As above, let $d \in \N^*$ denote the dimension of the ambient space and $k \in \brackets{1,\dots,d}$ be the almost-sure codimension of the random submanifolds we are interested in. Let $f:\R^d \to \R^k$ be a centered stationary Gaussian field. Let $U \subset \R^d$ be a convex open set and $\Omega \subset U$ be a non-empty open subset at positive distance from $\R^d \setminus U$. That is, $\inf \brackets*{\strut \Norm{y-x} \mvert x \in \Omega, y \notin U} >0$, where $\Norm{\cdot}$ denotes the Euclidean norm on $\R^d$.

Let $\cR \subset (0,+\infty)$ be an unbounded set. For all $R \in \cR$, let $f_R$ be a $\cC^2$ centered Gaussian field from $RU = \brackets{Rx \mid x \in U}\subset \R^d$ to~$\R^k$. We let $r_R$ denote its covariance kernel, which is defined of $RU \times RU$. Under mild assumptions, the zero set $Z_R=f_R^{-1}(0)$ is almost-surely a submanifold of codimension $k$ in~$RU$. As such, integrating over $Z_R$ with respect to the $(d-k)$-dimensional Hausdorff measure $\dx \cH^{d-k}$ defines a Radon measure on $RU$ (i.e., a Borel measure which is finite on compact subsets), that we also denote by $Z_R$. For all $\phi \in L^1(RU)$, we denote by
\begin{equation}
\label{eq: def linear statistics ZR}
\prsc{Z_R}{\phi} = \int_{Z_R} \phi(x) \dx \cH^{d-k}(x),
\end{equation}
which is an almost-surely well-defined random variable, see Section~\ref{subsec: Bulinskaya Lemma and Kac--Rice formulas for factorial moments} for details. Quantities of the type $\prsc{Z_R}{\phi}$ are called the \emph{linear statistics} of $Z_R$.

In this paper, we are interested in the asymptotic behavior of the random measure $Z_R$ as $R \to +\infty$, under the assumption that $f_R$ converges towards $f$, in a sense to be made precise below. We study this asymptotic behavior through the asymptotic distribution of some linear statistics. In order to obtain converging objects, we have to consider a scaled version of $Z_R$, denoted by $\nu_R$, which is the random Radon measure on $U$ defined by:
\begin{equation}
\label{eq: def linear statistics nuR}
\forall \phi \in L^1(U), \qquad \prsc{\nu_R}{\phi} = \frac{1}{R^d}\prsc*{Z_R}{\phi\parentheses*{\frac{\cdot}{R}}}.
\end{equation}
Alternatively, $\nu_R$ can be described as the measure of integration over the submanifold $\frac{1}{R}Z_R \subset U$, up to a scaling factor $\frac{1}{R^k}$ since we consider $(d-k)$-dimensional submanifolds. Note that we normalized $\nu_R$ so that $\esp{\prsc{\nu_R}{\phi}}$ converges as $R \to +\infty$. For example, if $\cB \subset U$ is a bounded Borel set and $\one_\cB$ is its indicator function then, by~\eqref{eq: def linear statistics ZR} and~\eqref{eq: def linear statistics nuR}, we have:
\begin{equation}
\label{eq: relation nuR volume}
\prsc{\nu_R}{\one_\cB}= \frac{\Vol_{d-k}\parentheses*{Z_R \cap R\cB}}{R^d}.
\end{equation}
If moreover $f_R$ is the restriction $f_{\vert RU}$ of $f$ to $RU$ for all $R \in \cR$, then the linear statistics in~\eqref{eq: relation nuR volume} are random variables of the form studied in Section~\ref{subsec: zero set of a stationary Gaussian field in a growing set}, and their expectation is given by~\eqref{eq: volume expectation}.

Let $q \in N$ and $s \in [1,+\infty]$. We already introduced Hypotheses~\hypND{q}, \hypNND{q} and \hypDCL{q}{s} in Section~\ref{subsec: zero set of a stationary Gaussian field in a growing set}. In the setting of the present section, we introduce an additional set of hypotheses. We will comment on these in Section~\ref{subsec: discussion of the main hypotheses} below.

\begin{description}
\hypitem{Regularity (Reg$(q)$)}
\label{hyp: regularity}
The centered Gaussian fields $f$ and $(f_R)_{R \in \cR}$ all are of class $\cC^q$.
\end{description}

Assuming that \hypReg{q} holds, we can state our other hypotheses as follows.

\begin{description}
\hypitem{Local scaling limit (ScL$(q)$)}
\label{hyp: scaling limit}
For all non-empty compact set $\Gamma \subset \R^d$ the following holds:
\begin{equation*}
\sup_{\norm{\alpha} \leq q, \norm{\beta}\leq q}\ \sup_{x \in \Omega}\ \sup_{w,z \in \Gamma}\ \Norm*{\strut \partial^{\alpha,\beta} r_R(Rx+w,Rx+z) - \partial^{\alpha,\beta}r(w,z)} \xrightarrow[R \to +\infty]{}0.
\end{equation*}

\hypitem{Uniform decay of correlations (DC$(q,s)$)}
\label{hyp: decay of correlations}
There exist $\omega>0$ and $g:\R^d \to [0,+\infty)$ such that:
\begin{itemize}
\item the function $g$ is even, bounded, and $g(x)\xrightarrow[\Norm{x}\to +\infty]{}0$;
\item the function $g_\omega:x \mapsto \sup_{\Norm{y}\leq \omega} g(x+y)$ belongs to $L^s(\R^d)$;
\item for all $R \in \cR$, for all $x,y \in RU$, we have $\max_{\norm{\alpha} \leq q, \norm{\beta}\leq q} \ \Norm*{\strut \partial^{\alpha,\beta} r_R(x,y)}\leq g(y-x)$.
\end{itemize}
\end{description}

Given $R \in \cR$ and $\phi_1,\dots,\phi_p \in L^1(U)$, we denote by $\kappa(\nu_R)(\phi_1,\dots,\phi_p)$ the \emph{mixed cumulant} of the family of random variables $\parentheses*{\prsc{\nu_R}{\phi_i}}_{1 \leq i \leq p}$, if it is well-defined. The precise definition of this mixed cumulant being a bit technical, we postpone it until Section~\ref{subsec: cumulants}. For now, let us say that $\kappa(\nu_R)(\phi_1,\dots,\phi_p)$ is defined as a universal polynomial in the mixed moments $\parentheses*{\esp{\prod_{a \in A}\prsc{\nu_R}{\phi_a}}}_{A \subset \brackets{1,\dots,p}}$. For example, $\kappa(\nu_R)(\phi_1) = \esp{\prsc{\nu_R}{\phi_1}}$, and $\kappa(\nu_R)(\phi_1,\phi_2)$ is the covariance of $\prsc{\nu_R}{\phi_1}$ and $\prsc{\nu_R}{\phi_2}$. Conversely, the mixed moments can be expressed as universal polynomials in the mixed cumulants, so that studying the cumulants of a family of random variables is equivalent to studying its moments or its central moments. In particular, the classical method of moments can be translated into the method of cumulants, see Proposition~\ref{prop: method of cumulants}. The main interest of cumulants is that they are designed to detect Gaussianity: a family of random variables is jointly Gaussian if and only if its mixed cumulants of order at least $3$ vanish, while the mixed cumulants of order $1$ and $2$ describe its mean and covariance. The asymptotic behavior of the cumulants of a sequence of random variables can thus be understood as a form of quantitative Central Limit Theorem. In this spirit, the first main result of this paper is the following.

\begin{thm}[Cumulants asymptotics for zero sets]
\label{thm: cumulants asymptotics for zero sets}
Let $f: \R^d \to \R^k$ be a centered stationary Gaussian field. For all $R \in \cR$, let $f_R:RU \to \R^k$ be a centered Gaussian field and $\nu_R$ be the measure defined by~\eqref{eq: def linear statistics nuR}. Let $p \in \N^*$, we assume that Hypotheses~\hypReg{\max(2,2p-1)}, \hypND{2p-1} and \hypScL{2p-1} hold. Then, for all $\phi_1,\dots,\phi_p \in L^1(\Omega) \cap L^\infty(\Omega)$ we have the following.
\begin{enumerate}
\item \label{item: cumulants asymptotics infty} If \hypDC{2p-1}{\infty} holds, then there exists $R_p \in \cR$, not depending on $\parentheses*{\phi_1,\dots,\phi_p}$, such that for all $R \in [R_p,+\infty) \cap \cR$ the mixed cumulant $\kappa(\nu_R)(\phi_1,\dots,\phi_p)$ is well-defined and finite.

\item \label{item: cumulants asymptotics 2} If $p \geq 3$ and \hypDC{2p-1}{2} holds, then $\kappa(\nu_R)(\phi_1,\dots,\phi_p) = o\parentheses*{R^{-\frac{pd}{2}}}$ as $R \to +\infty$.

\item \label{item: cumulants asymptotics p} If $p=1$, or if $p \geq 2$ and \hypDC{2p-1}{\frac{p}{p-1}} holds, then there exists $\gamma_p(f) \in \R$ such that:
\begin{equation*}
R^{d(p-1)}\kappa(\nu_R)(\phi_1,\dots,\phi_p) \xrightarrow[R \to +\infty]{} \gamma_p(f) \int_\Omega \prod_{i=1}^p \phi_i(x) \dx x.
\end{equation*}
Moreover, $\gamma_p(f)$ depends only on the distribution of $f$, it is explicit, and $\gamma_1(f)>0$.
\end{enumerate}
\end{thm}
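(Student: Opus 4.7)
The plan is to represent the mixed cumulant as an integral against a \emph{truncated $p$-point density}, then exploit its decorrelation properties. By the Leonov--Shiryaev cumulant-moment inversion,
$$\kappa(\nu_R)(\phi_1,\dots,\phi_p) = \sum_{\pi}(-1)^{\card\pi-1}(\card\pi-1)!\prod_{B\in\pi}\esp{\prod_{i\in B}\prsc{\nu_R}{\phi_i}},$$
summed over set partitions $\pi$ of $\brackets{1,\dots,p}$. Applying Kac--Rice to each block moment and rescaling $x_i = R y_i$ gives
$$\kappa(\nu_R)(\phi_1,\dots,\phi_p) = \int_{U^p}\phi_1(y_1)\cdots\phi_p(y_p)\,\tilde D_p^R(Ry_1,\dots,Ry_p)\,dy_1\cdots dy_p,$$
where $\tilde D_p^R$ is the same Möbius combination applied to the Kac--Rice $\card B$-point densities $D_{\card B}^R$ of $Z_R$. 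The decisive property of $\tilde D_p^R$, in contrast to the untruncated density $D_p^R$, is its \emph{cluster decay}: when the $p$ points split into two groups at mutual distance $\geq\omega$, the near-independence granted by \hypDC{2p-1}{s} factorises each block moment, and the Möbius signs cancel the leading product term, leaving a remainder controlled by $g$ at the splitting distance.

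The main technical obstacle is controlling $\tilde D_p^R$ near \emph{partial diagonals}, where the naive Kac--Rice densities $D_{\card B}^R$ degenerate as points in a block collide. This is exactly the reason for the strong assumption \hypND{2p-1}, and it is handled via the Kergin interpolation / divided difference machinery of \cite{AL2025,GS2024}: on any cluster of colliding points one re-parameterises the Gaussian vector $(f_R(x_i))_{i\in\text{cluster}}$ by its divided differences, which amount to mixed derivatives of $f_R$ of order up to $\card(\text{cluster})-1$. Non-degeneracy at order $2p-1$ for the limit field $f$ then, together with \hypScL{2p-1}, gives a lower bound on the covariance determinant of the joint vector of all such divided differences which is uniform in $R$ for $R$ large; \hypDC{2p-1}{\infty} provides matching uniform upper bounds. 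Substituting into Kac--Rice yields a uniform $L^\infty$ bound on $\tilde D_p^R$, proving item~\ref{item: cumulants asymptotics infty}.

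For item~\ref{item: cumulants asymptotics p}, I would change variables $y_1 = y$, $y_i = y + u_i/R$ for $i\geq 2$ (Jacobian $R^{-d(p-1)}$), yielding
$$R^{d(p-1)}\kappa(\nu_R)(\phi_1,\dots,\phi_p) = \int_{U\times(\R^d)^{p-1}}\phi_1(y)\prod_{i=2}^{p}\phi_i\!\parentheses*{y + \tfrac{u_i}{R}}\,\tilde D_p^R(Ry,Ry+u_2,\dots,Ry+u_p)\,du_2\cdots du_p\,dy.$$
By \hypScL{2p-1} and stationarity of the limit, the integrand converges pointwise for fixed $(u_2,\dots,u_p)$ to $\phi_1(y)\cdots\phi_p(y)\,\tilde D_p^f(0,u_2,\dots,u_p)$, where $\tilde D_p^f$ is the truncated $p$-point density of $f$. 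To apply dominated convergence, I would build an $R$-uniform dominant by decomposing $(0,u_2,\dots,u_p)$ according to its cluster structure at scale $\omega$: intra-cluster the Kergin bound of the previous paragraph applies, while inter-cluster the truncation together with \hypDC{2p-1}{s} gives a product of $g$'s evaluated at the gap variables. The exponent $s = p/(p-1)$ is precisely the borderline Hölder exponent making this product integrable over the $p-1$ gap variables, producing the claimed convergence with $\gamma_p(f) := \int_{(\R^d)^{p-1}}\tilde D_p^f(0,u_2,\dots,u_p)\,du_2\cdots du_p$. The case $p = 1$ reduces to classical Kac--Rice for $f$ and gives $\gamma_1(f) > 0$ by non-degeneracy.

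Finally, for item~\ref{item: cumulants asymptotics 2}, where only \hypDC{2p-1}{2} is assumed, the above integrable dominant may not exist, but a quantitative smallness estimate still holds: splitting the integral at an intermediate scale $\rho(R)\to+\infty$ according to the cluster configuration of the points, the intra-cluster piece is controlled by the Kergin bound, while the inter-cluster piece is estimated by Cauchy--Schwarz using $g\in L^2$, gaining a volume factor $R^{-d(\card\pi-1)}$ per non-trivial partition. Optimising $\rho(R)$ produces $\kappa(\nu_R)(\phi_1,\dots,\phi_p) = o(R^{-pd/2})$ for $p \geq 3$. The hardest step throughout is by far the uniform control of the Kac--Rice densities near partial diagonals, which is what motivates both the regularity at order $2p-1$ and the reliance on Kergin interpolation.
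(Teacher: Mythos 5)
Your strategy matches the paper's (cumulant--moment inversion, Kac--Rice for each block, a truncated $p$-point density $\cF_p = \tilde D_p$, decomposition by cluster scale, Kergin interpolation to tame the diagonal singularity, then dominated convergence); indeed, that is exactly the outline of Section~\ref{subsec: strategy of proof}. However, three of the decisive technical steps are either missing or mis-described, and without them the proof of items~\ref{item: cumulants asymptotics 2} and~\ref{item: cumulants asymptotics p} does not close.

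First, you describe the cluster decay of $\cF_p$ as: the M\"obius signs cancel the leading product term, ``leaving a remainder controlled by $g$ at the splitting distance.'' That is a \emph{first-order} bound, and it is not strong enough. Item~\ref{item: cumulants asymptotics p} needs $g \in L^{p/(p-1)}$, and a single factor of $g$ evaluated at one inter-cluster gap controls only one of the $p-1$ gap variables, so the dominating function you propose would not be integrable over $(\R^d)^{p-1}$. What the paper actually proves (Lemma~\ref{lem: vanishing F I J}, leading to the refined Hadamard lemma of Section~\ref{sec: refined Hadamard lemma}) is a \emph{second-order} vanishing: the field-dependent part $F_{\cI,\cJ}$ of the cumulant density vanishes together with its differential along the subspace of block-diagonal covariance operators, for every bipartition of the clusters simultaneously. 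Combining these vanishings produces the key estimate (Corollary~\ref{cor: key estimate graphs}) $|F_{\cI,\cJ}| \lesssim \sum_{\un \in \G_\cI} \prod_{\{I,J\}} \|\Sigma_I^J\|^{n_{\{I,J\}}}$, where the sum runs over minimally $2$-edge-connected graphs on the set of clusters. The $2$-edge-connectedness is precisely the combinatorial encoding of ``at least quadratic decay across every cut''; your sketch does not produce it.

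Second, once one has these graph-indexed products of $g$'s, you propose integrating by ``Hölder in the $p-1$ gap variables.'' This fails because the arguments $\flat(\ux_I)-\flat(\ux_J)$ of the different factors of $g$ are linearly dependent. The correct tool is a family of Hölder--Brascamp--Lieb inequalities (Proposition~\ref{prop: HBL graphs}), which the paper derives by interpolating between inequalities associated to spanning trees of a $2$-edge-connected graph (Lemmas~\ref{lem: existence of spanning trees},~\ref{lem: spanning tree and HBL}). The numerology $q_e \geq p/(p-1)$ does come out correctly, so your identification of the critical exponent is right, but the Hölder mechanism you invoke would not prove the needed $L^1$ domination.

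Third, for item~\ref{item: cumulants asymptotics 2} you propose an intermediate-scale splitting and Cauchy--Schwarz ``gaining $R^{-d(\card\pi-1)}$ per partition,'' but the $o(R^{-pd/2})$ rate is obtained in the paper by a different device: for each $2$-edge-connected graph, $g \in L^2$ implies $g \in L^{p_e}$ (since $p_e \geq 2$), and one approximates $g$ in $L^{p_e}$ by functions in $L^{q_e} \cap L^{p_e}$; the $L^{q_e}$-piece yields $O(R^{d(1-\norm{\cI}/2)})$ and the error piece is uniformly small. Without such a density-and-continuity argument I do not see how to upgrade the $O$ from $g \in L^2$ to the claimed $o$. (Smaller points: when $k=d$ the factorial power measure differs from the power measure on the diagonal, so the Kac--Rice cumulant formula and $\gamma_p(f)$ pick up extra combinatorics over $\cP_{p,d-k}$; and the Kergin interpolant is taken at the \emph{doubled} tuple $\ux_{2A}$, giving divided differences up to order $2\card(\text{cluster})-1$, not $\card(\text{cluster})-1$ as you wrote.)
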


In the case where the ambient dimension is $d=1$, Theorem~\ref{thm: cumulants asymptotics for zero sets} was proved in~\cite{Gas2023b} by refining a result of~\cite{AL2021}. More generally, there is a huge literature concerning zeros of Gaussian fields in dimension $1$, and we refer to the introductions of~\cite{AL2021,Gas2023b} for a discussion of related results in this setting. As in the model case, Theorem~\ref{thm: cumulants asymptotics for zero sets} does not cover the important case where $f$ and the fields $(f_R)_{R \in \cR}$ are gradient fields. Our second main result deals precisely with this case.

\begin{thm}[Cumulants asymptotics for critical points]
\label{thm: cumulants asymptotics for critical points}
Let us consider a centered stationary Gaussian field $h: \R^d \to \R$ and $f= \nabla h$. For all $R \in \cR$, let $h_R:RU \to \R$ be a centered Gaussian field, $f_R=\nabla h_R$ and $\nu_R$ be the measure on $U$ defined by~\eqref{eq: def linear statistics nuR}. Let $p \in \N^*$, we assume that Hypotheses~\hypReg{\max(2,2p-1)}, \hypNND{2p-1} and \hypScL{2p-1} hold. Then, for all $\phi_1,\dots,\phi_p \in L^1(\Omega) \cap L^\infty(\Omega)$ we have the following.
\begin{enumerate}
\item \label{item: cumulants asymptotics infty crit} If \hypDC{2p-1}{\infty} holds, then there exists $R_p \in \cR$, not depending on $\parentheses*{\phi_1,\dots,\phi_p}$, such that for all $R \in [R_p,+\infty) \cap \cR$ the cumulant $\kappa(\nu_R)(\phi_1,\dots,\phi_p)$ is well-defined and finite.

\item \label{item: cumulants asymptotics 2 crit} If $p \geq 3$ and \hypDC{2p-1}{2} holds, then $\kappa(\nu_R)(\phi_1,\dots,\phi_p) = o\parentheses*{R^{-\frac{pd}{2}}}$ as $R \to +\infty$.

\item \label{item: cumulants asymptotics p crit} If $p=1$, or if $p \geq 2$ and \hypDC{2p-1}{\frac{p}{p-1}} holds, then there exists $\gamma_p(f) \in \R$ such that:
\begin{equation*}
R^{d(p-1)}\kappa(\nu_R)(\phi_1,\dots,\phi_p) \xrightarrow[R \to +\infty]{} \gamma_p(f) \int_\Omega \prod_{i=1}^p \phi_i(x) \dx x.
\end{equation*}
Moreover, $\gamma_p(f)$ depends only on the distribution of $f$, it is explicit, $\gamma_1(f)>0$ and $\gamma_2(f)>0$.
\end{enumerate}
\end{thm}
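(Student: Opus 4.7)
The plan is to run the proof of Theorem~\ref{thm: cumulants asymptotics for zero sets} almost verbatim, with \hypND{2p-1} replaced throughout by \hypNND{2p-1}. The starting point is the same Kac--Rice cumulant expansion: provided the integrals converge, $\kappa(\nu_R)(\phi_1,\dots,\phi_p)$ is written as a sum over set partitions $\cP$ of $\brackets{1,\dots,p}$ of integrals of a ``cumulant density'' built from the Kac--Rice densities of $Z_R=f_R^{-1}(0)$ on each block of $\cP$. Off the diagonals this density is computed from the joint Gaussian density of $(f_R(x_1),\dots,f_R(x_p))$ and the conditional expectation of $\prod_{i=1}^p \norm{\det \nabla f_R(x_i)}$. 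The three items of the theorem are then obtained exactly as in Theorem~\ref{thm: cumulants asymptotics for zero sets} by combining uniform bounds on a suitably regularized cumulant density in a neighborhood of the diagonals with decay-of-correlations bounds at infinity (via \hypDC{2p-1}{s} with $s=\infty$, $s=2$ and $s=\frac{p}{p-1}$ respectively) and the pointwise convergence of the rescaled density under \hypScL{2p-1} and \hypReg{\max(2,2p-1)}.

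The only step that is genuinely affected by the gradient structure is the diagonal regularization. When $l$ of the evaluation points coalesce into clusters of multiplicities $m_1,\dots,m_l$ with $m_1+\dots+m_l=p$, the Kac--Rice density is regularized \`a la Kergin by substituting, for the values of $f_R$ at the $i$-th cluster, a system of partial derivatives of $f_R=\nabla h_R$ at the base point $x_i$ of total order strictly less than $m_i$. For a generic $f$ this requires the non-degeneracy of the Gaussian vector $\parentheses{\partial^\alpha f(x_i)}_{1\leq i \leq l,\, \norm{\alpha}<m_i}$, which is the content of \hypND{2p-1}. For $f=\nabla h$ one has $\partial^\alpha f = \nabla \partial^\alpha h$, so up to an invertible linear change of variables this is the same Gaussian vector as $\parentheses{\partial^\alpha h(x_i)}_{1\leq i \leq l,\, 0<\norm{\alpha}\leq m_i}$, whose non-degeneracy is exactly \hypNND{2p-1}. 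Consequently every Kac--Rice regularity estimate used in the proof of Theorem~\ref{thm: cumulants asymptotics for zero sets} carries over with an identical proof.

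The remaining claims are then routine: $\gamma_1(f)>0$ follows from the non-vanishing of the Gaussian density of $\nabla h(0)$ under \hypNND{1}; the explicit form of $\gamma_p(f)$ is obtained by taking the pointwise limit of the scaled cumulant density at a single point via \hypScL{2p-1}; and the strict positivity $\gamma_2(f)>0$ specific to the gradient case is extracted from the explicit integral expression of the limit variance, whose strict positivity exploits the gradient structure of $f$ (cf.~Remark~\ref{rem: moments asymptotics critical points}). The main obstacle is precisely the algebraic reduction described in the second paragraph: one must carefully check that \hypNND{2p-1} is the correct non-degeneracy hypothesis making the Kergin-regularized Kac--Rice integrand well-behaved when $f=\nabla h$, since the stronger-looking \hypND{2p-1} never holds for gradient fields. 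Once this reduction is in place, every remaining estimate is transported directly from the proof of Theorem~\ref{thm: cumulants asymptotics for zero sets}.
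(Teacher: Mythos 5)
Your strategy is the right one and matches the paper's: rerun the zero-set argument with \hypNND{2p-1} in place of \hypND{2p-1}, the only genuinely new step being the non-degeneracy of the Kergin-regularized density near the diagonal for gradient fields. However, the algebraic reduction you propose is stated imprecisely, and this is exactly where the paper invests its technical work. You claim that $(\partial^\alpha f(x_i))_{i,\,\norm{\alpha}<m_i}$ is, up to an \emph{invertible} linear change of variables, the same Gaussian vector as $(\partial^\alpha h(x_i))_{i,\,0<\norm{\alpha}\leq m_i}$. This cannot be literally true: already for a single point with multiplicity $m=2$ in dimension $d=2$ the first vector lives in $\R^{d\cdot 3}=\R^6$ while the second lives in $\R^{5}$. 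The first vector is \emph{degenerate} in its ambient space because of the Schwarz relations $\partial_i f_j=\partial_j f_i$; what is true is that it lives almost surely in a proper subspace, and on that subspace the two carry the same information.

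The paper's resolution is to avoid the ambient space $\R_{q}[X]^d$ altogether and work systematically in the subspace $\nabla\R_{q+1}[X]$ of polynomial gradients: the Kergin interpolant of $\nabla h$ is itself a polynomial gradient (Lemma~\ref{lem: Kergin interpolant of gradients}), so the twisted interpolant $\oK(f,\ux_{2A})$ defines a Gaussian vector in $\prod_{I\in\cI}\nabla\R_{2\norm{I}}[X]$, and non-degeneracy \emph{there} is precisely \hypNND{q} (Lemma~\ref{lem: uniform ND eta delta gradient}). This substitution of subspaces also forces two companion changes you did not mention but that are needed for the argument to close: the $1$-jet surjectivity has to target $\R^d\times\sym(\R^d)$ rather than $\R^d\times\cL(\R^d,\R^d)$ (Corollary~\ref{cor: surjectivity 1-jets gradients}), and Bulinskaya's Lemma and the Kac--Rice formula must be replaced by their maximal-codimension versions which drop the requirement that $(f(x),D_xf)$ be non-degenerate in $\R^d\times\cL(\R^d)$ (Proposition~\ref{prop: Bulinskaya gradient}), since that requirement is never satisfied by a gradient field. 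Once these are in place, the remaining transport of estimates from the zero-set case is indeed routine, as you say; your outline is therefore correct in spirit, but the ``algebraic reduction'' as you describe it, if implemented literally, would fail at the dimension count.
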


\begin{rem}
\label{rem: test function class}
Theorems~\ref{thm: cumulants asymptotics for zero sets} and~\ref{thm: cumulants asymptotics for critical points} describe the asymptotic behavior of $\nu_R$ restricted to the subset $\Omega \subset U$, since we consider test-functions in $L^1(\Omega) \cap L^\infty(\Omega)$. Note that this class of test-functions contains the indicator functions of bounded Borel subsets of $\Omega$, so that the associated linear statistics completely characterize the restriction of $\nu_R$ to $\Omega$.
\end{rem}


\subsection{Limit theorems and other corollaries}
\label{subsec: limit theorems and other corollaries}

The cumulant asymptotics in Theorems~\ref{thm: cumulants asymptotics for zero sets} and~\ref{thm: cumulants asymptotics for critical points} imply that the random measures $(\nu_R)_{R \in \cR}$ satisfy a Law of Large Numbers and a Central Limit Theorem as $R \to +\infty$. They also translate into corollaries that can be understood as asymptotic equidistribution results for the random submanifolds $Z_R$. This section gathers these various corollaries.

First, our cumulant asymptotics translates into the following moment asymptotics.

\begin{cor}[Moment asymptotics]
\label{cor: moment asymptotics}
In the same setting as Theorem~\ref{thm: cumulants asymptotics for zero sets} (resp.~Theorem~\ref{thm: cumulants asymptotics for critical points}), let $p \in \N^*$ and assume that \hypReg{\max(2,2p-1)}, \hypND{2p-1} (resp.~\hypNND{2p-1}) and \hypScL{2p-1} hold. Then, for all $\phi_1,\dots, \phi_p \in L^1(\Omega) \cap L^\infty(\Omega)$, we have the following.
\begin{enumerate}
\item \label{item: moment asymptotics infty} If \hypDC{2p-1}{\infty} holds, then there exists $R_p \in \cR$, not depending on $\parentheses*{\phi_1,\dots,\phi_p}$, such that for all $R \in [R_p,+\infty) \cap \cR$ the mixed moment $\displaystyle\esp{\prod_{i=1}^p \norm*{\prsc{\nu_R}{\phi_i}\strut}}$ is finite.

\item \label{item: moment asymptotics 2} If \hypDC{2p-1}{2} holds, then we have:
\begin{equation}
\label{eq: moment asymptotics}
\esp{\prod_{i=1}^p \prsc{\nu_R}{\phi_i}} \xrightarrow[R \to +\infty]{} \gamma_1(f)^p \prod_{i=1}^p \int_\Omega \phi_i(x) \dx x
\end{equation}
and
\begin{equation}
\label{eq: central moment asymptotics}
R^\frac{pd}{2}\esp{\prod_{i=1}^p \parentheses*{\prsc{\nu_R}{\phi_i}-\esp{\prsc{\nu_R}{\phi_i}\strut}}} \xrightarrow[R \to +\infty]{} \gamma_2(f)^\frac{p}{2} \hspace{-1.5ex} \sum_{\substack{(\brackets{a_j,b_j})_{1\leq j \leq \floor{\frac{p}{2}}} \\ \bigsqcup_{j=1}^{\floor{\frac{p}{2}}} \brackets{a_j,b_j} = \brackets{1,\dots,p}}}\! \prod_{j=1}^{\floor{\frac{p}{2}}} \int_{\Omega} \phi_{a_j}(x)\phi_{b_j}(x) \dx x,
\end{equation}
where $\gamma_1(f)$ and $\gamma_2(f)$ are the constants appearing in Theorem~\ref{thm: cumulants asymptotics for zero sets}.\ref{item: cumulants asymptotics p} (resp.~Theorem~\ref{thm: cumulants asymptotics for critical points}.\ref{item: cumulants asymptotics p crit}), and the sum in~\eqref{eq: central moment asymptotics} is indexed by the partitions in pairs of $\brackets{1,\dots,p}$. In particular, if $p$ is odd, the right-hand side of~\eqref{eq: central moment asymptotics} vanishes.
\end{enumerate}
\end{cor}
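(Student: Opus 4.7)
The plan is to derive both parts of the corollary from the cumulant estimates of Theorems~\ref{thm: cumulants asymptotics for zero sets} and~\ref{thm: cumulants asymptotics for critical points} via the classical moment-cumulant inversion identities. For a random vector $(X_1,\dots,X_p)$ with finite joint moments of order $p$, these read
\begin{equation*}
\esp{\prod_{i=1}^p X_i} = \sum_{\pi \in \fP(\brackets*{1,\dots,p})} \prod_{B \in \pi} \kappa\parentheses*{(X_i)_{i \in B}}
\end{equation*}
and, for the centered version,
\begin{equation*}
\esp{\prod_{i=1}^p \parentheses*{X_i - \esp{X_i}}} = \sum_{\substack{\pi \in \fP(\brackets*{1,\dots,p}) \\ \min_{B \in \pi} \norm*{B} \geq 2}} \prod_{B \in \pi} \kappa\parentheses*{(X_i)_{i \in B}},
\end{equation*}
where $\fP(S)$ denotes the set of set-partitions of $S$.

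For item~\ref{item: moment asymptotics infty}, since $\norm*{\prsc{\nu_R}{\phi_i}\strut} \leq \prsc{\nu_R}{\norm*{\phi_i}\strut}$, it is enough to prove finiteness of the mixed moment $\esp{\prod_i \prsc{\nu_R}{\norm*{\phi_i}}}$. Expanding this via the first identity above reduces the question to the finiteness of every mixed cumulant of order at most~$p$ built on the non-negative test functions $\norm*{\phi_1},\dots,\norm*{\phi_p}$. Since \hypReg{\max(2,2p-1)}, \hypND{2p-1} (resp.~\hypNND{2p-1}), \hypScL{2p-1} and \hypDC{2p-1}{\infty} all imply their analogues for every lower order $q \leq p$, item~1 of Theorem~\ref{thm: cumulants asymptotics for zero sets} (resp.~\ref{thm: cumulants asymptotics for critical points}) applied at each order $q$ yields a threshold $R_q \in \cR$, uniform in the choice of test functions, above which every mixed cumulant of order~$q$ is well-defined and finite. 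Taking $R_p := \max_{1\leq q\leq p} R_q$ settles the item.

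For item~\ref{item: moment asymptotics 2}, I would inject the relevant cumulant estimates into both identities and keep track of the rate of each term. Under \hypDC{2p-1}{2}, item~3 of the theorem applied at $p=1$ and $p=2$ (noting that \hypDC{3}{2} is implied) gives
\begin{equation*}
\kappa(\nu_R)(\phi_i) \xrightarrow[R \to +\infty]{} \gamma_1(f) \int_\Omega \phi_i
\qquad\text{and}\qquad
R^d \kappa(\nu_R)(\phi_i,\phi_j) \xrightarrow[R \to +\infty]{} \gamma_2(f) \int_\Omega \phi_i \phi_j,
\end{equation*}
while item~2 applied at every order $3 \leq q \leq p$ gives $\kappa(\nu_R)\parentheses*{(\phi_i)_{i \in B}} = o\parentheses*{R^{-qd/2}}$ whenever $\norm*{B} = q$. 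Plugging these into the non-central expansion, the all-singleton partition contributes $\prod_i \kappa(\nu_R)(\phi_i) \to \gamma_1(f)^p \prod_i \int_\Omega \phi_i$, yielding~\eqref{eq: moment asymptotics}, whereas any other partition contains a block of size $\geq 2$, and its overall contribution is at most $O(R^{-d}) \to 0$. In the central expansion, a partition with $n_2$ blocks of size~$2$ and $m$ further blocks of sizes $q_1,\dots,q_m \geq 3$ (so that $2n_2 + q_1 + \dots + q_m = p$) produces a term of order $O\parentheses*{R^{-dn_2}} \prod_{j=1}^m o\parentheses*{R^{-q_j d/2}} = o\parentheses*{R^{-dp/2}}$ as soon as $m \geq 1$; only pair partitions survive at the critical order $R^{-dp/2}$, and summing their contributions gives~\eqref{eq: central moment asymptotics}. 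When $p$ is odd, no pair partition exists and the right-hand side of~\eqref{eq: central moment asymptotics} vanishes identically.

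The main delicate point is this rate bookkeeping, which hinges on the algebraic identity $d n_2 + \tfrac{d}{2}(q_1+\dots+q_m) = \tfrac{dp}{2}$: it is exactly this cancellation that singles out pair partitions as the unique source of the leading-order central moment and produces the Wick-like combinatorial structure of~\eqref{eq: central moment asymptotics}. Once this is in place, no analytic input beyond Theorems~\ref{thm: cumulants asymptotics for zero sets} and~\ref{thm: cumulants asymptotics for critical points} is needed.
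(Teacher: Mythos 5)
Your proof is correct and follows essentially the same route as the paper's: you invoke the moment--cumulant inversion identities (including the centered version, which the paper obtains from its cancellation lemma), feed in the three parts of Theorem~\ref{thm: cumulants asymptotics for zero sets} (resp.~\ref{thm: cumulants asymptotics for critical points}) at each block size, and do the rate bookkeeping to isolate the all-singleton partition for~\eqref{eq: moment asymptotics} and the pair partitions for~\eqref{eq: central moment asymptotics}. The only cosmetic difference is that the paper rescales each block cumulant by $R^{\norm{J}d/2}$ before multiplying whereas you track the total exponent $dn_2 + \tfrac{d}{2}\sum q_j = \tfrac{dp}{2}$ directly, but the two are the same computation.
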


Then, Markov's inequality yields the following concentration result.

\begin{cor}[Concentration]
\label{cor: concentration}
In the setting of Theorem~\ref{thm: cumulants asymptotics for zero sets} (resp.~Theorem~\ref{thm: cumulants asymptotics for critical points}), let $\epsilon >0$ and $\phi \in L^1(\Omega) \cap L^\infty(\Omega)$. For all $p \in \N^*$, if \hypReg{4p-1}, \hypND{4p-1} (resp.~\hypNND{4p-1}), \hypScL{4p-1} and~\hypDC{4p-1}{2} hold, then we have:
\begin{equation*}
\P\parentheses*{\norm*{\prsc{\nu_R}{\phi}-\gamma_1(f) \int_{\Omega}\phi(x) \dx x}\geq\epsilon} = O\parentheses*{R^{-pd}},
\end{equation*}
where $\gamma_1(f)$ is the constant appearing in Theorem~\ref{thm: cumulants asymptotics for zero sets}.\ref{item: cumulants asymptotics p} (resp.~Theorem~\ref{thm: cumulants asymptotics for critical points}.\ref{item: cumulants asymptotics p crit}).
\end{cor}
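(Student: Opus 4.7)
The plan is to apply Markov's inequality to the $(2p)$-th absolute moment of the recentered linear statistic, reducing the claim to the central moment asymptotics already established in Corollary~\ref{cor: moment asymptotics}. Set $Y_R = \prsc{\nu_R}{\phi} - \esp{\prsc{\nu_R}{\phi}\strut}$ and $a_R = \esp{\prsc{\nu_R}{\phi}\strut} - \gamma_1(f) \int_\Omega \phi(x) \dx x$, so that the random variable inside the probability is $Y_R + a_R$.

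First I would handle the deterministic bias $a_R$. Applying Corollary~\ref{cor: moment asymptotics}.\ref{item: moment asymptotics 2} with $p=1$ (the required hypotheses are implied by those of Corollary~\ref{cor: concentration}, since $1 \leq 2p$), Equation~\eqref{eq: moment asymptotics} yields $\esp{\prsc{\nu_R}{\phi}\strut} \to \gamma_1(f) \int_\Omega \phi(x)\dx x$, hence $a_R \to 0$ as $R \to +\infty$. In particular, there exists $R_0$ such that $\norm{a_R} \leq \epsilon/2$ for all $R \in \cR$ with $R \geq R_0$, which gives the inclusion
\begin{equation*}
\brackets*{\norm{Y_R + a_R} \geq \epsilon} \subset \brackets*{\norm{Y_R} \geq \epsilon/2}
\end{equation*}
for such $R$. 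For $R < R_0$ (a finite range, or empty if $R_0$ is below $\inf \cR$), the probability is trivially $O(R^{-pd})$ with a constant large enough, so this range can be absorbed in the final estimate.

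Next I would apply Markov's inequality to the $(2p)$-th moment of $\norm{Y_R}$:
\begin{equation*}
\P\parentheses*{\norm{Y_R} \geq \epsilon/2} \leq \parentheses*{\frac{2}{\epsilon}}^{2p} \esp{Y_R^{2p}}.
\end{equation*}
The control of $\esp{Y_R^{2p}\strut}$ comes from Corollary~\ref{cor: moment asymptotics}.\ref{item: moment asymptotics 2} applied with $p$ replaced by $2p$ and $\phi_1 = \dots = \phi_{2p} = \phi$: the hypotheses \hypReg{4p-1}, \hypND{4p-1} (resp.~\hypNND{4p-1}), \hypScL{4p-1} and \hypDC{4p-1}{2} are exactly those assumed in Corollary~\ref{cor: concentration}. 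The limit~\eqref{eq: central moment asymptotics} then yields
\begin{equation*}
R^{pd}\, \esp{Y_R^{2p}\strut} \xrightarrow[R \to +\infty]{} \gamma_2(f)^p\, \frac{(2p)!}{2^p\, p!} \parentheses*{\int_\Omega \phi(x)^2 \dx x}^p,
\end{equation*}
so that $\esp{Y_R^{2p}\strut} = O(R^{-pd})$.

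Combining the two steps, we conclude that $\P\parentheses*{\norm{Y_R + a_R} \geq \epsilon} = O(R^{-pd})$, which is the desired bound. There is no real obstacle in this argument: the substantial analytic work is already contained in Corollary~\ref{cor: moment asymptotics}, and the only point to watch is that one applies the moment estimate with index $2p$ (so as to obtain an even moment, compatible with Markov, and matched with the order $4p-1$ assumptions in the hypotheses).
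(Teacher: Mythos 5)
Your proof is correct and follows essentially the same route as the paper's: absorb the deterministic bias $\esp{\prsc{\nu_R}{\phi}}-\gamma_1(f)\int_\Omega\phi\,\dx x$ (which tends to $0$ by the first cumulant/moment asymptotics) by shrinking $\epsilon$ to $\epsilon/2$ for large $R$, then apply Markov's inequality to the $2p$-th central moment and invoke Corollary~\ref{cor: moment asymptotics}.\ref{item: moment asymptotics 2} with index $2p$, whose hypotheses match \hypReg{4p-1}, \hypND{4p-1}, \hypScL{4p-1}, \hypDC{4p-1}{2} exactly. The only cosmetic difference is that you introduce the auxiliary notations $Y_R$, $a_R$, while the paper argues directly with the probability events; the substance is identical.
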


Recall that, for all $R \in \cR$, we denoted by $Z_R$ the zero set of $f_R$ and that $\nu_R$ is, up to a normalization factor, the measure of integration over the homothetic copy $\frac{1}{R}Z_R \subset U$. A particular case of Corollary~\ref{cor: concentration} yields the following equidistribution result.

\begin{cor}[Hole probability]
\label{cor: hole probability}
In the setting of Theorem~\ref{thm: cumulants asymptotics for zero sets} (resp.~Theorem~\ref{thm: cumulants asymptotics for critical points}), let $\cO$ be a non-empty open subset of $\Omega$. For all $p \in \N^*$, if \hypReg{4p-1}, \hypND{4p-1} (resp.~\hypNND{4p-1}), \hypScL{4p-1} and~\hypDC{4p-1}{2} hold, then we have:
\begin{equation*}
\P\parentheses*{\frac{1}{R}Z_R \cap \cO = \emptyset\strut} = O\parentheses*{R^{-pd}}.
\end{equation*}
\end{cor}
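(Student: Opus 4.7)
The plan is to deduce the hole probability bound from the concentration estimate of Corollary~\ref{cor: concentration}, by observing that the absence of zeros (resp.~critical points) in $\cO$ is an extreme instance of a large deviation of a well-chosen linear statistic of $\nu_R$ from its limiting expectation $\gamma_1(f)\Vol_d(\cdot)$.

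First, since $\cO$ is open and non-empty, I would fix once and for all a Euclidean ball $B$ with $\overline{B} \subset \cO$ and $\Vol_d(B) \in (0,+\infty)$. Then $\phi := \one_B$ has bounded support, so $\phi \in L^1(\Omega) \cap L^\infty(\Omega)$, and under the hypotheses listed in the statement, Corollary~\ref{cor: concentration} applies to~$\phi$. The key observation is that on the event $\brackets*{\frac{1}{R} Z_R \cap \cO = \emptyset}$ we have, a fortiori, $Z_R \cap RB = \emptyset$, and hence, by the defining relation~\eqref{eq: def linear statistics nuR} of $\nu_R$, the random variable $\prsc{\nu_R}{\phi}$ vanishes almost surely on this event. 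Since $\gamma_1(f) > 0$ by Theorem~\ref{thm: cumulants asymptotics for zero sets}.\ref{item: cumulants asymptotics p} (resp.~Theorem~\ref{thm: cumulants asymptotics for critical points}.\ref{item: cumulants asymptotics p crit}), the constant $\epsilon := \gamma_1(f) \Vol_d(B)$ is strictly positive and independent of~$R$, and we obtain the set inclusion
\begin{equation*}
\brackets*{\frac{1}{R} Z_R \cap \cO = \emptyset} \subset \brackets*{\norm*{\prsc{\nu_R}{\phi} - \gamma_1(f) \int_\Omega \phi(x) \dx x} \geq \epsilon}.
\end{equation*}
Applying Corollary~\ref{cor: concentration} with this $\phi$ and this $\epsilon$ then yields the claimed bound $\P\parentheses*{\frac{1}{R} Z_R \cap \cO = \emptyset} = O(R^{-pd})$.

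There is no real analytic obstacle here: Corollary~\ref{cor: concentration} does all the heavy lifting, and the argument is essentially a one-line reduction once the right test function is chosen. The only subtlety is that $\epsilon$ must be a fixed positive constant independent of $R$ (it depends only on the chosen ball $B \subset \cO$), which is exactly what makes the $O(R^{-pd})$ bound meaningful; this is automatic here thanks to the positivity of $\gamma_1(f)$.
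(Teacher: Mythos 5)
Your proof is correct and is essentially identical to the paper's: both choose a fixed ball inside $\cO$, note that the hole event forces the corresponding linear statistic $\prsc{\nu_R}{\one_\cB}$ to vanish, take $\epsilon = \gamma_1(f)\Vol_d(\cB) > 0$, and invoke Corollary~\ref{cor: concentration}.
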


For all $R \in \cR$, the random measure $\nu_R$ is almost surely Radon, that is, finite on compact subsets. Equivalently, it defines a positive continuous linear form on the space $\cC^0_c(U)$ of continuous functions with compact support in $U$, endowed with its standard topology. This point of view allows us to state a functional Law of Large Numbers for $(\nu_R)_{R \in \cR}$, which is an almost-sure equidistribution result.

\begin{thm}[Law of Large Numbers]
\label{thm: law of large numbers}
In the same setting as Theorem~\ref{thm: cumulants asymptotics for zero sets} (resp.~Theorem~\ref{thm: cumulants asymptotics for critical points}), let $p \in \N^*$ and assume that \hypReg{4p-1}, \hypND{4p-1} (resp.~\hypNND{4p-1}), \hypScL{4p-1} and~\hypDC{4p-1}{2} hold. Let $(R_n)_{n \in \N^*}$ be a sequence of numbers in $\cR$ such that $\sum_{n \geq 1} R_n^{-pd} <\!+\infty$. Then, for all $\phi \in L^1(\Omega) \cap L^\infty(\Omega)$, we have the almost-sure convergence:
\begin{equation}
\label{eq: thm law of large numbers}
\prsc{\nu_{R_n}}{\phi} \xrightarrow[n \to +\infty]{\text{a.s.}} \gamma_1(f) \int_\Omega \phi(x) \dx x,
\end{equation}
where $\gamma_1(f)$ appears in Theorem~\ref{thm: cumulants asymptotics for zero sets}.\ref{item: cumulants asymptotics p} (resp.~Theorem~\ref{thm: cumulants asymptotics for critical points}.\ref{item: cumulants asymptotics p crit}) and $\dx x$ is the Lebesgue measure on~$\Omega$.

Moreover, almost-surely, $\nu_{R_n} \xrightarrow[n \to +\infty]{\text{weak}-*}\gamma_1(f) \dx x$ as Radon measures. That is, almost-surely, the convergence in~\eqref{eq: thm law of large numbers} holds simultaneously for all $\phi \in \cC^0_c(\Omega)$.
\end{thm}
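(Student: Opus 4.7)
The plan is to combine the concentration estimate of Corollary~\ref{cor: concentration} with the Borel--Cantelli lemma to obtain almost-sure convergence for each fixed test function, and then to upgrade this to the functional weak-$*$ convergence of Radon measures via a separability argument.

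First I fix $\phi \in L^1(\Omega) \cap L^\infty(\Omega)$. For every $\epsilon > 0$, Corollary~\ref{cor: concentration} yields $\P\parentheses*{\norm*{\prsc{\nu_{R_n}}{\phi} - \gamma_1(f) \int_\Omega \phi(x) \dx x} \geq \epsilon} = O(R_n^{-pd})$. Since $\sum_{n \geq 1} R_n^{-pd}$ converges by hypothesis, the Borel--Cantelli lemma implies that, almost surely, only finitely many $n \in \N^*$ satisfy $\norm*{\prsc{\nu_{R_n}}{\phi} - \gamma_1(f) \int_\Omega \phi(x) \dx x} \geq \epsilon$. Applying this with $\epsilon = \frac{1}{m}$ for each $m \in \N^*$ and intersecting the resulting countable family of full-probability events establishes the convergence~\eqref{eq: thm law of large numbers} for this particular $\phi$.

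To pass to weak-$*$ convergence of Radon measures, I will use a separability argument. Let $(K_m)_{m \in \N^*}$ be an exhaustion of $\Omega$ by compact subsets with $K_m$ contained in the interior of $K_{m+1}$, and let $D_m$ be a countable dense subset of the Banach space of continuous functions on $\R^d$ supported in $K_m$, endowed with the supremum norm. Applying the first step simultaneously to each element of the countable set $D = \bigcup_{m \in \N^*} D_m \subset \cC^0_c(\Omega)$ and to each indicator $\one_{K_m} \in L^1(\Omega) \cap L^\infty(\Omega)$, I obtain a single event $E$ of full probability on which $\prsc{\nu_{R_n}}{\psi} \to \gamma_1(f) \int_\Omega \psi(x) \dx x$ for every $\psi \in D$, and on which $\sup_n \prsc{\nu_{R_n}}{\one_{K_m}}$ is finite for every $m$.

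It then remains to extend the convergence on $E$ to an arbitrary $\phi \in \cC^0_c(\Omega)$. Choosing $m$ such that the support of $\phi$ is contained in $K_m$ and approximating $\phi$ uniformly by some $\psi \in D_m$, I bound $\norm*{\prsc{\nu_{R_n}}{\phi - \psi}} \leq \Norm{\phi - \psi}_\infty \prsc{\nu_{R_n}}{\one_{K_m}}$, which is uniformly bounded in $n$ on $E$, and the analogous estimate holds for the Lebesgue integral. The main technical point I anticipate is precisely this uniform-in-$n$ control of the mass $\prsc{\nu_{R_n}}{\one_{K_m}}$ on the exhausting compact sets, which is exactly what the first step of the argument provides when applied to the indicator functions of these compacts.
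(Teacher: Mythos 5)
Your proof is correct and takes essentially the same approach as the paper: almost-sure convergence for each fixed test function, followed by a separability/exhaustion argument to upgrade to weak-$*$ convergence using the boundedness of the mass on exhausting compacts. The only differences are cosmetic — you route the pointwise step through Corollary~\ref{cor: concentration} and Borel--Cantelli rather than directly summing the $2p$-th central moments from Corollary~\ref{cor: moment asymptotics} (the former is Markov's inequality applied to the latter), and you control the local mass via the indicators $\one_{K_m}$ where the paper uses continuous cutoffs $\chi_i$.
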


Finally, we deduce a functional CLT from our cumulant asymptotics. In the following, we denote by $\cD(\Omega)$ the space of test-function on $\Omega$, that is, the space of smooth compactly supported functions on $\Omega$. We also denote by $\cD'(\Omega)$ the space of generalized functions on $\Omega$, which is the topological dual of $\cD(\Omega)$, endowed with its usual topology.

\begin{thm}[Central Limit Theorem]
\label{thm: central limit theorem}
In the setting of Theorem~\ref{thm: cumulants asymptotics for zero sets} (resp.~Theorem~\ref{thm: cumulants asymptotics for critical points}), we assume that \hypReg{q}, \hypND{q} (resp.~\hypNND{q}), \hypScL{q} and~\hypDC{q}{2} hold for all $q \in \N$. Then, for all $\phi_1,\dots,\phi_n \in L^1(\Omega) \cap L^\infty(\Omega)$, the random vector $R^\frac{d}{2}\parentheses*{\prsc*{\nu_R}{\phi_i}-\esp{\strut\prsc*{\nu_R}{\phi_i}}}_{1 \leq i \leq n}$ converges in distribution as $R \to +\infty$ to the centered Gaussian vector in $\R^n$ with covariance matrix
\begin{equation*}
\gamma_2(f)\, \begin{pmatrix}\displaystyle\int_\Omega\phi_i(x)\phi_j(x) \dx x \end{pmatrix}_{1 \leq i,j \leq n},
\end{equation*}
where $\gamma_2(f)$ is the constant appearing in Theorem~\ref{thm: cumulants asymptotics for zero sets}.\ref{item: cumulants asymptotics p} (resp.~Theorem~\ref{thm: cumulants asymptotics for critical points}.\ref{item: cumulants asymptotics p crit}).

Moreover, we have the following convergence in distribution in the space $\cD'(\Omega)$:
\begin{equation*}
R^\frac{d}{2}\parentheses*{\nu_R -\strut \esp{\nu_R}} \xrightarrow[R \to +\infty]{\text{law}} \sqrt{\gamma_2(f)} W,
\end{equation*}
where $W$ is the standard Gaussian White Noise on $\Omega$ and, for all $R \in \cR$, the generalized function $\esp{\nu_R}$ is defined by $\prsc*{\esp{\nu_R}}{\phi} = \esp{\prsc{\nu_R}{\phi}}$ for all $\phi \in \cD(\Omega)$.
\end{thm}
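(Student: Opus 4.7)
The plan is to apply the method of cumulants (Proposition~\ref{prop: method of cumulants}), which converts vanishing of higher-order cumulants into Gaussian convergence, using as input the cumulant asymptotics established in Theorems~\ref{thm: cumulants asymptotics for zero sets} and~\ref{thm: cumulants asymptotics for critical points}. The standing assumption that \hypReg{q}, the appropriate non-degeneracy hypothesis, \hypScL{q} and \hypDC{q}{2} all hold for every $q \in \N$ ensures that these two theorems apply for every $p \in \N^*$, which is exactly what the method of cumulants requires.

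For the finite-dimensional statement, I would fix $\phi_1,\dots,\phi_n \in L^1(\Omega) \cap L^\infty(\Omega)$ and reduce via the Cramér--Wold device to scalar convergence: for each $t=(t_1,\dots,t_n) \in \R^n$, set $\psi_t := \sum_{i=1}^n t_i \phi_i$, which still belongs to $L^1(\Omega) \cap L^\infty(\Omega)$, and consider
\[ Y_R^t := R^{\frac{d}{2}} \prsc{\nu_R - \esp{\nu_R}}{\psi_t}. \]
By multilinearity and translation-invariance of cumulants, $\kappa_1(Y_R^t) = 0$ and $\kappa_p(Y_R^t) = R^{\frac{pd}{2}} \kappa(\nu_R)(\psi_t,\dots,\psi_t)$ for all $p \geq 2$. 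Theorems~\ref{thm: cumulants asymptotics for zero sets} and~\ref{thm: cumulants asymptotics for critical points} then yield $\kappa_2(Y_R^t) \to \gamma_2(f) \int_\Omega \psi_t^2 \dx x$ and, for $p \geq 3$, $\kappa_p(Y_R^t) = R^{\frac{pd}{2}} \, o\parentheses*{R^{-\frac{pd}{2}}} = o(1)$. The method of cumulants then gives $Y_R^t \to \gauss{\gamma_2(f) \int_\Omega \psi_t^2 \dx x}$ in law; expanding $\psi_t^2$ bilinearly in $t$ and invoking Cramér--Wold identifies the announced covariance matrix.

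For the convergence in $\cD'(\Omega)$, I would identify the candidate limit as the Gaussian white noise $W$ on $\Omega$, namely the $\cD'(\Omega)$-valued centered Gaussian random variable with covariance $\esp{\prsc{W}{\phi}\prsc{W}{\psi}} = \int_\Omega \phi\psi \dx x$ for $\phi,\psi \in \cD(\Omega)$. Since $\cD(\Omega) \subset L^1(\Omega) \cap L^\infty(\Omega)$, the finite-dimensional convergence from the previous paragraph already matches the finite-dimensional distributions of $\sqrt{\gamma_2(f)}\,W$. The remaining step, which is the main conceptual obstacle, is to upgrade this to convergence in distribution on $\cD'(\Omega)$: here I would invoke Mitoma's theorem (or its LF variant due to Fernique), which, exploiting the fact that $\cD(\Omega)$ is a nuclear strict inductive limit of Fréchet spaces, asserts that convergence of all finite-dimensional marginals to those of a genuine $\cD'(\Omega)$-valued random variable already implies convergence in distribution in $\cD'(\Omega)$, bypassing any separate tightness argument. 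This functional upgrade is the only non-bookkeeping ingredient; the cumulant analysis itself reduces to a direct application of Theorems~\ref{thm: cumulants asymptotics for zero sets} and~\ref{thm: cumulants asymptotics for critical points}.
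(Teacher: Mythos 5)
Your proposal follows essentially the same path as the paper: reduce by Cramér--Wold to one-dimensional linear combinations, compute cumulants via multilinearity and the invariance of higher cumulants under centering (the paper phrases this through Lemma~\ref{lem: cancellation property}), apply the method of cumulants (Proposition~\ref{prop: method of cumulants}), and upgrade to $\cD'(\Omega)$-valued convergence via the Lévy--Fernique continuity theorem for nuclear spaces. The only cosmetic difference is that you invoke ``Mitoma's theorem'' for the functional step — Mitoma's theorem proper concerns tightness for nuclear-space-valued \emph{processes}, whereas the relevant statement here is Fernique's Lévy-type continuity theorem in $\cD'$, which you correctly name as the alternative; the paper cites \cite[Thm.~III.6.5]{Fer1967}.
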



\subsection{Discussion of the main hypotheses and examples}
\label{subsec: discussion of the main hypotheses}

Let us take some time to discuss the hypotheses of our main theorems. First, note that our assumptions satisfy some monotonicity with respect to the parameters. Given $p,q \in \N$ such that $p \leq q$, if \hypReg{q} (resp.~\hypND{q}, resp.~\hypNND{q}, resp.~\hypScL{q}) holds then so does \hypReg{p} (resp.~\hypND{p}, resp.~\hypNND{p}, resp.~\hypScL{p}). Similarly, for all $s \in [1,+\infty]$, if \hypDC{q}{s} (resp.~\hypDCL{q}{s}) holds then so does \hypDC{p}{s} (resp.~\hypDCL{p}{s}), with the same $\omega$ and~$g$. These last two hypotheses are also monotonous with respect to their second parameter. Let $q \in \N$ and $s,t \in [1,+\infty]$ be such that $s \leq t$. If \hypDC{q}{s} (resp.~\hypDCL{q}{s}) holds, then the function $g_\omega$ appearing in its statement belongs to $L^s(\R^d) \cap L^\infty(\R^d) \subset L^t(\R^d)$. Hence, \hypDC{q}{t} (resp.~\hypDCL{q}{t}) also holds, with the same parameter $\omega$ and the same function $g$.

Assumption \hypReg{q} is self-explanatory. In this paper, we consider fields $f$ and $(f_R)_{R \in \cR}$ at least of class~$\cC^2$. This is part of a classical set of hypotheses ensuring that the vanishing loci of these fields are almost-surely submanifolds of the right dimension. Higher regularity assumptions are used to control the degeneracy of centered Gaussian vectors of the type $\parentheses*{f(x_1),\dots,f(x_p)}$, as the distinct points $x_1,\dots,x_p \in \R^d$ collide with one another. Our approach relies on such a control, already to prove the finiteness of the moments or cumulants we study.

Hypothesis \hypDCL{q}{s} is a quantitative decorrelation assumption for the stationary field $f$ and its partial derivatives of order at most $q$. Note that this condition holds if and only if it holds for some parameter $\omega>0$ and the bounded even function $g:z  \mapsto \max_{\norm{\alpha} \leq q, \norm{\beta}\leq q} \ \Norm*{\strut \partial^{\alpha,\beta} r(0,z)}$. Let us also stress the fact that it is not enough to assume that $g \in L^s(\R^d)$. Indeed, even in the simpler setting of Section~\ref{subsec: zero set of a stationary Gaussian field in a growing set}, we need to control a fuzzy version on the correlation kernel $r$. Thus, we have to assume that $g_\omega \in L^s(\R^d)$, which is strictly stronger than $g \in L^s(\R^d)$ in general.

The meaning of \hypND{q} may be less obvious, so let us give some examples in the case of a scalar field, i.e., with $k=1$. Recalling that $f:\R^d \to \R$ is a stationary Gaussian field, it satisfies \hypND{0} if and only if $\var{f(0)}>0$. It satisfies \hypND{1} if and only if $\parentheses*{f(0),\nabla_0 f}$ is non-degenerate in~$\R^{d+1}$ and, for all $z \in \R^d \setminus \brackets{0}$, the Gaussian vector $\parentheses*{f(0),f(z)}$ is non-degenerate in $\R^2$. If $f$ satisfies \hypND{q} then, for any distinct $x_0,\dots,x_q \in \R^d$, the Gaussian vector $\parentheses*{f(x_0),\dots,f(x_q)}$ is non-degenerate, and so is $\parentheses*{\partial^\alpha f(0)}_{\norm{\alpha}\leq q}$. More generally, \hypND{q} entails the non-degeneracy of Gaussian vectors formed by evaluating partial derivatives of $f$ on $l$ pairwise distinct geometric points, up to orders that depend on the multiplicities $(m_i)_{1 \leq i \leq l}$ of these points.

The prototypical example of a field satisfying the previous assumptions is the Bargmann--Fock field $f_\text{BF}:\R^d \to \R$, whose distribution is described by its correlation kernel $r_\text{BF}:(x,y) \mapsto e^{-\frac{\Norm{y-x}^2}{2}}$. This field is smooth because $r_\text{BF}$ is smooth. Its correlations have Gaussian decay, so that it satisfies \hypDCL{q}{s} for any $q \in \N$ and $s \in [1,+\infty]$. It is also true, but not obvious, that $f_\text{BF}$ satisfies \hypND{q} for all $q \in \N$. Recall that the distribution of a centered stationary Gaussian field $f:\R^d \to \R$ with covariance kernel $r$ is characterized by its spectral measure, which is the finite measure on~$\R^d$ whose Fourier transform equals $z \mapsto r(0,z)$. Adapting the argument of~\cite[Lem.~A.2]{AL2021} to fields on $\R^d$ shows that a sufficient condition for $f$ to satisfy \hypND{q} for all $q \in \N$ is that: the support of its spectral measure is not contained in the vanishing locus of a non-zero analytic function. This condition is satisfied by $f_\text{BF}$, whose spectral measure is the standard Gaussian on~$\R^d$, and more generally by any field whose spectral measure contains an open ball in its support. If $f_1,\dots,f_k$ are independent real-valued Gaussian fields that satisfy \hypND{q}, then $f=(f_1,\dots,f_k)$ also satisfies \hypND{q}. For example, $k$ independent copies of $f_\text{BF}$ define a smooth field $f:\R^d \to \R^k$ that satisfies \hypND{q} and \hypDCL{q}{s} for all $q \in \N$ and $s \in [1,+\infty]$.

The previous discussion shows that fields satisfying \hypND{q} for all $q \in \N$ are fairly generic. Yet, there are interesting examples that do not have this property. For example, the Berry field $f_\text{Ber}:\R^d \to \R$, which is best described by saying that its spectral measure is the uniform measure on the unit Euclidean sphere, does not satisfy \hypND{2} since $\sum_{i=1}^d \partial_i^2 f_\text{Ber} = f_\text{Ber}$ almost-surely.

Assumption \hypNND{q} is a version of \hypND{q} adapted to gradient fields, for which \hypND{1} never holds because of the Schwarz Theorem. It means that the only relations between partial derivatives of $f$ are those coming from the fact that $f=\nabla h$, for some stationary Gaussian field $h:\R^d \to \R$. Given $p \in \N$, Hypothesis \hypNND{2p-1} in Theorems~\ref{thm: moments asymptotics critical points} and~\ref{thm: cumulants asymptotics for critical points}, is implied by the fact that $h$ satisfies \hypND{4p-1}, but not by the fact that $h$ satisfies \hypND{q} for some $q <4p-1$. In particular, the gradient field $\nabla f_\text{BF}$ satisfies the hypotheses of Theorem~\ref{thm: moments asymptotics critical points} for all $p \in \N^*$.

In the general framework of Section~\ref{subsec: cumulants asymptotics}, the fact that $U$ is convex is a technical assumption used in some interpolation procedure, see Section~\ref{sec: polynomial interpolation and Gaussian fields}. It could probably be removed, at the price of some technicalities. The assumption that $\Omega$ is at positive distance from $\R^d \setminus U$ is however important. It ensures that, for any compact subset $\Gamma \subset \R^d$, we have $R\Omega + \Gamma \subset RU$ for all $R \in \cR$ large enough. This is a necessary condition for Hypothesis~\hypScL{q} to make sense.

Hypothesis \hypScL{q} means that, for all $x \in \Omega$, the family of Gaussian fields $\parentheses*{f_R(Rx+\cdot)\strut}_{R \in \cR}$ converges in distribution as $R \to +\infty$ towards $f$, locally uniformly in the $\cC^q$-sense. Moreover, this convergence holds uniformly with respect to $x \in \Omega$. This type of convergence naturally occurs by considering the local scaling limits, in nice charts, of some classical families of Gaussian fields on manifolds, see~\cite[Sect.~2.2]{Sod2016}. Examples of this kind include the Cut-Off (or Band-Limited) ensembles of Random Waves~\cite{Gas2023,GW2016a,GW2017,Let2016,Nic2015,SW2019,Zel2009} and the complex Fubini--Study model~\cite{Anc2021,AL2021a,FLL2015,GW2011,GW2016,LP2019}, of which the Kostlan polynomials discussed in Section~\ref{subsec: Kostlan polynomials} are a special case. The Monochromatic Random Waves~\cite{CH2020,KKW2013,MPRW2016,NS2009,Wig2010,Zel2009} also give rise to a local scaling limit of the type~\hypScL{q}, but their scaling limit is the Berry field, which is degenerate in the sense that it does not satisfy \hypND{2}.

The meaning of \hypDC{q}{s} is the same as that of \hypDCL{q}{s}, but we control the correlations of the fields $f_R$ uniformly with respect to $R$, that is, by the same function $g$ for all $R \in \cR$. Note that, if \hypScL{q} and \hypDC{q}{s} hold, then \hypDCL{q}{s} holds, with the same $\omega$ and~$g$. Indeed, let $x \in \Omega$. For all $z \in \R^d$ and $\alpha,\beta \in \N^d$ such that $\norm{\alpha}\leq q$ and $\norm{\beta}\leq q$, we have $\partial^{\alpha,\beta}r_R(Rx,Rx+z) \xrightarrow[R \to +\infty]{}\partial^{\alpha,\beta}r(0,z)$, by \hypScL{q}. By \hypDC{q}{s}, for all $R \in \cR$ large enough, we have $\Norm{\partial^{\alpha,\beta}r_R(Rx,Rx+z)}\leq g(z)$, hence $\Norm{\partial^{\alpha,\beta}r(0,z)}\leq g(z)$.

The model case of Section~\ref{subsec: zero set of a stationary Gaussian field in a growing set} fits in our general framework by considering $\cR=(0,+\infty)$, $U=\R^d=\Omega$ and $f_R=f$ for all $R \in \cR$. In this case, $Z_R=Z=f^{-1}(0)$ for all $R >0$, and the random measures $\nu_R$ are related to volumes by~\eqref{eq: relation nuR volume}. Moreover, we have $r_R =r$ for all $R >0$. Hence, \hypScL{q} is tautologically satisfied for any $q \in \N$, and \hypDC{q}{s} is equivalent to \hypDCL{q}{s} for all $q \in \N$ and $s \in [1,+\infty]$. Thus, Theorems~\ref{thm: moments asymptotics volume} and \ref{thm: cumulants asymptotics for critical points} are special cases of Corollary~\ref{cor: moment asymptotics}.\ref{item: moment asymptotics 2}, and Theorem~\ref{thm: central limit theorem stationary case} is a special case of Theorems~\ref{thm: central limit theorem}. Translating Theorem~\ref{thm: law of large numbers} in this model case yields a statement which is strictly weaker than Theorem~\ref{thm: law of large numbers stationary case}. The proof of Theorem~\ref{thm: law of large numbers stationary case} requires an additional monotonicity argument, see Section~\ref{subsec: proof of thm law of large numbers}.

In order to prove the almost-sure convergence in Theorem~\ref{thm: law of large numbers}, we have to extract a subsequence $\parentheses*{\nu_{R_n}}_{n \in \N^*}$, where $R_n$ goes to infinity fast enough. In view of Theorem~\ref{thm: law of large numbers stationary case}, one might wonder whether this almost-sure convergence holds for the whole sequence $(\nu_R)_{R>0}$ as $R\to + \infty$ or not. Actually, it is not even clear that such a question makes sense in our general setting. Indeed, we cannot rule out the possibility that there exists a random positive sequence $(t_n)_{n \in \N}$ going to infinity such that, say with positive probability, $\nu_{t_n}$ is ill-defined for all $n \in \N$.


\subsection{Kostlan polynomials}
\label{subsec: Kostlan polynomials}

The techniques developed in this paper allow to deal with some models of Gaussian fields on manifolds. In this section, we state results analogous to those of Sections~\ref{subsec: cumulants asymptotics} and~\ref{subsec: limit theorems and other corollaries} for such a model: Kostlan polynomials on the sphere, in the large degree limit.

For any $\alpha = \parentheses*{\alpha_0,\dots,\alpha_d} \in \N^{d+1}$, we let $\norm{\alpha} = \alpha_0 + \dots + \alpha_d$ and $\binom{n}{\alpha} = \frac{n!}{\alpha_0! \dots \alpha_d!}$ if $\norm{\alpha}=n$. We also denote by $X=(X_0,\dots,X_d)$ and $X^\alpha = X_0^{\alpha_0}\dots X_d^{\alpha_d}$. A \emph{Kostlan polynomial}~\cite{Kos1993} of degree $n \in \N^*$ is a random homogeneous polynomial in $(d+1)$ variables, of the form:
\begin{equation}
\label{eq: def Kostlan polynomial}
\sum_{\norm{\alpha}=n} a_\alpha \sqrt{\binom{n}{\alpha}} X^\alpha,
\end{equation}
where $\parentheses*{a_\alpha}_{\norm{\alpha}=n}$ are independent real standard Gaussian variables. Let $\prsc{\cdot}{\cdot}$ denote the standard inner product on $\R^{d+1}$, and $\S^d \subset \R^{d+1}$ be the unit sphere endowed with its Euclidean metric. The Kostlan polynomial~\eqref{eq: def Kostlan polynomial} defines a smooth centered Gaussian field on~$\S^d$. Its covariance kernel is $(x,y) \mapsto \prsc{x}{y}^n = \cos\parentheses*{\strut\dist(x,y)}^n$, where $\dist(x,y)$ stands for the geodesic distance between $x$ and $y$ on the sphere. In particular, this field is stationary, in the sense that its distribution is invariant under the action of the orthogonal group $O_{d+1}(\R)$ on~$\S^d$.

Let $n \in \N^*$, we denote by $P_n:\S^d \to \R^k$ the centered Gaussian field whose components are $k$ independent Kostlan polynomials of degree $n$. Kostlan~\cite[Thm.~5.1]{Kos1993} proved that $P_n^{-1}(0)$ is almost-surely a submanifold of codimension $k$ in $\S^d$ and that $\esp{\Vol_{d-k}\parentheses*{P_n^{-1}(0)}} = n^\frac{k}{2} \Vol_{d-k}\parentheses*{\S^{d-k}}$. Let $\nu^\text{Kos}_n$ denote the random Radon measure on $\S^d$ whose linear statistics are defined by:
\begin{equation}
\label{eq: def nu Kostlan}
\forall \phi \in L^1(\S^d), \qquad \prsc*{\nu^\text{Kos}_n}{\phi} = n^{-\frac{k}{2}} \int_{P_n^{-1}(0)} \phi(x) \dx \cH^{d-k}(x).
\end{equation}
In order to be consistent with the notation of Section~\ref{subsec: cumulants asymptotics}, the measure $\nu^\text{Kos}_n$ should rather be denoted by $\nu^\text{Kos}_{\sqrt{n}}$. But then, the notation would be inconsistent with the whole literature on the subject. Here, we chose to stay consistent with the existing literature, and use the nicer-looking $\nu^\text{Kos}_n$. The expectation and asymptotic variance of the linear statistics~\eqref{eq: def nu Kostlan} were studied in~\cite{AADL2018,AADL2022,Dal2015,LS2019a,Let2019,LP2019}. In particular, it is known that:
\begin{equation}
\label{eq: Kostlan expectation}
\forall \phi \in L^1(\mathbb{S}^d), \qquad \esp{\prsc*{\nu^\text{Kos}_n}{\phi}} = \gamma_1 \int_{\mathbb{S}^d}\phi(x) \dx \cH^d(x),
\end{equation}
where $\gamma_1 = \frac{\Vol_{d-k}\parentheses*{\S^{d-k}}}{\Vol_d\parentheses*{\S^d}}$, and that there exists $\gamma_2>0$ such that, for all $\phi_1,\phi_2 \in L^1(\mathbb{S}^d)$, we have:
\begin{equation}
\label{eq: Kostlan variance}
n^\frac{d}{2}\esp{\prod_{i=1}^2 \parentheses*{\prsc*{\nu^\text{Kos}_n}{\phi_i}-\gamma_1\int_{\mathbb{S}^d}\phi_i(x) \dx \cH^d(x)}} \xrightarrow[n \to +\infty]{} \gamma_2 \int_{\mathbb{S}^d}\phi_1(x)\phi_2(x) \dx \cH^d(x),
\end{equation}
cf.~\cite[Thm.~1.6 and 1.8]{LP2019}.

Kostlan polynomials being either even or odd, their values on antipodal points are always fully correlated. In order to avoid long-range correlations, one usually works in the real projective space, and then deduces results on $\S^d$ from the projective ones. In the context of the following discussion, it is equivalent to consider only pairs of points $(x,y) \in \S^d \times \S^d$ at distance at most $\frac{\pi}{2}$, that is, such that $\prsc{x}{y}\geq 0$.

It is well-known that Kostlan polynomials admit the Bargmann--Fock field $f_\text{BF}$ as their local scaling limit. More precisely, let us consider any point on the sphere. In the exponential chart around this point, a Kostlan polynomial of degree~$n$ reads as a Gaussian field on the ball $\frac{\pi}{2}\B$ of center $0$ and radius $\frac{\pi}{2}$ in $\R^d$. Rescaling by~$\frac{1}{\sqrt{n}}$, we obtain a centered Gaussian field on $\sqrt{n}\frac{\pi}{2}\B$ whose covariance kernel is:
\begin{equation}
\label{eq: local kernel Kostlan}
r_n:(x,y) \longmapsto \parentheses*{\cos\parentheses*{\frac{\Norm{x}}{\sqrt{n}}}\cos\parentheses*{\frac{\Norm{y}}{\sqrt{n}}}+\sin\parentheses*{\frac{\Norm{x}}{\sqrt{n}}}\sin\parentheses*{\frac{\Norm{y}}{\sqrt{n}}}\prsc*{\frac{x}{\Norm{x}}}{\frac{y}{\Norm{y}}}}^n.
\end{equation}
From~\eqref{eq: local kernel Kostlan}, one can check that $r_n(x,y) \xrightarrow[n \to +\infty]{}e^{-\frac{\Norm{y-x}^2}{2}} = r_\text{BF}(x,y)$ point-wise. Actually, this convergence holds locally uniformly in the $\cC^\infty$-sense, which is reminiscent of~\hypScL{q}. It is also well-known that Kostlan polynomials decorrelate exponentially fast at scale $\frac{1}{\sqrt{n}}$. By stationarity, it is enough to observe that~\eqref{eq: local kernel Kostlan} implies that $\norm*{r_n(0,x)}\leq e^{-\frac{\Norm{x}^2}{\pi}}$ for any $n \in \N^*$ and $x \in \sqrt{n}\frac{\pi}{2}\B$. Similar estimates hold for all the partial derivatives of $r_n$, which is reminiscent of \hypDC{q}{s}.

Using the previous two facts, our methods yield the following result, which was conjectured in~\cite[Conj.~1.16]{AL2021a}.
\begin{thm}[Moments asymptotics for Kostlan polynomials]
\label{thm: moments Kostlan}
Let $d \in \N^*$ and $k \in \brackets{1,\dots,d}$. For all $n \in \N^*$, let $P_n:\S^d \to \R^k$ be the centered Gaussian field whose components are $k$ independent Kostlan polynomials and let $\nu^\text{Kos}_n$ be defined by~\eqref{eq: def nu Kostlan}. Let $p \geq 3$ be an integer. Then, for all $\phi_1,\dots,\phi_p \in L^1(\S^d)$ we have the following.
\begin{equation*}
n^\frac{pd}{4}\esp{\prod_{i=1}^p \parentheses*{\prsc*{\nu^\text{Kos}_n}{\phi_i}-\gamma_1\int_{\mathbb{S}^d}\phi_i \dx \cH^d}} \xrightarrow[n \to +\infty]{}\gamma_2(f)^\frac{p}{2} \sum_{\substack{(\brackets{a_j,b_j})_{1\leq j \leq \floor{\frac{p}{2}}} \\ \bigsqcup_{j=1}^{\floor{\frac{p}{2}}} \brackets{a_j,b_j} = \brackets{1,\dots,p}}}\! \prod_{j=1}^{\floor{\frac{p}{2}}} \int_{\S^d} \phi_{a_j}\phi_{b_j} \dx \cH^d,
\end{equation*}
where $\gamma_1$ and $\gamma_2$ are the constants appearing in~\eqref{eq: Kostlan expectation} and~\eqref{eq: Kostlan variance} respectively, and the sum is indexed by the partitions in pairs of $\brackets{1,\dots,p}$. In particular, for all $\phi \in L^1(\S^d)$, we have:
\begin{equation*}
n^\frac{pd}{4}\esp{\parentheses*{\prsc*{\nu^\text{Kos}_n}{\phi}-\gamma_1\int_{\mathbb{S}^d}\phi \dx \cH^d}^p} \xrightarrow[n \to +\infty]{} \parentheses*{\gamma_2\int_{\S^d} \phi^2 \dx \cH^d}^\frac{p}{2} \esp{\gauss{1}^p}.
\end{equation*}
\end{thm}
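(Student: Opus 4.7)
The plan is to reduce Theorem~\ref{thm: moments Kostlan} to a chart-wise application of Theorem~\ref{thm: cumulants asymptotics for zero sets}, combined with the moment--cumulant formula in the style of Corollary~\ref{cor: moment asymptotics}.\ref{item: moment asymptotics 2}. In any exponential chart $\psi:U\to V \subset \S^d$ with $U\subset\R^d$ convex and $V$ of diameter less than $\frac{\pi}{2}$, the rescaled field $x\mapsto P_n(\psi(x/\sqrt n))$ fits the framework of Section~\ref{subsec: cumulants asymptotics} with $R=\sqrt n$, covariance given by~\eqref{eq: local kernel Kostlan}, and stationary limit $f$ equal to a $k$-tuple of independent Bargmann--Fock fields.

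Concretely, I would cover $\S^d$ by finitely many such charts $\psi_j:U_j\to V_j$, fix a smooth partition of unity $(\chi_j)$ subordinate to $(V_j)$, and decompose each $\phi_i=\sum_j\phi_i\chi_j$. The bulk of the setup consists of checking the hypotheses of Theorem~\ref{thm: cumulants asymptotics for zero sets} for the pulled-back fields, for every $q\in\N$: \hypReg{q} is automatic; \hypScL{q} follows from a direct $\cC^\infty$ Taylor expansion of~\eqref{eq: local kernel Kostlan}, which gives $r_n\to r_\text{BF}$ in every $\cC^q$-norm locally uniformly; \hypDC{q}{\infty} (hence \hypDC{q}{s} for all $s\in[1,+\infty]$) follows from the bound $\norm{r_n(0,x)}\leq e^{-\Norm{x}^2/\pi}$ recalled in Section~\ref{subsec: discussion of the main hypotheses}, together with Gaussian-type bounds on all partial derivatives obtained by applying Leibniz and Faà di Bruno to~\eqref{eq: local kernel Kostlan}; and \hypND{q} holds both for the Bargmann--Fock limit, by the spectral criterion of Section~\ref{subsec: discussion of the main hypotheses}, and for each $P_n$ at finite $n$, by the classical non-degeneracy of monomial evaluation and iterated derivatives. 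Applied chart-wise, Theorem~\ref{thm: cumulants asymptotics for zero sets} yields the full cumulant asymptotics for every mixed cumulant of the family $\prsc{\nu^\text{Kos}_n}{\phi_i\chi_{j_i}}$.

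Next, I would expand the central $p$-fold product as a multi-sum over chart indices $(j_1,\dots,j_p)$, apply the moment--cumulant formula to each summand, and invoke the chart-wise analogue of Theorem~\ref{thm: cumulants asymptotics for zero sets}.\ref{item: cumulants asymptotics 2} to discard every partition containing a block of size $\geq 3$, whose contribution is $o(n^{-pd/4})$. Only pair partitions survive, and for each pair the cumulant is a covariance to which~\eqref{eq: Kostlan variance} applies directly, giving $n^{-d/2}\gamma_2\int_{\S^d}\phi_{a_j}\chi_{j_{a_j}}\phi_{b_j}\chi_{j_{b_j}}\dx\cH^d+o(n^{-d/2})$. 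Summing over the chart indices reassembles the integral $\int_{\S^d}\phi_{a_j}\phi_{b_j}\dx\cH^d$ by completeness of the partition of unity, and produces the announced Wick-type limit with the correct constant $\gamma_2$. The second formula for a single $\phi$ is then the particular case $\phi_1=\dots=\phi_p=\phi$, using $\esp{\gauss{1}^p}=(p-1)!!$ times the number of pair-partitions of $\brackets{1,\dots,p}$.

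The main obstacle is the antipodal singularity of the Kostlan covariance: $\norm{r_n(x,-x)}=1$ on $\S^d$, so a global \hypDC{q}{\infty} assumption fails. This is resolved by choosing every chart $V_j$ of diameter less than $\frac{\pi}{2}$ and bounded away from its own antipodal image, so that any pair $(x,y)\in V_j\times V_j$ stays uniformly far from antipodal configurations, and by handling cross-chart contributions separately: when $(V_j,V_{j'})$ is not antipodal, the uniform bound $\norm{r_n(x,y)}\leq(\cos\dist(x,y))^n$ is exponentially small in $n$ and makes the corresponding mixed cumulants negligible by a direct Kac--Rice estimate; when $V_{j'}$ is antipodal to $V_j$, the symmetry $P_n\circ(-\Id)=(-1)^nP_n$ identifies $\prsc{\nu^\text{Kos}_n}{\phi_i\chi_{j'}}$ with a linear statistic supported in $V_j$, after which the chart-wise cumulant asymptotics apply and the resulting pair-covariance is absorbed consistently by~\eqref{eq: Kostlan variance}.
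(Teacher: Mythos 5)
Your chart-wise strategy (partition of unity subordinate to a cover of $\S^d$ by small charts, moment--cumulant expansion of the central $p$-fold product, pair covariances handled by~\eqref{eq: Kostlan variance}, higher blocks discarded via the cumulant bound) is indeed the route suggested by Section~\ref{subsec: Kostlan polynomials}, and the combinatorial skeleton is sound. But there is a genuine gap in the step ``apply Theorem~\ref{thm: cumulants asymptotics for zero sets} chart-wise''.

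Theorem~\ref{thm: cumulants asymptotics for zero sets} is a statement about linear statistics for the \emph{Euclidean} $(d-k)$-dimensional Hausdorff measure of $f_R^{-1}(0)\subset RU\subset\R^d$, whereas $\prsc{\nu^\text{Kos}_n}{\phi_i\chi_j}$ is defined through the Hausdorff measure for the \emph{round} metric on $\S^d$. When you pull back via an exponential chart $\psi_j$, the change-of-variables factor $J(y,T)$ for a $(d-k)$-dimensional submanifold depends not only on the base point $y$ but on the tangent plane $T=T_y\psi_j^{-1}(Z_n)$, which is random. For exponential charts in dimension $d\geq 2$ this anisotropy is $O(\Norm{y}^2)$ over the fixed domain $U_j$, so it is not a negligible perturbation and cannot be absorbed into a deterministic test function $\tilde\phi_i$. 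Consequently $\prsc{\nu^\text{Kos}_n}{\phi_i\chi_j}$ is not of the form $\prsc{\nu_R}{\tilde\phi_i}$ to which Theorem~\ref{thm: cumulants asymptotics for zero sets} applies, and the chart-wise cumulant asymptotics do not transfer as stated. A fix is possible: either work in conformal charts (e.g.\ stereographic projection), for which $D_y\psi_j$ is $\lambda(y)$ times an isometry and the weight collapses to the scalar $\lambda(y)^{d-k}$, so the pulled-back statistic is a genuine linear statistic with test function $\tilde\phi_i(y)=(\phi_i\chi_j)(\psi_j(y))\lambda(y)^{d-k}$ (at the cost of re-deriving~\eqref{eq: local kernel Kostlan} in that chart and re-checking~\hypScL{q} and~\hypDC{q}{2}); or extend the cumulant machinery of Sections~\ref{sec: Kac-Rice formulas revisited}--\ref{sec: proof of thm cumulants asymptotics zero sets} to first-derivative-weighted linear statistics. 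Either way, this requires an argument your proposal does not supply.

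Three smaller remarks. The monotonicity of~\hypDC{q}{s} is reversed in your proposal: as noted in Section~\ref{subsec: discussion of the main hypotheses}, \hypDC{q}{s}$\Rightarrow$\hypDC{q}{t} for $s\leq t$, so $s=\infty$ is the weakest hypothesis, not the strongest; since $g$ has Gaussian decay one does have $g_\omega\in L^1\cap L^\infty$ and hence all $s$, but your ``hence'' points the wrong way. Hypothesis~\hypND{q} only concerns the stationary limit $f$, not the fields $f_R$, so your verification of non-degeneracy for finite-$n$ Kostlan fields is superfluous. Finally, the claim that mixed cumulants of order $\geq 2$ with supports in uniformly separated non-antipodal charts decay super-polynomially is plausible from $\norm{r_n(x,y)}\leq(\cos\dist(x,y))^n$, but is not a consequence of any result stated in the paper (the theorems in the paper concern a single $\Omega$ and a single scaling limit) and would need to be written out via an explicit Kac--Rice estimate.
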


As in the Euclidean case, this result implies a functional CLT. In particular, we recover the CLT for volumes established in~\cite{AADL2021,AADL2022,Dal2015}, by the method of moments.

\begin{thm}[Central Limit Theorem for Kostlan polynomials]
\label{thm: central limit theorem Kostlan}
In the setting of Theorem~\ref{thm: moments Kostlan}, for all $\phi_1,\dots,\phi_n \in L^1(\S^d)$, the random vector $n^\frac{d}{4}\parentheses*{\prsc*{\nu^\text{Kos}_n}{\phi_i}-\gamma_1\int_{\S^d}\phi_i \dx \cH^d}_{1 \leq i \leq n}$ converges in distribution as $n \to +\infty$ to the centered Gaussian with covariance matrix $\gamma_2 \begin{pmatrix}\int_{\S^d}\phi_i\phi_j \dx \cH \end{pmatrix}_{1 \leq i,j \leq n}$. Moreover, we have $n^\frac{d}{4}\parentheses*{\nu^\text{Kos}_n -\strut \esp{\nu^\text{Kos}_n}} \xrightarrow[R \to +\infty]{\text{law}} \sqrt{\gamma_2} W$ in $\cD'(\S^d)$, where $W$ is the standard Gaussian White Noise on $\S^d$.
\end{thm}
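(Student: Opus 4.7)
The plan is to deduce this central limit theorem from the moment asymptotics furnished by Theorem~\ref{thm: moments Kostlan}, combined with the already known mean and variance asymptotics~\eqref{eq: Kostlan expectation} and~\eqref{eq: Kostlan variance}, through the classical method of moments and the Cramér–Wold device. I will proceed in three steps: a scalar CLT, a finite-dimensional CLT, and the upgrade to convergence in $\cD'(\S^d)$.

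First, for a single test-function $\phi \in L^1(\S^d)$, I would use Theorem~\ref{thm: moments Kostlan} to control every central moment of order $p\geq 3$, while~\eqref{eq: Kostlan expectation} and~\eqref{eq: Kostlan variance} provide the mean ($p=1$) and the variance ($p=2$) asymptotics. All these moments match those of the Gaussian $\gauss{\gamma_2 \int_{\S^d} \phi^2 \dx \cH^d}$, which, being a centered Gaussian, is uniquely determined by its moments. The method of moments then yields
\[
n^\frac{d}{4}\parentheses*{\prsc*{\nu^\text{Kos}_n}{\phi} - \gamma_1 \int_{\S^d}\phi \dx \cH^d} \xrightarrow[n \to +\infty]{\text{law}} \gauss{\gamma_2 \int_{\S^d}\phi^2 \dx \cH^d}.
\]

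Second, the joint convergence of the random vector follows from the Cramér–Wold device. Given $(t_1,\dots,t_n) \in \R^n$, the function $\phi = \sum_i t_i \phi_i$ still lies in $L^1(\S^d)$, and the scalar CLT applied to $\phi$ gives convergence to a centered Gaussian with variance $\gamma_2 \int_{\S^d}\phi^2 \dx \cH^d = \gamma_2 \sum_{i,j}t_it_j \int_{\S^d}\phi_i\phi_j \dx \cH^d$. This is exactly the quadratic form associated to the claimed covariance matrix, so all one-dimensional projections of the limit are identified as the correct Gaussian, hence the joint Gaussian convergence.

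Finally, to upgrade to convergence in distribution in $\cD'(\S^d)$, I would rely on the nuclearity of $\cD(\S^d)$. Steps~1 and~2 apply in particular to every finite collection of test-functions in $\cD(\S^d)$, so the characteristic functional of $n^\frac{d}{4}\parentheses*{\nu^\text{Kos}_n - \esp{\nu^\text{Kos}_n}}$ converges pointwise on $\cD(\S^d)$ to the functional $\phi \mapsto \exp\parentheses*{-\frac{\gamma_2}{2}\int_{\S^d}\phi^2 \dx \cH^d}$, which is continuous on $\cD(\S^d)$ and is precisely the characteristic functional of $\sqrt{\gamma_2}W$. The Lévy continuity theorem in the dual of a nuclear Fréchet space (Bochner–Minlos) then delivers the announced convergence in $\cD'(\S^d)$. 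The main, and only non-elementary, obstacle in this plan is this last appeal to Bochner–Minlos; the moment and Cramér–Wold steps are direct consequences of Theorem~\ref{thm: moments Kostlan}.
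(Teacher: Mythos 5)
Your proposal is correct and follows essentially the same route the paper takes for the Euclidean analogue, Theorem~\ref{thm: central limit theorem}. The paper does not give a self-contained proof of Theorem~\ref{thm: central limit theorem Kostlan}, but its discussion after Theorem~\ref{thm: moments Kostlan} explicitly indicates that the CLT follows ``by the method of moments,'' and the proof of Theorem~\ref{thm: central limit theorem} in Section~\ref{subsec: proof of central limit theorem} uses exactly the three ingredients you invoke: (i) a one-dimensional limit theorem from moment/cumulant asymptotics (the paper uses Proposition~\ref{prop: method of cumulants}; you use the method of moments directly, which is equivalent here since Theorem~\ref{thm: moments Kostlan} provides moments rather than cumulants, and the Gaussian is determined by its moments by Carleman's condition); (ii) the Cramér--Wold device to upgrade to a finite-dimensional CLT, which works because $\phi = \sum_i t_i\phi_i \in L^1(\S^d)$ and $\gamma_2\int\phi^2 = \gamma_2\sum_{i,j}t_it_j\int\phi_i\phi_j$; and (iii) pointwise convergence of the characteristic functional plus a Lévy-type continuity theorem in $\cD'$. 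On this last point, the citation to Bochner--Minlos is slightly imprecise: Bochner--Minlos is the \emph{existence} theorem, whereas the step you need is Fernique's extension of Lévy's continuity theorem (what the paper cites as~\cite[Thm.~III.6.5]{Fer1967}, and which I would call the Lévy--Fernique theorem). The argument is otherwise complete.
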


The functional Law of Large Numbers for Kostlan polynomials was already established in dimension $d \neq 2$, see~\cite{AL2021a,LP2019}. Theorem~\ref{thm: moments Kostlan} implies that it also holds in dimension $d=2$, so that we obtain the following result.

\begin{thm}[Law of Large Numbers for Kostlan polynomials]
In the setting of Theorem~\ref{thm: moments Kostlan}, for all $\phi \in L^1(\S^d)$ we have $\prsc{\nu^\text{Kos}_n}{\phi} \xrightarrow[n \to +\infty]{\text{a.s.}} \gamma_1 \int_{\S^d} \phi \dx \cH^d$. Moreover, almost-surely, we have $\nu^\text{Kos}_n \xrightarrow[n \to +\infty]{\text{weak}-*}\gamma_1 \dx \cH^d$ as Radon measures.
\end{thm}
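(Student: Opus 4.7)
The approach is to combine the high-order moment estimates of Theorem~\ref{thm: moments Kostlan} with a Borel--Cantelli argument, exactly in the spirit of the proof of Theorem~\ref{thm: law of large numbers}.

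First I would fix $\phi \in L^\infty(\S^d)$ (so in particular $\phi \in L^2(\S^d)$ since $\S^d$ is compact) and let $X_n = \prsc{\nu^\text{Kos}_n}{\phi} - \gamma_1 \int_{\S^d}\phi \dx \cH^d$. Applying Theorem~\ref{thm: moments Kostlan} with $\phi_1=\cdots=\phi_p = \phi$ for some even $p \geq 4$ yields $\esp{X_n^p} = O(n^{-pd/4})$. Choosing $p$ even and large enough that $pd/4 > 1$ (for instance $p=6$ if $d=1$, $p=4$ if $d \geq 2$), Markov's inequality gives, for any $\epsilon >0$,
\begin{equation*}
\P\parentheses*{\norm{X_n}\geq \epsilon} \leq \epsilon^{-p}\esp{X_n^p} = O(n^{-pd/4}),
\end{equation*}
a summable bound. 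The Borel--Cantelli Lemma then yields $X_n \to 0$ almost surely, which is the first conclusion for $\phi \in L^\infty(\S^d)$.

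Next I would upgrade this individual convergence to a simultaneous one. Since $\cC^0(\S^d)$ is separable for the uniform norm, I pick a countable dense subset $\Phi \subset \cC^0(\S^d)$ and add the constant function $\one$ to it. The countable intersection of the probability-$1$ events $\brackets{X_n \to 0}$ indexed by $\Phi \cup \brackets{\one}$ still has probability $1$. On this event, applying the convergence to $\one$ yields that the total masses $\nu^\text{Kos}_n(\S^d)= \prsc{\nu^\text{Kos}_n}{\one}$ converge to $\gamma_1 \Vol_d(\S^d)$, hence are uniformly bounded. Given any $\phi \in \cC^0(\S^d)$ and $\psi \in \Phi$, the triangle inequality
\begin{equation*}
\norm*{\prsc{\nu^\text{Kos}_n}{\phi}-\gamma_1\int_{\S^d}\phi \dx \cH^d} \leq \norm*{\prsc{\nu^\text{Kos}_n}{\psi}-\gamma_1\int_{\S^d}\psi \dx \cH^d} + \Norm{\phi-\psi}_\infty\parentheses*{\nu^\text{Kos}_n(\S^d) + \gamma_1 \Vol_d(\S^d)\strut}
\end{equation*}
then shows the convergence holds simultaneously for every $\phi \in \cC^0(\S^d)$ on the same probability-$1$ event, which is the weak-$*$ convergence of Radon measures.

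Finally, to handle a general $\phi \in L^1(\S^d)$ (not necessarily bounded), I would use an approximation by bounded functions. For $\psi \in L^\infty(\S^d)$, the Kac--Rice formula~\eqref{eq: Kostlan expectation} applied to $\norm{\phi-\psi}$ gives $\esp{\prsc{\nu^\text{Kos}_n}{\norm{\phi-\psi}\strut}}=\gamma_1\Norm{\phi-\psi}_{L^1}$, providing a uniform-in-$n$ bound on the $L^1(\P)$-distance between the $\phi$ and $\psi$ statistics. Truncating $\phi$ at height $M$ and sending $M\to +\infty$, then combining with the a.s.~convergence already established for bounded test-functions via a standard diagonal argument along the truncations, yields the convergence for $\phi \in L^1(\S^d)$. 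The main delicate step I anticipate is precisely this last one, since the a.s.~event depends a priori on $\phi$, and passing to an arbitrary $\phi \in L^1$ requires turning the $L^1(\P)$-control of the tails into an almost-sure statement; alternatively, this can be bypassed by applying the weak-$*$ convergence of Radon measures to $\phi \one_{\brackets{\norm{\phi}\leq M}}$ viewed as an element of $L^\infty$, together with monotone/dominated convergence for the deterministic side and the bound on $\esp{\prsc{\nu^\text{Kos}_n}{\norm{\phi}\one_{\brackets{\norm{\phi}>M}}}}$ for the random side.
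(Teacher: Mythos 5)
Your core strategy — Borel--Cantelli along the integer sequence $n$ using the high even moments of Theorem~\ref{thm: moments Kostlan}, then upgrading to weak-$*$ convergence via separability of $\cC^0(\S^d)$ — is correct and matches the method of the paper, which states this theorem as a direct consequence of Theorem~\ref{thm: moments Kostlan} with the same mechanism as in Theorem~\ref{thm: law of large numbers}. Your exponent count is right ($pd/4>1$, so $p=6$ for $d=1$ and $p=4$ for $d\geq 2$ with $p$ even), and the discrete parameter $n$ removes the need for the auxiliary summable sequence $(R_n)$. Your separability step is also correct and, in fact, a legitimate simplification of the paper's proof of the weak-$*$ convergence: because $\S^d$ is compact and $\one\in\cC^0(\S^d)$, the total-mass control comes directly from the convergence at $\one$, whereas in the non-compact Euclidean case the paper has to exhaust $\Omega$ by compacts and cut off test functions.

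The last paragraph of your proposal has a genuine gap, which you correctly flag but do not close. For $\phi\in L^1(\S^d)\setminus L^2(\S^d)$ the asymptotics of Theorem~\ref{thm: moments Kostlan} are vacuous (the right-hand side $\bigl(\gamma_2\int_{\S^d}\phi^2\,\dx\cH^d\bigr)^{p/2}\esp{\gauss{1}^p}$ is $+\infty$ for even $p$), so the Borel--Cantelli step simply does not apply. The control $\esp{\prsc{\nu^\text{Kos}_n}{\norm{\phi-\psi_M}}}=\gamma_1\Norm{\phi-\psi_M}_{L^1}$ you derive from the Kac--Rice formula~\eqref{eq: Kostlan expectation} is uniform in $n$ but only in expectation; Markov's inequality turns it into a tail bound that is not summable in $n$, so it yields no almost-sure statement. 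Your alternative route via the weak-$*$ limit does not repair this either: the truncations $\phi\one_{\brackets{\norm{\phi}\leq M}}$ are generically discontinuous, and a Portmanteau argument would require both $\gamma_1\dx\cH^d$-a.e.\ continuity of the test function and almost-sure uniform integrability of $\norm{\phi}$ against the random measures $\nu^\text{Kos}_n$ — neither of which you establish. In fact, monotone approximation from below only gives the almost-sure inequality $\liminf_n\prsc{\nu^\text{Kos}_n}{\phi}\geq\gamma_1\int_{\S^d}\phi\,\dx\cH^d$ for $\phi\geq 0$, and the matching $\limsup$ bound is exactly the hard part. Note that the Euclidean analogue, Theorem~\ref{thm: law of large numbers}, is stated only for $\phi\in L^1(\Omega)\cap L^\infty(\Omega)$, which on the compact $\S^d$ is just $L^\infty(\S^d)$; your first step already delivers the full strength that the underlying method provides, and it is worth clarifying whether the $L^1(\S^d)$ claim in the theorem statement is meant literally or is a slip for $L^\infty(\S^d)$.
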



\subsection{Strategy of proof}
\label{subsec: strategy of proof}

In this section, we sketch the main steps of the proof of the cumulants asymptotics in Theorem~\ref{thm: cumulants asymptotics for zero sets}. In order to keep things as simple as possible, we consider random hypersurfaces ($d \geq 2$ and $k=1$) in $U=\R^d =\Omega$ with $\cR=(0,+\infty)$. We also focus on the case $p=2$, with $\phi_1$ and $\phi_2 \in \cC^0_c(\R^d)$. When considering cumulants of order $p \geq 3$, the main additional difficulty is that we have to deal with some intricate combinatorics. Part of it comes from the combinatorics of cumulants. Another part comes from the fact that we consider the integral of some function on $(\R^d)^p$, and we need to keep track of how its arguments $x_1,\dots,x_p$ are clustered together in $\R^d$. Apart from these combinatorics, understanding the case $p=2$ is enough to understand our strategy of proof.

Let us consider the setting of Theorem~\ref{thm: cumulants asymptotics for zero sets} in this case. We want to compute the large $R$ asymptotics of $\kappa_2(\nu_R)(\phi_1,\phi_2)$, which is the covariance of the random variables $\prsc{\nu_R}{\phi_1}$ and $\prsc{\nu_R}{\phi_2}$, defined by~\eqref{eq: def linear statistics nuR}. As usual in this business, the first step of the proof is to apply the Kac--Rice formula for moments and derive an integral expression for $\kappa_2(\nu_R)(\phi_1,\phi_2)$. We obtain that
\begin{equation}
\label{eq: Kac-Rice strategy}
\kappa_2(\nu_R)(\phi_1,\phi_2) = \frac{1}{R^{2d}} \int_{\R^d \times \R^d} \phi_1\parentheses*{\frac{x}{R}}\phi_2\parentheses*{\frac{y}{R}} \cF_2\parentheses*{f_R,x,y} \dx x \dx y.
\end{equation}
Here, the density $\cF_2(f_R,\cdot)$ is defined by $\cF_2(f_R,x,y) = \rho_2(f_R,x,y)-\rho_1(f_R,x)\rho_1(f_R,y)$ for all $x,y \in \R^d$, where $\rho_1(f_R,\cdot)$ and $\rho_2(f_R,\cdot)$ are the first two Kac--Rice densities associated with $f_R$. Under our assumptions, at least for $R$ large enough, $\rho_1(f_R,\cdot)$ is continuous on $\R^d$, and $\rho_2(f_R,\cdot)$ is continuous outside of the diagonal $\diag = \brackets{(x,x) \mid x \in \R^d} \subset \R^d \times \R^d$ but admits a pole along $\diag$.

\paragraph*{Model case.} The next step is to understand the asymptotics of~\eqref{eq: Kac-Rice strategy} in the model case where $f_R =f$ for all $R >0$. By stationarity of $f$ and a change of variable, we can rewrite~\eqref{eq: Kac-Rice strategy} as:
\begin{equation*}
R^d \kappa_2(\nu_R)(\phi_1,\phi_2) = \int_{\R^d \times \R^d} \phi_1\parentheses*{x}\phi_2\parentheses*{x+\frac{z}{R}}\cF_2(f,0,z) \dx x \dx z.
\end{equation*}
We want to prove that this quantity converges to $\gamma_2(f) \int_{\R^d}\phi_1(x)\phi_2(x)\dx x$ as $R \to +\infty$, where $\gamma_2(f) = \int_{\R^d} \cF_2(f,0,z)\dx z$. The difficulty here is to prove that $\cF_2(f,0,\cdot)$ is integrable on $\R^d$. First, this function is locally integrable on $\R^d$. Indeed, the stationarity of $f$ implies that $\rho_1(f,\cdot)$ equals some constant $\gamma_1(f)>0$, while it is known that $\rho_2(f,\cdot)$ is locally integrable on $\R^d$, see~\cite{AL2025,GS2024}.

It remains to prove the integrability of $\cF_2(f,0,\cdot)$ at infinity. In order to do this, we factor $\cF_2$ through a space of symmetric matrices. For all $(x,y) \notin \diag$, the value of $\cF_2(f,x,y)$ only depends on the variance matrix $\Sigma(f,x,y)$ of $\parentheses*{f(x),\nabla_xf,f(y),\nabla_yf}$, which is positive thanks to \hypND{3}. Thus, there exists a function $F_2$ on the open cone of positive symmetric matrices of size $2(d+1)$ such that $\cF_2(f,x,y)=F_2\parentheses*{\Sigma(f,x,y)}$ for all $(x,y) \notin \diag$. A matrix in the domain of $F_2$ can be written by blocks as $\Sigma=\parentheses*{\begin{smallmatrix}\Sigma_x & \Sigma_{xy} \\ \trans{\Sigma_{xy}} & \Sigma_y\end{smallmatrix}}$, where the blocks are square of size $d+1$. We prove that, if $\Sigma_{xy}=0$, then $F_2(\Sigma)=0$ and some part of the differential of $F_2$ at $\Sigma$ also vanishes. Since $F_2$ is actually smooth, a Taylor expansion shows that $\norm*{F_2(\Sigma)}$ is controlled by $\Norm{\Sigma_{xy}}^2$, uniformly on compact sets. Thus, $\norm{\cF_2(f,x,y)}$ is controlled by the square norm of the off-diagonal blocks of $\Sigma(f,x,y)$. The corresponding coefficients are partial derivatives of $r$ evaluated on $(x,y)$, hence they are controlled by $g(y-x)$. Finally, $\norm*{\cF_2(f,0,z)}$ is controlled by $g(z)^2$, say uniformly on the domain $\Norm{z}\geq 1$. Since we assumed~\hypDC{3}{2}, this proves the integrability of $\cF_2(f,0,\cdot)$ at infinity.

\paragraph*{General case.} Now that we understand the model case, we have to explain why replacing $f_R$ by the limit field $f$ in~\eqref{eq: Kac-Rice strategy} only contributes an error term going to $0$ as $R \to +\infty$. For this, we split the integral in two parts: the integral over $\diag_1 = \brackets*{(x,y) \in \R^d \times \R^d\mvert \Norm{y-x}\leq 1}$ and the integral over its complement.
\begin{itemize}

\item On $(\R^d \times \R^d) \setminus \diag_1$, we prove that the covariance matrix $\Sigma(f_R,x,y)$ converges to $\Sigma\parentheses*{f,x,y}$, uniformly with respect to $(x,y) \notin \diag_1$. Then, the regularity of $F_2$ and the same estimates as above allow us to prove that replacing $f_R$ by $f$ in the integral over $(\R^d \times \R^d) \setminus \diag_1$ only contributes an error term.

\item On $\diag_1$, the situation is trickier, mostly because the pole of $\cF_2$ along the diagonal plays havoc with our estimates. In order to deal with the singularity of $\cF_2(f,\cdot)$ along the diagonal, we use the Kergin interpolant of $f$ at $x$ and $y$, which is a multivariate polynomial that interpolates both $f$ and $\nabla f$ at $x$ and~$y$. For any $(x,y) \notin \diag$, this Kergin interpolant defines a centered Gaussian vector in the space $\R_3[X]$ of polynomials of degree at most $3$ in $d$ variables, whose variance operator is denoted by $\Sigma'(f,x,y)$. We can recover $\parentheses*{f(x),\nabla_xf,f(y),\nabla_yf}$ from the Kergin interpolant of $f$ and some geometric quantities depending only on $(x,y)$. We leverage this fact to write $\cF_2(f,x,y)$ as $\Upsilon_2(x,y)F_2'\parentheses*{\Sigma'(f,x,y)}$, where the function $\Upsilon_2:\R^d \times \R^d\to \R$ is locally integrable, continuous outside of $\diag$ and singular along $\diag$, and $F_2'$ is a smooth map on the set of positive symmetric operator on $\R_3[X]$. Formally, $F_2'$ also depends on $(x,y)$ through some geometric parameters but, since we are only sketching the proof, let us sweep this under the rug. The function $\Upsilon_2$ is the universal part of the Kac--Rice density on~$\diag_1$: it does not depend on the field, and it fully accounts for the pole of $\cF_2(f,\cdot)$ along~$\diag$. On the other hand, the distribution of the Kergin interpolant of $f$ at $x$ and $y$ does not degenerate as $x$ and $y$ approach one another. Thus, we can deal with $F_2'\parentheses*{\Sigma'(f_R,x,y)}$ in the same way we dealt with $F_2\parentheses*{\Sigma(f_R,x,y)}$ previously. We prove that it converges to $F_2'\parentheses*{\Sigma'(f,x,y)}$ uniformly with respect to $(x,y)\in\diag_1$, and this is enough to prove that replacing $f_R$ by $f$ in~\eqref{eq: Kac-Rice strategy} only contributes an error term. This completes our sketch of the proof that $R^d \kappa_2(\nu_R)(\phi_1,\phi_2) \xrightarrow[R \to +\infty]{} \gamma_2(f) \int_{\R^d}\phi_1(x)\phi_2(x)\dx x$.
\end{itemize}


\subsection{Organization of the paper}
\label{subsec: organisation}

Section~\ref{sec: partitions cumulants and diagonals} introduces combinatorial tools that are used throughout the paper. In particular, we recall the definition of the cumulants of a family of random variables in that section. Section~\ref{sec: polynomial interpolation and Gaussian fields} is concerned with Kergin interpolation, and more precisely with describing the distribution of the Kergin interpolant of a Gaussian field. We discuss various forms of the Kac--Rice formulas in Section~\ref{sec: Kac-Rice formulas revisited}, and explain how to write the corresponding densities as functions on a space of symmetric operators. In Section~\ref{sec: refined Hadamard lemma}, we prove a key estimate for the Kac--Rice densities for cumulants. Section~\ref{sec: edge-connected graphs} introduces a family of finite graphs that allows us to rewrite the estimate of Section~\ref{sec: refined Hadamard lemma} more nicely. In Section~\ref{sec: HBL inequalities}, we discuss Hölder--Brascamp--Lieb inequalities, and the estimates they yield for the Kac--Rice densities. Section~\ref{sec: proof of thm cumulants asymptotics zero sets} is dedicated to the proof of Theorem~\ref{thm: cumulants asymptotics for zero sets}. In Section~\ref{sec: the case of critical points}, we explain briefly how to adapt the content of the previous sections to deal with the case of gradient fields and prove Theorem~\ref{thm: cumulants asymptotics for critical points}. Finally, Section~\ref{sec: proofs limit theorems and other corollaries} contains the proofs of Theorem~\ref{thm: law of large numbers stationary case} and of the limit theorems stated in Section~\ref{subsec: limit theorems and other corollaries}.


\section{Partitions, cumulants and diagonals}
\label{sec: partitions cumulants and diagonals}

The purpose of this section is to introduce notation, mostly related with partitions, that will appear in many places in the paper. We also recall some basic properties of the objects we introduce. Section~\ref{subsec: partitions of a finite set} is concerned with partitions of a finite set and the partial order on the set of these partitions. In Section~\ref{subsec: cumulants}, we recall the definition and some properties of the cumulants of a family of random variables. Finally, Section~\ref{subsec: product spaces and diagonals} introduces notation for strata of the diagonal in a product space and Section~\ref{subsec: thick diagonals} studies a thick version of these strata.


\subsection{Partitions of a finite set}
\label{subsec: partitions of a finite set}

In this section we first introduce notation related with finite sets and their partitions. Then, we recall some useful facts about the canonical partial order on a set of partitions.

We denote by $\N$ (resp.~$\N^*$) the set of non-negative (resp.~positive) integers. Given $m$ and $n \in \N$ such that $m \leq n$, we denote by $\ssquarebrackets{m}{n}=[m,n]\cap \N$ the associated integer interval.

Let $A$ be a non-empty finite set, we denote either by $\card(A)$ or by $\norm{A}$ its cardinality. We also introduce the following notation for the set of unordered pairs of elements of $A$.

\begin{dfn}[Set of pairs]
\label{def: A wedge 2}
We denote by $A^{\cwedge} = \brackets*{\strut \brackets{a,b}\subset A \mvert a \neq b}$.
\end{dfn}

This set naturally indexes coefficients above the diagonal in a symmetric matrix whose rows and columns are indexed by $A$. It is also related to the set of edges of a graph with vertex set $A$.

\begin{rem}
\label{rem: product as pairs}
Let $B$ and $C$ be non-empty disjoint subsets of $A$. The map $(b,c) \mapsto \brackets{b,c}$ is a canonical injection of $B \times C$ into $A^{\cwedge}$. Using this injection, we often consider $B \times C$ as a subset of $A^{\cwedge}$ in the following.
\end{rem}

Next, we introduce the set of partitions of $A$ and its partial order.

\begin{dfn}[Partition]
\label{def: partition}
A \emph{partition} of $A$ is a family $\cI=\brackets{I_1,\dots,I_m}$ of non-empty disjoint subsets such that $\bigsqcup_{i=1}^m I_i = A$. We denote by $\cP_A$ (resp.~$\cP_p$) the set of partitions of $A$ (resp.~$\ssquarebrackets{1}{p}$).
\end{dfn}

\begin{dfn}[Blocks and induced partition]
\label{def: blocks and induced partition}
Let $\cI$ be a partition of $A$, let $a \in A$ and $B \subset A$.
\begin{itemize}
\item An element $I \in \cI$ is called a \emph{block} of $\cI$.

\item We denote by $[a]_{\cI}$ the only block of $\cI$ that contains $a$.

\item We denote by $\cI_B = \brackets*{I \cap B \mvert I \in \cI} \setminus \brackets*{\emptyset}$ the partition of $B$ \emph{induced} by $\cI$.

\end{itemize}
\end{dfn}

\begin{dfn}[Partial order on partitions]
\label{def: partial order on PA}
Let $\cI$ and $\cJ \in \cP_A$, we denote $\cJ \leq \cI$ if for all $J \in \cJ$ there exists $I \in \cI$ such that $J \subset I$. In this case, we say that $\cJ$ is \emph{finer} than $\cI$, or that $\cJ$ \emph{refines}~$\cI$, or that $\cI$ is \emph{coarser} than $\cJ$. We denote by $\cJ < \cI$ the fact that $\cJ \leq \cI$ and $\cJ \neq \cI$.
\end{dfn}

It is a classical fact that $\parentheses*{\cP_A,\leq}$ is a partially ordered set, and even a lattice (see~\cite[Sect.~2.2]{PT2011}). In particular it admits a maximum, equal to~$\brackets{A}$, and a minimum, equal to~$\brackets*{\strut \brackets{a} \mvert a \in A}$. Moreover, any two partitions of $A$ have a coarsest common refinement, called their meet.

\begin{dfn}[Meet]
\label{def: meet}
Let $\cI$ and $\cJ \in \cP_A$, we denote by $\cI \wedge \cJ$ the coarsest element of $\cP_A$ that refines both $\cI$ and $\cJ$. The partition $\cI \wedge \cJ$ is called the \emph{meet} of $\cI$ and $\cJ$ and is described by:
\begin{equation*}
\cI \wedge \cJ = \brackets*{I \cap J \mvert I \in \cI, J \in \cJ} \setminus \brackets{\emptyset} = \bigsqcup_{J \in \cJ} \cI_J = \bigsqcup_{I \in \cI} \cJ_I.
\end{equation*}
More generally, let $\cQ = \brackets*{\cI_1,\dots,\cI_m} \subset \cP_A$, we denote the coarsest partition refining all the elements of $\cQ$, by $\bigwedge \cQ = \bigwedge_{i=1}^m \cI_i = \brackets*{\strut \bigcap_{i=1}^m I_i \mvert \forall i \in \ssquarebrackets{1}{m}, I_i \in \cI_i} \setminus \brackets{\emptyset}$. Accordingly, we denote $\bigwedge \cP_A = \brackets*{\strut \brackets{a} \mvert a \in A}$ for the minimum of $\cP_A$.
\end{dfn}

We will need to consider the set obtained by forming the disjoint union of two copies of $A$. The notation we use is the following.

\begin{dfn}[Doubled sets and partitions]
\label{def: doubled sets and partitions}
For all $B \subset A$, we denote by $2B =\brackets{0,1} \times B$. Given $\cI \in \cP_A$, we denote by $2\cI = \brackets*{2I \mvert I \in \cI}$, which is a partition of $2A$.
\end{dfn}

Note that, if $\cJ \in \cP_{2A}$ is such that $[(0,a)]_\cJ = [(1,a)]_\cJ$ for all $a \in A$, then there exists a unique $\cI \in \cP_A$ such that $\cJ = 2\cI$.

We conclude this section by expliciting two bijections. They will be useful in computations to re-index sums indexed by sets of partitions.

\begin{lem}[Two bijections]
\label{lem: bijections partitions}
Let $\cI \in \cP_A$, then the following two maps are bijections:
\begin{enumerate}
\item \label{item: bijection coarser} the map $\cJ \mapsto \brackets*{\cI_J \mvert J \in \cJ}$ from $\brackets*{\cJ \in \cP_A \mvert \cI \leq \cJ}$ to $\cP_\cI$;

\item \label{item: bijection finer} the map $\cJ \mapsto \parentheses*{\cJ_I}_{I \in \cI}$ from $\brackets*{\cJ \in \cP_A \mvert \cJ \leq \cI}$ to $\prod_{I \in \cI} \cP_I$.
\end{enumerate}
\end{lem}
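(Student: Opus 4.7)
The plan is to exhibit explicit inverses for each of the two maps and then verify mutual inverseness, which in both cases reduces to unpacking Definition~\ref{def: partial order on PA}.

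For part~\ref{item: bijection coarser}, the starting observation is that if $\cI \leq \cJ$, then each block $I \in \cI$ is contained in a unique block of $\cJ$; equivalently, for all $I \in \cI$ and $J \in \cJ$, either $I \subset J$ or $I \cap J = \emptyset$. Consequently $\cI_J = \brackets{I \in \cI \mid I \subset J}$, and as $J$ ranges over $\cJ$ these sets form a partition of $\cI$, so the map lands in $\cP_\cI$. I would then define the candidate inverse $\Phi : \cP_\cI \to \brackets{\cJ \in \cP_A \mid \cI \leq \cJ}$ by $\Phi(\cK) = \brackets*{\bigcup_{I \in K} I \mvert K \in \cK}$; this is a well-defined partition of $A$ since each $K \in \cK$ is a non-empty subcollection of $\cI$. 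One checks by the above dichotomy that $\Phi(\brackets{\cI_J \mid J \in \cJ}) = \cJ$ and $\brackets{\cI_{\Phi(\cK)_K} \mid K \in \cK} = \cK$, which gives the bijection.

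For part~\ref{item: bijection finer}, the analogous observation is that if $\cJ \leq \cI$, then each block $J \in \cJ$ is contained in a unique block of $\cI$, so $\cJ_I = \brackets{J \in \cJ \mid J \subset I}$ and these blocks cover~$I$ disjointly, i.e.\ $\cJ_I \in \cP_I$. The candidate inverse $\Psi : \prod_{I \in \cI} \cP_I \to \brackets{\cJ \in \cP_A \mid \cJ \leq \cI}$ is $\Psi\parentheses*{(\cK_I)_{I \in \cI}} = \bigcup_{I \in \cI} \cK_I$, where the union is disjoint because the blocks~$I$ of $\cI$ are disjoint. One then verifies $\Psi\parentheses*{(\cJ_I)_{I \in \cI}} = \cJ$ and $\Psi\parentheses*{(\cK_I)_{I \in \cI}}_I = \cK_I$ from the same dichotomy.

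No step is really an obstacle: the only thing one needs is that the defining condition of each domain ($\cI \leq \cJ$ or $\cJ \leq \cI$) forces either inclusion or disjointness between each pair $(I,J)$ with $I \in \cI$ and $J \in \cJ$. This single fact makes both the well-definedness of the forward maps, the well-definedness of the inverses, and the two mutual-inverse identities immediate.
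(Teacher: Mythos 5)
Your proposal is correct and follows essentially the same route as the paper: you exhibit the same explicit inverse maps $\Phi: \cK \mapsto \brackets*{\bigcup_{I \in K} I \mvert K \in \cK}$ and $\Psi: (\cK_I)_{I \in \cI} \mapsto \bigsqcup_{I \in \cI}\cK_I$ and verify mutual inverseness. The only additions are the explicitly stated inclusion-or-disjointness dichotomy (which the paper leaves implicit) and some slightly garbled but unambiguous notation in the verification identities.
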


\begin{proof}
Let $\cJ \in \cP_A$ be coarser than $\cI$. Since $\cI \leq \cJ$, we have $\cI_J = \brackets*{I \in \cI \mvert I \subset J} \subset~\cI$ for all $J \in \cJ$, see Definition~\ref{def: blocks and induced partition}. Moreover $\cI = \bigsqcup_{J \in \cJ} \cI_J$, so that the map in Item~\ref{item: bijection coarser} is well-defined. Given $K \in \cK \in \cP_\cI$, we have $K \subset \cI \in \cP_A$ so that $\bigsqcup_{I \in K} I \subset A$ is a union of blocks of $\cI$. Hence,
\begin{equation*}
\cK \in \cP_\cI \longmapsto \brackets*{\bigsqcup_{I \in K} I \mvert K \in \cK} \in \brackets*{\cJ \in \cP_A \mvert \cI \leq \cJ}
\end{equation*}
is a well-defined map. One can check that it is the converse map of the one in Item~\ref{item: bijection coarser}, which proves that both maps are bijections.

If $\cJ \in \cP_A$, then $\cJ_I \in \cP_I$ for all $I \in \cI$, by definition. Hence, the map from Item~\ref{item: bijection finer} is well-defined. The converse map is $\parentheses*{\cK_I}_{I \in \cI} \mapsto \bigsqcup_{I \in \cI} \cK_I$, which proves that it is bijective.
\end{proof}

Let $\cI$ and $\cJ \in \cP_A$ be such that $\cI \leq \cJ$. For all $I \in \cI$, there exists a unique $J \in \cJ$ such that $I \subset J$. The partition $\cJ$ corresponds to some $\cK \in \cP_\cI$ through the bijection of Lemma~\ref{lem: bijections partitions}.\ref{item: bijection coarser}. The block $J$ then corresponds to $[I]_{\cK}$, in the sense that $J = \bigsqcup_{K \in [I]_\cK}K$. This shows that the following is consistent with Definition~\ref{def: blocks and induced partition} and Lemma~\ref{lem: bijections partitions}.\ref{item: bijection coarser}.

\begin{dfn}[Block with respect to a coarser partition]
\label{def: block coarser partition}
Let $\cI$ and $\cJ \in \cP_A$ be such that $\cI \leq \cJ$. For all $I \in \cI$, we denote be $[I]_\cJ \in \cJ$ the only block of $\cJ$ that contains $I$.
\end{dfn}


\subsection{Cumulants}
\label{subsec: cumulants}

In this section, we recall how cumulants of random variables are defined from their moments, as well as some of their properties.

Let $A$ be a non-empty finite set and $\emptyset \neq B \subset A$, we denote by $\mu_\cJ = (-1)^{\norm{\cJ}-1}\parentheses*{\norm{\cJ}-1}!$ for all $\cJ \in \cP_B$. Let us consider two sequences $\parentheses*{m_B}_{\emptyset \neq B \subset A}$ and $\parentheses*{\kappa_B}_{\emptyset \neq B \subset A}$ of numbers, indexed by non-empty subsets of $A$. The following result relies on the theory of Möbius functions. We will not go into the details of this theory, and we refer the interested reader to~\cite[Sect.~2.5]{PT2011}.

\begin{prop}[Möbius inversion]
\label{prop: Mobius inversion}
The following two statements are equivalent:
\begin{enumerate}
\item \label{item: Mobius cumulant} for all non-empty $B \subset A$, we have $\kappa_B = \displaystyle\sum_{\cJ \in \cP_B} \mu_\cJ \prod_{J \in \cJ} m_J$;

\item \label{item: Mobius moment} for all non-empty $B \subset A$, we have $m_B = \displaystyle\sum_{\cJ \in \cP_B} \prod_{J \in \cJ} \kappa_J$.
\end{enumerate}
\end{prop}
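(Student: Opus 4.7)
The plan is to read the equivalence as classical Möbius inversion on the partition lattice $(\cP_B, \leq)$, applied separately for each non-empty $B \subset A$. I introduce auxiliary functions $M, K : \cP_B \to \R$ by $M(\cI) = \prod_{I \in \cI} m_I$ and $K(\cI) = \prod_{I \in \cI} \kappa_I$, and show that each of the two items of the proposition is equivalent to the single zeta-type relation
\begin{equation*}
M(\cI) = \sum_{\cJ \in \cP_B,\, \cJ \leq \cI} K(\cJ), \qquad \forall \cI \in \cP_B.
\end{equation*}
The case $\cI = \brackets{B}$ of this relation is exactly item~\ref{item: Mobius moment}, so one direction is trivial. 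For the converse, assuming item~\ref{item: Mobius moment} for every non-empty subset of $A$, I apply it to each block $I \in \cI$ and multiply over $I$; expanding, the resulting sum is indexed by tuples $(\cJ^I)_{I \in \cI} \in \prod_{I \in \cI} \cP_I$, and Lemma~\ref{lem: bijections partitions}.\ref{item: bijection finer} re-indexes it as a sum over $\cJ \in \cP_B$ with $\cJ \leq \cI$, whose summand is precisely $\prod_{J \in \cJ} \kappa_J = K(\cJ)$.

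Next I invoke the standard Möbius inversion theorem on the finite poset $(\cP_B, \leq)$: the displayed zeta relation is equivalent to
\begin{equation*}
K(\cI) = \sum_{\cJ \in \cP_B,\, \cJ \leq \cI} \mu_{\cP_B}(\cJ, \cI)\, M(\cJ), \qquad \forall \cI \in \cP_B,
\end{equation*}
where $\mu_{\cP_B}$ denotes the Möbius function of the partition lattice. Rota's classical computation of this Möbius function gives $\mu_{\cP_B}(\cJ, \brackets{B}) = (-1)^{\norm{\cJ}-1}(\norm{\cJ}-1)! = \mu_\cJ$, so specializing the inverted relation to $\cI = \brackets{B}$ yields item~\ref{item: Mobius cumulant}. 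For the reverse implication, I apply item~\ref{item: Mobius cumulant} block-by-block inside $K(\cI) = \prod_I \kappa_I$ and re-index via Lemma~\ref{lem: bijections partitions}.\ref{item: bijection finer} to upgrade it to the inverted form for every $\cI$; another application of Möbius inversion then returns the zeta relation, whose $\cI = \brackets{B}$ case is item~\ref{item: Mobius moment}.

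The only non-routine ingredient is Rota's formula for $\mu_{\cP_B}(\cdot, \brackets{B})$, which is a standard textbook fact about the partition lattice; every other step reduces to an expansion governed by the bijections of Lemma~\ref{lem: bijections partitions}. The main obstacle, should one wish to avoid quoting Rota, would be to verify directly the orthogonality identity $\sum_{\cK \in \cP_n} \prod_{K \in \cK}(-1)^{\norm{K}-1}(\norm{K}-1)! = \delta_{n,1}$, which admits a short induction on $n$ or follows from the functional identity $\exp \circ \log = \mathrm{id}$ applied to exponential generating functions.
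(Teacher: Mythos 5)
Your proof is correct and takes essentially the same route as the paper: both reduce each item to a partition-level identity on $\cP_B$ via the block-wise expansion and re-indexing of Lemma~\ref{lem: bijections partitions}.\ref{item: bijection finer}, and then invoke Möbius inversion on the partition lattice — the paper by citing the specific equivalence in~\cite[pp.~19--20]{PT2011}, you by invoking the abstract Möbius inversion theorem together with Rota's formula $\mu_{\cP_B}(\cJ,\brackets{B})=(-1)^{\norm{\cJ}-1}(\norm{\cJ}-1)!$ (and, implicitly, its multiplicative extension $\mu_{\cP_B}(\cJ,\cI)=\prod_{I\in\cI}\mu_{\cJ_I}$ over products of intervals, which you should state explicitly since the block-by-block upgrade relies on it).
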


\begin{proof}
For all non-empty $B \subset A$ and $\cI \in \cP_B$ we introduce the auxiliary notation $\tilde{m}_\cI = \prod_{I \in \cI} m_I$ and $\tilde{\kappa}_\cI = \prod_{I \in \cI} \kappa_I$. Let us assume that Statement~\ref{item: Mobius cumulant} holds. For all $\cI \in \cP_B$, we have:
\begin{equation*}
\tilde{\kappa}_\cI = \prod_{I \in \cI} \sum_{\cJ_I \in \cP_I}\parentheses*{ \mu_{\cJ_I} \prod_{J \in \cJ_I} m_J} = \sum_{\cJ \leq \cI} \parentheses*{\prod_{I \in \cI} \mu_{\cJ_I}} \parentheses*{\prod_{J \in \cJ} m_J}= \sum_{\cJ \leq \cI} \parentheses*{\prod_{I \in \cI} \mu_{\cJ_I}} \tilde{m}_\cJ,
\end{equation*}
where we used Lemma~\ref{lem: bijections partitions}.\ref{item: bijection finer} to get the second equality. Thus, Statement~\ref{item: Mobius cumulant} implies
\begin{equation}
\label{eq: Mobius cumulant partition}
\forall\ \emptyset \neq B \subset A,\ \forall \cI \in \cP_B, \qquad \tilde{\kappa}_\cI = \sum_{\cJ \leq \cI} \parentheses*{\prod_{I \in \cI} \mu_{\cJ_I}} \tilde{m}_\cJ.
\end{equation}
Actually, \eqref{eq: Mobius cumulant partition} is equivalent to Statement~\ref{item: Mobius cumulant} since $\kappa_B = \tilde{\kappa}_{\brackets*{B}}=\sum_{\cJ \leq \brackets{B}} \mu_{\cJ} \prod_{J \in \cJ} m_J$. Similarly, let us assume that Statement~\ref{item: Mobius moment} holds. For all $\cI \in \cP_B$, we have:
\begin{equation*}
\tilde{m}_\cI = \prod_{I \in \cI} \parentheses*{\sum_{\cJ_I \in \cP_I} \prod_{J \in \cJ_I} \kappa_J} = \sum_{\cJ \leq \cI} \parentheses*{\prod_{J \in \cJ} \kappa_J}= \sum_{\cJ \leq \cI} \tilde{\kappa}_\cJ,
\end{equation*}
where we used Lemma~\ref{lem: bijections partitions}.\ref{item: bijection finer} to get the second equality. Hence, Statement~\ref{item: Mobius moment} is equivalent to:
\begin{equation}
\label{eq: Mobius moment partition}
\forall\ \emptyset \neq B \subset A,\ \forall \cI \in \cP_B, \qquad \tilde{m}_\cI = \sum_{\cJ \leq \cI} \tilde{\kappa}_\cJ,
\end{equation}
the converse implication following from $m_B = \tilde{m}_{\brackets*{B}}=\sum_{\cJ \leq \brackets{B}} \prod_{J \in \cJ} \kappa_J$.

To conclude the proof, we need to check that \eqref{eq: Mobius cumulant partition} and~\eqref{eq: Mobius moment partition} are equivalent. This is done in~\cite[pp.~19--20]{PT2011}, where it is stated as the equivalence between~(2.5.25) and~(2.5.26), using the coefficients defined by~(2.5.22).
\end{proof}

\begin{lem}[Cancellation property]
\label{lem: cancellation property}
Let $\parentheses*{m_B}_{\emptyset \neq B \subset A}$ be a sequence of numbers. Let us assume that there exists $C \subset A$ such that $\emptyset \neq C \neq A$; and $m_B = m_{B \cap C}m_{B \setminus C}$ for all $B \subset A$, with the convention that $m_\emptyset =1$. Then, we have $\sum_{\cJ \in \cP_A} \mu_\cJ \prod_{J \in \cJ} m_J=0$.
\end{lem}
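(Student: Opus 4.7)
The plan is to use Möbius inversion (Proposition~\ref{prop: Mobius inversion}) to turn the desired identity into a statement about the auxiliary quantities $\kappa_B = \sum_{\cJ \in \cP_B} \mu_\cJ \prod_{J \in \cJ} m_J$ defined by Statement~\ref{item: Mobius cumulant}. The goal becomes $\kappa_A = 0$, and, thanks to the inversion, we may freely use the equivalent formula $m_B = \sum_{\cJ \in \cP_B} \prod_{J \in \cJ} \kappa_J$ in our calculations.

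A preliminary observation is that the factorization hypothesis is inherited by subsets: if $D \subset A$ is such that $\emptyset \neq D \cap C \neq D$, then for every $B \subset D$ we have $B \cap C = B \cap (D \cap C)$ and $B \setminus C = B \setminus (D \cap C)$, so $m_B = m_{B \cap (D \cap C)} m_{B \setminus (D \cap C)}$. In other words, the pair $(D, D \cap C)$ satisfies the same hypothesis as $(A, C)$. This stability is what will drive the induction.

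I would then prove by strong induction on $\norm{A}$ that, under the hypotheses of the lemma, $\kappa_A = 0$. The base case $\norm{A} = 2$ is immediate: with $A = \brackets{a,b}$ and, say, $C = \brackets{a}$, one has $\kappa_A = m_{\brackets{a,b}} - m_{\brackets{a}} m_{\brackets{b}} = 0$. For the inductive step, apply Möbius inversion at $A$ and isolate the block $\brackets{A}$:
\begin{equation*}
m_A = \kappa_A + \sum_{\substack{\cJ \in \cP_A \\ \cJ \neq \brackets{A}}} \prod_{J \in \cJ} \kappa_J.
\end{equation*}
For $\cJ \neq \brackets{A}$, every block $J \in \cJ$ satisfies $J \subsetneq A$, so the preliminary observation together with the inductive hypothesis (applied to the pair $(J, J \cap C)$) force $\kappa_J = 0$ as soon as both $J \cap C$ and $J \setminus C$ are non-empty. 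Hence only those $\cJ \neq \brackets{A}$ survive whose blocks lie either entirely in $C$ or entirely in $A \setminus C$. Since $C \neq \emptyset$ and $A \setminus C \neq \emptyset$, such $\cJ$ correspond bijectively, via Lemma~\ref{lem: bijections partitions}.\ref{item: bijection finer} applied to the partition $\brackets{C, A \setminus C}$, to pairs $(\cJ_1, \cJ_2) \in \cP_C \times \cP_{A \setminus C}$, and the condition $\cJ \neq \brackets{A}$ is automatic. The surviving sum therefore factors as
\begin{equation*}
\sum_{\substack{\cJ \in \cP_A \\ \cJ \neq \brackets{A}}} \prod_{J \in \cJ} \kappa_J = \parentheses*{\sum_{\cJ_1 \in \cP_C} \prod_{J \in \cJ_1} \kappa_J} \parentheses*{\sum_{\cJ_2 \in \cP_{A \setminus C}} \prod_{J \in \cJ_2} \kappa_J} = m_C \cdot m_{A \setminus C},
\end{equation*}
by Möbius inversion applied to $C$ and to $A \setminus C$. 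The assumption $m_A = m_C\, m_{A \setminus C}$ then yields $\kappa_A = 0$, which is the desired conclusion.

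The only real subtlety is the bookkeeping in the inductive step: one has to check that the hypothesis restricts cleanly to every proper subset $J \subsetneq A$ that straddles $C$, so that the inductive hypothesis applies to $\kappa_J$. This is precisely what the preliminary observation ensures. Once the restriction is in place, the factorization of the surviving sum into a product of two Möbius sums over $\cP_C$ and $\cP_{A \setminus C}$ is the structural content of the proof, and the cancellation $m_A = m_C\, m_{A \setminus C}$ closes the argument.
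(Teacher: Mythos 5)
Your proof is correct. The paper itself does not write out the argument: it defines $\kappa_B$ as you do, observes that Möbius inversion applies, and then defers to the computation in the proof of Proposition~4.1 of Speed (1983), stating that it transfers verbatim. Your version is therefore a welcome self-contained alternative. The logical skeleton — strong induction on $\card(A)$, isolating $\kappa_A$ from the expansion $m_A = \sum_{\cJ \in \cP_A} \prod_{J \in \cJ}\kappa_J$, using the heredity of the factorization hypothesis (your preliminary observation) to kill every term whose partition has a block straddling $C$, and then factoring the surviving sum over $\brackets*{\cJ \leq \brackets{C,A\setminus C}}$ via Lemma~\ref{lem: bijections partitions}.\ref{item: bijection finer} into $m_C\, m_{A\setminus C}$ — is clean and elementary, and it only relies on Proposition~\ref{prop: Mobius inversion}, which the paper has already established. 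Speed's argument works more directly with Möbius-function identities on the partition lattice rather than by induction on the ground set, so your presentation is genuinely different; it trades a single slick lattice computation for a transparent inductive reduction. One small remark: the heredity step is exactly right because, for $B \subset J \subsetneq A$, one has $B \cap (J\cap C)=B\cap C$ and $B\setminus(J\cap C)=B\setminus C$, so the pair $(J, J\cap C)$ inherits the hypothesis whenever $J$ straddles $C$; and the case $\card(J)=1$ poses no problem since a singleton cannot straddle $C$ and is handled on the $\cP_C\times\cP_{A\setminus C}$ side of the bijection.
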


\begin{proof}
Denoting by $\kappa_B = \sum_{\cJ \in \cP_B} \mu_\cJ \prod_{J \in \cJ} m_J$ for all $B \subset A$, the sequences $\parentheses*{m_B}_{\emptyset \neq B \subset A}$ and $\parentheses*{\kappa_B}_{\emptyset \neq B \subset A}$ satisfy the equivalent conditions in Proposition~\ref{prop: Mobius inversion}. We want to prove that $\kappa_A=0$.

The proof is a computation involving Möbius inversion, for which we refer to the proof of~\cite[Prop.~4.1]{Spe1983}. Note that, in~\cite{Spe1983}, the author considers the moments and cumulants of a family of random variables. However, the computation we are interested in only uses: the fact that moments and cumulants satisfy the polynomial relations in Proposition~\ref{prop: Mobius inversion}, and a multiplicativity property for moments of independent random variables similar to our hypothesis. Thus, it extends verbatim to our setting.
\end{proof}

We can now focus on the case where the sequences $\parentheses*{m_B}_{\emptyset \neq B \subset A}$ and $\parentheses*{\kappa_B}_{\emptyset \neq B \subset A}$ are respectively the moments and cumulants of a family of random variables. We start by defining these quantities, see~\cite[Sect.~3.2]{PT2011} for example.

\begin{dfn}[Moments and cumulants]
\label{def: moments and cumulants}
Let $A$ be a set of cardinality $p \in \N^*$. Let $(Y_a)_{a \in A}$ be a family of $L^p$ random variables, we denote their \emph{joint moment} and \emph{joint cumulant} respectively~by:
\begin{align*}
m\parentheses*{\strut (Y_a)_{a \in A}} &= \esp{\prod_{a \in A}Y_a} & &\text{and} & \kappa\parentheses*{\strut (Y_a)_{a \in A}} &= \sum_{\cJ \in \cP_A}\mu_\cJ \prod_{J \in \cJ}\esp{\prod_{j \in J} Y_j}.
\end{align*}
If $Y$ is an $L^p$ random variable, we denote its \emph{$p$-th moment} and \emph{$p$-th cumulant} respectively by:
\begin{align*}
&m_p(Y) = m\parentheses{\underbrace{Y,\dots,Y}_{p \ \text{times}}} = \esp{Y^p} & &\text{and} & &\kappa_p(Y) = \kappa\parentheses{\underbrace{Y,\dots,Y}_{p \ \text{times}}}=\sum_{\cJ \in \cP_p} \mu_\cJ \prod_{J \in \cJ} \esp{Y^{\norm{J}}}.
\end{align*}
\end{dfn}

\begin{ex}[Cumulants of Gaussian variables]
\label{ex: Gaussian cumulants}
Let $Y \sim \gauss{\lambda}$ be a centered Gaussian variable in $\R$ of variance $\lambda \geq 0$. By Definition~\ref{def: moments and cumulants}, we have $\kappa_1(Y)=0$ and $\kappa_2(Y)=\lambda$. A generating function argument shows that $\kappa_p(Y)=0$ for all $p \geq 3$, see~\cite[Sect.~3.1]{PT2011} for example.
\end{ex}

Let $(Y_a)_{a \in A}$ be a family of random variables at least of class $L^p$, where $p=\card(A) \in \N^*$. Setting $m_B = m\parentheses*{\strut (Y_b)_{b \in B}}$ and $\kappa_B = \kappa\parentheses*{\strut (Y_b)_{b \in B}}$ for all non-empty $B \subset A$, we get two sequences of the form studied above. We thus obtain the following.

\begin{cor}
\label{cor: moments in terms of cumulants}
For all non-empty $B \subset A$, we have $m\parentheses*{\strut (Y_b)_{b \in B}} = \sum_{\cJ \in \cP_B} \prod_{J \in \cJ} \kappa\parentheses*{\strut (Y_j)_{j \in J}}$. In particular, if $Y$ is an $L^p$ random variable then $m_p(Y) = \sum_{\cJ \in \cP_p} \prod_{J \in \cJ}\kappa_{\norm{J}}(Y)$.
\end{cor}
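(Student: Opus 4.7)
The plan is to apply Proposition~\ref{prop: Mobius inversion} directly to the two sequences obtained from the family $(Y_a)_{a \in A}$. Concretely, for each non-empty $B \subset A$, set
\begin{equation*}
m_B = m\bigl((Y_b)_{b \in B}\bigr) = \esp{\prod_{b \in B} Y_b}, \qquad \kappa_B = \kappa\bigl((Y_b)_{b \in B}\bigr).
\end{equation*}
These are well-defined real numbers since the $Y_a$ are of class $L^p$ with $p = \card(A)$ (so products over any subset $B$ are integrable by Hölder). By Definition~\ref{def: moments and cumulants}, the sequence of cumulants satisfies, by construction,
\begin{equation*}
\kappa_B = \sum_{\cJ \in \cP_B} \mu_\cJ \prod_{J \in \cJ} m_J
\end{equation*}
for every non-empty $B \subset A$, which is precisely Statement~\ref{item: Mobius cumulant} of Proposition~\ref{prop: Mobius inversion}. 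By the equivalence with Statement~\ref{item: Mobius moment} provided by that proposition, we obtain $m_B = \sum_{\cJ \in \cP_B} \prod_{J \in \cJ} \kappa_J$, which unwinds to the claimed identity.

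For the second assertion, I would specialize to the case where $A$ is a set of cardinality $p$ and the family is constant, i.e.\ $Y_a = Y$ for all $a \in A$. Then for every non-empty $B \subset A$, the joint moment $m\bigl((Y_b)_{b \in B}\bigr)$ reduces to $\esp{Y^{\card(B)}} = m_{\card(B)}(Y)$, and the joint cumulant $\kappa\bigl((Y_b)_{b \in B}\bigr)$ coincides with $\kappa_{\card(B)}(Y)$ directly from Definition~\ref{def: moments and cumulants}, since the defining formula depends on the $Y_b$ only through the mixed moments, which are themselves functions of $\card(B)$ alone. Applying the first part with $B = A$ then yields $m_p(Y) = \sum_{\cJ \in \cP_p} \prod_{J \in \cJ} \kappa_{\card(J)}(Y)$.

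There is no real obstacle: the content of the corollary is purely combinatorial and is fully encoded in Proposition~\ref{prop: Mobius inversion}. The only point requiring a bit of attention is the verification, in the second part, that the joint cumulant of a constant family depends only on the cardinality of the indexing set, but this is immediate from the fact that the defining sum over partitions of $B$ involves only $\card(B)$ and the moments $\esp{Y^{\card(J)}}$.
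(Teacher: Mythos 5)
Your proposal is correct and follows exactly the same route as the paper: identify the moment and cumulant sequences $(m_B)$ and $(\kappa_B)$, observe that Statement~\ref{item: Mobius cumulant} of Proposition~\ref{prop: Mobius inversion} holds by Definition~\ref{def: moments and cumulants}, and conclude by the equivalence with Statement~\ref{item: Mobius moment}. Your extra verifications (integrability via H\"older, specialization to a constant family for the second assertion) are sound elaborations of the same argument.
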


\begin{proof}
The sequences $(m_B)_{\emptyset \neq B \subset A}$ and $(\kappa_B)_{\emptyset \neq B \subset A}$ were defined so that the first condition in Proposition~\ref{prop: Mobius inversion} is satisfied. Hence so is the second one.
\end{proof}

We conclude this section by recalling the method of cumulants, which is a classical tool to prove Central Limit Theorems.

\begin{prop}[Method of cumulants]
\label{prop: method of cumulants}
Let $\cR \subset (0,+\infty)$ be an unbounded set, $(Y_R)_{R \in \cR}$ be a family of real random variables and $\lambda \geq 0$. For all $p \in \N^*$, we assume that $Y_R$ is $L^p$ for all $R$ large enough (possibly depending on $p$) and that
\begin{equation*}
\kappa_p(Y_R) \xrightarrow[R \to +\infty]{}\kappa_p\parentheses*{\strut \gauss{\lambda}}=\begin{cases}\lambda, & \text{if} \ p =2;\\ 0, & \text{otherwise.} \end{cases}
\end{equation*}
Then $Y_R \xrightarrow[R \to +\infty]{}\gauss{\lambda}$ in distribution.
\end{prop}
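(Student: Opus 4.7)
The plan is to deduce this from the classical method of moments, using Corollary~\ref{cor: moments in terms of cumulants} as the bridge. The moment-to-cumulant conversion gives, for each $p \in \N^*$ and $R$ large enough,
\begin{equation*}
m_p(Y_R) = \sum_{\cJ \in \cP_p} \prod_{J \in \cJ} \kappa_{\norm{J}}(Y_R),
\end{equation*}
so each moment $m_p(Y_R)$ is a fixed polynomial in the first $p$ cumulants of $Y_R$. The right-hand side is a continuous function of $\parentheses*{\kappa_1(Y_R),\dots,\kappa_p(Y_R)}$, so our hypothesis on cumulants gives $m_p(Y_R) \to \sum_{\cJ \in \cP_p} \prod_{J \in \cJ} \kappa_{\norm{J}}\parentheses*{\strut\gauss{\lambda}}$ as $R \to +\infty$. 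By Corollary~\ref{cor: moments in terms of cumulants} applied to $\gauss{\lambda}$ itself, the limit is exactly $m_p\parentheses*{\gauss{\lambda}\strut}$. Thus every moment of $Y_R$ converges to the corresponding moment of $\gauss{\lambda}$.

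Next I would invoke the method of moments: if $(Y_R)$ is a family of real random variables whose moments of every order are eventually finite and converge to the moments of a probability distribution $\mu$ which is characterized by its moments, then $Y_R \to \mu$ in distribution. The Gaussian distribution $\gauss{\lambda}$ is determined by its moments: since $m_{2p}\parentheses*{\gauss{\lambda}\strut} = (2p-1)!! \, \lambda^p$, one checks that $\sum_{p \geq 1} m_{2p}\parentheses*{\gauss{\lambda}\strut}^{-1/(2p)} = +\infty$, so Carleman's criterion applies. This furnishes the conclusion $Y_R \xrightarrow[R \to +\infty]{\text{law}} \gauss{\lambda}$.

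The only technical nuisance, rather than a genuine obstacle, is that the $L^p$ hypothesis holds only eventually in $R$ (depending on $p$). This is harmless: fix any $p$, discard the finitely many values of $R \in \cR$ for which $Y_R$ is not $L^p$, and run the above argument on the tail. Since $\cR$ is unbounded, the asymptotics as $R \to +\infty$ are unaffected. The standard method-of-moments statement (see for example any textbook proof using tightness deduced from the uniform bound on second moments, extraction of a converging subsequence via Prokhorov, and identification of the limit via its moments) then applies without modification, and yields the desired convergence in law.
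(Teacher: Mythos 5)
Your proof is correct and follows essentially the same route as the paper: express $m_p(Y_R)$ as a universal polynomial in the cumulants via Corollary~\ref{cor: moments in terms of cumulants}, pass to the limit, and invoke the classical method of moments together with moment-determinacy of $\gauss{\lambda}$. The extra details you supply (Carleman's criterion, the tightness/Prokhorov sketch, and the remark that one only needs eventual finiteness of each moment) are accurate, but the paper simply delegates these standard points to Billingsley with a brief comment that his proof only uses eventual $L^p$ integrability and that the discrete-index case implies the general one.
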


\begin{proof}
Let $p \in \N^*$. By Corollary~\ref{cor: moments in terms of cumulants}, the $p$-th moment of a random variable is expressed as a universal polynomial in its cumulants of order at most $p$. Thus, our hypothesis implies that $m_p(Y_R) \xrightarrow[R \to +\infty]{} m_p\parentheses*{\strut \gauss{\lambda}}$, where the left-hand side is well-defined for $R$ large enough.

The conclusion follows by the standard method of moments~\cite[Ex.~30.1 and Thm.~30.2]{Bil1995}. Note that, in~\cite{Bil1995}, the author assumes that $Y_R$ has a finite $p$-th moment for all $R \in \cR$ and $p \in \N^*$. However, his proof only uses the fact that $Y_R$ has a finite $p$-th moment for $R$ large enough, possibly depending on $p$. Also, he only deals with the case $\cR=\N^*$, but this implies the general case.
\end{proof}


\subsection{Product spaces and diagonals}
\label{subsec: product spaces and diagonals}

In this section, we introduce notation concerning diagonals in product spaces, that will be used throughout the paper. We also recall the relation between the power measure and the factorial power measure of a submanifold in $\R^d$, see~Lemma~\ref{lem: power vs factorial power} below.

\begin{dfn}[Set-indexed products]
\label{def: set-indexed products}
Let $A$ be a non-empty finite set and $Z$ be any set.
\begin{itemize}
\item We denote by $Z^A$ the Cartesian product of $\card(A)$ copies of $Z$ indexed by $A$.

\item A generic element in $Z^A$ is denoted by $\ux = \parentheses*{x_a}_{a \in A}$.

\item For all $\emptyset \neq B \subset A$, we denote by $\ux_B=\parentheses*{x_b}_{b \in B}$ the projection of $\ux\in Z^A$ onto $Z^B$.

\item Recalling that $2B =\brackets{0,1} \times B$, we denote by $\ux_{2B} = \parentheses*{x_b}_{(i,b) \in 2B} \in Z^{2B}$.

\item Let $\parentheses*{\phi_a}_{a \in A}$ be functions from $Z$ to $\R$, we denote by $\phi_A^\otimes:\ux \mapsto \prod_{a \in A}\phi_a(x_a)$ from $Z^A$ to $\R$.
\end{itemize}
\end{dfn}

\begin{rem}
\label{rem: doubled points}
The point $\ux_{2B}$ corresponds to $\parentheses*{\ux_B,\ux_B}$ under the canonical bijection $Z^{2B} \simeq Z^B \times Z^B$. In the following, we identify $Z^{2B}$ with $Z^B \times Z^B$ through this bijection and write $\ux_{2B} = (\ux_B,\ux_B)$.
\end{rem}

Recalling the notation introduced in Section~\ref{subsec: partitions of a finite set}, we define the strata of the diagonal in~$Z^A$.

\begin{dfn}[Diagonals]
\label{def: diagonals}
We denote the \emph{large diagonal} in $Z^A$ by
\begin{equation*}
\diag = \brackets*{\strut (x_a)_{a \in A} \in Z^A \mvert \exists a,b \in A \ \text{such that} \ a \neq b \ \text{and} \ x_a=x_b}.
\end{equation*}
For all $\cI \in \cP_A$, we denote by $\diag_\cI$ the \emph{stratum} of $\diag$ where multiplicities are described by $\cI$:
\begin{equation*}
\diag_\cI = \brackets*{\strut (x_a)_{a \in A} \in Z^A \mvert \forall a,b \in A, \parentheses*{\strut x_a = x_b \iff [a]_\cI = [b]_\cI}}.
\end{equation*}
\end{dfn}

The sets $Z$ and $A$ will always be clear from the context. With this notation, we have:
\begin{align}
\label{eq: splitting product}
Z^A &= \bigsqcup_{\cI \in \cP_A} \diag_\cI & &\text{and} & \diag &= \bigsqcup_{\cI\, > \,\bigwedge\! \cP_A} \diag_\cI
\end{align}

\begin{rem}
\label{rem: diagonal min PA}
When $\cI = \bigwedge \cP_A=\brackets*{\strut \brackets{a} \mvert a \in A}$, the notation $\diag_{(\bigwedge \cP_A)}$ is misleading, since it denotes the complement of $\diag$ in $Z^A$. Therefore, we avoid this notation and use $Z^A \setminus \diag$ instead.
\end{rem}

\begin{dfn}[Diagonal inclusions]
\label{def: diagonal inclusions}
For all $\cI \in \cP_A$, we denote by $\iota_\cI: Z^\cI \setminus \diag \to \diag_\cI \subset Z^A$ the bijection defined by $\iota_\cI: \parentheses*{y_I}_{I \in \cI} \mapsto \parentheses*{y_{\squarebrackets{a}_\cI}}_{a \in A}$.
\end{dfn}

Let us now focus on the case where $Z$ is a submanifold of dimension $n$ in $\R^d$. The Euclidean metric on $\R^d$ restricts into a Riemannian metric on $Z$, and thus induces a canonical volume measure~$\nu$ on $Z$, which coincides with its $n$-dimensional Hausdorff measure. This gives rise to natural measures on~$Z^A$.

\begin{dfn}[Power measure and factorial power measure]
\label{def: power measure and factorial power measure}
For any non-empty finite set $A$, we denote by $\nu^A$ the product measure $\nu \otimes \dots \otimes \nu$ on $Z^A$, and by $\nu^{[A]}$ its restriction to $Z^A \setminus \diag$.
\end{dfn}

\begin{rem}
\label{rem: power measure and factorial power measure}
In this setting, $Z^A$ (resp.~$Z^A \setminus \diag$) is a submanifold of dimension $n\norm{A}$ in $(\R^d)^A$, and~$\nu^A$ (resp.~$\nu^{[A]}$) is its canonical volume measure induced by the Euclidean metric on $(\R^d)^A$.
\end{rem}

\begin{ex}
\label{ex: power measure and factorial power measure}
If $n=0$ then $Z$ is a discrete closed subset of $\R^d$ and $\nu = \sum_{x \in Z} \delta_x$, where $\delta_x$ stands for the unit Dirac mass at $x$. The product $Z^A$ is also $0$-dimensional and we have:
\begin{align*}
\nu^A &= \sum_{\ux \in Z^A} \delta_{\ux} & &\text{and} & \nu^{[A]} &= \sum_{\ux \in Z^A \setminus \diag} \delta_{\ux}.
\end{align*}
\end{ex}

The power measure $\nu^A$ is the one that appears when we consider a product of linear statistics of~$Z$. Indeed, $\prod_{a \in A} \prsc{\nu}{\phi_a} = \prsc*{\nu^A}{\phi_A^\otimes}$ for any reasonable test-functions $(\phi_a)_{a \in A}$. The factorial power measure $\nu^{[A]}$ is the one that can be dealt with using Kac--Rice formulas, see Section~\ref{subsec: Bulinskaya Lemma and Kac--Rice formulas for factorial moments}. The next result relates these two measures. When $n>0$ the relation is trivial, but if $n=0$ some combinatorics are involved. The following notation will allow us to deal with both cases at once.

\begin{dfn}[Indexing set of partitions]
\label{def: P A n}
Let $A$ be a non-empty finite set and $n \in \N$, we define $\cP_{A,n} = \cP_A$ if $n = 0$ and $\cP_{A,n}=\brackets{\bigwedge \cP_A}$ if $n \geq 1$. If $A = \ssquarebrackets{1}{p}$ with $p \in \N^*$, we let $\cP_{p,n}=\cP_{A,n}$.
\end{dfn}

\begin{lem}[Power vs factorial power]
\label{lem: power vs factorial power}
Let $A$ be a non-empty finite set and $\nu$ be the canonical volume measure of a $n$-dimensional submanifold $Z$ in $\R^d$. Then, we have $\nu^A = \sum_{\cI \in \cP_{A,n}} (\iota_\cI)_* \nu^{[\cI]}$.
\end{lem}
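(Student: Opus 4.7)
The plan is to split into two cases according to whether $n=0$ or $n\geq 1$, and in each case exhibit a decomposition of $Z^A$ into the strata $\diag_\cI$ that is compatible with $\nu^A$.

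First I would treat the case $n\geq 1$. Here $\cP_{A,n}=\brackets{\bigwedge \cP_A}$, so the right-hand side has only one term, and we have to check that $\nu^A=(\iota_{\bigwedge \cP_A})_*\nu^{[\bigwedge\cP_A]}$. Via the canonical identification $Z^{\bigwedge\cP_A}\simeq Z^A$, the map $\iota_{\bigwedge\cP_A}$ is simply the inclusion of $Z^A\setminus\diag$ into $Z^A$, and thus $(\iota_{\bigwedge\cP_A})_*\nu^{[\bigwedge\cP_A]}$ is the restriction of $\nu^A$ to $Z^A\setminus\diag$, i.e.\ $\nu^{[A]}$. So the claim reduces to $\nu^A(\diag)=0$. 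Using the partition $\diag=\bigsqcup_{\cI>\bigwedge\cP_A}\diag_\cI$ from~\eqref{eq: splitting product}, it suffices to show $\nu^A(\diag_\cI)=0$ for each such $\cI$. But $\diag_\cI=\iota_\cI(Z^\cI\setminus\diag)$ is contained in a submanifold of dimension $n\norm{\cI}$ in $(\R^d)^A$ (via the smooth inclusion $\iota_\cI$), whereas $Z^A$ is a submanifold of dimension $n\norm{A}$, cf.~Remark~\ref{rem: power measure and factorial power measure}. Since $\cI>\bigwedge\cP_A$ we have $\norm{\cI}<\norm{A}$, and because $n\geq 1$ this yields $n\norm{\cI}<n\norm{A}$, so that $\diag_\cI$ has vanishing $n\norm{A}$-dimensional Hausdorff measure, hence vanishing $\nu^A$-measure.

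Next I would treat the case $n=0$, where $\cP_{A,n}=\cP_A$. By Example~\ref{ex: power measure and factorial power measure}, $\nu^A=\sum_{\ux\in Z^A}\delta_\ux$ and $\nu^{[\cI]}=\sum_{\uy\in Z^\cI\setminus\diag}\delta_\uy$ for each $\cI\in\cP_A$. Using the partition $Z^A=\bigsqcup_{\cI\in\cP_A}\diag_\cI$ from~\eqref{eq: splitting product} (which is valid because each $\ux\in Z^A$ determines a unique partition $\cI\in\cP_A$ via the equivalence $a\sim b \Leftrightarrow x_a=x_b$), we decompose
\begin{equation*}
\nu^A=\sum_{\cI\in\cP_A}\sum_{\ux\in\diag_\cI}\delta_\ux.
\end{equation*}
On the other hand, since $\iota_\cI:Z^\cI\setminus\diag\to\diag_\cI$ is a bijection by Definition~\ref{def: diagonal inclusions}, we have
\begin{equation*}
(\iota_\cI)_*\nu^{[\cI]}=\sum_{\uy\in Z^\cI\setminus\diag}\delta_{\iota_\cI(\uy)}=\sum_{\ux\in\diag_\cI}\delta_\ux,
\end{equation*}
and summing over $\cI\in\cP_A$ yields the desired identity.

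The only step that requires a small amount of care is the measure-zero argument in the case $n\geq 1$, but it is routine once one observes that each non-trivial stratum $\diag_\cI$ sits inside a submanifold of strictly smaller dimension than $Z^A$; no other step involves any real computation.
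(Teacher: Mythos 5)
Your proof is correct and follows essentially the same strategy as the paper: both split on whether $n=0$ or $n\geq 1$, both handle $n=0$ by the identical Dirac-mass decomposition, and both handle $n\geq 1$ by reducing to the statement that $\nu^A(\diag)=0$ via a dimension count. The one minor difference is in how $\diag$ is decomposed for that dimension count: the paper writes $\diag=\bigcup_{\brackets{a,b}\in A^{\cwedge}}\brackets*{\ux \in Z^A \mvert x_a=x_b}$, a finite union of codimension-$n$ submanifolds of $Z^A$, while you use the stratification $\diag=\bigsqcup_{\cI>\bigwedge\cP_A}\diag_\cI$ and show each stratum is contained in the image of $\iota_\cI$, a submanifold of dimension $n\norm{\cI}<n\norm{A}$. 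Both decompositions lead immediately to the same conclusion, so the difference is cosmetic rather than structural.
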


\begin{proof}
If $n=0$, by Equation~\eqref{eq: splitting product}, Definition~\ref{def: diagonal inclusions} and Example~\ref{ex: power measure and factorial power measure}, we have:
\begin{equation*}
\nu^A = \sum_{\cI \in \cP_A} \sum_{\ux \in \diag_\cI} \delta_{\ux} = \sum_{\cI \in \cP_A} \sum_{\uy \in Z^{\cI} \setminus \diag} \delta_{\iota_\cI(\uy)} = \sum_{\cI \in \cP_A} (\iota_\cI)_*\parentheses*{\sum_{\uy \in Z^{\cI} \setminus \diag} \delta_{\uy}} = \sum_{\cI \in \cP_{A,n}} (\iota_\cI)_* \nu^{[\cI]}.
\end{equation*}

Recalling Definition~\ref{def: A wedge 2}, if $n>0$ the diagonal $\diag = \bigcup_{\brackets{a,b} \in A^{\cwedge}} \brackets*{ \ux \in Z^A \mvert x_a=x_b }$ is a finite union of submanifolds of positive codimension in $Z^A$. Hence, it is negligible for the Riemannian volume $\nu^A$ of~$Z^A$, and $\nu^A = \nu^{[A]}$. Here, $\iota_{(\bigwedge \cP_A)}:Z^{(\bigwedge \cP_A)} \setminus \diag \to Z^A \setminus \diag$ is a diffeomorphism, and an isometry. Indeed, it corresponds to the identity of $Z^A \setminus \diag$ under the canonical identification of $\bigwedge \cP_A = \brackets*{\brackets{a} \mvert a \in A}$ with $A$. Hence,
\begin{equation*}
\nu^A =\nu^{[A]} = \parentheses*{\iota_{(\bigwedge \cP_A)}}_* \parentheses*{\nu^{[\bigwedge \cP_A]}} = \sum_{\cI \in \cP_{A,n}} (\iota_\cI)_* \nu^{[\cI]}.\qedhere
\end{equation*}
\end{proof}


\subsection{Thick diagonals}
\label{subsec: thick diagonals}

In this section we define sets which can be thought of as thickenings of the strata of the diagonal. In a related context, sets of this kind were introduced in~\cite[Def.~4.10]{Anc2021} and used in~\cite{AL2021a,AL2021} and~\cite{Gas2023b}. Here, we deviate a bit from these previous works in order to take into account some convexity issues.

Let $\parentheses*{V,\prsc{\cdot}{\cdot}}$ be a Euclidean space, that is a finite-dimensional vector space equipped with a Euclidean inner product. Let $\Norm{\cdot}$ denote the Euclidean norm associated with $\prsc{\cdot}{\cdot}$, we denote by $\dist(U,U')= \inf\brackets*{\strut \Norm{u-u'}\mvert u \in U, u' \in U'}$ the distance between the sets $U$ and $U' \subset V$.

\begin{dfn}[Simplex]
\label{def: simplex}
Let $A$ be a non-empty finite set, we denote the \emph{standard simplex} in~$\R^A$ by $\simplex_A = \brackets*{\ut =(t_a) \in [0,1]^A \mvert \sum_{a \in A} t_a=1}$. If $A = \ssquarebrackets{0}{q}$ for some $q \in \N$, we let $\simplex_q=\simplex_{\ssquarebrackets{0}{q}}$ denote the standard simplex of dimension $q$.
\end{dfn}

\begin{dfn}[Convex hull]
\label{def: convex hull}
Given $\ux=(x_a)_{a \in A} \in V^A$ and $\ut=(t_a)_{a \in A} \in \R^A$, we denote by $\ut \cdot \ux = \sum_{a \in A}t_a x_a \in V$. We denote the \emph{convex hull} of $\brackets*{x_a \mvert a \in A}$~by $\conv(\ux) = \brackets*{\strut \ut \cdot \ux \mvert \ut \in \simplex_A}$.
\end{dfn}

\begin{dfn}[Clustering partitions]
\label{def: Q eta}
Let $\eta \geq 0$, for all $\ux \in V^A$, we denote by
\begin{equation*}
\cQ_\eta(\ux)=\brackets*{\strut \cI \in \cP_A \mvert \forall I, J \in \cI \ \text{such that}\ I \neq J \ \text{we have} \ \dist\parentheses*{\conv(\ux_I)\strut,\conv(\ux_J)}> \eta}.
\end{equation*}
\end{dfn}

\begin{lem}[Stability by meet]
\label{lem: stability by meet}
For all $\cI$ and $\cJ \in \cQ_\eta(\ux)$ we have $\cI \wedge \cJ \in \cQ_\eta(\ux)$. In particular, $\bigwedge \cQ_\eta(\ux) \in \cQ_\eta(\ux)$ and is the finest element of $\cQ_\eta(\ux)$.
\end{lem}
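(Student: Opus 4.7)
The plan is to prove the main stability statement by direct inspection of the blocks of the meet, then deduce the rest by induction and the universal property of $\bigwedge$.

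First I would fix $\cI, \cJ \in \cQ_\eta(\ux)$ and let $K \neq K'$ be two distinct blocks of $\cI \wedge \cJ$. By the explicit formula of Definition~\ref{def: meet}, I can write $K = I \cap J$ and $K' = I' \cap J'$ for some $I, I' \in \cI$ and $J, J' \in \cJ$. Since $K \neq K'$, we must have $I \neq I'$ or $J \neq J'$; by symmetry, assume $I \neq I'$. Since $K \subset I$, I have $\ux_K \subset \ux_I$, so $\conv(\ux_K) \subset \conv(\ux_I)$, and similarly $\conv(\ux_{K'}) \subset \conv(\ux_{I'})$. Monotonicity of the distance functional under inclusion then gives
\begin{equation*}
\dist\parentheses*{\conv(\ux_K), \conv(\ux_{K'})} \geq \dist\parentheses*{\conv(\ux_I), \conv(\ux_{I'})} > \eta,
\end{equation*}
where the strict inequality uses $\cI \in \cQ_\eta(\ux)$ together with $I \neq I'$. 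This shows $\cI \wedge \cJ \in \cQ_\eta(\ux)$.

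Next I would deduce the particular claim. The set $\cQ_\eta(\ux) \subset \cP_A$ is finite, and it is non-empty since the trivial partition $\brackets{A}$ vacuously satisfies the defining condition. Enumerating $\cQ_\eta(\ux) = \brackets{\cI_1, \dots, \cI_m}$ and using the associativity of $\wedge$ (clear from the formula in Definition~\ref{def: meet}), an immediate induction based on the binary stability just established yields $\cI_1 \wedge \dots \wedge \cI_m \in \cQ_\eta(\ux)$. By Definition~\ref{def: meet}, this element is exactly $\bigwedge \cQ_\eta(\ux)$. Finally, by the very definition of $\bigwedge$, the partition $\bigwedge \cQ_\eta(\ux)$ refines every element of $\cQ_\eta(\ux)$, so it is a lower bound of $\cQ_\eta(\ux)$ for the refinement order; being itself an element of $\cQ_\eta(\ux)$, it is therefore the finest element.

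There is no real obstacle in the argument: the only substantive point is the monotonicity $K \subset I \Rightarrow \conv(\ux_K) \subset \conv(\ux_I)$, which is trivial, and the case analysis $I \neq I'$ versus $J \neq J'$ is forced by $K \neq K'$. The induction step and the final minimality statement then follow formally from the lattice structure recalled in Section~\ref{subsec: partitions of a finite set}.
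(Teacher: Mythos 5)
Your proof is correct and follows essentially the same argument as the paper's: write the two distinct blocks of $\cI\wedge\cJ$ as intersections, note that at least one of the two constituent blocks must differ, use $\conv(\ux_K)\subset\conv(\ux_I)$ and the monotonicity of $\dist$ under inclusion, and then conclude the ``in particular'' part from finiteness and stability by binary meet. The extra details you supply (nonemptiness via $\brackets{A}$, associativity of $\wedge$, the universal property of $\bigwedge$) are all implicit in the paper's terser phrasing.
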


\begin{proof}
Let $\cI$ and $\cJ \in \cQ_\eta(\ux)$ and let $K_1,K_2 \in \cI \wedge \cJ$ be distinct. There exist $I_1,I_2 \in \cI$ and $J_1,J_2 \in \cJ$ such that $K_1 = I_1 \cap J_1$ and $K_2 = I_2 \cap J_2$, see Definition~\ref{def: meet}. Without loss of generality, we can assume that $I_1 \neq I_2$. For $i \in \brackets{1,2}$, we have $\conv(\ux_{K_i}) \subset \conv(\ux_{I_i})$. Hence, since $\cI \in \cQ_\eta(\ux)$,
\begin{equation*}
\dist\parentheses*{\conv(\ux_{K_1}),\conv(\ux_{K_2})} \geq \dist\parentheses*{\conv(\ux_{I_1}),\conv(\ux_{I_2})} > \eta.
\end{equation*}
Thus, $\cI \wedge \cJ \in \cQ_\eta(\ux)$. Then, since $\cQ_\eta(\ux)$ is finite and stable by meet, we have $\bigwedge \cQ_\eta(\ux) \in \cQ_\eta(\ux)$.
\end{proof}

\begin{ex}
\label{ex: Q eta}
If $V=\R$ and $A = \ssquarebrackets{1}{4}$, with $\ux=(x_1,x_2,x_3,x_4) = \parentheses*{-1,0,3,6}$ and $\eta=2$, the elements of $\cQ_\eta(\ux)$ are the following partitions of $\ssquarebrackets{1}{4}$, the finest being the last one:
\begin{align*}
&\brackets*{\strut \brackets{1,2,3,4}}, & &\brackets*{\strut \brackets{1,2,3},\brackets{4}}, & &\brackets*{\strut \brackets{1,2},\brackets{3,4}} & &\text{and} & &\brackets*{\strut \brackets{1,2},\brackets{3},\brackets{4}}.
\end{align*}
\end{ex}

\begin{dfn}[Thick diagonals]
\label{def: diag I eta}
Let $A$ be a non-empty finite set and $\eta \geq 0$. For all $\cI \in \cP_A$ we define $\diag_{\cI,\eta} = \brackets*{\ux \in V^A \mvert \bigwedge \cQ_\eta(\ux)=\cI}$. Note that $V^A = \bigsqcup_{\cI \in \cP_A} \diag_{\cI,\eta}$.
\end{dfn}

\begin{ex}
\label{ex: diag I eta}
Let us give some examples.
\begin{itemize}
\item If $\ux \in \diag_{\bigwedge\!\cP_A, \eta}$, then $\bigwedge \cP_A \in \cQ_\eta(\ux)$ and $\Norm{x_a-x_b}> \eta$ for all $a \neq b \in A$. Conversely, if $\Norm{x_a-x_b}> \eta$ for all $a \neq b \in A$, then $\bigwedge \cP_A \in \cQ_\eta(\ux)$ which implies $\ux \in \diag_{\bigwedge\! \cP_A, \eta}$. Thus,
\begin{equation*}
\diag_{\bigwedge\! \cP_A, \eta} = \brackets*{\ux=(x_a)_{a \in A} \in V^A\mvert \forall a,b \in A \ \text{such that} \ a \neq b, \ \text{we have} \ \Norm{x_a-x_b}>\eta}.
\end{equation*}

\item Let us assume that $\eta=0$. Let $\cI \in \cP_A$ and let $\ux \in \diag_\cI \subset V^A$, see Definition~\ref{def: diagonals}. There exists a unique $\uy=(y_I)_{I \in \cI} \in V^\cI \setminus \diag$ such that $\ux = \iota_\cI(\uy)$, see Definition~\ref{def: diagonal inclusions}. For all $I \in \cI$ we have $\conv(\ux_I)=\brackets{y_I}$. Since $\uy \in V^\cI \setminus \diag$, we have $\cI \in \cQ_0(\ux)$, and no proper refinement of $\cI$ belongs to $\cQ_0(\ux)$. Hence $\cI = \bigwedge \cQ_0(\ux)$, and $\ux \in \diag_{\cI,0}$. Thus $\diag_\cI \subset \diag_{\cI,0}$ for any $\cI \in \cP_A$. By Equation~\eqref{eq: splitting product} and Definition~\ref{def: diag I eta}, this implies that $\diag_\cI = \diag_{\cI,0}$ for all $\cI \in \cP_A$.
\end{itemize}
\end{ex}

\begin{rem}
\label{rem: diag I eta doubled points}
Let $\ux \in V^A$. For all $B \subset A$, we have $\parentheses*{\ux_{2A}}_{2B}=\ux_{2B} = \parentheses*{\ux_B,\ux_B}$, which implies that $\conv\parentheses*{(\ux_{2A})_{2B}}=\conv(\ux_B)$. Hence, given any $\eta \geq 0$, we have $\cQ_\eta(\ux_{2A}) = \brackets{2\cI \mid \cI \in \cQ_\eta(\ux)}$. Thus $\bigwedge \cQ_\eta(\ux_{2A}) = 2 \bigwedge \cQ_\eta(\ux)$, since $\cI \mapsto 2\cI$ is increasing. That is $\ux \in \diag_{\cI,\eta}$ if and only if $\ux_{2A}=(\ux,\ux) \in \diag_{2\cI,\eta}$. In particular, for all $\eta\geq 0$,
\begin{equation*}
\brackets*{(\ux,\ux) \mvert \ux \in V^A} = \bigsqcup_{\cI \in \cP_A} \diag_{2\cI,\eta} \subsetneq V^{2A}.
\end{equation*}
\end{rem}

The following lemma partially describes the behavior of the thick diagonals from Definition~\ref{def: diag I eta} when the scale parameter $\eta$ varies.

\begin{lem}[Monotonicity of thick diagonals]
\label{lem: monotonicity diag I eta}
Let $0\leq \delta \leq \epsilon \leq \eta$, for all $\cI \in \cP_A$, we have:
\begin{align*}
\diag_{\cI,\delta} &\subset \bigsqcup_{\cJ \geq \cI} \diag_{\cJ,\eta} & &\text{and} & \diag_{\cI,\delta}\cap \diag_{\cI,\eta} \subset \diag_{\cI,\epsilon}.
\end{align*}
\end{lem}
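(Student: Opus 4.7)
The plan is to derive both inclusions from a single elementary monotonicity observation about the family $\cQ_\eta(\ux)$ as a function of $\eta$. Namely, by Definition~\ref{def: Q eta}, if $\delta \leq \eta$ then any partition $\cJ$ whose blocks have pairwise convex hulls at distance $> \eta$ also has them at distance $> \delta$. Hence $\cQ_\eta(\ux) \subset \cQ_\delta(\ux)$ for all $\ux \in V^A$. Combined with the basic fact about meets that $\cA \subset \cB$ implies $\bigwedge \cB \leq \bigwedge \cA$ (since any common refinement of $\cB$ is also a common refinement of $\cA$), this yields
\begin{equation*}
\delta \leq \eta \qquad \Longrightarrow \qquad \bigwedge \cQ_\delta(\ux) \leq \bigwedge \cQ_\eta(\ux).
\end{equation*}

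For the first inclusion, take $\ux \in \diag_{\cI,\delta}$, so $\bigwedge \cQ_\delta(\ux) = \cI$. Setting $\cJ = \bigwedge \cQ_\eta(\ux)$, the displayed monotonicity gives $\cI \leq \cJ$, and by Definition~\ref{def: diag I eta} we have $\ux \in \diag_{\cJ,\eta}$. The sets $(\diag_{\cJ,\eta})_{\cJ \in \cP_A}$ being pairwise disjoint, this shows $\ux \in \bigsqcup_{\cJ \geq \cI} \diag_{\cJ,\eta}$.

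For the second inclusion, take $\ux \in \diag_{\cI,\delta} \cap \diag_{\cI,\eta}$, so $\bigwedge \cQ_\delta(\ux) = \cI = \bigwedge \cQ_\eta(\ux)$. Applying the displayed monotonicity twice, once with the pair $(\delta,\epsilon)$ and once with $(\epsilon,\eta)$, we obtain
\begin{equation*}
\cI = \bigwedge \cQ_\delta(\ux) \leq \bigwedge \cQ_\epsilon(\ux) \leq \bigwedge \cQ_\eta(\ux) = \cI,
\end{equation*}
so that $\bigwedge \cQ_\epsilon(\ux) = \cI$, i.e. $\ux \in \diag_{\cI,\epsilon}$.

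There is no real obstacle here; the only thing to be careful about is the direction of the order $\leq$ (a refinement relation) versus set inclusion of the families $\cQ_\eta(\ux)$, which go in opposite directions with respect to $\eta$. Once this bookkeeping is made explicit, both inclusions drop out immediately.
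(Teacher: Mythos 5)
Your proof is correct and follows essentially the same route as the paper's: establish $\cQ_\eta(\ux)\subset\cQ_\delta(\ux)$ for $\delta\leq\eta$, deduce $\bigwedge\cQ_\delta(\ux)\leq\bigwedge\cQ_\eta(\ux)$, and read off both inclusions from this monotonicity (the second by sandwiching $\bigwedge\cQ_\epsilon(\ux)$). The bookkeeping about the reversal between set inclusion of the $\cQ$'s and the refinement order is exactly the point, and you handle it correctly.
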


\begin{proof}
Let $\ux \in V^A$ and let $\cI \in \cQ_\eta(\ux)$. By Definition~\ref{def: Q eta}, for all $I, J \in \cI$ such that $I \neq J$ we have $\dist\parentheses*{\conv(\ux_I),\conv(\ux_J)}> \eta \geq \delta$. Hence $\cI \in \cQ_\delta(\ux)$. Thus $\cQ_{\eta}(\ux) \subset \cQ_\delta(\ux)$, which implies that $\bigwedge \cQ_\eta(\ux) \geq \bigwedge \cQ_\delta(\ux)$.

Let us fix $\cI \in \cP_A$. Let $\ux \in \diag_{\cI,\delta}$, we denote by $\cK = \bigwedge \cQ_\eta(\ux)$ so that $\ux \in \diag_{\cK,\eta}$. Then we have $\cI = \bigwedge \cQ_\delta(\ux) \leq \bigwedge \cQ_\eta(\ux)=\cK$. Thus, $\ux \in \bigsqcup_{\cJ \geq \cI} \diag_{\cJ,\eta}$, which proves the first inclusion.

If $\ux \in \diag_{\cI,\delta}\cap \diag_{\cI,\eta}$ then $\cI = \bigwedge \cQ_\delta(\ux) \leq \bigwedge \cQ_\epsilon(\ux) \leq \bigwedge \cQ_\eta(\ux)=\cI$. Hence $\cQ_\epsilon(\ux)=\cI$ and $\ux \in \diag_{\cI,\epsilon}$. This proves the second inclusion.
\end{proof}

\begin{lem}[Projection of thick diagonals]
\label{lem: projection diag I eta}
Let $\eta \geq 0$ and let $\cI,\cJ \in \cP_A$ be such that $\cI \leq \cJ$. Then, for all $\ux \in \diag_{\cI,\eta}$, for all $J \in \cJ$, we have $\ux_J \in \diag_{\cI_J,\eta}$.
\end{lem}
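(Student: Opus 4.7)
The goal is to show that $\bigwedge \cQ_\eta(\ux_J) = \cI_J$. I would break this into two inclusions: (a) $\cI_J \in \cQ_\eta(\ux_J)$, which gives $\bigwedge \cQ_\eta(\ux_J) \leq \cI_J$; and (b) every partition in $\cQ_\eta(\ux_J)$ that refines $\cI_J$ is in fact equal to $\cI_J$, which gives the reverse direction once we combine with Lemma~\ref{lem: stability by meet}.

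For (a), I would simply observe that $\ux \in \diag_{\cI,\eta}$ means $\cI \in \cQ_\eta(\ux)$, so for any two distinct $I, I' \in \cI$ we have $\dist(\conv(\ux_I), \conv(\ux_{I'})) > \eta$. Since $\cI \leq \cJ$, each block of $\cI_J$ is a block of $\cI$ contained in $J$; thus any two distinct blocks of $\cI_J$ are in particular two distinct blocks of $\cI$, so the defining inequalities for $\cI_J \in \cQ_\eta(\ux_J)$ follow directly from those for $\cI \in \cQ_\eta(\ux)$.

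For (b), let $\cK = \bigwedge \cQ_\eta(\ux_J)$, so $\cK \leq \cI_J$ by (a) and Lemma~\ref{lem: stability by meet}. The key idea is to extend $\cK$ to a partition $\tilde\cK$ of $A$ by gluing it with $\cI$ outside $J$: set $\tilde\cK = \cK \cup \cI_{A \setminus J}$, which makes sense because $\cI \leq \cJ$ implies every block of $\cI$ is either contained in $J$ or disjoint from~$J$. I would then verify that $\tilde\cK \in \cQ_\eta(\ux)$ by checking the three cases for pairs of distinct blocks $L, L' \in \tilde\cK$: both in $\cK$ (use $\cK \in \cQ_\eta(\ux_J)$); both in $\cI_{A\setminus J}$ (use $\cI \in \cQ_\eta(\ux)$); and one in each. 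The mixed case is the main point: since $\cK \leq \cI_J$, the block $L \in \cK$ is contained in some $I \in \cI_J$, and $L' \in \cI$ is disjoint from $I$, hence
\begin{equation*}
\dist\parentheses*{\conv(\ux_L), \conv(\ux_{L'})} \geq \dist\parentheses*{\conv(\ux_I), \conv(\ux_{L'})} > \eta,
\end{equation*}
using $\conv(\ux_L) \subset \conv(\ux_I)$ and $\cI \in \cQ_\eta(\ux)$.

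Once $\tilde\cK \in \cQ_\eta(\ux)$, Lemma~\ref{lem: stability by meet} applied to $\ux$ gives $\cI = \bigwedge \cQ_\eta(\ux) \leq \tilde\cK$. Restricting this refinement relation to the subset $J$, any $I \in \cI_J$ is contained in some block of $\tilde\cK$ which, being itself a subset of $J$, must come from $\cK$; thus $\cI_J \leq \cK$. Combined with $\cK \leq \cI_J$ this yields $\cK = \cI_J$, hence $\ux_J \in \diag_{\cI_J,\eta}$ as claimed. The only mildly delicate point is the mixed-case estimate in the verification that $\tilde\cK \in \cQ_\eta(\ux)$; everything else is a straightforward bookkeeping with the definitions.
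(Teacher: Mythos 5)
Your proof is correct and uses essentially the same approach as the paper's: both construct the glued partition (your $\tilde\cK = \bigwedge\cQ_\eta(\ux_J) \sqcup \cI_{A\setminus J}$, which the paper writes as $(\cI\setminus\cI_J)\sqcup\bigwedge\cQ_\eta(\ux_J)$) and verify $\tilde\cK \in \cQ_\eta(\ux)$ by the same three-case analysis, the mixed case being the crux. The only difference is cosmetic: you conclude directly from $\cI\leq\tilde\cK$ by restricting to $J$, whereas the paper phrases the final step as a contradiction from $\cK<\cI=\bigwedge\cQ_\eta(\ux)$.
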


\begin{proof}
Let $\ux \in \diag_{\cI,\eta}$ and $J \in \cJ$. Since $\cJ \geq \cI$, we have $\cI_J=\brackets*{I \in \cI \mvert I \subset J}$. Let $I_1,I_2 \in \cI_J \subset \cI$ be such that $I_1 \neq I_2$. Since $\cI = \bigwedge \cQ_\eta(\ux) \in \cQ_\eta(\ux)$, we have $\dist\parentheses*{\conv(\ux_{I_1}),\conv(\ux_{I_2})}> \eta$. Hence, $\cI_J \in \cQ_\eta(\ux_J)$ and $\cI_J \geq \bigwedge \cQ_\eta(\ux_J)$.

We prove that $\cI_J = \bigwedge \cQ_\eta(\ux_J)$ by contradiction. If $\cI_J > \bigwedge \cQ_\eta(\ux_J) \in \cP_J$ we define a partition $\cK =(\cI \setminus \cI_J)\sqcup \bigwedge \cQ_\eta(\ux_J) \in \cP_A$, which is such that $\cK < \cI$. Let $K_1$ and $K_2$ be two distinct blocks of $\cK$. If $K_1,K_2 \in \cI \setminus \cI_J$, then $\dist\parentheses*{\conv(\ux_{K_1}),\conv(\ux_{K_2})}> \eta$ because $\cI \in \cQ_\eta(\ux)$. If $K_1,K_2 \in \bigwedge \cQ_\eta(\ux_J)$, then $\dist\parentheses*{\conv(\ux_{K_1}),\conv(\ux_{K_2})}> \eta$. If $K_1 \in \cI \setminus \cI_J$ and $K_2 \in \bigwedge \cQ_\eta(\ux_J)$, there exists $I \in \cI_J$ such that $K_2 \subset I$. Since $\conv(\ux_{K_2})\subset\conv(\ux_I)$, we have once again $\dist\parentheses*{\conv(\ux_{K_1}),\conv(\ux_{K_2})}> \eta$. Hence $\cK \in \cQ_\eta(\ux)$, which contradicts $\cK < \cI = \bigwedge \cQ_\eta(\ux)$.

Finally, we have proved that $\cI_J = \bigwedge \cQ_\eta(\ux_J)$. That is, $\ux_J \in \diag_{\cI_J,\eta}$.
\end{proof}

\begin{lem}[Closedness of the deepest stratum]
\label{lem: closedness deepest stratum}
For all $\eta \geq 0$, the set $\diag_{\brackets{A},\eta}$ is closed in $V^A$.
\end{lem}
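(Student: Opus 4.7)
The plan is to show that the complement of $\diag_{\brackets{A},\eta}$ in $V^A$ is open, by expressing it as a finite union of sets defined by strict inequalities on a continuous function.

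First I would reformulate the membership condition. Since $\brackets{A}$ is the maximum of $\cP_A$, the condition $\cI \in \cQ_\eta(\ux)$ is vacuously satisfied when $\cI=\brackets{A}$ (there are no distinct blocks), so $\brackets{A} \in \cQ_\eta(\ux)$ always. Therefore $\bigwedge \cQ_\eta(\ux)=\brackets{A}$ if and only if $\cQ_\eta(\ux)=\brackets*{\brackets{A}\strut}$, i.e.\ no strictly finer partition lies in $\cQ_\eta(\ux)$. Consequently
\begin{equation*}
V^A \setminus \diag_{\brackets{A},\eta} = \bigcup_{\cI\, <\, \brackets{A}} \brackets*{\ux \in V^A \mvert \cI \in \cQ_\eta(\ux)} = \bigcup_{\cI\, <\, \brackets{A}}\ \bigcap_{\brackets{I,J}\in \cI^{\cwedge}} \brackets*{\ux \in V^A \mvert \dist\parentheses*{\conv(\ux_I),\conv(\ux_J)}>\eta}.
\end{equation*}
Since $\cP_A$ is finite, this is a finite union of finite intersections, so it suffices to show that for any fixed non-empty disjoint $I,J \subset A$, the function $\Phi_{I,J}:\ux \mapsto \dist\parentheses*{\conv(\ux_I),\conv(\ux_J)}$ is continuous on $V^A$; the set defining each factor will then be open as a preimage of $(\eta,+\infty)$ under a continuous map.

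For the continuity of $\Phi_{I,J}$, I would use the parametrization
\begin{equation*}
\Phi_{I,J}(\ux) = \min_{(\us,\ut)\,\in\,\simplex_I \times \simplex_J}\ \Norm{\us \cdot \ux_I - \ut \cdot \ux_J},
\end{equation*}
which follows from Definition~\ref{def: convex hull}. The map $(\ux,\us,\ut) \mapsto \Norm{\us \cdot \ux_I - \ut \cdot \ux_J}$ is jointly continuous on $V^A \times \simplex_I \times \simplex_J$, and $\simplex_I \times \simplex_J$ is compact. A standard uniform continuity argument (the minimum of a jointly continuous function over a compact parameter set depends continuously on the other variables) then gives the continuity of~$\Phi_{I,J}$.

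Putting the pieces together, $V^A \setminus \diag_{\brackets{A},\eta}$ is a finite union of open sets, hence open, so $\diag_{\brackets{A},\eta}$ is closed. No step here is genuinely difficult; the only mild care needed is in the characterization of the complement (observing that $\brackets{A}$ is automatically in $\cQ_\eta(\ux)$) and in the joint continuity argument for $\Phi_{I,J}$, which boils down to compactness of the simplices.
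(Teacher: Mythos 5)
Your proof is correct, and it takes a different route from the paper's. Both arguments ultimately rest on the same fact — that $\ux \mapsto \dist\parentheses*{\conv(\ux_I),\conv(\ux_J)}$ behaves continuously in $\ux$ — but you package it differently. You pass to the complement, write it as a finite union over $\cI < \brackets{A}$ of finite intersections of superlevel sets of the maps $\Phi_{I,J}$, and then invoke the standard compactness-plus-joint-continuity argument (min of a continuous function over the compact $\simplex_I \times \simplex_J$) to get continuity of $\Phi_{I,J}$; openness of each superlevel set $\brackets{\Phi_{I,J}>\eta}$ then finishes it. The paper instead argues sequentially: it fixes a sequence $\ux^{(n)} \in \diag_{\brackets{A},\eta}$ converging to $\ux$, takes any $\cI \in \cQ_\eta(\ux)$, and proves by an explicit $\epsilon$-perturbation estimate (using the triangle inequality against the $\simplex_I, \simplex_J$ parametrization of the convex hulls) that $\dist\parentheses*{\conv(\ux_I^{(n)}),\conv(\ux_J^{(n)})} \geq \eta + 2\epsilon$ for $n$ large, so $\cI \in \cQ_\eta(\ux^{(n)})$, forcing $\cI = \brackets{A}$. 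In effect the paper re-derives, by hand, the lower semicontinuity of the distance that your argument gets from the abstract compactness fact. Your version is a bit more conceptual and separates the combinatorics of the stratification from the analysis; the paper's version is self-contained and makes the quantitative dependence on $\epsilon$ explicit, which is in the spirit of the other estimates in that section. Both are complete proofs.
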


\begin{proof}
Let $\eta \geq 0$ and let $\ux \in V^A$ be the limit of a sequence $\parentheses*{\ux^{(n)}}_{n \in \N}$ of points of $\diag_{\brackets{A},\eta}$. Our goal is to prove that $\brackets{A}$ is the only element of $\cQ_\eta(\ux)$, see Definition~\ref{def: Q eta}, which will imply that $\ux \in \diag_{\brackets{A},\eta}$ and prove the result.

Let $\cI \in \cQ_\eta(\ux)$, for all $I,J \in \cI$ such that $I \neq J$, we have $\dist\parentheses*{\conv(\ux_I),\conv(\ux_J)}>\eta$. Hence there exists $\epsilon>0$ such that $\epsilon \leq \frac{1}{4}\min_{I \neq J} \parentheses*{\dist\parentheses*{\conv(\ux_I),\conv(\ux_J)}-\eta}$. Let $n \in \N$ be large enough that $\Norm{x_a-x_a^{(n)}}\leq \epsilon$ for all $a \in A$. Let $I, J \in \cI$ be such that $I \neq J$, let $\us \in \simplex_I$ and $\ut\in \simplex_J$. We have:
\begin{align*}
\eta \leq \Norm*{\us \cdot \ux_I - \ut \cdot \ux_J} &\leq \Norm*{\us \cdot\parentheses*{\ux_I^{(n)}-\ux_I}}+\Norm*{\us \cdot \ux_I^{(n)} - \ut \cdot \ux_J^{(n)}}+\Norm*{\ut \cdot\parentheses*{\ux_J^{(n)}-\ux_J}}\\
&\leq 2\epsilon +\Norm*{\us \cdot \ux_I^{(n)} - \ut \cdot \ux_J^{(n)}}
\end{align*}
so that $\Norm*{\us \cdot \ux_I^{(n)} - \ut \cdot \ux_J^{(n)}} \geq \eta + 2\epsilon$. We obtain $\dist\parentheses*{\conv(\ux_I^{(n)}),\conv(\ux_J^{(n)})}\geq\eta+2\epsilon$ by taking the infimum over $\us \in \simplex_I$ and $\ut \in \simplex_J$. Hence $\cI \in \cQ_{\eta}(\ux^{(n)})$, and $\cI = \brackets{A}$ since $\ux^{(n)} \in \diag_{\brackets{A},\eta}$.
\end{proof}

We conclude this section by proving that if $\ux \in \diag_{\cI,\eta}$, then the components indexed by elements of a given block $I \in \cI$ cannot be too far from one another.

\begin{lem}[Clusters diameter]
\label{lem: clusters diameter}
Let $\eta \geq 0$ and $\cI \in \cP_A$, for all $\ux \in \diag_{\cI,\eta}$ the following hold.
\begin{itemize}
\item For all $I,J \in \cI$ such that $I \neq J$ we have $\dist\parentheses*{\strut \conv(\ux_I),\conv(\ux_J)}> \eta$.

\item For all $a,b \in A$ such that $[a]_\cI=[b]_\cI$ we have $\Norm{x_a-x_b} \leq \eta \card(A)$.

\end{itemize}
\end{lem}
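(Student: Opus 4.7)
The first bullet is essentially a tautology. Since $\ux \in \diag_{\cI,\eta}$, by Definition~\ref{def: diag I eta} we have $\cI = \bigwedge \cQ_\eta(\ux)$, and by Lemma~\ref{lem: stability by meet} this partition belongs to $\cQ_\eta(\ux)$. The defining property of $\cQ_\eta(\ux)$ in Definition~\ref{def: Q eta} then yields $\dist(\conv(\ux_I),\conv(\ux_J))>\eta$ for all distinct $I,J \in \cI$.

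For the second bullet, fix $I \in \cI$ and $a,b \in I$ (so $[a]_\cI = [b]_\cI = I$). We may assume $x_a \neq x_b$ since otherwise the inequality is trivial. Set $u = (x_a-x_b)/\Norm{x_a-x_b}$, $y_c = \prsc{x_c}{u}$ for $c \in I$, and let $m = \card(I)$. Enumerate $I = \{c_1,\dots,c_m\}$ so that $y_{c_1} \leq \cdots \leq y_{c_m}$. The heart of the argument is a gap bound: for every $k \in \ssquarebrackets{1}{m-1}$, writing $S_k = \{c_1,\dots,c_k\}$, I claim that $\dist\parentheses*{\conv(\ux_{S_k}),\conv(\ux_{I \setminus S_k})} \leq \eta$.

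To prove the claim, consider the partition $\cI' = (\cI \setminus \{I\}) \cup \{S_k, I \setminus S_k\}$, which is a strict refinement of $\cI$. By Lemma~\ref{lem: stability by meet}, $\cI$ is the finest element of $\cQ_\eta(\ux)$, so $\cI' \notin \cQ_\eta(\ux)$, meaning that some pair of distinct blocks of $\cI'$ has convex-hull distance at most $\eta$. By the first bullet applied to $\cI$, any two distinct blocks of $\cI \setminus \{I\}$ are at distance $>\eta$; and for any $J \in \cI \setminus \{I\}$, the inclusions $\conv(\ux_{S_k}),\conv(\ux_{I \setminus S_k}) \subset \conv(\ux_I)$ combined with $\dist(\conv(\ux_J),\conv(\ux_I))>\eta$ force $\dist(\conv(\ux_J),\conv(\ux_{S_k}))$ and $\dist(\conv(\ux_J),\conv(\ux_{I \setminus S_k}))$ to exceed $\eta$ as well. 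Therefore the only remaining candidate pair is $(S_k,I \setminus S_k)$, establishing the claim.

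Now the orthogonal projection $v \mapsto \prsc{v}{u}$ is $1$-Lipschitz, so it does not increase the distance; the image of $\conv(\ux_{S_k})$ (resp. $\conv(\ux_{I \setminus S_k})$) is the interval $[y_{c_1},y_{c_k}]$ (resp. $[y_{c_{k+1}},y_{c_m}]$), and these are arranged in increasing order on $\R$. Consequently
\begin{equation*}
y_{c_{k+1}} - y_{c_k} = \dist\parentheses*{\squarebrackets*{y_{c_1},y_{c_k}},\squarebrackets*{y_{c_{k+1}},y_{c_m}}} \leq \dist\parentheses*{\conv(\ux_{S_k}),\conv(\ux_{I \setminus S_k})} \leq \eta.
\end{equation*}
Telescoping over $k$ yields $y_{c_m} - y_{c_1} \leq \eta(m-1)$. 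Since $a,b \in I$, we have $y_a,y_b \in [y_{c_1},y_{c_m}]$, so $\norm{y_a - y_b} \leq \eta(m-1)$. Finally, by the choice of $u$, $y_a - y_b = \prsc{x_a-x_b}{u} = \Norm{x_a-x_b}$, giving $\Norm{x_a-x_b} \leq \eta(m-1) \leq \eta\card(A)$. The main conceptual obstacle is isolating the right direction $u$: without projecting onto the line through $x_a$ and $x_b$, the per-refinement bounds $\dist(\conv(\ux_S),\conv(\ux_{I \setminus S})) \leq \eta$ are difficult to chain into a diameter estimate, since convex-hull distance does not in general control pointwise distance.
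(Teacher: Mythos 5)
Your proof is correct. The first bullet matches the paper exactly. For the second bullet, both you and the paper project onto the line through $x_a$ and $x_b$, but the arguments then diverge in a genuine way. The paper proves the contrapositive: assuming $\Norm{x_a-x_b}>\eta\card(A)$, a pigeonhole argument over the projections of \emph{all} of $A$ onto the segment $[0,\Norm{x_b-x_a}]$ produces a gap of width $>\eta$, which yields a two-block partition $\cJ\in\cQ_\eta(\ux)$ separating $a$ from $b$; since $\cI\leq\cJ$, this forces $[a]_\cI\neq[b]_\cI$. You instead argue directly and locally within the block $I=[a]_\cI$: sorting the projections of $\ux_I$, you show that every consecutive gap is at most $\eta$ by exploiting the minimality of $\cI$ in $\cQ_\eta(\ux)$ — splitting $I$ at any position produces a refinement that must fail the $\cQ_\eta$ condition, and by the first bullet and the monotonicity of convex-hull distance the offending pair can only be the two halves of $I$ — then telescope. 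Your route avoids the pigeonhole and yields the slightly sharper bound $\eta(\card(I)-1)$, at the small cost of needing the first bullet as input to rule out the other candidate pairs. Both are valid; yours is arguably more direct, while the paper's coarser-partition construction $\cJ=\{J,A\setminus J\}$ is the cleaner way to conclude $[a]_\cI\neq[b]_\cI$ when arguing by contraposition.
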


\begin{proof}
Let $\ux =(x_a)_{a \in A} \in \diag_{\cI,\eta}$. By Lemma~\ref{lem: stability by meet}, we have $\cI = \bigwedge \cQ_\eta(\ux) \in \cQ_\eta(\ux)$, which proves the first point, see Definitions~\ref{def: Q eta} and~\ref{def: diag I eta}.

Let $a$ and $b \in A$ and let us assume that $\Norm{x_a-x_b}>\eta \card(A)\geq 0$. We will prove that $[a]_\cI \neq [b]_\cI$, which will yield the second point by contraposition. Let $\pi:x \mapsto \prsc*{x-x_a}{\frac{x_b-x_a}{\Norm{x_b-x_a}}}$. Geometrically, $\pi$ is defined by projecting orthogonally onto the line through $x_a$ and $x_b$, then identifying this line with $\R$ using the isometry sending $x_a$ to $0$ and $x_b$ to $\Norm{x_b-x_a}$. The interval $\squarebrackets*{\strut 0,\Norm{x_b-x_a}}$ has length $\Norm{x_b-x_a}> \eta \card(A)$ and it contains at most $\card(A)$ points of the form $\pi(x_c)$ with $c \in A$, among which its endpoints. Hence, there exists $t >0=\pi(x_a)$ such that $t+\eta < \Norm{x_b-x_a}=\pi(x_b)$ and $\brackets*{\pi(x_c) \mvert c \in A} \cap [t,t+\eta] = \emptyset$.

Let $J = \brackets*{c \in A \mvert \pi(x_c) <t}$ and $\cJ = \brackets{J, A \setminus J}$. Note that $a \in J$ and $b \in A \setminus J$ so that $\cJ \in \cP_A$. We have $A \setminus J = \brackets*{c \in A \mvert \pi(x_c) \geq t}=\brackets*{c \in A \mvert \pi(x_c) > t+\eta}$, thanks to our choice of $t$. Since $\pi$ is affine, for all $x \in \conv(\ux_J)$ and $y \in \conv(\ux_{A \setminus J})$ we have $\pi(x) <t$ and $\pi(y) >t+\eta$, hence
\begin{equation*}
\eta < \pi(y)-\pi(x) = \prsc*{y-x}{\frac{x_b-x_a}{\Norm{x_b-x_a}}}\leq \Norm{y-x}.
\end{equation*}
By compactness of the convex hulls we have $\dist\parentheses*{\conv(\ux_J)\strut,\conv(\ux_{A \setminus J})} > \eta$, i.e.~$\cJ \in \cQ_\eta(\ux)$. Thus, we have $\cI = \bigwedge \cQ_\eta(\ux) \leq \cJ$. Since $[a]_\cJ =J \neq (A \setminus J)=[b]_\cJ$, we have $[a]_\cI \neq [b]_\cI$.
\end{proof}


\section{Polynomial interpolation and Gaussian fields}
\label{sec: polynomial interpolation and Gaussian fields}

The goal of this section is to introduce the Kergin interpolant of a Gaussian field and to prove some of its properties. In Section~\ref{subsec: divided differences and Kergin interpolation}, we recall the definition of Kergin interpolation for a deterministic function, which relies on generalized divided differences. In Section~\ref{subsec: divided differences of a Gaussian field}, we relate the covariance structure of a Gaussian field to that of its divided differences, that are themselves Gaussian fields. In Section~\ref{subsec: twisted Kergin interpolant}, we introduce a twisted Kergin interpolant, whose distribution is invariant by diagonal translations if the original field is stationary. Finally, Section~\ref{subsec: uniform estimates for Sigma I} is dedicated to proving uniform estimates on the variance operator of the Kergin interpolant associated with a Gaussian field.


\subsection{Divided differences and Kergin interpolation}
\label{subsec: divided differences and Kergin interpolation}

In the following, given two open subsets $U$ and $U'$ in some Banach spaces and $q \in \N$, we denote by $\cC^q(U,U')$ the space of maps of class $\cC^q$ from $U$ to $U'$. Let $U \subset \R^d$ be a convex open set. Given $f \in \cC^q(U,\R^k)$ we denote by $D^q_xf$ its $q$-th differential at $x \in U$, which belongs to the space $\sym^q(\R^d,\R^k)$ of symmetric $q$-linear maps from $(\R^d)^q$ to $\R^k$.

Let $A$ be a non-empty finite set. Recall that $\simplex_A$ stands for the standard simplex in $\R^A$. We denote by $\varsigma_A$ the Lebesgue measure on the hyperplane $\brackets*{(t_a)_{a \in A}\in \R^A\mvert \sum_{a \in A} t_a=1}$, normalized so that $\int_{\ut \in \simplex_A} \dx\varsigma_A(\ut)=\frac{1}{\norm{A}!}$.
\begin{dfn}[Divided differences]
\label{def: divided differences}
Let $A$ be a finite set of cardinality $q+1$. Let $f \in \cC^q(U,\R^k)$ and $\ux\in U^A$, we define the \emph{divided difference} of $f$ at $\ux$ by:
\begin{equation*}
f[\ux] = \int_{\ut \in \simplex_A} D^q_{\ut \cdot \ux} f \ \dx\varsigma_A(\ut) \in \sym^q(\R^d,\R^k).
\end{equation*}
\end{dfn}

\begin{rem}
\label{rem: divided differences symmetric}
The divided difference $f[\ux]$ is the integral over $\conv(\ux) \subset U$ (see~Definition~\ref{def: convex hull}) of~$D^qf$, with respect to the push-forward of $\varsigma_A$ by the standard parametrization $\ut \mapsto \ut \cdot \ux$. Since $\varsigma_A$ is invariant under permutation of the vertices of $\simplex_A$, the quantity $f[\ux]$ is invariant under permutation of the components of $\ux=(x_a)_{a \in A}$.
\end{rem}

\begin{ex}
\label{ex: divided differences}
Let us give two examples when $A=\ssquarebrackets{0}{q}$. The second one explains the terminology ``divided differences''.
\begin{itemize}
\item If $x_0=\dots=x_q=y \in U$ then $f[x_0,\dots,x_q] = \frac{1}{q!}D^q_yf$.
\item If $d=1$ then $\sym^q(\R^d,\R^k)\simeq \R^k$ canonically and, for any $\ux \in U^{q+1}\setminus \diag$, we can compute $f[x_0,\dots,x_q] = \frac{f[x_1,\dots,x_q]-f[x_0,\dots,x_{q-1}]}{x_q-x_0}$ recursively from the values of $\parentheses*{\strut f(x_i)}_{0 \leq i \leq q}$.
\end{itemize}
\end{ex}

We denote by $X=(X_1,\dots,X_d)$ and by $\R_q[X]$ the space of polynomials of degree at most~$q$ in $d$ variables. The \emph{Kergin polynomial} of a $\cC^q$ function $f$ at $\ux \in U^A$ is the only polynomial in $\R_q[X]$ interpolating the divided differences of $f$ on all $(\ux_B)_{\emptyset \neq B \subset A}$, cf.~\cite{Ker1980}. Interpolating component-wise we get the following, where tuples in $\R_q[X]^k$ are understood as polynomial maps from $\R^d$ to $\R^k$.

\begin{thm}[Kergin interpolation]
\label{thm: Kergin interpolation}
Let $f \in \cC^q(U,\R^k)$ and $\ux=(x_0,\dots,x_q) \in U^{q+1}$, there exists a unique $K(f,\ux) \in \R_q[X]^k$ such that $K(f,\ux)[\ux_B]=f[\ux_B]$ for all non-empty $B \subset \ssquarebrackets{0}{q}$. In~particular, if $y \in U$ appears with multiplicity at least $l+1$ in $\ux$ then $D^l_y\parentheses*{\strut K(f,\ux)}=D^l_y f$. Moreover,
\begin{equation}
\label{eq: Kergin explicit}
K(f,\ux) = \sum_{i=0}^q f[x_0,\dots,x_i](X-x_0,\dots,X-x_{i-1}).
\end{equation}
\end{thm}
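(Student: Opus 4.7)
My plan is to prove uniqueness first by a short downward induction on the degree, then establish existence by verifying that the polynomial on the right-hand side of~\eqref{eq: Kergin explicit} satisfies the full family of interpolation conditions, and finally to deduce the multiplicity assertion directly from Example~\ref{ex: divided differences}. Since divided differences act component-wise on $\R^k$ and formula~\eqref{eq: Kergin explicit} is linear in $f$, I would start by reducing to the scalar case $k=1$, so that I only need to build $K(f,\ux) \in \R_q[X]$ for $f: U \to \R$.

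For uniqueness, suppose $P \in \R_q[X]$ satisfies $P[\ux_B]=0$ for every non-empty $B \subset \ssquarebrackets{0}{q}$. Taking $B = \ssquarebrackets{0}{q}$ first, the $q$-th differential $D^q P$ is a constant symmetric $q$-tensor because $\deg P \leq q$, and pulling it out of the integral in Definition~\ref{def: divided differences} yields $P[\ux_B] = c_q\, D^q P$ for a nonzero constant $c_q$; hence $D^q P = 0$ and $\deg P \leq q-1$. Iterating with a subset of size $q$, the now-constant $D^{q-1}P$ is killed, and so on: at the $m$-th step $D^{q-m} P$ is constant and any $B$ with $|B| = q-m+1$ produces its vanishing. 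After $q+1$ steps, $P$ is constant and vanishes at $x_0$, so $P=0$.

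For existence, I would write the right-hand side of~\eqref{eq: Kergin explicit} as $P_q = \sum_{i=0}^q T_i$ with $T_i(X) = f[x_0,\ldots,x_i](X-x_0,\ldots,X-x_{i-1})$, a polynomial in $X$ of degree $i$, and check $P_q[\ux_B] = f[\ux_B]$ for each non-empty $B = \{b_0,\ldots,b_m\}$ by splitting by degree. The terms with $i < m$ vanish since $D^m T_i = 0$. The term with $i = m$ is a polynomial of degree exactly $m$ with top symmetric form $f[x_0,\ldots,x_m]$, which after integration against $\varsigma_m$ contributes $f[x_0,\ldots,x_m]$. The subtle contributions are those with $i > m$: each such $T_i$ has $T_i(x_j)=0$ for $j \leq i-1$, and by a Hermite/Lagrange-style computation the sum $\sum_{i>m} T_i[\ux_B]$ exactly accounts for the discrepancy between the naive $f[x_0,\ldots,x_m]$ produced by $T_m$ and the true $f[x_{b_0},\ldots,x_{b_m}]$. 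I plan to carry out this verification by induction on $q$, using the recursive and Leibniz-type identities for divided differences; once $P_q$ satisfies all interpolation conditions, uniqueness identifies it with $K(f,\ux)$, giving the explicit formula.

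The in-particular statement is then immediate from Example~\ref{ex: divided differences}: if $y$ appears with multiplicity at least $l+1$ in $\ux$, I choose $B \subset \ssquarebrackets{0}{q}$ with $|B|=l+1$ and $x_a = y$ for all $a \in B$, so that $\ux_B = (y,\ldots,y)$. By the first bullet of Example~\ref{ex: divided differences}, $f[\ux_B] = \tfrac{1}{l!}D^l_y f$ and $K(f,\ux)[\ux_B] = \tfrac{1}{l!}D^l_y K(f,\ux)$, and the interpolation condition reduces to $D^l_y K(f,\ux) = D^l_y f$. The main obstacle will be the existence step: formula~\eqref{eq: Kergin explicit} is manifestly asymmetric in $(x_0,\ldots,x_q)$, while the family of interpolation conditions is permutation-symmetric, so the cancellations among the $T_i$ with $i>m$ needed to handle subsets $B$ other than initial segments $\{0,1,\ldots,m\}$ are the combinatorial heart of the proof.
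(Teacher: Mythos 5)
Your uniqueness argument is correct and self-contained: by downward induction on degree, the condition $P[\ux_B]=0$ for $\norm{B}=q+1$ kills the constant tensor $D^qP$, then a subset of size $q$ kills the now-constant $D^{q-1}P$, and so on until $P(x_0)=0$ forces $P=0$. The reduction to $k=1$ and the derivation of the multiplicity claim from Example~\ref{ex: divided differences} also match the paper exactly. Where you depart from the paper is on existence and on the explicit formula~\eqref{eq: Kergin explicit}: the paper does not reprove these facts but cites~\cite[Thm.~12.5]{BHS1993} (see also~\cite{Mic1980,MM1980}), whereas you attempt a direct verification of the $2^{q+1}-1$ interpolation conditions for the Newton-form polynomial.

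That verification is exactly where your proposal has a genuine gap. You correctly isolate the issue --- for $\norm{B}=m+1$ the terms $T_i$ with $i<m$ vanish, $T_m$ contributes $f[x_0,\dots,x_m]$, and one must show $\sum_{i>m}T_i[\ux_B]=f[\ux_B]-f[x_0,\dots,x_m]$, including the case $B=\ssquarebrackets{0}{m}$ where this sum has to vanish outright. But the argument you invoke (``a Hermite/Lagrange-style computation,'' ``recursive and Leibniz-type identities for divided differences,'' ``induction on $q$'') is never set up: no identity is stated, the induction hypothesis for subsets $B$ containing the new index $q$ is not formulated, and confluent nodes are not treated. This is not a routine bookkeeping step. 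In several variables, divided differences are symmetric tensors, the Newton form~\eqref{eq: Kergin explicit} is order-dependent while the interpolation conditions are permutation-invariant, and establishing the requisite cancellations for arbitrary $B$ is precisely the content of Micchelli--Milman's theorem. To make your existence step rigorous you would essentially have to reproduce the argument of~\cite{MM1980} or~\cite[Ch.~12]{BHS1993}; as written it is a plan rather than a proof, and the plan omits the one piece of substance that the paper delegates to the literature.
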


\begin{proof}
We apply~\cite[Thm.~12.5]{BHS1993} with $m=0$ to each component of $f$, see also~\cite{Mic1980,MM1980}. This yields existence and uniqueness of $K(f,\ux)$ as well as the claimed expression. If $y \in U$ appears with multiplicity at least $l+1$ in $\ux$, let $B \subset \ssquarebrackets{0}{q}$ be such that $\norm{B}=l+1$ and $x_i=y$ for all $i \in B$. Then, by Example~\ref{ex: divided differences}, we have $D^l_y\parentheses*{\strut K(f,\ux)}=l!\parentheses*{\strut K(f,\ux)[\ux_B]}=l!f[\ux_B]=D^l_y f$.
\end{proof}

\begin{ex}
\label{ex: Kergin polynomial}
When $k=1$, we recover known interpolating polynomials as particular cases.
\begin{itemize}
\item If $x_0=\dots=x_q=y \in U$ then $K(f,\ux)$ is the Taylor polynomial of degree $q$ of $f$ at $y$.
\item If $d=1$ then $K(f,\ux)$ is the Hermite interpolating polynomial of $f$ at $x_0,\dots,x_q \in \R$.
\end{itemize}
\end{ex}

\begin{rem}
\label{rem: Kergin symmetric}
The polynomial map $K(f,\ux)$ does not depend on an ordering of $(x_i)_{0 \leq i \leq q}$.  Indeed, it is defined by interpolating a family of conditions which is invariant under permutation of~$\ssquarebrackets{0}{q}$, see Remark~\ref{rem: divided differences symmetric}. This allows to make sense of $K(f,\ux)$ for any $\ux \in U^A$, where $\card(A)=q+1$. Note however that each term of the sum in Equation~\eqref{eq: Kergin explicit} depends on the ordering of $(x_i)_{0 \leq i \leq q}$.
\end{rem}

The previous theorem is part of the standard knowledge concerning Kergin's interpolation. From this point on, we will analyze how the Kergin interpolant behaves, with respect to partitioning the set of interpolating points. The next lemma shows that the family of Kergin interpolants on well-separated sets of points defines a surjective map. Recall that, given $\ux \in (\R^d)^A$, we defined $\cQ_0(\ux) \subset \cP_A$ in Definition~\ref{def: Q eta}.

\begin{lem}[Surjectivity of block-wise interpolation]
\label{lem: surjectivity blockwise Kergin}
Let $A\neq \emptyset$ be finite, let $\ux \in (\R^d)^A$ and $\cI \in \cQ_0(\ux)$. The linear map $P \mapsto \parentheses*{K(P,\ux_I)}_{I \in \cI}$ is surjective from $\R_{\norm{A}-1}[X]^k$ to $\prod_{I \in \cI} \R_{\norm{I}-1}[X]^k$.
\end{lem}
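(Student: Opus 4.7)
The plan is to argue by induction on $m = |\cI|$. The base case $m=1$ is immediate from Theorem~\ref{thm: Kergin interpolation}: when $\cI=\brackets{A}$, every polynomial of degree at most $|A|-1$ is its own Kergin interpolant, so the map reduces to the identity on $\R_{|A|-1}[X]^k$.

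For the inductive step ($m \geq 2$), I would fix a block $I_0 \in \cI$ and set $\cI' = \cI \setminus \brackets{I_0}$, which is a partition of $A \setminus I_0$ inheriting the separation $\cI' \in \cQ_0(\ux_{A \setminus I_0})$. Applying the inductive hypothesis to $(\ux_{A \setminus I_0},\cI')$ and composing with the inclusion $\R_{|A \setminus I_0|-1}[X]^k \hookrightarrow \R_{|A|-1}[X]^k$ gives surjectivity of the ``projected'' map $P \mapsto (K(P,\ux_J))_{J \in \cI'}$ from $\R_{|A|-1}[X]^k$ onto $\prod_{J \in \cI'}\R_{|J|-1}[X]^k$. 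By linearity, the surjectivity of the whole map $P \mapsto (K(P,\ux_I))_{I \in \cI}$ therefore reduces to the following \emph{axis claim}: for every $Q \in \R_{|I_0|-1}[X]^k$ there exists $P \in \R_{|A|-1}[X]^k$ with $K(P,\ux_{I_0}) = Q$ and $K(P,\ux_J) = 0$ for every $J \in \cI'$. (This claim must then be established for each $I_0 \in \cI$, by the same construction.)

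To prove the axis claim, the crucial input is the disjointness $\conv(\ux_{I_0}) \cap \conv(\ux_J) = \emptyset$ for each $J \in \cI'$ granted by $\cI \in \cQ_0(\ux)$. The strategy is to construct a scalar ``polynomial partition of unity'' $\chi \in \R[X]$ of degree at most $|A|-|I_0|$ satisfying $K(\chi,\ux_J) = 0$ for all $J \in \cI'$ and $K(\chi,\ux_{I_0}) \equiv 1$. Hahn--Banach produces, for each $a \in A \setminus I_0$, an affine form $\ell_a$ with $\ell_a(x_a) = 0$ and $\ell_a > 0$ on $\conv(\ux_{I_0})$; a suitably normalized product of these affine forms (one factor per element of $A \setminus I_0$, so the degree is exactly $|A|-|I_0| = \sum_{J \in \cI'}|J|$) is the candidate for $\chi$. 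Once $\chi$ is available, a Leibniz-type expansion applied to the Hermite--Genocchi representation of Definition~\ref{def: divided differences}, combined with the Newton formula~\eqref{eq: Kergin explicit} on $\ux_{I_0}$, shows that the linear map $R \mapsto K(R\chi,\ux_{I_0})$ from $\R_{|I_0|-1}[X]^k$ to itself is upper-triangular in the Newton basis, with unit diagonal coming from $K(\chi,\ux_{I_0}) = 1$. Solving for $R$ and setting $P = R\chi$ then yields a polynomial of degree at most $|A|-1$ with $K(P,\ux_{I_0}) = Q$ and $K(P,\ux_J) = 0$ for every $J \in \cI'$.

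The hardest step will be the construction of $\chi$ in several variables. When $|J| \geq d+2$, the convex hull $\conv(\ux_J)$ cannot lie inside any affine hyperplane, so no single linear factor $\ell_a$ annihilates the higher divided differences $\chi[\ux_B]$ on its own; it only forces the pointwise vanishing $\chi(x_a) = 0$. Checking that the combined product kills \emph{all} the divided differences $(\chi[\ux_B])_{\emptyset \neq B \subset J}$ for every $J \in \cI'$ — not just the values at vertices — must proceed by propagating vanishing from level $|B|=1$ upward through the integral structure of divided differences, matching each point of $A \setminus I_0$ with exactly one linear factor. Because the degree budget $|A|-|I_0|$ is saturated, there is no slack, and this combinatorial/multilinear-algebra argument, together with the verification that multiplication by $R$ preserves the vanishing of the Kergin interpolant on each $\ux_J$, is the technical heart of the proof.
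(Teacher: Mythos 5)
Your induction on $\card(\cI)$ and the reduction to the ``axis claim'' are a reasonable strategy, and a dimension count shows that \emph{some} $\chi$ with the desired properties exists. But the construction of $\chi$ you propose is the technical heart of your argument, and it does not work.

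Take $d=2$, $A = \ssquarebrackets{1}{4}$, $\ux = \big((0,0),\,(1,0),\,(2,0),\,(1,1)\big)$, $\cI = \brackets*{\brackets{1},\brackets{2,3,4}}$ and $I_0 = \brackets{1}$; one checks $\cI \in \cQ_0(\ux)$. With, say, $\ell_2 = 1 - X_1$, $\ell_3 = 2 - X_1$, $\ell_4 = 2 - X_1 - X_2$, the product $\chi = \ell_2\ell_3\ell_4$ vanishes at $(1,0)$, $(2,0)$, $(1,1)$, but the divided difference $\chi[(1,0),(2,0)]$ is not zero: applied to $e_2$ it is proportional to $\int_0^1 \partial_2\chi(1+t,0)\,\dx t = \int_0^1 t(1-t)\,\dx t = \tfrac{1}{6} \neq 0$, so $K\big(\chi,\ux_{\brackets{2,3,4}}\big)\neq 0$. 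Forcing $\chi(x_a)=0$ for all $a$ in a block $J$ kills the directional derivatives of $\chi$ along the edge directions of the block-$J$ simplex, but in $d>1$ the Kergin operator probes $D^q\chi$ in all directions, and a product of one affine form per point of $A\setminus I_0$ has no degree slack to handle this. Note the obstruction already appears at $\norm{J}=d+1$, not only for $\norm{J}\geq d+2$, so the ``propagation from level $\norm{B}=1$ upward'' that you flag as the hard step is actually false for your candidate $\chi$.

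The paper avoids this difficulty entirely by exploiting the fact that $K(\cdot,\ux)$ is defined on \emph{any} $\cC^{\norm{A}-1}$ function, not just polynomials. Because the convex hulls $\conv(\ux_I)$ are disjoint compacts, one takes \emph{smooth} cut-off functions $\chi_I$ equal to $1$ near $\conv(\ux_I)$ and to $0$ near the other hulls, sets $f = \sum_{I \in \cI}\chi_I P_I$, and then $P = K(f,\ux)$ lies in $\R_{\norm{A}-1}[X]^k$ automatically. Since $f$ coincides with $P_I$ on a neighborhood of $\conv(\ux_I)$, the divided differences of $P$ on every nonempty $\ux_B$ with $B \subset I$ agree with those of $P_I$, and the uniqueness part of the Kergin theorem yields $K(P,\ux_I) = P_I$. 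The entire ``polynomial partition of unity'' problem never arises, and no degree bookkeeping is needed.
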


\begin{proof}
Since $\cI \in \cQ_0(\ux)$, the convex hulls $\parentheses*{\conv(\ux_I)}_{I \in \cI}$ are disjoint compact subsets of $\R^d$. Thus, for any $I \in \cI$, there exists a cut-off function $\chi_I \in \cC^\infty(\R^d,\R)$ such that $\chi_I= 1$ on a neighborhood of $\conv(\ux_I)$ and $\chi_I= 0$ on a neighborhood of $\bigsqcup_{J \in \cI \setminus \brackets{I}} \conv(\ux_J)$.

Let $(P_I)_{I \in \cI} \in \prod_{I \in \cI} \R_{\norm{I}-1}[X]^k$, we define $f = \sum_{I \in \cI}\chi_I P_I$ and $P=K(f,\ux) \in \R_{\norm{A}-1}[X]^k$. Let $I \in \cI$, for all non-empty $J \subset I$, we have:
\begin{equation*}
P[\ux_J] = K(f,\ux)[\ux_J]=f[\ux_J] = P_I[\ux_J]
\end{equation*}
because $f=P_I$ on a neighborhood of $\conv(\ux_J)\subset \conv(\ux_I)$, see Definition~\ref{def: divided differences}. The uniqueness part in Theorem~\ref{thm: Kergin interpolation} shows that $K(P,\ux_I)=P_I$ for all $I \in \cI$. Hence the result.
\end{proof}

In the case where each geometrical point appears with multiplicity exactly $2$ in $\ux$, the previous result shows that we can interpolate the $1$-jets of any $\cC^1$ function on $q$ pairwise distinct points in~$\R^d$ by a polynomial of degree at most $2q-1$. In the following, given vector spaces $V$ and $W$, we denote by $\cL(V,W)$ the space of continuous linear maps from $V$ to $W$ and by $\cL(V)=\cL(V,V)$.

\begin{cor}[Surjectivity of $1$-jets evaluation]
\label{cor: surjectivity 1-jets}
Let $A\neq \emptyset$ be finite and let $\ux \in (\R^d)^A \setminus \diag$, the linear map $P \mapsto \parentheses*{\strut \parentheses*{P(x_a),D_{x_a}P}}_{a \in A}$ is surjective from $\R_{2\norm{A}-1}[X]^k$ to $\parentheses*{\R^k \times \cL(\R^d,\R^k)}^A$.
\end{cor}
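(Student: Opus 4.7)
The plan is to deduce this corollary from Lemma~\ref{lem: surjectivity blockwise Kergin} applied to the doubled tuple $\ux_{2A} = (\ux,\ux) \in (\R^d)^{2A}$, together with the standard fact (recorded in Example~\ref{ex: Kergin polynomial}) that when a single point is repeated, the Kergin interpolant is a Taylor polynomial.

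First I would set up the partition. Consider $\cI = 2\bigwedge \cP_A = \brackets*{\strut \brackets{(0,a),(1,a)} \mvert a \in A} \in \cP_{2A}$. Because $\ux \in (\R^d)^A \setminus \diag$, the points $(x_a)_{a \in A}$ are pairwise distinct, so for distinct blocks $I = \brackets{(0,a),(1,a)}$ and $J = \brackets{(0,b),(1,b)}$ of $\cI$ we have $\conv\parentheses*{(\ux_{2A})_I} = \brackets{x_a}$ and $\conv\parentheses*{(\ux_{2A})_J} = \brackets{x_b}$, which are disjoint. Hence $\cI \in \cQ_0(\ux_{2A})$, see Definition~\ref{def: Q eta}.

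Next I would apply Lemma~\ref{lem: surjectivity blockwise Kergin}: since $\card(2A) = 2\norm{A}$, the linear map
\begin{equation*}
P \in \R_{2\norm{A}-1}[X]^k \longmapsto \parentheses*{K\parentheses*{P,(\ux_{2A})_I}}_{I \in \cI} \in \prod_{I \in \cI} \R_{\norm{I}-1}[X]^k = \prod_{a \in A} \R_1[X]^k
\end{equation*}
is surjective. For each $a \in A$, the tuple $(\ux_{2A})_{\brackets{(0,a),(1,a)}} = (x_a,x_a)$ consists of the single point $x_a$ repeated twice, so by Example~\ref{ex: Kergin polynomial} (or Theorem~\ref{thm: Kergin interpolation} with the $l=1$ jet condition) we have $K\parentheses*{P,(x_a,x_a)}(y) = P(x_a) + D_{x_a}P(y-x_a)$, i.e.~the degree-$1$ Taylor polynomial of $P$ at $x_a$.

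Finally, the map $Q \in \R_1[X]^k \mapsto \parentheses*{Q(x_a),D_{x_a}Q} \in \R^k \times \cL(\R^d,\R^k)$ is a linear isomorphism for any fixed $x_a$, since a degree-$1$ polynomial map is uniquely determined by (and can be freely prescribed through) its value and differential at a single point. Taking the product of these isomorphisms over $a \in A$ and composing with the surjection above yields surjectivity of $P \mapsto \parentheses*{\parentheses*{P(x_a),D_{x_a}P}}_{a \in A}$. There is no real obstacle here; the only thing to verify carefully is that the partition into doubled singletons belongs to $\cQ_0(\ux_{2A})$, which is immediate from $\ux \notin \diag$.
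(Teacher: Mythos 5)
Your proof is correct and takes essentially the same route as the paper: doubling the tuple, checking that the partition $2\bigwedge\cP_A$ lies in $\cQ_0(\ux_{2A})$, applying Lemma~\ref{lem: surjectivity blockwise Kergin}, identifying $K(P,x_a,x_a)$ with the degree-$1$ Taylor polynomial, and finishing with the fact that $Q\mapsto(Q(x_a),D_{x_a}Q)$ is an isomorphism of $\R_1[X]^k$ onto $\R^k\times\cL(\R^d,\R^k)$. The only cosmetic difference is that you verify $2\bigwedge\cP_A\in\cQ_0(\ux_{2A})$ by hand, whereas the paper delegates this to Remark~\ref{rem: diag I eta doubled points}.
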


\begin{proof}
Since $\ux \notin \diag$, we have $\bigwedge \cP_A = \brackets*{\brackets{a}\mvert a \in A} \in \cQ_0(\ux)$, see Definition~\ref{def: Q eta}. Then, by Remark~\ref{rem: diag I eta doubled points}, we have $2\bigwedge \cP_A \in \cQ_0(\ux_{2A})$. Lemma~\ref{lem: surjectivity blockwise Kergin} applied to $2\bigwedge \cP_A = \brackets*{\brackets{0,1} \times \brackets{a}\strut \mvert a \in A}$ and $\ux_{2A}$ then shows that $\varpi_{\ux}:P \mapsto \parentheses*{\strut K(P,x_a,x_a)}_{a \in A}$ is surjective from $\R_{2\norm{A}-1}[X]^k$ to $(\R_1[X]^k)^A$.

For all $y \in \R^d$, the map $P \mapsto \parentheses*{P(y),D_yP}$ is an isomorphism from $\R_1[X]^k$ to $\R^k \times \cL(\R^d,\R^k)$ whose inverse is given by $(z,\Lambda) \mapsto z +\Lambda(X-y)$. Hence $\vartheta_{\ux}:\parentheses*{P_a}_{a \in A} \mapsto \parentheses*{\strut \parentheses*{P_a(x_a),D_{x_a}P_a}}_{a \in A}$ is an isomorphism from $(\R_1[X]^k)^A$ to $\parentheses*{\R^k \times \cL(\R^d,\R^k)}^A$.

Let $P \in \R_{2\norm{A}-1}[X]^k$. By Example~\ref{ex: Kergin polynomial}, we know that $K(P,x_a,x_a)$ is the Taylor polynomial of degree $1$ of $P$ at $x_a$. Hence $\vartheta_{\ux}\circ \varpi_{\ux}(P)=\vartheta_{\ux}\parentheses*{\parentheses*{K(P,x_a,x_a)}_{a \in A}} = \parentheses*{\strut \parentheses*{P(x_a),D_{x_a}P}}_{a \in A}$ and the map we are interested in is $\vartheta_{\ux}\circ \varpi_{\ux}:\R_{2\norm{A}-1}[X]^k\to\parentheses*{\R^k \times \cL(\R^d,\R^k)}^A$. It is surjective as the composition of two surjective maps.
\end{proof}

To conclude this section, we discuss the regularity of divided differences. Recall that $U \subset \R^d$ is convex and open. Given a compact $\Gamma \subset U$, we denote by $\Norm{f}_{\cC^q,\Gamma}=\max \brackets*{\Norm{D^i_z f} \mvert \strut i \in \ssquarebrackets{0}{q}, z \in \Gamma}$ for all $f \in \cC^q(U,\R^k)$, where the norm on~$\sym^i(\R^d,\R^k)$ is the operator norm subordinated to the Euclidean norms on $\R^d$ and~$\R^k$. We consider $\cC^q(U,\R^k)$ endowed with the compact-open topology, that is the metric topology characterized by the fact that $f_n \xrightarrow[n \to +\infty]{}~f$ if and only if $\Norm{f_n-f}_{\cC^q,\Gamma}\xrightarrow[n \to +\infty]{}0$ for all compact subsets $\Gamma \subset U$.

\begin{lem}[Regularity of divided differences]
\label{lem: regularity divided differences}
Let $q \in \N$ and $A$ be a set of cardinality $q+1$. The map $\parentheses*{f,\ux} \mapsto f[\ux]$ is continuous from $\cC^q(U,\R^k) \times U^A$ to $\sym^q(\R^d,\R^k)$.
\end{lem}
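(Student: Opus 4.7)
The plan is to prove sequential continuity at an arbitrary point $(f_0,\ux^{(0)}) \in \cC^q(U,\R^k) \times U^A$. The first step is to exhibit a convenient compact set. Since $U$ is open and $\conv(\ux^{(0)})$ is a compact subset of $U$, there exist $\epsilon > 0$ and a compact set $\Gamma \subset U$ such that the closed $\epsilon$-neighborhood of $\conv(\ux^{(0)})$ is contained in $\Gamma$. Choosing a neighborhood $V$ of $\ux^{(0)}$ in $U^A$ small enough that $\max_{a \in A} \Norm{x_a - x_a^{(0)}} < \epsilon$ for $\ux \in V$, we get $\conv(\ux) \subset \Gamma$ for every $\ux \in V$ (by convexity of the neighborhood), so the integrand in $f[\ux] = \int_{\simplex_A} D^q_{\ut \cdot \ux} f \, \dx\varsigma_A(\ut)$ is taken on $\Gamma$ for all $\ux \in V$.

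The second step is to consider any sequence $(f_n,\ux^{(n)}) \to (f_0,\ux^{(0)})$, which we may assume satisfies $\ux^{(n)} \in V$ for all $n$, and split
\begin{equation*}
f_n[\ux^{(n)}] - f_0[\ux^{(0)}] = \int_{\simplex_A} \parentheses*{D^q_{\ut \cdot \ux^{(n)}} f_n - D^q_{\ut \cdot \ux^{(n)}} f_0}\dx\varsigma_A(\ut) + \int_{\simplex_A} \parentheses*{D^q_{\ut \cdot \ux^{(n)}} f_0 - D^q_{\ut \cdot \ux^{(0)}} f_0}\dx\varsigma_A(\ut).
\end{equation*}
The norm of the first integral is bounded by $\varsigma_A(\simplex_A) \Norm{f_n - f_0}_{\cC^q,\Gamma} = \frac{1}{(q+1)!}\Norm{f_n - f_0}_{\cC^q,\Gamma}$, which tends to $0$ by definition of the compact-open topology on $\cC^q(U,\R^k)$.

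For the second integral, the map $(\ut,\ux) \mapsto \ut \cdot \ux$ is continuous, hence uniformly continuous on the compact $\simplex_A \times \overline{V}$ (up to shrinking $V$ to have compact closure inside $U^A$), so $\sup_{\ut \in \simplex_A} \Norm{\ut \cdot \ux^{(n)} - \ut \cdot \ux^{(0)}} \to 0$ as $n \to +\infty$. Since $D^q f_0$ is continuous on $U$, hence uniformly continuous on the compact $\Gamma$, this implies $\sup_{\ut \in \simplex_A} \Norm{D^q_{\ut \cdot \ux^{(n)}} f_0 - D^q_{\ut \cdot \ux^{(0)}} f_0} \to 0$. The norm of the second integral is therefore at most $\frac{1}{(q+1)!}$ times this supremum, and also vanishes in the limit. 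Combining the two estimates yields $f_n[\ux^{(n)}] \to f_0[\ux^{(0)}]$ in $\sym^q(\R^d,\R^k)$.

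No serious obstacle is expected: the argument is the standard $\epsilon/2$ splitting for joint continuity of an integral depending on two parameters, and all the uniform bounds come directly from the compact-open $\cC^q$ topology and uniform continuity on compact sets. The only mild subtlety is the initial choice of $\Gamma$, for which convexity of $U$ is used implicitly to ensure the $\epsilon$-thickening of $\conv(\ux^{(0)})$ still lies in $U$.
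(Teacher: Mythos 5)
Your argument is correct and essentially reproduces the paper's proof: both fix a convenient compact $\Gamma\subset U$ around $\conv(\ux^{(0)})$, split the difference $f_n[\ux^{(n)}]-f_0[\ux^{(0)}]$ into a field-error term controlled by $\Norm{\cdot}_{\cC^q,\Gamma}$ and a point-error term controlled by uniform continuity of $D^qf_0$ on $\Gamma$, and integrate against $\varsigma_A$. The only differences are cosmetic (sequential continuity rather than $\epsilon$--$\delta$, and the $\epsilon$-neighborhood of $\conv(\ux^{(0)})$ rather than the convex hull of small balls as the choice of $\Gamma$); and to be precise, convexity of $U$ enters not to keep the $\epsilon$-thickening inside $U$ (that is automatic for compact sets in an open set) but already to guarantee that $\conv(\ux^{(0)})\subset U$ so the divided differences are defined.
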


\begin{proof}
Without loss of generality, we can assume that $A = \ssquarebrackets{0}{q}$. Let us fix $f \in \cC^q(U,\R^k)$ and $\ux=(x_0,\dots,x_q) \in U^{q+1}$, and prove the continuity of the map we are interested in at the point $(f,\ux)$. There exists $\varrho>0$ such that, for all $i \in \ssquarebrackets{0}{q}$, the closed ball $\B_i$ of center $x_i$ and radius $\varrho$ is contained in~$U$. By Carathéodory's Theorem, $\Gamma = \conv\parentheses*{\bigcup_{i=0}^q \B_i} \subset U$ is a compact set.

Let $\epsilon>0$, there exists a neighborhood $\cO$ of $f$ in $\cC^q(U,\R^k)$ such that, for all $g \in \cO$, we have $\Norm{g-f}_{\cC^q,\Gamma}\leq \epsilon$. Besides, by uniform continuity of $D^q f$ on $\Gamma$, there exists $\delta\in (0,\varrho)$ such that $\Norm*{D^q_zf-D^q_wf}\leq \epsilon$ for all $w,z \in \Gamma$ such that $\Norm{z-w}\leq \delta$.

Let $g \in \cO$ and $y_0,\dots,y_q \in U$ be such that $\Norm{y_i-x_i}\leq \delta$ for all $i \in \ssquarebrackets{0}{q}$. Since $\delta <\varrho$, we have $y_0,\dots,y_q \in \Gamma$. By convexity, for any $\ut\in \simplex_q$ we have $\ut \cdot \uy$ and $ \ut \cdot \ux \in \Gamma$ and $\Norm{\ut \cdot \uy - \ut \cdot \ux} \leq \delta$, so that
\begin{equation*}
\Norm*{f[\uy]-f[\ux]} \leq \int_{\ut \in \simplex_q} \Norm*{D^q_{\ut \cdot \uy} f-D^q_{\ut \cdot \ux} f} \dx \varsigma_q(\ut)\leq \epsilon.
\end{equation*}
On the other hand, since $g \in \cO$ and $\ut \cdot\uy \in \Gamma$ for all $\ut \in \simplex_q$, we have:
\begin{equation*}
\Norm*{g[\uy]-f[\uy]} \leq \int_{\ut \in \simplex_q} \Norm*{D^q_{\ut\cdot\uy}(g-f)} \dx \varsigma_q(\ut) \leq \Norm*{g-f}_{\cC^q,\Gamma} \leq \epsilon.
\end{equation*}
Thus, $\Norm*{g[\uy]-f[\ux]}\leq \Norm*{g[\uy]-f[\uy]}+\Norm*{f[\uy]-f[\ux]} \leq 2\epsilon$ for all $g \in \cO$ and $y_0,\dots,y_q \in U$ such that $\Norm{y_i-x_i}\leq \delta$ for all $i \in \ssquarebrackets{0}{q}$. This proves the continuity at $(f,\ux)$ and concludes the proof.
\end{proof}


\subsection{Divided differences of a Gaussian field}
\label{subsec: divided differences of a Gaussian field}

In this section, we recall some elementary facts about Gaussian vectors and fields. Then, we study the divided differences associated with a Gaussian field, which are themselves Gaussian fields.

\begin{dfn}[Covariance operators]
\label{def: covariance operators}
Let $Y_1$ and $Y_2$ be centered Gaussian vectors taking values in Euclidean spaces $\parentheses*{V_1,\prsc{\cdot}{\cdot}}$ and $\parentheses*{V_2,\prsc{\cdot}{\cdot}}$ respectively. The \emph{covariance operator} of $Y_1$ and $Y_2$ is the only linear map $\cov{Y_1}{Y_2}\in \cL(V_2,V_1)$ such that:
\begin{equation}
\label{eq: def covariance}
\forall v_1 \in V_1, \forall v_2 \in V_2, \quad \prsc*{v_1}{\strut \cov{Y_1}{Y_2}v_2} = \esp{\strut \prsc{Y_1}{v_1}\prsc{Y_2}{v_2}}.
\end{equation}
Denoting by $\sym(V_1)$ (resp.~$\sym^+(V_1)$) the space (resp.~set) of self-adjoint (resp.~positive self-adjoint) operators on $V_1$, the \emph{variance operator} of $Y_1$ is then $\var{Y_1}=\cov{Y_1}{Y_1}\in \sym(V_1)$. This operator is non-negative, and we say that $Y_1$ is \emph{non-degenerate} if $\var{Y_1} \in \sym^+(V_1)$.
\end{dfn}

Let us introduce some notation in order to talk about the regularity of Gaussian fields and their covariance kernels.

\begin{dfn}[Multi-indices]
\label{def: multi-indices}
Let $\alpha=(\alpha_1,\dots,\alpha_d) \in \N^d$ be a \emph{multi-index}.
\begin{itemize}
\item We denote by $\norm{\alpha}=\sum_{i=1}^d \alpha_i$ its \emph{length} and by $\alpha!=\prod_{i=1}^d (\alpha_i!)$.
\item If $x=(x_1,\dots,x_d) \in \R^d$ we denote by $x^\alpha=\prod_{i=1}^d x_i^{\alpha_i}$, and accordingly $X^\alpha=\prod_{i=1}^d X_i^{\alpha_i}$.
\item Given $i \in \ssquarebrackets{1}{d}$, we denote by $\partial_i$ the partial derivative with respect to the $i$-th variable in~$\R^d$ and by $\partial^\alpha=\partial_1^{\alpha_1}\dots\partial_d^{\alpha_d}$.
\item Let $\beta \in \N^d$, we have $(\alpha,\beta) \in \N^{2d}$ and we denote by $\partial^{\alpha,\beta}=\partial^{(\alpha,\beta)}$, acting on maps on $\R^d \times \R^d$.
\end{itemize}
\end{dfn}

Let $U \subset \R^d$ be an open subset and $V$ be some Euclidean space, we denote by $\cC^{q,q}(U \times U,V)$ the space of maps $r:U\times U\to V$ such that $\partial^{\alpha,\beta}r:U \times U \to V$ is well-defined and continuous for all $\alpha, \beta \in \N^d$ such that $\norm{\alpha}\leq q$ and $\norm{\beta}\leq q$.

\begin{dfn}[Correlation kernel]
\label{def: correlation kernel}
Let $f:U \to V$ be a centered Gaussian field, where $U \subset \R^d$ is open and $V$ is Euclidean. Its \emph{covariance kernel} is the map $r:U\times U \to \cL(V)$ defined by:
\begin{equation*}
r:(x,y) \longmapsto \cov{f(x)}{f(y)}.
\end{equation*}
\end{dfn}

If $f:U \to V$ is a $\cC^q$ centered Gaussian field then $r \in \cC^{q,q}(U\times U,\cL(V))$, see~\cite[Chap.~1.4.3]{AW2009} for example. In this case, we have $\partial^{\alpha,\beta}r:(x,y) \mapsto \cov{\partial^\alpha f(x)}{\partial^\beta f(y)}$ for all $\alpha,\beta \in \N^d$ of length at most $q$. If $U = \R^d$ and $f$ is stationary then $r(x,y) = r(0,y-x)$ for all $x,y \in \R^d$.

Let us assume the open set $U$ to be convex and let $f$ be a centered Gaussian field. We first prove that the divided differences of~$f$ (see~Definition~\ref{def: divided differences}) define Gaussians fields.

\begin{lem}[Divided differences of a Gaussian field]
\label{lem: divided differences Gaussian field}
Let $A$ be a set of cardinality~$q+1$ and $f \in \cC^q(U,\R^k)$ be a centered Gaussian field. Then $f[\,\cdot\,]:U^A \to \sym^q(\R^d,\R^k)$ is a continuous centered Gaussian field.
\end{lem}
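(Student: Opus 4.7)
The plan is to verify the two defining properties of a continuous centered Gaussian field—continuity of sample paths and joint Gaussianity of all finite-dimensional marginals—starting from the integral representation in Definition~\ref{def: divided differences}. Sample-path continuity is immediate: by hypothesis, $f$ almost surely lies in $\cC^q(U,\R^k)$, and on this probability-one event Lemma~\ref{lem: regularity divided differences}, applied pathwise, yields continuity of $\ux \mapsto f[\ux]$ on $U^A$.

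For Gaussianity, I would first record that $D^q f : U \to \sym^q(\R^d, \R^k)$ is itself a continuous centered Gaussian field: each component $\partial^\alpha f$ with $\norm{\alpha} = q$ is an $L^2(\Omega)$-limit of centered Gaussian difference quotients of $f$, and joint Gaussianity of any finite collection of evaluations is preserved under this limiting procedure. Fix $\ux^{(1)},\dots,\ux^{(n)} \in U^A$; by the Cram\'er--Wold device it suffices to show that, for every family $(v_j)_{1 \leq j \leq n}$ of linear forms on $\sym^q(\R^d,\R^k)$, the real random variable
\[
\sum_{j=1}^n v_j\parentheses*{f[\ux^{(j)}]} \;=\; \int_{\simplex_A} \sum_{j=1}^n v_j\parentheses*{D^q_{\ut \cdot \ux^{(j)}} f} \dx \varsigma_A(\ut)
\]
is centered Gaussian. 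The integrand on the right is a continuous centered Gaussian process in $\ut \in \simplex_A$, so the integral can be expressed as an $L^2(\Omega)$-limit of Riemann sums, each of which is a centered Gaussian random variable; since the $L^2$-closure of centered Gaussian real variables consists of centered Gaussian real variables, the conclusion follows.

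The only point that calls for care—and the main (mild) obstacle—is the $L^2(\Omega)$-convergence of these Riemann sums. It follows from uniform continuity on the compact simplex $\simplex_A$ of the $L^2(\Omega)$-valued map $\ut \mapsto D^q_{\ut \cdot \ux^{(j)}} f$, which is itself a consequence of the continuity of $D^q f$ as a Gaussian field (equivalently, of the continuity of the associated covariance kernel, as recorded just after Definition~\ref{def: correlation kernel}).
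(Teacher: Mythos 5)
Your proof is correct, but it follows a genuinely different route from the paper's. The paper invokes the abstract functional-analytic fact (cited to Bogachev, Lem.~2.2.2) that the push-forward of a Gaussian measure under a continuous linear map is Gaussian: by Lemma~\ref{lem: regularity divided differences}, the map $f \mapsto \parentheses*{f[\ux^{(i)}]}_{1 \le i \le n}$ is linear and continuous from $\cC^q(U,\R^k)$ (with the compact-open topology) to a finite-dimensional space, and since the distribution of $f$ is a Gaussian measure on that Fréchet space, its image is a Gaussian vector. That is essentially a one-line argument. You instead give an elementary, self-contained proof: you first establish that $D^q f$ is a Gaussian field via $L^2$-limits of difference quotients, reduce by Cram\'er--Wold to a scalar functional, write it as an integral over $\simplex_A$ of a continuous centered Gaussian process, and approximate by Riemann sums, using continuity of $\ut \mapsto D^q_{\ut\cdot\ux^{(j)}} f$ as an $L^2(\Omega)$-valued map to get $L^2$-convergence and closure of the Gaussian class under $L^2$-limits. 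Your approach avoids importing Gaussian measure theory on infinite-dimensional spaces at the cost of a longer argument and the need to first prove Gaussianity of $D^q f$; the paper's approach is terser but leans on a nontrivial abstract citation. One small point worth making explicit in your write-up: you obtain the $L^2$-limit of Riemann sums and the pathwise (Lebesgue) integral as a priori different objects; they coincide almost surely because the Riemann sums also converge almost surely to the pathwise integral (by sample-path continuity), and a.s.\ and $L^2$ limits agree along a subsequence. With that identification spelled out, the argument is complete.
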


\begin{proof}
By Lemma~\ref{lem: regularity divided differences}, for all $n \in \N^*$ and $\ux^{(1)},\dots,\ux^{(n)} \in U^A$, the map $f \mapsto \parentheses*{f[\ux^{(i)}]}_{1 \leq i \leq n}$ is linear and continuous, hence $\parentheses*{f[\ux^{(i)}]}_{1 \leq i \leq n}$ is a centered Gaussian vector, see~\cite[Lem.~2.2.2]{Bog1998}. This shows that $f[\,\cdot\,]$ is a centered Gaussian field. It is continuous, once again by Lemma~\ref{lem: regularity divided differences}.
\end{proof}

Let $(e_1,\dots,e_d)$ stand for the standard basis of $\R^d$. For all multi-index $\alpha$ of length~$q$, we denote by $e_\alpha=(e_1,\dots,e_1,\dots,e_d,\dots,e_d)$, where $e_i$ appears exactly $\alpha_i$ times. We denote by $e_\alpha^*$ the unique element of $\sym^q(\R^d,\R)$ such that $e^*_\alpha(e_\alpha)=1$ and $e^*_\alpha(e_\beta)=0$ if $\norm{\beta}=q$ and $\beta\neq \alpha$. The family $(e_\alpha^* \otimes e_i)_{\norm{\alpha}=q, 1 \leq i \leq k}$ is a basis of $\sym^q(\R^d,\R^k)\simeq \sym^q(\R^d,\R) \otimes \R^k$ and we endow this space with the only inner product making this basis orthonormal. Note that, for any $\cC^q$ function $f=(f_1,\dots,f_k)$ from $\R^d$ to $\R^k$ and $z \in \R^d$, we have
\begin{equation}
\label{eq: differential in terms of e*alpha}
D^q_z f = \sum_{i=1}^k D^q_zf_i \otimes e_i = \sum_{i=1}^k \parentheses*{\sum_{\norm{\alpha}=q} D^q_zf_i(e_\alpha)e_\alpha^*}\otimes e_i = \sum_{\norm{\alpha}=q, 1 \leq i \leq k} \partial^\alpha f_i(z) e_\alpha^* \otimes e_i.
\end{equation}

\begin{lem}[Covariance of divided differences]
\label{lem: covariance divided differences}
Let $f\in \cC^q(U,\R^k)$ be a centered Gaussian field, let $\ux\in U^{m+1}$ and $\uy\in U^{n+1}$ where $m \leq q$ and $n \leq q$. Then, we~have:
\begin{equation*}
\cov{f[\ux]}{f[\uy]} = \int_{\us \in \simplex_m}\int_{\ut \in \simplex_n} \cov{D^m_{\us\cdot \ux}f}{D^n_{\ut\cdot \uy}f} \dx \us \dx \ut.
\end{equation*}
\end{lem}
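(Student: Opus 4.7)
The plan is to unfold all covariances by pairing against arbitrary test vectors and then apply Fubini's theorem to exchange the simplex integrations with the expectation. By Definition~\ref{def: covariance operators}, the operator $\cov{f[\ux]}{f[\uy]} \in \cL\parentheses{\sym^n(\R^d,\R^k),\sym^m(\R^d,\R^k)}$ is characterized by its scalar bilinear form, so it suffices to establish the equality
\begin{equation*}
\prsc*{v}{\strut \cov{f[\ux]}{f[\uy]} w} = \int_{\simplex_m}\int_{\simplex_n} \prsc*{v}{\strut \cov{D^m_{\us\cdot \ux}f}{D^n_{\ut\cdot \uy}f} w} \dx \varsigma_m(\us) \dx \varsigma_n(\ut)
\end{equation*}
for all $v \in \sym^m(\R^d,\R^k)$ and $w \in \sym^n(\R^d,\R^k)$.

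First, I would use that the divided difference $f[\ux] = \int_{\simplex_m} D^m_{\us \cdot \ux} f \dx \varsigma_m(\us)$ is a Bochner integral in the finite-dimensional space $\sym^m(\R^d,\R^k)$, so the continuous linear functional $\prsc{\cdot}{v}$ commutes with the integral sign:
\begin{equation*}
\prsc{f[\ux]}{v} = \int_{\simplex_m} \prsc*{D^m_{\us \cdot \ux}f}{v} \dx \varsigma_m(\us),
\end{equation*}
and analogously for $\prsc{f[\uy]}{w}$ over $\simplex_n$. Multiplying these two scalar integrals and taking expectations, I would then invoke Fubini's theorem to obtain
\begin{equation*}
\esp*{\prsc{f[\ux]}{v} \prsc{f[\uy]}{w}} = \int_{\simplex_m}\int_{\simplex_n} \esp*{\prsc{D^m_{\us\cdot \ux}f}{v}\prsc{D^n_{\ut\cdot \uy}f}{w}} \dx\varsigma_m(\us)\dx\varsigma_n(\ut).
\end{equation*}
Using Definition~\ref{def: covariance operators} on the left (to recognize $\prsc{v}{\cov{f[\ux]}{f[\uy]}w}$) and on the integrand of the right (to recognize $\prsc{v}{\cov{D^m_{\us\cdot\ux}f}{D^n_{\ut\cdot\uy}f}w}$) concludes the proof, since $v$ and $w$ were arbitrary.

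The one technical point to check is the application of Fubini, which requires joint measurability and $L^1$ integrability of $(\us,\ut,\omega) \mapsto \prsc{D^m_{\us\cdot\ux}f(\omega)}{v}\prsc{D^n_{\ut\cdot\uy}f(\omega)}{w}$ on $\simplex_m\times\simplex_n\times\Omega$. Measurability follows from joint continuity in $(\us,\ut)$ provided by Lemma~\ref{lem: regularity divided differences} (applied with $q$ replaced by $m$ and $n$) combined with measurability in $\omega$. For integrability, Cauchy--Schwarz gives
\begin{equation*}
\esp*{\norm*{\prsc{D^m_{\us\cdot \ux}f}{v}\prsc{D^n_{\ut\cdot \uy}f}{w}}} \leq \Norm{v}\Norm{w}\sqrt{\esp{\Norm{D^m_{\us\cdot\ux}f}^2}\esp{\Norm{D^n_{\ut\cdot\uy}f}^2}},
\end{equation*}
and the right-hand side is continuous, hence bounded, on the compact simplex $\simplex_m\times\simplex_n$, since the centered Gaussian fields $(\us,\ut) \mapsto \parentheses*{D^m_{\us\cdot\ux}f, D^n_{\ut\cdot\uy}f}$ have continuous covariance kernel (as $f$ is of class $\cC^{\max(m,n)}$). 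This is routine and should not present any real obstacle; indeed, the whole argument is essentially a bookkeeping exercise that transports bilinearity of the covariance operator through the finite-dimensional Bochner integrals defining $f[\ux]$ and $f[\uy]$.
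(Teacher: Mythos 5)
Your proof is correct and follows essentially the same route as the paper's: pair against arbitrary test vectors, commute the simplex integrals with the pairing and the expectation via Fubini, and recognize the covariance operators on both sides. The only difference is that you spell out the Fubini justification (joint measurability plus an $L^1$ bound via Cauchy--Schwarz), whereas the paper condenses this to a one-line remark about continuity of the Gaussian fields; both are adequate.
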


\begin{proof}
Let $u \in \sym^m(\R^d,\R^k)$ and $v \in \sym^n(\R^d,\R^k)$, by linearity and Equation~\eqref{eq: def covariance}, we have:
\begin{align*}
\prsc*{u}{\parentheses*{\int_{\us \in \simplex_m}\int_{\ut \in \simplex_n} \cov{D^m_{\us\cdot \ux}f}{D^n_{\ut\cdot \uy}f} \dx \us \dx \ut} v} &= \int_{\us \in \simplex_m}\int_{\ut \in \simplex_n} \prsc*{u}{\cov{D^m_{\us\cdot \ux}f}{D^n_{\ut\cdot \uy}f}v} \dx \us \dx \ut\\
&= \int_{\us \in \simplex_m}\int_{\ut \in \simplex_n} \esp{\prsc{D^m_{\us\cdot \ux}f}{u}\prsc{D^n_{\ut\cdot \uy}f}{v}} \dx \us \dx \ut.
\end{align*}
The integrand being a product of two continuous Gaussian fields, Fubini's Theorem applies and we obtain:
\begin{equation*}
\esp{\parentheses*{\int_{\us \in \simplex_m}\prsc{D^m_{\us\cdot \ux}f}{u} \dx \us} \parentheses*{\int_{\ut \in \simplex_n}\prsc{D^n_{\ut\cdot \uy}f}{v} \dx \ut}}= \esp{\prsc{f[\ux]}{u}\prsc{f[\uy]}{v}}=\prsc*{u}{\cov{f[\ux]}{f[\uy]}v}.
\end{equation*}
These computations holding for any $u$ and $v$, this proves the result.
\end{proof}

\begin{lem}[Distance between covariances of divided differences]
\label{lem: distance between covariances of divided differences}
Let $f$ and $\tilde{f}\in \cC^q(U,\R^k)$ be centered Gaussian fields, we denote respectively by $r$ and $\tilde{r}$ their covariance kernels. Let $m$ and $n \in \ssquarebrackets{0}{q}$, for all $\ux\in U^{m+1}$ and $\uy\in U^{n+1}$ we have the following estimates:
\begin{equation*}
\Norm*{\strut \cov{f[\ux]}{f[\uy]}-\cov{\tilde{f}[\ux]}{\tilde{f}[\uy]}} \leq (q+1)^d \max_{\substack{\norm{\alpha}=m\\ \norm{\beta}=n}} \ \max_{\substack{w \in \conv(\ux)\\ z \in \conv(\uy)}} \ \Norm*{\strut \partial^{\alpha,\beta}r(w,z)-\partial^{\alpha,\beta}\tilde{r}(w,z)},
\end{equation*}
where the norms are the operator norms subordinated to the Euclidean ones.
\end{lem}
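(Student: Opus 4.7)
The plan is to reduce the bound to a pointwise estimate on covariances of total differentials via Lemma~\ref{lem: covariance divided differences}, and then to compute that operator in the orthonormal basis $(e_\alpha^*\otimes e_i)$. First, I will apply Lemma~\ref{lem: covariance divided differences} separately to $f$ and $\tilde f$ and subtract; by linearity, the left-hand side equals $\int_{\us \in \simplex_m}\int_{\ut \in \simplex_n}\parentheses*{\cov{D^m_{\us\cdot\ux}f}{D^n_{\ut\cdot\uy}f} - \cov{D^m_{\us\cdot\ux}\tilde f}{D^n_{\ut\cdot\uy}\tilde f}} \dx \us \dx \ut$. Pulling the norm inside the integral, and using that $\us\cdot\ux \in \conv(\ux)$, $\ut\cdot\uy \in \conv(\uy)$, together with the trivial volume bound $\int_{\simplex_m \times \simplex_n} \dx \us \dx \ut = \tfrac{1}{m!n!} \leq 1$, the problem reduces to estimating, uniformly for $w \in \conv(\ux)$ and $z \in \conv(\uy)$, the operator norm of $C(w,z):=\cov{D^m_w f}{D^n_z f}-\cov{D^m_w \tilde f}{D^n_z \tilde f}$, viewed as a map from $\sym^n(\R^d,\R^k)$ to $\sym^m(\R^d,\R^k)$.

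Next, using Equation~\eqref{eq: differential in terms of e*alpha} together with the defining identity~\eqref{eq: def covariance}, I would read off the matrix of $C(w,z)$ in the orthonormal bases $(e_\alpha^*\otimes e_i)_{\norm{\alpha}=m,\,i}$ and $(e_\beta^*\otimes e_j)_{\norm{\beta}=n,\,j}$: the coefficient indexed by $((\alpha,i),(\beta,j))$ is $\esp{\partial^\alpha f_i(w)\partial^\beta f_j(z)} - \esp{\partial^\alpha \tilde f_i(w)\partial^\beta \tilde f_j(z)}$, which is exactly the $(i,j)$-entry of $\partial^{\alpha,\beta}r(w,z) - \partial^{\alpha,\beta}\tilde r(w,z) \in \cL(\R^k)$. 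Thus $C(w,z)$ has a block structure indexed by pairs $(\alpha,\beta)$ with $\norm{\alpha}=m$, $\norm{\beta}=n$, each block being the corresponding difference in $\cL(\R^k)$.

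To bound $\Norm{C(w,z)}_{\text{op}}$, I would pair $C(w,z)$ against unit vectors $u \in \sym^m(\R^d,\R^k)$ and $v \in \sym^n(\R^d,\R^k)$, grouping the coefficients of $u$ into vectors $u_\alpha \in \R^k$ (one per multi-index $\alpha$ of length $m$) and similarly for $v$. Then $\prsc{u}{C(w,z)v}$ splits as a double sum $\sum_{\alpha,\beta} \prsc*{u_\alpha}{\parentheses*{\partial^{\alpha,\beta}r(w,z)-\partial^{\alpha,\beta}\tilde r(w,z)}v_\beta}_{\R^k}$. Applying the operator-norm bound inside each $\R^k$ block and then Cauchy--Schwarz on the outer sum, using $\sum_\alpha \Norm{u_\alpha}^2 = \Norm{u}^2 = 1$, produces a factor $\sqrt{N_m N_n}$, where $N_m = \binom{m+d-1}{d-1}$ counts the multi-indices of length $m$. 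Since such a multi-index is determined by its first $d-1$ coordinates, each lying in $\ssquarebrackets{0}{q}$, one has the elementary bound $N_m \leq (q+1)^{d-1}$, and therefore $\sqrt{N_m N_n} \leq (q+1)^{d-1} \leq (q+1)^d$, which yields the claim.

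No conceptual obstacle is expected: the argument is purely linear-algebraic bookkeeping once Lemma~\ref{lem: covariance divided differences} has been invoked. The only care required is to keep track of the inner-product conventions on $\sym^m(\R^d,\R^k)$ and to correctly identify each $k\times k$ block of $C(w,z)$ with the matching partial derivative of the covariance kernel.
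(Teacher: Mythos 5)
Your proposal is correct and follows essentially the same route as the paper: reduce to a pointwise bound on $\cov{D^m_wf}{D^n_zf}-\cov{D^m_w\tilde f}{D^n_z\tilde f}$ via Lemma~\ref{lem: covariance divided differences}, compute that operator blockwise against the bases $(e_\alpha^*\otimes e_i)$ using~\eqref{eq: differential in terms of e*alpha}, identify the $(\alpha,\beta)$ block with $\partial^{\alpha,\beta}(r-\tilde r)(w,z)$, and close with Cauchy--Schwarz on the outer sums over multi-indices. The only (inconsequential) differences are that you integrate first and bound pointwise second rather than the reverse, you write $1/(m!\,n!)$ instead of $1/((m+1)!\,(n+1)!)$ for the simplex masses (both are $\leq 1$, so the argument is unaffected), and your count $N_m\leq(q+1)^{d-1}$ is slightly sharper than the paper's $N_m\leq(m+1)^d$ — both yield the stated constant $(q+1)^d$.
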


\begin{proof}
Recall that we have defined an orthonormal basis $\parentheses{e_\alpha^* \otimes e_i}_{\norm{\alpha}=m, 1 \leq i \leq k}$ of $\sym^m(\R^d,\R^k)$ (resp.~$\parentheses{e_\beta^* \otimes e_j}_{\norm{\beta}=n, 1 \leq j \leq k}$ of $\sym^n(\R^d,\R^k)$). Let
\begin{align*}
u &= \sum_{\norm{\alpha}=m, 1 \leq i \leq k} u_{\alpha,i} e_\alpha^* \otimes e_i & &\text{and} & v &= \sum_{\norm{\beta}=n, 1 \leq j \leq k} v_{\beta,j} e_\beta^* \otimes e_j.
\end{align*}
For all $\alpha$ (resp.~$\beta$) of length $m$ (resp.~$n$) we denote by $u_\alpha=\sum_{i=1}^k u_{\alpha,i}e_i$ (resp.~$v_\beta = \sum_{j=1}^k v_{\beta,j}e_j$).

We first consider the field $f=(f_1,\dots,f_k)$. Let $w,z \in U$, by Equations~\eqref{eq: def covariance} and~\eqref{eq: differential in terms of e*alpha} we have:
\begin{align*}
\prsc*{u}{\strut \cov{D^m_wf}{D^n_zf}v} &= \esp{\strut \prsc*{D^m_wf}{u}\prsc*{D^n_zf}{v}} = \sum_{\norm{\alpha}=m,\norm{\beta}=n} \ \sum_{1 \leq i,j \leq k} u_{\alpha,i}v_{\beta,j} \esp{\partial^\alpha f_i(w)\partial^\beta f_j(z)}\\
&= \sum_{\substack{\norm{\alpha}=m\\\norm{\beta}=n}} \esp{\prsc*{\partial^\alpha f(w)}{u_\alpha}\prsc*{\partial^\beta f(z)}{v_\beta}}= \sum_{\substack{\norm{\alpha}=m\\\norm{\beta}=n}} \prsc*{u_\alpha}{\partial^{\alpha,\beta}r(w,z)v_\beta}.
\end{align*}
A similar formula holds for $\tilde{f}$. Hence, by linearity,
\begin{align*}
\prsc*{u}{\parentheses*{\strut \cov{D^m_wf}{D^n_zf}-\cov{D^m_w \tilde{f}}{D^n_z \tilde{f}}}v} &= \sum_{\norm{\alpha}=m,\norm{\beta}=n} \prsc*{u_\alpha}{\parentheses*{\partial^{\alpha,\beta}(r-\tilde{r})(w,z)}v_\beta}\\
&\leq \sum_{\norm{\alpha}=m,\norm{\beta}=n} \Norm{u_\alpha}\Norm{v_\beta} \Norm*{\partial^{\alpha,\beta}(r-\tilde{r})(w,z)}.
\end{align*}

If $\Norm{u} = 1= \Norm{v}$, we have $\sum_{\norm{\alpha}=m}\Norm{u_\alpha}\leq \sqrt{\card\brackets*{\alpha \in \N^d \mvert \norm{\alpha}=m}} \leq (m+1)^\frac{d}{2}\leq (q+1)^\frac{d}{2}$, and similarly $\sum_{\norm{\beta}=n}\Norm{v_\beta}\leq (q+1)^\frac{d}{2}$. Hence, the last term in the previous computation is bounded by $(q+1)^d \max_{\norm{\alpha}=m,\norm{\beta}=n} \Norm*{\partial^{\alpha,\beta}(r-\tilde{r})(w,z)}$. Thus,
\begin{multline*}
\Norm*{\strut \cov{D^m_wf}{D^n_zf}-\cov{D^m_w \tilde{f}}{D^n_z \tilde{f}}}\\
\begin{aligned}
&= \max_{\Norm{u}=1 =\Norm{v}} \prsc*{u}{\parentheses*{\strut \cov{D^m_w f}{D^n_zf }-\cov{D^m_w \tilde{f}}{D^n_z \tilde{f}}}v}\\
&\leq (q+1)^d \max_{\norm{\alpha}=m,\norm{\beta}=n} \Norm*{\partial^{\alpha,\beta}r(w,z)-\partial^{\alpha,\beta}\tilde{r}(w,z)}.
\end{aligned}
\end{multline*}
Finally, by Lemma~\ref{lem: covariance divided differences}, we obtain:
\begin{multline*}
\Norm*{\strut \cov{f[\ux]}{f[\uy]}-\cov{\tilde{f}[\ux]}{\tilde{f}[\uy]}}\\
\begin{aligned}
&\leq \int_{\us \in \simplex_m}\int_{\ut \in \simplex_n} \Norm*{\strut \cov{D^m_{\us\cdot \ux}f}{D^n_{\ut\cdot \uy}f}-\cov{D^m_{\us\cdot \ux}\tilde{f}}{D^n_{\ut \cdot \ut}\tilde{f}}} \dx \us \dx \ut\\
&\leq (q+1)^d \max_{\norm{\alpha}=m, \norm{\beta}=n} \ \max_{w \in \conv(\ux), z \in \conv(\uy)} \ \Norm*{\strut \partial^{\alpha,\beta}r(w,z)-\partial^{\alpha,\beta} \tilde{r}(w,z)},
\end{aligned}
\end{multline*}
since the simplices $\simplex_m$ and $\simplex_n$ have measure at most $1$.
\end{proof}


\subsection{Twisted Kergin interpolant}
\label{subsec: twisted Kergin interpolant}

In this section, we introduce a twisted Kergin interpolant which is well-behaved with respect to the action of $\R^d$ by diagonal translations.

Let $A$ be a non-empty finite set, recall that the \emph{diagonal action} $\R^d \acts (\R^d)^A$ is defined by: $\tau \cdot \ux = \parentheses*{x_a+\tau}_{a \in A}$ for all $\tau \in \R^d$ and $\ux=(x_a)_{a \in A}\in (\R^d)^A$. Besides, given $\tau \in \R^d$ and $f$ any function defined on $U \subset \R^d$, we denote by $\tau\cdot f:y \mapsto f(y+\tau)$ the $\tau$-translate of $f$, defined on $U - \tau$. This defines families of compatible actions $\R^d \acts \cC^q(\R^d,\R^k)$ and $\R^d \acts \R_q[X]^k$ for all $q \in \N$. More generally, if $f$ is any function on a subset of $(\R^d)^A$, we denote by $\tau\cdot f:\ux \mapsto f(\tau\cdot \ux)$.

As in the previous sections, we consider a convex open subset $U \subset \R^d$.

\begin{lem}[Translation equivariance of divided differences and Kergin interpolants]
\label{lem: translation equivariance}
Let $q \in \N$ and $A$ be a set of cardinality $q+1$. For any $f \in \cC^q(U,\R^k)$, $\ux \in U^A$ and $\tau \in \R^d$ we have:
\begin{align*}
&D^q (\tau\cdot f)=\tau \cdot D^q f, & &(\tau\cdot f)[\ux] = f[\tau \cdot \ux] & &\text{and} & &K(\tau\cdot f,\ux) = \tau \cdot K(f,\tau\cdot \ux).
\end{align*}
\end{lem}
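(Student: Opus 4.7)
The three identities build on one another. My plan is to establish them in the order stated, using the uniqueness part of Theorem~\ref{thm: Kergin interpolation} as the key tool for the last one.

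The first identity will follow from the chain rule applied to $\tau \cdot f = f \circ T_\tau$, where $T_\tau \colon y \mapsto y+\tau$ has constant differential $\Id_{\R^d}$; iterating yields $D^q_y(\tau \cdot f) = D^q_{y+\tau}f$ pointwise, which is precisely $D^q(\tau \cdot f) = \tau \cdot D^q f$.

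For the second identity, I will substitute the first into the integral defining the divided difference (Definition~\ref{def: divided differences}):
\[
(\tau \cdot f)[\ux] = \int_{\ut \in \simplex_A} D^q_{\ut \cdot \ux}(\tau \cdot f) \dx \varsigma_A(\ut) = \int_{\ut \in \simplex_A} D^q_{\ut \cdot \ux + \tau} f \dx \varsigma_A(\ut),
\]
and then exploit the fact that on $\simplex_A$ we have $\sum_{a \in A} t_a = 1$, so $\ut \cdot \ux + \tau = \ut \cdot \ux + \parentheses*{\sum_a t_a} \tau = \ut \cdot (\tau \cdot \ux)$. The right-hand side then equals $f[\tau \cdot \ux]$ by definition. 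The same argument works at any integer $q' \leq q$ in place of $q$, so the identity also holds for any non-empty $\ux_B$ with $B \subset A$, viewing $f$ as a $\cC^{\norm{B}-1}$ field.

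For the third identity, I will first note that $\tau \cdot K(f, \tau \cdot \ux) \in \R_q[X]^k$ since translation preserves polynomial degree. By the uniqueness part of Theorem~\ref{thm: Kergin interpolation}, it is then enough to verify that for every non-empty $B \subset A$,
\[
\parentheses*{\tau \cdot K(f, \tau \cdot \ux)}[\ux_B] = (\tau \cdot f)[\ux_B].
\]
Applying the (already proven) second identity to the polynomial $K(f, \tau \cdot \ux)$ turns the left-hand side into $K(f, \tau \cdot \ux)\squarebrackets*{(\tau \cdot \ux)_B}$, which equals $f\squarebrackets*{(\tau \cdot \ux)_B} = f[\tau \cdot \ux_B]$ by the interpolation property of $K(f, \tau \cdot \ux)$; applying the second identity once more to $f$ and $\ux_B$ gives $(\tau \cdot f)[\ux_B]$, matching the right-hand side. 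No real obstacle arises; the only subtlety is the simplex-sum observation $\ut \cdot \ux + \tau = \ut \cdot (\tau \cdot \ux)$, after which the three identities cascade.
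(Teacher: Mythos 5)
Your proposal is correct and follows essentially the same route as the paper's proof: chain rule for the first identity, substitution into Definition~\ref{def: divided differences} for the second (you make explicit the simplex-sum observation $\ut\cdot\ux+\tau=\ut\cdot(\tau\cdot\ux)$, which the paper uses implicitly), and comparison of divided differences plus uniqueness in Theorem~\ref{thm: Kergin interpolation} for the third.
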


\begin{proof}
Let $T_\tau:y \mapsto y+\tau$ from $\R^d$ to itself, so that $\tau\cdot f = f \circ T_\tau$. Applying recursively the chain rule, we obtain $D^l\parentheses*{\tau \cdot f} = D^l\parentheses*{f \circ T_\tau} = (D^l f) \circ T_\tau=\tau\cdot D^l f$ for all $l \in \ssquarebrackets{0}{q}$, where $D^l f$ is seen as a map from $U$ to $\sym^l(\R^d,\R^k)$. For $l=q$ this is the first point.

Let $B \subset A$ be of cardinality $l+1$. Then, using the previous point,
\begin{equation*}
(\tau\cdot f)[\ux_B] = \int_{\ut \in \simplex_B} D^l_{\ut \cdot \ux_B}(\tau\cdot f) \dx \varsigma_B(\ut) = \int_{\ut \in \simplex_B} D^l_{\tau+\ut \cdot \ux_B}f \dx \varsigma_q(\ut)= f[\tau \cdot \ux_B].
\end{equation*}
For $B=A$, this yields the second point.

Let $B \subset A$ be non-empty, using the interpolation properties of the Kergin interpolant, we have:
\begin{equation*}
K(\tau\cdot f,\ux)[\ux_B] = (\tau\cdot f)[\ux_B]=f[\tau\cdot\ux_B]=K(f,\tau\cdot \ux)[\tau \cdot \ux_B]=\parentheses*{\strut \tau \cdot K(f,\tau\cdot\ux)}[\ux_B],
\end{equation*}
where the last equality is obtained by applying the previous point to $K(f,\tau \cdot \ux)$. Thus, $K(\tau\cdot f,\ux)$ and $\tau \cdot K(f,\tau\cdot\ux)$ are two polynomial maps of degree at most $q$ with the same divided differences on all $(\ux_B)_{\emptyset \neq B \subset A}$. The uniqueness part in Theorem~\ref{thm: Kergin interpolation} proves that $K(\tau\cdot f,\ux)=\tau \cdot K(f,\tau\cdot\ux)$.
\end{proof}

Let us now consider a centered Gaussian field $f \in \cC^q(\R^d,\R^k)$ and assume that $f$ is stationary. Let $\tau \in \R^d$, by definition we have $\tau\cdot f=f$ in distribution. Then, the second point in Lemma~\ref{lem: translation equivariance} shows that the map $\tau \cdot \parentheses*{f[\,\cdot\,]}:\ux \mapsto f[\tau \cdot \ux]$ is distributed as $f[\,\cdot\,]$. That is, the distribution of the field $f[\,\cdot\,]$ is invariant under diagonal translations. Note that $f[\,\cdot\,]$ is a priori not stationary, as we only control its behavior under translations by vectors of the kind $(\tau,\dots,\tau) \in (\R^d)^A$.

The Kergin interpolant does not behave as nicely. If the distribution of $K(f,\cdot)$ was invariant by diagonal translations, the stationarity of $f$ and the third point in Lemma~\ref{lem: translation equivariance} would imply that
\begin{equation*}
K(f,\ux)=K(\tau \cdot f,\ux)=\tau\cdot K(f,\tau\cdot \ux)=\tau\cdot K(f,\ux)
\end{equation*}
in distribution, for any $\tau \in \R^d$ and $\ux \in (\R^d)^A$. This does not hold in general. For example, consider the case $d=k=q=1$ and $\ux=(0,0)$ with $\var{f(0),f'(0)} = \Id$. Then $K(f,0,0)=f(0)+f'(0)X$ and $\tau\cdot K(f,0,0)=f(0)+\tau f'(0) + f'(0)X$. But $\var{f(0)+\tau f'(0)} = 1+\tau^2 \neq \var{f(0)}$ for any $\tau \in \R^*$, so that $\tau\cdot K(f,0,0)$ is not distributed as $K(f,0,0)$. More fundamentally, a non-degenerate Gaussian vector in $\R_q[X]$ cannot be invariant under the action of $\tau \in \R^d \setminus \brackets{0}$, because $\tau$ acts non-trivially on $\R_q[X]$ by an endomorphism whose only complex eigenvalue is $1$.

In order to recover a translation-invariant object, we introduce a twisted Kergin interpolant.

\begin{dfn}[Barycenter]
\label{def: barycenter}
Let $A$ be a non-empty finite set and $\ux=(x_a)_{a \in A} \in (\R^d)^A$, we denote by $\flat(\ux) = \frac{1}{\norm{A}}\sum_{a \in A}x_a \in \R^d$ the \emph{(iso-)barycenter} of $\ux$. Let $\oux=\parentheses*{x_a-\flat(\ux)}_{a \in A} \in \R^d$ denote the \emph{centered version} of $\ux$, so that $\flat(\oux)=0$ and $\ux = \flat(\ux) \cdot \oux$.
\end{dfn}

\begin{rem}
\label{rem: barycenter}
In general, $\oux_B:=(x_b-\flat(\ux))_{b \in B}$ is not equal to $\obullet{\overgroup{\ux_B}}:=(x_b-\flat(\ux_B))_{b \in B}$ for $B \subset A$.
\end{rem}

\begin{dfn}[Twisted Kergin interpolant]
\label{def: twisted Kergin interpolant}
Let $q \in \N$ and $A$ be a set of cardinality $q+1$. For any $f \in \cC^q(U,\R^k)$ and $\ux \in U^A$, we define $\oK(f,\ux) = K(\flat(\ux)\cdot f,\oux)=\flat(\ux)\cdot K(f,\ux)$, see~Lemma~\ref{lem: translation equivariance}.
\end{dfn}

\begin{ex}
\label{ex: twisted Kergin polynomial}
Let $y \in \R^d$, then $\flat(y,y)=y$ and $\obullet{(y,y)}=0$. Hence $K(f,y,y)=f(y)+f'(y)(X-y)$ is the degree~$1$ Taylor polynomial of $f$ at $y$, while $\oK(f,y,y) = K(y \cdot f,0)=f(y)+f'(y)X$ is the Taylor polynomial of $y\cdot f:z \mapsto f(z+y)$ at $z=0$.
\end{ex}

\begin{lem}[Translation equivariance of twisted Kergin interpolants]
\label{lem: translation equivariance twisted interpolants}
For any $f \in \cC^q(U,\R^k)$, $\ux \in U^A$ and $\tau \in \R^d$ we have $\oK\parentheses*{\tau \cdot f,\ux} = \oK\parentheses*{f,\tau \cdot \ux}$.
\end{lem}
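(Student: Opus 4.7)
The plan is to unwind the definition of $\oK$ and reduce everything to the translation equivariance of the ordinary Kergin interpolant, which is the third identity of Lemma~\ref{lem: translation equivariance}. The key observation is that the barycenter map $\flat$ is equivariant with respect to the diagonal translation action: for any $\tau \in \R^d$ and $\ux \in (\R^d)^A$, we have $\flat(\tau \cdot \ux) = \flat(\ux) + \tau$, since $\flat(\tau \cdot \ux) = \frac{1}{\norm{A}}\sum_{a \in A}(x_a + \tau) = \flat(\ux) + \tau$.

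The computation then proceeds as follows. Starting from Definition~\ref{def: twisted Kergin interpolant} applied to the pair $(\tau \cdot f, \ux)$, I would write $\oK(\tau \cdot f, \ux) = \flat(\ux) \cdot K(\tau \cdot f, \ux)$. Applying the third identity of Lemma~\ref{lem: translation equivariance} to the field $\tau \cdot f$ gives $K(\tau \cdot f, \ux) = \tau \cdot K(f, \tau \cdot \ux)$. Composing the two translation actions yields $\oK(\tau \cdot f, \ux) = (\flat(\ux) + \tau) \cdot K(f, \tau \cdot \ux)$, and using the equivariance of $\flat$ identifies $\flat(\ux) + \tau$ with $\flat(\tau \cdot \ux)$. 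The resulting expression $\flat(\tau \cdot \ux) \cdot K(f, \tau \cdot \ux)$ is by Definition~\ref{def: twisted Kergin interpolant} exactly $\oK(f, \tau \cdot \ux)$.

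There is no real obstacle: the statement is a bookkeeping consequence of the three inputs already established, namely (i) the equivariance $\flat(\tau \cdot \ux) = \flat(\ux) + \tau$, (ii) the fact that the $\R^d$-action on functions satisfies $(\sigma + \tau) \cdot g = \sigma \cdot (\tau \cdot g)$, and (iii) the identity $K(\tau \cdot f, \ux) = \tau \cdot K(f, \tau \cdot \ux)$ from Lemma~\ref{lem: translation equivariance}. The proof is thus a two-line chain of equalities, and no non-degeneracy or regularity assumption beyond $f \in \cC^q(U, \R^k)$ is needed.
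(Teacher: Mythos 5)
Your proof is correct. It is essentially the same computation as the paper's, the only difference being which of the two equivalent expressions for $\oK$ in Definition~\ref{def: twisted Kergin interpolant} you unwind: you start from $\oK(f,\ux)=\flat(\ux)\cdot K(f,\ux)$ and invoke the third identity of Lemma~\ref{lem: translation equivariance}, while the paper starts from $\oK(f,\ux)=K(\flat(\ux)\cdot f,\oux)$ and instead uses the companion fact $\obullet{\overgroup{\tau\cdot\ux}}=\oux$; both routes come down to $\flat(\tau\cdot\ux)=\flat(\ux)+\tau$ together with the group-action law $\sigma\cdot(\tau\cdot g)=(\sigma+\tau)\cdot g$, and neither needs anything beyond $f\in\cC^q(U,\R^k)$.
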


\begin{proof}
We compute, using the fact that $\flat(\tau\cdot \ux)=\flat(\ux)+\tau$ and $\obullet{\overgroup{\tau \cdot \ux}}=\oux$,
\begin{equation*}
\oK\parentheses*{\tau \cdot f,\ux} =K\parentheses*{\strut \flat(\ux)\cdot (\tau \cdot f),\oux} = K\parentheses*{\strut \parentheses*{\flat(\ux)+\tau}\cdot f,\oux}=K\parentheses*{\strut \flat(\tau\cdot \ux)\cdot f, \obullet{\overgroup{\tau \cdot \ux}}}= \oK\parentheses*{f,\tau \cdot \ux}.\qedhere
\end{equation*}
\end{proof}

Let us now introduce some auxiliary operators that will be useful to relate the divided differences of $f$ to $K(f,\cdot)$ and $\oK(f,\cdot)$.

\begin{dfn}[Divided differences operators]
\label{def: isomorphisms Delta Psi}
Let $\ux=(x_0,\dots,x_q) \in U^{q+1}$, we denote by $\Delta_{\ux}: \cC^q(U,\R^k) \to \prod_{i=0}^q \sym^i(\R^d,\R^k)$ and $\Psi_{\ux}:\prod_{i=0}^q \sym^i(\R^d,\R^k) \to \R_q[X]^k$ the linear maps:
\begin{align*}
\Delta_{\ux}:f & \longmapsto \parentheses*{f[x_0,\dots,x_i]}_{0 \leq i \leq q} & &\text{and} & \Psi_{\ux}:\parentheses*{S_i}_{0 \leq i \leq q} & \longmapsto \sum_{i=0}^q S_i(X-x_0,\dots,X-x_{i-1}).
\end{align*}
\end{dfn}

As a consequence of Equation~\eqref{eq: Kergin explicit}, for all $\ux \in U^{q+1}$ we have $K(\cdot,\ux) = \Psi_{\ux} \circ \Delta_{\ux}$. Moreover, Theorem~\ref{thm: Kergin interpolation} shows that the restriction of $\Delta_{\ux}$ to $\R_q[X]^k$ is an isomorphism, whose inverse is $\Psi_{\ux}$. We can also express $\oK(\cdot,\ux)$ in terms of the previous maps.

\begin{lem}[Factorization of $\oK$]
\label{lem: oK in terms of Delta and Psi}
For all $\ux=(x_0,\dots,x_q) \in U^{q+1}$ we have $\oK(\cdot,\ux)=\Psi_{\oux}\circ \Delta_{\ux}$.
\end{lem}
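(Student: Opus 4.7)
The plan is to unpack the definition of $\oK$ and chain two facts already established: the Kergin explicit formula $K(\cdot,\oux) = \Psi_{\oux} \circ \Delta_{\oux}$ (which is Equation~\eqref{eq: Kergin explicit} rewritten via Definition~\ref{def: isomorphisms Delta Psi}), and the translation equivariance of divided differences from Lemma~\ref{lem: translation equivariance}.

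More precisely, I would start from $\oK(f,\ux) = K(\flat(\ux)\cdot f, \oux)$, which is the first expression in Definition~\ref{def: twisted Kergin interpolant}. Applying the Kergin factorization at the tuple $\oux$ to the translated function $\flat(\ux)\cdot f$, this equals $\Psi_{\oux}\bigl(\Delta_{\oux}(\flat(\ux)\cdot f)\bigr)$. The whole proof then reduces to the identity of sequences of divided differences $\Delta_{\oux}(\flat(\ux)\cdot f) = \Delta_{\ux}(f)$.

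To prove this last identity, I would check it component by component: for each $i \in \ssquarebrackets{0}{q}$, one needs $(\flat(\ux)\cdot f)[\oux_0,\dots,\oux_i] = f[x_0,\dots,x_i]$. This is a direct application of the second point of Lemma~\ref{lem: translation equivariance} with $\tau = \flat(\ux)$ and the tuple $(\oux_0,\dots,\oux_i)$: the right-hand side is $f[\,\flat(\ux) \cdot (\oux_0,\dots,\oux_i)\,]$, and by Definition~\ref{def: barycenter} we have $\oux_j + \flat(\ux) = x_j$ for every $j$, so the translated tuple equals $(x_0,\dots,x_i)$.

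There is no real obstacle here; the only subtlety to keep in mind is that although $\Delta_{\oux}$ and $\Delta_{\ux}$ act on different arguments of the covariance structure, the diagonal translation by $\flat(\ux)$ exchanges them exactly, while the outer operator $\Psi_{\oux}$ is the one demanded by the centered tuple. Combining the two displayed equalities gives $\oK(\cdot,\ux) = \Psi_{\oux} \circ \Delta_{\ux}$, which is the claim.
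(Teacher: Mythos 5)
Your proof is correct and follows exactly the same route as the paper: unwind Definition~\ref{def: twisted Kergin interpolant} to $K(\flat(\ux)\cdot f,\oux)$, apply the Kergin factorization $K(\cdot,\oux)=\Psi_{\oux}\circ\Delta_{\oux}$, and then identify $\Delta_{\oux}(\flat(\ux)\cdot f)$ with $\Delta_{\ux}(f)$ component-wise via the second point of Lemma~\ref{lem: translation equivariance}. The closing remark about ``arguments of the covariance structure'' is loosely worded (there is no covariance here, only a deterministic algebraic identity), but this does not affect the argument, which is complete.
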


\begin{proof}
Let $f \in \cC^q(U,\R^k)$, by Definitions~\ref{def: twisted Kergin interpolant} and~\ref{def: isomorphisms Delta Psi} and Equation~\eqref{eq: Kergin explicit} we have:
\begin{equation*}
\oK(f,\ux) = K(\flat(\ux)\cdot f,\oux) = \Psi_{\oux} \circ \Delta_{\oux}\parentheses*{\flat(\ux) \cdot f}.
\end{equation*}
For all $i \in \ssquarebrackets{0}{q}$, we have $(\flat(\ux)\cdot f)[x_0-\flat(\ux),\dots,x_i-\flat(\ux)] = f[x_0,\dots,x_i]$ by the second point in Lemma~\ref{lem: translation equivariance}. This implies that $\Delta_{\oux}(\flat(\ux)\cdot f)=\Delta_{\ux}(f)$. Hence, $\oK(f,\ux) = \Psi_{\oux} \circ \Delta_{\ux}(f)$.
\end{proof}

\begin{lem}[Continuity of $\Psi$]
\label{lem: continuity Psi}
The map $\ux \mapsto \Psi_{\ux}$ is continuous from $(\R^d)^{q+1}$ to the space $\cL\parentheses*{\prod_{i=0}^q \sym^i(\R^d,\R^k),\R_q[X]^k}$, endowed with its unique normed topology.
\end{lem}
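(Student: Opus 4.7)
The plan is to reduce the claim to the observation that $\Psi_{\ux}$ has matrix entries which are polynomial in the coordinates of $\ux$, and then to invoke finite-dimensionality to promote this to operator-norm continuity.

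First, I would note that both $\prod_{i=0}^q \sym^i(\R^d,\R^k)$ and $\R_q[X]^k$ are finite-dimensional vector spaces, so the space $\cL\parentheses*{\prod_{i=0}^q \sym^i(\R^d,\R^k),\R_q[X]^k}$ is itself finite-dimensional, and all norms on it are equivalent. In particular, it suffices to check that the matrix coefficients of $\Psi_{\ux}$, computed in any choice of bases, depend continuously on $\ux$.

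Next, I would fix bases and unwind the definition. Pick the basis $(e_\alpha^* \otimes e_j)_{\norm{\alpha}=i,\, 1\leq j \leq k}$ of each $\sym^i(\R^d,\R^k)$ (as introduced before Lemma~\ref{lem: covariance divided differences}), and the monomial basis $(X^\beta \otimes e_j)_{\norm{\beta}\leq q,\, 1 \leq j \leq k}$ of $\R_q[X]^k$. For a single basis vector $S = e_\alpha^* \otimes e_j \in \sym^i(\R^d,\R^k)$, the evaluation $S(X-x_0,\dots,X-x_{i-1})$ is, up to the factor $e_j$, a scalar polynomial in $X$ of degree at most~$i$ whose coefficients are themselves polynomial expressions in the coordinates of $x_0,\dots,x_{i-1}\in \R^d$ (each such coefficient is a multilinear symmetric function of the $x_l$'s, arising from expanding a product of linear forms $X_{m}-(x_l)_m$). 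Summing over $i$ shows that every matrix entry of $\Psi_{\ux}$ in the chosen bases is a polynomial function of $\ux \in (\R^d)^{q+1}$, hence continuous.

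Since polynomial dependence of each matrix entry implies continuity in any matrix norm, and any matrix norm is equivalent to the operator norm in finite dimension, the map $\ux \mapsto \Psi_{\ux}$ is continuous as a map into $\cL\parentheses*{\prod_{i=0}^q \sym^i(\R^d,\R^k),\R_q[X]^k}$. There is no real obstacle here: the only thing to be slightly careful about is the book-keeping of which coefficients of $\Psi_{\ux}$ depend on which $x_l$, but the polynomial (hence smooth) character of this dependence is immediate from the definition of $\Psi_{\ux}$.
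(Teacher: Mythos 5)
Your proof is correct, but it takes a genuinely different route from the paper's. The paper fixes concrete norms (the sup of operator norms on $\prod_{i=0}^q \sym^i(\R^d,\R^k)$, and $\Norm{P}=\max_{\Norm{z}\le 1}\Norm{P(z)}$ on $\R_q[X]^k$) and proves an explicit \emph{local Lipschitz} bound, $\Norm{\Psi_{\uy}-\Psi_{\ux}}\leq q^2(1+\varrho)^q\Norm{\uy-\ux}$ on the ball of radius $\varrho$, via a telescoping sum over the $i$ arguments of each $S_i$ and the multilinear triangle inequality. You instead observe that the matrix entries of $\Psi_{\ux}$ in fixed bases are polynomial in the coordinates of $\ux$, and invoke finite-dimensionality (equivalence of norms) to upgrade entrywise continuity to operator-norm continuity. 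Your approach is softer and in fact shows the map is smooth, with less bookkeeping; the paper's yields a quantitative modulus of continuity, though downstream uses (e.g.\ in Lemma~\ref{lem: distance between covariances of twisted interpolants}) only exploit boundedness on compacts, so either proof serves. One small imprecision in your write-up: the coefficient of a fixed monomial $X^\beta\otimes e_j$ in $S_i(X-x_0,\dots,X-x_{i-1})$ is polynomial of degree $i-\norm{\beta}$ in the $x_l$'s (not uniformly ``multilinear''), but this does not affect the argument since polynomial dependence is all you need.
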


\begin{proof}
The spaces under consideration being finite-dimensional, we can use any norms we fancy. On $\prod_{i=0}^q \sym^i(\R^d,\R^k)$, we work with the sup-norm associated with the operator norm on each factor. On $\R_q[X]^k$, we use the norm defined by $\Norm{P}=\max_{\Norm{z}\leq 1} \Norm{P(z)}$ for all $P \in \R_q[X]^k$. Finally, we use the associated operator norm on $\cL\parentheses*{\prod_{i=0}^q \sym^i(\R^d,\R^k),\R_q[X]^k}$.

Let $\varrho >0$ and let $\ux=(x_0,\dots,x_q)$ and $\uy=(y_0,\dots,y_q) \in (\R^d)^{q+1}$ be such that $\Norm{\ux}< \varrho$ and $\Norm{\uy}< \varrho$. Let $i \in \ssquarebrackets{1}{q}$ and $S_i \in \sym^i(\R^d,\R^k)$, for all $z \in \R^d$ such that $\Norm{z}\leq 1$ we~have:
\begin{multline*}
\Norm{S_i(z-y_0,\dots,z-y_{i-1})-S_i(z-x_0,\dots,z-x_{i-1})}\\
\begin{aligned}
\leq & \Norm{S_i(z-y_0,z-y_1,\dots,z-y_{i-1})-S_i(z-x_0,z-y_1,\dots,z-y_{i-1})} + \dots&\\
&+ \Norm{S_i(z-x_0,\dots,z-x_{i-2},z-y_{i-1})-S_i(z-x_0,\dots,z-x_{i-2},z-x_{i-1})}\\
\leq & \Norm{S_i}\sum_{0\leq j <i}\parentheses*{\Norm{y_j-x_j}\prod_{0 \leq k <j}\Norm{z-x_k}\prod_{j< k<i}\Norm{z-y_k}} \leq \Norm{S_i}(1+\varrho)^{i-1}  \sum_{0\leq j<i}\Norm{y_j-x_j}.
\end{aligned}
\end{multline*}
Hence $\Norm*{\strut S_i(X-y_0,\dots,X-y_{i-1})-S_i(X-x_0,\dots,X-x_{i-1})} \leq \Norm{S_i} q(1+\varrho)^q\Norm*{\uy-\ux}$.

Then, for all $S=(S_i)_{0 \leq i \leq q}$ such that $\Norm{S_i}\leq 1$ for all $i \in \ssquarebrackets{0}{q}$, we have:
\begin{align*}
\Norm*{\Psi_{\uy}(S)-\Psi_{\ux}(S)} &\leq \sum_{i=1}^q \Norm*{\strut S_i(X-y_0,\dots,X-y_{i-1})-S_i(X-x_0,\dots,X-x_{i-1})}\\
&\leq q^2(1+\varrho)^q\Norm*{\uy-\ux}.
\end{align*}
Thus $\Norm{\Psi_{\uy}-\Psi_{\ux}}\leq q^2(1+\varrho)^q\Norm*{\uy-\ux}$. This proves the continuity of $\ux \mapsto \Psi_{\ux}$ on the ball of center~$0$ and radius~$\varrho$ in $(\R^d)^{q+1}$. Hence the result, since $\varrho$ is arbitrary.
\end{proof}

\begin{lem}[Regularity of the twisted Kergin interpolant]
\label{lem: regularity Kergin interpolant}
Let $A$ be a set of cardinality $q+1$.
\begin{enumerate}
\item \label{item: Kergin continuity} The map $\oK:\cC^q(U,\R^k) \times U^A \to \R_q[X]^k$ is continuous.
\item \label{item: Kergin subspace} Let $V$ be a finite-dimensional subspace of $\cC^q(U,\R^k)$, then the map $\ux \mapsto \oK(\cdot,\ux)_{\vert V}$ is continuous from $U^A$ to $\cL\parentheses*{V,\R_q[X]^k}$ endowed with its unique normed topology.
\end{enumerate}
\end{lem}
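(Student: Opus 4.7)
The proof is essentially a composition of the continuity results already established, plus a standard finite-dimensionality argument for the second item.

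For item~\ref{item: Kergin continuity}, my plan is to apply the factorization $\oK(f,\ux) = \Psi_{\oux} \circ \Delta_{\ux}(f)$ from Lemma~\ref{lem: oK in terms of Delta and Psi}. The map $\ux \mapsto \oux$ is continuous on $U^A$ by definition of the barycenter, so Lemma~\ref{lem: continuity Psi} gives the continuity of $\ux \mapsto \Psi_{\oux}$ as an element of $\cL\parentheses*{\prod_{i=0}^q \sym^i(\R^d,\R^k),\R_q[X]^k}$. On the other hand, each component $f \mapsto f[x_0,\dots,x_i]$ of $\Delta_{\ux}(f)$ is jointly continuous in $(f,\ux)$ by Lemma~\ref{lem: regularity divided differences}, so $(f,\ux) \mapsto \Delta_{\ux}(f)$ is continuous from $\cC^q(U,\R^k) \times U^A$ to $\prod_{i=0}^q \sym^i(\R^d,\R^k)$. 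Composition of a continuous operator-valued map with a continuous vector-valued map (via the jointly continuous evaluation $(T,v) \mapsto T(v)$ on finite-dimensional spaces) then yields the continuity of $(f,\ux) \mapsto \Psi_{\oux} \circ \Delta_{\ux}(f) = \oK(f,\ux)$.

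For item~\ref{item: Kergin subspace}, I would fix a basis $(v_1,\dots,v_n)$ of the finite-dimensional subspace $V$. Endowing $V$ with its unique normed topology, this basis gives a continuous linear identification $V \simeq \R^n$, and the operator norm on $\cL(V,\R_q[X]^k)$ is equivalent to the norm $T \mapsto \max_{1 \leq i \leq n} \Norm{T(v_i)}$. For each fixed $i$, part~\ref{item: Kergin continuity} applied to the constant input $v_i \in \cC^q(U,\R^k)$ shows that $\ux \mapsto \oK(v_i,\ux)$ is continuous from $U^A$ to $\R_q[X]^k$. By linearity of $\oK(\cdot,\ux)$, the restriction $\oK(\cdot,\ux)_{\vert V}$ is completely determined by the tuple $\parentheses*{\oK(v_i,\ux)}_{1 \leq i \leq n}$, and continuity of each component entails continuity of $\ux \mapsto \oK(\cdot,\ux)_{\vert V}$ for the norm topology on $\cL(V,\R_q[X]^k)$.

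There is no real obstacle: both items reduce to assembling previously established continuity facts, and the only care needed is the standard remark that on finite-dimensional spaces all reasonable topologies coincide, so composition of operators is jointly continuous and the operator-norm continuity of $\ux \mapsto \oK(\cdot,\ux)_{\vert V}$ can be checked pointwise on a basis.
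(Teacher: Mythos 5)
Your proof is correct and follows essentially the same route as the paper's: both items rely on the factorization $\oK(f,\ux) = \Psi_{\oux}(\Delta_{\ux}(f))$ from Lemma~\ref{lem: oK in terms of Delta and Psi} combined with Lemmas~\ref{lem: regularity divided differences} and~\ref{lem: continuity Psi}, and both reduce Item~\ref{item: Kergin subspace} to checking continuity on a basis of $V$ (the paper uses an orthonormal basis and writes the restriction as $\sum_j \oK(v_j,\ux)\prsc{v_j}{\cdot}$, while you use equivalence of norms in finite dimension; this is a cosmetic difference).
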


\begin{proof}
Without loss of generality, we can assume that $A = \ssquarebrackets{0}{q}$. By Lemma~\ref{lem: regularity divided differences}, we know that $(f,\ux) \mapsto \Delta_{\ux}(f)=\parentheses*{f[x_0,\dots,x_i]}_{0 \leq i \leq q}$ is continuous from $\cC^q(U,\R^k) \times U^A$ to $\prod_{i=0}^q \sym^i(\R^d,\R^k)$. Since $\ux \mapsto \oux$ is continuous, the map $(f,\ux) \mapsto \parentheses*{\psi_{\oux},\Delta_{\ux}(f)}$ is continuous by Lemma~\ref{lem: continuity Psi}. Then, the map $\parentheses*{\ell,v}\mapsto \ell(v)$ is bilinear continuous from $\cL\parentheses*{\prod_{i=0}^q \sym^i(\R^d,\R^k),\R_q[X]^k} \times \prod_{i=0}^q \sym^i(\R^d,\R^k)$ to $\R_q[X]^k$. Hence, $\oK:(f,\ux) \mapsto \Psi_{\oux}\parentheses*{\Delta_{\ux}(f)}$ is continuous, which proves Item~\ref{item: Kergin continuity}.

Let $V \subset \cC^q(U,\R^k)$ be a finite-dimensional subspace. Let us equip $V$ with an arbitrary Euclidean inner product and let $(v_1,\dots,v_n)$ be an orthonormal basis of $V$. Then, for all $\ux \in U^A$, we have
\begin{equation*}
\oK(\cdot,\ux)_{\vert V} = \sum_{j=1}^n \oK(v_j,\ux) \prsc{v_j}{\cdot},
\end{equation*}
and this quantity depends continuously on $\ux$ by the previous point. This proves Item~\ref{item: Kergin subspace}.
\end{proof}

\begin{cor}[Twisted interpolant of a Gaussian field]
\label{cor: twisted interpolant Gaussian field}
Let $A$ be a non-empty finite set and $\cI \in \cP_A$. Let $f \in \cC^{\norm{A}-1}(U,\R^k)$ be a centered Gaussian field, then $\ux \mapsto \parentheses{\oK(f,\ux_I)}_{I \in \cI}$ is a continuous centered Gaussian field on $U^A$ with values in $\prod_{I \in \cI} \R_{\norm{I}-1}[X]^k$.
\end{cor}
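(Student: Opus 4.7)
The approach is a direct application of Lemmas~\ref{lem: oK in terms of Delta and Psi} and~\ref{lem: regularity Kergin interpolant}, mimicking the argument used in Lemma~\ref{lem: divided differences Gaussian field}. There are essentially three things to verify: (i) well-definedness, (ii) the Gaussian and centered nature of all finite-dimensional marginals, and (iii) continuity in $\ux$.

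First I would observe that for each $I \in \cI$, we have $\norm{I} \leq \norm{A}$, so $f$ is of class $\cC^{\norm{I}-1}$, and hence $\oK(f, \ux_I) \in \R_{\norm{I}-1}[X]^k$ is well-defined for every $\ux \in U^A$ by Definition~\ref{def: twisted Kergin interpolant}. For the Gaussian marginals, I would fix an arbitrary finite family $\ux^{(1)}, \dots, \ux^{(n)} \in U^A$ and consider the evaluation
\begin{equation*}
f \longmapsto \parentheses*{\oK(f, \ux^{(j)}_I)}_{1 \leq j \leq n,\, I \in \cI} \in \prod_{j=1}^n \prod_{I \in \cI} \R_{\norm{I}-1}[X]^k.
\end{equation*}
By Lemma~\ref{lem: oK in terms of Delta and Psi}, each component equals $\Psi_{\oux^{(j)}_I} \circ \Delta_{\ux^{(j)}_I}(f)$; by Lemma~\ref{lem: regularity divided differences}, the map $f \mapsto \Delta_{\ux^{(j)}_I}(f)$ is linear and continuous from $\cC^{\norm{A}-1}(U, \R^k)$ to $\prod_{i=0}^{\norm{I}-1}\sym^i(\R^d, \R^k)$, and the $\Psi_{\oux^{(j)}_I}$ are fixed deterministic linear maps. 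Thus the whole evaluation is linear and continuous, and applying it to the centered Gaussian field $f$ yields a centered Gaussian vector by~\cite[Lem.~2.2.2]{Bog1998}. This shows $\ux \mapsto \parentheses{\oK(f, \ux_I)}_{I \in \cI}$ is a centered Gaussian field on $U^A$.

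For continuity, I would note that for each $I \in \cI$ the projection $\ux \mapsto \ux_I$ is continuous from $U^A$ to $U^I$, and by Lemma~\ref{lem: regularity Kergin interpolant}.\ref{item: Kergin continuity} applied in cardinality $\norm{I}$, the map $\uy \mapsto \oK(f, \uy)$ is continuous from $U^I$ to $\R_{\norm{I}-1}[X]^k$. Composing and taking the product over $I \in \cI$ gives continuity of the whole field. None of the steps is an obstacle: the statement is really a packaging of the preceding linearity and regularity lemmas, the only mild point being to keep track of the fact that different blocks $I$ live in different-degree polynomial spaces and that the divided differences only require regularity up to $\norm{I}-1 \leq \norm{A}-1$, which is guaranteed by hypothesis.
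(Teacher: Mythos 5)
Your proposal is correct and follows essentially the same route as the paper: the paper's own proof simply says to mimic the proof of Lemma~\ref{lem: divided differences Gaussian field}, replacing the use of Lemma~\ref{lem: regularity divided differences} by Lemma~\ref{lem: regularity Kergin interpolant}.\ref{item: Kergin continuity}. The only cosmetic difference is that, for the Gaussian marginals, you unfold $\oK(\cdot,\ux) = \Psi_{\oux}\circ\Delta_{\ux}$ via Lemmas~\ref{lem: oK in terms of Delta and Psi} and~\ref{lem: regularity divided differences}, whereas the paper would cite Lemma~\ref{lem: regularity Kergin interpolant}.\ref{item: Kergin continuity} directly — but that lemma's proof consists of precisely your factorization, so the arguments coincide.
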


\begin{proof}
The proof is similar to that of Lemma~\ref{lem: divided differences Gaussian field}, replacing the use of Lemma~\ref{lem: regularity divided differences} by that of Lemma~\ref{lem: regularity Kergin interpolant}.\ref{item: Kergin continuity}.
\end{proof}

For all $q \in \N$, we consider the Euclidean inner product on $\R_q[X]$ making the basis $(X^\alpha)_{\norm{\alpha}\leq q}$ orthonormal. This induces an inner product on $\R_q[X]^k$ such that the factors are orthogonal. Similarly, if $\cI$ is a partition of some finite set $A$, we endow $\prod_{I \in \cI} \R_{\norm{I}-1}[X]^k$ with the inner product that coincides with the previous one on each factor and makes these factors orthogonal. We can now define the following.

\begin{dfn}[Variance of twisted Kergin interpolants]
\label{def: Sigma I}
Let $A$ be a non-empty finite set and $\cI \in \cP_A$. Let $f \in \cC^{\norm{A}-1}(U,\R^k)$ be a centered Gaussian field. For all $\ux \in U^A$, we denote by:
\begin{equation*}
\Sigma_{\cI}(f,\ux) = \var{\parentheses*{\oK(f,\ux_I)}_{I \in \cI}} \in \sym\parentheses*{\prod_{I \in \cI} \R_{\norm{I}-1}[X]^k}.
\end{equation*}
For all $I$ and $J \in \cI$, we also denote by
\begin{equation*}
\Sigma_I^J(f,\ux) = \cov{\oK(f,\ux_I)}{\oK(f,\ux_J)}:\R_{\norm{J}-1}[X]^k\to \R_{\norm{I}-1}[X]^k,
\end{equation*}
so that $\Sigma_\cI(f,\ux)$ can be written as a block-operator $\Sigma_\cI(f,\ux)=\parentheses*{\strut \Sigma_I^J(f,\ux)}_{I,J \in \cI}$ on $\prod_{I \in \cI} \R_{\norm{I}-1}[X]^k$.
\end{dfn}

\begin{rem}
\label{rem: Sigma I C0}
It follows from Corollary~\ref{cor: twisted interpolant Gaussian field} that $\ux \mapsto \Sigma_\cI(f,\ux)$ is continuous on $U^A$. Moreover, by Lemma~\ref{lem: translation equivariance twisted interpolants}, if $U=\R^d$ and $f$ is stationary then, for all $\tau \in \R^d$ and $\ux \in (\R^d)^A$,
\begin{equation*}
\Sigma_\cI(f,\tau\cdot \ux) = \var{\parentheses*{\oK(\tau \cdot f,\ux_I)}_{I \in \cI}}= \var{\parentheses*{\oK(f,\ux_I)}_{I \in \cI}} = \Sigma_\cI(f,\ux).
\end{equation*}
\end{rem}

We conclude this section by introducing projection operators that will allow us to relate $\Sigma_\cI(f,\ux)$ to $\Sigma_\cJ(f,\ux)$ with $\cJ \leq \cI$. Recall that we denoted by $\cL(V,W)$ the space of continuous linear maps from $V$ to~$W$. In the following, we also denote by $\cL^\dagger(V,W) = \brackets*{\strut \Lambda \in \cL(V,W) \mvert \Lambda \ \text{is surjective}}$.

\begin{dfn}[Projection operators]
\label{def: projection operators}
Let $A$ be a finite set and $\emptyset \neq B \subset A$, we define
\begin{align*}
\Pi_B^A : \R_{\norm{A}-1}[X]^k \times (\R^d)^A &\longmapsto \qquad \R_{\norm{B}-1}[X]^k\\
(P,\ux) \qquad &\longmapsto \oK\parentheses*{\strut (-\flat(\ux))\cdot P,\ux_B}.
\end{align*}
For all $\cI \in \cP_A$, we also define $\Pi_\cI=\parentheses*{\Pi_I^A}_{I \in \cI}$ from $\R_{\norm{A}-1}[X]^k \times (\R^d)^A $ to $\prod_{I \in \cI} \R_{\norm{I}-1}[X]^k$.
\end{dfn}

\begin{lem}[Properties of $\Pi_B^A$]
\label{lem: prop Pi BA}
The projection operator $\Pi_B^A$ satisfies the following properties.
\begin{enumerate}
\item \label{item: Pi and Kergin} If $U$ is an open convex, $f \in \cC^{\norm{A}-1}(U,\R^k)$ and $\ux \in U^A$, then $\Pi_B^A\parentheses*{\oK(f,\ux),\ux} = \oK(f,\ux_B)$.

\item \label{item: Pi linear surj} For all $\ux \in (\R^d)^A$, the map $\Pi_B^A(\cdot,\ux)$ is linear surjective from $\R_{\norm{A}-1}[X]^k$ to $\R_{\norm{B}-1}[X]^k$.

\item \label{item: Pi continuity} The map $\ux \mapsto \Pi_B^A(\cdot,\ux)$ is continuous from $(\R^d)^A$ to $\cL^\dagger\parentheses*{\R_{\norm{A}-1}[X]^k,\R_{\norm{B}-1}[X]^k}$.

\item \label{item: Pi translation inv} For all $\ux \in (\R^d)^A$ and $\tau \in \R^d$, we have $\Pi_B^A(\cdot,\tau\cdot \ux) = \Pi_B^A(\cdot,\ux)$.
\end{enumerate}
\end{lem}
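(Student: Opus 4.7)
The plan is to handle the four items by unwinding the definition $\Pi_B^A(P, \ux) = \oK((-\flat(\ux))\cdot P, \ux_B)$ and invoking the properties of twisted Kergin interpolants that have already been established. The only conceptually non-trivial step is Item~\ref{item: Pi and Kergin}; the remaining three items are essentially immediate consequences of the preceding lemmas.

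For Item~\ref{item: Pi and Kergin}, I would first use Definition~\ref{def: twisted Kergin interpolant} to rewrite $\oK(f,\ux) = \flat(\ux)\cdot K(f,\ux)$, so that the group law on translations yields $(-\flat(\ux))\cdot \oK(f,\ux) = K(f,\ux)$. The claim then reduces to $\oK(K(f,\ux), \ux_B) = \oK(f, \ux_B)$, which after stripping the outer translation $\flat(\ux_B)\cdot$ becomes $K(K(f,\ux), \ux_B) = K(f, \ux_B)$. This is the heart of the proof and it follows from the uniqueness part of Theorem~\ref{thm: Kergin interpolation}: both sides are elements of $\R_{\norm{B}-1}[X]^k$, and by the interpolation property $K(f,\ux)[\ux_C]=f[\ux_C]$ for every nonempty $C\subset A$, in particular for every nonempty $C\subset B$, so the two polynomials share the same divided differences on $(\ux_C)_{\emptyset\neq C \subset B}$ and must coincide.

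Item~\ref{item: Pi linear surj} is then straightforward. Linearity is clear because both $P\mapsto (-\flat(\ux))\cdot P$ and $Q\mapsto \oK(Q,\ux_B)$ are linear in their function argument. For surjectivity I would exhibit an explicit preimage of an arbitrary $Q\in\R_{\norm{B}-1}[X]^k$: set $P = \flat(\ux)\cdot\bigl((-\flat(\ux_B))\cdot Q\bigr)$, which lies in $\R_{\norm{A}-1}[X]^k$ since translation preserves degree and $\norm{B}\leq\norm{A}$. Since a polynomial of degree at most $\norm{B}-1$ is its own Kergin interpolant on $\norm{B}$ points (again by uniqueness in Theorem~\ref{thm: Kergin interpolation}), a direct computation gives $\Pi_B^A(P,\ux)=Q$.

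For Item~\ref{item: Pi continuity}, I would factor $\Pi_B^A(\cdot,\ux) = \Lambda_{\ux_B}\circ T_{-\flat(\ux)}$, where $T_\tau:P\mapsto \tau\cdot P$ is translation on $\R_{\norm{A}-1}[X]^k$ and $\Lambda_\uy := \oK(\cdot,\uy)_{\vert \R_{\norm{A}-1}[X]^k}$. The map $\tau\mapsto T_\tau$ is continuous, the map $\uy\mapsto\Lambda_\uy$ is continuous by Lemma~\ref{lem: regularity Kergin interpolant}.\ref{item: Kergin subspace} applied to the finite-dimensional subspace $V=\R_{\norm{A}-1}[X]^k\subset \cC^{\norm{B}-1}(\R^d,\R^k)$, and composition of linear maps is continuous on finite-dimensional spaces. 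Since Item~\ref{item: Pi linear surj} already gives surjectivity pointwise, landing in $\cL^\dagger$ with its subspace topology is automatic. Finally, Item~\ref{item: Pi translation inv} is a direct application of Lemma~\ref{lem: translation equivariance twisted interpolants}: using $\flat(\tau\cdot\ux)=\flat(\ux)+\tau$, $(\tau\cdot\ux)_B=\tau\cdot\ux_B$, and the group identity $(-\flat(\ux)-\tau)\cdot P = (-\tau)\cdot\bigl((-\flat(\ux))\cdot P\bigr)$, the expression $\Pi_B^A(P,\tau\cdot\ux)=\oK\bigl((-\tau)\cdot((-\flat(\ux))\cdot P),\tau\cdot \ux_B\bigr)$ collapses to $\oK((-\flat(\ux))\cdot P,\ux_B)=\Pi_B^A(P,\ux)$ via the equivariance $\oK(\tau'\cdot g,\uy)=\oK(g,\tau'\cdot\uy)$ applied with $\tau'=-\tau$.
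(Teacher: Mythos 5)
Your proof is correct and follows essentially the same route as the paper: Item 1 via stripping translations and invoking uniqueness of the Kergin interpolant, Item 3 via Lemma~\ref{lem: regularity Kergin interpolant}.\ref{item: Kergin subspace}, and Item 4 via Lemma~\ref{lem: translation equivariance twisted interpolants}. The only cosmetic differences are that for Item 2 you exhibit an explicit preimage rather than writing $\Pi_B^A(\cdot,\ux)$ as the composition $T_{\flat(\ux_B)}\circ K(\cdot,\ux_B)\circ T_{\flat(\ux)}^{-1}$ of an isomorphism, a surjection and an isomorphism, and for Item 3 you factor through the translation operator $T_{-\flat(\ux)}$ directly instead of identifying it as $\Psi_{\flat(\ux)\cdot 0}\circ\Delta_0$ — but this is the same operator, and both proofs reduce continuity to the same two lemmas.
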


\begin{proof}
Let $U \subset \R^d$ be open and convex. Let $f \in \cC^{\norm{A}-1}(U,\R^k)$ and $\ux \in U^A$, by Definition~\ref{def: twisted Kergin interpolant} we have $\oK(f,\ux) = \flat(\ux) \cdot K(f,\ux)$. Hence $(-\flat(\ux))\cdot \oK(f,\ux) = K(f,\ux)$ and
\begin{equation*}
\Pi_B^A(\oK(f,\ux),\ux) = \oK\parentheses*{\strut K(f,\ux),\ux_B} = \flat(\ux_B)\cdot K\parentheses*{\strut K(f,\ux),\ux_B}.
\end{equation*}
By Theorem~\ref{thm: Kergin interpolation}, for all non-empty $C \subset B$, we have $K\parentheses*{\strut K(f,\ux),\ux_B}[\ux_C]=K(f,\ux)[\ux_C] =f[\ux_C]$, and the uniqueness of the Kergin interpolant yields $K\parentheses*{\strut K(f,\ux),\ux_B} = K(f,\ux_B)$. This implies that $\Pi_B^A\parentheses*{\strut \oK(f,\ux),\ux_B} = \flat(\ux_B)\cdot K(f,\ux_B)=\oK(f,\ux_B)$ and proves Item~\ref{item: Pi and Kergin}.

Let $\ux \in (\R^d)^A$, we denote by $T_{\flat(\ux)}$ the isomorphism of $\R_{\norm{A}-1}[X]^k$ defined as $T_{\flat(\ux)}:P \mapsto \flat(\ux) \cdot P$. Similarly, we denote by $T_{\flat(\ux_B)}:P \mapsto \flat(\ux_B)\cdot P$ from $\R_{\norm{B}-1}[X]^k$ to itself. For all $P \in \R_{\norm{A}-1}[X]^k$,
\begin{equation}
\label{eq: Pi AB surjective}
\Pi_B^A(P,\ux) = \oK\parentheses*{\strut (-\flat(\ux))\cdot P,\ux_B} = \flat(\ux_B) \cdot K\parentheses*{\strut (-\flat(\ux)\cdot P,\ux_B} = T_{\flat(\ux_B)}\circ K(\cdot,\ux_B)\circ T_{\flat(\ux)}^{-1}(P).
\end{equation}
That is, $\Pi_B^A(\cdot,\ux) = T_{\flat(\ux_B)}\circ K(\cdot,\ux_B)\circ T_{\flat(\ux)}^{-1}$. The linear map $K(\cdot,\ux_B)$ is surjective from $\R_{\norm{A}-1}[X]^k$ to $\R_{\norm{B}-1}[X]^k$. Indeed, its restriction to $\R_{\norm{B}-1}[X]^k$ is the identity. Since $T_{\flat(\ux)}$ and $T_{\flat(\ux_B)}$ are isomorphisms, this proves Item~\ref{item: Pi linear surj}.

Let us now focus on Item~\ref{item: Pi continuity}. Without loss of generality, we can assume that $A =\ssquarebrackets{0}{q}$. Recall that, for any $\uy \in (\R^d)^{q+1}$ we have $\Psi_{\uy}\circ \Delta_{\uy} = \Id$ on $\R_q[X]^k$, see Definition~\ref{def: isomorphisms Delta Psi} and Theorem~\ref{thm: Kergin interpolation}. Let $\tau \in \R^d$, applying the previous equality with $\uy = \tau \cdot 0$ we obtain for all $P \in \R_q[X]^k$:
\begin{equation*}
(-\tau)\cdot P = \Psi_{\tau\cdot 0}\circ \Delta_{\tau \cdot 0}\parentheses*{(-\tau)\cdot P} = \Psi_{\tau \cdot 0}(\Delta_0(P))
\end{equation*}
by the second point in Lemma~\ref{lem: translation equivariance}. Then, for all $\ux \in (\R^d)^{q+1}$ and $P \in \R_q[X]^k$, we have
\begin{equation*}
\Pi_B^A(P,\ux) = \oK\parentheses*{\strut (-\flat(\ux))\cdot P,\ux_B}= \oK\parentheses*{\Psi_{\flat(\ux) \cdot 0}(\Delta_0(P)),\ux_B}
\end{equation*}
by applying the previous relation with $\tau = \flat(\ux)$. This shows that $\Pi_B^A(\cdot,\ux) = \oK(\cdot,\ux_B) \circ \Psi_{\flat(\ux)\cdot 0}\circ \Delta_0$. By Lemma~\ref{lem: continuity Psi} and~\ref{lem: regularity Kergin interpolant}.\ref{item: Kergin subspace}, the map $\ux \mapsto \parentheses*{\oK(\cdot,\ux),\Psi_{\flat(\ux)\cdot 0},\Delta_0}$ is continuous from $(\R^d)^{q+1}$ to
\begin{equation*}
\cL\parentheses*{\strut \R_q[X]^k,\R_{\norm{B}-1}[X]^k}\times \cL\parentheses*{\prod_{i=0}^q \sym^i(\R^d,\R^k),\R_q[X]^k} \times \cL\parentheses*{\R_q[X]^k,\prod_{i=0}^q \sym^i(\R^d,\R^k)}.
\end{equation*}
Since $(\Lambda_1,\Lambda_2,\Lambda_3) \mapsto \Lambda_1 \circ \Lambda_2\circ\Lambda_3$ is continuous on this space, we finally get that $\ux \mapsto \Pi_B^A(\cdot,\ux)$ is continuous on $(\R^d)^A$.

Finally, let $P \in \R^{\norm{A}-1}[X]^k$, $\ux \in (\R^d)^A$ and $\tau \in \R^d$, by Lemma~\ref{lem: translation equivariance twisted interpolants} we have:
\begin{align*}
\Pi_B^A(P,\tau\cdot \ux) &= \oK\parentheses*{\strut (-\tau -\flat(\ux))\cdot P,(\tau\cdot \ux)_B} = \oK\parentheses*{\strut (-\tau) \cdot \parentheses*{(-\flat(\ux))\cdot P},\tau \cdot \ux_B}\\
&= \oK\parentheses*{(-\flat(\ux))\cdot P,\ux_B} =\Pi_B^A(P,\ux).\qedhere
\end{align*}
\end{proof}

\begin{lem}[Surjectivity of $\Pi_\cI$]
\label{lem: surjectivity Pi I}
Let $A$ be a non-empty finite set and $\cI \in \cP_A$. For all $\ux \in (\R^d)^A$ such that $\cI \in \cQ_0(\ux)$ the map $\Pi_\cI(\cdot,\ux):\R_{\norm{A}-1}[X]^k \to \prod_{I \in \cI} \R_{\norm{I}-1}[X]^k$ is surjective.
\end{lem}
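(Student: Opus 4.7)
The plan is to reduce the statement directly to Lemma~\ref{lem: surjectivity blockwise Kergin}, which is the analogous surjectivity for the untwisted Kergin interpolant. The twist by the barycenter contributes only composition with isomorphisms, so it cannot affect surjectivity.

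More precisely, I would proceed as follows. For each $I \in \cI$, by Equation~\eqref{eq: Pi AB surjective} in the proof of Lemma~\ref{lem: prop Pi BA}, we have the factorisation
\begin{equation*}
\Pi_I^A(\cdot,\ux) \;=\; T_{\flat(\ux_I)}\circ K(\cdot,\ux_I) \circ T_{\flat(\ux)}^{-1},
\end{equation*}
where $T_\tau: P \mapsto \tau \cdot P$ denotes the translation action. Assembling these over $I \in \cI$ gives
\begin{equation*}
\Pi_\cI(\cdot,\ux) \;=\; \Big(\bigoplus_{I\in\cI} T_{\flat(\ux_I)}\Big) \circ \Big(P \mapsto \bigl(K(P,\ux_I)\bigr)_{I \in \cI}\Big)\circ T_{\flat(\ux)}^{-1}.
\end{equation*}
The first and third factors are isomorphisms (of $\prod_{I \in \cI}\R_{\norm{I}-1}[X]^k$ and of $\R_{\norm{A}-1}[X]^k$ respectively), since each $T_\tau$ is. Hence $\Pi_\cI(\cdot,\ux)$ is surjective if and only if $P \mapsto \bigl(K(P,\ux_I)\bigr)_{I \in \cI}$ is surjective from $\R_{\norm{A}-1}[X]^k$ to $\prod_{I \in \cI}\R_{\norm{I}-1}[X]^k$.

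Under the hypothesis $\cI \in \cQ_0(\ux)$, this is exactly the content of Lemma~\ref{lem: surjectivity blockwise Kergin}, concluding the proof. No genuine obstacle arises: the only point one has to keep in mind is that the whole argument is formal bookkeeping around the translation action, and the actual combinatorial/analytic input — the existence of a polynomial interpolant of degree $\norm{A}-1$ realising prescribed Kergin data on the well-separated clusters $(\ux_I)_{I\in\cI}$ — is already packaged in Lemma~\ref{lem: surjectivity blockwise Kergin}.
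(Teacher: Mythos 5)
Your proof is correct and takes essentially the same approach as the paper: both use the factorisation from Equation~\eqref{eq: Pi AB surjective} to write $\Pi_\cI(\cdot,\ux)$ as an isomorphism, followed by the block-wise Kergin map $P \mapsto (K(P,\ux_I))_{I\in\cI}$, followed by another isomorphism, and then invoke Lemma~\ref{lem: surjectivity blockwise Kergin} for the middle factor.
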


\begin{proof}
Using the same notation as in the proof of Lemma~\ref{lem: prop Pi BA}.\ref{item: Pi linear surj}, Equation~\eqref{eq: Pi AB surjective} shows that for all $I \in \cI$ we have $\Pi_I^A(\cdot,\ux) = T_{\flat(\ux_I)}\circ K(\cdot,\ux_I) \circ T_{\flat(\ux)}^{-1}$. Thus $\Pi_\cI(\cdot,\ux)$ is the composition of the following three maps: the map $T_{\flat(\ux)}^{-1}:P \mapsto (-\flat(\ux))\cdot P$, which is an isomorphism of $\R_{\norm{A}-1}[X]^k$; the map $P \mapsto \parentheses*{K(P,\ux_I)}_{I \in \cI}$, which is surjective by Lemma~\ref{lem: surjectivity blockwise Kergin} since $\cI \in \cQ_0(\ux)$; and finally $(P_I)_{I \in \cI} \mapsto \parentheses*{\flat(\ux_I)\cdot P_I}_{I \in \cI}$, which is an isomorphism of $\prod_{I \in \cI} \R_{\norm{I}-1}[X]^k$. Hence the result.
\end{proof}


\subsection{Uniform estimates for the variance operator \texorpdfstring{$\Sigma_\cI$}{}}
\label{subsec: uniform estimates for Sigma I}

This section is dedicated to finding conditions ensuring the non-degeneracy of~$\Sigma_\cI(f,\ux)$, see Definition~\ref{def: Sigma I}, uniformly with respect to $\ux \in\diag_{\cI,\eta}$, see Definition~\ref{def: diag I eta}. We start by proving non-degeneracy results for projection operators. Then, we use them to deduce non-degeneracy results for variance operators.

\begin{lem}[Bounds on centered tuples]
\label{lem: bound on oux I}
Let $A$ be a non-empty finite set, $\cI \in \cP_A$ and $\eta \geq 0$. Let $\ux \in \diag_{\cI,\eta}$, for all $I \in \cI$ and $i \in I$, we have $\Norm{x_i-\flat(\ux_I)}\leq \card(A)\eta$, and $\Norm{\obullet{\overgroup{\ux_I}}} \leq \card(A)^2 \eta$.
\end{lem}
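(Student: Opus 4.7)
The plan is to deduce both estimates directly from the cluster-diameter bound established in Lemma~\ref{lem: clusters diameter}. Since $\ux \in \diag_{\cI,\eta}$, for any two indices $i,j$ lying in the same block $I \in \cI$, we have $[i]_\cI = [j]_\cI$, and Lemma~\ref{lem: clusters diameter} then provides the pointwise estimate
\begin{equation*}
\Norm{x_i - x_j} \leq \card(A)\,\eta.
\end{equation*}
This is the only geometric input needed; the remainder is routine averaging.

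For the first inequality, I would write the deviation from the barycenter as an average of pairwise differences:
\begin{equation*}
x_i - \flat(\ux_I) = \frac{1}{\card(I)} \sum_{j \in I} (x_i - x_j),
\end{equation*}
and apply the triangle inequality together with the bound above. Since each summand has norm at most $\card(A)\,\eta$, averaging yields $\Norm{x_i - \flat(\ux_I)} \leq \card(A)\,\eta$, which is the first claim.

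For the second inequality, I would unpack the Euclidean norm on $(\R^d)^I$ that is induced by the product structure, and bound each component using the first inequality:
\begin{equation*}
\Norm{\obullet{\overgroup{\ux_I}}}^2 = \sum_{j \in I} \Norm{x_j - \flat(\ux_I)}^2 \leq \card(I)\bigl(\card(A)\,\eta\bigr)^2 \leq \card(A)^3\,\eta^2,
\end{equation*}
so that $\Norm{\obullet{\overgroup{\ux_I}}} \leq \card(A)^{3/2}\,\eta \leq \card(A)^2\,\eta$, using $\card(I) \leq \card(A)$. There is no real obstacle here: the lemma is a purely mechanical consequence of Lemma~\ref{lem: clusters diameter}, and the only slightly loose step is the final $\card(A)^{3/2} \leq \card(A)^2$, which presumably reflects the authors' choice of a clean constant rather than the sharpest one.
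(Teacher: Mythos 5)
Your proposal is correct and follows essentially the same path as the paper: first invoke Lemma~\ref{lem: clusters diameter} to control $\Norm{x_i-x_j}$ within a block, then average to bound the deviation from the barycenter, then pass from componentwise bounds to a bound on $\Norm{\obullet{\overgroup{\ux_I}}}$. The only difference is cosmetic: for the last step the paper uses the triangle inequality $\Norm{\obullet{\overgroup{\ux_I}}}\leq\sum_{i\in I}\Norm{x_i-\flat(\ux_I)}\leq \card(I)\card(A)\eta\leq\card(A)^2\eta$, whereas you compute the Euclidean norm on the product directly and obtain the marginally sharper $\card(A)^{3/2}\eta$ before relaxing to the stated constant.
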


\begin{proof}
Let $I \in \cI$ and $i \in I$. By Lemma~\ref{lem: clusters diameter}, for all $j \in I$, we have $\Norm{x_i-x_j}\leq \card(A)\eta$. Then, we have $\Norm*{x_i-\flat(\ux_I)}=\Norm*{x_i-\frac{1}{\norm{I}}\sum_{j \in I}x_j} \leq \frac{1}{\norm{I}}\sum_{j \in I}\Norm{x_i-x_j}\leq \card(A)\eta$, see Definition~\ref{def: barycenter}. Finally, $\Norm{\obullet{\overgroup{\ux_I}}}\leq \sum_{i \in I} \Norm{x_i-\flat(\ux)} \leq \card(A)^2\eta$.
\end{proof}

\begin{cor}[Compactness of the deepest centered stratum]
\label{cor: compactness diag A eta 0}
Let $A$ be a non-empty finite set and $\eta \geq 0$. Then, the set $\odiag := \brackets*{\ux \in \diag_{\brackets{A},\eta} \mvert \flat(\ux)=0}$ is compact.
\end{cor}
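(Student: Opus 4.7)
The plan is to verify the two standard ingredients of compactness in finite dimension: closedness and boundedness. The ambient space $V^A$ is finite-dimensional, so these two properties will suffice.

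For closedness, the set $\odiag$ is the intersection of $\diag_{\brackets{A},\eta}$, which is closed in $V^A$ by Lemma~\ref{lem: closedness deepest stratum}, with the preimage $\flat^{-1}(\brackets{0})$. Since the barycenter map $\flat:V^A \to V$ is continuous (indeed linear), this preimage is closed, and hence so is $\odiag$.

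For boundedness, I would apply Lemma~\ref{lem: bound on oux I} with the partition $\cI=\brackets{A}$, whose unique block is $A$ itself. The lemma then gives $\Norm{x_i - \flat(\ux_A)} \leq \card(A)\eta$ for every $\ux \in \diag_{\brackets{A},\eta}$ and every $i \in A$. When $\ux \in \odiag$ we moreover have $\flat(\ux_A) = \flat(\ux) = 0$, so each coordinate $x_i$ satisfies $\Norm{x_i}\leq \card(A)\eta$, which bounds $\ux$ uniformly in $V^A$.

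The combination of closedness and boundedness in the finite-dimensional Euclidean space $V^A$ yields compactness by the Heine--Borel theorem. There is no real obstacle here; the only thing to double-check is that the hypothesis $\cI = \brackets{A} \in \cP_A$ is compatible with the statement of Lemma~\ref{lem: bound on oux I}, and that the indexing conventions ($\ux_A = \ux$ and $\flat(\ux_A) = \flat(\ux)$) are used consistently.
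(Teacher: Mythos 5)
Your proof is correct and follows essentially the same route as the paper: closedness is obtained by intersecting the closed set $\diag_{\brackets{A},\eta}$ (Lemma~\ref{lem: closedness deepest stratum}) with the closed preimage $\flat^{-1}(\brackets{0})$, and boundedness comes from Lemma~\ref{lem: bound on oux I} applied to $\cI=\brackets{A}$, after which Heine--Borel gives compactness. The only cosmetic difference is that you invoke the per-coordinate bound $\Norm{x_i-\flat(\ux)}\leq\card(A)\eta$ from the first conclusion of that lemma, whereas the paper reads off the bound $\Norm{\oux}\leq\card(A)^2\eta$ from the second conclusion; both give the same boundedness.
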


\begin{proof}
Since $\flat$ is continuous and $\diag_{\brackets{A},\eta}$ is closed by Lemma~\ref{lem: closedness deepest stratum}, the set $\odiag$ is closed in $(\R^d)^A$ as the intersection of two closed sets. Then, if $\ux \in \diag_{\brackets{A},\eta}$ is such that $\flat(\ux)=0$, Lemma~\ref{lem: bound on oux I} yields $\Norm*{\ux} = \Norm{\oux} \leq \card(A)^2 \eta$. Hence, $\odiag$ is also bounded, therefore compact.
\end{proof}

Given two vector spaces $V$ and $W$, we denoted by $\cL^\dagger(V,W)$ the space of continuous linear surjections from $V$ to $W$ and defined $\Pi_\cI$ in Definition~\ref{def: projection operators}. Recall that a set is said to be \emph{relatively compact} in a topological space if its closure is compact.

\begin{lem}[Uniform non-degeneracy of projectors]
\label{lem: uniform non-degeneracy Pi}
Let $A$ be a non-empty finite set and $\cI \in \cP_A$. Let $0 < \delta \leq \eta$, the set
\begin{equation*}
\brackets*{\Pi_\cI(\cdot,\ux)\mvert \ux \in \diag_{\cI,\delta}\cap \diag_{\brackets{A},\eta}}
\end{equation*}
is relatively compact in $\cL^\dagger\parentheses*{\R_{\norm{A}-1}[X]^k, \prod_{I \in \cI}\R_{\norm{I}-1}[X]^k}$.
\end{lem}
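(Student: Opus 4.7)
The plan is to reduce to a centered, compact parameter space by exploiting the translation invariance of $\Pi_\cI$, then conclude by continuity. Here is the structure in more detail.

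First I would observe that the sets $\diag_{\cI,\delta}$ and $\diag_{\brackets{A},\eta}$ are invariant under the diagonal action of $\R^d$: this is because $\cQ_\eta(\ux)$ is defined in terms of distances between convex hulls of sub-tuples of $\ux$, and these are invariant under diagonal translation. By Lemma~\ref{lem: prop Pi BA}.\ref{item: Pi translation inv}, the map $\ux\mapsto\Pi_\cI(\cdot,\ux)$ is also translation invariant, so replacing each $\ux$ by its centered version $\oux$ does not change the family under consideration. Therefore it suffices to study the set $E=\brackets*{\oux \mvert \ux\in\diag_{\cI,\delta}\cap\diag_{\brackets{A},\eta}}$, and to show that $\brackets*{\Pi_\cI(\cdot,\ux)\mvert\ux\in E}$ is relatively compact in $\cL^\dagger$.

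Next I would show $E$ is bounded. Every element of $E$ belongs to $\diag_{\brackets{A},\eta}$ (translation invariance again) and has barycenter $0$, so $E$ is contained in the compact set $\odiag$ from Corollary~\ref{cor: compactness diag A eta 0}. Hence the closure $\bar E$ in $(\R^d)^A$ is compact. By Lemma~\ref{lem: prop Pi BA}.\ref{item: Pi continuity}, the map $\ux\mapsto\Pi_\cI(\cdot,\ux)$ is continuous from $(\R^d)^A$ to $\cL\parentheses*{\R_{\norm{A}-1}[X]^k,\prod_{I\in\cI}\R_{\norm{I}-1}[X]^k}$ (continuity of each component coordinate gives continuity of the tuple). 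So its image on $\bar E$ is a compact subset of $\cL$, and it only remains to check that this image lies inside $\cL^\dagger$, i.e., consists of surjective maps.

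The key point is the following: if $\ux\in\bar E$ is the limit of $\ux^{(n)}\in E$, then for each $n$ and each pair $I\neq J$ in $\cI$ we have $\dist\parentheses*{\conv(\ux^{(n)}_I),\conv(\ux^{(n)}_J)}>\delta$ because $\cI\in\cQ_\delta(\ux^{(n)})$ (here I use $\bigwedge\cQ_\delta(\ux^{(n)})=\cI$ from Definition~\ref{def: diag I eta}). Passing to the limit, by continuity of distances between convex hulls of a fixed finite family of points, $\dist\parentheses*{\conv(\ux_I),\conv(\ux_J)}\geq\delta>0$, so $\cI\in\cQ_0(\ux)$. Lemma~\ref{lem: surjectivity Pi I} then gives $\Pi_\cI(\cdot,\ux)\in\cL^\dagger$. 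Hence the continuous image of $\bar E$ is a compact subset of $\cL^\dagger$ containing $\brackets*{\Pi_\cI(\cdot,\ux)\mvert\ux\in E}$, which proves the relative compactness.

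I do not foresee a serious obstacle; the only delicate point is the implicit fact that strict inequality survives passage to the limit as a non-strict one, but $\delta>0$ is precisely what ensures that the limiting condition $\dist\parentheses*{\conv(\ux_I),\conv(\ux_J)}\geq\delta$ still puts $\cI$ in $\cQ_0(\ux)$, keeping the projection surjective.
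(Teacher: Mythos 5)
Your proof is correct and follows essentially the same route as the paper's: reduce to the compact centered stratum $\odiag$ via translation invariance (Lemma~\ref{lem: prop Pi BA}.\ref{item: Pi translation inv} and Corollary~\ref{cor: compactness diag A eta 0}), use continuity of $\ux\mapsto\Pi_\cI(\cdot,\ux)$, and check surjectivity on the closure by passing the distance condition $\dist(\conv(\ux_I),\conv(\ux_J))>\delta$ to the limit so that $\cI\in\cQ_0(\ux)$ and Lemma~\ref{lem: surjectivity Pi I} applies. The only cosmetic difference is that you push forward the closure of the centered parameter set, whereas the paper takes the closure directly in $\cL$ and lifts a convergent sequence back to $\odiag$; and you invoke continuity of the distance between convex hulls directly, whereas the paper packages the same fact as a compactness-of-simplices contradiction argument.
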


\begin{proof}
We proved in Corollary~\ref{cor: compactness diag A eta 0} that $\odiag$ is compact. Hence, the set $\brackets*{\Pi_\cI(\cdot,\ux) \mvert \ux \in \odiag}$ is compact by Lemma~\ref{lem: prop Pi BA}.\ref{item: Pi continuity}. By Lemma~\ref{lem: prop Pi BA}.\ref{item: Pi translation inv}, the map $\ux \mapsto \Pi_\cI(\cdot,\ux)$ is translation-invariant. Thus, we have
\begin{equation*}
\brackets*{\Pi_\cI(\cdot,\ux)\mvert \ux \in \diag_{\cI,\delta}\cap \diag_{\brackets{A},\eta}} \subset \brackets*{\Pi_\cI(\cdot,\ux) \mvert \ux \in \odiag}
\end{equation*}
and the left-hand side has compact closure in $\cL\parentheses*{\R_{\norm{A}-1}[X]^k, \prod_{I \in \cI}\R_{\norm{I}-1}[X]^k}$. To conclude, we need to prove that its closure is contained in $\cL^\dagger\parentheses*{\R_{\norm{A}-1}[X]^k, \prod_{I \in \cI}\R_{\norm{I}-1}[X]^k}$.

Let $(\ux^{(n)})_{n \in \N}$ be a sequence in $\diag_{\cI,\delta}\cap \diag_{\brackets{A},\eta}$ such that $\Pi_\cI(\cdot,\ux^{(n)})\xrightarrow[n \to +\infty]{}\Pi$. By Lemma~\ref{lem: prop Pi BA}.\ref{item: Pi translation inv}, we can assume that $\flat(\ux^{(n)})=0$ for all $n \in \N$. Then, by compactness of $\odiag$, we can assume that $\ux^{(n)}\xrightarrow[n \to +\infty]{}\ux$. By continuity, we have $\Pi = \Pi_\cI(\cdot,\ux)$, and we need to prove that it is surjective.

Let $I,J \in \cI$ be such that $I \neq J$. By contradiction, if $\dist\parentheses*{\conv(\ux_I),\conv(\ux_J)}<\delta$, there exist $\us \in \simplex_I$ and $\ut \in \simplex_J$ such that $\Norm*{\us \cdot \ux_I - \ut \cdot \ux_J}< \delta$. This would imply that $\Norm*{\us \cdot \ux_I^{(n)} - \ut \cdot \ux_J^{(n)}}< \delta$ for $n$ large enough and contradict $\ux^{(n)} \in \diag_{\cI,\delta}$, see Lemma~\ref{lem: clusters diameter}. Hence, for all $I \neq J$, we have $\dist\parentheses*{\conv(\ux_I),\conv(\ux_J)}\geq \delta>0$. Thus $\cI \in \cQ_0(\ux)$, and $\Pi_\cI(\cdot,\ux)$ is surjective by Lemma~\ref{lem: surjectivity Pi I}.
\end{proof}

\begin{dfn}[Block-diagonal operators]
\label{def: block diagonal operators}
Let $(\Lambda_a)_{a \in A} \in \prod_{a \in A}\cL(V_a,W_a)$ be a finite family of linear maps. We denote the associated block-diagonal operator from $\prod_{a \in A} V_a$ to $\prod_{a \in A}W_a$ by $\bigoplus_{a \in A} \Lambda_a:(v_a)_{a \in A} \mapsto \parentheses*{\Lambda_a(v_a)}_{a \in A}$.
\end{dfn}

\begin{cor}
\label{cor: uniform non-degeneracy Pi}
Let $\cI$ and $\cJ \in \cP_A$ be such that $\cI \leq \cJ$ and let $0 < \delta \leq \eta$. Then, the set
\begin{equation*}
\brackets*{\bigoplus_{J \in \cJ} \Pi_{\cI_J}(\cdot,\ux_J) \mvert \ux \in \diag_{\cI,\delta}\cap \diag_{\cJ,\eta}}
\end{equation*}
is relatively compact in $\displaystyle\cL^\dagger\parentheses*{\prod_{J \in \cJ}\R_{\norm{J}-1}[X]^k,\prod_{J \in \cJ}\prod_{I \in \cI_J}\R_{\norm{I}-1}[X]^k}$.
\end{cor}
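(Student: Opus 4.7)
The plan is to reduce the block-diagonal statement to the non-block-diagonal Lemma~\ref{lem: uniform non-degeneracy Pi} block by block, and then recombine via the (obviously continuous) block-diagonal construction.

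First, I would observe that the operator $\bigoplus_{J \in \cJ} \Pi_{\cI_J}(\cdot, \ux_J)$ is entirely determined by the individual tuples $(\ux_J)_{J \in \cJ}$, and that each $\ux_J$ has nice membership in a thick diagonal of $(\R^d)^J$: by Lemma~\ref{lem: projection diag I eta} applied to the refinement $\cI \leq \cJ$, if $\ux \in \diag_{\cI,\delta}$ then $\ux_J \in \diag_{\cI_J,\delta}$ for every $J \in \cJ$; and applying the same lemma to the trivial refinement $\cJ \leq \cJ$, the condition $\ux \in \diag_{\cJ,\eta}$ yields $\ux_J \in \diag_{\cJ_J,\eta} = \diag_{\brackets{J},\eta}$. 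Consequently, for every $J \in \cJ$, the tuple $\ux_J$ belongs to $\diag_{\cI_J,\delta} \cap \diag_{\brackets{J},\eta}$.

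Next, I would apply Lemma~\ref{lem: uniform non-degeneracy Pi} separately to each block $J \in \cJ$ (with $A$ replaced by $J$ and $\cI$ replaced by $\cI_J$), obtaining that the set
\begin{equation*}
S_J := \brackets*{\Pi_{\cI_J}(\cdot,\uy)\mvert \uy \in \diag_{\cI_J,\delta}\cap \diag_{\brackets{J},\eta}}
\end{equation*}
is relatively compact in $\cL^\dagger\parentheses*{\R_{\norm{J}-1}[X]^k, \prod_{I \in \cI_J}\R_{\norm{I}-1}[X]^k}$. Then the product $\prod_{J \in \cJ} S_J$ is relatively compact in $\prod_{J \in \cJ}\cL^\dagger\parentheses*{\R_{\norm{J}-1}[X]^k, \prod_{I \in \cI_J}\R_{\norm{I}-1}[X]^k}$, and, by the step above, the set from the corollary is contained in the image of $\prod_{J \in \cJ} S_J$ under the block-diagonal construction map $\Phi:(\Lambda_J)_{J \in \cJ} \mapsto \bigoplus_{J \in \cJ} \Lambda_J$.

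To conclude, I would use that $\Phi$ is continuous (it is linear between finite-dimensional spaces) and maps $\prod_{J \in \cJ}\cL^\dagger\parentheses*{\R_{\norm{J}-1}[X]^k, \prod_{I \in \cI_J}\R_{\norm{I}-1}[X]^k}$ into the space $\cL^\dagger\parentheses*{\prod_{J \in \cJ}\R_{\norm{J}-1}[X]^k,\prod_{J \in \cJ}\prod_{I \in \cI_J}\R_{\norm{I}-1}[X]^k}$, since a block-diagonal operator is surjective if and only if each of its diagonal blocks is. Thus $\Phi$ sends a relatively compact set of surjective tuples to a relatively compact set of surjective block-diagonal operators, and the corollary follows. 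There is no real obstacle here: the whole argument is a formal upgrade of Lemma~\ref{lem: uniform non-degeneracy Pi}, the only genuine content being the reduction in the first paragraph, which relies exactly on Lemma~\ref{lem: projection diag I eta}.
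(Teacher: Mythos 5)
Your proof is correct and follows essentially the same route as the paper's: apply Lemma~\ref{lem: uniform non-degeneracy Pi} block by block to get compact sets, use Lemma~\ref{lem: projection diag I eta} to place each $\ux_J$ in $\diag_{\cI_J,\delta}\cap\diag_{\brackets{J},\eta}$, and conclude via continuity of the block-diagonal construction together with the fact that a block-diagonal operator with surjective diagonal blocks is surjective. The only cosmetic difference is that you invoke Lemma~\ref{lem: projection diag I eta} explicitly for the second membership (via the trivial refinement $\cJ\leq\cJ$), whereas the paper folds both memberships into a single citation; the substance is identical.
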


\begin{proof}
By Lemma~\ref{lem: uniform non-degeneracy Pi}, for all $J \in \cJ$ there exists a compact set $\Gamma_J$ such that
\begin{equation*}
\brackets*{\Pi_{\cI_J}(\cdot,\ux_J) \mvert \ux_J \in \diag_{\cI_J,\delta}\cap\diag_{\brackets{J},\eta}} \subset \Gamma_J \subset \cL^\dagger\parentheses*{\R_{\norm{J}-1}[X]^k,\prod_{I \in \cI_J}\R_{\norm{I}-1}[X]^k}.
\end{equation*}
If $\parentheses*{\Lambda_J}_{J \in \cJ} \in \prod_{J \in \cJ} \Gamma_J$ then $\bigoplus_{J \in \cJ} \Lambda_J$ is a block-diagonal operator with surjective diagonal blocks, hence it is surjective. By continuity of the map $(\Lambda_J)_{J \in \cJ} \mapsto \bigoplus_{J\in \cJ} \Lambda_J$, the set
\begin{equation}
\label{eq: uniform non-degeneracy Pi}
\brackets*{\bigoplus_{J \in \cJ} \Lambda_J \mvert \forall J \in \cJ, \Lambda_J \in \Gamma_J} \subset \cL^\dagger\parentheses*{\prod_{J \in \cJ}\R_{\norm{J}-1}[X]^k,\prod_{J \in \cJ}\prod_{I \in \cI_J}\R_{\norm{I}-1}[X]^k}
\end{equation}
is compact. By Lemma~\ref{lem: projection diag I eta}, if $\ux \in \diag_{\cI,\delta}\cap \diag_{\cJ,\eta}$ then for all $J \in \cJ$ we have $\ux_J \in \diag_{\cI_J,\delta}\cap \diag_{\brackets{J},\eta}$ and $\Pi_{\cI_J}(\cdot,\ux_J) \in \Gamma_J$. Hence $\brackets*{\bigoplus_{J \in \cJ} \Pi_{\cI_J}(\cdot,\ux_J) \mvert \ux \in \diag_{\cI,\delta}\cap \diag_{\cJ,\eta}}$ is contained in the compact set on the left-hand side of Equation~\eqref{eq: uniform non-degeneracy Pi}, which concludes the proof.
\end{proof}

Let us now consider the blocks of the variance operators introduced in Definition~\ref{def: Sigma I}.

\begin{lem}[Distance between covariances of twisted interpolants]
\label{lem: distance between covariances of twisted interpolants}
Let $q \in \N$ and $\eta \geq 0$, there exists $C\geq 0$ with the following property: for any open convex $U \subset \R^d$, for  any centered Gaussian fields $f$ and $\tilde{f}\in \cC^q(U,\R^k)$, for any set $A$ of cardinality $q+1$ and $\cI \in \cP_A$, for any $I,J \in \cI$ and $\ux \in \diag_{\cI,\eta}$, we have:
\begin{equation*}
\Norm*{\Sigma_I^J(f,\ux)-\Sigma_I^J(\tilde{f},\ux)} \leq C \max_{\substack{\norm{\alpha}\leq q\\ \norm{\beta}\leq q}} \ \max_{\substack{w \in \conv(\ux_I)\\ z \in \conv(\ux_J)}} \ \Norm*{\strut \partial^{\alpha,\beta}r(w,z)-\partial^{\alpha,\beta}\tilde{r}(w,z)},
\end{equation*}
where $r$ and $\tilde{r}$ stand for the covariance kernels of $f$ and $\tilde{f}$ respectively.
\end{lem}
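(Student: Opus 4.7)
The plan is to reduce the estimate to the analogous one for divided differences provided by Lemma~\ref{lem: distance between covariances of divided differences}. After choosing arbitrary orderings of the tuples $\ux_I$ and $\ux_J$, the factorization $\oK(\cdot,\ux_I) = \Psi_{\obullet{\overgroup{\ux_I}}}\circ \Delta_{\ux_I}$ from Lemma~\ref{lem: oK in terms of Delta and Psi} (and its analogue for $\ux_J$), combined with the elementary identity $\cov{\Lambda_1 Y_1}{\Lambda_2 Y_2} = \Lambda_1 \circ \cov{Y_1}{Y_2} \circ \Lambda_2^*$ for centered Gaussian vectors and continuous linear maps, gives
\begin{equation*}
\Sigma_I^J(f,\ux) - \Sigma_I^J(\tilde f,\ux) = \Psi_{\obullet{\overgroup{\ux_I}}}\circ \bigl(\cov{\Delta_{\ux_I}(f)}{\Delta_{\ux_J}(f)} - \cov{\Delta_{\ux_I}(\tilde f)}{\Delta_{\ux_J}(\tilde f)}\bigr)\circ \Psi_{\obullet{\overgroup{\ux_J}}}^*.
\end{equation*}
Taking operator norms then reduces the task to controlling each of the three factors on the right-hand side.

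For the flanking operators, I would apply Lemma~\ref{lem: bound on oux I} to bound $\Norm*{\obullet{\overgroup{\ux_I}}}$ and $\Norm*{\obullet{\overgroup{\ux_J}}}$ by $\card(A)^2\eta \leq (q+1)^2 \eta$, and then invoke the continuity result Lemma~\ref{lem: continuity Psi} (whose proof furnishes an explicit Lipschitz estimate of the form $\Norm{\Psi_{\uy}-\Psi_0} \leq q^2(1+\Norm{\uy})^q\Norm{\uy}$) to produce a constant $C_1 = C_1(q,\eta)$ dominating both $\Norm{\Psi_{\obullet{\overgroup{\ux_I}}}}$ and $\Norm{\Psi_{\obullet{\overgroup{\ux_J}}}}$. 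For the central factor, I would decompose $\cov{\Delta_{\ux_I}(f)}{\Delta_{\ux_J}(f)}$ into blocks of the form $\cov{f[\ux_I^{(i)}]}{f[\ux_J^{(j)}]}$, where $\ux_I^{(i)}$ and $\ux_J^{(j)}$ denote prefixes of $\ux_I$ and $\ux_J$ of length $i+1$ and $j+1$, with $i \leq \norm{I}-1$ and $j \leq \norm{J}-1$. Applying Lemma~\ref{lem: distance between covariances of divided differences} block by block and using that $\conv(\ux_I^{(i)}) \subset \conv(\ux_I)$ and $\conv(\ux_J^{(j)}) \subset \conv(\ux_J)$, each block difference is bounded by $(q+1)^d$ times the maximum of $\Norm*{\partial^{\alpha,\beta}(r-\tilde r)(w,z)}$ over $\norm{\alpha}\leq q$, $\norm{\beta}\leq q$, $w \in \conv(\ux_I)$, and $z \in \conv(\ux_J)$. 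Summing over the at most $(q+1)^2$ blocks yields a second constant $C_2 = C_2(q)$, and the desired estimate follows with $C = C_1^2 C_2$.

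I do not expect any serious obstacle here, since all the required pieces are already in place. The main care needed is bookkeeping: one must verify that the final constant depends only on $q$ and $\eta$ (and on the ambient $d$, $k$), and in particular is uniform in $A$, $\cI$, $I$, $J$, the convex open set $U$, the fields $f$ and $\tilde f$, and the point $\ux \in \diag_{\cI,\eta}$. It is also worth noting that, although the factorization via $\Delta_{\ux_I}$ and $\Psi_{\obullet{\overgroup{\ux_I}}}$ depends on an ordering of $\ux_I$, the left-hand side $\Sigma_I^J(f,\ux)$ is intrinsic, so any ordering works and the final bound carries no residual dependence on this choice.
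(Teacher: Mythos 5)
Your proposal is correct and follows essentially the same route as the paper: factor $\oK(\cdot,\ux_I)=\Psi_{\obullet{\overgroup{\ux_I}}}\circ\Delta_{\ux_I}$ via Lemma~\ref{lem: oK in terms of Delta and Psi}, sandwich the covariance difference between $\Psi$-operators, bound the central term block by block using Lemma~\ref{lem: distance between covariances of divided differences}, and bound the flanking $\Psi$-operators by combining Lemma~\ref{lem: bound on oux I} with the continuity from Lemma~\ref{lem: continuity Psi}. The paper's proof differs only in bookkeeping (it absorbs the block count into a single $(q+1)^{d+2}$ constant when applying the divided-difference lemma), and it addresses the ordering issue the same way you do, via Remark~\ref{rem: Kergin symmetric}.
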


\begin{proof}
Let $f$ and $\tilde{f} \in \cC^q(U,\R^k)$ be centered Gaussian fields. Let $m,n \in \ssquarebrackets{0}{q}$ and let us consider $x_0,\dots,x_m,y_0,\dots,y_n \in U$. By Definition~\ref{def: isomorphisms Delta Psi}, we have
\begin{equation*}
\cov{\Delta_{\ux}(f)}{\Delta_{\uy}(f)} = \begin{pmatrix}
\cov{f[x_0,\dots,x_i]}{f[y_0,\dots,y_j]}
\end{pmatrix}_{0 \leq i \leq m, 0 \leq j \leq n}
\end{equation*}
as a block-operator, and similarly for $\tilde{f}$. Using Lemma~\ref{lem: distance between covariances of divided differences}, we obtain the following bound for the operator norm:
\begin{multline*}
\Norm*{\cov{\Delta_{\ux}(f)}{\Delta_{\uy}(f)}-\cov{\Delta_{\ux}(\tilde{f})}{\Delta_{\uy}(\tilde{f})}}\\
\begin{aligned}
&\leq \sum_{0 \leq i \leq m, 0\leq j \leq n} \Norm*{\cov{f[x_0,\dots,x_i]}{f[y_0,\dots,y_j]}-\cov{\tilde{f}[x_0,\dots,x_i]}{\tilde{f}[y_0,\dots,y_j]}}\\
&\leq (q+1)^{d+2} \max_{\substack{\norm{\alpha}\leq q\\ \norm{\beta}\leq q}}\ \max_{\substack{w \in \conv(\ux)\\ z \in \conv(\uy)}}\ \Norm*{\partial^{\alpha,\beta}r(w,z)-\partial^{\alpha,\beta}\tilde{r}(w,z)}.
\end{aligned}
\end{multline*}

By Lemma~\ref{lem: oK in terms of Delta and Psi}, we have $\cov{\oK(f,\ux)}{\oK(f,\uy)} = \Psi_{\oux} \circ \cov{\Delta_{\ux}(f)}{\Delta_{\uy}(f)} \circ \Psi_{\ouy}^*$, where $\Psi_{\ouy}^*$ stands for the adjoint operator of $\Psi_{\ouy}$. A similar formula holds for $\tilde{f}$ and, since we use the operator norm subordinated to the Euclidean ones, this yields:
\begin{multline*}
\Norm*{\cov{\oK(f,\ux)}{\oK(f,\uy)}-\cov{\oK(\tilde{f},\ux)}{\oK(\tilde{f},\uy)}}\\
\begin{aligned}
&\leq \Norm*{\Psi_{\oux}}\Norm*{\cov{\Delta_{\ux}(f)}{\Delta_{\uy}(f)}-\cov{\Delta_{\ux}(\tilde{f})}{\Delta_{\uy}(\tilde{f})}}\Norm*{\Psi_{\ouy}^*}\\
&\leq  (q+1)^{d+2}\Norm*{\Psi_{\oux}}\Norm*{\Psi_{\ouy}} \max_{\substack{\norm{\alpha}\leq q\\ \norm{\beta}\leq q}}\ \max_{\substack{w \in \conv(\ux)\\ z \in \conv(\uy)}}\ \Norm*{\partial^{\alpha,\beta}r(w,z)-\partial^{\alpha,\beta}\tilde{r}(w,z)}.
\end{aligned}
\end{multline*}
By Lemma~\ref{lem: continuity Psi}, the map $\ux \mapsto \Psi_{\ux}$ is continuous, hence bounded by some constant $C_m\geq 0$ on the closed ball of center $0$ and radius $(q+1)^2\eta$ in $(\R^d)^{m+1}$. Assuming that $\Norm{\oux} \leq (q+1)^2\eta$ and $\Norm{\ouy}\leq (q+1)^2\eta$, and denoting by $C = (q+1)^{d+2} \max_{0 \leq m \leq q} C_m^2$, we obtain:
\begin{equation*}
\Norm*{\cov{\oK(f,\ux)}{\oK(f,\uy)}-\cov{\oK(\tilde{f},\ux)}{\oK(\tilde{f},\uy)}} \leq C \max_{\substack{\norm{\alpha}\leq q\\ \norm{\beta}\leq q}}\ \max_{\substack{w \in \conv(\ux)\\ z \in \conv(\uy)}}\ \Norm*{\partial^{\alpha,\beta}(r-\tilde{r})(w,z)}.
\end{equation*}

Now, let $A$ be a set of cardinality $q+1$ and $\cI \in \cP_A$. Let $I,J \in \cI$ and $\ux \in \diag_{\cI,\eta}$. Since $\oK(f,\ux_I)$ is invariant under permutation of $I$ (see Remark~\ref{rem: Kergin symmetric} and Definition~\ref{def: twisted Kergin interpolant}), we can choose an arbitrary ordering of $I$ (resp.~$J$) and write $\ux_I$ as $(x_0,\dots,x_{\norm{I}})$ (resp.~$\ux_J$ as $(y_0,\dots,y_{\norm{J}})$). By Lemma~\ref{lem: bound on oux I}, since $\ux \in \diag_{\cI,\eta}$, we have $\Norm{\obullet{\overgroup{\ux_I}}}\leq (q+1)^2\eta$ and $\Norm{\obullet{\overgroup{\ux_J}}}\leq (q+1)^2\eta$, so that the previous bound holds with $\ux$ replaced by $\ux_I$ and $\uy$ replaced by $\ux_J$. Hence the result.
\end{proof}

Let us now focus on the case where $U=\R^d$ and $f \in \cC^q(\R^d,\R^k)$ is a stationary field. The following is one of the key results of this section.

\begin{prop}[Uniform non-degeneracy of $\Sigma_\cI$ for stationary fields]
\label{prop: uniform non degeneracy Sigma I}
Let $A$ be a finite set of cardinality $q+1$. Let $f\in \cC^q(\R^d,\R^k)$ be a stationary centered Gaussian field satisfying \hypND{q} and \hypDCL{q}{\infty}. Then, there exists $\eta_0 >0$ such that, for any $\eta \in (0,\eta_0]$ and $\cI \in \cP_A$ the set $\brackets*{\Sigma_\cI(f,\ux) \mvert \ux \in \diag_{\cI,\eta}}$ is relatively compact in $ \sym^+\parentheses*{\prod_{I \in \cI}\R_{\norm{I}-1}[X]^k}$.
\end{prop}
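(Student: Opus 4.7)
The plan splits into showing that $\brackets*{\Sigma_\cI(f,\ux) : \ux \in \diag_{\cI,\eta}}$ is bounded in $\sym\parentheses*{\prod_{I \in \cI} \R_{\norm{I}-1}[X]^k}$, and that every limit point of this set lies in $\sym^+$. Boundedness is straightforward: applying Lemma~\ref{lem: distance between covariances of twisted interpolants} with $\tilde{f}=0$, together with the stationarity of $f$ (so $\partial^{\alpha,\beta}r(w,z)=\partial^{\alpha,\beta}r(0,z-w)$) and the bound $\Norm{\partial^{\alpha,\beta}r(0,\cdot)} \leq \Norm{g}_\infty$ from Hypothesis~\hypDCL{q}{\infty}, gives $\Norm{\Sigma_I^J(f,\ux)} \leq C\Norm{g}_\infty$ uniformly in $I,J \in \cI$ and $\ux \in \diag_{\cI,\eta}$.

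For a limit point $\Sigma$ of a sequence $\Sigma_\cI(f,\ux^{(n)})$ with $\ux^{(n)} \in \diag_{\cI,\eta}$, I would first use the translation invariance of Remark~\ref{rem: Sigma I C0} to normalize $\flat(\ux_{I_0}^{(n)})=0$ for a reference block $I_0 \in \cI$. By Lemma~\ref{lem: bound on oux I} the centered clusters $\obullet{\overgroup{\ux_I^{(n)}}}$ lie in compact sets, so a further extraction makes each of them converge. The cluster barycenters $y_I^{(n)}:=\flat(\ux_I^{(n)})$ naturally determine a partition $\cJ \in \cP_\cI$ via the rule $I_1 \sim_\cJ I_2$ iff $\Norm{y_{I_1}^{(n)}-y_{I_2}^{(n)}}$ stays bounded along the subsequence; within each group $J \in \cJ$, after recentering on some $I_J \in J$, the relative positions $y_I^{(n)}-y_{I_J}^{(n)}$ converge, while across distinct groups the distances diverge. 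Applying Lemma~\ref{lem: distance between covariances of twisted interpolants} once more, combined with the decay $g(z) \to 0$ at infinity from \hypDCL{q}{\infty}, forces the off-diagonal blocks of $\Sigma_\cI(f,\ux^{(n)})$ between distinct groups of $\cJ$ to vanish in the limit. Hence $\Sigma = \bigoplus_{J \in \cJ} \Sigma^J$ is block-diagonal, with each $\Sigma^J = \Sigma_{\cI_J}(f,\ux^{J,\infty})$ the variance of a finite limit configuration $\ux^{J,\infty}$ whose blocks are pairwise separated by at least $\eta$.

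To establish $\Sigma^J \succ 0$, I would compare it with the \emph{collapsed} configuration $\ux^{J,0}$ obtained by replacing each cluster $\ux_I^{J,\infty}$ with $(y_I,\dots,y_I)$ at its barycenter $y_I$. By Theorem~\ref{thm: Kergin interpolation} applied at coincident points, together with Definition~\ref{def: twisted Kergin interpolant}, $\oK(f,\ux_I^{J,0})$ equals the Taylor polynomial at $0$ of the translated field $y_I \cdot f$ of order $\norm{I}-1$, hence depends linearly on the jet $\parentheses*{\partial^\alpha f(y_I)}_{\norm{\alpha}<\norm{I}}$. Hypothesis~\hypND{q}, applied to the distinct points $(y_I)_{I \in J}$ with multiplicities $\norm{I}$ (padded by an auxiliary block if needed so that the multiplicities sum to exactly $q+1$), then yields that the full jet family $\parentheses*{\partial^\alpha f(y_I)}_{\norm{\alpha}<\norm{I},\,I \in J}$ is non-degenerate, whence $\Sigma_{\cI_J}(f,\ux^{J,0}) \succ 0$.

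The hard part will be to transfer this positive definiteness from $\ux^{J,0}$ to $\ux^{J,\infty}$ uniformly in the admissible $(y_I)_{I \in J}$. I plan to establish two uniform estimates. First, a lower bound $\lambda > 0$ on the smallest eigenvalue of $\Sigma_{\cI_J}(f,\ux^{J,0})$ as $(y_I)_{I \in J}$ ranges over configurations with $y_{I_J}=0$ and pairwise distances at least $\eta$: continuity of this smallest eigenvalue handles bounded $(y_I)$, while as any $y_I$ tends to infinity the operator decouples (via \hypDCL{q}{\infty}) into block-diagonal form whose diagonal blocks are $\Var[\oK(f,(0,\dots,0))]$, positive definite by stationarity and \hypND{q}, yielding the uniform bound. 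Second, a uniform modulus of continuity $\Norm{\Sigma_{\cI_J}(f,\ux^{J,\infty})-\Sigma_{\cI_J}(f,\ux^{J,0})} \leq \varepsilon(\eta)$ with $\varepsilon(\eta) \to 0$ as $\eta \to 0$, obtained by combining Lemma~\ref{lem: distance between covariances of twisted interpolants}-type estimates with uniform continuity of $\partial^{\alpha,\beta}r$ on compact sets (for nearby clusters) and the decay from \hypDCL{q}{\infty} (for far-apart clusters). Choosing $\eta_0$ so small that $\varepsilon(\eta_0) < \lambda/2$ then ensures $\Sigma^J \succ 0$ for every $J \in \cJ$, hence $\Sigma \succ 0$.
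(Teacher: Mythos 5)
Your Step 1 (boundedness via Lemma~\ref{lem: distance between covariances of twisted interpolants} with $\tilde f=0$ and stationarity) matches the paper's Lemma~\ref{lem: boundedness Sigma I}, and your Step 2 (extract, normalize, form $\cJ\in\cP_\cI$ from which barycenter differences stay bounded, prove block-diagonality of the limit using $\hypDCL{q}{\infty}$) is essentially the paper's Steps 1--2 of Lemma~\ref{lem: uniform ND eta delta}. The genuine gap is in your final step, where you try to transfer positive-definiteness from the collapsed configuration $\ux^{J,0}$ to the non-collapsed limit $\ux^{J,\infty}$.

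The issue is that both of your ``uniform estimates'' degenerate simultaneously as $\eta\to 0$. Your floor $\lambda$ is necessarily $\lambda=\lambda(\eta)$: it is the infimum of the smallest eigenvalue of $\Sigma_{\cI_J}(f,\ux^{J,0})$ over configurations with pairwise barycenter distances $\geq\eta$, and since $\hypND{q}$ only guarantees non-degeneracy for \emph{distinct} points, $\lambda(\eta)\to 0$ as $\eta\to 0$ (already in the simplest case of two singleton blocks, $\lambda(\eta)=r(0)-\sup_{\Norm{z}\geq\eta}\norm{r(0,z)}\sim c\eta^2$ since $r$ is even). Meanwhile $\varepsilon(\eta)\to 0$ as well, but under the hypothesis $f\in\cC^q$ the top derivatives $\partial^{\alpha,\beta}r$ with $\norm{\alpha}=\norm{\beta}=q$ are merely continuous, not Lipschitz, so $\varepsilon(\eta)=o(1)$ with no rate. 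There is therefore no reason that $\varepsilon(\eta)<\lambda(\eta)/2$ should hold for some $\eta$; in fact the ratio $\varepsilon(\eta)/\lambda(\eta)$ may well diverge. Your sentence ``Choosing $\eta_0$ so small that $\varepsilon(\eta_0)<\lambda/2$'' hides this circularity, since shrinking $\eta_0$ also shrinks $\lambda$.

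The paper avoids the race between $\varepsilon(\eta)$ and $\lambda(\eta)$ by decoupling the within-cluster scale from the between-cluster scale, via a backward induction over $\cP_A$. Lemma~\ref{lem: uniform ND eta delta} only establishes uniform non-degeneracy over $\diag_{\cI,\eta}\cap\diag_{\cI,\delta}$ with $\delta>0$ \emph{fixed}: the contradiction sequence has $\eta=1/n\to 0$ while inter-cluster distances stay $>\delta$, so the limit configuration is genuinely collapsed with distinct barycenters at distance $\geq\delta$, and $\hypND{q}$ applies directly with no comparison argument. Then, in Proposition~\ref{prop: uniform non degeneracy Sigma I}, one decomposes $\diag_{\cI,\eta}=\bigsqcup_{\cJ\geq\cI}\parentheses*{\diag_{\cI,\eta}\cap\diag_{\cJ,\delta}}$ using Lemma~\ref{lem: monotonicity diag I eta} with $\delta=\min_{\cJ>\cI}\eta_\cJ$; the piece $\cJ=\cI$ is handled by Lemma~\ref{lem: uniform ND eta delta}, and each piece $\cJ>\cI$ is handled by the induction hypothesis for $\cJ$ together with Corollary~\ref{cor: uniform non-degeneracy Pi} (relative compactness of the surjective projection operators $\bigoplus_J\Pi_{\cI_J}(\cdot,\ux_J)$) and the identity $\Sigma_\cI=\Pi\,\Sigma_\cJ\,\Pi^*$. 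You would need to incorporate some version of this intermediate-scale decomposition and the projection operators $\Pi^A_B$ of Definition~\ref{def: projection operators} to close the argument.
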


The remainder of this section is mostly dedicated to the proof of Proposition~\ref{prop: uniform non degeneracy Sigma I}.
Given a Euclidean space $V$, a subset $\cO$ is relatively compact in $\sym^+(V)$ if and only if it is bounded in $\sym(V)$ and there exists $C>0$ such that: $\forall \Lambda \in \cO$, $\det(\Lambda)\geq C$. The next lemma takes care of the boundedness of $\brackets*{\Sigma_\cI(f,\ux) \mvert \ux \in \diag_{\cI,\eta}}$. Obtaining a uniform lower bound on determinants is more involved, and is dealt with below.

\begin{lem}[Boundedness of $\Sigma_\cI$ for stationary fields]
\label{lem: boundedness Sigma I}
Let $A$ be a set of cardinality $q+1$ and $\eta \geq 0$, let $f \in \cC^q(\R^d,\R^k)$ be a centered stationary Gaussian field, for all $\cI \in \cP_A$ we have:
\begin{equation*}
\sup_{\ux \in \diag_{\cI,\eta}} \Norm*{\Sigma_\cI(f,\ux)} <+\infty.
\end{equation*}
\end{lem}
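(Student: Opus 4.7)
The plan is to write $\Sigma_\cI(f,\ux)$ as the block operator $\bigl(\Sigma_I^J(f,\ux)\bigr)_{I,J \in \cI}$ on $\prod_{I \in \cI} \R_{\norm{I}-1}[X]^k$ and control each block separately. Since $\Norm{\Sigma_\cI(f,\ux)}$ is bounded above by $\sum_{I,J \in \cI} \Norm{\Sigma_I^J(f,\ux)}$, and the set $\cI$ is finite with $\card(\cI) \leq q+1$, it suffices to find a uniform bound on $\Norm{\Sigma_I^J(f,\ux)}$ over $\ux \in \diag_{\cI,\eta}$ for every pair $I,J \in \cI$.

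To bound a single block, I would apply Lemma~\ref{lem: distance between covariances of twisted interpolants} with the open convex set $U = \R^d$ and the trivial second field $\tilde f \equiv 0$ (whose covariance kernel $\tilde r$ vanishes identically). This yields a constant $C = C(q,\eta) \geq 0$ such that, for all $\ux \in \diag_{\cI,\eta}$,
\begin{equation*}
\Norm*{\Sigma_I^J(f,\ux)} \leq C \max_{\norm{\alpha}\leq q,\,\norm{\beta}\leq q}\ \max_{w \in \conv(\ux_I),\,z \in \conv(\ux_J)} \Norm*{\strut \partial^{\alpha,\beta}r(w,z)}.
\end{equation*}

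The key observation is then that, because $f$ is stationary, the right-hand side is in fact bounded by a constant depending only on $f$ and $q$, independently of $w$ and $z$. Indeed, $\partial^{\alpha,\beta}r(w,z) = \cov{\partial^\alpha f(w)}{\partial^\beta f(z)}$, and applying the Cauchy--Schwarz inequality to the bilinear form $(u,v) \mapsto \esp{\prsc{\partial^\alpha f(w)}{u}\prsc{\partial^\beta f(z)}{v}}$ gives
\begin{equation*}
\Norm*{\partial^{\alpha,\beta}r(w,z)} \leq \Norm*{\var{\partial^\alpha f(w)}}^{\frac{1}{2}}\Norm*{\var{\partial^\beta f(z)}}^{\frac{1}{2}} = \Norm*{\partial^{\alpha,\alpha}r(0,0)}^{\frac{1}{2}}\Norm*{\partial^{\beta,\beta}r(0,0)}^{\frac{1}{2}},
\end{equation*}
where the last equality uses stationarity, namely $\partial^{\alpha,\alpha}r(w,w) = \partial^{\alpha,\alpha}r(0,0)$ for every $w \in \R^d$.

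Combining the two displayed inequalities and summing over the finite set of pairs $(I,J) \in \cI \times \cI$ and multi-indices $(\alpha,\beta)$ with $\norm{\alpha},\norm{\beta} \leq q$ produces a finite upper bound on $\Norm{\Sigma_\cI(f,\ux)}$ that is independent of $\ux \in \diag_{\cI,\eta}$, which is the claim. There is no substantial obstacle here: the proof is essentially a direct application of Lemma~\ref{lem: distance between covariances of twisted interpolants} combined with the standard Cauchy--Schwarz bound on covariance operators, made effective by stationarity.
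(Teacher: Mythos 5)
Your proof is correct and follows essentially the same route as the paper: reduce to bounding each block $\Sigma_I^J(f,\ux)$, apply Lemma~\ref{lem: distance between covariances of twisted interpolants} with $\tilde f = 0$ to bound the block by the sup of $\Norm{\partial^{\alpha,\beta}r}$ over the relevant convex hulls, then use Cauchy--Schwarz together with stationarity to bound that sup uniformly by $\max_{\norm{\gamma}\leq q}\Norm{\partial^{\gamma,\gamma}r(0,0)}$. The only cosmetic difference is that you phrase the Cauchy--Schwarz step at the level of variance operators rather than writing out the argument for unit vectors $u,v$, but the content is the same.
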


\begin{proof}
Given $\cI \in \cP_A$, it is enough to prove that $\Sigma_I^J(f,\ux)$ is bounded on $\diag_{\cI,\eta}$ for all $I$ and $J \in \cI$, see Definition~\ref{def: Sigma I}. Let us fix $I$ and $J \in \cI$. By Lemma~\ref{lem: distance between covariances of twisted interpolants} applied on $U=\R^d$ with $\tilde{f}=0$, there exists $C \geq 0$ such that, for all $\ux \in \diag_{\cI,\eta}$,
\begin{equation*}
\Norm*{\strut \Sigma_I^J(f,\ux)}\leq C \max_{\norm{\alpha}\leq q,\norm{\beta}\leq q} \ \max_{w \in \conv(\ux_I), z \in \conv(\ux_J)} \ \Norm*{\strut \partial^{\alpha,\beta}r(w,z)},
\end{equation*}
where $r:\R^d \times \R^d \to \sym(\R^k)$ is the covariance kernel of $f$. Let $\alpha$ and $\beta \in \N^d$ be of length at most~$q$ and let $w,z \in \R^d$. For all $u$ and $v \in \R^k$ such that $\Norm{u}=1=\Norm{v}$ we have:
\begin{align*}
\prsc*{u}{\partial^{\alpha,\beta}r(w,z)v}&=\esp{\prsc*{\partial^\alpha f(w)}{u}\prsc*{\partial^\beta f(z)}{v}} \leq \esp{\prsc*{\partial^\alpha f(w)}{u}^2}^\frac{1}{2}\esp{\prsc*{\partial^\beta f(z)}{v}^2}^\frac{1}{2}\\
&\leq \prsc*{u}{\partial^{\alpha,\alpha}r(w,w)u\strut}^\frac{1}{2}\prsc*{v}{\partial^{\beta,\beta}r(z,z)v\strut}^\frac{1}{2}\leq \Norm*{\partial^{\alpha,\alpha}r(0,0)}^\frac{1}{2}\Norm*{\partial^{\beta,\beta}r(0,0)}^\frac{1}{2},
\end{align*}
where the last inequality uses the stationarity of $f$. Taking the maximum over $u$ and $v$ of norm $1$ yields that $\Norm*{\partial^{\alpha,\beta}r(w,z)}\leq \max_{\norm{\gamma}\leq q}\Norm*{\partial^{\gamma,\gamma}r(0,0)}$. Hence,
\begin{equation*}
\sup_{\ux \in \diag_{\cI,\eta}}\Norm{\Sigma_I^J(f,\ux)} \leq C \max_{\norm{\gamma}\leq q}\Norm*{\partial^{\gamma,\gamma}r(0,0)}.\qedhere
\end{equation*}
\end{proof}

An important step on the way to proving Proposition~\ref{prop: uniform non degeneracy Sigma I} is the following.

\begin{lem}[Uniform non-degeneracy for well-separated clusters]
\label{lem: uniform ND eta delta}
Let $A$ be a finite set of cardinality $q+1$. Let $f\in \cC^q(\R^d,\R^k)$ be a stationary centered Gaussian field satisfying \hypND{q} and \hypDCL{q}{\infty}. Let $\cI \in \cP_A$ and $\delta>0$, there exists $\eta \in (0,\delta]$ such that $\brackets*{\Sigma_\cI(f,\ux) \mvert \ux \in \diag_{\cI,\eta}\cap \diag_{\cI,\delta}}$ is relatively compact in $\sym^+\parentheses*{\prod_{I \in \cI}\R_{\norm{I}-1}[X]^k}$.
\end{lem}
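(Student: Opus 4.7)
The plan is to argue by contradiction, exploiting two geometric mechanisms: the collapse of each $\cI$-cluster to a single point as the inner scale vanishes, and the decorrelation of the field on well-separated clusters. Lemma~\ref{lem: boundedness Sigma I} already supplies the boundedness part of relative compactness in $\sym^+$, so only positive-definiteness can fail. If the conclusion is false, I will pick for each $n\in\N^*$ with $1/n\leq\delta$ a configuration $\ux^{(n)}\in\diag_{\cI,1/n}\cap\diag_{\cI,\delta}$ for which the smallest eigenvalue of $\Sigma_\cI(f,\ux^{(n)})$ is at most $1/n$, and aim to produce a positive-definite limit operator.

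After extracting a subsequence, I may assume that for every pair $I\neq I'$ in $\cI$, the quantity $\dist\parentheses*{\conv(\ux^{(n)}_I),\conv(\ux^{(n)}_{I'})}$ is either bounded in $n$ or tends to $+\infty$. This dichotomy defines a coarser partition $\cJ\geq\cI$ of $A$ in which two $\cI$-blocks share a $\cJ$-block exactly when they remain at bounded distance. Since the $(I,I')$-entry of $\Sigma_\cI(f,\ux)$ with $I,I'\in\cI_J$ depends only on $\ux_J$, the diagonal-translation invariance (Remark~\ref{rem: Sigma I C0}) lets me translate each $\ux^{(n)}_J$ independently so that $\flat(\ux^{(n)}_J)=0$, without altering any within-$\cJ$-block entry of $\Sigma_\cI(f,\ux^{(n)})$. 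Combined with the within-cluster diameter bound $\Norm{x^{(n)}_a-x^{(n)}_b}\leq\card(A)/n$ from Lemma~\ref{lem: clusters diameter}, a further extraction yields a limit $\ux^{(n)}_J\to\uy^*_J$ in which every cluster $I\in\cI_J$ collapses to a single point $z^*_I\in\R^d$, with $\Norm{z^*_I-z^*_{I'}}\geq\delta>0$ for distinct $I,I'\in\cI_J$. Lemma~\ref{lem: distance between covariances of twisted interpolants} applied with $\tilde f=0$, together with the fact that the bound $g$ of \hypDCL{q}{\infty} vanishes at infinity, makes each off-diagonal entry $\Sigma_I^{I'}(f,\ux^{(n)})$ between distinct $\cJ$-blocks tend to $0$, while the within-$\cJ$-block entries converge to those of $\Sigma_{\cI_J}(f,\uy^*_J)$ by continuity (Remark~\ref{rem: Sigma I C0}). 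Thus
\begin{equation*}
\Sigma_\cI\parentheses*{f,\ux^{(n)}}\longrightarrow\bigoplus_{J\in\cJ}\Sigma_{\cI_J}\parentheses*{f,\uy^*_J}.
\end{equation*}

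The main obstacle is to verify that each limit block $\Sigma_{\cI_J}(f,\uy^*_J)$ is positive definite, since this is what will contradict the vanishing smallest eigenvalue. For each $I\in\cI_J$, the equality $\uy^*_I=(z^*_I,\dots,z^*_I)$ combined with Definition~\ref{def: twisted Kergin interpolant} and Example~\ref{ex: Kergin polynomial} identifies $\oK(f,\uy^*_I)\in\R_{\norm{I}-1}[X]^k$ with the Taylor polynomial of $z^*_I\cdot f$ at $0$ of degree $\norm{I}-1$; this is a deterministic linear isomorphism of the Hermite data $(\partial^\alpha f(z^*_I))_{\norm{\alpha}<\norm{I}}$. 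Concatenating across $I\in\cI_J$, the Gaussian vector $(\oK(f,\uy^*_I))_{I\in\cI_J}$ is therefore a deterministic linear bijection of the family $\parentheses*{\partial^\alpha f(z^*_I)}_{I\in\cI_J,\,\norm{\alpha}<\norm{I}}$ at the $\card(\cI_J)$ pairwise distinct points $(z^*_I)_{I\in\cI_J}$, with multiplicities summing to $\norm{J}\leq q+1$. Invoking \hypND{\norm{J}-1}, which follows from \hypND{q} by the monotonicity in the parameter discussed in Section~\ref{subsec: discussion of the main hypotheses}, this Hermite vector is non-degenerate, forcing $\Sigma_{\cI_J}(f,\uy^*_J)$ to be positive definite. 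The direct sum over $J\in\cJ$ is then positive definite, contradicting the vanishing smallest eigenvalue and closing the proof.
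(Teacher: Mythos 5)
Your proof is correct and follows essentially the same route as the paper's: assume failure, pick a sequence $\ux^{(n)}\in\diag_{\cI,1/n}\cap\diag_{\cI,\delta}$ with vanishing smallest eigenvalue, extract a subsequence, build a coarser partition $\cJ\geq\cI$ grouping the $\cI$-clusters that stay at bounded mutual distance, show the cross-$\cJ$-block entries of $\Sigma_\cI(f,\ux^{(n)})$ vanish via Lemma~\ref{lem: distance between covariances of twisted interpolants} and the decay of $g$, and identify each diagonal block's limit with the variance of Taylor jets at pairwise distinct points, whose non-degeneracy is exactly \hypND{\norm{J}-1}. The only cosmetic difference is that you perform the dichotomy-extraction at the level of $\cI$-blocks (bounded vs.\ divergent inter-hull distances, then a further extraction for convergence of the recentered block), whereas the paper extracts at the level of individual points so that all pairwise differences $x_a^{(n)}-x_b^{(n)}$ either converge or diverge; your version works equally well because the within-block diameters shrink to $0$, making the "bounded distance" relation transitive and the recentered blocks precompact.
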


\begin{proof}
For all $\eta\in (0,\delta]$, we know by Lemma~\ref{lem: boundedness Sigma I} that $\brackets*{\Sigma_\cI(f,\ux) \mvert \ux \in \diag_{\cI,\eta}\cap \diag_{\cI,\delta}}$ is bounded in $\sym\parentheses*{\prod_{I \in \cI}\R_{\norm{I}-1}[X]^k}$. Thus, we only need to prove that there exists $\eta \in (0,\delta]$ and $C >0$ such that: $\forall \ux \in \diag_{\cI,\eta}\cap \diag_{\cI,\delta}$, $\det\parentheses*{\Sigma_\cI(f,\ux)}\geq C$. If it were not the case, there would exists a sequence $(\ux^{(n)})_{n \in \N^*}$ such that $\det\parentheses*{\Sigma_\cI(f,\ux^{(n)})}\xrightarrow[n\to +\infty ]{}0$ and $\ux^{(n)} \in \diag_{\cI,\frac{1}{n}}\cap \diag_{\cI,\delta}$ for all $n \in \N^*$. Let us assume that such a sequence exists and derive a contradiction. This is done in three steps. First, we replace $\parentheses*{\ux^{(n)}}_{n \in \N}$ by a nice subsequence. Second, we prove that $\Sigma_\cI(f,\ux^{(n)})$ converges toward an operator admitting a block-diagonal structure with respect to some partition $\cJ \geq \cI$. And third, we prove that the diagonal blocks of the limit are positive-definite.

\paragraph*{Step 1: Extracting a nice subsequence.}
We prove by induction on $\card(A)$ that there exists an increasing $\varphi:\N^* \to \N^*$ such that, for all $a,b \in A$ we have either $\Norm{x_a^{\varphi(n)}-x_b^{\varphi(n)}} \xrightarrow[n \to +\infty]{}+\infty$ or $\parentheses{x_a^{\varphi(n)}-x_b^{\varphi(n)}}_{n \in \N^*}$ converges in $\R^d$. If $\card(A)=1$ this is true.

If $\card(A) >1$, without loss of generality we can assume that $A =\ssquarebrackets{0}{q}$. If $\parentheses{x_0^{(n)}-x_1^{(n)}}_{n \in \N^*}$ is bounded, we can extract a subsequence that converges. Otherwise, we can extract a subsequence that diverges to infinity as $n \to +\infty$. Iterating this procedure, we obtain an increasing $\varphi_0$ such that, for all $i \in \ssquarebrackets{1}{q}$ we have either $\Norm{x_0^{\varphi_0(n)}-x_i^{\varphi_0(n)}} \xrightarrow[n \to +\infty]{}+\infty$ or $\parentheses{x_0^{\varphi_0(n)}-x_i^{\varphi_0(n)}}_{n \in \N^*}$ converges in $\R^d$.

Let us denote by $B=\brackets{i \in \ssquarebrackets{1}{q}\mid \Norm{x_0^{\varphi_0(n)}-x_i^{\varphi_0(n)}} \xrightarrow[n \to +\infty]{}+\infty}$. If $B = \emptyset$, it is enough to define $\varphi=\varphi_0$. If $B \neq \emptyset$, we have $\card(B) < \card(A)$ and the induction hypothesis yields the existence of $\varphi_1$ such that, for all $a,b \in B$, either $\Norm{x_a^{\varphi_0\circ\varphi_1(n)}-x_b^{\varphi_0\circ\varphi_1(n)}} \xrightarrow[n \to +\infty]{}+\infty$ or $\parentheses{x_a^{\varphi_0\circ\varphi_1(n)}-x_b^{\varphi_0\circ\varphi_1(n)}}_{n \in \N^*}$ converges. Then, $\varphi = \varphi_0 \circ \varphi_1$ is the subsequence we are looking for.

By Lemma~\ref{lem: monotonicity diag I eta}, for all $n \in \N^*$ such that $\frac{1}{n}\leq \delta$, we have $\ux^{\varphi(n)} \in \diag_{\cI,\frac{1}{\varphi(n)}}\cap\diag_{\cI,\delta} \subset \diag_{\cI,\frac{1}{n}}$, because $\varphi(n) \geq n$. Thus, up to considering a subsequence, we can assume that: for all $a$ and $b \in A$ we have either $\Norm{x_a^{(n)}-x_b^{(n)}} \xrightarrow[n \to +\infty]{}+\infty$ or $\parentheses{x_a^{(n)}-x_b^{(n)}}_{n \in \N^*}$ converges.

\paragraph*{Step 2: Block structure of the limit operator.}
Let $\cJ \in \cP_A$ be defined by the property that $[a]_\cJ = [b]_\cJ$ if and only if $\parentheses{x_a^{(n)}-x_b^{(n)}}_{n \in \N^*}$ converges. Let $I \in \cI$ and let $i,j \in I$. For all $n \in \N^*$, we have $\ux^{(n)} \in \diag_{\cI,\frac{1}{n}}$. By Lemma~\ref{lem: clusters diameter}, this implies that $\Norm{x_i^{(n)}-x_j^{(n)}}\leq \frac{1}{n}\card(A) \xrightarrow[n \to +\infty]{}0$, hence $[i]_\cJ=[j]_\cJ$. This shows that, for all $a,b \in A$, if $[a]_\cI = [b]_\cI$ then $[a]_\cJ = [b]_\cJ$, that is $\cI \leq \cJ$.

Let $J \in \cJ$ and $j \in J$. By definition of $\cJ$, the following quantity converges in $\R^d$ as $n \to +\infty$:
\begin{equation*}
x_j^{(n)} - \flat(\ux_J^{(n)}) = x_j^{(n)} - \frac{1}{\norm{J}}\sum_{i \in J} x_i^{(n)} = \frac{1}{\norm{J}}\sum_{i \in J}x_j^{(n)}-x_i^{(n)}.
\end{equation*}
We denote by $x_j$ the limit of this sequence, which defines a vector $\ux_J=(x_j)_{j \in J}$ such that $\obullet{\overgroup{\ux_J^{(n)}}}\xrightarrow[n \to +\infty]{}\ux_J$. For all $i,j \in J$, we have
\begin{equation*}
x_i-x_j = \lim_{n \to +\infty}\parentheses*{x_i^{(n)}-\flat(\ux_J^{(n)})} - \parentheses*{x_j^{(n)}-\flat(\ux_J^{(n)})} = \lim_{n \to +\infty}x_i^{(n)}-x_j^{(n)}.
\end{equation*}
If $x_i=x_j$ then, for all $n$ large enough we have $\Norm{x_i^{(n)}-x_j^{(n)}}<\delta$. Since $\ux^{(n)} \in \diag_{\cI,\delta}$, this implies that $[i]_\cI = [j]_\cI$ by Lemma~\ref{lem: clusters diameter}. Conversely, we have already seen that if $[i]_\cI = [j]_\cI$ then $x_i^{(n)}-x_j^{(n)}\xrightarrow[n \to +\infty]{}0$, so that $x_i=x_j$. Thus $x_i=x_j$ if and only if $[i]_\cI=[j]_\cI$, that is $\ux_J \in \diag_{\cI_J}$. By Definition~\ref{def: Sigma I}, for all $n \in \N^*$ we have:
\begin{equation*}
\begin{pmatrix}
\Sigma_I^{I'}(f,\ux^{(n)})
\end{pmatrix}_{I,I' \in \cI_J} = \var{\oK(f,\ux^{(n)}_I)}_{I \in \cI_J} = \Sigma_{\cI_J}\parentheses*{f,\ux_J^{(n)}}.
\end{equation*}
Since $f$ is stationary, we have $\Sigma_{\cI_J}\parentheses{f,\ux_J^{(n)}}=\Sigma_{\cI_J}\parentheses{f,\obullet{\overgroup{\ux_J^{(n)}}}}\xrightarrow[n \to +\infty]{}\Sigma_{\cI_J}(f,\ux_J)$ by Remark~\ref{rem: Sigma I C0}.

Let $M \geq 0$ be such that, for all $J \in \cJ$, $j \in J$ and $n \in \N^*$, we have $\Norm{x_j^{(n)}-\flat(\ux_J^{(n)})}\leq M$. Such a constant exists, since we consider a finite number of converging sequences. Let $J,J' \in \cJ$ be such that $J \neq J'$, Let $i \in J$ and $j \in J'$, we have:
\begin{equation*}
\Norm*{\parentheses*{x_i^{(n)}-x_j^{(n)}}-\parentheses*{\flat(\ux_J^{(n)})-\flat(\ux_{J'}^{(n)})}}\leq \Norm*{x_i^{(n)}-\flat(\ux_J^{(n)})}+ \Norm*{x_j^{(n)}-\flat(\ux_{J'}^{(n)})}\leq 2M.
\end{equation*}
By definition of $\cJ$ we have $\Norm{x_i^{(n)}-x_j^{(n)}}\xrightarrow[n \to +\infty]{}+\infty$, hence $\Norm*{\flat(\ux_J^{(n)})-\flat(\ux_{J'}^{(n)})}\xrightarrow[n \to +\infty]{}+\infty$.

Let $I$ and $I' \in \cI$ be such that $I \subset J$ and $I' \subset J'$. Lemma~\ref{lem: distance between covariances of twisted interpolants} applied with $\tilde{f}=0$ yields the existence of $C\geq 0$ such that for all $n \in \N$,
\begin{equation*}
\Norm*{\Sigma_I^{I'}(f,\ux^{(n)})}\leq C \max_{\substack{\norm{\alpha}\leq q\\ \norm{\beta}\leq q}}\ \max_{\substack{w \in \conv(\ux_I^{(n)})\\ z \in \conv(\ux_{I'}^{(n)})}} \Norm*{\partial^{\alpha,\beta}r(w,z)}.
\end{equation*}
Let $w \in \conv(\ux_I^{(n)})$ and $z \in \conv(\ux_{I'}^{(n)})$, there exists $\us \in \simplex_I$ and $\ut \in \simplex_{I'}$ such that $w=\us\cdot\ux_I^{(n)}$ and  $z=\ut\cdot\ux_{I'}^{(n)}$. Then, we have
\begin{align*}
\Norm{w-z} &= \Norm*{\sum_{i \in I} s_i\parentheses*{x_i^{(n)}-\flat(\ux_J^{(n)})}-\sum_{j \in I'} t_j\parentheses*{x_j^{(n)}-\flat(\ux_{J'}^{(n)})} +\flat(\ux_J^{(n)})-\flat(\ux_{J'}^{(n)})}\\
&\geq \Norm*{\flat(\ux_J^{(n)})-\flat(\ux_{J'}^{(n)})} - \Norm*{\sum_{i \in I} s_i\parentheses*{x_i^{(n)}-\flat(\ux_J^{(n)})}}-\Norm*{\sum_{j \in I'} t_j\parentheses*{x_j^{(n)}-\flat(\ux_{J'}^{(n)})}}\\
&\geq \Norm*{\flat(\ux_J^{(n)})-\flat(\ux_{J'}^{(n)})} -2M.
\end{align*}

Recall that we assumed the existence of an even bounded function $g:\R^d \to [0,+\infty)$ going to~$0$ at infinity such that~\hypDCL{q}{\infty} holds. We deduce from this and the previous inequalities that
\begin{equation*}
\Norm*{\Sigma_I^{I'}(f,\ux^{(n)})}\leq C \sup_{\substack{w \in \conv(\ux_I^{(n)})\\ z \in \conv(\ux_{I'}^{(n)})}} g(z-w)\leq C \sup \brackets*{g(y) \mvert \Norm{y}\geq \Norm*{\flat(\ux_J^{(n)})-\flat(\ux_{J'}^{(n)})} -2M}.
\end{equation*}
Since $\Norm*{\flat(\ux_J^{(n)})-\flat(\ux_{J'}^{(n)})}\xrightarrow[n \to+\infty]{}+\infty$, the right-hand side goes to $0$ as $n \to +\infty$. Hence, $\Sigma_I^{I'}(f,\ux^{(n)})\xrightarrow[n \to +\infty]{}0$ for any $I,I' \in \cI$ that are included in different blocks of $\cJ$.

Finally, we have proved that, as $n \to +\infty$, the operator $\Sigma_\cI(f,\ux^{(n)}) = \parentheses*{\Sigma_I^{I'}(f,\ux^{(n)})\strut}_{I,I' \in \cI}$ converges towards an operator which is block-diagonal with respect to $\cJ \geq \cI$, with diagonal blocks $\parentheses*{\strut \Sigma_{\cI_J}(f,\ux_J)}_{J \in \cJ}$. Moreover, we have checked that $\ux_J \in \diag_{\cI_J}$ for all $J \in \cJ$. By continuity,
\begin{equation*}
\prod_{J \in \cJ} \det\parentheses*{\Sigma_{\cI_J}(f,\ux_J)} = \lim_{n \to +\infty} \det\parentheses*{\Sigma_\cI(f,\ux^{(n)})}=0.
\end{equation*}
Thus, there exists $J \in \cJ$ and $\ux_J \in \diag_{\cI_J}$ such that $\det\parentheses*{\Sigma_{\cI_J}(f,\ux_J)}=0$.

\paragraph*{Step 3: Non-degeneracy of the limit.}
Let $J \in \cJ$ and $\ux_J \in \diag_{\cI_J}$ be as above. Recalling Definition~\ref{def: diagonal inclusions}, there exists $\uy =(y_I)_{I \in \cI_J} \in (\R^d)^{\cI_J}\setminus \diag$ such that $\ux_J = \iota_{\cI_J}(\uy)$. For all $I \in \cI_J$, we have $\ux_I=(y_I,\dots,y_I)$, hence $\oK(f,\ux_I) = \sum_{\norm{\alpha}<\norm{I}}\frac{\partial^\alpha f(y_I)}{\alpha !}X^\alpha$, see Example~\ref{ex: twisted Kergin polynomial}.

We assumed that~\hypND{q} holds. Since $\norm{J}\leq q+1$, we also have~\hypND{\norm{J}-1}. Applying this non-degeneracy hypothesis with $l = \norm{\cI_J}\leq \norm{J}$ and $m_1,\dots,m_l$ the cardinalities of the blocks of~$\cI_J$, we obtain that the Gaussian vector $\parentheses*{\partial^{\alpha_I}f(y_I)}_{I \in \cI_J, \norm{\alpha_I}< \norm{I}}$ is non-degenerate. Hence $\parentheses*{\oK(f,\ux_I)}_{I \in \cI_J}$ is non-degenerate, and its variance operator $\Sigma_{\cI_J}(f,\ux_J)$ is positive-definite. This contradicts $\det\parentheses*{\Sigma_{\cI_J}(f,\ux_J)}=0$ and concludes the proof.
\end{proof}

\begin{proof}[Proof of Proposition~\ref{prop: uniform non degeneracy Sigma I}]
We will prove, by a backward induction on $\cP_A$, that the following property is satisfied for all $\cI \in \cP_A$: there exists $\eta_\cI >0$ such that, for all $\eta \in (0,\eta_I]$, the set $\brackets*{\Sigma_\cI(f,\ux) \mvert \ux \in \diag_{\cI,\eta}}$ is relatively compact in $\sym^+\parentheses*{\prod_{I \in \cI}\R_{\norm{I}-1}[X]^k}$. This will yield the result by letting $\eta_0 = \min_{\cI \in \cP_A}\eta_\cI$.

Let $\cI \in \cP_A$, and let us assume that the previous property holds for all $\cJ > \cI$. Note that this condition is satisfied if $\cI = \brackets{A}$. Let us denote by $\delta = \min \parentheses*{\brackets*{\eta_\cJ \mvert \cJ > \cI}\cup \brackets{1}} \in (0,1]$. By Lemma~\ref{lem: uniform ND eta delta}, there exists $\eta_\cI \in (0,\delta]$ such that $\brackets*{\Sigma_\cI(f,\ux) \mvert \ux \in \diag_{\cI,\eta_\cI} \cap \diag_{\cI,\delta}}$ is relatively compact in $\sym^+\parentheses*{\prod_{I \in \cI}\R_{\norm{I}-1}[X]^k}$. Let $\eta \in (0,\eta_\cI]$, since $\eta \leq \eta_\cI \leq \delta$ we have:
\begin{equation*}
\brackets*{\Sigma_\cI(f,\ux) \mvert \ux \in \diag_{\cI,\eta}} = \bigcup_{\cJ \geq \cI} \brackets*{\Sigma_\cI(f,\ux) \mvert \ux \in \diag_{\cI,\eta} \cap \diag_{\cJ,\delta}},
\end{equation*}
by Lemma~\ref{lem: monotonicity diag I eta}. Thus, it is enough to check that each of the sets appearing on the right-hand side is relatively compact in $\sym^+\parentheses*{\prod_{I \in \cI}\R_{\norm{I}-1}[X]^k}$.

In the case $\cJ = \cI$, by Lemma~\ref{lem: monotonicity diag I eta} we have $\diag_{\cI,\eta}\cap\diag_{\cI,\delta} \subset \diag_{\cI,\eta_\cI}$, so that
\begin{equation*}
\brackets*{\Sigma_\cI(f,\ux) \mvert \ux \in \diag_{\cI,\eta} \cap \diag_{\cI,\delta}} \subset \brackets*{\Sigma_\cI(f,\ux) \mvert \ux \in \diag_{\cI,\eta_\cI} \cap \diag_{\cI,\delta}}.
\end{equation*}
The right-hand side is relatively compact in $\sym^+\parentheses*{\prod_{I \in \cI}\R_{\norm{I}-1}[X]^k}$ by definition of $\eta_\cI$. Hence, so is the left-hand side.

In the case $\cJ > \cI$, since $\delta \leq \eta_\cJ$, there exists a compact set $\Gamma_1$ such that
\begin{equation*}
\brackets*{\Sigma_\cJ(f,\ux) \mvert \ux \in \diag_{\cJ,\delta}} \subset \Gamma_1 \subset \sym^+\parentheses*{\prod_{J \in \cJ}\R_{\norm{J}-1}[X]^k},
\end{equation*}
by the induction hypothesis. Besides, by Corollary~\ref{cor: uniform non-degeneracy Pi}, there exists a compact set $\Gamma_2$ such that
\begin{equation*}
\brackets*{\bigoplus_{J \in \cJ} \Pi_{\cI_J}(\cdot,\ux_J) \mvert \ux \in \diag_{\cI,\eta}\cap \diag_{\cJ,\delta}} \subset \Gamma_2 \subset \cL^\dagger\parentheses*{\prod_{J \in \cJ}\R_{\norm{J}-1}[X]^k,\prod_{J \in \cJ}\prod_{I \in \cI_J}\R_{\norm{I}-1}[X]^k}.
\end{equation*}
Since $(\Sigma,\Pi)\mapsto \Pi \circ \Sigma \circ \Pi^*$ is continuous, $\Gamma = \brackets*{\Pi \circ \Sigma \circ \Pi^* \mvert \Sigma \in \Gamma_1, \Pi \in \Gamma_2}$ is a compact subset of $\sym^+\parentheses*{\prod_{I \in \cI}\R_{\norm{I}-1}[X]^k}$.

Let us now consider $\ux \in \diag_{\cI,\eta}\cap \diag_{\cJ,\delta}$. Let $J \in \cJ$ and $I \in \cI_J$, by Lemma~\ref{lem: prop Pi BA}.\ref{item: Pi and Kergin} we have $\oK(f,\ux_I) = \Pi_I^{\brackets{J}}\parentheses*{\oK(f,\ux_J),\ux_J}$, hence $\parentheses*{\oK(f,\ux_I)}_{I \in \cI_J} = \Pi_{\cI_J}\parentheses*{\oK(f,\ux_J),\ux_J}$ by Definition~\ref{def: projection operators}. Since $\cJ \geq \cI$ we have $\cI = \bigsqcup_{J \in \cJ} \cI_J$. Thus, recalling Definition~\ref{def: block diagonal operators}, we have:
\begin{equation*}
\parentheses*{\oK(f,\ux_I)}_{I \in \cI} = \parentheses*{\Pi_{\cI_J}\parentheses*{\oK(f,\ux_J),\ux_J}}_{J \in \cJ} = \parentheses*{\bigoplus_{J \in \cJ}\Pi_{\cI_J}(\cdot,\ux_J)}\parentheses*{\oK(f,\ux_J)}_{J \in \cJ}.
\end{equation*}
It implies the following for variance operators:
\begin{equation*}
\Sigma_I(f,\ux) = \parentheses*{\bigoplus_{J \in \cJ}\Pi_{\cI_J}(\cdot,\ux_J)} \circ \Sigma_\cJ(f,\ux) \circ \parentheses*{\bigoplus_{J \in \cJ}\Pi_{\cI_J}(\cdot,\ux_J)}^* \in \Gamma.
\end{equation*}
Hence $\brackets*{\Sigma_\cI(f,\ux) \mvert \ux \in \diag_{\cI,\eta} \cap \diag_{\cJ,\delta}}\subset \Gamma$, which proves its relative compactness.

We have proved that for all $\eta \in (0,\eta_I]$, the set $\brackets*{\Sigma_\cI(f,\ux) \mvert \ux \in \diag_{\cI,\eta}}$ is relatively compact in $\sym^+\parentheses*{\prod_{I \in \cI}\R_{\norm{I}-1}[X]^k}$. This concludes the induction step, and the proof.
\end{proof}

To conclude this section we prove that, if $f$ is the limit of a family of Gaussian fields, then we can extend the result of Proposition~\ref{prop: uniform non degeneracy Sigma I} into a uniform non-degeneracy result for the whole family. Recall that we consider a convex open set $U \subset \R^d$ and a non-empty open $\Omega \subset U$ such that $\dist(\Omega,\R^d\setminus U)>0$. Let $\cR$ be an unbounded subset of $(0,+\infty)$. For all $R \in \cR$, let $f_R:RU \to \R^k$ be a centered Gaussian field. In the following, $f:\R^d \to \R^k$ is still a centered stationary Gaussian field. We first prove the uniform convergence of $\Sigma_\cI(f_R,\cdot)$ towards $\Sigma_\cI(f,\cdot)$ under the suitable assumptions.

\begin{prop}[Uniform convergence of $\Sigma_\cI$]
\label{prop: uniform convergence Sigma I}
Let $A$ be a set of cardinality $q+1$. Let us assume that $f$ and $(f_R)_{R \in \cR}$ satisfy Hypotheses~\hypReg{q}, \hypScL{q} and~\hypDC{q}{\infty}. Then, for all $\cI \in \cP_A$ and $\eta >0$, we have:
\begin{equation*}
\sup_{\ux \in \diag_{\cI,\eta} \cap (R\Omega)^A} \Norm*{\Sigma_\cI(f_R,\ux)- \Sigma_\cI(f,\ux)} \xrightarrow[R \to +\infty]{}0
\end{equation*}
\end{prop}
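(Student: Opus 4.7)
The plan is to apply Lemma~\ref{lem: distance between covariances of twisted interpolants} block-by-block and then to control the resulting supremum of $\Norm{\partial^{\alpha,\beta}(r_R - r)(w,z)}$ by splitting the domain into a ``near'' regime, handled by \hypScL{q}, and a ``far'' regime, handled by \hypDC{q}{\infty} together with the induced \hypDCL{q}{\infty} for $f$ (whose validity under our hypotheses is recalled in Section~\ref{subsec: discussion of the main hypotheses}).

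First I would reduce to a single block. Since $\cI$ is finite and $\Sigma_\cI = (\Sigma_I^J)_{I,J \in \cI}$, it suffices to show that for every fixed $(I,J) \in \cI \times \cI$, one has $\sup_{\ux} \Norm{\Sigma_I^J(f_R,\ux) - \Sigma_I^J(f,\ux)} \to 0$ as $R \to +\infty$, where the supremum is over $\ux \in \diag_{\cI,\eta} \cap (R\Omega)^A$. Applying Lemma~\ref{lem: distance between covariances of twisted interpolants} on the convex open set $RU$ (which contains $\conv(\ux_I)$ and $\conv(\ux_J)$ since $\ux \in (R\Omega)^A \subset (RU)^A$ and $RU$ is convex) gives a constant $C = C(q,\eta)$ such that for all such $\ux$,
\begin{equation*}
\Norm{\Sigma_I^J(f_R,\ux) - \Sigma_I^J(f,\ux)} \leq C \max_{\norm{\alpha}, \norm{\beta}\leq q} \ \max_{\substack{w \in \conv(\ux_I)\\ z \in \conv(\ux_J)}} \Norm{\partial^{\alpha,\beta}(r_R - r)(w,z)}.
\end{equation*}
Hence, everything reduces to a uniform estimate on the right-hand side.

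Next, fix $i_0 \in I$ and $j_0 \in J$, and set $D = \eta\,\card(A)$. By Lemma~\ref{lem: clusters diameter}, $\conv(\ux_I) \subset \overline{B(x_{i_0},D)}$ and $\conv(\ux_J) \subset \overline{B(x_{j_0},D)}$. Let $x = x_{i_0}/R \in \Omega$; then any $w \in \conv(\ux_I)$ writes as $w = Rx + w'$ with $\Norm{w'} \leq D$, while any $z \in \conv(\ux_J)$ writes as $z = Rx + z'$ with $z' = (z - x_{j_0}) + (x_{j_0} - x_{i_0})$, so $\Norm{z'} \leq D + \Norm{x_{j_0} - x_{i_0}}$. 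Fix $\epsilon > 0$. Choose $M \geq 2D+1$ large enough that $\sup_{\Norm{y} \geq M - 2D} g(y) \leq \epsilon/(4C)$, where $g$ is the function appearing in \hypDC{q}{\infty}; this is possible since $g$ vanishes at infinity. I split according to whether $\Norm{x_{j_0} - x_{i_0}} \leq M$ or not.

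In the ``near'' regime $\Norm{x_{j_0} - x_{i_0}} \leq M$, both $w'$ and $z'$ lie in the compact set $\Gamma = \overline{B(0, D+M)}$. Using the stationarity of $f$ and the translation-invariance of $\partial^{\alpha,\beta}r$ under diagonal translations, $\partial^{\alpha,\beta}r(w,z) = \partial^{\alpha,\beta}r(w',z')$, so \hypScL{q} applied with this $\Gamma$ yields the uniform bound
\begin{equation*}
\sup_{x \in \Omega}\ \sup_{w',z' \in \Gamma}\ \Norm{\partial^{\alpha,\beta} r_R(Rx + w', Rx + z') - \partial^{\alpha,\beta} r(w',z')} \leq \frac{\epsilon}{2C}
\end{equation*}
for $R$ large enough (independently of $\ux$, $I$, $J$, $\alpha$, $\beta$). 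In the ``far'' regime $\Norm{x_{j_0} - x_{i_0}} > M$, any $w \in \conv(\ux_I)$, $z \in \conv(\ux_J)$ satisfy $\Norm{z-w} \geq M - 2D$. By \hypDC{q}{\infty} we have $\Norm{\partial^{\alpha,\beta} r_R(w,z)} \leq g(z-w)$, and since \hypScL{q} together with \hypDC{q}{\infty} implies \hypDCL{q}{\infty} (as noted in Section~\ref{subsec: discussion of the main hypotheses}), the same inequality holds for $r$. Hence both terms are bounded by $\sup_{\Norm{y} \geq M - 2D} g(y) \leq \epsilon/(4C)$, so their difference is at most $\epsilon/(2C)$. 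In both regimes, the bound at the end of the previous paragraph is at most $\epsilon/2$, which completes the proof upon summing the finitely many contributions $(I,J) \in \cI^2$.

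The main obstacle is that the distance $\Norm{x_{j_0} - x_{i_0}}$ is not bounded on $\diag_{\cI,\eta}$, so \hypScL{q} alone (which is only a local statement, for $w,z$ in a fixed compact) cannot provide the uniform convergence. The decay of correlations hypothesis \hypDC{q}{\infty} is precisely what bridges this gap by making both kernels small simultaneously in the far regime.
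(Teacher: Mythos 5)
Your proof is correct and follows essentially the same strategy as the paper's: reduce to individual blocks $\Sigma_I^J$ via Lemma~\ref{lem: distance between covariances of twisted interpolants}, then dichotomize into a near regime controlled by \hypScL{q} and a far regime controlled by \hypDC{q}{\infty} together with the induced \hypDCL{q}{\infty}. The only cosmetic difference is the case-split: you split on the distance $\Norm{x_{j_0}-x_{i_0}}$ between fixed representatives $i_0 \in I$, $j_0 \in J$, while the paper invokes the unique $\cJ \geq \cI$ with $\ux \in \diag_{\cI,\eta}\cap\diag_{\cJ,T}$ (via Lemma~\ref{lem: monotonicity diag I eta}) and distinguishes whether $[I]_\cJ = [J]_\cJ$; both routes rely on Lemma~\ref{lem: clusters diameter} for the same diameter bound and yield identical estimates.
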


\begin{proof}
Let $\cI \in \cP_A$ and $\eta >0$. Recalling Definition~\ref{def: Sigma I}, we have $\Sigma_\cI(f,\ux) = \parentheses*{\Sigma_I^J(f,\ux)}_{I,J \in \cI}$ by blocks, and similarly $\Sigma_\cI(f_R,\ux) = \parentheses*{\Sigma_I^J(f_R,\ux)}_{I,J \in \cI}$. The estimate we want to prove does not depend on the norm on $\sym\parentheses*{\prod_{I \in \cI} \R_{\norm{I}-1}[X]^k}$. For the sake of the argument, say we use the operator norm associated with the Euclidean structure we used to define $\Sigma_\cI$. Then, it is enough to prove that, for all $I,J \in \cI$,
\begin{equation*}
\sup_{\ux \in \diag_{\cI,\eta} \cap (R\Omega)^A} \Norm*{\Sigma_I^J(f_R,\ux)- \Sigma_I^J(f,\ux)} \xrightarrow[R \to +\infty]{}0.
\end{equation*}

Recall that $r$ (resp.~$r_R$) stands for the covariance kernel of $f$ (resp.~$f_R$). Let $I,J \in \cI$ and $\epsilon>0$. By Lemma~\ref{lem: distance between covariances of twisted interpolants}, there exists $C> 0$ such that, for all $R \in \cR$, for all $\ux \in \diag_{\cI,\eta} \cap (RU)^A$,
\begin{equation}
\label{eq: uniform ND distance between var}
\Norm*{\Sigma_I^J(f_R,\ux)-\Sigma_I^J(f,\ux)} \leq C \max_{\norm{\alpha}\leq q, \norm{\beta}\leq q} \ \max_{w \in \conv(\ux_I), z \in \conv(\ux_J)} \ \Norm*{\strut \partial^{\alpha,\beta}r(w,z)-\partial^{\alpha,\beta}r_R(w,z)}.
\end{equation}
Let $g$ be the function appearing in~\hypDC{q}{\infty}. Since $g$ goes to $0$ at infinity, there exists $T \geq \eta$ such that $\sup_{\Norm{y}\geq T} g(y) \leq \epsilon$. Applying our hypothesis~\hypScL{q} to the closed ball $\B \subset \R^d$ of center~$0$ and radius $\card(A)T$, there exists $R_0 \in \cR$ such that: for all $R \in \cR$ such that $R \geq R_0$,
\begin{equation}
\label{eq: uniform ND scaling limit}
\sup_{\norm{\alpha} \leq q, \norm{\beta}\leq q}\ \sup_{y \in R\Omega}\ \sup_{w,z \in \B}\ \Norm*{\strut \partial^{\alpha,\beta} r_R(y+w,y+z) - \partial^{\alpha,\beta}r(w,z)} \leq \epsilon.
\end{equation}

Let $R\in \cR$ be such that $R\geq R_0$ and $\ux \in \diag_{\cI,\eta}\cap(R\Omega)^A$. Since $T \geq \eta$, by Lemma~\ref{lem: monotonicity diag I eta} there exists a unique $\cJ \geq \cI$ such that $\ux \in \diag_{\cI,\eta}\cap \diag_{\cJ,T}$. Recalling Definition~\ref{def: block coarser partition}, if $[I]_\cJ \neq [J]_\cJ$, then $\dist\parentheses*{\conv(\ux_I),\conv(\ux_J)} > T$ by Lemma~\ref{lem: clusters diameter}. Then, using~\hypDC{q}{\infty}, for all $w \in \conv(\ux_I)$ and $z \in \conv(\ux_J)$ we have
\begin{equation*}
\max_{\norm{\alpha} \leq q, \norm{\beta} \leq q}\Norm*{\partial^{\alpha,\beta}r_R(w,z)} \leq g(z-w) \leq \sup_{\Norm{y}\geq T}g(y) \leq \epsilon.
\end{equation*}
The same estimates holds for $r$. Indeed, as explained in Section~\ref{subsec: discussion of the main hypotheses}, if \hypScL{q} and~\hypDC{q}{\infty} are satisfied, then \hypDCL{q}{\infty} holds with the same parameter $\omega$ and the same function $g$. Hence, $\Norm{\Sigma_I^J(f_R,\ux)-\Sigma_I^J(f,\ux)} \leq 2C\epsilon$ by~\eqref{eq: uniform ND distance between var}. If $[I]_\cJ=[J]_\cJ$, let $i \in [I]_\cJ$. For all $j \in [I]_\cJ$ we have $\Norm{x_i-x_j}\leq \card(A)T$, by Lemma~\ref{lem: clusters diameter} once again. Hence $\conv(\ux_I) \cup \conv(\ux_J) \subset x_i+\B$ and
\begin{equation*}
\max_{\substack{\norm{\alpha}\leq q\\ \norm{\beta}\leq q}} \ \max_{\substack{w \in \conv(\ux_I)\\ z \in \conv(\ux_J)}} \ \Norm*{\strut \partial^{\alpha,\beta}(r_R-r)(w,z)} \leq \max_{\norm{\alpha}\leq q, \norm{\beta}\leq q} \ \max_{w, z \in \B} \ \Norm*{\strut \partial^{\alpha,\beta}(r_R-r)(x_i+w,x_i+z)} \leq \epsilon,
\end{equation*}
where we used Equation~\eqref{eq: uniform ND scaling limit}, the stationarity of $f$ and the fact that $x_i \in R\Omega$. Finally, by Equation~\eqref{eq: uniform ND distance between var}, we obtain that $\Norm{\Sigma_I^J(f_R,\ux)-\Sigma_I^J(f,\ux)} \leq C\epsilon$ in this case.

Thus, for all $R \in \cR \cap [R_0,+\infty)$, we have $\sup_{\ux \in \diag_{\cI,\eta} \cap (R\Omega)^A} \Norm*{\Sigma_I^J(f_R,\ux)- \Sigma_I^J(f,\ux)} \leq 2C\epsilon$,  which concludes the proof.
\end{proof}

\begin{cor}[Uniform non-degeneracy for families of fields]
\label{cor: uniform non degeneracy f R}
Let $A$ be a set of cardinality $q+1$. Let us assume that $f$ and $(f_R)_{R \in \cR}$ satisfy Hypotheses~\hypReg{q}, \hypND{q}, \hypScL{q} and~\hypDC{q}{\infty}. Then, there exists $\eta_0>0$ such that, for all $\eta \in (0,\eta_0]$, there exists $R_\eta \in \cR$ such that for all $\cI \in \cP_A$ the set
\begin{equation*}
\brackets*{\Sigma_\cI(f,\ux) \mvert \ux \in \diag_{\cI,\eta}} \cup \brackets*{\Sigma_\cI(f_R,\ux) \mvert R \in \cR \cap [R_\eta,+\infty) \ \text{and} \  \ux \in \diag_{\cI,\eta} \cap (R\Omega)^A}
\end{equation*}
is relatively compact in $\sym^+\parentheses*{\prod_{I \in \cI}\R_{\norm{I}-1}[X]^k}$.
\end{cor}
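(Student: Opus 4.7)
The plan is to combine the uniform non-degeneracy for the stationary limit field $f$ (Proposition~\ref{prop: uniform non degeneracy Sigma I}) with the uniform convergence of variance operators (Proposition~\ref{prop: uniform convergence Sigma I}). First, observe that under \hypScL{q} and \hypDC{q}{\infty} we have \hypDCL{q}{\infty}, as is noted at the end of Section~\ref{subsec: discussion of the main hypotheses}. Combined with \hypND{q}, Proposition~\ref{prop: uniform non degeneracy Sigma I} applied to $f$ therefore supplies an $\eta_0>0$ with the property that, for every $\eta \in (0,\eta_0]$ and every $\cI \in \cP_A$, the set
\begin{equation*}
K_{\cI,\eta} := \overline{\brackets*{\Sigma_\cI(f,\ux) \mvert \ux \in \diag_{\cI,\eta}}}
\end{equation*}
is a compact subset of the open cone $\sym^+\parentheses*{\prod_{I \in \cI}\R_{\norm{I}-1}[X]^k}$.

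Now fix $\eta \in (0,\eta_0]$. For each $\cI \in \cP_A$, compactness of $K_{\cI,\eta}$ inside the open set $\sym^+$ gives a $\delta_\cI>0$ such that the closed $\delta_\cI$-neighborhood $N_{\cI,\eta} := K_{\cI,\eta} + \overline{B(0,\delta_\cI)}$, taken with respect to the operator norm on $\sym\parentheses*{\prod_{I \in \cI}\R_{\norm{I}-1}[X]^k}$, is still entirely contained in $\sym^+$; being bounded and closed, $N_{\cI,\eta}$ is itself compact in $\sym^+$. By Proposition~\ref{prop: uniform convergence Sigma I}, there exists $R_{\cI,\eta} \in \cR$ such that for all $R \in \cR$ with $R \geq R_{\cI,\eta}$, one has $\Norm*{\Sigma_\cI(f_R,\ux)-\Sigma_\cI(f,\ux)}\leq \delta_\cI$ whenever $\ux \in \diag_{\cI,\eta}\cap(R\Omega)^A$. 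Consequently $\Sigma_\cI(f_R,\ux) \in N_{\cI,\eta}$ for all such $R$ and $\ux$.

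Since $\cP_A$ is finite, we may set $R_\eta := \max_{\cI \in \cP_A} R_{\cI,\eta} \in \cR$. For any $\cI \in \cP_A$ we then obtain
\begin{equation*}
\brackets*{\Sigma_\cI(f,\ux) \mvert \ux \in \diag_{\cI,\eta}} \cup \brackets*{\Sigma_\cI(f_R,\ux) \mvert R \in \cR \cap [R_\eta,+\infty),\, \ux \in \diag_{\cI,\eta} \cap (R\Omega)^A} \subset N_{\cI,\eta},
\end{equation*}
and the right-hand side is a compact subset of $\sym^+\parentheses*{\prod_{I \in \cI}\R_{\norm{I}-1}[X]^k}$, yielding the claimed relative compactness.

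There is essentially no hard step here, since both main ingredients are established in the preceding propositions; the only thing to watch is the order of quantifiers, namely that a single $R_\eta$ must work uniformly in $\cI$, which is achieved simply by taking the (finite) maximum of the $R_{\cI,\eta}$'s. The construction of the compact buffer $N_{\cI,\eta}$ around $K_{\cI,\eta}$ inside $\sym^+$ is standard and merely exploits the openness of the positive cone together with the compactness of $K_{\cI,\eta}$.
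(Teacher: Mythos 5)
Your proof is correct and follows essentially the same approach as the paper's: deduce \hypDCL{q}{\infty} from \hypScL{q} and \hypDC{q}{\infty}, invoke Proposition~\ref{prop: uniform non degeneracy Sigma I} to obtain $\eta_0$ and the compact closure $K_{\cI,\eta}\subset \sym^+$, pad $K_{\cI,\eta}$ by a small closed ball to get a compact buffer inside the open cone, use Proposition~\ref{prop: uniform convergence Sigma I} to land $\Sigma_\cI(f_R,\ux)$ in that buffer for $R$ large, and finally take the maximum of the $R_{\cI,\eta}$ over the finite set $\cP_A$. The only cosmetic difference is that you write the buffer as the Minkowski sum $K_{\cI,\eta}+\overline{B(0,\delta_\cI)}$ whereas the paper describes the same set as $\Gamma_\epsilon$; both descriptions and the compactness argument coincide.
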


\begin{proof}
As explained in Section~\ref{subsec: discussion of the main hypotheses}, if \hypScL{q} and~\hypDC{q}{\infty} are satisfied then the correlation kernel $r$ of $f$ satisfies~\hypDC{q}{\infty}. Since we also assume~\hypReg{q} and \hypND{q}, the field $f$ satisfies the hypotheses of Proposition~\ref{prop: uniform non degeneracy Sigma I}. Let $\eta_0>0$ be given by Proposition~\ref{prop: uniform non degeneracy Sigma I} and let $\eta\in(0,\eta_0]$. It is enough to prove that, for all $\cI \in \cP_A$, there exists $R_\cI \in \cR$ such that
\begin{equation*}
\brackets*{\Sigma_\cI(f,\ux) \mvert \ux \in \diag_{\cI,\eta}} \cup \brackets*{\Sigma_\cI(f_R,\ux) \mvert R \in \cR \cap [R_\cI,+\infty) \ \text{and} \  \ux \in \diag_{\cI,\eta} \cap (R\Omega)^A}
\end{equation*}
is relatively compact in $\sym^+\parentheses*{\prod_{I \in \cI}\R_{\norm{I}-1}[X]^k}$. Then the result follows with $R_\eta = \max_{\cI \in \cP_A}R_\cI$.

Let $\cI \in \cP_A$. Since $\eta \in (0,\eta_0]$, the closure $\Gamma$ of $\brackets*{\Sigma_\cI(f,\ux) \mvert \ux \in \diag_{\cI,\eta}}$ is a compact subset of the open cone $\sym^+\parentheses*{\prod_{I \in \cI}\R_{\norm{I}-1}[X]^k}$. Hence, there exists $\epsilon >0$ such that
\begin{equation*}
\Gamma_\epsilon = \brackets*{\Lambda \in \sym\parentheses*{\prod_{I \in \cI}\R_{\norm{I}-1}[X]^k} \mvert \exists \Sigma \in \Gamma, \Norm{\Lambda-\Sigma} \leq \epsilon} \subset \sym^+\parentheses*{\prod_{I \in \cI}\R_{\norm{I}-1}[X]^k}.
\end{equation*}
By Proposition~\ref{prop: uniform convergence Sigma I}, there exists $R_\cI \in \cR$ such that: for all $R \in \cR \cap [R_\cI,+\infty)$ and $\ux \in \diag_{\cI,\eta}\cap (R\Omega)^A$, we have $\Norm{\Sigma_\cI(f_R,\ux)-\Sigma_\cI(f,\ux)}\leq \epsilon$. Since $\Sigma_\cI(f,\ux) \in \Gamma$ we have $\Sigma_\cI(f_R,\ux) \in \Gamma_\epsilon$. Hence
\begin{equation*}
\brackets*{\Sigma_\cI(f,\ux) \mvert \ux \in \diag_{\cI,\eta}} \cup \brackets*{\Sigma_\cI(f_R,\ux) \mvert R \in \cR \cap [R_\cI,+\infty) \ \text{and} \  \ux \in \diag_{\cI,\eta} \cap (R\Omega)^A} \subset \Gamma_\epsilon.
\end{equation*}
This proves the claimed relative compactness, since $\Gamma_\epsilon$ is compact as the continuous image of the product of $\Gamma$ with a closed ball.
\end{proof}


\section{Kac--Rice formulas revisited}
\label{sec: Kac-Rice formulas revisited}

As suggested by the title, this section is concerned with Kac--Rice formulas, which are a classical tool to study zero set of Gaussian fields, see for instance~\cite{AT2007,AAL2025,AW2009,Ste2022}. In Section~\ref{subsec: Bulinskaya Lemma and Kac--Rice formulas for factorial moments}, we recall the standard form of these formulas, that express the so-called factorial moments of the linear statistics associated with the zeros of a nice enough Gaussian field. In Section~\ref{subsec: evaluation maps and their kernels}, we introduce various maps on spaces of polynomials. Then, in Section~\ref{subsec: Kac--Rice densities as functions of variance operators}, we use these maps and the Kergin interpolant from Section~\ref{sec: polynomial interpolation and Gaussian fields} to derive alternative expressions of the Kac--Rice densities, which allow for a better understanding of their singularities along the diagonal. Section~\ref{subsec: Kac--Rice densities for cumulants} is dedicated to proving similar formulas adapted to cumulants and studying the associated densities.


\subsection{Bulinskaya Lemma and Kac--Rice formulas for factorial moments}
\label{subsec: Bulinskaya Lemma and Kac--Rice formulas for factorial moments}

In this section, we recall some classical facts about random submanifolds defined as zero sets of Gaussian fields. One of the main tools to study these random submanifolds are the Kac--Rice formulas, see Theorem~\ref{thm: Kac-Rice} below. Let us first recall the following definition.

\begin{dfn}[Jacobian]
\label{def: Jacobian}
Let $V$ and $W$ be Euclidean spaces such that $\dim(V) \geq \dim(W)$. Given $\Lambda \in \cL(V,W)$, we denote its \emph{Jacobian} by $\jac{\Lambda}=\sqrt{\det(\Lambda \Lambda^*)}$, where $\Lambda^*:W \to V$ stands for the adjoint operator of~$\Lambda$.
\end{dfn}

\begin{rem}
\label{rem: Jacobian}
We have $\jac{\Lambda}>0$ if and only if $\Lambda$ is surjective.
\end{rem}

Let us now consider an open subset $U \subset \R^d$ and a centered Gaussian field $f:U \to \R^k$, where $k \in \ssquarebrackets{1}{d}$. Under mild regularity and non-degeneracy assumptions, its zero set $Z$ is almost-surely a nice submanifold of $U$.

\begin{prop}[Bulinskaya Lemma]
\label{prop: Bulinskaya}
Let $f:U\to \R^k$ be a $\cC^2$ centered Gaussian field such that, for all $x \in U$, the Gaussian vector $\parentheses*{f(x),D_xf}$ is non-degenerate. Then, almost-surely,
\begin{equation*}
\brackets*{x \in U \mvert f(x)=0 \ \text{and} \ \jac{D_xf}=0} = \emptyset.
\end{equation*}
In particular, $Z=f^{-1}(0)$ is almost-surely a submanifold without boundary of codimension $k$ in $U$.
\end{prop}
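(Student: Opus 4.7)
I will show that for every compact $K\subset U$ the random set
\[
\cN_K \coloneqq \brackets*{x\in K \mvert f(x)=0 \ \text{and}\ \jac{D_xf}=0}
\]
is almost surely empty; exhausting $U$ by countably many such $K$ then yields the first assertion. Geometrically, $\cN_K = g^{-1}(S)\cap K$, where $g \coloneqq (f,Df):U\to\R^k\times\cL(\R^d,\R^k)$ and $S \coloneqq \brackets{0}\times\brackets*{A\in\cL(\R^d,\R^k) \mvert \jac A=0}$. The product structure shows that $S$ has codimension $d+1$ in the target: its first factor contributes codimension $k$, and the set of rank-deficient $k\times d$ matrices has codimension $d-k+1$. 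The whole argument then consists in turning the strict inequality ``codimension $d+1>d$'' into a quantitative volume estimate.

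\textbf{Volume estimate.} I first establish that, for any $R>0$, the $\delta$-neighbourhood of $S$ intersected with the ball of radius $R$ has Lebesgue measure $O(\delta^{d+1})$ as $\delta\to 0$. The $\brackets{0}$-factor trivially contributes $O(\delta^k)$. For the rank-deficient factor, observe that $A\in\cL(\R^d,\R^k)$ lies within distance $\delta$ of the rank-deficient set if and only if its smallest singular value is at most $\delta$, equivalently there exists $u\in\S^{k-1}$ with $\Norm{A^*u}\leq\delta$. Covering $\S^{k-1}$ by a $\delta$-net of cardinality $O(\delta^{-(k-1)})$, the whole set is contained in a union of $O(\delta^{-(k-1)})$ ``affine slabs'' of the form $\brackets*{\Norm{A^*u_i}\leq 2\delta,\ \Norm A\leq R}$, each of volume $O(\delta^d)$. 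Summing gives $O(\delta^{d-k+1})$ for the rank-deficient factor, and multiplying by the $O(\delta^k)$ factor yields $O(\delta^{d+1})$.

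\textbf{Discretisation.} I will cover $K$ by $N(\epsilon) = O(\epsilon^{-d})$ cubes $(C_j)$ of side $\epsilon$ with centres $(c_j)$. Since $f$ is $\cC^2$, the map $g$ is $\cC^1$, hence Lipschitz on any compact subset of $U$ on which $Dg$ is bounded. Fix a compact neighbourhood $K'\Supset K$ inside $U$. On the event $\cE_M \coloneqq \brackets*{\sup_{K'}\Norm{Dg}\leq M}$, the map $g$ is $M$-Lipschitz on $K$, so $\cN_K\cap C_j\neq\emptyset$ forces $\dist\parentheses*{g(c_j),S}\leq M\sqrt{d}\,\epsilon$. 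The non-degeneracy hypothesis, combined with continuity of the covariance kernel of $f$, ensures that the Gaussian density of $g(c_j)$ is uniformly bounded for $c_j\in K$ by some constant~$C_K$. Combined with the volume estimate above, each cube contributes probability $O(\epsilon^{d+1})$, and summing over $j$ yields $\P\parentheses*{\cE_M\cap\brackets{\cN_K\neq\emptyset}} = O(\epsilon)$. Letting $\epsilon\to 0$ and then $M\to\infty$, using that $\sup_{K'}\Norm{Dg}$ is almost surely finite, gives $\P(\cN_K\neq\emptyset)=0$.

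\textbf{Submanifold part and main obstacle.} Once $\cN_K$ is almost surely empty for every compact $K\subset U$, at every $x\in Z=f^{-1}(0)$ the differential $D_xf$ is surjective. The Implicit Function Theorem applied to the $\cC^2$ map $f$ then provides a local $\cC^2$-parametrisation of $Z$ near $x$ of dimension $d-k$, so $Z$ is a $\cC^2$ submanifold without boundary of codimension $k$ in $U$. The only substantive technical point in the above argument is the volume bound on the thickening of the rank-deficient matrices; the elementary $\delta$-net trick sketched above is clean and avoids any algebraic-geometric stratification machinery, and everything else reduces to standard Lipschitz and Gaussian-density estimates.
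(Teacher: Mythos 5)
The paper does not prove this proposition; it simply cites \cite[Prop.~6.12]{AW2009}, which establishes it via the classical Bulinskaya discretisation argument. Your proof is a self-contained version of essentially that same argument, so the two approaches coincide in substance. Your codimension count is correct (the $\brackets{0}$-factor contributes codimension $k$, and the determinantal variety of rank-deficient $k\times d$ matrices has codimension $d-k+1$, for a total of $d+1>d$), the $\delta$-net volume estimate for the rank-deficient factor is a clean replacement for the usual stratification argument, and the cube-covering count closes the loop.

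There is, however, one genuine though easily repairable gap in the discretisation step. Your localising event $\cE_M=\brackets*{\sup_{K'}\Norm{Dg}\leq M}$ controls the Lipschitz constant of $g$ but leaves $\Norm{g(c_j)}$ uncontrolled. The volume estimate you invoke is, by construction, a bound on the measure of the $\delta$-thickening of $S$ intersected with a ball of fixed radius $R$ -- not of the full, unbounded thickening -- so the deduction that each cube contributes probability $O(\epsilon^{d+1})$ requires, in addition, confining $g(c_j)$ to a fixed ball. The simplest fix is to enlarge the event to $\cE_M=\brackets*{\sup_{K'}\parentheses*{\Norm{g}+\Norm{Dg}}\leq M}$: on this event $g(c_j)$ lies in the ball of radius $M$ for every $j$, the $R=M$ version of your volume bound applies verbatim, and $\P(\cE_M)\to 1$ as $M\to\infty$ since $g$ is almost surely $\cC^1$ on the compact set $K'$. (Alternatively, keep $\cE_M$ as written and absorb the contribution of $\Norm{g(c_j)}>R$ via the Gaussian tail, taking $R\sim\sqrt{\log(1/\delta)}$; this costs only a harmless polylogarithmic factor.) A second cosmetic point: in the $\delta$-net step the slab radius should be $(1+R)\delta$ rather than $2\delta$, since $\Norm{A^*u_i}\leq\Norm{A^*u}+\Norm{A}\Norm{u-u_i}\leq\delta(1+R)$; this only changes the implicit constant and does not affect the conclusion. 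With those adjustments the cascade ``$\epsilon\to 0$ then $M\to\infty$'' is airtight, and the Implicit Function Theorem finishes the submanifold statement exactly as you say.
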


\begin{proof}
This result is a special case of~\cite[Prop.~6.12]{AW2009}.
\end{proof}

Recalling Section~\ref{subsec: product spaces and diagonals}, we denote by $\nu$ the Riemannian volume measure on $Z$ induced by the Euclidean metric on $U \subset \R^d$, that is its $(d-k)$-dimensional Hausdorff measure. For any Borel-measurable function  $\phi:U \to \R$, we denote by
\begin{equation}
\label{eq: def linear statistics}
\prsc{\nu}{\phi} = \int_Z \phi(x) \dx\nu(x),
\end{equation}
the \emph{linear statistics} of $\nu$ associated with $\phi$. By Proposition~\ref{prop: Bulinskaya}, the measure $\nu$ is almost-surely a Radon measure on $U$, so that $\prsc{\nu}{\phi}$ is well-defined if $\phi$ is continuous with compact support. Actually, a by-product of the Kac--Rice formula for the expectation is that $\prsc{\nu}{\phi}$ is an almost-surely well-defined random variable as soon as $\phi$ is integrable on $U$, see~\cite[Rem.~6.18]{AL2025}. Given a non-empty finite set $A$, recall that the power measure $\nu^A$ and the factorial power measure $\nu^{[A]}$ were defined in Definition~\ref{def: power measure and factorial power measure}. These are almost-surely Radon measures on $U^A$ and their linear statistics are defined similarly to~\eqref{eq: def linear statistics}.

\begin{dfn}[Kac--Rice density]
\label{def: rho A}
Let $A$ be a non-empty finite set and $f\in \cC^1(U,\R^k)$ be a centered Gaussian field such that, for all $(x_a)_{a \in A} \notin \diag$, the Gaussian vector $\parentheses*{f(x_a)}_{a \in A}$ is non-degenerate. We define the \emph{Kac--Rice density} $\rho_A(f,\cdot):U^A \setminus \diag \to [0,+\infty)$ by:
\begin{equation*}
\forall \ux = (x_a)_{a \in A} \in U^A \setminus \diag, \qquad \rho_A(f,\ux) = \frac{\espcond{\prod_{a \in A}\jac{D_{x_a}f}}{\forall a \in A, f(x_a)=0}}{\det\parentheses*{2\pi \var{\parentheses*{f(x_a)}_{a \in A}}}^\frac{1}{2}},
\end{equation*}
where the numerator is the conditional expectation of $\prod_{a \in A}\jac{D_{x_a}f}$ given that $\parentheses*{f(x_a)}_{a \in A}=0$.
\end{dfn}

\begin{thm}[Kac--Rice formula]
\label{thm: Kac-Rice}
Let $A$ be a non-empty finite set and $f\in \cC^2(U,\R^k)$ be a centered Gaussian field such that: $\parentheses*{f(x),D_xf}$ is non-degenerate for all $x \in U$, and $\parentheses*{f(x_a)}_{a \in A}$ is non-degenerate for all $\ux \notin \diag$. Let $\Phi:U^A \to \R$ be a Borel-measurable function which is either non-negative or such that $\Phi \rho_A(f,\cdot) \in L^1(U^A)$, then
\begin{equation*}
\esp{\prsc{\nu^{[A]}}{\Phi}} = \int_{U^A} \Phi(\ux) \rho_A(f,\ux)\dx \ux.
\end{equation*}
\end{thm}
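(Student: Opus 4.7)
The plan is to reduce the multi-point Kac--Rice formula to its single-point analogue, applied to the product field $F: U^A \setminus \diag \to (\R^k)^A$ defined by $F(\uy) = \parentheses{f(y_a)}_{a \in A}$. The main work is therefore to (i) establish the single-point case and (ii) check that the ingredients entering the Kac--Rice density for $F$ reassemble into $\rho_A(f,\cdot)$.

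\textbf{Step 1: Single-point formula.} Treat first the case $\card(A)=1$. By Proposition~\ref{prop: Bulinskaya}, the zero set $Z$ is almost-surely a $\cC^1$ codimension-$k$ submanifold, and on $Z$ the Jacobian $\jac{D_xf}$ does not vanish. A standard partition-of-unity argument reduces matters to a local statement in a chart where $f$ can be completed into a $\cC^1$ diffeomorphism $\tilde f$ onto an open subset of $\R^d$. Combining the deterministic coarea formula with the factorisation of the Gaussian density $p_{(f(x),D_xf)}(0,\cdot) = p_{f(x)}(0)\, p_{D_xf \mid f(x)=0}(\cdot)$, and applying Fubini, yields, for any Borel $\phi: U \to [0,+\infty)$,
\begin{equation*}
\esp{\prsc{\nu}{\phi}} = \int_U \phi(x)\, \frac{\espcond{\jac{D_xf}}{f(x)=0}}{\det(2\pi\var{f(x)})^{1/2}}\dx x,
\end{equation*}
which is precisely the formula for $\card(A)=1$. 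This is the content of~\cite[Thm.~6.8]{AW2009} applied coordinatewise, so I would quote it rather than reprove it.

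\textbf{Step 2: Reduction to a product field.} Set $V = U^A \setminus \diag$ and consider $F: V \to (\R^k)^A$, $F(\uy) = \parentheses{f(y_a)}_{a \in A}$. Since the derivative $D_\uy F$ splits as the block-diagonal operator $\bigoplus_{a \in A} D_{y_a}f$ (the $a$-th component of $F$ depends only on $y_a$), one has $\jac{D_\uy F} = \prod_{a \in A} \jac{D_{y_a}f}$. The non-degeneracy of $\parentheses{f(y_a)}_{a \in A}$ on $V$ is in our hypotheses, and combined with the non-degeneracy of $(f(x),D_xf)$ at each $x$ it ensures that $(F(\uy),D_\uy F)$ is non-degenerate for $\uy \in V$: conditioning $D_\uy F$ on $F(\uy)=0$ amounts to conditioning each $D_{y_a}f$ on the values $\parentheses{f(y_b)}_{b \in A}$, and surjectivity is preserved because each $D_{y_a}f$ is already surjective after conditioning on $f(y_a)=0$ alone.

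\textbf{Step 3: Identification of measures and densities.} The zero set $F^{-1}(0) \subset V$ coincides with $(Z^A \setminus \diag)$, and its induced $(d-k)\card(A)$-dimensional Hausdorff measure agrees with the product $\nu^{[A]}$ from Definition~\ref{def: power measure and factorial power measure}, because the Euclidean metric on $(\R^d)^A$ restricts to the product Riemannian metric on $Z^A$. Applying Step~1 to $F$ on $V$ with test-function $\Phi$, and using the block-diagonal formula for $\jac{D_\uy F}$, gives
\begin{equation*}
\esp{\prsc{\nu^{[A]}}{\Phi}} = \int_V \Phi(\uy)\, \frac{\espcond{\prod_{a \in A}\jac{D_{y_a}f}}{\parentheses{f(y_a)}_{a \in A}=0}}{\det\parentheses{2\pi\var{\parentheses{f(y_a))}_{a \in A}}}^{1/2}}\dx \uy,
\end{equation*}
and the integrand is exactly $\Phi(\uy)\rho_A(f,\uy)$ by Definition~\ref{def: rho A}. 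Extending from $V$ to $U^A$ costs nothing since $\diag$ is Lebesgue-negligible. Finally, the dichotomy on $\Phi$ is handled by monotone convergence for non-negative $\Phi$, then linearity and Fubini for the integrable case.

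\textbf{Main obstacle.} The only delicate point is verifying cleanly that the joint Gaussian conditioning in Step~2 preserves surjectivity of $D_\uy F$, i.e.\ that $(F(\uy),D_\uy F)$ is non-degenerate on $V$; once this is in hand, the rest is bookkeeping. If preferred, one can bypass this by quoting the multi-point Kac--Rice formula directly from, e.g., \cite[Thm.~6.3]{AW2009} or~\cite{AT2007}, which is how most references in this paper treat it.
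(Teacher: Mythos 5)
Your reduction to the product field $F:\uy \mapsto \parentheses*{f(y_a)}_{a\in A}$ is the right idea and essentially retraces the internal proof of the multi-point formula in Azaïs--Wschebor, whereas the paper's own proof is shorter: it directly cites the multi-point theorems (Theorems~6.2, 6.3, 6.8 and~6.9 of~\cite{AW2009}) for indicator functions of rectangles, then extends to Borel indicators, simple functions, and finally non-negative or integrable $\Phi$ by standard measure theory. However, Step~2 of your proposal contains a claim that is false. The joint non-degeneracy of $\parentheses*{F(\uy),D_\uy F}$ does \emph{not} follow from the stated hypotheses and can genuinely fail: take $d=k=1$, $A=\brackets{1,2}$, and $f(t)=X+Y\cos t+Z\sin t$ with $X,Y,Z$ independent standard Gaussians; then $(f(t),f'(t))$ is non-degenerate for every $t$ and $\parentheses*{f(t_1),f(t_2)}$ is non-degenerate whenever $t_1-t_2\notin 2\pi\Z$, yet $\parentheses*{f(t_1),f(t_2),f'(t_1),f'(t_2)}$ is supported in the three-dimensional span of $X,Y,Z$ and is therefore degenerate. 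Your sentence ``surjectivity is preserved after conditioning'' conflates the non-degeneracy of a Gaussian vector with an almost-sure property of the conditioned random operator; these are not the same assertion.

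The good news is that the false claim is also unnecessary, so the proof can be repaired without changing the overall strategy. Besides the non-degeneracy of $F(\uy)$, what the single-point Kac--Rice theorem requires is only the Bulinskaya conclusion for $F$: almost-surely there is no $\uy\in V$ with $F(\uy)=0$ and $\jac{D_\uy F}=0$. This follows directly from the one-point Bulinskaya conclusion for $f$ (Proposition~\ref{prop: Bulinskaya}), which your hypotheses do guarantee, together with your block-diagonal identity $\jac{D_\uy F}=\prod_{a\in A}\jac{D_{y_a}f}$: if $F(\uy)=0$ and $\jac{D_\uy F}=0$, then some $a\in A$ satisfies $f(y_a)=0$ and $\jac{D_{y_a}f}=0$, an event of probability zero. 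Replacing your non-degeneracy claim with this inclusion of events closes the gap. Two minor bookkeeping points remain: when $k=d$ the product map $F$ has equal source and target dimension, so you must apply the counting version of the single-point theorem rather than the volume one; and $\Phi$ must first be restricted to $V=U^A\setminus\diag$, which costs nothing since $\rho_A(f,\cdot)$ is only defined there and $\diag$ is Lebesgue-negligible.
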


\begin{proof}
The field $f$ satisfies the hypotheses of Proposition~\ref{prop: Bulinskaya}, so that $\nu^{[A]}$ is an almost-surely well-defined Radon measure. Moreover, $\rho_A(f,\cdot)$ is well-defined outside of $\diag$.

Let us first consider the case $k=d$, so that $Z$ is almost-surely discrete. Let $\cB \subset U$ be a Borel subset and $\Phi = \one_{\cB^A}$ be the indicator function of $\cB^A \subset U^A$. Then $\esp{\prsc{\nu^{[A]}}{\Phi}}$ is the factorial moment of order $\norm{A}$ of the linear statistic $\prsc{\nu}{\one_\cB} = \card \parentheses*{Z \cap \cB}$, as explained in~\cite[p.~58]{AW2009}. In this case, the result is given by \cite[Thm.~6.2]{AW2009} if $\norm{A}=1$ and~\cite[Thm.~6.3]{AW2009} if $\norm{A}\geq 2$. The proofs of these results are still valid if $\Phi$ is the indicator function of any Borel subset of $U^A$.

If $k <d$, recall that $Z$ has almost-surely dimension $d-k>0$, so that $\nu^{[A]}=\nu^A$ by Lemma~\ref{lem: power vs factorial power}. If $\Phi$ is of the form $\one_{\cB^A}$, the result is given by \cite[Thm.~6.8]{AW2009} if $\norm{A}=1$ and~\cite[Thm.~6.9]{AW2009} if $\norm{A}\geq 2$. Once again, the proofs are still valid if $\Phi$ is the indicator function of any Borel subset of $U^A$.

Now that the result is proved for indicator functions of Borel sets, we recover simple functions by linearity, then non-negative functions by monotone convergence. When $\Phi\rho_A(f,\cdot)$ is integrable, since $\rho_A(f,\cdot)$ is non-negative, the result follows by applying the previous case to the non-negative and non-positive parts of $\Phi$.
\end{proof}

\begin{rem}
\label{rem: Kac-Rice}
It was proved in~\cite[Thm.~6.26]{AL2025} and~\cite[Thm.~1.13]{GS2024} that, if we assume in addition that $f$ is of class $\cC^{\norm{A}}$ and satisfies~\hypND{\norm{A}-1} over $U$, then $\rho_A(f,\cdot)$ is locally integrable on~$U^A$. In this case, if $\Phi \in L^\infty(U^A)$ has compact support, then $\Phi \rho_A(f,\cdot) \in L^1(U^A)$ and we can apply Theorem~\ref{thm: Kac-Rice}. We will prove in Section~\ref{subsec: validity of Kac-Rice} that, under the hypotheses of Theorem~\ref{thm: cumulants asymptotics for zero sets}, if $(\phi_a)_{a \in A}$ are in $L^1(U)\cap L^\infty(U)$ then $\phi_A^\otimes\, \rho_A(f,\cdot) \in L^1(U^A)$, where $\phi_A^\otimes$ is as in Definition~\ref{def: set-indexed products}.
\end{rem}


\subsection{Evaluation maps and their kernels}
\label{subsec: evaluation maps and their kernels}

The Kac--Rice densities are infamously singular along the diagonal, and a major difficulty in our work is to understand this singularity. The goal of this section is to introduce some evaluations maps, and their kernels, that we use to derive alternative expressions of the Kac--Rice densities and deal with their singularities.

In the following, we consider a non-empty finite set $A$. Recall that if $\ux=\parentheses{x_a}_{a \in A} \in (\R^d)^A$, we defined $\flat(\ux)$ as the barycenter of $\ux$ and $\oux=\parentheses*{x_a-\flat(\ux)}_{a \in A}$ in Definition~\ref{def: barycenter}.

\begin{dfn}[Centered evaluation map]
\label{def: ev x}
For all $\ux \in (\R^d)^A$, let $\ev_{\ux}:\R_{2\norm{A}-1}[X]^k \to (\R^k)^A$ be the linear map defined by $\ev_{\ux}:P \mapsto \parentheses*{\strut P(x_a-\flat(\ux))}_{a \in A}$.
\end{dfn}

\begin{lem}[Regularity of the evaluation]
\label{lem: regularity of ev}
The map $\ux \mapsto \ev_{\ux}$ is smooth from $(\R^d)^A \setminus \diag$ to $\cL^\dagger\parentheses*{\R_{2\norm{A}-1}[X]^k,(\R^k)^A}$.
\end{lem}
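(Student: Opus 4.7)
The approach is to split the statement into two independent claims: first, that $\ux \mapsto \ev_{\ux}$ is smooth as a map into the ambient space $\cL(\R_{2\norm{A}-1}[X]^k,(\R^k)^A)$; second, that for $\ux \notin \diag$ the linear map $\ev_{\ux}$ is surjective. Since $\cL^\dagger\parentheses*{\R_{2\norm{A}-1}[X]^k,(\R^k)^A}$ is an open subset of the finite-dimensional space $\cL\parentheses*{\R_{2\norm{A}-1}[X]^k,(\R^k)^A}$ endowed with its unique normed topology, these two facts together yield the result.

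For smoothness, I would fix the monomial basis $(X^\alpha \otimes e_i)_{\norm{\alpha} \leq 2\norm{A}-1,\, 1 \leq i \leq k}$ on the source and the canonical basis on the target $(\R^k)^A$. In these bases, the matrix entries of $\ev_{\ux}$ are of the form $(x_a - \flat(\ux))^\alpha$ for various $a \in A$ and multi-indices $\alpha$ of length at most $2\norm{A}-1$. Since $\flat(\ux) = \frac{1}{\norm{A}}\sum_{a \in A} x_a$ depends linearly on the components of $\ux$, each such entry is polynomial in the coordinates of $\ux \in (\R^d)^A$. Hence $\ux \mapsto \ev_{\ux}$ is smooth on the whole of $(\R^d)^A$, not just on the complement of $\diag$.

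For surjectivity on $(\R^d)^A \setminus \diag$, I would invoke Corollary~\ref{cor: surjectivity 1-jets}. Given $\ux \notin \diag$, the centered tuple $\oux = (x_a - \flat(\ux))_{a \in A}$ also lies in $(\R^d)^A \setminus \diag$, so Corollary~\ref{cor: surjectivity 1-jets} applied to $\oux$ shows that the $1$-jet evaluation map $P \mapsto \parentheses*{\strut \parentheses*{P(x_a - \flat(\ux)), D_{x_a - \flat(\ux)}P}}_{a \in A}$ is surjective from $\R_{2\norm{A}-1}[X]^k$ onto $\parentheses*{\R^k \times \cL(\R^d,\R^k)}^A$. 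Composing with the canonical projection onto the value factors shows that $\ev_{\ux}$ is surjective onto $(\R^k)^A$, so that $\ev_{\ux} \in \cL^\dagger\parentheses*{\R_{2\norm{A}-1}[X]^k,(\R^k)^A}$.

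There is no genuine obstacle here; the proof is essentially bookkeeping once Corollary~\ref{cor: surjectivity 1-jets} is available. The only point worth emphasizing is that surjectivity is an open condition on $\cL\parentheses*{\R_{2\norm{A}-1}[X]^k,(\R^k)^A}$, so smoothness of the enlarged map into the ambient linear space automatically transfers to smoothness into the open subset $\cL^\dagger\parentheses*{\R_{2\norm{A}-1}[X]^k,(\R^k)^A}$ over the domain where the image actually lies in this open subset.
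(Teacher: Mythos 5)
Your proof is correct and takes essentially the same route as the paper's: surjectivity follows from Corollary~\ref{cor: surjectivity 1-jets} applied to $\oux$ (composed with the obvious projection onto the value factors), and smoothness follows from the observation that the matrix entries of $\ev_{\ux}$ are polynomial in $\ux$. The only material you add is the explicit remark that $\cL^\dagger(\cdot,\cdot)$ is open in $\cL(\cdot,\cdot)$, which the paper leaves implicit.
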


\begin{proof}
If $\ux \notin \diag$, then $\oux \notin \diag$, so that $\ev_{\ux}$ is surjective by Corollary~\ref{cor: surjectivity 1-jets}. Then, we observe that the coefficients of the matrix of $\ev_{\ux}$ in the canonical bases of $\R_{2\norm{A}-1}[X]^k$ and $(\R^k)^A$ are monomials in~$\oux$, hence polynomials in $\ux$. Thus $\ux \mapsto \ev_{\ux}$ is smooth.
\end{proof}

Given a finite-dimensional vector space $V$ and $n \in \ssquarebrackets{0}{\dim(V)}$, recall that the \emph{Grassmannian} $\gr{n}{V}$ is the set of codimension $n$ subspaces of $V$. This set has a canonical manifold structure that we use in the following. For more details on this matter, see~\cite[Chap.~5]{MS1974} or~\cite[Ex.~1.36]{Lee2013}.

\begin{dfn}[Evaluation kernel]
\label{def: G kernel ev}
For all $\ux \in (\R^d)^A\setminus \diag$, we denote by $\cG_A(\ux)=\ker(\ev_{\ux})$.
\end{dfn}

\begin{lem}[Regularity of $\cG_A$]
\label{lem: regularity GA}
The map $\cG_A:(\R^d)^A \setminus \diag \to \gr{k\norm{A}}{\R_{2\norm{A}-1}[X]^k}$ is smooth.
\end{lem}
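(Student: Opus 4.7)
The plan is to factor $\cG_A$ as the composition of two smooth maps: first the evaluation map $\ux \mapsto \ev_{\ux}$, and second the kernel map $\Lambda \mapsto \ker(\Lambda)$. By Lemma~\ref{lem: regularity of ev}, the first map is smooth from $(\R^d)^A \setminus \diag$ into $\cL^\dagger\parentheses*{\R_{2\norm{A}-1}[X]^k,(\R^k)^A}$, which is an open subset of $\cL\parentheses*{\R_{2\norm{A}-1}[X]^k,(\R^k)^A}$. It therefore remains to show that, for any finite-dimensional Euclidean spaces $V$ and $W$, the map $\Lambda \mapsto \ker(\Lambda)$ is smooth from $\cL^\dagger(V,W)$ to $\gr{\dim W}{V}$; applying this with $V=\R_{2\norm{A}-1}[X]^k$ and $W=(\R^k)^A$ (noting that $\dim W = k\norm{A}$) will yield the result.

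To check the smoothness of the kernel map, the plan is to verify it in a standard atlas of the Grassmannian. Fix $\Lambda_0 \in \cL^\dagger(V,W)$, set $E_0 = \ker(\Lambda_0)$, and choose a linear complement $F_0$ of $E_0$ in~$V$. The restriction $\Lambda_0\vert_{F_0}: F_0 \to W$ is then a linear isomorphism, and by continuity there is an open neighborhood $\cU$ of $\Lambda_0$ in $\cL^\dagger(V,W)$ on which $\Lambda\vert_{F_0}$ remains an isomorphism. For $\Lambda \in \cU$, any $P \in V$ decomposes uniquely as $P = P_E + P_F$ with $P_E \in E_0$ and $P_F \in F_0$, and $\Lambda(P) = 0$ if and only if $P_F = -\bigl(\Lambda\vert_{F_0}\bigr)^{-1}\bigl(\Lambda\vert_{E_0}(P_E)\bigr)$. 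Hence $\ker(\Lambda)$ is the graph of the linear map
\begin{equation*}
L(\Lambda) = -\bigl(\Lambda\vert_{F_0}\bigr)^{-1} \circ \Lambda\vert_{E_0} \in \cL(E_0,F_0).
\end{equation*}
The map $\Lambda \mapsto L(\Lambda)$ is smooth on $\cU$, since matrix inversion is smooth on the open set of invertible operators and composition of linear maps is bilinear continuous.

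The final step is to recognize that $L(\Lambda)$ is exactly the coordinate expression of $\ker(\Lambda)$ in a canonical chart of $\gr{\dim W}{V}$ around $E_0$. Specifically, the set $\brackets*{E \in \gr{\dim W}{V} \mvert E \cap F_0 = \brackets{0}}$ is an open neighborhood of $E_0$, and each such $E$ is the graph of a unique element of $\cL(E_0,F_0)$; this identification is one of the standard smooth charts of the Grassmannian described in~\cite[Chap.~5]{MS1974} and~\cite[Ex.~1.36]{Lee2013}. Thus $\ker$ is smooth in a neighborhood of $\Lambda_0$, and since $\Lambda_0 \in \cL^\dagger(V,W)$ was arbitrary, $\ker: \cL^\dagger(V,W) \to \gr{\dim W}{V}$ is smooth. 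Composing with $\ux \mapsto \ev_{\ux}$ yields the lemma. The only mildly non-trivial ingredient is the explicit graph description of $\ker(\Lambda)$ that identifies it with the Grassmannian chart; everything else reduces to smoothness of inversion and composition of linear maps.
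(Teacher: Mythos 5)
Your proof is correct, and it takes a somewhat different route from the paper's. The paper passes through the orthogonal complement: it observes that the rows of the matrix of $\ev_{\ux}$ span $\cG_A(\ux)^\perp$ and depend smoothly on~$\ux$, applies Gram--Schmidt locally to obtain a smooth orthonormal frame, and then invokes~\cite[Lem.~5.1 and Problem~5.A]{MS1974} to conclude that forming the span (and hence recovering $\cG_A(\ux)$) is a smooth operation. You instead work directly with the kernel and show that $\ker$ is smooth on the open set $\cL^\dagger(V,W)$ of surjective maps by exhibiting, near any $\Lambda_0$ with $E_0=\ker(\Lambda_0)$ and complement $F_0$, the explicit chart formula
\begin{equation*}
L(\Lambda) = -\bigl(\Lambda\vert_{F_0}\bigr)^{-1}\circ \Lambda\vert_{E_0},
\end{equation*}
and recognizing $\ker(\Lambda)$ as the graph of $L(\Lambda)$ in the standard affine chart of the Grassmannian around~$E_0$. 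Both arguments are local-chart proofs, but yours is more self-contained and computational (it does not need Gram--Schmidt, orthonormal frames, or the external reference for the smoothness of the span map), and it also isolates the general statement that $\ker:\cL^\dagger(V,W)\to\gr{\dim W}{V}$ is smooth, which is a cleaner intermediate lemma than what the paper writes down. The paper's argument is shorter on the page precisely because it offloads the chart-checking to the cited reference. One small stylistic note: you don't actually need the Euclidean structure (you only use a choice of complement $F_0$, not an orthogonal one), which makes your argument marginally more general than the paper's, where orthonormality is built in via Gram--Schmidt.
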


\begin{proof}
In this proof, we use the canonical bases of $\R_{2\norm{A}-1}[X]^k$ and $(\R^k)^A$, and the inner products making them orthonormal. For all $\ux \in (\R^d)^A \setminus \diag$, the lines of the matrix of $\ev_{\ux}$ form a basis of $\cG_A(\ux)^\perp$. This basis depends smoothly on $\ux$ by Lemma~\ref{lem: regularity of ev}. Locally, the Gram--Schmidt procedure yields an orthonormal basis of $\cG_A(\ux)$ depending smoothly on $\ux$. We recover $\cG_A(\ux)$ by considering the span of this basis, which is also a smooth operation, cf.~\cite[Lem.~5.1 and Problem~5.A]{MS1974}.
\end{proof}

Let $V$ be a finite-dimensional space and $l \in \N$, we denote by $\fP_l(V)$ the space of homogeneous polynomial functions of degree $l$ on $V$. If $\Norm{\cdot}$ is a norm on $V$, then we define a subordinated norm on $\fP_l(V)$ by $\Norm{\fp} = \max \brackets*{\strut \norm{\fp(v)} \mvert v \in V, \Norm{v}\leq 1}$ for all $\fp\in \fP_l(V)$. For example, recalling Definition~\ref{def: Jacobian}, the map $\Jac^2:\Lambda \mapsto \det(\Lambda\Lambda^*)$ defines an element of $\fP_{2k}\parentheses*{\cL(\R^d,\R^k)}$.

Let $\ux \in (\R^d)^A \setminus \diag$, we denote by $\pi_{\ux}$ the orthogonal projection from $\R_{2\norm{A}-1}[X]^k$ onto $\cG_A(\ux)$. Given $a \in A$, the map $D_{x_a-\flat(\ux)}:P \mapsto D_{x_a-\flat(\ux)}P$ is linear from $\R_{2\norm{A}-1}[X]^k$ to $\cL(\R^d,\R^k)$, so that $\Jac^2 \circ D_{x_a-\flat(\ux)} \circ \pi_{\ux} \in \fP_{2k}\parentheses*{\R_{2\norm{A}-1}[X]^k}$. Considering these polynomials is one of the important ideas introduced in~\cite{GS2024}. It allows us to read the conditional Jacobian appearing in Definition~\ref{def: rho A} as the square root of some deterministic polynomial applied to the Kergin interpolant of the field. In this way, we can isolate the contribution of the random field~$f$ from that of the geometry of the tuple $\ux$, and better understand the behavior of this Jacobian.

\begin{lem}[Non-degeneracy of differentials]
\label{lem: non-degeneracy differentials}
Let $\ux \in (\R^d)^A \setminus \diag$, we have:
\begin{equation*}
\prod_{a \in A}\parentheses*{\Jac^2 \circ D_{x_a-\flat(\ux)} \circ \pi_{\ux}} \in \fP_{2k\norm{A}}\parentheses*{\R_{2\norm{A}-1}[X]^k} \setminus \brackets{0}.
\end{equation*}
\end{lem}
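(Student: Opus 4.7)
The plan is to establish both claims separately: first, that the product defines a homogeneous polynomial of the expected degree $2k\norm{A}$ on $\R_{2\norm{A}-1}[X]^k$, and second, that this polynomial does not vanish identically.

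For the degree count, I would proceed by inspection of each factor in the composition. The projection $\pi_{\ux}$ is linear on $\R_{2\norm{A}-1}[X]^k$, the differentiation operator $P \mapsto D_{x_a - \flat(\ux)} P$ is linear from $\R_{2\norm{A}-1}[X]^k$ to $\cL(\R^d,\R^k)$, and $\Jac^2: \cL(\R^d,\R^k) \to \R$ given by $\Lambda \mapsto \det(\Lambda \Lambda^*)$ is homogeneous of degree $2k$ (since $\Lambda\Lambda^*$ is quadratic in the entries of $\Lambda$ and $\det$ is homogeneous of degree $k$ on $\cL(\R^k)$). Composing the three yields an element of $\fP_{2k}(\R_{2\norm{A}-1}[X]^k)$ for each $a \in A$, and taking the product over $A$ gives a homogeneous polynomial of degree $2k\norm{A}$, as required.

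For the non-vanishing, it suffices to exhibit one $Q \in \R_{2\norm{A}-1}[X]^k$ at which the product is strictly positive. The key observation is that if $Q \in \cG_A(\ux) = \ker \ev_{\ux}$, then $\pi_{\ux}(Q) = Q$, so the factor corresponding to $a$ equals $\Jac^2\parentheses*{D_{x_a - \flat(\ux)} Q}$, which is positive precisely when $D_{x_a - \flat(\ux)} Q$ is surjective (cf.\ Remark~\ref{rem: Jacobian}). I would therefore look for $Q \in \cG_A(\ux)$ whose differentials at every point $x_a - \flat(\ux)$ are surjective onto $\R^k$. Since $\ux \notin \diag$ implies $\oux = (x_a - \flat(\ux))_{a \in A} \notin \diag$, Corollary~\ref{cor: surjectivity 1-jets} applied to $\oux$ asserts that the $1$-jet evaluation
\[
\varpi : P \longmapsto \parentheses*{\strut \parentheses*{P(x_a - \flat(\ux)),\, D_{x_a - \flat(\ux)} P}}_{a \in A}
\]
is surjective from $\R_{2\norm{A}-1}[X]^k$ onto $(\R^k \times \cL(\R^d,\R^k))^A$.

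Given an arbitrary tuple of surjective linear maps $(\Lambda_a)_{a \in A} \in \cL(\R^d,\R^k)^A$ (such maps exist since $d \geq k$, e.g.\ pick all $\Lambda_a$ equal to the standard projection), I take a preimage $Q$ of $\parentheses*{(0,\Lambda_a)}_{a \in A}$ under $\varpi$. Then $\ev_{\ux}(Q) = 0$, so $Q \in \cG_A(\ux)$ and $\pi_{\ux}(Q) = Q$, while $D_{x_a - \flat(\ux)} Q = \Lambda_a$ is surjective for every $a \in A$. Consequently $\Jac^2(D_{x_a - \flat(\ux)} \pi_{\ux}(Q)) > 0$ for all $a$, so the product evaluated at $Q$ is strictly positive, proving that the polynomial is non-zero. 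No real obstacle is expected here: the argument reduces entirely to the surjectivity statement of Corollary~\ref{cor: surjectivity 1-jets}, and the rest is bookkeeping on degrees and homogeneity.
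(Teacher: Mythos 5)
Your proposal is correct and follows essentially the same route as the paper: the degree count is the same bookkeeping, and the non-vanishing step uses Corollary~\ref{cor: surjectivity 1-jets} to produce a polynomial $Q \in \cG_A(\ux)$ with prescribed surjective differentials at the centered points. The only cosmetic difference is that you allow an arbitrary family $(\Lambda_a)_{a\in A}$ of surjections where the paper simply takes them all equal to a single $\Lambda_0$, which changes nothing.
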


\begin{proof}
We already explained that, for all $a \in A$, the map $\Jac^2 \circ D_{x_a-\flat(\ux)} \circ \pi_{\ux}$ is a homogeneous polynomial of degree $2k$ on $\R_{2\norm{A}-1}[X]^k$. The product of these maps is therefore homogeneous of degree $2k\norm{A}$, and we need to check that it is non-zero.

Since $d \geq k$, there exists a surjective map $\Lambda_0 \in \cL(\R^d,\R^k)$. Since $\ux \notin \diag$, we have $\oux \notin \diag$. Then, by Corollary~\ref{cor: surjectivity 1-jets}, there exists $P \in \R_{2\norm{A}-1}[X]^k$ such that, for all $a \in A$, we have $P(x_a-\flat(\ux))=0$ and $D_{x_a-\flat(\ux)}P=\Lambda_0$. That is, $P \in \ker(\ev_{\ux})=\cG_A(\ux)$, so that $\pi_{\ux}(P)=P$, and we have:
\begin{equation*}
\prod_{a \in A}\parentheses*{\Jac^2 \circ D_{x_a-\flat(\ux)}\circ \pi_{\ux}}(P) = \prod_{a \in A} \jac{D_{x_a-\flat(\ux)}P}^2 = \jac{\Lambda_0}^{2\norm{A}} >0.\qedhere
\end{equation*}
\end{proof}

We conclude this section by introducing notation for the radial and spherical parts of the map studied in Lemma~\ref{lem: non-degeneracy differentials}. It will be used in Section~\ref{subsec: Kac--Rice densities as functions of variance operators} to define alternative expressions of the Kac--Rice densities.

\begin{dfn}[Polar decomposition of products of Jacobians]
\label{def: N theta}
For all $\ux \in (\R^d)^A \setminus \diag$, we denote by $N_A(\ux) = \Norm*{\prod_{a \in A}\Jac^2 \circ D_{x_a-\flat(\ux)}\circ \pi_{\ux}} \in (0,+\infty)$ and by $\theta_A(\ux) \in \S\fP_{2k\norm{A}}\parentheses*{\R_{2\norm{A}-1}[X]^k}$ the unit polynomial function defined by:
\begin{equation*}
\theta_A(\ux):P \mapsto \frac{1}{N_A(\ux)} \prod_{a \in A}\parentheses*{\Jac^2 \circ D_{x_a-\flat(\ux)}\circ \pi_{\ux}}(P).
\end{equation*}
For all $P \in \R_{2\norm{A}-1}[X]^k$ we denote by $\theta_A(\ux,P)=\theta_A(\ux)(P)$ for simplicity, so that if $P \in \cG_A(\ux)$ then $\prod_{a \in A}\jac{D_{x_a-\flat(\ux)}P} = \sqrt{N_A(\ux)}\sqrt{\theta_A(\ux,P)}$.
\end{dfn}

\begin{lem}[Regularity of $N_A$ and $\theta_A$]
\label{lem: regularity N theta}
The maps $\theta_A:(\R^d)^A \setminus \diag \to \S\fP_{2k\norm{A}}\parentheses*{\R_{2\norm{A}-1}[X]^k}$ and $N_A:(\R^d)^A \setminus \diag\to (0,+\infty)$ are smooth.
\end{lem}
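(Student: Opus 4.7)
The plan is to write both maps as compositions and quotients of manifestly smooth objects, using the regularity of the evaluation kernel from Lemma~\ref{lem: regularity GA} together with the non-vanishing from Lemma~\ref{lem: non-degeneracy differentials}.

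First I would check that the ``raw'' polynomial-valued map
\begin{equation*}
\Theta_A : \ux \longmapsto \prod_{a \in A}\parentheses*{\Jac^2 \circ D_{x_a - \flat(\ux)} \circ \pi_{\ux}} \in \fP_{2k\norm{A}}\parentheses*{\R_{2\norm{A}-1}[X]^k}
\end{equation*}
is smooth on $(\R^d)^A \setminus \diag$. For this I would go factor by factor: the map $\ux \mapsto (x_a - \flat(\ux))_{a \in A}$ is affine, and for each fixed $y \in \R^d$ the differential-at-$y$ operator $P \mapsto D_y P$ is a continuous linear map from $\R_{2\norm{A}-1}[X]^k$ to $\cL(\R^d,\R^k)$ depending smoothly (in fact polynomially) on $y$; composition with the orthogonal projector $\pi_{\ux}$, which is smooth in $\ux$ by Lemma~\ref{lem: regularity GA} (orthogonal projection onto an element of the Grassmannian is a smooth function of the subspace), therefore yields a smooth map $\ux \mapsto D_{x_a - \flat(\ux)} \circ \pi_{\ux} \in \cL\parentheses*{\R_{2\norm{A}-1}[X]^k, \cL(\R^d,\R^k)}$. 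Post-composing with the polynomial map $\Jac^2 \in \fP_{2k}\parentheses*{\cL(\R^d,\R^k)}$ produces a smooth $\fP_{2k}\parentheses*{\R_{2\norm{A}-1}[X]^k}$-valued map (because, for any Euclidean spaces $V,W$ and any $\fp \in \fP_l(W)$, the post-composition operator $\Lambda \mapsto \fp \circ \Lambda$ is a polynomial map of degree $l$ from $\cL(V,W)$ to $\fP_l(V)$). Finally, the pointwise product of finitely many such smooth maps is smooth, since multiplication of homogeneous polynomial functions is bilinear and continuous from $\fP_m \times \fP_n$ to $\fP_{m+n}$.

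Once smoothness of $\Theta_A$ is in hand, Lemma~\ref{lem: non-degeneracy differentials} states that $\Theta_A(\ux) \neq 0$ for every $\ux \in (\R^d)^A \setminus \diag$. Since the norm $\Norm{\cdot}$ on $\fP_{2k\norm{A}}\parentheses*{\R_{2\norm{A}-1}[X]^k}$ is smooth away from the origin, $N_A = \Norm{\Theta_A}$ is smooth and strictly positive on $(\R^d)^A \setminus \diag$. Then $\theta_A = \Theta_A / N_A$ is smooth as a quotient of smooth maps with non-vanishing denominator, and takes values in the unit sphere by construction.

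There is no real obstacle here beyond bookkeeping: the only non-routine input is the smoothness of $\ux \mapsto \pi_{\ux}$, which is precisely Lemma~\ref{lem: regularity GA}, and the non-vanishing of $\Theta_A$, which is precisely Lemma~\ref{lem: non-degeneracy differentials}. Everything else is the standard fact that compositions and finite products of smooth maps between finite-dimensional normed spaces are smooth, together with smoothness of the norm away from $0$.
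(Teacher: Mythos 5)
Your argument is correct and reaches the conclusion by essentially the same decomposition as the paper, but you take a slightly different route to the smoothness of $\pi_{\ux}$: you go through Lemma~\ref{lem: regularity GA} (smoothness of $\cG_A$ as a Grassmannian-valued map) plus the general fact that orthogonal projection onto a subspace depends smoothly on that subspace, whereas the paper bypasses the Grassmannian and directly writes $\pi_{\ux} = \Id - \ev_{\ux}^*(\ev_{\ux}\ev_{\ux}^*)^{-1}\ev_{\ux}$, which is smooth in $\ev_{\ux}$ and hence in $\ux$ by Lemma~\ref{lem: regularity of ev}. Both are fine; the paper's formula is self-contained, while your route relies on a standard but unproved Grassmannian fact, and in effect re-derives from Lemma~\ref{lem: regularity GA} something that the paper's formula for $\pi_{\ux}$ already gives more directly. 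The remaining steps (smoothness of $\Theta_A$ by composition of polynomial operations, then non-vanishing via Lemma~\ref{lem: non-degeneracy differentials}, then polar decomposition) match the paper. One caveat you state explicitly and the paper leaves tacit: you both rely on the norm on $\fP_{2k\norm{A}}\parentheses*{\R_{2\norm{A}-1}[X]^k}$ being smooth away from the origin, but the paper defines this norm as a sup-norm $\Norm{\fp} = \max_{\Norm{v}\le 1}\norm{\fp(v)}$, which is in general only Lipschitz, not smooth (the spectral norm on quadratic forms is a standard counterexample). The paper's proof quietly reduces to ``smoothness of $\Theta_A$'' without justifying this passage; your phrasing at least makes the assumption visible. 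Since downstream (Lemma~\ref{lem: regularity Upsilon}, the compact parameter spaces $\cM_\cI$) only continuity of $N_A$ and $\theta_A$ is actually used, this is a cosmetic rather than structural concern, but if smoothness is truly wanted one should replace the sup-norm by an equivalent Euclidean norm or work with $\Theta_A$ directly.
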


\begin{proof}
It is enough to prove that $\ux \mapsto \prod_{a \in A} \Jac^2 \circ D_{x_a-\flat(\ux)}\circ \pi_{\ux}$ is smooth from $(\R^d)^A \setminus \diag$ to $\fP_{2k\norm{A}}\parentheses*{\R_{2\norm{A}-1}[X]^k}$, since we have already shown that it is never zero in Lemma~\ref{lem: non-degeneracy differentials}.

Given $\ux \notin \diag$, the orthogonal projection onto $\cG_A(\ux)=\ker(\ev_{\ux})$ is $\pi_{\ux} = \Id - \ev_{\ux}^*\parentheses{\ev_{\ux}\ev_{\ux}^*}^{-1}\ev_{\ux}$. Composing continuous linear maps, inverting them or taking their adjoint are smooth operations. Hence $\ux \mapsto \pi_{\ux}$ is smooth on $(\R^d)^A \setminus \diag$, by Lemma~\ref{lem: regularity of ev}. The map $\ux \mapsto \oux = \parentheses*{x_a-\flat(\ux)}_{a \in A}$ is also smooth.

Then, we observe that the function $\parentheses*{\uy,\Lambda,P} \mapsto \prod_{a \in A} (\Jac^2 \circ D_{y_a} \circ \Lambda)(P)$ is a polynomial on $(\R^d)^A \times \cL\parentheses*{\R_{2\norm{A}-1}[X]^k}\times \R_{2\norm{A}-1}[X]^k$, so that $\parentheses*{\uy,\Lambda} \mapsto \prod_{a \in A} (\Jac^2 \circ D_{y_a} \circ \Lambda)$ is smooth as a map with values in $\fP_{2k\norm{A}}\parentheses*{\R_{2\norm{A}-1}[X]^k}$. The conclusion follows by composing $\ux \mapsto \parentheses*{\oux,\pi_{\ux}}$ with the previous map.
\end{proof}


\subsection{Kac--Rice densities as functions of variance operators}
\label{subsec: Kac--Rice densities as functions of variance operators}

The goal of this section is to write the Kac--Rice density $\rho_A(f,\cdot)$ from Definition~\ref{def: rho A} in a way that allows to deal with its singularity along the diagonal. We write it as a product of two terms: the first one is a universal geometrical factor, depending only on $\ux$, that fully accounts for the singularity of $\rho_A(f,\cdot)$ along~$\diag$; the second one depends on the field $f$ but is not singular along~$\diag$.

Recalling Definitions~\ref{def: ev x}, \ref{def: G kernel ev} and~\ref{def: N theta}, we define the universal and field-dependent parts of the Kac--Rice density as follows.

\begin{dfn}[Universal part of the density]
\label{def: Upsilon}
Let $A$ be a non-empty finite set, we define
\begin{equation*}
\Upsilon_A: \ux \longmapsto \frac{\sqrt{N_A(\ux)}}{\jac{\ev_{\ux}}}
\end{equation*}
from $(\R^d)^A \setminus \diag$ to $(0,+\infty)$.
\end{dfn}

\begin{dfn}[Sets of variance operators and sets of parameters]
\label{def: M I}
Let $\cI$ be a partition of a finite set $A \neq \emptyset$, we denote by $\cM_\cI$ the following compact set:
\begin{equation*}
\cM_\cI = \prod_{I \in \cI} \gr{k\norm{I}}{\R_{2\norm{I}-1}[X]^k} \times \prod_{I \in \cI} \S\fP_{2k\norm{I}}\parentheses*{\R_{2\norm{I}-1}[X]^k}.
\end{equation*}
We will denote a generic element in $\cM_\cI$ by $\parentheses*{(\cG_I)_{I \in \cI}\strut,(\theta_I)_{I \in \cI}}$. We also denote by
\begin{align*}
&\cS_\cI = \sym\parentheses*{\prod_{I \in \cI} \R_{2\norm{I}-1}[X]^k} & &\text{and} & &\cS^+_\cI = \sym^+\parentheses*{\prod_{I \in \cI} \R_{2\norm{I}-1}[X]^k}.
\end{align*}
\end{dfn}

Let $\cI \in \cP_A$, if for all $I \in \cI$ we have $\cG_I \in \gr{k\norm{I}}{\R_{2\norm{I}-1}[X]^k}$, then $\prod_{I \in \cI}\cG_I$ is a subspace of codimension $k\norm{A}$ in $\prod_{I \in \cI}\R_{2\norm{I}-1}[X]^k$. In particular, we can apply operators on $\prod_{I \in \cI}\R_{2\norm{I}-1}[X]^k$ to tuples of the form $(P_I)_{I \in \cI} \in \prod_{I \in \cI}\cG_I$.

\begin{dfn}[Field-dependent part of the density]
\label{def: sigma I}
Let $A$ be a non-empty finite set and $\cI \in \cP_A$. We define a map $\sigma_\cI:\cM_\cI \times \cS^+_\cI \to [0,+\infty)$ by:
\begin{equation*}
\sigma_{\cI}:\parentheses*{(\cG_I)_{I \in \cI}\strut ,(\theta_I)_{I \in \cI},\Sigma} \longmapsto \int_{\prod_{I \in \cI} \cG_I} \parentheses*{\prod_{I \in \cI} \sqrt{\theta_I(P_I)}}\frac{\exp\parentheses*{-\frac{1}{2}\prsc{\Sigma^{-1}\uP}{\uP}}}{\sqrt{\det\parentheses*{2\pi\Sigma}}}\dx \uP,
\end{equation*}
where $\uP=(P_I)_{I \in \cI} \in \prod_{I \in \cI} \cG_I$ and $\dx\uP$ is the Lebesgue measure on $\prod_{I \in \cI} \cG_I$ induced by the inner product on $\prod_{I \in \cI}\R_{2\norm{I}-1}[X]^k$ introduced before Definition~\ref{def: Sigma I}.
\end{dfn}

An important idea is that, in the expression of $\rho_A(f,\ux)$ given by Definition~\ref{def: rho A}, we can replace~$f$ by its Kergin interpolant, introduced Section~\ref{sec: polynomial interpolation and Gaussian fields}. For this, we need to interpolate not only the values of $f$ on the $(x_a)_{a \in A}$ but also those of its differential. This suggest to consider the interpolant not on $\ux$ but on $\ux_{2A}=(\ux,\ux)$, where each point appears with double multiplicity, see~Section~\ref{subsec: product spaces and diagonals}.

Let $U \subset \R^d$ be a convex open set and $f \in \cC^{2\norm{A}-1}(U,\R^k)$ be a centered Gaussian field. Recalling Definition~\ref{def: barycenter}, one can check that $\flat\parentheses*{\ux_{2A}} = \flat(\ux)$ and $\obullet{\overgroup{\ux_{2A}}} =\parentheses{\oux,\oux} = \oux_{2A}$. In particular, $\oK(f,\ux_{2A}) = K\parentheses*{\flat(\ux)\cdot f,\oux,\oux}=\flat(\ux) \cdot K(f,\ux_{2A})$ for all $\ux \in U^A$, see Definition~\ref{def: twisted Kergin interpolant}. Then, for all $\cI \in \cP_A$, we have $\Sigma_{2\cI}(f,\ux_{2A})= \parentheses*{\Sigma_{2I}^{2J}(f,\ux_{2A})}_{I,J \in \cI} = \var{\parentheses*{\oK(f,\ux_{2I})}_{I \in \cI}}$, see Definition~\ref{def: Sigma I}.

\begin{lem}[Relation between notions of non-degeneracy]
\label{lem: relation between notions of non-degeneracy}
Let $f \in \cC^{2\norm{A}-1}(U,\R^k)$ be a centered Gaussian field, let $\cI \in \cP_A$ and $\ux \in U^A \setminus \diag$ be such that $\Sigma_{2\cI}(f,\ux_{2A})$ is positive-definite, then the Gaussian vector $\parentheses*{f(x_a)}_{a \in A}$ is non-degenerate, and $\parentheses*{f(x_a),D_{x_a}f}$ is non-degenerate for all $a \in A$.
\end{lem}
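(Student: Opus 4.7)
The plan is to express the Gaussian vectors of interest as the images of $Z := \parentheses*{\oK(f,\ux_{2I})}_{I \in \cI}$ under suitable surjective linear maps, and then use the fact that non-degeneracy of a Gaussian vector is preserved under surjective linear images.

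First, I would observe that, for each $I \in \cI$, every point $x_a$ with $a \in I$ appears with multiplicity exactly $2$ in the tuple $\ux_{2I} = (\ux_I,\ux_I)$. By the interpolation property in Theorem~\ref{thm: Kergin interpolation}, this forces $K(f,\ux_{2I})(x_a) = f(x_a)$ and $D_{x_a}K(f,\ux_{2I}) = D_{x_a}f$ for all $a \in I$. Using $\flat(\ux_{2I}) = \flat(\ux_I)$ and the identity $\oK(f,\ux_{2I}) = \flat(\ux_I) \cdot K(f,\ux_{2I})$ from Definition~\ref{def: twisted Kergin interpolant}, this translates into
\begin{equation*}
\oK(f,\ux_{2I})\parentheses*{x_a-\flat(\ux_I)} = f(x_a) \quad \text{and} \quad D_{x_a - \flat(\ux_I)}\oK(f,\ux_{2I}) = D_{x_a}f.
\end{equation*}
Hence the Gaussian vector $Y := \parentheses*{\strut f(x_a),D_{x_a}f}_{a \in A}$ is the image of $Z$ under the linear map
\begin{equation*}
\Lambda:(P_I)_{I \in \cI} \longmapsto \parentheses*{\strut P_{[a]_\cI}\!\parentheses*{x_a-\flat(\ux_{[a]_\cI})},D_{x_a - \flat(\ux_{[a]_\cI})}P_{[a]_\cI}}_{a \in A}.
\end{equation*}

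Next, I would argue that $\Lambda$ is surjective. It decomposes as a direct sum, indexed by $I \in \cI$, of the centered $1$-jet evaluation maps $P \mapsto \parentheses*{\strut P(x_a-\flat(\ux_I)),D_{x_a-\flat(\ux_I)}P}_{a \in I}$ from $\R_{2\norm{I}-1}[X]^k$ to $\parentheses*{\R^k \times \cL(\R^d,\R^k)}^I$. Since $\ux \notin \diag$ implies $\ux_I \notin \diag$, and translation by $-\flat(\ux_I)$ preserves this property, Corollary~\ref{cor: surjectivity 1-jets} shows that each summand is surjective, hence so is~$\Lambda$.

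Finally, since $\var{Z} = \Sigma_{2\cI}(f,\ux_{2A})$ is positive-definite by hypothesis and $\Lambda$ is surjective, $\var{Y} = \Lambda \var{Z} \Lambda^*$ is positive-definite as well (for any $v \neq 0$, surjectivity of $\Lambda$ yields $\Lambda^*v \neq 0$, and then $\prsc{\var{Z}\Lambda^*v}{\Lambda^*v}>0$). Thus $Y$ is non-degenerate. The vector $\parentheses*{f(x_a)}_{a \in A}$ is the image of $Y$ under the (surjective) coordinate projection dropping the differentials, so it is non-degenerate; likewise each pair $\parentheses*{f(x_a),D_{x_a}f}$ is non-degenerate as a coordinate projection of $Y$. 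There is no genuine obstacle here: the only step that requires care is the compatibility of the interpolation identity with the $\flat$-twist, which is a direct consequence of Lemma~\ref{lem: translation equivariance}.
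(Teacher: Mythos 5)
Your proof is correct and takes essentially the same approach as the paper: both reduce the claim to the block-wise surjectivity of the $1$-jet evaluation provided by Corollary~\ref{cor: surjectivity 1-jets}, together with the fact that a surjective linear image of a non-degenerate Gaussian is non-degenerate. The only cosmetic difference is that the paper first notes that the $\flat$-twist is a translation isomorphism on each factor (so one may pass from $\oK(f,\ux_{2I})$ to $K(f,\ux_{2I})$ and apply Corollary~\ref{cor: surjectivity 1-jets} directly), whereas you keep the twist and verify surjectivity at the shifted points $x_a-\flat(\ux_I)$; the two are equivalent.
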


\begin{proof}
Our assumption on $\Sigma_{2\cI}(f,\ux_{2A})$ means that the Gaussian vector $\parentheses*{\oK(f,\ux_{2I})}_{I \in \cI}$ is non-degenerate. Since translations act by isomorphisms on each factor $\R_{2\norm{I}-1}[X]^k$, this means that $\parentheses*{\strut K(f,\ux_{2I})}_{I \in \cI}$ is non-degenerate. Then, applying Corollary~\ref{cor: surjectivity 1-jets} block-wise shows that the Gaussian vector $\parentheses*{\parentheses*{f(x_i),D_{x_i}f}_{i \in I}}_{I \in \cI} = \parentheses*{f(x_a),D_{x_a}f}_{a \in A}$ is non-degenerate. Hence the result.
\end{proof}

\begin{lem}[Kac--Rice formula revisited]
\label{lem: Kac-Rice revisited}
Let $f \in \cC^{2\norm{A}-1}(U,\R^k)$ be a centered Gaussian field. Let $\ux \in U^A \setminus \diag$ and $\cI \in \cP_A$ be such that $\Sigma_{2\cI}(f,\ux_{2A})$ is positive-definite, then
\begin{equation*}
\rho_A(f,\ux) = \parentheses*{\prod_{I \in \cI} \Upsilon_I(\ux_I)} \tilde{\sigma}_\cI(f,\ux),
\end{equation*}
where we denoted by $\tilde{\sigma}_\cI(f,\ux) := \sigma_\cI\parentheses*{(\cG_I(\ux_I))_{I \in \cI}\strut, (\theta_I(\ux_I))_{I \in \cI}, \Sigma_{2\cI}(f,\ux_{2A})}$.
\end{lem}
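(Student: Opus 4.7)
The strategy is to identify the Gaussian vector $(f(x_a), D_{x_a}f)_{a \in A}$ with the vector of twisted Kergin interpolants $\uP = (\oK(f,\ux_{2I}))_{I \in \cI}$, and then apply a single Gaussian disintegration along an appropriate evaluation map. All the arithmetic then reduces to cancellations between Jacobians and Gaussian normalizations.

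First I will collect the reconstruction properties of $\uP$. Because $\oK(f,\ux_{2I}) = \flat(\ux_I)\cdot K(f,\ux_{2I})$ and each point $x_i - \flat(\ux_I)$ has multiplicity $2$ in $\oux_{2I}$, Theorem~\ref{thm: Kergin interpolation} together with the translation equivariance of Lemma~\ref{lem: translation equivariance} yields, for every $I \in \cI$ and $i \in I$, the identities $\oK(f,\ux_{2I})(x_i - \flat(\ux_I)) = f(x_i)$ and $D_{x_i - \flat(\ux_I)}\oK(f,\ux_{2I}) = D_{x_i}f$. Introducing the block-diagonal evaluation $\ev_\cI := \bigoplus_{I \in \cI} \ev_{\ux_I}$ from $\prod_{I \in \cI} \R_{2\norm{I}-1}[X]^k$ to $(\R^k)^A$, these identities give $\ev_\cI(\uP) = (f(x_a))_{a \in A}$, $\ker(\ev_\cI) = \prod_{I \in \cI} \cG_I(\ux_I)$, and $\jac{\ev_\cI} = \prod_{I \in \cI} \jac{\ev_{\ux_I}}$. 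The hypothesis $\Sigma_{2\cI}(f,\ux_{2A}) \in \cS^+_\cI$ makes $\uP$ non-degenerate, and since $\ux_I \notin \diag$ for each $I$, Lemma~\ref{lem: regularity of ev} ensures each $\ev_{\ux_I}$, hence $\ev_\cI$, is surjective; Lemma~\ref{lem: relation between notions of non-degeneracy} confirms that $\var{\ev_\cI \uP} = \var{(f(x_a))_{a \in A}}$ is non-degenerate.

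Next I will rewrite the conditional Jacobian on the kernel. For $\uP = (P_I)_{I \in \cI} \in \ker(\ev_\cI)$, each $P_I$ lies in $\cG_I(\ux_I)$, so $\pi_{\ux_I}(P_I) = P_I$, and Definition~\ref{def: N theta} yields $\prod_{i \in I} \jac{D_{x_i - \flat(\ux_I)} P_I} = \sqrt{N_I(\ux_I)}\sqrt{\theta_I(\ux_I,P_I)}$, from which
\begin{equation*}
\prod_{a \in A} \jac{D_{x_a} f} = \prod_{I \in \cI} \sqrt{N_I(\ux_I)}\sqrt{\theta_I(\ux_I, P_I)}.
\end{equation*}
I then invoke the standard Gaussian conditioning formula: for a non-degenerate centered Gaussian vector $\uP$ with variance $\Sigma$ on a Euclidean space and a surjective linear $\Lambda$,
\begin{equation*}
\espcond{h(\uP)}{\Lambda \uP = 0} = \frac{\sqrt{\det\parentheses*{2\pi\var{\Lambda\uP}}}}{\jac{\Lambda}} \int_{\ker \Lambda} h(\uP) \frac{\exp\parentheses*{-\tfrac{1}{2}\prsc{\Sigma^{-1}\uP}{\uP}}}{\sqrt{\det(2\pi\Sigma)}} \dx \uP,
\end{equation*}
which follows from the co-area formula applied to $\Lambda$ combined with the explicit value of the density of $\Lambda\uP$ at $0$.

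Finally I will plug these into Definition~\ref{def: rho A} with $\Lambda = \ev_\cI$, $\Sigma = \Sigma_{2\cI}(f,\ux_{2A})$, and $h(\uP) = \prod_{a \in A} \jac{D_{x_a} f}$. The factor $\sqrt{\det(2\pi\var{\ev_\cI\uP})}$ produced by the disintegration cancels the normalizing denominator of $\rho_A$. On $\ker(\ev_\cI) = \prod_I \cG_I(\ux_I)$, the integrand left over is exactly the one defining $\sigma_\cI$ evaluated at $((\cG_I(\ux_I)), (\theta_I(\ux_I)), \Sigma_{2\cI}(f,\ux_{2A}))$, times the scalar $\prod_I \sqrt{N_I(\ux_I)}$. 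Dividing the latter by $\jac{\ev_\cI} = \prod_I \jac{\ev_{\ux_I}}$ produces $\prod_I \Upsilon_I(\ux_I)$ by Definition~\ref{def: Upsilon}, yielding the claimed factorization. No step is truly difficult; the main piece of bookkeeping is verifying that the block-diagonal structure of $\ev_\cI$ makes the kernel, the Jacobian and the integral all factor over $\cI$, which relies on the orthogonal-sum inner product chosen before Definition~\ref{def: Sigma I}.
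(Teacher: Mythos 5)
Your proof is correct and follows essentially the same route as the paper's: pass to the twisted Kergin interpolants $\uP = (\oK(f,\ux_{2I}))_{I\in\cI}$, observe that the block-diagonal evaluation $\ev_\cI$ recovers $(f(x_a))_{a\in A}$ with kernel $\prod_I\cG_I(\ux_I)$, rewrite the conditional Jacobian via $N_I,\theta_I$, and then integrate over the kernel. The only difference is one of packaging: you invoke a single Gaussian disintegration identity for $\espcond{h(\uP)}{\Lambda\uP=0}$, whereas the paper proves that same identity inline (its Step~1 performs the orthogonal decomposition of each $K_I$ and the Jacobian cancellation producing $\Upsilon_I$, and its Step~2 is the Schur-complement computation showing $\det\Sigma=\det\Sigma_0\det\tilde\Sigma$ and $\tilde\Sigma^{-1}=(\Sigma^{-1})|_{\cG\times\cG}$). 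Your version is a tidy compression of those two steps, and the one-line justification of the disintegration formula via the co-area formula and the density of $\Lambda\uP$ at $0$ is accurate.
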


\begin{proof}
First, $\parentheses*{f(x_a)}_{a \in A}$ is non-degenerate by Lemma~\ref{lem: relation between notions of non-degeneracy}. Hence $\rho_A(f,\ux)$ is well-defined by Definition~\ref{def: rho A}. Starting from this definition, we proceed in two steps.

\paragraph*{Step 1: Replacing $f$ with its twisted Kergin interpolant.}
Let $I \in \cI$ and let us denote by $K_I = \oK(f,\ux_{2I}) \in \R_{2\norm{I}-1}[X]^k$. We have:
\begin{equation}
\label{eq: oK and evaluation}
K_I(X-\flat(\ux_I)) = \oK(f,\ux_{2I})(X-\flat(\ux_I)) = \parentheses*{\strut \flat(\ux_I)\cdot K(f,\ux_{2I})}(X-\flat(\ux_I))= K(f,\ux_{2I}).
\end{equation}
Recalling Definition~\ref{def: ev x}, we have:
\begin{equation*}
\ev_{\ux_I}(K_I) = \parentheses*{\strut K_I(x_i-\flat(\ux_I))}_{i \in I} = \parentheses*{\strut K(f,\ux_{2I})(x_i)}_{i \in I} = \parentheses*{\strut f(x_i)}_{i \in I},
\end{equation*}
using the interpolation properties of $K(f,\ux_{2I})$. Hence, we have $\parentheses*{f(x_a)}_{a \in A} = \parentheses*{\ev_{\ux_I}(K_I)}_{I \in \cI}$, and conditioning on $\parentheses*{f(x_a)}_{a \in A}=0$ is the same as conditioning on $K_I \in \cG_I(\ux_I)=\ker(\ev_{\ux_I})$ for all~$I \in \cI$.

Let $I \in \cI$ and $i \in I$, differentiating Equation~\eqref{eq: oK and evaluation} at $x_i$ and using Theorem~\ref{thm: Kergin interpolation} yields that
\begin{equation*}
D_{x_i-\flat(\ux_I)} K_I = D_{x_i} K(f,\ux_{2I})=D_{x_i}f.
\end{equation*}
Recalling Definition~\ref{def: N theta}, given that $K_I \in \cG_I(\ux_I)$, we obtain that:
\begin{equation*}
\prod_{i \in I}\jac{D_{x_i}f} = \prod_{i \in I} \jac{D_{x_i-\flat(\ux_I)} K_I} = \sqrt{N_I(\ux_I)}\sqrt{\theta_I(\ux_I,K_I)}.
\end{equation*}
Thus
\begin{equation*}
\rho_A(f,\ux) = \frac{\espcond{\prod_{I \in \cI} \sqrt{N_I(\ux_I)}\sqrt{\theta_I(\ux_I,K_I)}}{\forall I \in \cI, K_I\in \cG_I(\ux_I)}}{\det\parentheses*{2\pi \var{\parentheses*{\ev_{\ux_I}(K_I)}_{I \in \cI}}}^\frac{1}{2}}
\end{equation*}

For all $I \in \cI$, we have an orthogonal direct sum $\R_{2\norm{I}-1}[X]^k = \cG_I(\ux_I)^\perp \oplus \cG_I(\ux_I)$. Accordingly, we write $K_I = K_{I,0} + K_{I,1}$ where $K_{I,0} \in \cG_I(\ux_I)^\perp$ and $K_{I,1} \in \cG_I(\ux_I)$. Let us denote by $\Lambda_I$ the restriction of $\ev_{\ux_I}$ to $\cG_I(\ux_I)^\perp$, so that $\ev_{x_I}(K_I) = \Lambda_I(K_{I,0})$. For all $I \in \cI$, the linear map $\Lambda_I: \cG_I(\ux)^\perp \to (\R^k)^I$ is an isomorphism. Hence, using the notation introduced in Definition~\ref{def: block diagonal operators}, the block-diagonal operator $\bigoplus_{I \in \cI} \Lambda_I$ is an isomorphism and
\begin{equation*}
\var{\parentheses*{\ev_{\ux_I}(K_I)}_{I \in \cI}} = \var{\strut \parentheses*{\Lambda_I(K_{I,0})}_{I \in \cI}}= \parentheses*{\bigoplus_{I \in \cI} \Lambda_I} \var{\parentheses*{K_{I,0}}_{I \in \cI}} \parentheses*{\bigoplus_{I \in \cI} \Lambda_I}^*.
\end{equation*}
Taking the determinant on both sides yields:
\begin{equation*}
\det\parentheses*{2\pi \var{\parentheses*{\ev_{\ux_I}(K_I)}_{I \in \cI}}} = \det\parentheses*{2\pi \var{\parentheses*{K_{I,0}}_{I \in \cI}}}\prod_{I \in \cI} \det\parentheses*{\Lambda_I \Lambda_I^*}.
\end{equation*}
Observing that $\Lambda_I \Lambda_I^* = \ev_{\ux_I}\ev_{\ux_I}^*$, we get that $\det\parentheses*{\Lambda_I \Lambda_I^*}^\frac{1}{2} = \jac{\ev_{\ux_I}}$ for all $I \in \cI$. Moreover, because of the orthogonal projection in Definition~\ref{def: N theta}, we have $\theta_I(\ux_I,K_I) = \theta_I(\ux_I,K_{I,1})$ for all $I \in \cI$. Finally, recalling Definition~\ref{def: Upsilon}, we proved that:
\begin{equation*}
\rho_A(f,\ux) = \parentheses*{\prod_{I \in \cI} \Upsilon_I(\ux_I)} \frac{\espcond{\prod_{I \in \cI} \sqrt{\theta_I(\ux_I,K_{I,1})}}{\forall I \in \cI, K_{I,0}=0}}{\det\parentheses*{2\pi \var{\parentheses*{K_{I,0}}_{I \in \cI}}}^\frac{1}{2}}.
\end{equation*}

\paragraph*{Step 2: Dealing with the conditional expectation.}
Let $\cG = \prod_{I \in \cI} \cG_I(\ux_I)$, with our choice of inner product we have $\cG^\perp = \prod_{I \in \cI} \cG_I(\ux_I)^\perp$ and $\parentheses*{K_I}_{I \in \cI} = K_0+K_1$, where $K_0=\parentheses*{K_{I,0}}_{I \in \cI} \in \cG^\perp$ and $K_1=\parentheses*{K_{I,1}}_{I \in \cI} \in \cG$. We can use the orthogonal splitting $\prod_{I \in \cI} \R_{2\norm{I}-1}[X]^k = \cG^\perp \oplus \cG$ to write $\Sigma_{2\cI}(f,\ux_{2A}) = \var{\parentheses*{K_I}_{I \in \cI}}$ as a block-operator $\parentheses*{\begin{smallmatrix} \Sigma_0 & \Sigma_2^* \\ \Sigma_2 & \Sigma_1 \end{smallmatrix}}$, where $\Sigma_0 = \var{K_0}$, $\Sigma_1 = \var{K_1}$ and $\Sigma_2 = \cov{K_1}{K_0}$.

Since $(K_0,K_1)$ is a centered Gaussian with positive-definite variance $\Sigma_{2\cI}(f,\ux_{2A})$, the conditional distribution of $K_1$ given that $K_0=0$ is a centered Gaussian whose variance operator is $\tilde{\Sigma} = \Sigma_1 - \Sigma_2 \Sigma_0^{-1}\Sigma_2^*$, see~\cite[Prop.~1.2]{AW2009}. The operator $\tilde{\Sigma} \in \sym^+(\cG)$ is the Schur complement of $\Sigma_0$ in $\Sigma_{2\cI}(f,\ux_{2A})$. It is well-known that $\det\parentheses*{\Sigma_{2\cI}(f,\ux_{2A})} = \det(\Sigma_0)\det(\tilde{\Sigma})$ and that $\tilde{\Sigma}^{-1}$ is the bottom-right block of $\Sigma_{2\cI}(f,\ux_{2A})^{-1}$. In particular, for all $\uP \in \cG$, we have $\prsc{\tilde{\Sigma}^{-1}\uP}{\uP} = \prsc{\Sigma_{2\cI}(f,\ux_{2A})^{-1} \uP}{\uP}$. Thus,
\begin{multline*}
\frac{\espcond{\prod_{I \in \cI} \sqrt{\theta_I(\ux_I,K_{I,1})}}{\forall I \in \cI, K_{I,0}=0}}{\det\parentheses*{2\pi \var{\parentheses*{K_{I,0}}_{I \in \cI}}}^\frac{1}{2}}\\
\begin{aligned}
&= \frac{1}{\det\parentheses*{2\pi \Sigma_0}^\frac{1}{2}} \int_{\uP \in \cG} \prod_{I \in \cI} \sqrt{\theta_I(\ux_I,P_I)}\frac{\exp\parentheses*{-\frac{1}{2}\prsc{\tilde{\Sigma}^{-1}\uP}{\uP}}}{\det\parentheses*{2\pi\tilde{\Sigma}}^\frac{1}{2}}\dx \uP\\
&= \int_{\uP \in \cG} \prod_{I \in \cI} \sqrt{\theta_I(\ux_I,P_I)}\frac{\exp\parentheses*{-\frac{1}{2}\prsc{\Sigma_{2\cI}(f,\ux_{2A})^{-1}\uP}{\uP}}}{\det\parentheses*{\strut 2\pi \Sigma_{2\cI}(f,\ux_{2A})}^\frac{1}{2}}\dx \uP\\
&=\sigma_\cI\parentheses*{\parentheses*{\cG_I(\ux_I)}_{I \in \cI},\strut \parentheses*{\theta_I(\ux_I)}_{I \in \cI}, \Sigma_{2\cI}(f,\ux_{2A})},
\end{aligned}
\end{multline*}
which concludes the proof.
\end{proof}

Now that we explained how $\Upsilon_A$ and $\sigma_\cI$ are related to $\rho_A(f,\cdot)$, the remainder of this section is dedicated to proving some properties of these maps.

\begin{dfn}[Parametric smooth maps]
\label{def: C 0 infty}
Let $\cM$ be a topological space (that we think of as a parameter space) and $\Omega$ be an open subset of a finite-dimensional vector space $V$.
\begin{itemize}
\item Let $F:\cM \times \Omega \to \R$, for all $q \in \N$ and $(w,x)\in \cM \times \Omega$ we denote by $D^q_{(w,x)}F = D^q_x\parentheses*{F(w,\cdot)}$, if $F(w,\cdot):\Omega \to \R$ is $q$-differentiable at $x$. Similarly, assuming we are given a basis of $V$, for any multi-index $\alpha$ we denote by $\partial^\alpha F(w,x) = \partial^\alpha(F(w,\cdot))(x)$, if it is well-defined.

\item We denote by $\cCM$ the space of functions $F:\cM \times \Omega \to \R$ such that, for all $q \in \N$, the map $(w,x) \mapsto D_{(w,x)}^qF$ is well-defined and continuous on $\cM \times \Omega$. Equivalently, $F \in \cCM$ if and only if $\partial^\alpha F$ is well-defined and continuous on $\cM \times \Omega$ for all $\alpha$.
\end{itemize}
\end{dfn}

\begin{lem}[Regularity of $\sigma_\cI$]
\label{lem: regularity sigma I}
The map $\sigma_\cI$ from Definition~\ref{def: sigma I} belongs to $\cC^{0,\infty}\parentheses*{\cM_\cI \times \cS_\cI^+}$.
\end{lem}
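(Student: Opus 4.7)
The plan is to reduce the integral, which is taken over a parameter-dependent subspace $\prod_{I\in\cI}\cG_I$, to an integral over a fixed Euclidean space by trivializing locally. Fix $w_0 = \parentheses*{(\bar\cG_I)_{I\in\cI}, (\bar\theta_I)_{I\in\cI}} \in \cM_\cI$. By local triviality of the tautological bundle over each Grassmannian factor, there exists an open neighborhood $\cU$ of $w_0$ in $\cM_\cI$ and continuous maps $w \mapsto (e_j^I(w))_{1 \le j \le k\norm{I}}$ producing orthonormal bases of the subspaces $\cG_I$ encoded in $w$. Bundling these together, we obtain a continuous family of isometric isomorphisms $\Phi_w : \R^{k\norm{A}} \to \prod_{I\in \cI}\cG_I$, $y \mapsto \parentheses*{\sum_j y_j^I e_j^I(w)}_{I \in \cI}$. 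After the change of variable $\uP = \Phi_w(y)$ (which preserves Lebesgue measure), we can write, for $(w,\Sigma) \in \cU \times \cS_\cI^+$,
\begin{equation*}
\sigma_\cI(w,\Sigma) = \int_{\R^{k\norm{A}}} H(w,y)\, G(\Sigma,\Phi_w(y))\, \dx y,
\end{equation*}
where $H(w,y) = \prod_{I\in\cI}\sqrt{\theta_I(\Phi_w^I(y))}$ and $G(\Sigma,\uP) = \det(2\pi\Sigma)^{-\frac{1}{2}}\exp\parentheses*{-\frac{1}{2}\prsc{\Sigma^{-1}\uP}{\uP}}$.

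Next I would analyze the two factors separately. The factor $H(w,y)$ depends only on the parameter $w$ and on $y$, not on $\Sigma$: it is jointly continuous in $(w,y)$ since the frames, the polynomial evaluations $\theta_I \circ \Phi_w^I$ and the square root on $[0,+\infty)$ are all continuous, and $H(w,y)$ grows polynomially in $y$ of degree $k\norm{A}$, with a polynomial bound $\norm{H(w,y)} \leq C(1+\Norm{y})^{k\norm{A}}$ uniform for $w$ in a compact subset of $\cU$ (since $\theta_I$ lies on a sphere of unit polynomials). The factor $G(\Sigma,\uP)$ is the standard centered Gaussian density with covariance $\Sigma$; restricted to the closed subspace $\prod_I \cG_I$ it is manifestly smooth in $\Sigma \in \cS_\cI^+$. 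Any derivative $\partial_\Sigma^\alpha G(\Sigma,\uP)$ is a polynomial in $\uP$ (of degree $2\norm{\alpha}$) with coefficients continuous in $\Sigma$, multiplied by the Gaussian $G(\Sigma,\uP)$ itself.

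The main point is then to apply continuity and differentiation under the integral sign uniformly for $(w,\Sigma)$ in a compact set $\cK \subset \cU \times \cS_\cI^+$. On such a compact, there is $c>0$ with $\Sigma^{-1} \geq c\,\Id$ for all $\Sigma$ appearing, which yields the uniform Gaussian domination $G(\Sigma,\Phi_w(y)) \leq C' \exp\parentheses*{-\frac{c}{2}\Norm{y}^2}$. Combined with the polynomial bounds on $H$ and on $\partial_\Sigma^\alpha G$, this produces an $L^1(\dx y)$ majorant of any fixed multi-index derivative $\partial_\Sigma^\alpha\squarebrackets*{H(w,y) G(\Sigma,\Phi_w(y))}$, depending only on $\cK$. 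Standard dominated convergence arguments then give (i) joint continuity of $(w,\Sigma) \mapsto \sigma_\cI(w,\Sigma)$ on $\cU \times \cS_\cI^+$, and (ii) the validity of differentiating under the integral to any order in~$\Sigma$, with each resulting derivative being again of the form above and hence jointly continuous. This exactly yields $\sigma_\cI \in \cC^{0,\infty}(\cU \times \cS_\cI^+)$, and covering $\cM_\cI$ by such neighborhoods $\cU$ gives the lemma.

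The main obstacle is simply keeping the domination uniform while allowing $\cG_I$ to vary in the Grassmannian; the local-frame trick circumvents this by transferring the variation of the integration domain into the smoothly-varying isomorphism $\Phi_w$ inside the integrand. There is no smoothness demanded in $w$ (only continuity), which is convenient because $\theta_I$ is merely a parameter of the integrand, and the square root is not differentiable at the zeros of $\theta_I \circ \Phi_w^I$.
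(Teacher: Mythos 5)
Your proof is correct and follows the same global strategy as the paper: trivialize the parameter-dependent domain of integration locally, reduce to an integral over a fixed Euclidean space, and then apply continuity and differentiation under the integral sign with a Gaussian-times-polynomial dominating function uniform on compact sets. The one genuine difference is the choice of local trivialization. You use orthonormal frames of the tautological bundle, which gives a measure-preserving isomorphism $\Phi_w$ and avoids any Jacobian factor, at the cost of only getting the frame (and hence $\Phi_w$) to depend continuously on $w$. The paper instead uses the affine chart on the Grassmannian parametrizing $\cG_I$ as the graph of a linear map $\Lambda_I \in \cL\parentheses*{\cG_I^0, (\cG_I^0)^\perp}$; the change of variables then carries the explicit Jacobian factor $\prod_I \sqrt{\det(\Id + \Lambda_I^*\Lambda_I)}$, which is smooth in $\Lambda$. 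Since the target class is only $\cC^{0,\infty}$ (continuous in the $\cM_\cI$-parameter, smooth in $\Sigma$), both choices suffice, and your isometric trivialization is marginally cleaner in the bookkeeping. Note also that the crucial observation that $\Phi_w$ is isometric, so $\Norm{\Phi_w(y)} = \Norm{y}$, is what makes the uniform Gaussian domination $\exp\parentheses*{-\frac{c}{2}\Norm{y}^2}$ immediate; in the paper's chart one instead bounds $\Norm{\uP + \Lambda(\uP)}$ in terms of $\Norm{\Lambda}$.

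One small slip worth fixing: $\cG_I \in \gr{k\norm{I}}{\R_{2\norm{I}-1}[X]^k}$ has \emph{codimension} $k\norm{I}$, not dimension $k\norm{I}$, so the orthonormal frame of $\cG_I$ has $\dim\parentheses*{\R_{2\norm{I}-1}[X]^k} - k\norm{I}$ vectors and the trivialization $\Phi_w$ maps from $\R^N$ with $N = \sum_{I\in\cI}\parentheses*{\dim\R_{2\norm{I}-1}[X]^k - k\norm{I}}$, not from $\R^{k\norm{A}}$. This is purely a matter of labeling and does not affect the argument; the polynomial growth bound on $H$ still reads $\leq C(1+\Norm{y})^{k\norm{A}}$ because $\prod_I\theta_I$ is homogeneous of total degree $2k\norm{A}$.
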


\begin{proof}
It is enough to check the regularity of $\sigma_\cI$ locally, which can be done using local charts on the Grassmannians, cf.~\cite[Ex.~1.36]{Lee2013}. The proof is divided in two steps.

\paragraph*{Step 1: Reduction to a computation in local coordinates.}
Let $\cG_I^0 \in \gr{k\norm{I}}{\R_{2\norm{I}-1}[X]^k}$ for all $I \in \cI$. The map $\Lambda_I \mapsto \brackets*{P+\Lambda_I(P) \mvert P \in \cG_I^0}$ is a diffeomorphism from $\cL\parentheses*{\cG_I^0,(\cG_I^0)^\perp}$ onto $\brackets*{\cG \in \gr{k\norm{I}}{\R_{2\norm{I}-1}[X]^k} \mvert \cG \cap (\cG_I^0)^\perp = \brackets{0}}$, which defines local coordinates centered at~$\cG_I^0$. Reading $\sigma_\cI$ in these coordinates, we need to prove that it admits continuous partial derivatives at any order with respect to $\Sigma$.

For all $I \in \cI$, let $\Lambda_I\in \cL\parentheses*{\cG_I^0,(\cG_I^0)^\perp}$ and $\cG_I$ denote its graph. Then, $P \mapsto P +\Lambda_I(P)$ is an isomorphism from $\cG_I^0$ to $\cG_I$ whose Jacobian equals $\sqrt{\det\parentheses*{\Id +\Lambda_I^*\Lambda_I}}$. Starting from the integral expression in Definition~\ref{def: sigma I}, a change of variables yields that $\sigma_{\cI}\parentheses*{\strut \parentheses{\cG_I}_{I \in \cI},\parentheses*{\theta_I}_{I \in \cI},\Sigma}$ equals:
\begin{equation*}
\frac{\prod_{I \in \cI} \sqrt{\det\parentheses*{\Id +\Lambda_I^*\Lambda_I}}}{\sqrt{\det\parentheses*{2\pi\Sigma}}} \int_{\cG^0} \prod_{I \in \cI}\sqrt{\theta_I(P_I+\Lambda_I(P_I))}\exp\parentheses*{-\frac{\prsc{\strut \Sigma^{-1}\parentheses*{\uP+\Lambda(\uP)}}{\uP+\Lambda(\uP)}}{2}}\dx \uP,
\end{equation*}
where $\cG^0 =\prod_{I \in \cI} \cG_I^0$, we let $\uP = (P_I)_{I \in \cI} \in \cG^0$ and $\Lambda(\uP)=\parentheses*{\Lambda_I(P_I)}_{I \in \cI}$, and $\dx \uP$ stands for the Lebesgue measure on $\cG^0$. What we gained is that the domain of integration no longer depends on the arguments, and this expression depends explicitly on $\Lambda = \bigoplus_{I \in \cI}\Lambda_I$.

\paragraph*{Step 2: Regularity of $\sigma_{\cI}$ in the previous local coordinates.}
Since the maps $\Sigma \mapsto \Sigma^{-1}$ and $\parentheses*{\Lambda,\Sigma} \mapsto \det\parentheses*{2\pi\Sigma}^{-\frac{1}{2}} \prod_{I \in \cI} \det\parentheses*{\Id +\Lambda_I^*\Lambda_I}^{\frac{1}{2}}$ are smooth, we only need to prove that
\begin{equation}
\label{eq: regularity sigma I}
\parentheses*{\Lambda,\strut \theta, \Sigma} \longmapsto \int_{\cG^0} \prod_{I \in \cI}\sqrt{\theta_I(P_I+\Lambda_I(P_I))}\exp\parentheses*{-\frac{\prsc{\strut \Sigma \parentheses*{\uP+\Lambda(\uP)}}{\uP+\Lambda(\uP)}}{2}} \dx \uP
\end{equation}
admits continuous partial derivatives at any order with respect to $\Sigma$, where $\theta = \parentheses*{\theta_I}_{I \in \cI}$.

The integrand in~\eqref{eq: regularity sigma I} is continuous with respect to $\parentheses*{\Lambda,\theta, \Sigma,\uP}$, as well as its partial derivatives with respect to $\Sigma$. For all $I \in \cI$, we have $\Norm{P_I+\Lambda_I(P_I)} \leq (1+\Norm{\Lambda_I})\Norm{P_I}$ and, since $\theta_I$ has unit norm, $\norm{\theta_I\parentheses*{P_I+\Lambda_I(P_I)}} \leq (1+\Norm{\Lambda_I})\Norm{P_I}$. This is enough to obtain dominating functions with Gaussian decay with respect to $\Norm{\uP}$, uniformly with respect to $(\Lambda,\theta,\Sigma)$ in any compact subset of $\prod_{I \in \cI} \cL\parentheses*{\cG^0_I,(\cG^0_I)^\perp} \times \prod_{I \in \cI} \S\fP_{2k\norm{I}}\parentheses*{\R_{2\norm{I}-1}[X]^k} \times \cS_{\cI}^+$. Hence the result.
\end{proof}

\begin{lem}[Regularity of $\Upsilon$]
\label{lem: regularity Upsilon}
Let $A$ be non-empty and finite, the function $\Upsilon_A$ from Definition~\ref{def: Upsilon} is continuous on $(\R^d)^A \setminus \diag$ and locally integrable on $(\R^d)^A$. Moreover, we have $\tau \cdot \Upsilon_A = \Upsilon_A$ for all $\tau \in \R^d$.
\end{lem}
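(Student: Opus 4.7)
Continuity and translation invariance follow directly from the regularity results established in Sections~\ref{subsec: evaluation maps and their kernels}--\ref{subsec: Kac--Rice densities as functions of variance operators}. By Lemma~\ref{lem: regularity of ev}, the map $\ux \mapsto \ev_{\ux}$ is smooth and takes values in $\cL^\dagger\parentheses*{\R_{2\norm{A}-1}[X]^k,(\R^k)^A}$, so $\jac{\ev_{\ux}} > 0$ depends smoothly on $\ux$; combined with the smoothness of $N_A$ from Lemma~\ref{lem: regularity N theta}, this yields that $\Upsilon_A$ is smooth on $(\R^d)^A\setminus\diag$. For translation invariance, observe that $\flat(\tau\cdot\ux) = \flat(\ux)+\tau$, so that $x_a-\flat(\ux)$ is invariant under the diagonal action of $\R^d$. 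Hence $\ev_{\ux}$, $\cG_A(\ux)=\ker\ev_{\ux}$, the projection $\pi_{\ux}$, and each factor $\Jac^2\circ D_{x_a-\flat(\ux)}\circ\pi_{\ux}$ coincide at $\ux$ and at $\tau\cdot\ux$. Therefore $N_A$ and $\jac{\ev_{\ux}}$ are translation-invariant, and so is $\Upsilon_A$.

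The remaining, and main, difficulty is local integrability near the diagonal. The strategy will be to exploit Lemma~\ref{lem: Kac-Rice revisited} applied with $\cI = \brackets{A}$, which gives
\begin{equation*}
\rho_A(f,\ux) = \Upsilon_A(\ux)\,\tilde{\sigma}_{\brackets{A}}(f,\ux)
\end{equation*}
for any centered Gaussian field $f\in\cC^{2\norm{A}-1}(\R^d,\R^k)$ such that $\Sigma_{2\brackets{A}}(f,\ux_{2A})\in\cS^+_{\brackets{A}}$. We take $f$ to be a tuple of $k$ independent Bargmann--Fock fields on $\R^d$ ; as discussed in Section~\ref{subsec: discussion of the main hypotheses}, $f$ is smooth and satisfies $\hypND{q}$ for every $q\in\N$. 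Via Lemma~\ref{lem: relation between notions of non-degeneracy} applied backwards through the $1$-jet identification of Corollary~\ref{cor: surjectivity 1-jets}, this implies that $\Sigma_{2\brackets{A}}(f,\ux_{2A})$ is positive-definite for every $\ux\in(\R^d)^A$, and Remark~\ref{rem: Kac-Rice} ensures that $\rho_A(f,\cdot)$ is locally integrable on $(\R^d)^A$.

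To conclude, it suffices to bound $\tilde{\sigma}_{\brackets{A}}(f,\cdot)$ from below by a positive constant on any given compact $K\subset(\R^d)^A$. The map $\ux\mapsto\Sigma_{2\brackets{A}}(f,\ux_{2A})$ is continuous on $(\R^d)^A$ by Remark~\ref{rem: Sigma I C0} and takes values in the open cone $\cS^+_{\brackets{A}}$; hence its image over $K$ lies in some compact subset $C\subset\cS^+_{\brackets{A}}$. The space $\cM_{\brackets{A}}$ being compact (Definition~\ref{def: M I}) and $\sigma_{\brackets{A}}$ being continuous on $\cM_{\brackets{A}}\times\cS^+_{\brackets{A}}$ by Lemma~\ref{lem: regularity sigma I}, $\sigma_{\brackets{A}}$ attains a minimum $c$ on the compact $\cM_{\brackets{A}}\times C$. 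The key observation will be that $c>0$: for every $(\cG_A,\theta_A)\in\cM_{\brackets{A}}$, the function $\theta_A$ is a unit-norm, non-negative polynomial (a sum of products of squared Jacobians, not identically zero by Lemma~\ref{lem: non-degeneracy differentials}), so $\sqrt{\theta_A}$ is continuous, non-negative, and strictly positive on a non-empty open subset of $\cG_A$, making the Gaussian integral in Definition~\ref{def: sigma I} strictly positive. Since $\diag$ has Lebesgue measure zero, we then obtain
\begin{equation*}
\int_K \Upsilon_A(\ux)\dx\ux \;=\; \int_{K\setminus\diag}\frac{\rho_A(f,\ux)}{\tilde{\sigma}_{\brackets{A}}(f,\ux)}\dx\ux \;\leq\; \frac{1}{c}\int_K\rho_A(f,\ux)\dx\ux \;<\; +\infty.
\end{equation*}
The main obstacle is precisely this uniform lower bound; once in place, local integrability follows immediately from the integrability of the Kac--Rice density of a reference field.
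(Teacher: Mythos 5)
Your proofs of continuity and translation invariance are correct and essentially the paper's. Your reduction of local integrability to a lower bound on $\tilde{\sigma}_{\brackets{A}}(f,\cdot)$ for a reference field is also the right idea (the paper takes $f_0$ a standard Gaussian vector in $\R_{2\norm{A}-1}[X]^k$ itself rather than the Bargmann--Fock field, which makes the non-degeneracy of $\Sigma_{\brackets{2A}}(f_0,\ux_{2A})$ a one-line observation instead of invoking \hypND{q}, but your choice can be made to work).

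The gap is the claim that $c := \min_{\cM_{\brackets{A}}\times C}\sigma_{\brackets{A}} > 0$. You argue that ``for every $(\cG_A,\theta_A)\in\cM_{\brackets{A}}$, the function $\theta_A$ is a unit-norm, non-negative polynomial (a sum of products of squared Jacobians) ... strictly positive on a non-empty open subset of $\cG_A$.'' This misreads Definition~\ref{def: M I}: $\cM_{\brackets{A}}$ is the \emph{entire} product $\gr{k\norm{A}}{\R_{2\norm{A}-1}[X]^k}\times\S\fP_{2k\norm{A}}\parentheses*{\R_{2\norm{A}-1}[X]^k}$, not the set of pairs $\parentheses*{\cG_A(\ux),\theta_A(\ux)}$ actually arising from $\ux\notin\diag$. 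A generic $(\cG,\theta)\in\cM_{\brackets{A}}$ has no relation to squared Jacobians at all; in particular, for any codimension-$k\norm{A}$ subspace $\cG$ one can choose a unit-norm $\theta$ supported on $\cG^\perp$, giving $\theta_{\vert\cG}\equiv 0$ and hence $\sigma_{\brackets{A}}\parentheses*{\cG,\theta,\Sigma}=0$. So the minimum over $\cM_{\brackets{A}}\times C$ is $0$, and your uniform lower bound does not hold.

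What is actually needed is a lower bound for $\tilde{\sigma}_{\brackets{A}}(f_0,\cdot)$ on $\Gamma\setminus\diag$ only — that is, along the restricted set of triples $\parentheses*{\cG_A(\ux),\theta_A(\ux),\Sigma_{\brackets{2A}}(f_0,\ux_{2A})}$ for $\ux\in\Gamma\setminus\diag$. The paper does this by contradiction: given $\tilde{\sigma}_{\brackets{A}}(f_0,\ux^{(n)})\to 0$ with $\ux^{(n)}\to\ux\in\Gamma$, one passes to subsequential limits $\parentheses*{\cG,\theta}$ of $\parentheses*{\cG_A(\ux^{(n)}),\theta_A(\ux^{(n)})}$ and must show that $\theta_{\vert\cG}\not\equiv 0$. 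This uses the specific structure: one can choose $P_n\in\cG_A(\ux^{(n)})$ with $\Norm{P_n}\leq 1$ and $\theta_A(\ux^{(n)},P_n)=1$ (the norm of $\theta_A(\ux^{(n)})$ is attained on $\cG_A(\ux^{(n)})$ because of the projection $\pi_{\ux^{(n)}}$ in Definition~\ref{def: N theta}), then extract a limit $P\in\cG$ with $\theta(P)=1$ using the graph-chart convergence of the Grassmannian. This pointwise-along-a-sequence argument cannot be replaced by a global minimum over the compact parameter space, precisely because that compact set contains degenerate points.
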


\begin{proof}
The continuity follows from Definition~\ref{def: Upsilon} and from Lemmas~\ref{lem: regularity of ev} and~\ref{lem: regularity N theta}. Let $\tau \in \R^d$ and $\ux \in (\R^A)^d \setminus \diag$. By Definition~\ref{def: ev x}, we have $\ev_{\tau\cdot \ux} =\ev_{\oux} = \ev_{\ux}$. Then, we get $\cG_A(\tau \cdot \ux) =\cG_A(\ux)$. And finally $N_A(\tau \cdot\ux) = N_A(\ux)$ by Definition~\ref{def: N theta}. Thus $\Upsilon_A(\tau\cdot \ux)=\Upsilon(\ux)$ for all $\ux \notin \diag$ and $\tau \in \R^d$, that is $\Upsilon_A$ is invariant under the action of diagonal translations. The difficulty is in proving the local integrability, which is done in two steps.

\paragraph*{Step 1: Reduction to lower bounding $\sigma_{\brackets{A}}$.}
Let us consider a standard Gaussian vector $f_0$ in $\R_{2\norm{A}-1}[X]^k$. On the one hand, $f_0$ defines a smooth centered Gaussian field from $\R^d$ to $\R^k$. On the other hand, if $\ux \in (\R^d)^A$, we have $K(f_0,\ux_{2A})=f_0$ by uniqueness in Theorem~\ref{thm: Kergin interpolation}. Hence the Gaussian vector $\oK(f_0,\ux_{2A}) = \flat(\ux)\cdot K(f_0,\ux_{2A}) = \flat(\ux)\cdot f_0$ is non-degenerate, since $P \mapsto \flat(\ux)\cdot P$ is an isomorphism of $\R_{2\norm{A}-1}[X]^k$. Thus, we have $\Sigma_{\brackets{2A}}(f_0,\ux_{2A}) = \var{\oK(f_0,\ux_{2A})} \in \cS^+_{\brackets{A}}$ for all $\ux \in (\R^d)^A$. Then, by Lemma~\ref{lem: Kac-Rice revisited}, we have $\rho_A(f_0,\cdot) = \Upsilon_A \tilde{\sigma}_{\brackets{A}}(f_0,\cdot)$ on $(\R^d)^A \setminus \diag$.

Let $\Gamma \subset (\R^d)^A$ be compact. We know from~\cite[Thm.~6.26]{AL2025} and~\cite[Thm.~1.13 and Lem.~2.9]{GS2024} that $\rho_A(f_0,\cdot)$ is locally integrable on $(\R^d)^A$. Indeed, in this case, the non-degeneracy of $f_0$ implies that of its multijets up to order $2\norm{A}$. In particular $\rho_A(f_0,\cdot)$ is integrable on $\Gamma$. In order to prove that $\Upsilon_A$ is integrable on $\Gamma$, it is thus enough to show that the non-negative function $\tilde{\sigma}_{\brackets{A}}(f_0,\cdot)$ is bounded from below on $\Gamma \setminus \diag$ by some constant $C>0$.

\paragraph*{Step 2: Existence of a lower bound.}
We prove the existence of $C>0$ by contradiction. If there exists no such bound, there is a sequence $\parentheses*{\ux^{(n)}}_{n \in \N}$ in $\Gamma \setminus \diag$ such that $\tilde{\sigma}_{\brackets{A}}\parentheses*{f_0,\ux^{(n)}} \xrightarrow[n \to +\infty]{}0$. By compactness of $\Gamma$, up to extracting a subsequence, we can assume that $\ux^{(n)}\xrightarrow[n \to +\infty]{}\ux \in \Gamma$. By Remark~\ref{rem: Sigma I C0}, we have $\Sigma_{\brackets{2A}}\parentheses*{f_0,\ux_{2A}^{(n)}} \xrightarrow[n \to +\infty]{} \Sigma_{\brackets{2A}}\parentheses*{f_0,\ux_{2A}} \in \cS^+_{\brackets{A}}$. By compactness of $\cM_{\brackets{A}}$, see Definition~\ref{def: M I}, we can also assume that $\parentheses*{\cG_A(\ux^{(n)}),\theta_A(\ux^{(n)})}\xrightarrow[n \to +\infty]{} \parentheses*{\strut \cG,\theta}\in \cM_{\brackets{A}}$. Finally by continuity of $\sigma_{\brackets{A}}$, see Lemma~\ref{lem: regularity sigma I}, we have:
\begin{align*}
0 &= \lim_{n \to +\infty} \tilde{\sigma}_{\brackets{A}}\parentheses*{f_0,\ux^{(n)}} = \lim_{n\to +\infty}\sigma_{\brackets{A}}\parentheses*{\cG_A(\ux^{(n)}),\theta_A(\ux^{(n)}), \Sigma_{\brackets{2A}}(f_0,\ux_{2A}^{(n)})}\\
&= \sigma_{\brackets{A}}\parentheses*{\cG,\theta,\Sigma_{\brackets{2A}}(f_0,\ux_{2A})} = \int_{\cG} \sqrt{\theta(P)}\frac{\exp\parentheses*{-\frac{1}{2}\prsc{\Sigma_{\brackets{2A}}(f_0,\ux_{2A})^{-1} P}{ P}}}{\sqrt{\det\parentheses*{2\pi\Sigma_{\brackets{2A}}(f_0,\ux_{2A})}}}\dx P.
\end{align*}
Hence $\theta_{\vert \cG}=0$. We will prove that this cannot happen.

For all $n \in \N$, we have $\Norm{\theta_A(\ux^{(n)})}=1$. Hence, there exists $P_n \in \R_{2\norm{A}-1}[X]^k$ such that $\Norm{P_n}\leq 1$ and $\theta_A(\ux^{(n)},P_n)=1$. Its orthogonal projection onto $\cG_A(\ux^{(n)})$ satisfies $\Norm{\pi_{\ux^{(n)}}(P_n)} \leq \Norm{P_n}\leq 1$ and $\theta_A(\ux^{(n)},\pi_{\ux^{(n)}}(P_n))=\theta_A(\ux^{(n)},P_n)=1$, see Definition~\ref{def: N theta}. Up to replacing $P_n$ by $\pi_{\ux^{(n)}}(P_n)$, we can assume $P_n \in \cG_A(\ux^{(n)})$. By compactness of the unit ball, we can also assume that $P_n \xrightarrow[n \to +\infty]{}P$.

As above, we work in a chart on $\gr{k\norm{A}}{\R_{2\norm{A}-1}[X]^k}$. Since $\cG_A(\ux^{(n)}) \xrightarrow[n \to +\infty]{}\cG$, for $n$ large enough there exists $\Lambda_n \in \cL(\cG,\cG^\perp)$ such that $\cG_A(\ux^{(n)})$ is the graph of $\Lambda_n$, and moreover $\Lambda_n \xrightarrow[n\to+\infty]{}0$. Let $Q_n \in \cG$ denote the orthogonal projection of $P_n$ onto $\cG$. Since $P_n \in \cG_A(\ux^{(n)})$, we have $P_n = Q_n + \Lambda_n(Q_n)$. Then,
\begin{equation*}
\Norm{P_n-Q_n}=\Norm{\Lambda_n(Q_n)}\leq \Norm{\Lambda_n}\Norm{Q_n}\leq \Norm{\Lambda_n}\Norm{P_n}\leq \Norm{\Lambda_n}\xrightarrow[n \to +\infty]{}0,
\end{equation*}
so that $Q_n \xrightarrow[n \to +\infty]{}P$, and thus $P \in \cG$. On the other hand,
\begin{align*}
\norm*{1-\theta(P)} &=\norm*{\theta_A(\ux^{(n)},P_n)-\theta(P)} \leq \norm*{\theta_A(\ux^{(n)},P_n)-\theta(P_n)}+\norm{\theta(P_n)-\theta(P)}\\
&\leq \Norm*{\theta_A(\ux^{(n)})-\theta} + \norm{\theta(P_n)-\theta(P)} \xrightarrow[n \to +\infty]{}0,
\end{align*}
by continuity of $\theta$. Thus $\theta(P)=1$ and $\theta_{\vert \cG}\neq 0$, which is the contradiction we sought.
\end{proof}

\begin{rem}
\label{rem: regularity rho A}
Let $A$ be a non-empty finite set and $\cI \in \cP_A$. Let $f \in \cC^{2\norm{A}-1}(U,\R^k)$ be a Gaussian field, where $U \subset \R^d$ is open and convex.
\begin{itemize}
\item Let $\uy \in U^A$ be such that $\Sigma_{2\cI}(f,\uy_{2A}) \in \cS_\cI^+$. By continuity of $\Sigma_{2\cI}(f,\cdot)$, there exists a compact neighborhood $\Gamma$ of $\uy$ such that $\brackets*{\Sigma_{2\cI}(f,\ux_{2A}) \mvert \ux \in \Gamma} \subset \cS_\cI^+$. In particular, the expression of $\rho_A(f,\cdot)$ derived in Lemma~\ref{lem: Kac-Rice revisited} is valid for all $\ux \in \Gamma \setminus \diag$. Then, we deduce from Remark~\ref{rem: Sigma I C0} and Lemmas~\ref{lem: regularity GA}, \ref{lem: regularity N theta}, \ref{lem: regularity sigma I} and~\ref{lem: regularity Upsilon} that $\rho_A(f,\cdot)$ is continuous on $\Gamma \setminus \diag$.

\item By continuity of $\Sigma_{2\cI}(f,\cdot)$ the set $\cM_\cI \times \brackets*{\Sigma_{2\cI}(f,\ux_{2A}) \mvert \ux \in \Gamma}$ is compact. Since $\sigma_\cI$ is continuous, it is bounded on this set, hence $\tilde{\sigma}_\cI(f,\cdot)$ is bounded on $\Gamma \setminus \diag$. This substantiate our claim that the singularity of $\rho_A(f,\cdot)$ is fully accounted for by the universal term $\ux \mapsto \prod_{I \in \cI} \Upsilon_I(\ux_I)$.
\end{itemize}
\end{rem}

To conclude this section we prove that, if $w \in \cM_\cI$ and $\Sigma \in \cS_\cI^+$ is block-diagonal, then $\sigma_\cI(w,\Sigma)$ is a product of terms of the same form. Let us introduce some notation before stating this result.

\begin{dfn}[Frobenius inner product]
\label{def: Frobenius inner product}
Let $V$ and $W$ be Euclidean spaces, the \emph{Frobenius inner product} on $\cL(V,W)$ is defined by $\prsc*{\Lambda_1}{\Lambda_2} = \Tr\parentheses*{\Lambda_1 \Lambda_2^*}$ for all $\Lambda_1, \Lambda_2 \in \cL(V,W)$.
\end{dfn}

Let $A \neq \emptyset$ be finite and $\cI \in \cP_A$. Recall that we equipped $\prod_{I \in \cI} \R_{2\norm{I}-1}[X]^k$ with an inner product such that the factors are orthogonal. Self-adjoint operators in $\cS_\cI$ have a natural block structure adapted to this product, and the Frobenius inner product on this space is such that $\prsc{\Sigma}{\Lambda} = \sum_{I,J \in \cI} \prsc{\Sigma_I^J}{\Lambda_I^J}$ for all $\Sigma = \parentheses*{\Sigma_I^J}_{I,J \in \cI}$ and $\Lambda=\parentheses*{\Lambda_I^J}_{I,J \in \cI} \in \cS_\cI$.

\begin{dfn}[Block-diagonal variance operators]
\label{def: block-diagonal variance}
Let $A$ be a finite set and $B \subset A$ be a proper subset, that is, such that $\emptyset \neq B \neq A$. Let $\cI \leq \brackets*{B,A\setminus B}$, we denote by:
\begin{align*}
\cS_{\cI,B} &= \brackets*{\Sigma = (\Sigma_I^J)_{I,J \in \cI} \in \cS_\cI \mvert \forall I \in \cI_B, \forall J \in \cI_{A\setminus B}, \Sigma_I^J=0} & &\text{and} & \cS_{\cI,B}^+ = \cS_{\cI,B} \cap \cS_\cI^+.
\end{align*}
For the Frobenius inner product, the orthogonal of $\cS_{\cI,B}$ is:
\begin{equation*}
\cS_{\cI,B}^\perp = \brackets*{\Sigma = (\Sigma_I^J)_{I,J \in \cI} \in \cS_\cI \mvert (\Sigma_I^J)_{I,J \in \cI_B}=0 \ \text{and} \ (\Sigma_I^J)_{I,J \in \cI_{A \setminus B}}=0}.
\end{equation*}
\end{dfn}

If $\cI \in \cP_A$ is such that $\cI \leq \brackets{B, A \setminus B}$, then  $\cI = \cI_{B} \sqcup \cI_{A \setminus B}$ so that
\begin{equation*}
\prod_{I \in \cI} \R_{2\norm{I}-1}[X]^k = \prod_{I \in \cI_B} \R_{2\norm{I}-1}[X]^k \times \prod_{I \in \cI_{A \setminus B}} \R_{2\norm{I}-1}[X]^k
\end{equation*}
and operators on this space have an associated $2 \times 2$ block structure. Then $\cS_{\cI,B}$ (resp.~$\cS_{\cI,B}^\perp$) is the subspace of block-diagonal (resp.~block-anti-diagonal) operators with respect to $\brackets{B,A \setminus B}$.

\begin{lem}[Exact clustering]
\label{lem: exact clustering}
Let $A$ be finite, let $B$ be a proper subset of $A$ and $\cI \in \cP_A$ such that $\cI \leq \brackets{B, A \setminus B}$. For all $\parentheses*{(\cG_I)_{I \in \cI},(\theta_I)_{I \in \cI}} \in \cM_\cI$ and $\Sigma=(\Sigma_I^J)_{I,J \in \cI} \in \cS_{\cI,B}^+$, we have:
\begin{equation*}
\sigma_\cI\parentheses*{\strut (\cG_I)_{I \in \cI},(\theta_I)_{I \in \cI},\Sigma} = \prod_{C \in \brackets{B,A\setminus B}} \sigma_{\cI_C}\parentheses*{\strut (\cG_I)_{I \in \cI_C},(\theta_I)_{I \in \cI_C},(\Sigma_I^J)_{I,J \in \cI_C}}.
\end{equation*}
\end{lem}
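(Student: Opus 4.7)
The plan is to unpack the definition of $\sigma_\cI$ and observe that all three ingredients of the integrand (the domain of integration, the weight $\prod_I \sqrt{\theta_I(P_I)}$, and the Gaussian density with covariance $\Sigma$) factor through the splitting $\cI = \cI_B \sqcup \cI_{A\setminus B}$, after which Fubini yields the claim.

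First, I would write the decomposition $\prod_{I\in \cI}\R_{2\norm{I}-1}[X]^k = V_B \times V_{A\setminus B}$ where $V_C = \prod_{I \in \cI_C} \R_{2\norm{I}-1}[X]^k$ for $C \in \brackets{B, A\setminus B}$. This splitting is orthogonal by our conventions on the inner product. Accordingly, a generic tuple $\uP = (P_I)_{I\in\cI}$ writes as $(\uP_B, \uP_{A\setminus B})$ with $\uP_C = (P_I)_{I\in\cI_C}$, the domain splits as $\prod_{I\in\cI}\cG_I = \prod_{I\in\cI_B}\cG_I \times \prod_{I\in\cI_{A\setminus B}}\cG_I$, and the Lebesgue measure factors as $\dx\uP = \dx\uP_B \dx\uP_{A\setminus B}$. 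The product weight likewise factors: $\prod_{I\in\cI}\sqrt{\theta_I(P_I)} = \prod_{I\in\cI_B}\sqrt{\theta_I(P_I)}\cdot\prod_{I\in\cI_{A\setminus B}}\sqrt{\theta_I(P_I)}$.

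Next, I would exploit the hypothesis $\Sigma \in \cS_{\cI,B}^+$. By Definition~\ref{def: block-diagonal variance}, $\Sigma$ is block-diagonal with respect to $\brackets{B, A\setminus B}$, so writing $\Sigma_C = (\Sigma_I^J)_{I,J\in\cI_C} \in \sym^+(V_C)$ for $C \in \brackets{B, A\setminus B}$, we have $\Sigma = \Sigma_B \oplus \Sigma_{A\setminus B}$ in the sense of Definition~\ref{def: block diagonal operators}. From this, $\Sigma^{-1} = \Sigma_B^{-1} \oplus \Sigma_{A\setminus B}^{-1}$, the determinant factors as $\det(2\pi\Sigma) = \det(2\pi\Sigma_B)\det(2\pi\Sigma_{A\setminus B})$, and the quadratic form splits as
\begin{equation*}
\prsc{\Sigma^{-1}\uP}{\uP} = \prsc{\Sigma_B^{-1}\uP_B}{\uP_B} + \prsc{\Sigma_{A\setminus B}^{-1}\uP_{A\setminus B}}{\uP_{A\setminus B}}.
\end{equation*}
Consequently the Gaussian density factors as a product of Gaussian densities on $V_B$ and on $V_{A\setminus B}$ with covariances $\Sigma_B$ and $\Sigma_{A\setminus B}$ respectively.

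Finally, I would apply Fubini's theorem to the integral defining $\sigma_\cI\parentheses*{(\cG_I)_{I\in\cI},(\theta_I)_{I\in\cI},\Sigma}$ in Definition~\ref{def: sigma I}. The integrand is non-negative, so Tonelli--Fubini applies without integrability concerns, and the double integral becomes the product of two integrals, each of which is exactly $\sigma_{\cI_B}\parentheses*{(\cG_I)_{I\in\cI_B},(\theta_I)_{I\in\cI_B},\Sigma_B}$ and $\sigma_{\cI_{A\setminus B}}\parentheses*{(\cG_I)_{I\in\cI_{A\setminus B}},(\theta_I)_{I\in\cI_{A\setminus B}},\Sigma_{A\setminus B}}$ by Definition~\ref{def: sigma I}. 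This is precisely the claimed identity. There is no real obstacle here; the only minor point to be careful about is that all three factorizations (domain, weight, density) are with respect to the same orthogonal splitting $V_B \times V_{A\setminus B}$, so the Fubini step is unambiguous.
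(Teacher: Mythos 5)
Your proof is correct and takes essentially the same approach as the paper: decompose along $\cI = \cI_B \sqcup \cI_{A\setminus B}$, observe that the domain, the weight $\prod_I\sqrt{\theta_I(P_I)}$, and the Gaussian density all factor through the block-diagonal structure of $\Sigma \in \cS_{\cI,B}^+$, and conclude by Fubini. The only cosmetic difference is that you explicitly invoke Tonelli for the non-negative integrand; the paper does this implicitly.
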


\begin{proof}
Let $\parentheses*{(\cG_I)_{I \in \cI},(\theta_I)_{I \in \cI}} \in \cM_\cI$ and $\Sigma=(\Sigma_I^J)_{I,J \in \cI} \in \cS_{\cI,B}^+$. For all $C \in \brackets{B,A\setminus B}$, let us denote by $\Sigma_C = \parentheses*{\Sigma_I^J}_{I,J \in \cI_C}$, so that $\Sigma = \parentheses*{\begin{smallmatrix} \Sigma_B & 0 \\ 0 & \Sigma_{A \setminus B}\end{smallmatrix}}$ by blocks, with respect to $\brackets{B,A\setminus B}$. Then $\det\parentheses*{2\pi \Sigma} = \det\parentheses*{2\pi \Sigma_B}\det\parentheses*{2\pi \Sigma_{A \setminus B}}$ and $\Sigma^{-1} = \parentheses*{\begin{smallmatrix} \Sigma_B^{-1} & 0 \\ 0 & \Sigma_{A \setminus B}^{-1}\end{smallmatrix}}$. For all $\uP \in \prod_{I \in \cI}\R_{2\norm{I}-1}[X]^k$, we have:
\begin{equation*}
\prsc*{\Sigma^{-1}\uP}{\uP} = \prsc*{\parentheses*{\begin{smallmatrix} \Sigma_B^{-1} & 0 \\ 0 & \Sigma_{A \setminus B}^{-1}\end{smallmatrix}}\parentheses*{\begin{smallmatrix} \uP_{\cI_B} \\ \uP_{\cI_{A \setminus B}}\end{smallmatrix}}}{\parentheses*{\begin{smallmatrix} \uP_{\cI_B} \\ \uP_{\cI_{A \setminus B}}\end{smallmatrix}}}=\sum_{C \in \brackets{B,A\setminus B}}\prsc{\Sigma_C^{-1} \uP_{\cI_C}}{\uP_{\cI_C}},
\end{equation*}
hence $e^{-\frac{1}{2}\prsc*{\Sigma^{-1}\uP}{\uP}} =\prod_{C \in \brackets{B,A \setminus B}}e^{-\frac{1}{2}\prsc*{\Sigma_C^{-1}\uP_{\cI_C}}{\uP_{\cI_C}}}$. Since $\cI = \cI_B \sqcup \cI_{A \setminus B}$, products indexed by $I \in \cI$ can be splitted into two products indexed by $\cI_B$ and $\cI_{A \setminus B}$ respectively. Then, starting from Definition~\ref{def: sigma I} and applying Fubini's Theorem, we get:
\begin{equation*}
\sigma_\cI\parentheses*{\strut (\cG_I)_{I \in \cI},(\theta_I)_{I \in \cI},\Sigma} = \prod_{C \in \brackets{B,A\setminus B}} \int_{\prod_{I \in \cI_C} \cG_I} \parentheses*{\prod_{I \in \cI_C}\sqrt{\theta_I(P_I)}}\frac{e^{-\frac{1}{2}\prsc{\Sigma_C^{-1}\uP_{\cI_C}}{\uP_{\cI_C}}}}{\sqrt{\det(2\pi \Sigma_C)}}\dx \uP_{\cI_C}.\qedhere
\end{equation*}
\end{proof}

\begin{lem}[Vanishing of the differential]
\label{lem: vanishing D sigma I}
Let $A$ be finite, let $B$ be a proper subset of $A$ and $\cI \in \cP_A$ be such that $\cI \leq \brackets{B, A \setminus B}$. For all $w \in \cM_\cI$, for all $\Sigma \in \cS_{\cI,B}^+$ and $\Lambda\in \cS_{\cI,B}^\perp$, we have $D_{(w,\Sigma)}\sigma_\cI\cdot \Lambda=0$.
\end{lem}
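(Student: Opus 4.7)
The plan is to prove the lemma by a direct computation of the directional derivative combined with a parity argument. Throughout, the notation $D_{(w,\Sigma)}\sigma_\cI$ denotes the differential of $\sigma_\cI$ with respect to $\Sigma$ at $(w,\Sigma)$, per the convention introduced in Definition~\ref{def: C 0 infty} and the regularity statement of Lemma~\ref{lem: regularity sigma I}.

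First, I would differentiate under the integral sign in Definition~\ref{def: sigma I}, which is justified by the smoothness established in Lemma~\ref{lem: regularity sigma I}. Writing $\cG = \prod_{I \in \cI} \cG_I$ and $\phi(\uP) = \prod_{I \in \cI}\sqrt{\theta_I(P_I)}$, and using the standard identities $D_\Sigma(\Sigma^{-1})\cdot\Lambda = -\Sigma^{-1}\Lambda\Sigma^{-1}$ and $D_\Sigma \log\det(\Sigma)\cdot\Lambda = \Tr(\Sigma^{-1}\Lambda)$, I would obtain
\begin{equation*}
D_{(w,\Sigma)}\sigma_\cI\cdot\Lambda = \frac{1}{2}\int_\cG \phi(\uP)\Bigl[\prsc{\Sigma^{-1}\Lambda\Sigma^{-1}\uP}{\uP}-\Tr(\Sigma^{-1}\Lambda)\Bigr]\frac{e^{-\frac{1}{2}\prsc{\Sigma^{-1}\uP}{\uP}}}{\sqrt{\det(2\pi\Sigma)}}\dx\uP.
\end{equation*}

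Next, I would exploit the block structures. Since $\cI \leq \brackets{B,A\setminus B}$, the space $\prod_{I\in\cI}\R_{2\norm{I}-1}[X]^k$ splits orthogonally as $V_B \oplus V_{A\setminus B}$ with $V_C = \prod_{I\in\cI_C}\R_{2\norm{I}-1}[X]^k$. By Definition~\ref{def: block-diagonal variance}, $\Sigma \in \cS_{\cI,B}^+$ is block-diagonal for this splitting, so is $\Sigma^{-1}$, while $\Lambda \in \cS_{\cI,B}^\perp$ is block-anti-diagonal. Consequently, $\Sigma^{-1}\Lambda\Sigma^{-1}$ is block-anti-diagonal and $\Sigma^{-1}\Lambda$ has vanishing diagonal blocks, hence $\Tr(\Sigma^{-1}\Lambda)=0$, killing the second term.

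Then, I would split $\uP = (\uP_B,\uP_{A\setminus B})$ accordingly. The block-diagonality of $\Sigma^{-1}$ makes the Gaussian density factor as a product $\rho_B(\uP_B)\rho_{A\setminus B}(\uP_{A\setminus B})$ of densities that are even in each argument. Similarly $\phi(\uP)$ factors as $\phi_B(\uP_B)\phi_{A\setminus B}(\uP_{A\setminus B})$, and each factor is even because $\theta_I \in \fP_{2k\norm{I}}$ is homogeneous of even degree, hence $\theta_I(-P_I)=\theta_I(P_I)$. On the other hand, writing $\Sigma^{-1}\Lambda\Sigma^{-1}$ by blocks, the quadratic form $\prsc{\Sigma^{-1}\Lambda\Sigma^{-1}\uP}{\uP}$ equals $2\prsc{M\uP_{A\setminus B}}{\uP_B}$ for some operator $M:V_{A\setminus B}\to V_B$, i.e.\ it is a bilinear form coupling $\uP_B$ and $\uP_{A\setminus B}$ with no pure diagonal contribution.

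Finally, I would apply Fubini (the integrand has Gaussian decay, uniformly on compact neighborhoods, so this is legitimate) and conclude by a parity argument: the inner integral over $\uP_B \in \prod_{I\in\cI_B}\cG_I$ is the integral of an odd function of $\uP_B$—linear in $\uP_B$ times the even function $\phi_B(\uP_B)\rho_B(\uP_B)$—over the symmetric domain $\prod_{I\in\cI_B}\cG_I$, hence vanishes. The main (very minor) obstacle is simply bookkeeping the block structure carefully; the core mechanism is elementary Gaussian calculus plus symmetry. This yields $D_{(w,\Sigma)}\sigma_\cI\cdot\Lambda = 0$, as required.
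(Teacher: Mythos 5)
Your proof is correct and takes essentially the same route as the paper: the key mechanisms are identical, namely that $\Tr(\Sigma^{-1}\Lambda)=0$ because $\Sigma^{-1}$ is block-diagonal while $\Lambda$ is block-anti-diagonal, and that $\prsc{\Sigma^{-1}\Lambda\Sigma^{-1}\uP}{\uP}$ is linear in $\uP_B$, which together with the evenness of the $\theta_I$ and of the (block-factorizing) Gaussian density makes the integrand odd in $\uP_B$, so its integral over the subspace $\prod_{I\in\cI_B}\cG_I$ vanishes. The only cosmetic difference is that you differentiate the full expression at once, whereas the paper first writes $\sigma_\cI(w,\cdot)=\varphi\psi$ and applies Leibniz, and the paper expends more effort (a Taylor expansion with explicit Gaussian domination) to justify the interchange of differentiation and integration, while you invoke the smoothness from Lemma~\ref{lem: regularity sigma I}, which is acceptable since that lemma's proof already establishes exactly this dominated differentiation.
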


\begin{proof}
Recall that $\sigma_\cI \in \cC^{0,\infty}(\cM_\cI\times \cS_\cI^+)$ and that by differential we mean differential with respect to the second variable. Let $w=\parentheses*{(\cG_I)_{I \in \cI},(\theta_I)_{I \in \cI}} \in \cM_\cI$, then $\sigma_\cI(w,\cdot)=\varphi\psi$, where
\begin{align*}
&\varphi:\Sigma \mapsto \frac{1}{\sqrt{\det(2\pi\Sigma)}} & &\text{and} & &\psi:\Sigma \mapsto \int_{\prod_{I \in \cI} \cG_I} \parentheses*{\prod_{I \in \cI}\sqrt{\theta_I(P_I)}}\exp\parentheses*{-\frac{1}{2}\prsc{\Sigma^{-1}\uP}{\uP}}\dx \uP
\end{align*}
are smooth maps from $\cS_\cI^+$ to $[0,+\infty)$. Let $\Sigma= \parentheses*{\Sigma_I^J}_{I,J \in \cI} \in \cS_{\cI,B}^+$ and $\Lambda= \parentheses*{\Lambda_I^J}_{I,J \in \cI} \in \cS_{\cI,B}^\perp$. By Leibniz' Rule, in order to prove that $D_{(w,\Sigma)}\sigma_\cI \cdot \Lambda =0$ it is enough to prove that $D_\Sigma\varphi\cdot \Lambda = 0$ and $D_\Sigma\psi\cdot \Lambda = 0$. This is what we do in the following two steps.

\paragraph*{Step 1: Differential of $\varphi$.}
The differential of $\det$ at $2\pi\Sigma$ is $M \mapsto \det(2\pi\Sigma)\Tr\parentheses*{\Sigma^{-1}\frac{M}{2\pi}}$. By the Chain Rule, we obtain that $D_\Sigma\varphi\cdot \Lambda = -\frac{1}{2}\frac{\Tr(\Sigma^{-1}\Lambda)}{\sqrt{\det(2\pi\Sigma)}}$.

As above, let $\Sigma_C = \parentheses*{\Sigma_I^J}_{I,J \in \cI_C}$ for $C \in \brackets{B, A\setminus B}$. We have $\Sigma = \parentheses*{\begin{smallmatrix} \Sigma_B & 0 \\ 0 & \Sigma_{A \setminus B}\end{smallmatrix}}$ since $\Sigma \in \cS_{\cI,B}$. Let us also denote by $\Lambda_0 = \parentheses*{\Lambda_I^J}_{I\in \cI_{A \setminus B}, J \in \cI_B}$, so that $\Lambda = \parentheses*{\begin{smallmatrix} 0 & \Lambda_0^* \\ \Lambda_0 & 0\end{smallmatrix}}$ since $\Lambda \in \cS_{\cI,B}^\perp$. Then
\begin{equation*}
\Tr(\Sigma^{-1}\Lambda) = \Tr\parentheses*{\begin{pmatrix}\Sigma_B^{-1} & 0 \\ 0 & \Sigma_{A \setminus B}^{-1}\end{pmatrix}\begin{pmatrix} 0 & \Lambda_0^* \\ \Lambda_0 & 0\end{pmatrix}} = \Tr\begin{pmatrix} 0 & \Sigma_B^{-1} \Lambda_0^* \\ \Sigma_{A \setminus B}^{-1} \Lambda_0 & 0\end{pmatrix}=0,
\end{equation*}
and finally $D_\Sigma\varphi\cdot \Lambda =0$.

\paragraph*{Step 2: Differential of $\psi$.}
We will prove below that, for all $M \in \cS_\cI$,
\begin{equation}
\label{eq: differential psi}
D_\Sigma\psi\cdot M = \frac{1}{2}\int_{\prod_{I \in \cI} \cG_I} \parentheses*{\prod_{I \in \cI}\sqrt{\theta_I(P_I)}}\prsc*{\Sigma^{-1}M\Sigma^{-1}\uP}{\uP}\exp\parentheses*{-\frac{1}{2}\prsc{\Sigma^{-1}\uP}{\uP}}\dx \uP.
\end{equation}

With the same notation as above, we have $\Sigma^{-1}\Lambda\Sigma^{-1} = \parentheses*{\begin{smallmatrix} 0 & \Sigma_B^{-1} \Lambda_0^* \Sigma_{A \setminus B}^{-1} \\ \Sigma_{A \setminus B}^{-1} \Lambda_0\Sigma_B^{-1} & 0\end{smallmatrix}}$. Hence, for all $\uP \in \prod_{I \in \cI}\cG_I$, we have $\prsc*{\Sigma^{-1}\Lambda\Sigma^{-1}\uP}{\uP} = 2 \prsc{\Sigma_{A \setminus B}^{-1} \Lambda_0\Sigma_B^{-1} \uP_{\cI_B}}{\uP_{\cI_{A \setminus B}}}$. Assuming that~\eqref{eq: differential psi} holds for $M=\Lambda$, the integrand is then an odd function of $\uP_{\cI_B}=(P_I)_{I \in \cI_B}$ when $\uP_{\cI_{A \setminus B}}$ is fixed. Indeed, the $(\theta_I)_{I \in \cI_B}$ are homogeneous polynomial functions of even degree. Thus the integral of this function vanishes and $D_\Sigma\psi\cdot \Lambda=0$.

Let us now prove that~\eqref{eq: differential psi} holds. Let $\epsilon>0$ be small enough that $\Sigma+\B_\epsilon\subset \cS_\cI^+$, where $\B_\epsilon$ denotes the closed ball of center $0$ and radius $\epsilon$ in $\cS_\cI$. Recall that the differential of the smooth map $M \mapsto M^{-1}$ at $\Sigma$ is $M \mapsto -\Sigma^{-1}M\Sigma^{-1}$. By Taylor's Theorem, there exists $C>0$ such that, for all $M \in \B_\epsilon$, we have $\Norm{(\Sigma+M)^{-1}-\Sigma^{-1}}\leq C\Norm{M}$ and $\Norm{(\Sigma+M)^{-1}-\Sigma^{-1}+\Sigma^{-1}M\Sigma^{-1}}\leq C\Norm{M}^2$. Since $\Sigma$ is positive-definite, up to increasing $C$, we can assume that $\prsc{\Sigma^{-1}\uP}{\uP}\geq \frac{1}{C}\Norm{\uP}^2$ for all $\uP \in \prod_{I \in \cI}\cG_I$. Finally, up to decreasing $\epsilon$, we can assume that $\epsilon \leq \frac{1}{2C^2}$.

The second order Taylor expansion of $\exp$ at $0$ yields that, for all $\uP \in \prod_{I \in \cI}\cG_I$ and $M \in \B_\epsilon$:
\begin{multline*}
\norm*{\exp\parentheses*{-\frac{1}{2}\prsc*{((\Sigma+M)^{-1}-\Sigma^{-1})\uP}{\uP}}-1+\frac{1}{2}\prsc*{((\Sigma+M)^{-1}-\Sigma^{-1})\uP}{\uP}}\\
\begin{aligned}
&\leq \norm*{\prsc*{((\Sigma+M)^{-1}-\Sigma^{-1})\uP}{\uP}}^2 e^{\frac{1}{2}\norm*{\prsc*{((\Sigma+M)^{-1}-\Sigma^{-1})\uP}{\uP}}}\\
&\leq \Norm*{(\Sigma+M)^{-1}-\Sigma^{-1}}^2\Norm{\uP}^4e^{\frac{1}{2}\Norm*{(\Sigma+M)^{-1}-\Sigma^{-1}}\Norm{\uP}^2}\leq C^2 \Norm{M}^2\Norm{\uP}^4 e^{\frac{C\epsilon}{2}\Norm{\uP}^2}.
\end{aligned}
\end{multline*}
Thus,
\begin{multline*}
\norm*{e^{-\frac{1}{2}\prsc*{(\Sigma+M)^{-1}\uP}{\uP}}-e^{-\frac{1}{2}\prsc*{\Sigma^{-1}\uP}{\uP}}-\frac{1}{2}e^{-\frac{1}{2}\prsc*{\Sigma^{-1}\uP}{\uP}}\prsc*{\Sigma^{-1}M\Sigma^{-1}\uP}{\uP}}\\
\begin{aligned}
\leq& e^{-\frac{1}{2}\prsc*{\Sigma^{-1}\uP}{\uP}}\norm*{\exp\parentheses*{-\frac{1}{2}\prsc*{((\Sigma+M)^{-1}-\Sigma^{-1})\uP}{\uP}}-1+\frac{1}{2}\prsc*{((\Sigma+M)^{-1}-\Sigma^{-1})\uP}{\uP}}\\
&+e^{-\frac{1}{2}\prsc*{\Sigma^{-1}\uP}{\uP}}\norm*{\prsc*{((\Sigma+M)^{-1}-\Sigma^{-1}+\Sigma^{-1}M\Sigma^{-1})\uP}{\uP}}\\
\leq& C^2 \Norm{M}^2\Norm{\uP}^4 e^{-\frac{\Norm{\uP}^2}{2C}\parentheses*{1- C^2\epsilon}}+e^{-\frac{\Norm{\uP}^2}{2C}}C\Norm*{M}^2\Norm*{\uP}^2 \leq \Norm{M}^2 e^{-\frac{\Norm{\uP}^2}{4C}}\parentheses*{1+C\Norm{\uP}^2}^2.
\end{aligned}
\end{multline*}
Since $(\theta_I)_{I \in \cI}$ are polynomials in $\uP$, multiplying the previous terms by $\prod_{I \in \cI}\sqrt{\theta_I(P_I)}$ and integrating over $\prod_{I \in \cI} \cG_I$ yields:
\begin{equation*}
\norm*{\psi(\Sigma+M) - \psi(\Sigma) -\frac{1}{2} \int_{\prod_{I \in \cI} \cG_I} \parentheses*{\prod_{I \in \cI}\sqrt{\theta_I(P_I)}}\prsc*{\Sigma^{-1}M\Sigma^{-1}\uP}{\uP}e^{-\frac{1}{2}\prsc*{\Sigma^{-1}\uP}{\uP}}\dx \uP} =O(\Norm{M}^2)
\end{equation*}
as $M \to 0$. This proves that $D_\Sigma\psi$ is indeed given by Equation~\eqref{eq: differential psi}, and concludes the proof.
\end{proof}


\subsection{Kac--Rice densities for cumulants}
\label{subsec: Kac--Rice densities for cumulants}

In the previous section, we dealt with Kac--Rice densities for factorial moments. The goal of the present one is to introduce densities that play the same role for cumulants. In all this section, we consider a convex open set $U \subset \R^d$ and $k \in \ssquarebrackets{1}{d}$.

\begin{dfn}[Cumulant Kac--Rice density]
\label{def: F A}
Let $A$ be a non-empty finite set and $f\in \cC^1(U,\R^k)$ be a centered Gaussian field such that, for all $(x_a)_{a \in A} \notin \diag$, the Gaussian vector $\parentheses*{f(x_a)}_{a \in A}$ is non-degenerate. We define the \emph{cumulant Kac--Rice density} $\cF_A(f,\cdot):U^A \setminus \diag \to \R$ by:
\begin{equation*}
\forall \ux = (x_a)_{a \in A} \in U^A \setminus \diag, \qquad \cF_A(f,\ux) = \sum_{\cJ \in \cP_A} \mu_\cJ \prod_{J \in \cJ} \rho_J(f,\ux_J),
\end{equation*}
where the $(\rho_J)$ were defined in Definition~\ref{def: rho A}, and $\mu_\cJ= (-1)^{\norm{\cJ}-1}\parentheses*{\norm{\cJ}-1}!$ for all $\cJ \in \cP_A$. If $A = \ssquarebrackets{1}{p}$, we let $\cF_p=\cF_A$.
\end{dfn}

Like the Kac--Rice density $\rho_A(f,\cdot)$ from Definition~\ref{def: rho A}, the function $\cF_A(f,\cdot)$ is a priori singular along the diagonal. In order to work with this singularity, we will prove a result analogous to Lemma~\ref{lem: Kac-Rice revisited} for $\cF_A(f,\cdot)$. Recalling the notation for blocks with respect to a coarser partition introduced in Definition~\ref{def: block coarser partition}, we define the following.

\begin{dfn}[Parameter set of projections]
\label{def: L J}
Let $A$ be a non-empty finite set and let $\cI, \cJ \in \cP_A$ be such that $\cJ \leq \cI$, we denote by
\begin{equation*}
\cL_{\cJ,\cI}^\dagger = \prod_{J \in \cJ} \cL^\dagger\parentheses*{\R_{2\norm{[J]_\cI}-1}[X]^k,\R_{2\norm{J}-1}[X]^k}.
\end{equation*}
\end{dfn}

Let $\cI$ and $\cJ \in \cP_A$ be such that $\cJ \leq \cI$. Let $J \in \cJ$ and $I = [J]_\cI \in \cI$. Recalling Definition~\ref{def: projection operators} and Lemma~\ref{lem: prop Pi BA}.\ref{item: Pi linear surj}, we have $\Pi_{2J}^{2I}(\cdot,\ux_{2I}) \in \cL^\dagger\parentheses*{\R_{2\norm{I}-1}[X]^k,\R_{2\norm{J}-1}[X]^k}$ for all $\ux_I \in (\R^d)^I$. Typical elements of $\cL^\dagger_{\cJ,\cI}$ we are interested in are those of the form $\parentheses*{\Pi_{2J}^{2[J]_\cI}(\cdot,\ux_{2[J]_\cI})}_{J \in \cJ}$, where $\ux \in (\R^d)^A$.

Let $\cI$, $\cJ$ and $\cK \in \cP_A$ be such that $\cI \wedge \cK = \cJ$, see Definition~\ref{def: meet}. For all $K \in \cK$ and $\parentheses*{\Pi_J}_{J \in \cJ} \in \cL_{\cJ,\cI}^\dagger$, we denote by $\bigoplus_{J \in \cJ_K} \Pi_J \in \cL^\dagger\parentheses*{\prod_{I \in \cI} \R_{2\norm{I}-1}[X]^k,\prod_{J \in \cJ_K} \R_{2\norm{J}-1}[X]^k}$ the map defined by:
\begin{equation}
\label{eq: abuse of notation}
\bigoplus_{J \in \cJ_K} \Pi_J:\parentheses*{P_I}_{I \in \cI} \longmapsto \parentheses*{\Pi_J(P_{[J]_{\cI}})}_{J \in \cJ_K}.
\end{equation}
Note that there is a slight abuse of notation with respect to Definition~\ref{def: block diagonal operators}. Indeed, we have $\cJ_K = \brackets*{J \in \cJ \mvert J \subset K} = \brackets*{I \cap K \mvert I \in \cI} \setminus \brackets{\emptyset}$ so that, formally, the source space of $\bigoplus_{J \in \cJ_K}\Pi_J$ should be a product indexed by $\brackets*{[J]_\cI \mvert J \in \cJ_K}=\brackets*{I \in \cI\mvert I \cap K \neq \emptyset}$ instead of $\cI$. Here we first implicitly project onto $\prod_{\brackets*{I \in \cI \mvert I \cap K\neq \emptyset}} \R_{2\norm{I}-1}[X]^k$, then apply the block-diagonal operator with diagonal blocks $\parentheses*{\Pi_J}_{J \in \cJ_K}$, which is surjective since its diagonal blocks are.

\begin{dfn}[Field-dependent part of the cumulant density]
\label{def: F I J}
Let $A$ be a non-empty finite set and let $\cI, \cJ \in \cP_A$ be such that $\cJ \leq \cI$, we define a function $F_{\cI,\cJ}:\cM_\cJ \times \cL^\dagger_{\cJ,\cI} \times \cS_\cI^+ \to \R$ by
\begin{multline*}
F_{\cI,\cJ}:\parentheses*{\parentheses*{\cG_J}_{J \in \cJ},\parentheses*{\theta_J}_{J \in \cJ},\parentheses*{\Pi_J}_{J \in \cJ},\Sigma} \longmapsto\\
\sum_{\brackets*{\cK \in \cP_A\mvert \cI \wedge \cK= \cJ}} \mu_\cK \prod_{K \in \cK}\sigma_{\cJ_K}\parentheses*{\parentheses*{\cG_J}_{J \in \cJ_K},\parentheses*{\theta_J}_{J \in \cJ_K},\parentheses{\textstyle\bigoplus_{J \in \cJ_K}\Pi_J}\Sigma\parentheses{\bigoplus_{J \in \cJ_K}\Pi_J}^*},
\end{multline*}
where the maps $(\sigma_{\cJ_K})$ were defined in Definition~\ref{def: sigma I}.
\end{dfn}

The cumulant Kac--Rice density can be written in terms of the field-dependent parts we just introduced. This relation uses the functions introduced in Definitions~\ref{def: projection operators}, \ref{def: G kernel ev} and~\ref{def: N theta}.

\begin{lem}[Relation between $\cF_A$ and $F_{\cI,\cJ}$]
\label{lem: relation between cumulant densities}
Let $A$ be a non-empty finite set and $\cI \in \cP_A$. Let $f \in \cC^{2\norm{A}-1}(U,\R^k)$ be a centered Gaussian field, and  $\ux \in U^A \setminus \diag$ be such that $\Sigma_{2\cI}(f,\ux_{2A}) \in \cS_\cI^+$, then
\begin{equation*}
\cF_A(f,\ux) = \sum_{\substack{\cJ \in \cP_A\\ \cJ \leq \cI}} \tilde{F}_{\cI,\cJ}(f,\ux) \prod_{J \in \cJ} \Upsilon_J(\ux_J),
\end{equation*}
where $\tilde{F}_{\cI,\cJ}(f,\ux) = F_{\cI,\cJ}\parentheses*{\parentheses*{\cG_J(\ux_J)}_{J \in \cJ},\parentheses*{\theta_J(\ux_J)}_{J \in \cJ},\parentheses*{\Pi_{2J}^{2[J]_\cI}(\cdot,\ux_{2[J]_\cI})}_{J \in \cJ},\Sigma_{2\cI}(f,\ux_{2A})}$.
\end{lem}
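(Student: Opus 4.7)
The plan is to apply Lemma~\ref{lem: Kac-Rice revisited} block-wise to each factor of the cumulant formula and then reorganize the resulting sum according to the meet of partitions. Concretely, we start from Definition~\ref{def: F A}, which gives $\cF_A(f,\ux) = \sum_{\cK \in \cP_A} \mu_\cK \prod_{K \in \cK} \rho_K(f,\ux_K)$. For each $\cK \in \cP_A$ and $K \in \cK$, the partition $\cI_K \in \cP_K$ induced by $\cI$ provides a natural candidate for the partition parameter in Lemma~\ref{lem: Kac-Rice revisited}, and the hypothesis $\Sigma_{2\cI}(f,\ux_{2A}) \in \cS_\cI^+$ descends to $\Sigma_{2\cI_K}(f,(\ux_K)_{2K}) \in \cS_{\cI_K}^+$ because this is a principal block. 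Applying Lemma~\ref{lem: Kac-Rice revisited} then yields
\begin{equation*}
\rho_K(f,\ux_K) = \parentheses*{\prod_{L \in \cI_K} \Upsilon_L(\ux_L)}\tilde{\sigma}_{\cI_K}(f,\ux_K).
\end{equation*}

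Next, I would separate universal from field-dependent factors. Multiplying over $K \in \cK$, and noting that $\bigsqcup_{K \in \cK} \cI_K = \cI \wedge \cK$ by Definition~\ref{def: meet}, the universal prefactors combine into $\prod_{J \in \cI \wedge \cK} \Upsilon_J(\ux_J)$. Grouping partitions $\cK$ according to their meet with $\cI$ and setting $\cJ = \cI \wedge \cK$ (which, by Lemma~\ref{lem: bijections partitions} and the fact that $\cJ \leq \cI$, is automatically a partition refining $\cI$), we obtain
\begin{equation*}
\cF_A(f,\ux) = \sum_{\substack{\cJ \in \cP_A \\ \cJ \leq \cI}} \parentheses*{\prod_{J \in \cJ} \Upsilon_J(\ux_J)} \sum_{\substack{\cK \in \cP_A \\ \cI \wedge \cK = \cJ}} \mu_\cK \prod_{K \in \cK} \tilde{\sigma}_{\cJ_K}(f,\ux_K),
\end{equation*}
where I used the elementary identity $(\cI \wedge \cK)_K = \cI_K$ (both sides consist of the non-empty sets $I \cap K$ for $I \in \cI$), and $\cI_K = \cJ_K$ once $\cJ = \cI \wedge \cK$. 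It remains to identify the inner sum with $\tilde{F}_{\cI,\cJ}(f,\ux)$ of Definition~\ref{def: F I J}.

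The main technical step is therefore to verify the identity
\begin{equation*}
\tilde{\sigma}_{\cJ_K}(f,\ux_K) = \sigma_{\cJ_K}\!\parentheses*{(\cG_J(\ux_J))_{J \in \cJ_K},(\theta_J(\ux_J))_{J \in \cJ_K},\parentheses{\textstyle\bigoplus_{J \in \cJ_K}\Pi_{2J}^{2[J]_\cI}}\Sigma_{2\cI}(f,\ux_{2A})\parentheses{\textstyle\bigoplus_{J \in \cJ_K}\Pi_{2J}^{2[J]_\cI}}^*},
\end{equation*}
where the $\Pi_{2J}^{2[J]_\cI}$ are evaluated at $\ux_{2[J]_\cI}$. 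In view of the definition of $\tilde{\sigma}_{\cJ_K}$, this amounts to checking that the covariance matrix of $(\oK(f,\ux_{2L}))_{L \in \cJ_K}$ is obtained from $\Sigma_{2\cI}(f,\ux_{2A})$ by sandwiching with the block-diagonal projection. This is where Lemma~\ref{lem: prop Pi BA}.\ref{item: Pi and Kergin} does the real work: for each $L \in \cJ_K = \cI_K$, one has $L \subset [L]_\cI \in \cI$, so
\begin{equation*}
\oK(f,\ux_{2L}) = \Pi_{2L}^{2[L]_\cI}\parentheses*{\oK(f,\ux_{2[L]_\cI}),\ux_{2[L]_\cI}},
\end{equation*}
and since $L \mapsto [L]_\cI$ is injective on $\cJ_K$ (if $L_1 = I_1 \cap K$ and $L_2 = I_2 \cap K$ coincide when $I_1 = I_2$), the tuple $(\oK(f,\ux_{2L}))_{L \in \cJ_K}$ is the image of $(\oK(f,\ux_{2I}))_{I \in \cI}$ under the block-diagonal surjection $\bigoplus_{J \in \cJ_K}\Pi_{2J}^{2[J]_\cI}(\cdot,\ux_{2[J]_\cI})$, with the abuse of notation from~\eqref{eq: abuse of notation}. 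Taking variances yields the claimed sandwich identity.

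The main obstacle is not analytic but notational: keeping straight the three partitions $\cI$, $\cJ = \cI \wedge \cK$, and $\cK$ simultaneously, and ensuring that the abuse of notation in~\eqref{eq: abuse of notation} is compatible with the natural identification $[L]_\cI = [L]_\cJ$ when $L \in \cJ_K$ (which holds since $\cJ \leq \cI$ and $L \in \cJ$ forces $[L]_\cJ = L$, while $[L]_\cJ \subset [L]_\cI$ by Definition~\ref{def: block coarser partition}). Once this bookkeeping is in place, the result follows by direct substitution.
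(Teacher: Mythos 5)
Your proposal follows essentially the same route as the paper: express $\cF_A$ via Definition~\ref{def: F A}, apply Lemma~\ref{lem: Kac-Rice revisited} block by block, regroup by $\cJ = \cI \wedge \cK$ using $\bigsqcup_{K\in\cK}\cI_K = \cI \wedge \cK$, and identify the inner sum with $\tilde F_{\cI,\cJ}$ via the projection/sandwich identity built from Lemma~\ref{lem: prop Pi BA}.\ref{item: Pi and Kergin}. The only misstep is the claim that positivity of $\Sigma_{2\cI_K}(f,\ux_{2K})$ follows ``because this is a principal block'' of $\Sigma_{2\cI}(f,\ux_{2A})$. It is not a principal submatrix: when $J = I\cap K \subsetneq I$, the interpolant $\oK(f,\ux_{2J})$ lives in $\R_{2\norm{J}-1}[X]^k$, a strictly smaller space than $\R_{2\norm{I}-1}[X]^k$. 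The correct reason — which you in fact give a few lines later — is precisely the sandwich identity $\Sigma_{2\cI_K}(f,\ux_{2K}) = \Pi\,\Sigma_{2\cI}(f,\ux_{2A})\,\Pi^*$ with $\Pi = \bigoplus_{J\in\cI_K}\Pi_{2J}^{2[J]_\cI}(\cdot,\ux_{2[J]_\cI})$ surjective by Lemma~\ref{lem: prop Pi BA}.\ref{item: Pi linear surj}, so conjugating a positive-definite operator by a surjection preserves positivity. You should replace the ``principal block'' remark with this argument (the paper records it as its Equation~\eqref{eq: projection variance Kac Rice cumulants}); otherwise the proof is correct and complete.
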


\begin{proof}
By Lemma~\ref{lem: relation between notions of non-degeneracy}, since $\Sigma_{2\cI}(f,\ux_{2A}) \in \cS_\cI^+$ we have that $(f(x_a))_{a \in A}$ is non-degenerate, hence $\cF_A(f,\ux)$ is well-defined. We want to apply Lemma~\ref{lem: Kac-Rice revisited} to each factor in the definition of $\cF_A(f,\ux)$. For this, we first need to check the non-degeneracy of a family of variance operators.

Let $B \subset A$ be non-empty. For all $J \in \cI_B$, there exists a unique $I \in \cI$ such that $J = I \cap B$, and $\oK(f,\ux_{2J}) = \Pi_{2J}^{2I}\parentheses*{\oK(f,\ux_{2I}),\ux_{2I}}$ by Lemma~\ref{lem: prop Pi BA}.\ref{item: Pi and Kergin}. Hence,
\begin{equation}
\label{eq: projection variance Kac Rice cumulants}
\begin{aligned}
\Sigma_{2\cI_B}(f,\ux_{2B}) &= \var{\parentheses*{\oK(f,\ux_{2J}}_{J \in \cI_B}}= \var{\parentheses*{\Pi_{2(I \cap B)}^{2I}\parentheses*{\oK(f,\ux_{2I}),\ux_{2I}}}_{\brackets{I \in \cI \mid I \cap B \neq \emptyset}}}\\
&= \parentheses*{\bigoplus_{\brackets{I \in \cI \mid I \cap B \neq \emptyset}} \Pi_{2(I \cap B)}^{2I}(\cdot,\ux_{2I})} \Sigma_{2\cI}(f,\ux_{2A}) \parentheses*{\bigoplus_{\brackets{I \in \cI \mid I \cap B \neq \emptyset}} \Pi_{2(I \cap B)}^{2I}(\cdot,\ux_{2I})}^*,
\end{aligned}
\end{equation}
with the same abuse of notation as in~\eqref{eq: abuse of notation}. By Lemma~\ref{lem: prop Pi BA}.\ref{item: Pi linear surj}, for all $I \in \cI$ such that $I \cap B \neq \emptyset$ the map $\Pi_{2(I \cap B)}^{2I}(\cdot,\ux_{2I})$ is surjective. Hence, the direct sum of these maps is surjective and, since $\Sigma_{2\cI}(f,\ux_{2A}) \in \cS_\cI^+$, we have $\Sigma_{2\cI_B}(f,\ux_{2B}) \in \cS_{\cI_B}^+$ for all non-empty $B \subset A$.

Then, for all $\cK \in \cP_A$, we can use Lemma~\ref{lem: Kac-Rice revisited} for each block of $\cK$. This yields:
\begin{align*}
\cF_A(f,\ux) &= \sum_{\cK \in \cP_A} \mu_\cK \prod_{K \in \cK}\rho_K(f,\ux_K) = \sum_{\cK \in \cP_A} \mu_\cK \prod_{K \in \cK}\parentheses*{\prod_{J \in \cI_K} \Upsilon_J(\ux_J)}\tilde{\sigma}_{\cI_K}(f,\ux_K)\\
&= \sum_{\cK \in \cP_A} \mu_\cK \parentheses*{\prod_{J \in \cI \wedge \cK} \Upsilon_J(\ux_J)}\parentheses*{\prod_{K \in \cK} \tilde{\sigma}_{\cI_K}(f,\ux_K)}\\
&= \sum_{\substack{\cJ \in \cP_A\\ \cJ \leq \cI}} \parentheses*{\prod_{J \in \cJ} \Upsilon_J(\ux_J)} \sum_{\substack{\cK \in \cP_A\\ \cI \wedge \cK = \cJ}}\mu_\cK \prod_{K \in \cK} \tilde{\sigma}_{\cI_K}(f,\ux_K),
\end{align*}
where the third equality follows from $\cI \wedge \cK = \bigsqcup_{K \in \cK}\cI_K$. Then, if $\cI$, $\cJ$ and $\cK$ are such that $\cI \wedge \cK = \cJ$, for all $K \in \cK$ we have $\cI_K = \brackets*{I \cap K \mvert I \in \cI} \setminus \brackets{\emptyset} = \cJ_K$, and Equation~\eqref{eq: projection variance Kac Rice cumulants} yields:
\begin{equation*}
\Sigma_{2\cI_K}(f,\ux_{2K}) = \parentheses*{\bigoplus_{J \in \cJ_K} \Pi_{2J}^{2[J]_\cI}(\cdot,\ux_{2[J]_\cI})} \Sigma_{2\cI}(f,\ux_{2A}) \parentheses*{\bigoplus_{J \in \cJ_K} \Pi_{2J}^{2[J]_\cI}(\cdot,\ux_{2[J]_\cI})}^*.
\end{equation*}
Finally, recalling the definitions of $\tilde{F}_{\cI,\cJ}(f,\ux)$ and $\parentheses*{\tilde{\sigma}_{\cI_K}(f,\ux_K)}_{\emptyset \neq K \subset A}$ we recognize that
\begin{equation*}
\sum_{\brackets*{\cK \in \cP_A\mvert \cI \wedge \cK = \cJ}}\mu_\cK \prod_{K \in \cK} \tilde{\sigma}_{\cI_K}(f,\ux_K) = \tilde{F}_{\cI,\cJ}(f,\ux).\qedhere
\end{equation*}
\end{proof}

Let $U \subset \R^d$ be a convex open subset and $f:U \to \R^k$ be a centered Gaussian field. Let $\Omega \subset U$ be open, we let $\nu$ denote the Riemannian volume measure of $Z=f^{-1}(0)\cap \Omega$, and define its linear statistics by~\eqref{eq: def linear statistics}.

\begin{dfn}[Cumulants of linear statistics]
\label{def: cumulants of linear statistics}
Let $\uphi=(\phi_a)_{a \in A}$ be a finite family of Borel measurable functions on $\Omega$. We denote by $\kappa(\nu)(\uphi) = \kappa\parentheses*{\strut \parentheses{\prsc{\nu}{\phi_a}}_{a \in A}}$ whenever this cumulant is well-defined, see Definition~\ref{def: moments and cumulants}.
\end{dfn}

Let us express these cumulants in terms of the cumulant Kac--Rice densities. Recall that we defined $\cP_{A,n}$ in Definition~\ref{def: P A n}.

\begin{prop}[Kac--Rice formula for cumulants]
\label{prop: Kac-Rice cumulants}
Let $A\neq \emptyset$ be finite and $q =\max(2,2\norm{A}-1)$. Let $f \in \cC^q(U,\R^k)$ be a centered Gaussian field and $\eta >0$ be such that $\Sigma_{2\cI}(f,\ux_{2A}) \in \cS_\cI^+$ for all $\cI \in \cP_A$ and $\ux \in \diag_{\cI,\eta} \cap \Omega^A$. Let $\uphi=\parentheses*{\phi_a}_{a \in A}$ be Borel-measurable functions. We assume that for all non-empty $B \subset A$, for all $\cK \in \cP_{B,d-k}$, for all $\cI, \cJ \in \cP_\cK$ such that $\cJ \leq \cI$, the function
\begin{equation*}
\ux \longmapsto \parentheses*{\phi^\otimes_B\circ \iota_{\cK}(\ux)} \tilde{F}_{\cI,\cJ}(f,\ux)\prod_{J \in \cJ}  \Upsilon_J(\ux_J)
\end{equation*}
is integrable over $\Omega^{\cK} \cap \diag_{\cI,\eta}$. Then,
\begin{align*}
\kappa(\nu)(\uphi) &= \sum_{\cK \in \cP_{A,d-k}} \int_{\ux \in \Omega^\cK} \parentheses*{\phi^\otimes_A\circ \iota_{\cK}(\ux)} \cF_{\cK}(f,\ux) \dx \ux\\
&= \sum_{\cK \in \cP_{A,d-k}} \ \sum_{\substack{\cI,\cJ \in \cP_\cK\\ \cJ \leq \cI}}\ \int_{\ux \in \Omega^\cK \cap \diag_{\cI,\eta}} \parentheses*{\phi^\otimes_A\circ \iota_{\cK}(\ux)} \tilde{F}_{\cI,\cJ}(f,\ux)\prod_{J \in \cJ}  \Upsilon_J(\ux_J) \dx \ux.
\end{align*}
\end{prop}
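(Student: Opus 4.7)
The plan is to unwind the definition of the cumulant, convert each joint moment of the linear statistics into an expectation against factorial power measures by means of Lemma~\ref{lem: power vs factorial power}, apply the Kac--Rice formula (Theorem~\ref{thm: Kac-Rice}) factor by factor, and then reorganize the resulting combinatorial sum to recognize the cumulant Kac--Rice density of Definition~\ref{def: F A}. Lemma~\ref{lem: relation between cumulant densities} is then used on a stratification of $\Omega^\cK$ to derive the second identity.

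Concretely, Definitions~\ref{def: moments and cumulants} and~\ref{def: cumulants of linear statistics} give
\begin{equation*}
\kappa(\nu)(\uphi) = \sum_{\cJ \in \cP_A} \mu_\cJ \prod_{J \in \cJ} \esp{\prsc*{\nu^J}{\phi_J^\otimes}},
\end{equation*}
and Lemma~\ref{lem: power vs factorial power} rewrites $\nu^J = \sum_{\cL \in \cP_{J,d-k}}(\iota_\cL)_* \nu^{[\cL]}$. The non-degeneracy prerequisites of Theorem~\ref{thm: Kac-Rice} on $\Omega$ will be obtained from the stratification $\Omega^A = \bigsqcup_{\cI \in \cP_A}\diag_{\cI,\eta}$, the hypothesis on $\Sigma_{2\cI}$, and Lemma~\ref{lem: relation between notions of non-degeneracy}; the integrability needed to apply Kac--Rice follows from the stated integrability assumption, after unfolding $\rho_\cK(f,\cdot)$ via Lemma~\ref{lem: relation between cumulant densities}. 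This replaces each factorial moment by $\int_{\Omega^\cL}(\phi_J^\otimes \circ \iota_\cL)(\uy)\,\rho_\cL(f,\uy)\dx\uy$.

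Distributing the product over $J \in \cJ$ produces a sum indexed by $(\cL_J)_{J \in \cJ} \in \prod_{J \in \cJ}\cP_{J,d-k}$, which, by the bijection of Lemma~\ref{lem: bijections partitions}.\ref{item: bijection finer}, is identified with the set of partitions $\cK \in \cP_{A,d-k}$ satisfying $\cK \leq \cJ$. After a Fubini swap legitimized by absolute integrability, this gives
\begin{equation*}
\kappa(\nu)(\uphi) = \sum_{\cK \in \cP_{A,d-k}} \int_{\Omega^\cK} \parentheses*{\phi_A^\otimes \circ \iota_\cK}(\ux)\parentheses*{\sum_{\substack{\cJ \in \cP_A\\ \cJ \geq \cK}}\mu_\cJ\prod_{J \in \cJ}\rho_{\cK_J}(f,\ux_{\cK_J})}\dx\ux.
\end{equation*}
The dual bijection of Lemma~\ref{lem: bijections partitions}.\ref{item: bijection coarser} rewrites the inner sum as $\sum_{\cJ' \in \cP_\cK}\mu_{\cJ'}\prod_{J' \in \cJ'}\rho_{J'}(f,\ux_{J'})$, which equals $\cF_\cK(f,\ux)$ by Definition~\ref{def: F A}. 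This establishes the first identity.

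For the second identity, I would further stratify $\Omega^\cK = \bigsqcup_{\cI \in \cP_\cK}(\diag_{\cI,\eta} \cap \Omega^\cK)$ as in Definition~\ref{def: diag I eta} and apply Lemma~\ref{lem: relation between cumulant densities}, with $\cK$ in place of $A$, on each stratum, to obtain $\cF_\cK(f,\ux) = \sum_{\cJ \leq \cI}\tilde F_{\cI,\cJ}(f,\ux)\prod_{J \in \cJ}\Upsilon_J(\ux_J)$ pointwise. The main obstacle here is that the lemma requires non-degeneracy of $\Sigma_{2\cI}(f,\ux_{2\cK})$ for $\cI \in \cP_\cK$ and $\ux \in \Omega^\cK$, whereas the hypothesis only provides non-degeneracy of $\Sigma_{2\cI}(f,\uy_{2A})$ on $\Omega^A$ for $\cI \in \cP_A$. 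I would bridge this gap by identifying $\cP_\cK$ with the set of partitions of $A$ coarser than $\cK$ via Lemma~\ref{lem: bijections partitions}.\ref{item: bijection coarser}, and by exhibiting a surjective linear map from the higher-order Kergin interpolants at $\iota_\cK(\ux)_{2A}$ to the lower-order doubled-point Kergin interpolants at $\ux_{2\cK}$, whose existence follows from the interpolation property (Theorem~\ref{thm: Kergin interpolation}) and the surjectivity of $1$-jet evaluation in Corollary~\ref{cor: surjectivity 1-jets}. The stated integrability assumption then legitimizes the final Fubini swap between the sum over strata and the sum over $\cJ \leq \cI$.
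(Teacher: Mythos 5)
Your proposal follows the paper's proof essentially step by step: unwinding the cumulant into a M\"obius sum of joint moments, writing each $\nu^J$ as a sum of pushed-forward factorial power measures via Lemma~\ref{lem: power vs factorial power}, applying Theorem~\ref{thm: Kac-Rice} factor by factor, re-indexing twice via the two bijections of Lemma~\ref{lem: bijections partitions}, and stratifying by $\diag_{\cI,\eta}$ to invoke Lemma~\ref{lem: relation between cumulant densities}. You also correctly flag a genuine subtlety that the paper's proof glosses over: Lemma~\ref{lem: relation between cumulant densities} is invoked with $\cK$ in place of $A$, so one needs $\Sigma_{2\cI}(f,\ux_{2\cK}) \in \cS_\cI^+$ for $\cI \in \cP_\cK$ and $\ux \in \Omega^\cK \cap \diag_{\cI,\eta}$, while the stated hypothesis only covers $\cI \in \cP_A$ and $\ux \in \Omega^A$.

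Two small imprecisions in your sketch of the bridge and the unfolding. For the bridge, the relevant surjectivity is not that of Corollary~\ref{cor: surjectivity 1-jets}: setting $\uy=\iota_\cK(\ux)$ and letting $\cI'\geq\cK$ be the element of $\cP_A$ corresponding to $\cI$ via Lemma~\ref{lem: bijections partitions}.\ref{item: bijection coarser}, one first checks $\uy\in\diag_{\cI',\eta}$ (only partitions coarser than $\cK$ can lie in $\cQ_\eta(\uy)$, and the bijection preserves order), and then for each $I\in\cI$ with corresponding block $I'=\bigsqcup_{K\in I}K\subset A$, the map $P\mapsto K(P,\ux_{2I})$ from $\R_{2\norm{I'}-1}[X]^k$ to $\R_{2\norm{I}-1}[X]^k$ is surjective simply because its restriction to $\R_{2\norm{I}-1}[X]^k$ is the identity, by the uniqueness clause of Theorem~\ref{thm: Kergin interpolation}; untwisting by the appropriate translations then expresses $\oK(f,\ux_{2I})$ as a surjective linear image of $\oK(f,\uy_{2I'})$. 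For the unfolding, Lemma~\ref{lem: relation between cumulant densities} expands $\cF_\cK$, not $\rho_\cK$, so the integrability check for Theorem~\ref{thm: Kac-Rice} requires a preliminary M\"obius inversion (Proposition~\ref{prop: Mobius inversion}) expressing $\rho_\cK(f,\cdot)$ as $\sum_{\cJ\geq\cK}\prod_{J\in\cJ}\cF_{\cK_J}(f,\cdot)$, after which Fubini and then Lemma~\ref{lem: relation between cumulant densities} reduce matters to the stated integrability assumption.
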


\begin{proof}
There are two parts to the proof. The first one is to check that our assumptions ensure that we can use Theorem~\ref{thm: Kac-Rice} when we need to. The second one is computational.

\paragraph*{Step 1: Checking the hypotheses of Theorem~\ref{thm: Kac-Rice}.}
For all $\ux \in \Omega^A$, there exists $\cI \in \cP_A$ such that $\ux \in \diag_{\cI,\eta}$, see Definition~\ref{def: diag I eta}. Then, by Lemma~\ref{lem: relation between notions of non-degeneracy}, for all $\ux \in \Omega^A \setminus \diag$ we have that $\parentheses*{f(x_a)}_{a \in A}$ is non-degenerate. Moreover, $\parentheses*{f(x),D_xf}$ is non-degenerate for all $x \in \Omega$. Thus the non-degeneracy hypotheses in Theorem~\ref{thm: Kac-Rice} are satisfied. Let us focus on the integrability ones.

Let $B \subset A$ be non-empty and $\cK \in \cP_{B,d-k}$. Let $\ux \in \Omega^{\cK} \setminus \diag$. By Definition~\ref{def: F A}, for all non-empty $I \subset \cK$, we have:
\begin{equation*}
\cF_I(f,\ux_I) = \sum_{\cJ \in \cP_I} \mu_\cJ \prod_{J \in \cJ}\rho_J(f,\ux_J).
\end{equation*}
By Möbius inversion, see Proposition~\ref{prop: Mobius inversion}, we deduce that $\rho_{\cK}(f,\ux) = \sum_{\cI \in \cP_\cK} \prod_{I \in \cI} \cF_I(f,\ux_I)$. Using Lemma~\ref{lem: bijections partitions}.\ref{item: bijection coarser}, we can re-index this sum by the set of partitions of $B$ that are coarser than~$\cK$. That is, for all $\ux \in \Omega^{\cK} \setminus \diag$, we have $\rho_{\cK}(f,\ux) = \sum_{\cJ \geq \cK} \prod_{J \in \cJ} \cF_{\cK_J}(f,\ux_{\cK_J})$. Hence,
\begin{align*}
\phi^\otimes_B\parentheses*{\iota_{\cK}(\ux)} \rho_{\cK}(f,\ux) &= \parentheses*{\prod_{K \in \cK} \prod_{a \in K}\phi_a(x_K)} \sum_{\substack{\cJ \in \cP_B \\ \cJ \geq \cK}} \prod_{J \in \cJ} \cF_{\cK_J}(f,\ux_{\cK_J})\\
&= \sum_{\substack{\cJ \in \cP_B \\ \cJ \geq \cK}} \prod_{J \in \cJ} \parentheses*{\cF_{\cK_J}(f,\ux_{\cK_J}) \prod_{K \in \cK_J} \prod_{a \in K}\phi_a(x_K)}.
\end{align*}
By Fubini's Theorem, in order to prove that $\parentheses*{\phi^\otimes_B \circ \iota_\cK} \rho_\cK(f,\cdot) \in L^1(\Omega^\cK)$, it is enough to prove that $\parentheses*{\phi^\otimes_J \circ \iota_{\cK_J}} \cF_{\cK_J}(f,\cdot) \in L^1(\Omega^{\cK_J})$ for all $\cJ \geq \cK$ in $\cP_B$ and all $J \in \cJ$. Note that $\emptyset \neq J \subset A$ and $\cK_J \in \cP_{J,d-k}$. We want the former integrability condition to be satisfied for all non-empty $B \subset A$ and $\cK \in \cP_{B,d-k}$. By the previous discussion, it is enough to prove that: for all non-empty $B \subset A$, for all $\cK \in \cP_{B,d-k}$, we have $\parentheses*{\phi^\otimes_B \circ \iota_\cK} \cF_{\cK}(f,\cdot) \in L^1(\Omega^\cK)$.

Let $B \subset A$ be non-empty and $\cK \in \cP_{B,d-k}$. Let $\cI \in \cP_{\cK}$, by Lemma~\ref{lem: relation between cumulant densities} we have :
\begin{equation*}
\forall \ux \in \Omega^{\cK} \cap \diag_{\cI,\eta}, \qquad \parentheses*{\phi^\otimes_B \circ \iota_\cK(\ux)} \cF_{\cK}(f,\ux) = \sum_{\cJ \leq \cI} \parentheses*{\phi^\otimes_B \circ \iota_\cK(\ux)} \tilde{F}_{\cI,\cJ}(f,\ux)\prod_{J \in \cJ}  \Upsilon_J(\ux_J).
\end{equation*}
Our integrability assumptions ensure that each term on the right-hand side is integrable over $U^{\cK} \cap \diag_{\cI,\eta}$, hence so is the left-hand side. This holds for all $\cI \in \cP_\cK$, hence $\parentheses*{\phi^\otimes_B \circ \iota_\cK} \cF_{\cK}(f,\cdot)$ is integrable over $\Omega^\cK$ for all $\cK \in \cP_{B,d-k}$, where $\emptyset \neq B \subset A$. Finally, our integrability assumptions imply that $\parentheses*{\phi^\otimes_B \circ \iota_\cK} \rho_\cK(f,\cdot) \in L^1(\Omega^\cK)$ for all non-empty $B \subset A$ and $\cK \in \cP_{B,d-k}$.

\paragraph*{Step 2: Computation of the cumulants.}
By Definitions~\ref{def: moments and cumulants}, \ref{def: power measure and factorial power measure} and~\ref{def: cumulants of linear statistics} and Lemma~\ref{lem: power vs factorial power}, we have
\begin{align*}
\kappa(\nu)(\uphi) &= \sum_{\cJ \in \cP_A} \mu_\cJ \prod_{J \in \cJ}\esp{\prod_{j \in J}\prsc{\nu}{\phi_j}} = \sum_{\cJ \in \cP_A} \mu_\cJ \prod_{J \in \cJ} \esp{\prsc*{\nu^J}{\phi_J^\otimes}}\\
&= \sum_{\cJ \in \cP_A} \mu_\cJ \prod_{J \in \cJ} \sum_{\cK_J \in \cP_{J,d-k}} \esp{\prsc*{\nu^{[\cK_J]}}{\phi_J^\otimes \circ \iota_{\cK_J}}}.
\end{align*}
Let $\cJ \in \cP_A$ and $(\cK_J)_{J \in \cJ} \in \prod_{J \in \cJ} \cP_{J,d-k}$. If $d>k$, then $\cK_J = \bigwedge \cP_J$ for all $J \in \cJ$ and $\bigsqcup_{J \in \cJ} \cK_J = \bigwedge \cP_A$ is the only element of $\cP_{A,d-k}$. Otherwise $\cK = \bigsqcup_{J \in \cJ} \cK_J  \in \cP_A = \cP_{A,d-k}$, and each $\cK \leq \cJ$ in $\cP_A$ can be uniquely realized in this way, see Lemma~\ref{lem: bijections partitions}.\ref{item: bijection finer}. Re-indexing the previous sum, we obtain:
\begin{equation*}
\kappa(\nu)(\uphi) = \sum_{\cK \in \cP_{A,d-k}} \sum_{\cJ \geq \cK} \mu_\cJ \prod_{J \in \cJ} \esp{\prsc*{\nu^{[\cK_J]}}{\phi_J^\otimes \circ \iota_{\cK_J}}}.
\end{equation*}
We checked in the first step of the proof that we can apply the Kac--Rice formula, see Theorem~\ref{thm: Kac-Rice}, to compute each factor on the right-hand side. Hence,
\begin{align*}
\kappa(\nu)(\uphi) &= \sum_{\cK \in \cP_{A,d-k}} \sum_{\cJ \geq \cK} \mu_\cJ \prod_{J \in \cJ} \int_{\Omega^{\cK_J}} \parentheses*{\phi_J^\otimes \circ \iota_{\cK_J}(\ux_{\cK_J})} \rho_{\cK_J}(f,\ux_{\cK_J}) \dx \ux_{\cK_J}\\
&= \sum_{\cK \in \cP_{A,d-k}} \int_{\Omega^\cK} \parentheses*{\phi_A^\otimes \circ \iota_{\cK}(\ux)} \sum_{\cJ \geq \cK} \mu_\cJ \prod_{J \in \cJ} \rho_{\cK_J}(f,\ux_{\cK_J}) \dx \ux\\
&= \sum_{\cK \in \cP_{A,d-k}} \int_{\ux \in \Omega^\cK} \parentheses*{\phi^\otimes_A\circ \iota_{\cK}(\ux)} \cF_{\cK}(f,\ux) \dx \ux,
\end{align*}
where we used Lemma~\ref{lem: bijections partitions}.\ref{item: bijection coarser} to replace the sum over $\brackets{\cJ \in \cP_A \mid \cJ \geq \cK}$ by a sum over $\cP_\cK$ and recognized $\cF_\cK(f,\cdot)$, see Definition~\ref{def: F A}. This establishes the first expression of $\kappa(\nu)(\uphi)$.

In order to establish the second one, for each $\cK \in \cP_{A,d-k}$, we split the integral over $\Omega^\cK$ into a sum, indexed by $\cI \in \cP_\cK$, of integrals over $\Omega^\cK \cap \diag_{\cI,\eta}$, see Definition~\ref{def: diag I eta}. Then, the result follows from Lemma~\ref{lem: relation between cumulant densities}.
\end{proof}

The results we established for $\sigma_\cI$ at the end of Section~\ref{subsec: Kac--Rice densities as functions of variance operators} have natural analogues for the functions $F_{\cI,\cJ}$. We state and prove these results in the remainder on the present section.

\begin{lem}[Regularity of $F_{\cI,\cJ}$]
\label{lem: regularity F I J}
Let $A$ be a non-empty finite set and let $\cI, \cJ \in \cP_A$ be such that $\cJ \leq \cI$, then $F_{\cI,\cJ} \in \cC^{0,\infty}\parentheses*{(\cM_\cJ \times \cL_{\cJ,\cI}^\dagger) \times \cS_\cI^+}$.
\end{lem}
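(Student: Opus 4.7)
The plan is to reduce the claim to Lemma~\ref{lem: regularity sigma I} by observing that $F_{\cI,\cJ}$ is a finite sum of finite products of compositions of $\sigma_{\cJ_K}$ with elementary continuous/polynomial maps. Since $\cC^{0,\infty}(\cdot)$ is obviously stable under finite sums and products (for products, one uses the Leibniz rule applied to partial derivatives in the $\cS_\cI^+$-variable), it suffices to prove that for each $\cK \in \cP_A$ with $\cI \wedge \cK = \cJ$ and each $K \in \cK$, the function
\begin{equation*}
\parentheses*{\strut (\cG_J)_{J}, (\theta_J)_{J}, (\Pi_J)_{J}, \Sigma} \longmapsto \sigma_{\cJ_K}\parentheses*{(\cG_J)_{J \in \cJ_K}, (\theta_J)_{J \in \cJ_K}, (\textstyle \bigoplus_{J \in \cJ_K}\Pi_J)\Sigma(\bigoplus_{J \in \cJ_K}\Pi_J)^*}
\end{equation*}
belongs to $\cC^{0,\infty}\parentheses*{(\cM_\cJ \times \cL_{\cJ,\cI}^\dagger) \times \cS_\cI^+}$.

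Fix such a pair $(\cK,K)$ and denote by $T_K$ the map from $(\cM_\cJ \times \cL_{\cJ,\cI}^\dagger) \times \cS_\cI$ to $\cM_{\cJ_K}\times\cS_{\cJ_K}$ defined by the expression that appears as the argument of $\sigma_{\cJ_K}$. The first two components of $T_K$ are obtained by simply projecting $(\cG_J)_{J\in\cJ}$ and $(\theta_J)_{J \in \cJ}$ onto the coordinates indexed by $\cJ_K \subset \cJ$, hence are continuous and do not depend on $\Sigma$. The third component $((\Pi_J)_J, \Sigma) \mapsto (\bigoplus_{J\in\cJ_K}\Pi_J)\Sigma(\bigoplus_{J\in\cJ_K}\Pi_J)^*$ is continuous in $(\Pi_J)_J$ and \emph{linear} in $\Sigma$, so all of its partial derivatives in $\Sigma$ at any order are jointly continuous in all variables (the first derivative being linear in $\Sigma$, and higher derivatives identically zero). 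I then check that $T_K$ sends $(\cM_\cJ \times \cL_{\cJ,\cI}^\dagger) \times \cS_\cI^+$ into $\cM_{\cJ_K}\times \cS_{\cJ_K}^+$: with the convention of equation~\eqref{eq: abuse of notation}, $\bigoplus_{J\in\cJ_K}\Pi_J$ is a block-diagonal direct sum of surjective maps, so it is surjective; conjugating a positive-definite operator by a surjective linear map yields a positive-definite operator, so indeed $(\bigoplus_{J\in\cJ_K}\Pi_J)\Sigma(\bigoplus_{J\in\cJ_K}\Pi_J)^* \in \cS_{\cJ_K}^+$ whenever $\Sigma\in\cS_\cI^+$.

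It then remains to combine $T_K$ with $\sigma_{\cJ_K}$. By Lemma~\ref{lem: regularity sigma I}, $\sigma_{\cJ_K} \in \cC^{0,\infty}(\cM_{\cJ_K}\times \cS_{\cJ_K}^+)$, meaning all its partial derivatives with respect to its last argument exist at any order and are jointly continuous. Since $T_K$ is continuous and its dependence on $\Sigma$ is affine with jointly continuous (in fact constant above order $1$) derivatives, the chain rule shows that all partial derivatives of $\sigma_{\cJ_K}\circ T_K$ with respect to $\Sigma$ exist at every order and can be written as finite sums of products of partial derivatives of $\sigma_{\cJ_K}$ (evaluated at $T_K$) with the partial derivatives of $T_K$ with respect to $\Sigma$. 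Each such product is jointly continuous in all variables, so $\sigma_{\cJ_K}\circ T_K \in \cC^{0,\infty}\parentheses*{(\cM_\cJ\times \cL_{\cJ,\cI}^\dagger)\times \cS_\cI^+}$. Summing and multiplying over $\cK$ and $K \in \cK$ concludes the proof.

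The only substantive point is verifying in Step~3 that positive-definiteness is preserved under the conjugation appearing in the third coordinate of $T_K$; this uses nothing more than the fact that $\cL_{\cJ,\cI}^\dagger$ consists of \emph{surjective} maps (Definition~\ref{def: L J}) so that the block map $\bigoplus_{J\in\cJ_K}\Pi_J$ is surjective, which is also implicit in the discussion following equation~\eqref{eq: abuse of notation}.
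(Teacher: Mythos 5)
Your proof is correct and follows essentially the same route as the paper's: reduce to a single summand/factor $\sigma_{\cJ_K}\circ T_K$ via closure of $\cC^{0,\infty}$ under sums and products, observe that $T_K$ projects $\cM_\cJ$ onto $\cM_{\cJ_K}$ and sends $\cL_{\cJ,\cI}^\dagger\times\cS_\cI^+$ smoothly into $\cS_{\cJ_K}^+$ (using surjectivity of the block operator), and conclude by composing with Lemma~\ref{lem: regularity sigma I}. You spell out the chain-rule and positive-definiteness steps that the paper leaves implicit, but the structure is the same.
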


\begin{proof}
By Definition~\ref{def: F I J} it is enough to prove that, for all $\cK \in \cP_A$ such that $\cI \wedge \cK = \cJ$ and all $K \in \cK$, the following map belongs to $\cC^{0,\infty}\parentheses*{(\cM_\cJ \times \cL_{\cJ,\cI}^\dagger) \times \cS_\cI^+}$:
\begin{equation}
\label{eq: regularity F I J}
\parentheses*{\strut \parentheses*{\cG_J}_{J \in \cJ},\parentheses*{\theta_J}_{J \in \cJ},\parentheses*{\Pi_J}_{J \in \cJ},\Sigma} \mapsto \sigma_{\cJ_K}\parentheses*{\parentheses*{\cG_J}_{J \in \cJ_K},\parentheses*{\theta_J}_{J \in \cJ_K},\parentheses*{\textstyle\bigoplus_{J \in \cJ_K}\Pi_J}\Sigma\parentheses*{\textstyle\bigoplus_{J \in \cJ_K}\Pi_J}^*}.
\end{equation}

Let $\cK \in \cP_A$ such that $\cI \wedge \cK = \cJ$ and $K \in \cK$. We have $\cJ_K = \brackets*{J \in \cJ \mvert J \subset K} \subset \cJ$ since $\cJ \leq \cK$. Recalling Definition~\ref{def: M I}, the parameter space $\cM_{\cJ_K}$ is then a factor in  $\cM_\cJ$. The projection $\parentheses*{\strut \parentheses*{\cG_J}_{J \in \cJ},\parentheses*{\theta_J}_{J \in \cJ}} \mapsto \parentheses*{\strut \parentheses*{\cG_J}_{J \in \cJ_K},\parentheses*{\theta_J}_{J \in \cJ_K}}$ from $\cM_\cJ$ onto $\cM_{\cJ_K}$ is continuous. The map $\parentheses*{\parentheses*{\Pi_J}_{J \in \cJ},\Sigma} \mapsto \parentheses*{\bigoplus_{J \in \cJ_K}\Pi_J}\Sigma\parentheses*{\bigoplus_{J \in \cJ_K}\Pi_J}^*$ is smooth from $\cL_{\cJ,\cI}^\dagger \times \cS_\cI^+$ to $\cS_{\cJ_K}^+$. Then, by Lemma~\ref{lem: regularity sigma I}, the map~\eqref{eq: regularity F I J} indeed belongs to $\cC^{0,\infty}\parentheses*{(\cM_\cJ \times \cL_{\cJ,\cI}^\dagger) \times \cS_\cI^+}$.
\end{proof}

Recalling the notation introduced in Definition~\ref{def: block-diagonal variance}, we can now prove that the map $F_{\cI,\cJ}$ vanishes at second order along well-chosen subspaces.

\begin{lem}[Vanishing of $F_{\cI,\cJ}$]
\label{lem: vanishing F I J}
Let $A$ be a finite set and let $\cI, \cJ \in \cP_A$ be such that $\cJ \leq \cI$. Let $B$ be a proper subset of $A$ such that $\cI \leq \brackets*{B, A \setminus B}$. Then, for all $w \in \cM_\cJ \times \cL_{\cJ,\cI}^\dagger$, for all $\Sigma \in \cS_{\cI,B}^+$ and $\Lambda \in \cS_{\cI,B}^\perp$ we have $F_{\cI,\cJ}(w,\Sigma)=0$ and $D_{(w,\Sigma)}F_{\cI,\cJ}\cdot \Lambda =0$.
\end{lem}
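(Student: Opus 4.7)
The proof splits into two independent claims. My strategy for $F_{\cI,\cJ}(w,\Sigma)=0$ starts by reindexing. By Lemma~\ref{lem: bijections partitions}.\ref{item: bijection coarser}, partitions $\cK \in \cP_A$ with $\cJ \leq \cK$ correspond to $\cN := \brackets*{\cJ_K \mvert K \in \cK} \in \cP_\cJ$, with $\mu_\cK = \mu_\cN$ and $\prod_{K}\sigma_{\cJ_K}(\ldots) = \prod_{N}\sigma_N(\ldots)$. The constraint $\cI \wedge \cK = \cJ$ translates to the \emph{transversality} condition $\norm{N \cap \cJ_I} \leq 1$ for every $N \in \cN$ and $I \in \cI$. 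Since $\cJ \leq \cI \leq \brackets*{B, A \setminus B}$, both $\cJ_B$ and $\cJ_{A \setminus B}$ are nonempty, and any transversal $\cN$ is uniquely determined by its restrictions $\cN_B \in \cP_{\cJ_B}$, $\cN_{A \setminus B} \in \cP_{\cJ_{A \setminus B}}$ together with a partial matching of their blocks.

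Next, I use $\Sigma \in \cS_{\cI,B}^+$ to apply Lemma~\ref{lem: exact clustering} blockwise: since the conjugated operator $\parentheses*{\bigoplus_{J \in \cJ_K}\Pi_J}\Sigma\parentheses*{\bigoplus_{J \in \cJ_K}\Pi_J}^*$ inherits the block-diagonal structure of $\Sigma$ with respect to $\brackets*{K \cap B, K \cap (A \setminus B)}$ whenever $K$ meets both sides, each $\sigma_{\cJ_K}$ splits as $\sigma_{\cJ_{K \cap B}}(\ldots)\,\sigma_{\cJ_{K \cap (A \setminus B)}}(\ldots)$; the splitting is trivial when $K$ lies on a single side. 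For any transversal $\cN$ with restrictions $(\cN_B, \cN_{A \setminus B})$, the product $\prod_{N \in \cN}\sigma_N$ then equals $\prod_{N_B \in \cN_B}\sigma_{N_B}(\ldots) \cdot \prod_{N' \in \cN_{A \setminus B}}\sigma_{N'}(\ldots)$, independently of the matching. Grouping by restrictions rewrites $F_{\cI,\cJ}(w,\Sigma)$ as a sum of terms of the form (product of $\sigma$'s) $\cdot T(a, b)$, where $a = \norm{\cN_B}$, $b = \norm{\cN_{A \setminus B}}$, and
\begin{equation*}
T(a, b) := \sum_{k=0}^{\min(a,b)}\binom{a}{k}\binom{b}{k}k!\,(-1)^{a+b-k-1}(a+b-k-1)!
\end{equation*}
collects the Möbius weights of partial matchings with $k$ matched pairs. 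The vanishing $T(a,b)=0$ for $a, b \geq 1$ is the technical heart of this step and follows from a short generating-function computation: rewriting $\binom{a+b-k-1}{b-1}=[x^{a-k}](1-x)^{-b}$ and summing against $(-1)^k\binom{b}{k}$ gives $T(a,b) = (-1)^{a+b-1}a!(b-1)!\,[x^a]\bigl((1-x)^b(1-x)^{-b}\bigr) = (-1)^{a+b-1}a!(b-1)!\,[x^a]\,1 = 0$ for $a \geq 1$. Since both restrictions are nonempty under our hypotheses, this yields $F_{\cI,\cJ}(w,\Sigma)=0$.

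For $D_{(w,\Sigma)}F_{\cI,\cJ}\cdot\Lambda = 0$, the chain rule applied to Definition~\ref{def: F I J} writes the derivative as a finite sum of terms of the shape $\parentheses*{D\sigma_{\cJ_{K_0}}\cdot\Lambda_{K_0}} \cdot \prod_{K \neq K_0} \sigma_{\cJ_K}(\ldots)$, where $\Sigma_{K_0}$ and $\Lambda_{K_0}$ denote the conjugations of $\Sigma$ and $\Lambda$ by $\bigoplus_{J \in \cJ_{K_0}}\Pi_J$. The same blockwise analysis shows $\Sigma_{K_0} \in \cS_{\cJ_{K_0}, K_0 \cap B}^+$ and $\Lambda_{K_0} \in \cS_{\cJ_{K_0}, K_0 \cap B}^\perp$. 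When $K_0$ meets both $B$ and $A \setminus B$, Lemma~\ref{lem: vanishing D sigma I} applies and gives $D\sigma_{\cJ_{K_0}}\cdot\Lambda_{K_0} = 0$; when $K_0$ lies entirely in $B$ or in $A \setminus B$, all classes $[J]_\cI$ for $J \in \cJ_{K_0}$ lie on the same side of $\brackets*{B, A\setminus B}$, so the off-diagonality of $\Lambda \in \cS_{\cI,B}^\perp$ forces $\Lambda_{K_0}$ to vanish outright. Every term is therefore zero.
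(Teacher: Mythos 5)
Your proof is correct, and the second half (vanishing of the derivative) follows essentially the same route as the paper: apply the chain rule to Definition~\ref{def: F I J}, observe that each conjugated $\Lambda_{K_0}$ lies in $\cS_{\cJ_{K_0},K_0\cap B}^\perp$, and conclude via Lemma~\ref{lem: vanishing D sigma I} when $K_0$ straddles $B$ and $A\setminus B$, and via $\Lambda_{K_0}=0$ otherwise.

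The first half, however, takes a genuinely different route. The paper does not touch the combinatorics of transversal partitions directly: it introduces the auxiliary quantities $m_K = \sigma_{\cI_K}(\ldots)\prod_{I\in\cI_K}\epsilon_I$, where $\epsilon_I$ is the indicator of $I \in \cJ$, rewrites $F_{\cI,\cJ}(w,\Sigma)$ as $\sum_{\cK\in\cP_A}\mu_\cK\prod_K m_K$, checks the multiplicativity $m_K = m_{K\cap B}m_{K\setminus B}$ via Lemma~\ref{lem: exact clustering}, and then invokes the black-box cancellation Lemma~\ref{lem: cancellation property} (itself imported from Speed). You instead unpack the constraint $\cI\wedge\cK=\cJ$ into a transversality condition on $\cN\in\cP_\cJ$, observe that a transversal $\cN$ is equivalent to a pair of transversal restrictions $(\cN_B,\cN_{A\setminus B})$ together with a partial matching, factor the $\sigma$-product out of the matching sum, and reduce the vanishing to the explicit identity $T(a,b) = \sum_k\binom{a}{k}\binom{b}{k}k!\,(-1)^{a+b-k-1}(a+b-k-1)! = 0$ for $a,b\geq 1$, which you verify with a clean generating-function argument. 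The paper's approach is shorter and cleaner, and reuses a general fact about cumulants; yours is longer and more explicit, but has the virtue of being entirely self-contained --- you effectively re-prove the relevant special case of the cancellation lemma rather than cite it. Both the reindexing and the identity check out (the matching-independence of the $\sigma$-product follows from Equation~\eqref{eq: block projection Sigma I}, since the entries of $\Sigma_K$ depend only on the $\cJ$-blocks involved and not on $K$). One exposition nit: you should state explicitly that the bijection is onto triples where $\cN_B$ and $\cN_{A\setminus B}$ are themselves transversal partitions of $\cJ_B$ and $\cJ_{A\setminus B}$; as written, the reader has to fill this in.
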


\begin{proof}
The key idea is that projection operators of the form~\eqref{eq: abuse of notation} preserve the block structure of the operators $\Sigma$ and $\Lambda$, in a sense to be made precise below. Thus, we can deduce the result from Lemmas~\ref{lem: exact clustering} and~\ref{lem: vanishing D sigma I}. We deal first with the vanishing of the function, then with that of its transverse derivatives.

\paragraph*{Step 1: Vanishing of $F_{\cI,\cJ}$.}
For all $I \in \cI$ and $\emptyset \neq K \subset I$, let $\cG_K \in \gr{k\norm{K}}{\R_{2\norm{K}-1}[X]^k}$, $\theta_K \in \S\fP_{2k\norm{K}}\parentheses*{\R_{2\norm{K}-1}[X]^k}$ and $\Pi_K \in \cL^\dagger\parentheses*{\R_{2\norm{I}-1}[X]^k,\R_{2\norm{K}-1}[X]^k}$. For all $\cK \leq \cI$, we denote by $w_{\cK} = \parentheses*{\strut (\cG_K)_{K \in \cK},(\theta_K)_{K \in \cK},(\Pi_K)_{K \in \cK}} \in \cM_{\cK}\times \cL^\dagger_{\cK,\cI}$. In particular, $w_\cJ \in \cM_\cJ \times \cL^\dagger_{\cJ,\cI}$, and every element of $\cM_\cJ \times \cL^\dagger_{\cJ,\cI}$ can be realized in this way.

For all $K \subset A$, let us denote by $\epsilon_K = 1$ if $K \in \cJ$ and $\epsilon_K=0$ otherwise. Then, for all $\cK \in \cP_A$, we have $\prod_{K \in \cK}\epsilon_K =1$ if $\cK = \cJ$ and $\prod_{K \in \cK}\epsilon_K =0$ otherwise. Let $\Sigma \in \cS_{\cI,B}^+$, for every non-empty $K \subset A$, we denote by
\begin{equation*}
m_K =\sigma_{\cI_K}\parentheses*{\parentheses*{\cG_I}_{I \in \cI_K},\parentheses*{\theta_I}_{I \in \cI_K},\parentheses*{\textstyle\bigoplus_{I \in \cI_K}\Pi_I}\Sigma\parentheses*{\textstyle\bigoplus_{I \in \cI_K}\Pi_I}^*} \prod_{I \in \cI_K}\epsilon_I.
\end{equation*}
Then, recalling that $\bigsqcup_{K \in \cK} \cI_K =\cI \wedge \cK$ for all $\cK \in \cP_A$, we compute:
\begin{align*}
\sum_{\cK \in \cP_A} &\mu_\cK \prod_{K \in \cK} m_K= \sum_{\cK \in \cP_A} \mu_\cK \prod_{K \in \cK} \parentheses*{\sigma_{\cI_K}\parentheses*{\parentheses*{\cG_I}_{\cI_K},\parentheses*{\theta_I}_{\cI_K},\parentheses*{\textstyle\bigoplus_{I \in \cI_K}\Pi_I}\Sigma\parentheses*{\textstyle\bigoplus_{I \in \cI_K}\Pi_I}^*} \prod_{I \in \cI_K}\epsilon_I}\\
&= \sum_{\substack{\cJ' \in \cP_A\\ \cJ' \leq \cI}} \parentheses*{\prod_{J \in \cJ'}\epsilon_J} \sum_{\substack{\cK \in \cP_A\\ \cI \wedge \cK= \cJ'}} \mu_\cK \prod_{K \in \cK} \sigma_{\cI_K}\parentheses*{\parentheses*{\cG_I}_{\cI_K},\parentheses*{\theta_I}_{\cI_K},\parentheses*{\textstyle\bigoplus_{I \in \cI_K}\Pi_I}\Sigma\parentheses*{\textstyle\bigoplus_{I \in \cI_K}\Pi_I}^*}\\
&= \sum_{\substack{\cJ' \in \cP_A\\ \cJ' \leq \cI}} \parentheses*{\prod_{J \in \cJ'}\epsilon_J} F_{\cI,\cJ'}(w_{\cJ'},\Sigma) = F_{\cI,\cJ}(w_\cJ,\Sigma).
\end{align*}
Indeed, if $\cI \wedge \cK = \cJ'$ then $\cI_K = \cJ'_K$ for all $K \in \cK$. We will prove below that $m_K = m_{K\cap B}m_{K \setminus B}$ for all $K \subset A$, with the convention that $m_\emptyset=1$. Then,  it will follow from Lemma~\ref{lem: cancellation property} that $F_{\cI,\cJ}(w_\cJ,\Sigma)=\sum_{\cK \in \cP_A} \mu_\cK \prod_{K \in \cK} m_K=0$.

Let $K \subset A$. If $K \cap B =\emptyset$ or $K \setminus B=\emptyset$, then $m_K = m_{K\cap B}m_{K \setminus B}$ because $m_\emptyset=1$. In the following, we denote by $L = K \cap B$ and assume that $\emptyset \neq L \neq K$. For all $I \in \cI_K$, we denote by $\tilde{I}$ the unique block of $\cI$ such that $I = \tilde{I}\cap K$. Since $\cI \leq \brackets*{B,A\setminus B}$, we have either $\tilde{I} \subset B$ or $\tilde{I} \subset (A\setminus B)$. In the first case $I = \tilde{I} \cap K \subset L$, and in the second $I \subset K \setminus L$. Thus $\cI_K \leq \brackets*{L,K\setminus L}$ and $\cI_K = \cI_L \sqcup \cI_{K\setminus L}$. Moreover, $\cI_L=(\cI_K)_L = \brackets*{\tilde{I} \cap K \mvert \tilde{I} \in \cI_B}\setminus\brackets*{\emptyset}$ and $\cI_{K \setminus L}=(\cI_K)_{K\setminus L} = \brackets*{\tilde{I} \cap K \mvert \tilde{I} \in \cI_{A \setminus B}}\setminus\brackets*{\emptyset}$.

Let us denote by $\parentheses*{\Sigma_I^J}_{I,J \in \cI}$ the blocks of $\Sigma$. Then, recalling Equation~\eqref{eq: abuse of notation}, we have:
\begin{equation}
\label{eq: block projection Sigma I}
\parentheses*{\textstyle\bigoplus_{J \in \cI_K}\Pi_J}\Sigma\parentheses*{\textstyle\bigoplus_{J \in \cI_K}\Pi_J}^* = \parentheses*{\Pi_I \circ  \Sigma_{\tilde{I}}^{\tilde{J}} \circ \Pi_J^*}_{I,J \in \cI_K}.
\end{equation}
If $I \in \cI_L$ and $J \in \cI_{K \setminus L}$, then $\tilde{I} \in \cI_B$ and $\tilde{J}\in \cI_{A \setminus B}$, and we have $\Pi_I \circ  \Sigma_{\tilde{I}}^{\tilde{J}} \circ \Pi_J^*=0$ since $\Sigma \in \cS_{\cI,B}$, see Definition~\ref{def: block-diagonal variance}. Hence, $\parentheses*{\bigoplus_{J \in \cI_K}\Pi_J}\Sigma\parentheses*{\bigoplus_{J \in \cI_K}\Pi_J}^* \in \cS_{\cI_K,L}^+$. By Lemma~\ref{lem: exact clustering},
\begin{multline*}
m_K = \sigma_{\cI_L}\parentheses*{\parentheses*{\cG_I}_{I \in \cI_L},\parentheses*{\theta_I}_{I \in \cI_L},\parentheses*{\Pi_I \circ \Sigma_{\tilde{I}}^{\tilde{J}} \circ \Pi_J^*}_{I,J \in \cI_L}}\prod_{I \in \cI_L}\epsilon_I\\
\quad \times\sigma_{\cI_{K\setminus L}}\parentheses*{\parentheses*{\cG_I}_{I \in \cI_{K\setminus L}},\parentheses*{\theta_I}_{I \in \cI_{K\setminus L}},\parentheses*{\Pi_I \circ  \Sigma_{\tilde{I}}^{\tilde{J}} \circ \Pi_J^*}_{I,J \in \cI_{K \setminus L}}}\prod_{I \in \cI_{K \setminus L}}\epsilon_I.
\end{multline*}
Applying~\eqref{eq: block projection Sigma I} with $\cI_K$ replaced by $\cI_L$ and $\cI_{K\setminus L}$ respectively, we recognize this product to be $m_Lm_{K \setminus L} = m_{K \cap B}m_{K \setminus B}$. Thus, $m_K = m_{K \cap B}m_{K \setminus B}$ for all $K \subset A$ and, by Lemma~\ref{lem: cancellation property}, we have $F_{\cI,\cJ}(w_\cJ,\Sigma)=\sum_{\cK \in \cP_A} \mu_\cK \prod_{K \in \cK} m_K=0$.

\paragraph*{Step 2: Vanishing of $D F_{\cI,\cJ}$.}
Let $w=\parentheses*{\strut (\cG_J)_{J \in \cJ},(\theta_J)_{J \in \cJ},(\Pi_J)_{J \in \cJ}} \in \cM_\cJ\times \cL^\dagger_{\cJ,\cI}$ and $\Sigma \in \cS_\cI^+$. Let $\cK \in \cP_A$ such that $\cK\geq \cJ$ and $K \in \cK$, we denote by $w_K'=\parentheses*{\strut (\cG_J)_{J \in \cJ_K},(\theta_J)_{J \in \cJ_K}}$ and $\Sigma_K = \parentheses*{\bigoplus_{J \in \cJ_K}\Pi_J}\Sigma\parentheses*{\bigoplus_{J \in \cJ_K}\Pi_J}^*$. With this notation, Definition~\ref{def: F I J} becomes:
\begin{equation*}
F_{\cI,\cJ}(w,\Sigma) = \sum_{\brackets*{\cK \in \cP_A\mvert \cI \wedge \cK= \cJ}} \mu_\cK \prod_{K \in \cK}\sigma_{\cJ_K}(w'_K,\Sigma_K).
\end{equation*}
Since the $\Sigma \mapsto \Sigma_K$ is linear for all $K \in \cK \geq \cJ$, differentiating the previous expression yields that, for all $w \in \cM_{\cJ}\times \cL^\dagger_{\cJ,\cI}$, all $\Sigma \in \cS_\cI^+$ and all $\Lambda \in \cS_\cI$, we have:
\begin{equation}
\label{eq: differential F I J}
D_{(w,\Sigma)}F_{\cI,\cJ}\cdot \Lambda = \sum_{\brackets*{\cK \in \cP_A\mvert \cI \wedge \cK= \cJ}} \mu_\cK \sum_{K \in \cK} \parentheses*{\prod_{L \in \cK \setminus\brackets{K}}\sigma_{\cJ_L}\parentheses*{w'_L,\Sigma_L}}D_{(w'_K,\Sigma_K)}\sigma_{\cI_K}\cdot \Lambda_K.
\end{equation}

Let us fix $\cK  \in \cP_A$ such that $\cI \wedge\cK= \cJ$ and $K \in \cK$. For all $I \in \cJ_K = \cI_K$, let us denote by $\tilde{I}$ the only element of $\cI$ such that $I = \tilde{I}\cap K$. Writing $\Sigma=\parentheses*{\Sigma_I^J}_{I,J \in \cI}$ and $\Lambda = \parentheses*{\Lambda_I^J}_{I,J \in \cI}$ as block-operators we obtain:
\begin{align*}
\Sigma_K &= \parentheses*{\Pi_I \circ  \Sigma_{\tilde{I}}^{\tilde{J}} \circ \Pi_J^*}_{I,J \in \cI_K} & &\text{and} & \Lambda_K &= \parentheses*{\Pi_I \circ  \Lambda_{\tilde{I}}^{\tilde{J}} \circ \Pi_J^*}_{I,J \in \cI_K},
\end{align*}
as in Equation~\eqref{eq: block projection Sigma I}. Let us assume that $\Sigma \in \cS^+_{\cI,B}$ and $\Lambda \in \cS_{\cI,B}^\perp$. Denoting by $L = K \cap B$, we have as above $\cI_K = \cI_L \sqcup \cI_{K \setminus L}$. For all $I \in \cI_L$ and $J \in \cI_{K \setminus L}$, have $\tilde{I} \in \cI_B$ and $\tilde{J} \in \cI_{A \setminus B}$, and $\Pi_I \circ  \Sigma_{\tilde{I}}^{\tilde{J}} \circ \Pi_J^*=0$. Thus $\Sigma_K \in \cS^+_{\cI_K,L}$. For all $I,J \in \cI_L$ (resp.~$\cI_{K \setminus L}$), we have $\tilde{I},\tilde{J} \in \cI_B$ (resp.~$\cI_{A \setminus B}$), and $\Pi_I \circ  \Lambda_{\tilde{I}}^{\tilde{J}} \circ \Pi_J^*=0$. Thus $\Lambda_K \in \cS_{\cI_K,L}^\perp$. Then, by Lemma~\ref{lem: vanishing D sigma I}, we have $D_{(w'_K,\Sigma_K)}\sigma_{\cI_K}\cdot \Lambda_K=0$.

Finally, if $\Sigma \in \cS_{\cI,B}^+$ and $\Lambda \in \cS_{\cI,B}^\perp$, then each term on the right-hand side of~\eqref{eq: differential F I J} vanishes, and $D_{(w,\Sigma)}F_{\cI,\cJ}\cdot \Lambda =0$.
\end{proof}

Lemma~\ref{lem: vanishing F I J} shows that $F_{\cI,\cJ}$ vanishes at second order along the subspace $\cS_{\cI,B}$, which is defined by the vanishing of the off-diagonal blocks $(\Sigma_{I,J})_{I \in \cI_B, J \in \cI_{A \setminus B}}$ of $\Sigma \in \cS_{\cI}^+$. Using Taylor formulas, one can hope to deduce estimates of the type $F_{\cI,\cJ}(w,\Sigma) = O\parentheses*{\max_{I \in \cI_B, J \in \cI_{A \setminus B}} \Norm{\Sigma_I^J}^2}$ from this result. Actually, we can derive a better estimate by patching together estimates of the previous type for all $B \subset A$ such that $\cI \leq \brackets{B,A \setminus B}$. This is the object of the next section.


\section{Refined Hadamard lemma}
\label{sec: refined Hadamard lemma}

The purpose of this section to derive a refinement of the Hadamard lemma for smooth functions defined on an open subset of a Euclidean space and vanishing at second order along a well-chosen family of subspaces. The upshot is to deduce from this result an estimate for the functions~$F_{\cI,\cJ}$, see Definition~\ref{def: F I J} and Lemma~\ref{lem: vanishing F I J}.

In section~\ref{subsec: minimal multi-indices}, we introduce a notation of minimality for multi-indices indexed by a finite set. We use this notion of minimal multi-indices to state and prove a refined version of the Hadamard lemma for smooth function on a Euclidean space in Section~\ref{subsec: refined Hadamard lemma in RA}. Finally, in Section~\ref{subsec: refined Hadamard lemma for FIJ}, we deduce a key estimate for $F_{\cI,\cJ}$ from this refined Hadamard lemma.


\subsection{Minimal multi-indices}
\label{subsec: minimal multi-indices}

Recall that we introduced multi-indices indexed by $\ssquarebrackets{1}{d}$ in Definition~\ref{def: multi-indices}. In the following, we need to consider multi-indices indexed by a more general finite set. In this section, we first introduce the associated notation. Then, we define a set of minimal multi-indices and prove some of its properties. In all this section, $A$ is a non-empty finite set.

\begin{dfn}[Multi-indices indexed by $A$]
\label{def: multi-indices A}
The set of \emph{multi-indices} indexed by $A$ is $\N^A$. For all $\un =(n_a)_{a \in A} \in \N^A$, we denote by $\un !=\prod_{a \in A} n_a!$ and by $\norm{\un} = \sum_{a \in A} n_a$ its \emph{length}. If $B \subset A$, we let $\un_B=(n_b)_{b \in B} \in \N^B$. Finally, for all $a \in A$, we denote by $\one_a \in \N^A$ the multi-index having a~$1$ in position~$a$ and zeros elsewhere.
\end{dfn}

\begin{rem}
\label{rem: multi-indices B sub A}
If $B \subset A$, there is a canonical injection mapping $\um \in \N^B$ to the only $\un \in \N^A$ such that $\un_B=\um$ and $\un_{A \setminus B}=0$. Using this injection, we consider $\N^B$ as a subset of $\N^A$ in the following.
\end{rem}

Recall that the canonical order on $\N$ induces a partial order on $\N^A$, such that $\um \leq \un$ if and only if $m_a \leq n_a$ for all $a \in A$. Accordingly, we write $\um < \un$ if $\um \leq \un$ and $\um \neq \un$.

\begin{dfn}[Minimal multi-indices]
\label{def: minimal multi-indices}
Let $\cN \subset \N^A$, we say $\un \in \cN$ is \emph{minimal} in $\cN$ if for all $\um \in \cN$ such that $\um \leq \un$ we have $\um =\un$. We denote by $\min(\cN)$ the set of minimal elements of~$\cN$.
\end{dfn}

\begin{lem}[Existence of minimal elements]
\label{lem: well founded order}
Let $\cN \subset \N^A$ and $\un \in \cN$, there exists $\um \in \min(\cN)$ such that $\um \leq \un$.
\end{lem}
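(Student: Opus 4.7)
The plan is to exploit the finiteness of the down-set of $\un$ in $\N^A$ to reduce to a minimality statement in a finite poset. Concretely, I would consider the set
\[
\cN_{\un} := \cN \cap \brackets*{\uk \in \N^A \mvert \uk \leq \un},
\]
which is non-empty (it contains $\un$) and, crucially, \emph{finite}: it is contained in the product $\prod_{a \in A}\ssquarebrackets{0}{n_a}$, which has cardinality $\prod_{a \in A}(n_a+1)$.

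In any non-empty finite partially ordered set, minimal elements exist. One can see this by picking any $\um^{(0)} \in \cN_{\un}$; if it is not minimal in $\cN_{\un}$ there is $\um^{(1)} \in \cN_{\un}$ with $\um^{(1)} < \um^{(0)}$, and so on. This produces a strictly decreasing sequence in $\cN_{\un}$, which must terminate by finiteness, yielding some $\um \in \cN_{\un}$ that is minimal within $\cN_{\un}$.

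It remains to observe that such an $\um$ is in fact minimal in the whole of $\cN$ (not only in $\cN_{\un}$). Indeed, if $\uk \in \cN$ satisfies $\uk \leq \um$, then by transitivity $\uk \leq \um \leq \un$, so $\uk \in \cN_{\un}$; by minimality of $\um$ in $\cN_{\un}$, this forces $\uk = \um$. Hence $\um \in \min(\cN)$ and $\um \leq \un$, as required.

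There is no real obstacle here: the only conceptual ingredient is that the product order on $\N^A$ is well-founded on any finite down-set, which is immediate from the well-foundedness of $(\N,\leq)$ coordinate by coordinate. The argument is so short that one could also phrase it as induction on $\norm{\un}$: if $\un$ is already minimal in $\cN$ we are done; otherwise pick $\um \in \cN$ with $\um < \un$, which forces $\norm{\um}<\norm{\un}$, and apply the induction hypothesis to $\um$.
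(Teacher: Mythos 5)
Your proof is correct and takes essentially the same approach as the paper: both restrict to the finite down-set $\cN \cap \{\uk \mid \uk \leq \un\}$, extract a minimal element there, and observe that it is automatically minimal in all of $\cN$. You simply spell out the finiteness/well-foundedness step and offer an alternative induction on $\norm{\un}$, both of which the paper leaves implicit.
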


\begin{proof}
The set $\cN' = \brackets*{\um \in \cN \mvert \um \leq \un}$ is non-empty and finite. Hence it admits a minimal element, say $\um_0$. If $\um \in \cN$ is such that $\um \leq \um_0$, then $\um \in \cN'$, and hence $\um=\um_0$. Thus $\um_0 \in \min(\cN)$.
\end{proof}

In the following, we consider a family $\cB$ of non-empty subsets of $A$. That is $\cB \subset 2^A$ and $\emptyset \notin \cB$. Note that we do not assume the elements of $\cB$ to be disjoint as subsets of $A$. We can associate with $\cB$ the following set of minimal multi-indices.
\begin{equation}
\label{eq: def NB}
\cN_\cB = \brackets*{\un \in \N^A \mvert \forall B \in \cB, \norm{\un_B} \geq 2}.
\end{equation}

\begin{lem}[Description of $\min(\cN_\cB)$]
\label{lem: description of NB}
If $\cB \subset 2^A$ is such that $\emptyset \notin \cB$, then $\emptyset \neq \min(\cN_\cB) \subset \ssquarebrackets{0}{2}^A$. In particular $\min(\cN_\cB)$ is finite. Moreover, for all $\un \in \min(\cN_\cB)$ and $a \notin \bigcup_{B \in \cB} B$ we have $n_a=0$.
\end{lem}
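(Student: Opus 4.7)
The plan is to verify each clause of the conclusion separately, all via elementary arguments built on Lemma~\ref{lem: well founded order}. Throughout, I will repeatedly use the obvious fact that $\cN_\cB$ is upward closed: if $\un \in \cN_\cB$ and $\un \leq \un'$, then $\un' \in \cN_\cB$.

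First, to see $\min(\cN_\cB) \neq \emptyset$, I will produce any element of $\cN_\cB$ and invoke Lemma~\ref{lem: well founded order}. The natural witness is $\un^{(0)} = 2 \sum_{a \in A} \one_a \in \N^A$. Since $\emptyset \notin \cB$, every $B \in \cB$ contains at least one element, so $\norm{\un^{(0)}_B} \geq 2$; thus $\un^{(0)} \in \cN_\cB$, and Lemma~\ref{lem: well founded order} yields some $\um \in \min(\cN_\cB)$ with $\um \leq \un^{(0)}$.

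Next, I will prove the inclusion $\min(\cN_\cB) \subset \ssquarebrackets{0}{2}^A$ by contradiction. Suppose $\un \in \min(\cN_\cB)$ and $n_a \geq 3$ for some $a \in A$; consider $\um = \un - \one_a \in \N^A$. For any $B \in \cB$ with $a \notin B$, we have $\um_B = \un_B$ so $\norm{\um_B} = \norm{\un_B} \geq 2$. For any $B \in \cB$ with $a \in B$, the single coordinate $m_a = n_a - 1 \geq 2$ already gives $\norm{\um_B} \geq m_a \geq 2$. Hence $\um \in \cN_\cB$, and $\um < \un$ contradicts the minimality of $\un$. Thus $n_a \leq 2$ for every $a$, and finiteness of $\min(\cN_\cB)$ follows immediately since $\ssquarebrackets{0}{2}^A$ is finite.

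Finally, for the last clause, suppose $\un \in \min(\cN_\cB)$ and $a \notin \bigcup_{B \in \cB} B$. If $n_a \geq 1$, then $\um = \un - \one_a$ satisfies $\um_B = \un_B$ for every $B \in \cB$ (because $a \in B$ for no such $B$), so $\um \in \cN_\cB$ with $\um < \un$, again contradicting minimality. Therefore $n_a = 0$. There is no real obstacle here: the entire lemma is a compactness-of-a-downward-closed-complement style argument, and the only mildly delicate point is observing, in the second paragraph, that decrementing a coordinate $n_a \geq 3$ is safe even when $a \in B$ because that single coordinate still contributes $\geq 2$ after the decrement.
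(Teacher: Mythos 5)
Your proof is correct and uses essentially the same idea as the paper's: both start by noting $(2,\dots,2) \in \cN_\cB$ and invoke Lemma~\ref{lem: well founded order}, and both then exploit the fact that one can shrink a minimal element without leaving $\cN_\cB$. The only stylistic difference is that the paper constructs, in one step, the truncated-and-zeroed multi-index $\um$ with $m_a = \min(n_a,2)$ for $a \in \bigcup_{B\in\cB}B$ and $m_a=0$ otherwise, and concludes $\um = \un$ by minimality, whereas you reach the same conclusions by two single-coordinate decrement arguments run by contradiction; both are sound and of comparable length.
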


\begin{proof}
Since $\emptyset \notin \cB$, we have $(2,\dots,2) \in \cN_\cB$. By Lemma~\ref{lem: well founded order}, there exists $\un \in \min(\cN_\cB)$ such that $\un \leq (2,\dots,2)$. In particular $\min(\cN_\cB) \neq \emptyset$.

Let $\un=(n_a)_{a \in A} \in \min(\cN_\cB)$. We define $\um=(m_a)_{a \in A} \in \ssquarebrackets{0}{2}^A$ by $m_a=\min(n_a,2)$ if $a \in \bigcup_{B \in \cB}B$ and $m_a=0$ otherwise. We have $\um \leq \un$. For all $B \in \cB$, if there exists $b \in B$ such that $n_b \geq 2$, then $\norm{\um_B}\geq m_b =2$; otherwise $\um_B=\un_B$, and $\norm{\um_B}=\norm{\un_B}\geq 2$. This proves that $\um \in \cN_\cB$, and hence $\um=\un$ by minimality of $\un$. The conclusion follows.
\end{proof}

\begin{ex}
\label{ex: NB}
Let us give some examples.

\begin{enumerate}
\item \label{item: NB 0} If $\cB = \emptyset$ then $\cN_\cB=\N^A$ and $\min(\cN_\cB)=\brackets{0}$.

\item \label{item: NB 1} If $\cB = \brackets{B}$, then $\min(\cN_\cB)=\brackets*{\un \in \N^B \mvert \norm{\un}=2} = \brackets*{\strut \one_b+ \one_c \mvert b,c \in B} \subset \N^A$.

\item \label{item: NB explicit} If $\cB = \brackets*{\strut \brackets{1,2}, \brackets{1,3}, \brackets{2,3}} \subset 2^{\ssquarebrackets{1}{3}}$ then $\min(\cN_\cB) = \brackets*{\strut (1,1,1), (2,2,0), (2,0,2), (0,2,2)}$.
\end{enumerate}
\end{ex}


\subsection{Refined Hadamard lemma in \texorpdfstring{$\R^A$}{}}
\label{subsec: refined Hadamard lemma in RA}

Let $A$ be a non-empty finite set and $\Omega \subset \R^A$ be an open subset. Let $\cM$ be a locally compact topological space. The goal of the present section is to prove a refined version of the Hadamard lemma which is adapted to functions in $\cC^{0,\infty}(\cM \times \Omega)$ vanishing at second order along several subspaces in $\Omega \subset \R^A$, see Definition~\ref{def: C 0 infty}. This is done in Proposition~\ref{prop: refined Hadamard lemma} below.

Consistently with the notation introduced in~Definitions~\ref{def: multi-indices} and~\ref{def: multi-indices A}, we let $\partial_a$ denote the partial derivative with respect to the coordinate indexed by $a \in A$, which acts on $\cC^{0,\infty}(\cM\times \Omega)$. For all $\ux =(x_a)_{a \in A} \in \R^A$ and $\un = (n_a)_{a \in A} \in \N^A$, we denote by $\ux^{\un} = \prod_{a \in A} x_a^{n_a}$. Given $B \subset A$ and $\um \in \N^B$, we have $\ux^{\um} = (\ux_B)^{\um}$ thanks to the inclusion $\N^B \subset \N^A$, see Remark~\ref{rem: multi-indices B sub A}. Similarly, $\partial^{\um}$ acts naturally on $\cCM$.

We start by proving a parametric version of the Hadamard lemma for functions of $\cCM$ vanishing at order $2$ along $\R^{A \setminus B}\times \brackets{0} \subset \R^A$, in the case where $\Omega$ is the product of two balls. In the following, we use the convention that $\R^\emptyset = \brackets{0}$.

\begin{lem}[Parametric Hadamard Lemma]
\label{lem: parametric Hadamard}
Let $B \subset A$ be non-empty. Let $\Omega = U \times U'$, where $U \subset \R^{A \setminus B}$ and $U' \subset \R^B$ are open balls, and $0 \in U'$. Let $F \in \cCM$ be such that: for all $(w,\ux) \in \cM \times \Omega$ such that $\ux_B=0$ and $\uh \in \brackets{0}\times \R^B$, we have $F(w,\ux)=0$ and $D_{(w,\ux)}F\cdot \uh=0$. Then, there exist functions $(F_{\um})_{\um \in \N^B, \norm{\um}=2}$ in $\cCM$ such that
\begin{equation*}
F:(w,\ux) \longmapsto \sum_{\um \in \N^B, \norm{\um}=2} \ux^{\um} F_{\um}(w,\ux).
\end{equation*}
\end{lem}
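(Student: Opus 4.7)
The plan is to apply the integral Taylor formula twice in the $\ux_B$ direction, exploiting the two vanishing assumptions in succession. Write $\ux = (\uy, \uz)$ with $\uy \in U \subset \R^{A \setminus B}$ and $\uz \in U' \subset \R^B$, so the hypotheses translate as $F(w,\uy,0) = 0$ and $\partial_b F(w,\uy,0) = 0$ for every $b \in B$ and every $(w,\uy) \in \cM \times U$. Since $U'$ is an open ball containing $0$, it is star-shaped with respect to $0$, so the segment $t \mapsto t\uz$ stays in $U'$ for all $t \in [0,1]$, and the constructions below are legitimate.

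Applying the fundamental theorem of calculus along this segment and using $F(w,\uy,0)=0$ gives
\begin{equation*}
F(w,\uy,\uz) = \int_0^1 \frac{d}{dt} F(w,\uy,t\uz) \dx t = \sum_{b \in B} z_b\, G_b(w,\ux),
\qquad G_b(w,\ux) = \int_0^1 \partial_b F(w,\uy,t\uz) \dx t.
\end{equation*}
Since $F \in \cCM$, differentiation under the integral sign shows $G_b \in \cCM$, and the vanishing hypothesis on the first differential yields $G_b(w,\uy,0) = 0$ as well. Repeating the same argument applied to each $\partial_b F$ (which is itself in $\cCM$ and vanishes on $\brackets{\uz = 0}$) gives
\begin{equation*}
\partial_b F(w,\uy,\uz) = \sum_{c \in B} z_c \int_0^1 \partial_b\partial_c F(w,\uy,s\uz) \dx s,
\end{equation*}
and substituting into the expression of $G_b$ yields, after swapping the order of integration,
\begin{equation*}
F(w,\ux) = \sum_{b,c \in B} z_b z_c \, \tilde{H}_{b,c}(w,\ux),
\qquad \tilde{H}_{b,c}(w,\ux) = \int_0^1\!\!\int_0^1 t\, \partial_b\partial_c F(w,\uy,st\uz) \dx s \dx t.
\end{equation*}
Differentiation under the integral, together with the joint continuity statement in Definition~\ref{def: C 0 infty}, shows that each $\tilde{H}_{b,c}$ belongs to $\cCM$.

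To conclude, it only remains to regroup terms according to the multi-index $\um = \one_b + \one_c$. Setting
\begin{equation*}
F_{\um}(w,\ux) = \sum_{(b,c) \in B \times B,\ \one_b + \one_c = \um} \tilde{H}_{b,c}(w,\ux) \in \cCM
\end{equation*}
for each $\um \in \N^B$ with $\norm{\um} = 2$, and observing that $\uz^{\um} = \ux^{\um}$ under the inclusion $\N^B \subset \N^A$ of Remark~\ref{rem: multi-indices B sub A}, we obtain $F(w,\ux) = \sum_{\norm{\um}=2} \ux^{\um} F_{\um}(w,\ux)$, as required. No single step is delicate here: the only point to watch is that the parametric smoothness class $\cCM$ is preserved under the integral operations, which follows directly from the usual differentiation under the integral theorem applied on compact sets in $(s,t) \in [0,1]^2$.
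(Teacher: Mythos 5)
Your proof is correct and follows essentially the same route as the paper. The only cosmetic difference is that you apply the fundamental theorem of calculus twice, producing the double-integral remainder $\int_0^1\int_0^1 t\,\partial_b\partial_c F(w,\uy,st\uz)\dx s\dx t$, whereas the paper writes the first-order Taylor expansion with integral remainder in one step, giving $\frac{2}{\um!}\int_0^1 (1-t)\,\partial^{\um}F(w,\ux_{A\setminus B},t\ux_B)\dx t$; a change of variables (Fubini in $(s,t)$ then $u=st$) shows these coincide.
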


\begin{proof}
Let $w \in \cM$ and $\ux \in \Omega = U \times U'$. Since $U'$ is convex, we can write the first-order Taylor expansion with integral remainder of the function $\uy \mapsto F(w,\ux_{A \setminus B},\uy)$ between $0$ and $\ux_B$. Our hypotheses imply that its first-order Taylor polynomial at $0$ vanishes, hence
\begin{equation*}
F(w,\ux) = \sum_{\um \in \N^B, \norm{\um}=2} \ux^{\um} \underbrace{\frac{2}{\um !}\int_0^1 (1-t) \partial^{\um} F(w,\ux_{A \setminus B},t\ux_B) \dx t}_{=F_{\um}(w,\ux)}.
\end{equation*}

Let $\un\in \N^A$, the function $(w,\ux,t)\mapsto (1-t)t^{\norm{\un_B}} \partial^{\um+\un} F(w,\ux_{A \setminus B},t\ux_B)$ is continuous. Thus, for any compact subset $\Gamma \subset \cM \times \Omega$ it can be dominated by a constant on $\Gamma \times [0,1]$. This is enough to prove that the parametric integral $F_{\um}$ admits continuous partial derivatives at any order on the interior of $\Gamma$. Since $\cM$ is locally compact, so is $\cM \times \Omega$. Hence $F_{\um} \in \cCM$.
\end{proof}

As in Section~\ref{subsec: minimal multi-indices}, let $\cB \subset 2^A$ be a family of non-empty subsets of $A$. Recall that $\cN_\cB \subset \N^A$ is defined by Equation~\eqref{eq: def NB} and that $\min(\cN_\cB)$ is finite by Lemma~\ref{lem: description of NB}.

\begin{prop}[Refined Hadamard lemma]
\label{prop: refined Hadamard lemma}
Let $F \in \cCM$ be such that, for all $B \in \cB$: for all $(w,\ux) \in \cM \times \Omega$ such that $\ux_B=0$ and $\uh \in \brackets{0}\times\R^B$, we have $F(w,\ux)=0$ and $D_{(w,\ux)}F\cdot \uh=0$. Then there exist functions $\parentheses*{F_{\un}}_{\un \in \min(\cN_\cB)}$ in $\cCM$ such that
\begin{equation*}
F : (w,\ux) \longmapsto \sum_{\un \in \min(\cN_\cB)} \ux^{\un} F_{\un}(w,\ux).
\end{equation*}
\end{prop}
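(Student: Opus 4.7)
The plan is to reduce to a suitable local model via a partition of unity, then to iterate Lemma~\ref{lem: parametric Hadamard} under a strengthened induction hypothesis that tracks partial vanishing orders along each subspace $V_B = \brackets*{\ux \in \R^A \mvert \ux_B = 0}$, and finally to collapse the resulting index set down to $\min(\cN_\cB)$ using Lemma~\ref{lem: well founded order}.

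First, I would use a partition of unity to reduce to the case where $\Omega$ is a product of open intervals $\prod_{a \in A} I_a$, with each $I_a$ containing $0$ when $a$ belongs to $B_* = \bigcup_{B \in \cB} B$. On neighborhoods disjoint from every $V_B$, the factorization is trivial, since any monomial $\ux^{\un}$ is nowhere zero there and $F/\ux^{\un}$ is smooth. On a product neighborhood of the type above, every $B \subset B_*$ yields a convex second factor containing the origin, so Lemma~\ref{lem: parametric Hadamard} applies (its proof uses only convexity of $U'$ and that $0 \in U'$, not that $U'$ is a ball). Gluing the local decompositions with the partition of unity preserves the form $\sum_{\un \in \min(\cN_\cB)} \ux^{\un} F_{\un}$.

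Second, I would prove by induction on the integer weight $W = \sum_{B \in \cB} n_B$ the following strengthened statement: for any family $(n_B)_{B \in \cB}$ of positive integers and any $F \in \cCM$ such that $\partial^{\un'} F(w, \ux) = 0$ whenever $\ux_B = 0$ and $\norm{\un'_B} < n_B$, there exist a finite set $T \subset \brackets*{\un \in \N^A \mvert \forall B \in \cB,\ \norm{\un_B} \geq n_B}$ and $(F_{\un})_{\un \in T}$ in $\cCM$ with $F = \sum_{\un \in T} \ux^{\un} F_{\un}$. The base case $W = 0$ is trivial. For the induction step, pick $B_1 \in \cB$ with $n_{B_1} \geq 1$ and apply a Hadamard lemma of order $n_{B_1}$ at $V_{B_1}$: Lemma~\ref{lem: parametric Hadamard} when $n_{B_1} = 2$, and the standard first-order analogue when $n_{B_1} = 1$. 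This produces $F = \sum_{\um \in \N^{B_1},\, \norm{\um} = n_{B_1}} \ux^{\um} F_{\um}$. The explicit Taylor-remainder formulas for the $F_{\um}$ then give, for every $B \in \cB \setminus \brackets{B_1}$ and every $\un'$ with $\norm{\un'_B} < n_B - \norm{\um_B}$, that $\partial^{\un'} F_{\um}(w, \ux) = 0$ whenever $\ux_B = 0$. Replacing the family $(n_B)_{B \in \cB}$ by $\parentheses*{\max(0,\, n_B - \norm{\um_B})}_{B \in \cB \setminus \brackets{B_1}}$ strictly decreases $W$, so the induction hypothesis decomposes each $F_{\um}$; multiplying by $\ux^{\um}$ and regrouping yields the claim.

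Applying this strengthened statement with $n_B = 2$ for every $B \in \cB$ produces a decomposition $F = \sum_{\un \in T} \ux^{\un} F_{\un}$ with finite $T \subset \cN_\cB$. By Lemma~\ref{lem: well founded order}, for each $\un \in T$ there exists $\um(\un) \in \min(\cN_\cB)$ with $\um(\un) \leq \un$; writing $\ux^{\un} F_{\un} = \ux^{\um(\un)} \parentheses*{\ux^{\un - \um(\un)} F_{\un}}$ and grouping terms by the value of $\um(\un)$ gives the desired decomposition indexed by $\min(\cN_\cB)$. The main obstacle will be the induction step of the strengthened statement: the naive version with $n_B = 2$ throughout fails to iterate, because the coefficients $F_{\um}$ produced by Lemma~\ref{lem: parametric Hadamard} inherit only the reduced order $n_B - \norm{\um_B}$ of vanishing along $V_B$ for $B \neq B_1$, not the full order two. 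Introducing variable orders and tracking their decrease after pulling out a factor $\ux^{\um}$ is precisely what closes the induction.
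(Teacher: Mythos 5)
Your core idea — inducting on the total weight $W = \sum_{B \in \cB} n_B$ with a strengthened statement that tracks variable vanishing orders along each $\brackets*{\ux \mvert \ux_B = 0}$, rather than on $\card(\cB)$ as the paper does — is correct and genuinely different. The key observation, that the coefficients $F_{\um}$ produced by Lemma~\ref{lem: parametric Hadamard} inherit vanishing of order $n_B - \norm{\um_B}$ along the other subspaces (because the integral formula composes $\partial^{\um+\un'}F$ with a point still lying in $\brackets*{\ux_B = 0}$ when the input does), is accurate, and the collapse to $\min(\cN_\cB)$ at the end via Lemma~\ref{lem: well founded order} is fine. Compared with the paper, which peels off one $B$ at a time and applies Lemma~\ref{lem: parametric Hadamard} to a modified Taylor remainder of the induction-hypothesis coefficients $G_{\un}$, your version applies the Hadamard lemma directly to $F$; this is arguably cleaner, at the price of carrying the extra bookkeeping of variable orders.

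However, your reduction to local models has a genuine gap. You claim that a partition of unity reduces $\Omega$ to two kinds of patches: product neighborhoods $\prod_a I_a$ with $0 \in I_a$ for every $a \in B_* = \bigcup_{B\in\cB} B$, and neighborhoods disjoint from every $\brackets*{\ux \mvert \ux_B = 0}$. These two types do not cover $\Omega$ in general. Take $A = \brackets{1,2}$, $\cB = \brackets{\brackets{1},\brackets{2}}$, and let $\Omega$ be $(-1,1)^2$ with the closed segment $\brackets*{(t,0) \mvert \norm{t}\leq 1/2}$ removed; the point $(9/10,\,0)$ lies in $\Omega$ and on $\brackets*{x_2 = 0}$, so no neighborhood of it is disjoint from every $V_B$, yet no product neighborhood of it inside $\Omega$ can have $0 \in I_1$, since $\brackets*{(t,0) \mvert 0 < t < 9/10}$ is not contained in $\Omega$. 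Around such a point, at the step where you pick $B_1 = \brackets{2}$ to peel off, you may apply Lemma~\ref{lem: parametric Hadamard}, but if instead $n_{\brackets{2}}$ has already dropped to zero and you must peel off $B_1 = \brackets{1}$, the $\R^{B_1}$-factor of the local model does not contain $0$ and Lemma~\ref{lem: parametric Hadamard} does not apply. This is precisely the situation handled by Case~2 of the paper's proof, where one multiplies and divides by $\Norm{\ux_B}_2^2$ (which is bounded away from $0$) to pull out the required factor of degree $2$. Your strengthened-induction framework can absorb this fix — include a sub-case inside the induction step that, whenever the $\R^{B_1}$-factor of the current local model misses the origin, replaces the Hadamard lemma by the $\Norm{\ux_{B_1}}_2^2$ trick — but as written your proposal omits it and the partition-of-unity reduction is invalid as stated.
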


\begin{proof}
The proof is by induction on $\card(\cB)$. If $\cB = \emptyset$ then $\min(\cN_\cB)=\brackets{0}$, see Example~\ref{ex: NB}.\ref{item: NB 0}, and we get the result by letting $F_{0} = F$. Let us assume that $\card(\cB) \geq 1$ and let $\emptyset \neq B \in \cB$. The induction hypothesis applied to $\cB \setminus \brackets{B}$ yields functions $\parentheses*{G_{\un}}_{\un \in \min(\cN_{\cB \setminus \brackets{B}})}$ in $\cCM$ such that $F : (w,\ux) \mapsto \sum_{\un \in \min(\cN_{\cB \setminus \brackets{B}})} \ux^{\un} G_{\un}(w,\ux)$. In order to establish the induction step, we consider three cases.

\paragraph*{Case 1: $\Omega$ is a product of balls intersecting $\R^{A \setminus B}\times \brackets{0}$.}
We assume that $\Omega = U \times U'$, where $U \subset \R^{A \setminus B}$ and $U' \subset \R^B$ are open balls, and that $0 \in U'$.

For all $(w,\ux) \in \cM \times \Omega$, we have $(w,\ux_{A \setminus B},0) \in \cM \times \Omega$, hence
\begin{equation}
\label{eq: Hadamard order 0}
0 = F(w,\ux_{A \setminus B},0) = \sum_{\substack{\un \in \min(\cN_{\cB \setminus \brackets{B}})\\ \norm{\un_B}=0}} \ux^{\un} G_{\un}(w,\ux_{A \setminus B},0).
\end{equation}
Moreover, for all $b \in B$, we have $\partial_b F(w,\ux_{A \setminus B},0) = D_{(w,\ux_{A \setminus B},0)}F\cdot \one_b = 0$, so that:
\begin{equation*}
0 = x_b \partial_b F(w,\ux_{A \setminus B},0) = \sum_{\substack{\un \in \min(\cN_{\cB \setminus \brackets{B}})\\ \un_B=\one_b}} \ux^{\un} G_{\un}(w,\ux_{A \setminus B},0)+\sum_{\substack{\un \in \min(\cN_{\cB \setminus \brackets{B}})\\ \un_B=0}} \ux^{\un} x_b \partial_b G_{\un}(w,\ux_{A \setminus B},0).
\end{equation*}
Summing over $b \in B$, we obtain:
\begin{equation}
\label{eq: Hadamard order 1}
\sum_{\substack{\un \in  \min(\cN_{\cB \setminus \brackets{B}})\\ \norm{\un_B}=1}} \ux^{\un} G_{\un}(w,\ux_{A \setminus B},0)+ \sum_{\substack{\un \in \min(\cN_{\cB \setminus \brackets{B}})\\ \norm{\un_B}=0}} \sum_{b \in B} \ux^{\un} x_b \partial_b G_{\un}(w,\ux_{A \setminus B},0)=0.
\end{equation}

Let $\un \in \min(\cN_{\cB \setminus \brackets{B}})$, the map $(w,\ux) \mapsto G_{\un}(w,\ux)-G_{\un}(w,\ux_{A \setminus B},0)-\sum_{b \in B}x_b \partial_b G_{\un}(w,\ux_{A \setminus B},0)$ satisfies the hypotheses of Lemma~\ref{lem: parametric Hadamard}. Thus, recalling Example~\ref{ex: NB}.\ref{item: NB 1}, there exist functions $\parentheses*{H_{\um,\un}}_{\um \in \min(\cN_{\brackets{B}})}$ in $\cCM$ such that
\begin{equation*}
G_{\un}:(w,\ux) \mapsto G_{\un}(w,\ux_{A \setminus B},0)+\sum_{b \in B}x_b \partial_b G_{\un}(w,\ux_{A \setminus B},0)+\sum_{\um \in \min(\cN_{\brackets{B}})} \ux^{\um} H_{\um,\un}(w,\ux).
\end{equation*}
Then, using Equations~\eqref{eq: Hadamard order 0} and~\eqref{eq: Hadamard order 1}, for all $(w,\ux) \in \cM \times \Omega$ we have
\begin{multline*}
F(w,\ux) = \sum_{\substack{\un \in \min(\cN_{\cB \setminus \brackets{B}})\\ \norm{\un_B}\geq 2}} \ux^{\un}G_{\un}(w,\ux_{A \setminus B},0) +\sum_{\substack{\un \in \min(\cN_{\cB \setminus \brackets{B}})\\ \norm{\un_B}\geq 1}} \sum_{b \in B} \ux^{\un} x_b \partial_b G_{\un}(w,\ux_{A \setminus B},0)\\
+\sum_{\un \in \min(\cN_{\cB \setminus \brackets{B}})} \sum_{\um \in \min(\cN_{\brackets{B}})} \ux^{\um+\un} H_{\um,\un}(w,\ux).
\end{multline*}

Let $\un \in \min(\cN_{\cB \setminus \brackets{B}})$ be such that $\norm{\un_B}\geq 2$. We have $\un \in \cN_\cB$, hence there exists $\up_{\un} \in \min(\cN_\cB)$ such that $\up_{\un} \leq \un$, see Lemma~\ref{lem: well founded order}. Similarly, let $b \in B$ and $\un \in \min(\cN_{\cB \setminus \brackets{B}})$ be such that $\norm{\un_B}\geq 1$. We have $\un \in \cN_{\cB\setminus \brackets{B}}$, hence $\un+\one_b \in \cN_{\cB\setminus \brackets{B}}$. Besides $\norm{(\un+\one_b)_B}=\norm{\un_B}+1 \geq 2$, so that $\un+\one_b \in \cN_\cB$. By Lemma~\ref{lem: well founded order}, there exists $\uq_{\un,b} \in \min(\cN_\cB)$ such that $\uq_{\un,b} \leq \un + \one_b$. Finally, given $\um \in \min(\cN_{\brackets{B}})$ and $\un \in \min(\cN_{\cB \setminus \brackets{B}})$, we have $\um+\un \in \cN_{\cB}$. Hence, there exists $\ur_{\um,\un} \in \min(\cN_\cB)$ such that $\ur_{\um,\un} \leq \um +\un$. For all $\up \in \min(\cN_\cB)$, we define $F_{\up} \in \cCM$ by:
\begin{multline*}
F_{\up}:(w,\ux) \mapsto \sum_{\substack{\un \in \min(\cN_{\cB \setminus \brackets{B}})\\ \norm{\un_B}\geq 2;\ \up_{\un}=\up}}\ux^{\un-\up} G_{\un}(w,\ux_{A \setminus B},0) + \sum_{\substack{\un \in \min(\cN_{\cB \setminus \brackets{B}})\\ \norm{\un_B}\geq 1}}\sum_{\substack{b \in B \\ \uq_{\un,b}=\up}} \ux^{\un+\one_b-\up} \partial_b G_{\un}(w,\ux_{A \setminus B},0) \\
+ \sum_{\un \in \min(\cN_{\cB \setminus \brackets{B}})} \sum_{\substack{\um \in \min(\cN_{\brackets{B}})\\ \ur_{\um,\un}=\up}} \ux^{\um+\un-\up}H_{\um,\un}(w,\ux).
\end{multline*}
Then, $F:(w,\ux) \mapsto \sum_{\up \in \min(\cN_\cB)}\ux^{\up} F_{\up}(w,\ux)$ is the expansion we are looking for in this case.

\paragraph*{Case 2: $\Omega$ is a product of balls disjoint from $\R^{A \setminus B}\times \brackets{0}$.} 
Let us deal with a second special case. We assume that $\Omega = U \times U'$, where $U \subset \R^{A \setminus B}$ and $U' \subset \R^B$ are open balls, and $0 \notin U'$.

In this case, $\ux \mapsto \Norm{\ux_B}_2^2=\sum_{b \in B}x_b^2$ is a positive smooth function on $\Omega$. Recall that we described $\min(\cN_{\brackets{B}})$ in Example~\ref{ex: NB}.\ref{item: NB 1}. Let $\un \in \min(\cN_{\cB \setminus \brackets{B}})$ and $\um \in \min(\cN_{\brackets{B}})$. If there exists $b \in B$ such that $m_b=2$, we define $H_{\um,\un}:(w,\ux) \mapsto \frac{1}{\Norm{\ux_B}_2^2} G_{\un}(w,\ux)$. Otherwise we let $H_{\um,\un}=0$. We have $H_{\um,\un} \in \cCM$ for all $(\um,\un) \in \min(\cN_{\brackets{B}}) \times \min(\cN_{\cB \setminus \brackets{B}})$ and, for all $(w,\ux) \in \cM \times \Omega$,
\begin{equation*}
F(w,\ux)= \sum_{\un \in \min(\cN_{\cB \setminus \brackets{B}})} \ux^{\un} \parentheses*{\sum_{b \in B}x_b^2 \frac{G_{\un}(w,\ux)}{\Norm{\ux_B}_2^2}} = \sum_{\un \in \min(\cN_{\cB \setminus \brackets{B}});\, \um \in \min(\cN_{\brackets{B}})} \ux^{\um+\un} H_{\um,\un}(w,\ux).
\end{equation*}
As above, for all $\um \in\min(\cN_{\brackets{B}})$ and $\un \in \min(\cN_{\cB \setminus \brackets{B}})$, there exists $\ur_{\um,\un} \in \min(\cN_\cB)$ such that $\ur_{\um,\un} \leq \um +\un$. Then, $F:(w,\ux) \mapsto \sum_{\up \in \min(\cN_\cB)}\ux^{\up} F_{\up}(w,\ux)$ where, for all $\up \in \min(\cN_\cB)$, we defined:
\begin{equation*}
F_{\up}:(w,\ux) \longmapsto  \sum_{\un \in \min(\cN_{\cB \setminus \brackets{B}})}\ \sum_{\um \in \min(\cN_{\brackets{B}}); \,  \ur_{\um,\un}=\up} \ux^{\um+\un-\up}H_{\um,\un}(w,\ux).
\end{equation*}
This is the expansion of $F$ we are looking for in this second case.

\paragraph*{Case 3: General case.}
Let us now consider the case where $\Omega$ is an open subset of $\R^A$. In this case, $\Omega$ admits an open cover $\parentheses*{\Omega_i}_{i \in \aleph}$, where each $\Omega_i$ is of a type considered in one of the previous two special cases. Let $\parentheses*{\chi_i}_{i \in \aleph}$ be a smooth partition of unity subordinate to $\parentheses*{\Omega_i}_{i \in \aleph}$, cf.~\cite[Thm.~2.23]{Lee2013}. Given $i \in \aleph$, we can apply one the previous two cases to the restriction of $F$ to $\cM \times \Omega_i$. This yields functions $\parentheses*{F_{i,\un}}_{\un \in \min(\cN_\cB)}$ in $\cC^{0,\infty}\parentheses*{\cM\times\Omega_i}$ such that
\begin{equation*}
\forall (w,\ux) \in \cM \times \Omega_i, \qquad F(w,\ux) = \sum_{\un \in \min(\cN_\cB)} \ux^{\un} F_{i,\un}(w,\ux).
\end{equation*}

Let $\un \in \min(\cN_\cB)$, for all $i \in \aleph$ the function $(w,\ux) \mapsto\chi_i(\ux)F_{i,\un}(w,\ux)$ extends into a function of $\cCM$ vanishing outside $\cM \times \Omega_i$. Since each compact of $\Omega$ intersects the support of finitely many of the $(\chi_i)_{i\in \aleph}$, the map $F_{\un}:(w,\ux) \mapsto \sum_{i \in \aleph} \chi_i(\ux) F_{i,\un}(w,\ux)$ belongs to $\cCM$. Then,
\begin{equation*}
\forall (w,\ux) \in \cM \times \Omega, \qquad F(w,\ux) = \sum_{i \in \aleph} \chi_i(\ux) \parentheses*{\sum_{\un \in \min(\cN_\cB)} \ux^{\un} F_{i,\un}(w,\ux)} = \sum_{\un \in \min(\cN_\cB)} \ux^{\un} F_{\un}(w,\ux).
\end{equation*}
This establishes the induction step in the general case, and concludes the proof.
\end{proof}


\subsection{Refined Hadamard lemma for \texorpdfstring{$F_{\cI,\cJ}$}{}}
\label{subsec: refined Hadamard lemma for FIJ}

In this section, we state a version of Proposition~\ref{prop: refined Hadamard lemma} adapted to the maps $F_{\cI,\cJ}$ from Definition~\ref{def: F I J}. As a consequence, we derive an estimate for these maps, which will be fundamental in the proof of Theorem~\ref{thm: cumulants asymptotics for zero sets}. This is done in Proposition~\ref{prop: key estimate FIJ} below, which is a generalization of~\cite[Lem.~3.22]{Gas2023b}.

In all this section, let $A$ be a non-empty finite set, and let $\cI$ and $\cJ \in \cP_A$ be such that $\cJ \leq \cI$. Recall that $\cM_\cJ$, $\cS_\cI^+$ and $\cL_{\cJ,\cI}^\dagger$ were defined in Definitions~\ref{def: M I} and \ref{def: L J} respectively. Recall also that $F_{\cI,\cJ} \in \cC^{0,\infty}\parentheses*{\parentheses{\cM_\cJ \times \cL_{\cJ,\cI}^\dagger}\times \cS_\cI^+}$, see Lemma~\ref{lem: regularity F I J}.

For all $I \in \cI$, we defined the inner product on $\R_{2\norm{I}-1}[X]^k$ so that $\parentheses*{X^\alpha\otimes e_l}_{\norm{\alpha}<2\norm{I}, 1 \leq l \leq k}$ is an orthonormal basis, where $(e_1,\dots,e_k)$ is the standard basis of $\R^k$. This basis is indexed by:
\begin{equation}
\label{eq: def AI}
\cA_I = \brackets*{(I,\alpha,l) \in \brackets{I}\times \N^d \times \ssquarebrackets{1}{d} \mvert \norm{\alpha}<2\norm{I}}.
\end{equation}
Since we use the product Euclidean structure on $\prod_{I \in \cI} \R_{2\norm{I}-1}[X]^k$, we obtain an orthonormal basis of this space by juxtaposing the previous bases of each factor. The resulting basis is indexed by:
\begin{equation}
\label{eq: def A}
\cA = \bigsqcup_{I \in \cI} \cA_I.
\end{equation}

In the following, we identify a self-adjoint operator $\Lambda \in \cS_\cI = \sym\parentheses*{\prod_{I \in \cI} \R_{2\norm{I}-1}[X]^k}$ with its matrix $\parentheses*{\lambda_{ab}}_{a,b \in \cA}$ in the previous basis. The map $\Lambda \mapsto \parentheses*{(\lambda_{aa})_{a \in \cA}, (\sqrt{2}\lambda_{ab})_{\brackets{a,b}\in \cA^{\cwedge}}}$ is well-defined since $\parentheses*{\lambda_{ab}}_{a,b \in \cA}$ is symmetric. In particular, $\tilde{A}=\cA \sqcup \cA^{\cwedge}$ naturally indexes the coefficients on or above the diagonal in the matrix of $\Lambda$, the subset $\cA$ (resp.~$\cA^{\cwedge}$) indexing to diagonal (resp.~off-diagonal) coefficients. One can check that the previous map is an isometric isomorphism from $\cS_\cI$ equipped with the Frobenius inner product (see Definition~\ref{def: Frobenius inner product}) to~$\R^{\tilde{A}}$ with its standard Euclidean structure. Using these Euclidean coordinates, one can see $\cS_\cI^+$ as an open subset in $\R^{\tilde{A}}$.

Let $B$ be a proper subset of $A$ such that $\cI \leq \brackets{B,A\setminus B}$. Recall that $\cI = \cI_B \sqcup \cI_{A \setminus B}$. We denote by $\tilde{B} \subset \tilde{A}$ the set of indices associated with coefficients in the off-diagonal block above the diagonal in the $2\times 2$ block-decomposition induced by the partition $\brackets{B,A \setminus B}$. That is,
\begin{equation}
\label{eq: def B tilde}
\tilde{B} = \bigsqcup_{I \in \cI_B, J \in \cI_{A \setminus B}} \cA_I \times \cA_J \subset \cA^{\cwedge} \subset \tilde{A}, 
\end{equation}
where we used the canonical inclusion from Remark~\ref{rem: product as pairs}. Note that $\tilde{B} \neq \emptyset$. We also denote by
\begin{equation}
\label{eq: def B I tilde}
\tilde{\cB}_\cI = \brackets*{\tilde{B} \mvert \emptyset \varsubsetneq B \varsubsetneq A \ \text{such that} \ \cI \leq \brackets{B,A\setminus B}}.
\end{equation}
Using this notation, a special case of Proposition~\ref{prop: refined Hadamard lemma} is the following. Recall that $\min(\cN_{\tilde{\cB}_\cI})$ is a finite set of multi-indices indexed by $\tilde{A}$, see Equation~\eqref{eq: def NB} and Lemma~\ref{lem: description of NB}.

\begin{cor}[Refined Hadamard lemma for $F_{\cI,\cJ}$]
\label{cor: refined Hadamard lemma for FIJ}
Let $A$ be a non-empty finite set and let $\cI,\cJ \in \cP_A$ be such that $\cJ \leq \cI$. Then there exists a family $\parentheses*{F_{\cI,\cJ,\un}}_{\un \in \min(\cN_{\tilde{\cB}_\cI})}$ of functions in $\cC^{0,\infty}\parentheses*{\parentheses{\cM_\cJ \times \cL_{\cJ,\cI}^\dagger}\times \cS_\cI^+}$ such that, for all $w \in \cM_\cJ \times \cL_{\cJ,\cI}^\dagger$ and $\uSigma \in \cS_\cI^+\subset \R^{\tilde{A}}$ we have:
\begin{equation*}
F_{\cI,\cJ}(w,\uSigma) = \sum_{\un \in \min(\cN_{\tilde{\cB}_\cI})} \uSigma^{\un} F_{\cI,\cJ,\un}(w,\uSigma).
\end{equation*}
\end{cor}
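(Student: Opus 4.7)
The plan is to deduce this corollary as a direct application of Proposition~\ref{prop: refined Hadamard lemma} to the function $F_{\cI,\cJ}$, using Lemma~\ref{lem: vanishing F I J} to supply the vanishing hypotheses. The only work is checking that the geometric notions of block-diagonal and block-anti-diagonal operators on $\cS_\cI^+$ translate correctly into coordinate-wise vanishing conditions indexed by the sets $\tilde{B}$.

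First, I would set up the framework by identifying $\cS_\cI$ with $\R^{\tilde{A}}$ via the orthonormal basis coordinates introduced around~\eqref{eq: def AI}--\eqref{eq: def A}: to a symmetric operator $\Sigma$ with matrix $(\lambda_{ab})_{a,b\in \cA}$ we associate the tuple $(\lambda_{aa})_{a\in \cA}$ and $(\sqrt{2}\lambda_{ab})_{\{a,b\}\in \cA^{\cwedge}}$. This is an isometry of Euclidean spaces that sends $\cS_\cI^+$ onto an open subset of $\R^{\tilde{A}}$. I would then take $\cM = \cM_\cJ \times \cL_{\cJ,\cI}^\dagger$ as the parameter space: $\cM_\cJ$ is compact (product of Grassmannians and unit spheres in polynomial spaces), $\cL_{\cJ,\cI}^\dagger$ is open in a finite-dimensional normed space, hence $\cM$ is locally compact. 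By Lemma~\ref{lem: regularity F I J}, $F_{\cI,\cJ} \in \cC^{0,\infty}(\cM \times \cS_\cI^+)$, so that the regularity hypothesis of Proposition~\ref{prop: refined Hadamard lemma} holds.

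Next, I would verify the vanishing hypotheses for the family $\tilde{\cB}_\cI$. Fix $\tilde{B}\in\tilde{\cB}_\cI$ coming from a proper subset $B\subset A$ with $\cI\leq \brackets{B,A\setminus B}$. Under the coordinate identification above, the subspace $\cS_{\cI,B}$ of block-diagonal operators is exactly the coordinate hyperplane $\brackets*{\uSigma \in \R^{\tilde{A}} \mvert \uSigma_{\tilde{B}}=0}$, since $\tilde{B}$ collects precisely those pairs of basis indices lying in the off-diagonal block indexed by $\cI_B\times \cI_{A\setminus B}$ (cf.~\eqref{eq: def B tilde}). Symmetrically, $\cS_{\cI,B}^\perp$ is the complementary subspace $\brackets{0}\times \R^{\tilde{B}}$, corresponding to directions $\uh$ with $\uh_{\tilde{A}\setminus\tilde{B}}=0$. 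Lemma~\ref{lem: vanishing F I J} then reads: for all $(w,\uSigma)\in \cM\times \cS_\cI^+$ with $\uSigma_{\tilde{B}}=0$ and all $\uh \in \brackets{0}\times\R^{\tilde{B}}$, one has $F_{\cI,\cJ}(w,\uSigma)=0$ and $D_{(w,\uSigma)}F_{\cI,\cJ}\cdot \uh =0$. This is precisely the hypothesis of Proposition~\ref{prop: refined Hadamard lemma} for the family $\cB = \tilde{\cB}_\cI \subset 2^{\tilde{A}}$.

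Applying Proposition~\ref{prop: refined Hadamard lemma} then produces a finite family $(F_{\cI,\cJ,\un})_{\un\in\min(\cN_{\tilde{\cB}_\cI})}$ in $\cC^{0,\infty}\parentheses*{(\cM_\cJ \times \cL_{\cJ,\cI}^\dagger)\times \cS_\cI^+}$ such that $F_{\cI,\cJ}(w,\uSigma)=\sum_{\un\in\min(\cN_{\tilde{\cB}_\cI})}\uSigma^{\un}F_{\cI,\cJ,\un}(w,\uSigma)$, which is the claim. No substantive obstacle is expected: the entire argument reduces to the dictionary between the block structure of $\cS_\cI$ and the coordinates $\uSigma_{\tilde{B}}$, and the main care is simply to ensure that the proper subsets $B\subset A$ satisfying $\cI\leq \brackets{B,A\setminus B}$ used in Lemma~\ref{lem: vanishing F I J} are exactly those indexing $\tilde{\cB}_\cI$ via~\eqref{eq: def B I tilde}.
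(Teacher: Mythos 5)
Your proposal is correct and follows essentially the same route as the paper: identify $\cS_\cI$ with $\R^{\tilde{A}}$ via the orthonormal basis, translate the block-diagonal/anti-diagonal subspaces $\cS_{\cI,B}$ and $\cS_{\cI,B}^\perp$ into the coordinate conditions $\uSigma_{\tilde{B}}=0$ and $\uLambda_{\tilde{A}\setminus\tilde{B}}=0$, read Lemma~\ref{lem: vanishing F I J} as the second-order vanishing hypothesis for the family $\tilde{\cB}_\cI$, and conclude by Proposition~\ref{prop: refined Hadamard lemma}. The only addition is your explicit check that $\cM_\cJ \times \cL_{\cJ,\cI}^\dagger$ is locally compact and that $F_{\cI,\cJ}$ has the required regularity, which the paper leaves implicit via Lemma~\ref{lem: regularity F I J}.
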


\begin{proof}
As explained previously, we identify any self-adjoint operator $\Lambda \in \cS_\cI$ with the vector of its Euclidean coordinates $\uLambda=\parentheses*{(\lambda_{aa})_{a \in \cA}, (\sqrt{2}\lambda_{ab})_{\brackets{a,b}\in \cA^{\cwedge}}} \in \R^{\tilde{A}}$, where $(\lambda_{ab})_{a,b \in \cA}$ is the matrix of $\Lambda$ in the orthonormal basis discussed above. Under this identification, for any proper $B \subset A$ such that $\cI \leq \brackets{B,A\setminus B}$, the subspace $\cS_{\cI,B}$ from Definition~\ref{def: block-diagonal variance} corresponds to $\brackets{\uLambda\in \R^{\tilde{A}}\mid \uLambda_{\tilde{B}}=0}$, see Equation~\eqref{eq: def B tilde}. Similarly, $\cS_{\cI,B}^\perp$ corresponds to $\brackets{\uLambda\in \R^{\tilde{A}}\mid \uLambda_{\tilde{A}\setminus\tilde{B}}=0}$.

Thus, Lemma~\ref{lem: vanishing F I J} translates into the fact that for all $\tilde{B} \in \tilde{\cB}_\cI$ the following holds: for all $w \in \cM_\cJ \times \cL_{\cJ,\cI}^\dagger$, all $\uSigma \in \cS_\cI^+$ such that $\uSigma_{\tilde{B}}=0$ and all $\uLambda \in \R^{\tilde{A}}$ such that $\uLambda_{\tilde{A}\setminus \tilde{B}}=0$ we have $F_{\cI,\cJ}(w,\uSigma)=0$ and $D_{(w,\uSigma)}F_{\cI,\cJ}\cdot \uLambda=0$. The conclusion follows by Proposition~\ref{prop: refined Hadamard lemma}.
\end{proof}

Let $\cV$ be a non-empty finite set. For any proper $\cW \subset \cV$, recall that $\cW \times (\cV \setminus \cW)$ can be seen as a subset of $\cV^{\cwedge}$ thanks to the inclusion from Remark~\ref{rem: product as pairs}. We define:
\begin{equation}
\label{eq: def B V}
\cB_\cV = \brackets*{\cW \times (\cV \setminus \cW) \mvert \emptyset \varsubsetneq \cW \varsubsetneq \cV} \subset 2^{(\cV^{\cwedge})}.
\end{equation}
Note that $\cB_\cV=\emptyset$ if and only if $\card(\cV)=1$, and that $\emptyset \notin \cB_\cV$.

Let us now consider $\cI \in \cP_A$. If $B \subset A$ is a proper subset such that $\cI \leq \brackets{B,A\setminus B}$, then $\cI_B$ is a proper subset of $\cI$. Conversely, any proper subset of $\cI$ can be obtained in this way, compare Lemma~\ref{lem: bijections partitions}.\ref{item: bijection coarser} in the case of partitions with exactly two blocks. Thus, in this case, the previous definition can be rewritten as follows:
\begin{equation}
\label{eq: def B I}
\cB_\cI = \brackets*{\cI_B \times \cI_{A \setminus B}\mvert \emptyset \varsubsetneq B \varsubsetneq A \ \text{such that} \ \cI \leq \brackets{B,A\setminus B}} \subset 2^{(\cI^{\cwedge})}.
\end{equation}
A consequence of Corollary~\ref{cor: refined Hadamard lemma for FIJ} is the following key estimate.

\begin{prop}[Key estimate for $F_{\cI,\cJ}$]
\label{prop: key estimate FIJ}
Let $A$ be a non-empty finite set and $\cI,\cJ \in \cP_A$ be such that $\cJ \leq \cI \neq \brackets{A}$. For any compact $\Gamma \subset \cM_\cJ \times \cL_{\cJ,\cI}^\dagger \times \cS_\cI^+$, there exists $C_{\Gamma} \geq 0$ such that:
\begin{equation*}
\forall (w,\Sigma) \in \Gamma, \qquad \norm*{F_{\cI,\cJ}(w,\Sigma)} \leq C_{\Gamma} \sum_{\un \in \min(\cN_{\cB_\cI})} \prod_{\brackets{I,J} \in \cI^{\cwedge}} \Norm{\Sigma_I^J}^{n_{\brackets{I,J}}},
\end{equation*}
where $\Sigma = (\Sigma_I^J)_{I,J \in \cI}$ by blocks, and $\Norm{\Sigma_I^J}=\Norm{\Sigma_J^I}$ stands for the common operator norm of these linear maps, subordinated to the Euclidean ones.
\end{prop}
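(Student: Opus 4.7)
The plan is to deduce the estimate directly from the Hadamard-type decomposition of Corollary~\ref{cor: refined Hadamard lemma for FIJ}, bounding each term by transferring multi-indices from coefficient level (indexed by $\tilde{A}$) to block level (indexed by $\cI^{\cwedge}$).

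First, apply Corollary~\ref{cor: refined Hadamard lemma for FIJ} to write
\begin{equation*}
F_{\cI,\cJ}(w,\Sigma) = \sum_{\un \in \min(\cN_{\tilde{\cB}_\cI})} \uSigma^{\un}\, F_{\cI,\cJ,\un}(w,\Sigma),
\end{equation*}
where $\uSigma = \bigl((\sigma_{aa})_{a\in\cA},(\sqrt{2}\sigma_{ab})_{\brackets{a,b}\in\cA^{\cwedge}}\bigr)\in \R^{\tilde{A}}$ denotes the Euclidean coordinates of $\Sigma$ in the orthonormal basis from Section~\ref{sec: refined Hadamard lemma}. Since each $F_{\cI,\cJ,\un}$ belongs to $\cC^{0,\infty}\parentheses*{(\cM_\cJ\times\cL^\dagger_{\cJ,\cI})\times \cS_\cI^+}$, hence is continuous, it is bounded on the compact $\Gamma$ by some constant. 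It thus suffices to bound each monomial $\norm*{\uSigma^{\un}}$ for $\un \in \min(\cN_{\tilde{\cB}_\cI})$.

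Next, exploit the block structure. By Lemma~\ref{lem: description of NB} applied to $\tilde{\cB}_\cI \subset 2^{\cA^{\cwedge}}$, any $\un \in \min(\cN_{\tilde{\cB}_\cI})$ is supported in $\cA^{\cwedge}$ (i.e.\ on off-diagonal entries), and for each $\brackets{a,b}\in\cA^{\cwedge}$ with $a\in\cA_I$, $b\in\cA_J$ and $I\neq J$ in $\cI$, we have $\norm{\sigma_{\brackets{a,b}}}\leq \sqrt{2}\Norm{\Sigma_I^J}$. Define the aggregated multi-index $\um(\un) = \bigl(m_{\brackets{I,J}}(\un)\bigr)_{\brackets{I,J}\in\cI^{\cwedge}} \in \N^{\cI^{\cwedge}}$ by
\begin{equation*}
m_{\brackets{I,J}}(\un) = \sum_{a \in \cA_I,\, b \in \cA_J} n_{\brackets{a,b}},
\end{equation*}
(terms with $a = b$ being $0$ since $I\neq J$ and $\cA_I\cap\cA_J=\emptyset$). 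Then, up to an absolute constant,
\begin{equation*}
\norm*{\uSigma^{\un}} \leq C \prod_{\brackets{I,J}\in\cI^{\cwedge}} \Norm{\Sigma_I^J}^{m_{\brackets{I,J}}(\un)}.
\end{equation*}

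The key combinatorial step is to check that $\um(\un) \in \cN_{\cB_\cI}$. For any proper $B\subset A$ with $\cI \leq \brackets{B,A\setminus B}$, recall from~\eqref{eq: def B tilde} that $\tilde{B} = \bigsqcup_{I\in\cI_B, J\in\cI_{A\setminus B}} \cA_I\times\cA_J$, and from~\eqref{eq: def B I} that the corresponding element of $\cB_\cI$ is $\cI_B\times\cI_{A\setminus B}\subset\cI^{\cwedge}$. A direct computation gives
\begin{equation*}
\norm*{\um(\un)_{\cI_B\times\cI_{A\setminus B}}} = \sum_{I\in\cI_B,\, J\in\cI_{A\setminus B}} \, \sum_{a\in\cA_I,\, b\in\cA_J} n_{\brackets{a,b}} = \norm{\un_{\tilde{B}}} \geq 2,
\end{equation*}
the last inequality because $\un \in \cN_{\tilde{\cB}_\cI}$. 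Hence $\um(\un) \in \cN_{\cB_\cI}$.

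Finally, by Lemma~\ref{lem: well founded order}, pick $\um'(\un)\in\min(\cN_{\cB_\cI})$ with $\um'(\un)\leq \um(\un)$. Since the blocks $\Norm{\Sigma_I^J}$ are uniformly bounded on $\Gamma$ by some constant $M$, we have
\begin{equation*}
\prod_{\brackets{I,J}\in\cI^{\cwedge}} \Norm{\Sigma_I^J}^{m_{\brackets{I,J}}(\un)} \leq M^{\norm{\um(\un)}-\norm{\um'(\un)}}\prod_{\brackets{I,J}\in\cI^{\cwedge}} \Norm{\Sigma_I^J}^{m'_{\brackets{I,J}}(\un)}.
\end{equation*}
Summing over the finite set $\min(\cN_{\tilde{\cB}_\cI})$ and regrouping the finitely many resulting terms according to the value of $\um'(\un)\in\min(\cN_{\cB_\cI})$ yields the bound, with $C_\Gamma$ depending only on $\Gamma$ and the fixed data $\cI,\cJ$.

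The only delicate point is verifying the aggregation identity $\norm{\un_{\tilde{B}}}=\norm{\um(\un)_{\cI_B\times\cI_{A\setminus B}}}$, which links the two layers of combinatorics (coefficient-level in $\tilde{A}$ vs.\ block-level in $\cI^{\cwedge}$); everything else is routine book-keeping with continuity arguments on the compact set $\Gamma$.
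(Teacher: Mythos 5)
Your proof is correct and follows the same overall structure as the paper's: apply Corollary~\ref{cor: refined Hadamard lemma for FIJ}, bound the coefficient functions $F_{\cI,\cJ,\un}$ on $\Gamma$ by continuity, aggregate the $\tilde{A}$-level multi-index $\un$ into a $\cI^{\cwedge}$-level multi-index $\um(\un)$, and check the membership $\um(\un) \in \cN_{\cB_\cI}$ via the identity $\norm{\um(\un)_{\cI_B\times\cI_{A\setminus B}}} = \norm{\un_{\tilde{B}}} \geq 2$. The one genuine divergence is in how you handle the passage from $\cN_{\cB_\cI}$ to $\min(\cN_{\cB_\cI})$. The paper proves directly that $\um(\un)$ is itself minimal — this requires a short but non-trivial argument using the minimality of $\un$: one picks $\up\leq\um(\un)$ minimal, lifts $\up$ to a $\uq\in\N^{\tilde{A}}$ with $\uq\leq\un$ and the correct block-sums, checks $\uq\in\cN_{\tilde{\cB}_\cI}$, and invokes minimality of $\un$ to conclude $\uq=\un$ hence $\up=\um(\un)$. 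You bypass this entirely: you just pick $\um'\leq\um(\un)$ minimal by Lemma~\ref{lem: well founded order} and absorb the surplus $\prod\Norm{\Sigma_I^J}^{m_{\brackets{I,J}}-m'_{\brackets{I,J}}}$ into the constant, using boundedness of $\Norm{\Sigma_I^J}$ on the compact~$\Gamma$. This is a valid, shorter argument; what you trade away is that your constant $C_\Gamma$ now also depends on the sup of $\Norm{\Sigma_I^J}$ over $\Gamma$, whereas the paper's multiplicative constant is explicit and $\Gamma$-independent up to the bound on the $F_{\cI,\cJ,\un}$. Since the statement only asks for \emph{some} $C_\Gamma$ depending on $\Gamma$, this makes no difference to the result. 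One small imprecision in your write-up: you say that $\un$ is ``supported in $\cA^{\cwedge}$'', but Lemma~\ref{lem: description of NB} gives the sharper statement that $\un$ is supported on $\bigcup_{\tilde B \in \tilde\cB_\cI}\tilde B = \bigsqcup_{\brackets{I,J}\in\cI^{\cwedge}}\cA_I\times\cA_J$, which excludes the within-block off-diagonal pairs $(\cA_I)^{\cwedge}$. Your bound $\norm{\uSigma^{\un}}\leq C\prod\Norm{\Sigma_I^J}^{m_{\brackets{I,J}}(\un)}$ with an absolute constant $C=\sqrt{2}^{\norm{\un}}$ implicitly uses this sharper support statement; if the support were genuinely all of $\cA^{\cwedge}$, you would have uncontrolled factors from the diagonal blocks $\Sigma_I^I$ and the constant would not be absolute. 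It would be cleaner to state the support explicitly, as the paper does.
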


\begin{proof}
We start from the expansion we established in Corollary~\ref{cor: refined Hadamard lemma for FIJ}. By continuity of the functions $\parentheses*{F_{\cI,\cJ,\un}}_{\un \in \min(\cN_{\tilde{B}_\cI})}$ on the compact $\Gamma$, there exists $C \geq 0$ such that
\begin{equation*}
\forall (w,\Sigma) \in \Gamma, \qquad \norm*{F_{\cI,\cJ}(w,\Sigma)} \leq C \sum_{\un \in \min(\cN_{\tilde{\cB}_\cI})} \norm{\uSigma^{\un}},
\end{equation*}
where $(\Sigma_{ab})_{a,b \in \cA}$ is the matrix of $\Sigma$ in the orthonormal basis discussed at the beginning of this section, and $\uSigma = \parentheses*{(\Sigma_{aa})_{a \in \cA},(\sqrt{2}\Sigma_{ab})_{\brackets{a,b} \in \cA^{\cwedge}}} \in \R^{\tilde{A}}$.

Let $\un \in \min(\cN_{\tilde{\cB}_\cI})$. It is a multi-index indexed by
\begin{equation}
\label{eq: splitting A tilde}
\tilde{A} =\cA \sqcup \cA^{\cwedge} = \cA \sqcup \parentheses*{\bigsqcup_{I \in \cI} (\cA_I)^{\cwedge}} \sqcup \parentheses*{\bigsqcup_{\brackets{I,J} \in \cI^{\cwedge}} \cA_I\times \cA_J},
\end{equation}
see~\eqref{eq: def AI} and~\eqref{eq: def A}. By Equations~\eqref{eq: def B tilde} and~\eqref{eq: def B I tilde}, we have $\parentheses*{\cA \sqcup \bigsqcup_{I \in \cI} (\cA_I)^{\cwedge}} \cap \parentheses*{\bigcup_{\tilde{B} \in \tilde{\cB}_\cI} \tilde{B}}=\emptyset$. Hence, by Lemma~\ref{lem: description of NB}, we have $\un_{\cA}=0$, and $\un_{(\cA_I)^{\cwedge}}=0$ for all $I \in \cI$. Note that $\cA \sqcup \bigsqcup_{I \in \cI} (\cA_I)^{\cwedge}$ is the set indexing the coefficients in the diagonal blocks of the matrix of $\Sigma = (\Sigma_I^J)_{I,J \in \cI}$. We just proved that these coefficients do not appear in the product $\uSigma^{\un}$. Thus, given $\Sigma \in \cS_\cI^+$, we have:
\begin{equation*}
\norm{\uSigma^{\un}} = \prod_{\brackets{I,J} \in \cI^{\cwedge}} \prod_{(a,b) \in \cA_I \times \cA_J} \norm*{\sqrt{2}\Sigma_{ab}}^{n_{\brackets{a,b}}} \leq \sqrt{2}^{\norm{\un}} \prod_{\brackets{I,J} \in \cI^{\cwedge}} \Norm{\Sigma_I^J}^{\norm{\un_{\cA_I \times \cA_J}}}.
\end{equation*}
Indeed, if $I \neq J$ and $(a,b) \in \cA_I \times \cA_J$, then $\Sigma_{ab}$ is a coefficient in the matrix of the block $\Sigma_I^J$. With our choice of norms, we have $\norm{\Sigma_{ab}} \leq \Norm{\Sigma_I^J} = \Norm{\Sigma_J^I}$. In particular, there is no ambiguity coming from the fact that we consider unordered pairs.

Since $\cI \neq \brackets{A}$, we have $\cI^{\cwedge}\neq \emptyset$. Let us define $\um=(m_{\brackets{I,J}}) \in \N^{(\cI^{\cwedge})}$ by $m_{\brackets{I,J}}=\norm*{\un_{\cA_I \times \cA_J}}$ for all $\brackets{I,J} \in \cI^{\cwedge}$. Let $B$ be a proper subset of $A$ such that $\cI \leq \brackets{B,A \setminus B}$. We have:
\begin{equation}
\label{eq: key estimate relation multi-indices}
\norm{\um_{\cI_B \times \cI_{A \setminus B}}} = \sum_{(I,J) \in \cI_B \times \cI_{A \setminus B}} m_{\brackets{I,J}} = \sum_{(I,J) \in \cI_B \times \cI_{A \setminus B}} \sum_{(a,b) \in \cA_I \times \cA_J} n_{\brackets{a,b}} = \norm{\un_{\tilde{B}}}\geq 2,
\end{equation}
since $\un \in \cN_{\tilde{\cB}_\cI}$, see Equations~\eqref{eq: def NB}, \eqref{eq: def B tilde} and~\eqref{eq: def B I tilde}. Hence $\um \in \cN_{\cB_\cI}$, see Equation~\eqref{eq: def B I}.

Let us prove that $\um \in\min(\cN_{\cB_\cI})$. By Lemma~\ref{lem: well founded order}, there exists $\up \in \min(\cN_{\cB_\cI})$ such that $\up \leq \um$. We have $p_{\brackets{I,J}}\leq \norm{\un_{\cA_I \times \cA_J}}$ for all $\brackets{I,J} \in \cI^{\cwedge}$. Thanks to the splitting of $\tilde{A}$ given by~\eqref{eq: splitting A tilde}, there exists $\uq \in \N^{\tilde{A}}$ such that $\uq \leq \un$ and, for all $\brackets{I,J} \in \cI^{\cwedge}$, $\norm*{\uq_{\cA_I \times \cA_J}}=p_{\brackets{I,J}}$. The same computation as~\eqref{eq: key estimate relation multi-indices} shows that, for all $\tilde{B} \in \tilde{B}_\cI$, we have $\norm{\uq_{\tilde{B}}} = \norm{\up_{\cI_B \times \cI_{A \setminus B}}} \geq 2$. Hence, $\uq \in \cN_{\tilde{\cB}_\cI}$. By minimality of $\un$ we have $\uq=\un$, which implies $\um=\up \in\min(\cN_{\cB_\cI})$. Hence, for all $(w,\Sigma) \in \Gamma$, we have:
\begin{equation*}
\norm{\uSigma^{\un}} \leq \sqrt{2}^{\norm{\un}} \prod_{\brackets{I,J} \in \cI^{\cwedge}} \Norm{\Sigma_I^J}^{m_{\brackets{I,J}}} \leq \sqrt{2}^{\norm{\un}} \sum_{\up \in \min(\cN_{\cB_\cI})} \prod_{\brackets{I,J} \in \cI^{\cwedge}} \Norm{\Sigma_I^J}^{p_{\brackets{I,J}}}.
\end{equation*}

By Lemma~\ref{lem: description of NB}, the cardinality of $\min\parentheses{\cN_{\tilde{B}_\cI}}$ is at most $3 \norm{\tilde{A}}$, and moreover $\norm{\un} \leq 2 \norm{\tilde{A}}$ for all $\un \in \min(\cN_{\tilde{\cB}\cI})$. Letting $C_\Gamma = 3C\norm{\tilde{A}} 2^{\norm{\tilde{A}}}$, we conclude the proof by summing the previous inequalities over $\un \in \min\parentheses{\cN_{\tilde{B}_\cI}}$.
\end{proof}


\section{Edge-connected graphs}
\label{sec: edge-connected graphs}

The goal of this section is to recall some standard facts about graphs and to relate indexing sets of the form $\min(\cN_{\cB_\cI})$ to graphs. This is done in Section~\ref{subsec: graphs and edge-connectedness}. The graphs we consider satisfy a property reinforcing connectedness, called $2$-edge-connectedness. We prove in Section~\ref{subsec: ear decompositions and spanning trees} that $2$-edge-connectedness implies the existence of a nice family of spanning trees.


\subsection{Graphs and edge-connectedness}
\label{subsec: graphs and edge-connectedness}

We consider non-empty finite graphs that have no loops but may have multiple edges with the same endpoints. We are interested in keeping track of the labels of the vertices, but we do not care about the labels of the edges. This leads to the following definition, compare~\cite[Def.~1.1.2]{Wes2001}.

\begin{dfn}[Graph]
\label{def: graph}
A \emph{graph} $G$ is a couple $\parentheses*{\cV,\un}$ where $\cV$ is a non-empty finite set and $\un = \parentheses{n_{\brackets{u,v}}}_{\brackets{u,v} \in \cV^{\cwedge}}$ is a multi-index in $\N^{(\cV^{\cwedge})}$. Elements of $\cV$ are called the \emph{vertices} of $G$. For all $e = \brackets{u,v} \in \cV^{\cwedge}$, the integer $n_e$ is the number of \emph{edges} between $u$ and $v$. We denote by $\cE_G = \brackets*{e \in \cV^{\cwedge} \mvert n_e \geq 1}$ the set of pairs of vertices that are the \emph{endpoints} of at least one edge.
\end{dfn}

Since Definition~\ref{def: graph} is but one among many possible variations, we recall the following standard definitions for the reader's convenience.

\begin{dfn}[Path, cycle and spanning tree]
\label{def: path cycle tree}
Let $G=\parentheses*{\cV,\un}$ be a graph.
\begin{enumerate}
\item \label{item: def subgraph} We say that a graph $H=(\cW,\um)$ is a \emph{sub-graph} of $G$, and we write $H \subset G$, if $\cW \subset \cV$ and $\um \leq \un$. Note that $\um \in \N^{(\cV^{\cwedge})}$ since the first condition implies $\cW^{\cwedge} \subset \cV^{\cwedge}$, see Remark~\ref{rem: multi-indices B sub A}.

\item \label{item: def cycle} Let $l \geq 2$. A \emph{cycle} of length $l$ in $G$ is a sub graph of the type $\parentheses*{\brackets{v_1,\dots,v_l},\sum_{i=1}^l \one_{\brackets{v_{i-1},v_i}}}$, where $v_1,\dots,v_l \in \cV$ are pairwise distinct and we denoted $v_0=v_l$.

\item \label{item: def path} Let $l \in \N$ and $u,v \in \cV$. A \emph{path} of length $l$ between $u$ and $v$ in $G$ is a sub graph of the type $\parentheses*{\brackets{v_0,v_1,\dots,v_l},\sum_{i=1}^l \one_{\brackets{v_{i-1},v_i}}}$; where $v_0=u$, $v_l=v$ and $v_0,v_1,\dots,v_l \in \cV$ are pairwise distinct. The vertices $\parentheses{v_i}_{1 \leq i \leq l-1}$ are called the \emph{internal vertices} of the path.

\item \label{item: def tree} A connected graph without cycle is called a~\emph{tree}. In particular, a tree has no multiple edge.

\item \label{item: def spanning} We say that a sub graph $T=(\cW,\um) \subset G$ is a \emph{spanning tree} of $G$ if $T$ is a tree and $\cW=\cV$.
\end{enumerate}
\end{dfn}

We are interested in a class of graphs satisfying a property reinforcing connectedness, that we now introduce. This is a special case of the notion of $k$-edge-connectedness, see~\cite[Def.~4.1.7]{Wes2001}.

\begin{dfn}[$2$-edge-connectedness]
\label{def: 2-edge-connected}
We say that a graph $G=\parentheses*{\cV,\un}$ is \emph{$2$-edge-connected} if it is connected and remains so after removing any of its edges. That is, for all $\um < \un$ such that $\norm{\um}=\norm{\un}-1$, the graph $\parentheses*{\cV,\um}$ is connected. We say that $G$ is \emph{minimally $2$-edge-connected} if it is $2$-edge-connected and, for all $\um < \un$, the graph $(\cV,\um)$ is not $2$-edge-connected.
\end{dfn}

Let $\cV$ be a non-empty finite set. Recall that we defined $\cB_\cV$ in Equation~\eqref{eq: def B V} and $\cN_{\cB_\cV}$ in Equation~\eqref{eq: def NB}. With this in mind, we can characterize $2$-edge-connected graphs as follows.

\begin{lem}[Characterization of $2$-edge-connectedness]
\label{lem: characterization 2 edge connected}
A non-trivial graph $G = \parentheses*{\cV,\un}$ is $2$-edge-connected (resp.~minimally $2$-edge-connected) if and only if $\un \in \cN_{\cB_\cV}$ (resp.~$\un \in \min \parentheses*{\cN_{\cB_\cV}}$).
\end{lem}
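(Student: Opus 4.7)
The proof should proceed by a direct translation between the combinatorial condition defining $\cN_{\cB_\cV}$ and the standard edge-cut characterization of $2$-edge-connectedness, which is a classical fact of graph theory.

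First, I would unpack the definitions. By~\eqref{eq: def B V} and~\eqref{eq: def NB}, the condition $\un \in \cN_{\cB_\cV}$ reads: for every proper non-empty subset $\cW \subsetneq \cV$,
\begin{equation*}
\sum_{(u,v) \in \cW \times (\cV \setminus \cW)} n_{\brackets{u,v}} \geq 2.
\end{equation*}
The left-hand side is precisely the size of the edge cut $[\cW, \cV \setminus \cW]$ in $G = (\cV, \un)$. So $\un \in \cN_{\cB_\cV}$ is equivalent to: every non-trivial edge cut of $G$ has size at least $2$.

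Next, I would prove the classical equivalence: a non-trivial graph $G$ is $2$-edge-connected if and only if every non-trivial edge cut has size at least $2$. For the forward direction, assume some cut $[\cW, \cV \setminus \cW]$ contains exactly one edge $e$; then $G$ is connected (some edge crosses every non-trivial cut, since $G$ is $2$-edge-connected hence connected), but removing $e$ leaves no edge between $\cW$ and $\cV \setminus \cW$, contradicting $2$-edge-connectedness. For the backward direction, the hypothesis immediately gives connectedness (each non-trivial cut contains at least one edge). Moreover, removing a single edge $e = \brackets{u,v}$ decreases the size of every cut by at most one: cuts containing $e$ lose one edge, cuts not containing $e$ are unchanged. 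Since every cut started with size $\geq 2$, every cut still has size $\geq 1$ in $G - e$, so $G - e$ remains connected. This establishes the first half of the lemma.

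For the second half, note that by Definition~\ref{def: minimal multi-indices}, $\un \in \min(\cN_{\cB_\cV})$ if and only if $\un \in \cN_{\cB_\cV}$ and no $\um < \un$ lies in $\cN_{\cB_\cV}$. By the already-established equivalence applied simultaneously to $\un$ and to every $\um \leq \un$ (since $\um$ is also the edge-count of a subgraph on vertex set $\cV$), this translates verbatim into: $G = (\cV,\un)$ is $2$-edge-connected, and $(\cV,\um)$ is not $2$-edge-connected for any $\um < \un$, which is exactly the definition of minimally $2$-edge-connected. No further argument is needed; the only point worth emphasizing is that quantifying over all $\um < \un$ and over only those $\um < \un$ with $\norm{\um} = \norm{\un} - 1$ produce the same class, since $\cN_{\cB_\cV}$ is upward-closed in the component-wise order on $\N^{\cV^{\cwedge}}$, but this observation is not needed for the lemma as stated. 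I do not foresee any genuine obstacle: the whole statement is a dictionary between combinatorial and graph-theoretic vocabulary, once the edge-cut interpretation of $\norm{\un_B}$ for $B = \cW \times (\cV \setminus \cW)$ is made explicit.
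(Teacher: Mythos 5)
Your proof is correct and takes essentially the same approach as the paper's: both translate the condition $\un \in \cN_{\cB_\cV}$ into the edge-cut characterization of $2$-edge-connectedness, and both treat minimality as an immediate translation of the minimality condition on multi-indices. The only difference is one of detail: the paper explicitly proves as a preliminary step that connectedness is equivalent to every non-trivial cut being nonempty, whereas you invoke this as a classical fact; this is a matter of exposition rather than substance.
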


\begin{proof}
Since we assume $G$ to be non-trivial, we have $\card(\cV)\geq 2$ and $\cV^{\cwedge}\neq \emptyset$. Let us first prove that $G=(\cV,\un)$ is connected if and only if, for all proper $\cW \subset \cV$, we have $\norm{\un_{\cW \times \parentheses*{\cV \setminus \cW}}}\geq 1$. Recall that $\cW \times \parentheses*{\cV \setminus \cW} \subset \cV^{\cwedge}$ by Remark~\ref{rem: product as pairs}.

\paragraph*{Step 1: Characterization of connectedness.}  Let us assume that $G$ is connected. Let $\cW \subset \cV$ be a proper subset. Let $w \in \cW$ and $v \in \cV \setminus \cW$. By connectedness, there exists a path from $w$ to $v$ in~$G$. Let $w=v_0,\dots,v_l=v$ denote the distinct vertices of this path, ordered as in Definition~\ref{def: path cycle tree}.\ref{item: def path}. There exists a smallest $j \in \ssquarebrackets{1}{l}$ such that $v_j \notin \cW$. Then, there is at least one edge of $G$ between $v_{j-1} \in \cW$ and $v_j \in \cV \setminus \cW$, and $\norm{\un_{\cW \times \parentheses*{\cV \setminus \cW}}} \geq n_{\brackets{v_{j-1},v_j}} \geq 1$. Thus, for all proper $\cW \subset \cV$, we have $\norm{\un_{\cW \times \parentheses*{\cV \setminus \cW}}}\geq 1$.

Conversely, let us assume that $\norm{\un_{\cW \times \parentheses*{\cV \setminus \cW}}}\geq 1$ for all $\emptyset \varsubsetneq \cW \varsubsetneq \cV$. Let $u \in \cV$, we denote by
\begin{equation*}
\cW = \brackets*{w \in \cV \mvert \text{there exists a path between} \ u \ \text{and} \ w \ \text{in} \ G},
\end{equation*}
the connected component of $u$. For all $w \in \cW$ and $v \notin \cW$ we have $n_{\brackets{v,w}}=0$. Since $u \in \cW$, if we had $\cW \neq \cV$, then $\cW$ would be a proper subset of $\cV$ such that $\un_{\cW \times \parentheses*{\cV \setminus \cW}}=0$, which would contradict our hypothesis. Hence $\cW = \cV$, and $G$ is connected.

\paragraph*{Step 2: Characterization of (minimal) $2$-edge-connectedness.} Assuming that $G$ is $2$-edge-connected, let $\cW$ be a proper subset of $\cV$. Since $G$ is connected, $\norm{\un_{\cW \times \parentheses*{\cV \setminus \cW}}}\geq 1$. Hence there exists $e \in \cW \times (\cV\setminus \cW)$ such that $n_e \geq 1$. Letting $\um=\un-\one_e$, our hypothesis ensures that $(\cV,\um)$ is also connected, hence $\norm{\um_{\cW \times \parentheses*{\cV \setminus \cW}}}\geq 1$ and $\norm{\un_{\cW \times \parentheses*{\cV \setminus \cW}}}=\norm{\um_{\cW \times \parentheses*{\cV \setminus \cW}}}+1 \geq 2$. This holds for all proper $\cW \subset \cV$, hence $\un \in \cN_{\cB_\cV}$.

Conversely, let us assume that $\un \in \cN_{\cB_\cV}$. Let $\um <\un$ be such that $\norm{\um}=\norm{\un}-1$. Then, there exists a pair $e \in \cV^{\cwedge}$ such that $\um = \un - \one_e$. For all proper subset $\cW \subset \cV$, we have $\norm{\um_{\cW \times \parentheses*{\cV \setminus \cW}}}\geq \norm{\un_{\cW \times \parentheses*{\cV \setminus \cW}}}-1 \geq 1$. Hence $(\cV,\um)$ is connected. This establishes the $2$-edge-connectedness of $G$, and concludes the proof of the first equivalence.

Let us now assume that $G$ is $2$-edge-connected graph, that is, $\un \in \cN_{\cB_\cV}$. If $G$ is minimally $2$-edge-connected, for all $\um < \un$, the graph $(\cV,\um)$ is not $2$-edge-connected, hence $\um \notin \cN_{\cB_\cV}$ by the characterization we just proved. Thus $\un \in \min(\cN_{\cB_\cV})$. Conversely, if $\un$ is minimal in $\cN_{\cB_\cV}$, then for all $\um <\un$ we have $\um \notin \cN_{\cB_\cV}$, and hence $(\cV,\um)$ is not $2$-edge-connected. Thus, $G$ is minimally $2$-edge-connected. This concludes the proof the second equivalence.
\end{proof}

\begin{ex}
\label{ex: 2-edge-connected}
Here are some useful examples.
\begin{itemize}
\item A tree is not $2$-edge-connected unless it is trivial.

\item Cycles are minimally $2$-edge-connected.

\end{itemize}
\end{ex}

\begin{dfn}[Set of minimally $2$-edge-connected graphs]
\label{def G V}
Let $\cV$ be a non-empty finite set, we denote by $\G_\cV$ the finite set of minimally $2$-edge-connected graphs with vertex set $\cV$. Since the set of vertices is explicit in this notation, we often denote $\un \in \G_\cV$ instead of $(\cV,\un)\in\G_\cV$. 
\end{dfn}

If $\card(\cV)=1$, then $\G_\cV$ contains only the trivial graph. Otherwise, $(\cV,\un) \mapsto \un$ is a bijection from $\G_\cV$ to $\min(\cN_{\cB_\cV})$ by Lemma~\ref{lem: characterization 2 edge connected}. By Lemma~\ref{lem: description of NB}, this shows that $\G_\cV$ is indeed finite. This also allows us to restate Proposition~\ref{prop: key estimate FIJ} as follows.

\begin{cor}[Key estimate with graphs]
\label{cor: key estimate graphs}
Let $A$ be a non-empty finite set and $\cI,\cJ \in \cP_A$ be such that $\cJ \leq \cI$. For any compact $\Gamma \subset \cM_\cJ \times \cL_{\cJ,\cI}^\dagger \times \cS_\cI^+$, there exists $C_{\Gamma} \geq 0$ such that:
\begin{equation*}
\forall (w,\Sigma) \in \Gamma, \qquad \norm*{F_{\cI,\cJ}(w,\Sigma)} \leq C_{\Gamma} \sum_{\un \in \G_\cI} \prod_{\brackets{I,J} \in \cI^{\cwedge}} \Norm{\Sigma_I^J}^{n_{\brackets{I,J}}},
\end{equation*}
where $\Sigma = (\Sigma_I^J)_{I,J \in \cI}$, and $\Norm{\Sigma_I^J}=\Norm{\Sigma_J^I}$ are operator norms subordinated to the Euclidean ones.
\end{cor}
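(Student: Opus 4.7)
The plan is to reduce the statement to Proposition~\ref{prop: key estimate FIJ} via the bijection provided by Lemma~\ref{lem: characterization 2 edge connected}, treating the degenerate case $\cI = \brackets{A}$ separately. Since the latter is the only place where the hypothesis $\cI \neq \brackets{A}$ of Proposition~\ref{prop: key estimate FIJ} could cause trouble, I would dispense with it first.

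First, suppose $\cI = \brackets{A}$. Then $\card(\cI) = 1$ and $\cI^{\cwedge} = \emptyset$, so the product $\prod_{\brackets{I,J}\in\cI^{\cwedge}} \Norm{\Sigma_I^J}^{n_{\brackets{I,J}}}$ is empty and equals $1$. On the other hand, $\G_\cI$ consists of the single trivial graph, as noted immediately after Definition~\ref{def G V}, so the right-hand side equals $C_\Gamma$. Since $F_{\cI,\cJ} \in \cC^{0,\infty}\parentheses*{(\cM_\cJ \times \cL_{\cJ,\cI}^\dagger) \times \cS_\cI^+}$ by Lemma~\ref{lem: regularity F I J}, it is in particular continuous on the compact $\Gamma$, so $C_\Gamma := \sup_\Gamma \norm{F_{\cI,\cJ}}$ is a finite constant that does the job.

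Second, suppose $\cI \neq \brackets{A}$, so that $\card(\cI) \geq 2$. Then Proposition~\ref{prop: key estimate FIJ} applies and yields $C_\Gamma \geq 0$ such that
\begin{equation*}
\forall (w,\Sigma) \in \Gamma, \qquad \norm*{F_{\cI,\cJ}(w,\Sigma)} \leq C_\Gamma \sum_{\un \in \min(\cN_{\cB_\cI})} \prod_{\brackets{I,J} \in \cI^{\cwedge}} \Norm{\Sigma_I^J}^{n_{\brackets{I,J}}}.
\end{equation*}
By Lemma~\ref{lem: characterization 2 edge connected}, the map $(\cI,\un) \mapsto \un$ is a bijection from $\G_\cI$ onto $\min(\cN_{\cB_\cI})$, since $\cI$ is now non-trivial. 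Re-indexing the sum on the right-hand side by this bijection yields exactly the inequality in the statement of the corollary.

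The main obstacle, if any, is purely notational: one must observe that the convention in Definition~\ref{def G V} of identifying $\un \in \G_\cV$ with the graph $(\cV,\un)$ is consistent with reading $\un$ as an element of $\N^{(\cV^{\cwedge})}$ indexing the product $\prod_{\brackets{I,J} \in \cI^{\cwedge}} \Norm{\Sigma_I^J}^{n_{\brackets{I,J}}}$ in exactly the same way as in Proposition~\ref{prop: key estimate FIJ}. No additional estimates are required.
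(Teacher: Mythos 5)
Your proof is correct and follows the same two-case strategy as the paper: handle $\cI = \brackets{A}$ by continuity of $F_{\cI,\cJ}$ on the compact $\Gamma$ (Lemma~\ref{lem: regularity F I J}), and for $\cI \neq \brackets{A}$ invoke Proposition~\ref{prop: key estimate FIJ} and re-index the sum via the bijection $\G_\cI \simeq \min(\cN_{\cB_\cI})$ from Lemma~\ref{lem: characterization 2 edge connected}. The only cosmetic difference is the order in which the two cases are treated.
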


\begin{proof}
If $\cI \neq \brackets{A}$ we have $\card(\cI)\geq 2$, and the result follows from Proposition~\ref{prop: key estimate FIJ} by re-indexing the sum, using the bijection between $\G_\cI$ and $\min(\cN_{\cB_\cI})$ we just described. If $\cI = \brackets{A}$, then $\norm{\G_\cI}=1$ and $\cI^{\cwedge}=\emptyset$. In this case, we just need to prove that $F_{\cI,\cJ}$ is bounded on the compact $\Gamma$, which follows from its continuity, see Lemma~\ref{lem: regularity F I J}.
\end{proof}


\subsection{Ear decompositions and spanning trees}
\label{subsec: ear decompositions and spanning trees}

An interesting feature of $2$-edge-connected graphs is that they admit a \emph{closed-ear decomposition}, cf.~\cite[Thm.~4.2.10]{Wes2001}. Such a decomposition allows us to prove that a $2$-edge-connected graph admits a nice family of spanning trees, which is the main result of this section, see Lemma~\ref{lem: existence of spanning trees}.

\begin{lem}[Closed-ear decomposition]
\label{lem: ear decomposition}
Let $G=(\cV,\un)$ be a non-trivial $2$-edge-connected graph. There exist $p \in \N$ and non-trivial sub-graphs $G_0=(\cV_0,\un^{(0)})$, \dots, $G_p=(\cV_p,\un^{(p)})$ of $G$ satisfying the following properties.
\begin{enumerate}
\item \label{item: ear G0} The graph $G_0$ is a cycle whose length, denoted by $l_0$, is maximal among cycles in $G$.

\item \label{item: ear Gq} For all $q \in \ssquarebrackets{1}{p}$, $G_q$ is either a cycle of length at most $l_0$ with exactly one vertex in $\bigcup_{i=0}^{q-1} \cV_i$ or a path of length less than $l_0$ between vertices of $\bigcup_{i=0}^{q-1} \cV_i$ with no internal vertex in $\bigcup_{i=0}^{q-1} \cV_i$.

\item \label{item: ear G} We have $\cV = \bigcup_{i=0}^p \cV_i$ and $\un = \sum_{i=0}^p \un^{(i)}$.
\end{enumerate}
\end{lem}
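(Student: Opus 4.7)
The plan is to build the sub-graphs $G_0,G_1,\dots,G_p$ inductively: first pick $G_0$ to be a longest cycle in $G$, then add ears one at a time by the standard closed-ear construction, and finally read the length bounds off the maximality of $l_0$.

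I would start by observing that since $G$ is non-trivial and $2$-edge-connected, every vertex has degree at least $2$: a vertex of degree $0$ would disconnect $G$, and a vertex of degree $1$ would make its unique incident edge a bridge, contradicting Definition~\ref{def: 2-edge-connected}. A standard walk argument — start anywhere, leave each vertex through a fresh edge (possible since the minimum degree is $\geq 2$), and wait for the first vertex to be revisited — then produces a cycle in $G$, possibly of length $2$ via a double edge. Since cycle lengths in $G$ are bounded by $\card(\cV)$, the maximum cycle length $l_0$ is well-defined, and I would choose $G_0=(\cV_0,\un^{(0)})$ to be any cycle of $G$ achieving it.

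For the inductive step, suppose $G_0,\dots,G_{q-1}$ have been built so that $H_{q-1}:=\parentheses*{\bigcup_{i<q}\cV_i,\sum_{i<q}\un^{(i)}}$ is a connected sub-graph of $G$ (this is clear by construction: $G_0$ is a cycle, and each later $G_i$ is glued to $H_{i-1}$ either at a single vertex or along the two endpoints of a path, both of which preserve connectedness). If $H_{q-1}=G$, stop. Otherwise there are two cases. If $\cV(H_{q-1})=\cV$ but some edge $e=\brackets{u,v}$ of $G$ is missing from $H_{q-1}$, I take $G_q$ to be the path of length $1$ made of $e$. If instead $\cV(H_{q-1})\subsetneq \cV$, I use connectedness of $G$ to pick an edge $e=\brackets{u,w}$ of $G$ not in $H_{q-1}$ with $u\in \cV(H_{q-1})$ and $w\notin \cV(H_{q-1})$; by $2$-edge-connectedness the graph $G\setminus e$ is still connected, so there exists a path in $G\setminus e$ from $w$ to $\cV(H_{q-1})$, and any shortest such path $w=w_0,w_1,\dots,w_l=v$ automatically satisfies $w_1,\dots,w_{l-1}\notin \cV(H_{q-1})$. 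Concatenating this path with $e$ yields $G_q$: it is a cycle attached to $H_{q-1}$ at $u$ if $v=u$, and a path with endpoints $u,v\in \cV(H_{q-1})$ and internal vertices outside $\cV(H_{q-1})$ if $v\neq u$. In either case $H_q$ is connected and either $\card(\cV(H_q))$ or the number of edges of $H_q$ strictly increases, so the process terminates after finitely many steps with $H_p=G$, giving Item~\ref{item: ear G}.

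It remains to verify the length constraints. Any cycle $G_q$ produced is a cycle of $G$, so its length is at most $l_0$ by the maximality of $l_0$. For a path ear $G_q$ of length $l\geq 1$ with endpoints $u,v\in \cV(H_{q-1})$, the connectedness of $H_{q-1}$ gives a path inside $H_{q-1}$ from $u$ to $v$ of some length $m\geq 1$; its union with $G_q$ is a cycle of $G$ of length $l+m$, so $l+m\leq l_0$ and in particular $l<l_0$. The main obstacle is the ear construction when $\cV(H_{q-1})\subsetneq \cV$: this is exactly where $2$-edge-connectedness is used, to guarantee that after removing the connecting edge $e$ one can still reach $\cV(H_{q-1})$ from $w$; every other step is straightforward bookkeeping.
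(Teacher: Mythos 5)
Your proof is correct and follows essentially the same approach as the paper's: build the decomposition by induction starting from a longest cycle $G_0$, then at each step use $2$-edge-connectedness to produce a closed or open ear, with all length constraints following from the maximality of $l_0$. The only variation worth noting is your length bound for open ears — you glue a path inside $H_{q-1}$ onto the ear to form a cycle of length $\geq l'+1$, whereas the paper truncates the $u$--$v$ path in $G\setminus e$ at its first return to $\bigcup_{i<q}\cV_i$ and reads the bound $j_0+1\leq l<l_0$ directly from the cycle formed by $e$ plus the full path — both yield the same conclusion.
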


\begin{proof}
We build $(G_q)_{0 \leq q \leq p}$ by induction. Since $G$ is $2$-edge-connected and non-trivial, it is connected but it is not a tree, hence $G$ admits cycles. The length of these cycles is bounded by~$\norm{\un}$, hence there exists one whose length $l_0$ is maximal. We denote by $G_0=(\cV_0,\un^{(0)})$ any such cycle, so that Condition~\ref{item: ear G0} holds. In particular $\un^{(0)}\leq \un$.

Let $q \in \N$. We assume that we have built sub-graphs $G_0=(\cV_0,\un^{(0)})$, \dots, $G_q=(\cV_q,\un^{(q)})$ satisfying Conditions~\ref{item: ear G0} and~\ref{item: ear Gq}, and such that $\sum_{i=0}^q \un^{(i)} \leq \un$.

We first consider the case where $\sum_{i=0}^q \un^{(i)} = \un$. Let $v \in \cV$, since $G$ is connected and non-trivial, there exists $u \neq v$ such that $n_{\brackets{u,v}}\geq 1$. Hence, there exists $i \in \ssquarebrackets{0}{q}$ such that $n_{\brackets{u,v}}^{(i)}\geq 1$, which implies that $v \in \cV_i$. Thus, for all $v \in \cV$ we have $v \in \bigcup_{i=0}^q \cV_i$, and $\cV =\bigcup_{i=0}^q \cV_i$. In this case, we let $p=q$ and we are done.

In the case where $\sum_{i=0}^q \un^{(i)} < \un$, there exists $e=\brackets{u,v} \in \cV^{\cwedge}$ such that $\sum_{i=0}^q n_e^{(i)}<n_e$. If $v \notin \bigcup_{i=0}^q \cV_i$, by connectedness of $G$, there exists a path from $v$ to some $w \in \bigcup_{i=0}^q \cV_i$. Denoting by $v=v_0,\dots,v_l=w$ the distinct vertices of this path, ordered as in Definition~\ref{def: path cycle tree}.\ref{item: def path}, there is a smallest $j \in \ssquarebrackets{1}{l}$ such that $v_j\in\bigcup_{i=0}^q \cV_i$. Since $v_{j-1} \notin \bigcup_{i=0}^q \cV_i$, we have $\sum_{i=0}^q n_{\brackets{v_{j-1},v_j}}^{(i)}=0<n_{\brackets{v_{j-1},v_j}}$. Thus, up to replacing $e=\brackets{u,v}$ by $\brackets{v_{j-1},v_j}$, we can assume that $v \in \bigcup_{i=0}^q \cV_i$.

Let $\um = \un -\one_e\geq \sum_{i=0}^q \un^{(i)}$. By $2$-edge-connectedness of $G$, there is a path from $u$ to $v$ in the sub-graph $(\cV,\um)$. Denoting by $u=v_0,\dots,v_l=v$ the distinct vertices of this path, ordered as in Definition~\ref{def: path cycle tree}.\ref{item: def path}, we have $\one_{\brackets{v_l,v_0}} + \sum_{j=1}^l \one_{\brackets{v_{j-1},v_j}} \leq \one_{\brackets{u,v}}+\um \leq \un$. Hence, by adding an edge between $u$ and $v$, we obtain a cycle in $G$ of length~$l+1$. By maximality of $l_0$, we have $l+1 \leq l_0$.

Since $v=v_l \in \bigcup_{i=0}^q \cV_i$, there exists a smallest $j_0 \in \ssquarebrackets{0}{l}$ such that $v_{j_0}\in\bigcup_{i=0}^q \cV_i$. Recalling that $u=v_0$, we define $\cV_{q+1}=\brackets*{v,u,v_1,\dots,v_{j_0}}$ and $\un^{(q+1)}=\one_{\brackets{u,v}}+ \sum_{j=1}^{j_0} \one_{\brackets{v_{j-1},v_j}}$. If $j_0=l$, then $v_{j_0}=v$ and $G_{q+1}=\parentheses*{\cV_{q+1},\un^{(q+1)}}$ is a cycle of length $l+1\leq l_0$ such that $\cV_{q+1} \cap \parentheses*{\bigcup_{i=0}^q \cV_i} = \brackets{v}$. If $j_0 <l$ then $G_{q+1}$ is a non-trivial path of length $j_0+1\leq l< l_0$ from $v$ to $v_{j_0}$ such that $\cV_{q+1} \cap \parentheses*{\bigcup_{i=0}^q \cV_i} = \brackets{v,v_{j_0}}$. Thus, $G_{q+1}$ is of one of the two types described in Condition~\ref{item: ear Gq}.

For all $j \in \ssquarebrackets{1}{j_0}$, since $v_{j-1} \notin \bigcup_{i=0}^q \cV_i$, we have $\sum_{i=0}^q n^{(i)}_{\brackets{v_{j-1},v_j}}=0 < m_{\brackets{v_{j-1},v_j}}$. Hence,
\begin{equation*}
\sum_{i=0}^{q+1} \un^{(i)} = \sum_{i=0}^{q} \un^{(i)} + \sum_{j=1}^{j_0} \one_{\brackets{v_{j-1},v_j}}+\one_{\brackets{u,v}} \leq \um + \one_{\brackets{u,v}} \leq \un,
\end{equation*}
which concludes the induction step.

Each $G_q$ being non-trivial and connected, $\sum_{i=0}^q \norm{\un^{(i)}}$ increases with $q$. Since this quantity is bounded by $\norm{\un}$, the previous inductive construction process finishes after finitely many steps.
\end{proof}

\begin{lem}[Existence of spanning trees, cf.~{\cite[Lem.~3.17]{Gas2023b}}]
\label{lem: existence of spanning trees}
Let $G=\parentheses*{\cV,\un}$ be a $2$-edge-connected graph. It admits a family $\parentheses{T_v}_{v \in \cV}$ of spanning trees with the following property: for all $e \in \cE_G$, there exists $v \in \cV$ such that $\um^{(v)} \leq \un -\one_e$, where we denoted $T_v= \parentheses*{\cV,\um^{(v)}}$.
\end{lem}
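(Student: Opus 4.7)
The plan is to induct on the number of ears in the closed-ear decomposition of $G$ provided by Lemma~\ref{lem: ear decomposition}. The case $\card(\cV) = 1$ is vacuous since $\cE_G = \emptyset$, so assume $G$ is non-trivial and decompose $G = G_0 \cup G_1 \cup \cdots \cup G_p$, where $G_0$ is a cycle and each $G_q$ is either a cycle attached at a single vertex of the previous union $H_{q-1} := G_0 \cup \cdots \cup G_{q-1}$ or a path attached at two of its vertices. I will show by induction on $q$ that each $H_q$ carries a family $(T_v)_{v \in \cV_{H_q}}$ of spanning trees such that every edge of $H_q$ is avoided by at least one member of the family. Applied at $q = p$ this yields the lemma, since a tree $T_v$ avoiding $e \in \cE_G$ satisfies $m_e^{(v)} = 0 \leq n_e - 1$, while the inequality on the other entries is automatic because trees have no multi-edges: $m_{e'}^{(v)} \leq 1 \leq n_{e'}$ for every $e' \in \cE_G$.

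For the base case, label the vertices of the cycle $G_0$ cyclically as $v_0, \ldots, v_{l_0-1}$ and let $T_{v_i}$ be the spanning path obtained by removing the edge $\brackets{v_i, v_{i+1}}$; then each edge is avoided by exactly one tree. For the inductive step, suppose a family $(T_v^{(q-1)})_{v \in \cV_{H_{q-1}}}$ has been constructed. If $G_q$ is a cycle $w = u_0, u_1, \ldots, u_{l-1}$ attached at $w \in \cV_{H_{q-1}}$ with edges $f_i = \brackets{u_{i-1}, u_i}$, put $T_{u_i} := T_w^{(q-1)} \cup (G_q \setminus \brackets{f_i})$ for $i = 1, \ldots, l-1$, $T_w := T_w^{(q-1)} \cup (G_q \setminus \brackets{f_l})$, and $T_v := T_v^{(q-1)} \cup (G_q \setminus \brackets{f_l})$ for the remaining $v \in \cV_{H_{q-1}}$. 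If instead $G_q$ is a path between $u, u' \in \cV_{H_{q-1}}$ with internal vertices $w_1, \ldots, w_{l-1}$ and edges $f_1, \ldots, f_l$, set $T_{w_i} := T_u^{(q-1)} \cup (G_q \setminus \brackets{f_i})$ for $i = 1, \ldots, l-1$ and $T_v := T_v^{(q-1)} \cup (G_q \setminus \brackets{f_l})$ for $v \in \cV_{H_{q-1}}$; when $l = 1$, no new vertices appear and the old family is unchanged.

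One checks in both cases that these are genuine spanning trees of $H_q$: each piece added to $T_v^{(q-1)}$ is a path meeting it only at one or two vertices of $\cV_{H_{q-1}}$, with one of its edges deleted so as to break the unique cycle that $G_q$ would otherwise create. The avoidance property persists because $T_v^{(q-1)} \subset T_v$ for every $v \in \cV_{H_{q-1}}$, so the inductive covering of $\cE_{H_{q-1}}$ carries over, while each new edge $f_i$ of $G_q$ is avoided by a tree indexed by a new vertex for $i \leq l-1$, and $f_l$ is avoided by $T_w$ in Case~A or by any old $T_v$ in Case~B. The main obstacle is the slight asymmetry in Case~B, where a path ear of length $l$ contributes $l$ edges but only $l-1$ new vertices, forcing one edge $f_l$ to be covered by a vertex of $H_{q-1}$; the key trick is to delete the same edge $f_l$ uniformly from every old tree so that the inclusion $T_v^{(q-1)} \subset T_v$ holds and the inductive avoidance property propagates cleanly.
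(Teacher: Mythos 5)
Your construction is sound and yields the lemma, but it takes a genuinely different route from the paper's. Both begin with the closed-ear decomposition of Lemma~\ref{lem: ear decomposition}, but the paper exploits the \emph{maximality} of the initial cycle length $l_0$: it builds, for each ear $G_i$, a surjection $\zeta_i\colon \cV_0 \to \cE_i$ (possible precisely because each ear has at most $l_0$ edges), constructs a tree $T_v$ for each $v \in \cV_0$ by removing $\zeta_i(v)$ from every ear simultaneously, and takes arbitrary spanning trees for the remaining vertices. You instead run a direct induction on the partial unions $H_q$, assigning trees to the vertices newly introduced by each ear and propagating the avoidance property forward. This sidesteps the maximality condition entirely, which the paper does rely on; your argument would go through with a weaker ear decomposition. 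The splitting of Case~B into $l = 1$ versus $l \geq 2$, and the uniform deletion of $f_l$ from every old tree so that the inclusion $T_v^{(q-1)} \subset T_v$ holds, is the right bookkeeping.

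There is one imprecision in the concluding step. You claim that the invariant ``every edge of $H_p$ is avoided by at least one member'' yields a tree with $m_e^{(v)} = 0$, but this fails when $n_e \geq 2$: removing one labelled copy of $e$ still leaves the tree free to use another copy. Your own base case already exhibits this: a cycle of length $2$ is a double edge, and both spanning trees you produce use the pair once, so $m_e^{(v)} = 1$, not $0$. Stated as ``every pair is completely unused by some tree,'' the invariant is outright false for such graphs. The remedy is one line: a tree $T_v$ that misses at least one copy of $e$ satisfies $m_e^{(v)} \leq n_e - 1$ (directly when $n_e = 1$, and automatically when $n_e \geq 2$ since $m_e^{(v)} \leq 1$), which together with $m_{e'}^{(v)} \leq n_{e'}$ for $e' \neq e$ is exactly $\um^{(v)} \leq \un - \one_e$. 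So the construction proves the lemma; only the phrasing of what is being propagated needs adjustment.
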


\begin{proof}
If $G$ is trivial then it has no edge and $\cV=\brackets{v}$. Since $G$ is a tree, it is enough to let $T_v=G$. In the following, we assume that $G$ is non-trivial, in particular $\cV^{\cwedge}\neq \emptyset$.

Let $G_0=(\cV_0,\un^{(0)})$, \dots, $G_p=(\cV_p,\un^{(p)})$ be a decomposition of $G$ given by Lemma~\ref{lem: ear decomposition}. For all $i \in \ssquarebrackets{0}{p}$, we denote by $\cE_i = \brackets{e \in \cV^{\cwedge} \mid n^{(i)}_e\geq 1}$. We have $\card(\cE_i) \leq \norm{\un^{(i)}} \leq l_0 = \card(\cV_0)$, by Conditions~\ref{item: ear G0} and~\ref{item: ear Gq} in Lemma~\ref{lem: ear decomposition}. Hence there exists a surjection $\zeta_i:\cV_0 \to \cE_i$. Note that $\un^{(i)} - \one_{\zeta_i(v)} \in \N^{(\cV^{\cwedge})}$ for all $v \in \cV_0$, since $\zeta_i(v) \in \cE_i$.

Let $v \in \cV_0$, for all $q \in \ssquarebrackets{0}{p}$ we denote by $T_{q,v} = \parentheses*{\bigcup_{i=0}^q \cV_i, \sum_{i=0}^q \un^{(i)} - \one_{\zeta_i(v)}}$ and let $T_v=T_{p,v}$. Since $T_{0,v}$ is obtained by removing exactly one edge from the cycle $G_0$, it is a path and in particular a tree. Let $q \in \ssquarebrackets{0}{p-1}$, we assume that $T_{q,v}$ is a tree. If $G_{q+1}$ is a cycle, we obtain $T_{q+1,v}$ by adding to $T_{q,v}$ a path sharing exactly one vertex with it. If $G_{q+1}$ is a path, we obtain $T_{q+1,v}$ by adding to $T_{q,v}$ two paths of the previous kind, that have no common vertex. These operations preserve connectedness and do not create cycles, hence $T_{q+1,v}$ is again a tree. By induction, $T_v=T_{p,v}$ is a tree. It is actually a spanning tree since $\bigcup_{i=0}^p \cV_i=\cV$, by Condition~\ref{item: ear G} in Lemma~\ref{lem: ear decomposition}.

The previous construction shows that $G$ admits spanning trees. For all $v \in \cV \setminus \cV_0$, we denote by $T_v$ any spanning tree of $G$. For all $v \in \cV$, we let $T_v= (\cV,\um^{(v)})$.

Let $e \in \cE_G$, we have $n_e >0$. By Conditions~\ref{item: ear G} in Lemma~\ref{lem: ear decomposition}, there exists $i \in \ssquarebrackets{0}{p}$ such that $\un^{(i)}_e >0$, that is, $e \in \cE_i$. By surjectivity of $\zeta_i$, there exists $v \in \cV_0$ such that $\zeta_i(v)=e$. Then $\um^{(v)} = \sum_{i=0}^p \parentheses*{\un^{(i)} - \one_{\zeta_i(v)}} \leq \parentheses*{\sum_{i=0}^p \un^{(i)}} - \one_e = \un - \one_e$. This proves that $(T_v)_{v \in \cV}$ satisfies the claimed property.
\end{proof}


\section{Hölder--Brascamp--Lieb inequalities}
\label{sec: HBL inequalities}

This section is concerned with Hölder--Brascamp--Lieb inequalities, which are a family of inequalities generalizing those of Hölder and Young. In Section~\ref{subsec: interpolation of HBL inequalities}, we recall what these inequalities are and how to interpolate between them. In Section~\ref{subsec: HBL inequalities and edge-connected graphs}, we prove that specific Hölder--Brascamp--Lieb inequalities hold in the case of a product space, where the product is indexed by the vertices and edges of a $2$-edge-connected graph. Finally, in Section~\ref{subsec: integral estimates for the universal part of the Kac--Rice density}, we use these inequalities to establish integral estimates for the universal part $\Upsilon_A$ of the Kac--Rice density, see Definition~\ref{def: Upsilon}.


\subsection{Interpolation of Hölder--Brascamp--Lieb inequalities}
\label{subsec: interpolation of HBL inequalities}

In this section, we recall the definition of Hölder--Brascamp--Lieb inequalities and a theorem of Bennett, Carbery, Christ and Tao~\cite{BCCT2010} that allows us to interpolate between these inequalities.

Let $V$ be a Euclidean space, endowed with its Lebesgue measure $\dx x$, and $\varphi:V \to [-\infty,+\infty]$ be a Borel-measurable map. For all $s \geq 1$, we denote by $\Norm{\varphi}_s = \parentheses*{\int_V \norm{\varphi(x)}^s \dx x}^\frac{1}{s}$ the $L^s$-norm of~$\varphi$, which can be either a non-negative number or $+\infty$. Similarly, we denote by $\Norm{\varphi}_\infty \in [0,+\infty]$ the essential supremum of $\norm{\varphi}$.

Let $A$ be a non-empty finite set, let $V$ and $(V_a)_{a \in A}$ be Euclidean spaces. For all $a \in A$, let $\ell_a:V \to V_a$ be a linear surjection. We denote by $\ul=\parentheses*{\ell_a}_{a \in A}$ the collection of these surjections.

\begin{dfn}[Hölder--Brascamp--Lieb inequality]
\label{def: HBL inequality}
Let $\ul=(\ell_a)_{a \in A}$ be a family of surjections as above and $\up=(p_a)_{a \in A} \in [1,+\infty]^A$. We say that a \emph{Hölder--Brascamp--Lieb} (HBL) \emph{inequality} associated with $\parentheses*{\ul,\up}$ holds if there exists $C\in(0,+\infty)$ such that
\begin{equation}
\label{eq: HBL}
\int_{x \in V} \prod_{a \in A} \varphi_a(\ell_a(x)) \dx x \leq C \prod_{a \in A} \Norm{\varphi_a}_{p_a},
\end{equation}
for any family $(\varphi_a)_{a \in A}$ such that $\varphi_a :V_a \to [0,+\infty]$ is Borel-measurable for all $a \in A$. Note that both sides of Equation~\eqref{eq: HBL} can take the value $+\infty$.
\end{dfn}

Given $\ualpha=(\alpha_a)_{a \in A} \in [0,1]^A$, we denote by $\frac{1}{\ualpha}=\parentheses{\frac{1}{\alpha_a}}_{a \in A}\in [1,+\infty]^A$, with the usual convention that $\frac{1}{0}=+\infty$. A necessary and sufficient condition for a HBL inequality associated with $\parentheses*{\ul,\up}$ to hold was established in~\cite{BCCT2010}. It can be stated using the set defined by:
\begin{equation}
\label{eq: def convex HBL interpolation}
\fC_{\ul}=\brackets*{\ualpha \in [0,1]^A \mvert \text{a HBL inequality associated with}\ \parentheses*{\ul,\frac{1}{\ualpha}}\ \text{holds}}.
\end{equation}

\begin{thm}[Bennett--Carbery--Christ--Tao]
\label{thm: BCCT}
Let $\ul=(\ell_a)_{a \in A}$ be a family of surjections as above and $\ualpha=(\alpha_a)_{a \in A }\in [0,1]^A$. Then, $\ualpha\in \fC_{\ul}$ if and only if, for every subspace $W \subset V$, we have $\dim(W) \leq \sum_{a \in A} \alpha_a \dim(\ell_a(W))$, and equality holds when $W=V$.
\end{thm}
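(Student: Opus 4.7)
The plan is to recognize that this theorem is, essentially verbatim, the main result of Bennett--Carbery--Christ--Tao~\cite{BCCT2010}, so the proof will consist in importing their statement after a brief verification that our setup fits their framework. What I would actually write is a sketch of both directions, to convince the reader that the dimension condition is the natural one, followed by a pointer to~\cite{BCCT2010} for the hard direction.

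For the necessity direction, I would test~\eqref{eq: HBL} against a one-parameter family of anisotropic Gaussians. Given a subspace $W \subset V$, for each $a \in A$ let $\pi_a$ denote the orthogonal projection of $V_a$ onto $\ell_a(W)$ and set $\varphi_a^{(t)}(y_a) = \exp\bigl(-\pi\Norm{\pi_a(y_a)}^2 - \pi t \Norm{y_a - \pi_a(y_a)}^2\bigr)$ for $t>0$. A direct computation gives $\Norm{\varphi_a^{(t)}}_{1/\alpha_a} = \alpha_a^{\dim V_a/2}\, t^{-\alpha_a(\dim V_a - \dim \ell_a(W))/2}$ up to a universal constant, while the left-hand side of~\eqref{eq: HBL} is bounded below by $c\, t^{-(\dim V - \dim W)/2}$ with $c>0$ depending on $\ul$ and $W$ but not on $t$. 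Comparing the exponents of $t$ as $t\to +\infty$ yields $\dim W \leq \sum_{a\in A}\alpha_a\dim \ell_a(W)$. Specializing to $W = V$ and then letting $t\to 0$ forces the reverse inequality, giving the scaling equality $\dim V = \sum_{a\in A}\alpha_a\dim V_a$.

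The sufficiency direction is the deep content of~\cite{BCCT2010}. Their strategy is to: (i) first reduce to the Gaussian case via a monotonicity argument, showing that the best constant in~\eqref{eq: HBL} is attained (or approached) by centered Gaussians $\varphi_a(y_a) = \exp(-\pi\prsc{A_a y_a}{y_a})$; (ii) express the Brascamp--Lieb functional on Gaussian inputs as a ratio of determinants indexed by positive operators $(A_a)_{a \in A}$; (iii) prove monotonicity of this functional under the simultaneous heat flow $\varphi_a \mapsto \varphi_a * g_s$ where $g_s$ is the Gaussian heat kernel of variance $s$; (iv) observe that the dimension condition is exactly what is needed so that the infimum of the Gaussian functional is finite, hence yields a finite constant $C$ in~\eqref{eq: HBL}. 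I would not reproduce any of this — invoking~\cite[Thm.~2.1]{BCCT2010} is sufficient, after checking that our hypothesis of surjectivity of each $\ell_a$ just fixes the convention $\dim V_a = \dim \ell_a(V)$ and that we allow exponents $\alpha_a = 0$ (in which case the corresponding factor drops out of both sides and $\ell_a$ plays no role).

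The main obstacle is of course the sufficiency direction, which is genuinely hard and orthogonal to the rest of the paper; the proposal is therefore to treat Theorem~\ref{thm: BCCT} as a black box and focus the writing on the application in the next subsection, where specific choices of $\ul$ and $\ualpha$ arising from $2$-edge-connected graphs will produce the integrability estimates for $\Upsilon_A$.
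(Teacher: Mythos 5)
Your proposal matches the paper's proof exactly: the paper's entire proof of Theorem~\ref{thm: BCCT} is a one-line citation of~\cite[Thm.~2.1]{BCCT2010} with $(p_a)_{a\in A}=\frac{1}{\ualpha}$, which is precisely what you propose. One small slip in your supplementary sketch of necessity: when you specialize to $W=V$, the projections $\pi_a$ become the identity (since each $\ell_a$ is surjective), so $\varphi_a^{(t)}$ has no $t$-dependence and the ``let $t\to 0$'' step is vacuous; the scaling equality $\dim V=\sum_a\alpha_a\dim V_a$ is instead obtained by testing against dilates $\varphi_a(\lambda\,\cdot)$ and comparing powers of $\lambda$. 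This does not affect the validity of the proposal, since the sketch is explicitly decorative and the actual proof is the citation.
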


\begin{proof}
This is~\cite[Thm.~2.1]{BCCT2010} for $(\ell_a)_{a \in A}$ and $(p_a)_{a \in A}=\frac{1}{\ualpha}$.
\end{proof}

In general, it is difficult to use Theorem~\ref{thm: BCCT} to check that a given $\ualpha \in [0,1]^A$ belongs to~$\fC_{\ul}$. However, it implies the following qualitative result.

\begin{cor}[Interpolation of Hölder--Brascamp--Lieb inequalities]
\label{cor: interpolation of HBL inequalities}
The set $\fC_{\ul}$ is convex.
\end{cor}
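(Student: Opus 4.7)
The plan is to apply Theorem~\ref{thm: BCCT}, which reduces membership in $\fC_{\ul}$ to a family of linear inequalities indexed by the subspaces of $V$. Since convex combinations of linear inequalities remain linear inequalities (with the same sign), convexity of $\fC_{\ul}$ should follow immediately.

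More concretely, let $\ualpha,\ubeta \in \fC_{\ul}$ and $t \in [0,1]$, and set $\ugamma = t\ualpha + (1-t)\ubeta$. First, $\ugamma \in [0,1]^A$ as a convex combination of elements of $[0,1]^A$. Next, for any subspace $W \subset V$, Theorem~\ref{thm: BCCT} applied to $\ualpha$ and $\ubeta$ gives
\begin{equation*}
\dim(W) \leq \sum_{a \in A} \alpha_a \dim(\ell_a(W)) \qquad \text{and} \qquad \dim(W) \leq \sum_{a \in A} \beta_a \dim(\ell_a(W)).
\end{equation*}
Taking the convex combination with weights $t$ and $1-t$, we obtain
\begin{equation*}
\dim(W) \leq \sum_{a \in A} \gamma_a \dim(\ell_a(W)),
\end{equation*}
and the same computation with $W = V$ transforms the two equality cases into an equality for $\ugamma$. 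By the converse direction of Theorem~\ref{thm: BCCT}, this shows $\ugamma \in \fC_{\ul}$, which proves convexity.

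There is no real obstacle here: the work has already been done in Theorem~\ref{thm: BCCT}, which converts the analytic condition defining $\fC_{\ul}$ into an affine condition on $\ualpha$ for each fixed $W$. The only thing to be mildly careful about is that the equality constraint at $W=V$ is preserved under convex combinations (which it is, since equality of two affine functions is preserved under averaging), and that the resulting $\ugamma$ still lies in $[0,1]^A$.
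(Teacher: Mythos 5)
Your proof is correct and follows exactly the paper's approach: the paper's one-line argument is that Theorem~\ref{thm: BCCT} shows $\fC_{\ul}$ is defined by a family of linear inequalities (together with one linear equality at $W=V$, which you correctly note is also preserved under averaging), hence convex. You have simply spelled out the details.
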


\begin{proof}
Theorem~\ref{thm: BCCT} shows that $\fC_{\ul}$ is defined by a family of linear inequalities, hence it is convex.
\end{proof}


\subsection{Hölder--Brascamp--Lieb inequalities and edge-connected graphs}
\label{subsec: HBL inequalities and edge-connected graphs}

The goal of this section is to prove that HBL inequalities hold for some specific parameters, in the case where the family of surjections $\ul$ is related with a $2$-edge-connected graph.

Let $G=(\cV,\un)$ be a graph. For all $v \in \cV$, we denote by $\ell_v:\ux=(x_u)_{u \in \cV} \mapsto x_v$ from~$(\R^d)^\cV$ to~$\R^d$. Given $e=\brackets{v,w} \in \cE_G$, we define $\ell_e:(\R^d)^\cV \to \R^d$ either as $\ux \mapsto x_v-x_w$ or as $\ux \mapsto x_w-x_v$ arbitrarily. This defines a family $\ul = (\ell_a)_{a \in \cV \sqcup \cE_G}$ of linear surjections from $(\R^d)^\cV$ to~$\R^d$.

Let $\xi = (\xi_u)_{u \in \cV} \in (0,+\infty)^\cV$, we denote by $V_\xi = \brackets*{\ux=(x_u)_{u \in \cV} \in (\R^d)^\cV \mvert \sum_{u \in \cV} \xi_u x_u =0}$. Given $e=\brackets{v,w} \in \cE_G$, we let $\ell_e^{(\xi)}$ denote the restriction of $\ell_e$ to $V_\xi$. We let $\ul^{(\xi)}=\parentheses{\ell_e^{(\xi)}}_{e \in \cE_G}$, which defines a family of linear surjections from $V_\xi$ to $\R^d$. Indeed, given $e \in \cE_G$ and $z \in \R^d$, we have $z = \ell_e^{(\xi)}(\uy)$, where $\uy=(y_u)_{u \in \cV}$ is defined by $y_v = \frac{\xi_w}{\xi_v+\xi_w} z$, $y_w = -\frac{\xi_v}{\xi_v+\xi_w} z$ and $y_u=0$ for all $u \in \cV \setminus e$.

The purpose of the following is to find some relevant points in the convex sets $\fC_{\ul}$ and $\fC_{\ul^{(\xi)}}$, see~\eqref{eq: def convex HBL interpolation}. We first establish HBL inequalities as a consequence of the existence of a spanning tree of~$G$, see Definition~\ref{def: path cycle tree}.\ref{item: def spanning}. A similar result appears in the proof of~\cite[Lem.~4.6]{Gas2023b}. Note that, since $\un_{\cV^{\cwedge}\setminus \cE_G}=0$, for any sub-graph $\parentheses*{\cW,\um} \subset G$ we can consider $\um$ as an element of $\N^{\cE_G}$.

\begin{lem}[Spanning trees and HBL inequalities]
\label{lem: spanning tree and HBL}
Let $T=(\cV,\um)$ be a spanning tree of $G$ and $v \in \cV$. Then, we have $\one_v+\um \in \fC_{\ul}$. Moreover, for all $\xi \in (0,+\infty)^{\cV}$, we have $\um \in \fC_{\ul^{(\xi)}}$.
\end{lem}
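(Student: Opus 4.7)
The plan is to apply Theorem~\ref{thm: BCCT}. For each of the two claims, this reduces to verifying (i) a dimension equality at the ambient space and (ii) a dimension inequality for every linear subspace $W$. Writing $\cE_T = \brackets*{e \in \cV^{\cwedge} \mvert m_e = 1}$ for the edge set of the spanning tree $T$, recall that $\card(\cE_T) = \card(\cV) - 1$. In both claims I will exhibit an explicit linear isomorphism from the ambient space to a product of copies of $\R^d$ indexed by the designated surjections; the equality will follow by a dimension count, and the inequality by restricting this isomorphism to $W$.

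For the first claim, I would consider the linear map
\[
L_1 : (\R^d)^\cV \longrightarrow \R^d \times (\R^d)^{\cE_T}, \qquad \ux \longmapsto \parentheses*{\ell_v(\ux),\strut (\ell_e(\ux))_{e \in \cE_T}}.
\]
Injectivity of $L_1$ is immediate: if $L_1(\ux) = 0$, then $x_v = 0$ and $x_w = x_{w'}$ across every edge of $T$, which propagates to $x_u = 0$ at every vertex by connectedness of the spanning tree. Since both spaces have dimension $d\card(\cV)$, $L_1$ is an isomorphism. Restricting $L_1$ to any subspace $W \subset (\R^d)^\cV$ then gives $\dim W = \dim L_1(W) \leq \dim \ell_v(W) + \sum_{e \in \cE_T} \dim \ell_e(W)$ with equality at $W = (\R^d)^\cV$. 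This is exactly the BCCT criterion for the vector $\one_v + \um$ (whose entry at $v$ is $1$, at $e \in \cE_T$ is $1$, and $0$ elsewhere).

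For the second claim, the analogous map is
\[
L_2 : V_\xi \longrightarrow (\R^d)^{\cE_T}, \qquad \ux \longmapsto \parentheses*{\ell_e^{(\xi)}(\ux)}_{e \in \cE_T}.
\]
The main subtlety of the whole proof lies in the injectivity of $L_2$, which is the only step where the hypothesis $\xi \in (0,+\infty)^\cV$ is used. If $L_2(\ux) = 0$, connectedness of $T$ forces $x_u$ to take a common value $c \in \R^d$ for every $u \in \cV$, so the defining constraint $\sum_{u \in \cV} \xi_u x_u = 0$ becomes $c \sum_{u \in \cV} \xi_u = 0$. Strict positivity of each $\xi_u$ ensures $\sum_u \xi_u > 0$, hence $c = 0$ and $\ux = 0$. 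Since $\dim V_\xi = d(\card(\cV) - 1) = d\card(\cE_T)$, $L_2$ is an isomorphism, and restricting to any $W \subset V_\xi$ yields the BCCT inequality $\dim W \leq \sum_{e \in \cE_T} \dim \ell_e^{(\xi)}(W)$ with equality at $W = V_\xi$. Theorem~\ref{thm: BCCT} then gives $\um \in \fC_{\ul^{(\xi)}}$. Apart from this one use of positivity, the argument is a direct dimension count.
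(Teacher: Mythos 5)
Your proof is correct, but it takes a different route from the paper's. The paper proves the HBL inequality directly: it bounds the factors $\varphi_a$ with $a \notin \brackets{v} \sqcup \cE_T$ by their $L^\infty$ norms, then performs the explicit linear change of variables $\uy = \Lambda(\ux)$, where $\Lambda$ is your $L_1$, obtaining the inequality with the explicit constant $1/\jac{\Lambda}$ (and similarly with $\Lambda_\xi = L_2$ for the second claim). You instead invoke the BCCT characterization (Theorem~\ref{thm: BCCT}) and check its dimension criterion: for every $W$, $\dim W = \dim L_1(W) \leq \dim \ell_v(W) + \sum_{e \in \cE_T} \dim \ell_e(W)$ by injectivity of $L_1$ and the inclusion $L_1(W) \subset \ell_v(W) \times \prod_{e \in \cE_T} \ell_e(W)$, with equality at $W=V$ by surjectivity of the $\ell_a$. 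Both arguments rest on exactly the same key insight — injectivity of the tree evaluation map, propagated through the spanning tree by connectedness, with the positivity of $\xi$ needed only in the restricted case to pin down the common value — so the content is essentially the same; the paper's route is more self-contained (it never needs the nontrivial "if" direction of BCCT, only the "only if" used in Corollary~\ref{cor: interpolation of HBL inequalities}, and it produces an explicit constant), while yours is slightly shorter and arguably cleaner given that BCCT has already been stated.
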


\begin{proof}
We have $\cE_T= \brackets{e \in \cV^{\cwedge} \mid m_e\geq 1} \subset \cE_G$. Since $T$ is a tree, it has no multiple edge. Hence, $\um = \sum_{e \in \cE_T}\one_e \in \brackets{0,1}^{\cE_G}$, and $\one_v+\um = \sum_{a \in \brackets{v}\sqcup \cE_T} \one_a \in \brackets{0,1}^{\cV \sqcup \cE_G}$. We denote by $\up = (p_a)_{a \in \cV \sqcup \cE_G} = \frac{1}{\one_v+\um}$, that is, $p_a=1$ if $a \in \brackets{v}\sqcup \cE_T$ and $p_a =+\infty$ otherwise.

\paragraph*{Proof that $\one_v+\um \in \fC_{\ul}$.} We have to prove that a HBL inequality associated with $(\ul,\up)$ holds, see~\eqref{eq: HBL}. For all $a \in \cV \sqcup \cE_G$, let $\varphi_a:\R^d \to [0,+\infty]$ be a Borel-measurable function. We have:
\begin{equation*}
\int_{\ux \in (\R^d)^\cV} \prod_{a \in \cV \sqcup \cE_G} \varphi_a(\ell_a(\ux))\dx \ux \leq \parentheses*{\prod_{a \notin \brackets{v}\sqcup \cE_T} \Norm{\varphi_a}_\infty} \int_{\ux \in (\R^d)^\cV} \prod_{a \in \brackets{v}\sqcup \cE_T}\varphi_a(\ell_a(\ux))\dx \ux.
\end{equation*}
We claim that the linear map $\Lambda:\ux \mapsto \parentheses*{\ell_a(\ux)}_{a \in \brackets{v}\sqcup \cE_T}$ is an isomorphism from $(\R^d)^\cV$ to $(\R^d)^{\brackets{v}\sqcup \cE_T}$. Assuming this fact for now, we perform the linear change of variable $\uy=\Lambda(\ux)$, which yields:
\begin{equation*}
\int_{\ux \in (\R^d)^\cV} \prod_{a \in \brackets{v}\sqcup \cE_T}\varphi_a(\ell_a(\ux))\dx \ux = \int_{(\R^d)^{\brackets{v}\sqcup \cE_T}} \prod_{a \in \brackets{v}\sqcup \cE_T} \varphi_a(y_a)\frac{\dx \uy}{\Jac(\Lambda)}= \frac{1}{\Jac(\Lambda)}\prod_{a \in \brackets{v}\sqcup \cE_T}\Norm{\varphi_a}_1.
\end{equation*}
Thus, a HBL inequality associated with $(\ul,\up)$ holds with constant $\frac{1}{\jac{\Lambda}}$, and hence $\one_v+\um \in \fC_{\ul}$.

Let us now prove that $\Lambda:(\R^d)^\cV \to (\R^d)^{\brackets{v}\sqcup \cE_T}$ is an isomorphism. Since $T$ is a spanning tree, we have $1+\card(\cE_T) = 1 + \norm{\um} = \card(\cV)$, see~\cite[Thm.~2.1.4.B]{Wes2001}. Hence, the source and target spaces have the same dimension, and it is enough to prove that $\Lambda$ is injective.

Let $\ux=(x_u)_{u \in \cV} \in \ker(\Lambda)$. We prove that $x_u=0$ for all $u \in \cV$, by induction on the graph distance $\dist(u,v)$ from $u$ to $v$ in~$T$. If $d(u,v)=0$, then $u=v$ and $x_u=x_v=\ell_v(\ux)=0$. Let $u \in \cV$ be such that $\dist(u,v)=l\geq 1$. There exists a path from $u$ to $v$ of length $l$ in $T$. In particular, there exists $w \in \cV$ such that $\dist(u,w)=1$ and $\dist(w,v)=l-1$. Letting $e=\brackets{u,w}$, the first condition proves that $e \in \cE_T$, and hence $\Norm{x_u-x_w} = \Norm{\ell_e(\ux)}=0$. By the induction hypothesis, we have $x_w=0$, and hence $x_u=0$. This concludes the induction step. Since $T$ is a spanning tree of $G$, this proves that $\ux=0$. Thus, $\Lambda$ is injective, which concludes the proof that $\one_v+\um \in \fC_{\ul}$.

\paragraph*{Proof that $\um \in \fC_{\ul^{(\xi)}}$.} Let $\xi \in (0,+\infty)^\cV$. Considering $\um$ as an element of $\brackets{0,1}^{\cE_G}$, we observe that $\up_{\cE_G}= \frac{1}{\um}$. Thus, we have to prove that a HBL inequality associated with $\parentheses{\ul^{(\xi)},\up_{\cE_G}}$ holds. For any family $(\varphi_e)_{e \in \cE_G}$ of non-negative Borel-measurable functions on $\R^d$, we have:
\begin{equation*}
\int_{\ux \in V_\xi} \prod_{e \in \cE_G} \varphi_e(\ell_e(\ux))\dx \ux \leq \prod_{e \notin \cE_T} \Norm{\varphi_e}_\infty \int_{\ux \in V_\xi} \prod_{e \in \cE_T} \varphi_e(\ell_e(\ux))\dx \ux,
\end{equation*}
where $\dx \ux$ stands for the Lebesgue measure on the subspace $V_\xi \subset (\R^d)^\cV$. Let $\Lambda_\xi:V_\xi \to (\R^d)^{\cE_T}$ be the linear map $\ux \mapsto \parentheses*{\ell_e(\ux)}_{e \in \cE_T}$. As above, we have $\dim(V_\xi) = d(\card(\cV)-1)=d\card(\cE_T)$, because $T$ is a spanning tree. Assuming for now that $\Lambda_\xi$ is injective, it is an isomorphism. The change of variable $\uy=\Lambda_\xi(\ux)$ then yields:
\begin{equation*}
\int_{\ux \in V_\xi} \prod_{e \in \cE_T} \varphi_e(\ell_e(\ux))\dx \ux = \frac{1}{\jac{\Lambda_\xi}} \int_{\uy \in (\R^d)^{\cE_T}} \prod_{e \in \cE_T} \varphi_e(y_e)\dx \uy = \frac{1}{\jac{\Lambda_\xi}}\prod_{e \in \cE_T}\Norm{\varphi_e}_1.
\end{equation*}
Thus, a HBL inequality associated with $(\ul^{(\xi)},\up_{\cE_G})$ holds, and $\um \in \fC_{\ul^{(\xi)}}$.

We still need to prove that $\Lambda_{\xi}$ is injective. Let $\ux=(x_u)_{u \in \cV} \in V_\xi$ be such that $\Lambda_\xi(\ux)=0$. Let $v \in \cV$. As above, we prove by induction on $\dist(u,v)$ that $x_u=x_v$ for all $u \in \cV$. Then, since $\ux \in V_\xi$, we have $0= \sum_{u \in \cV}\xi_u x_u=x_v\parentheses*{\sum_{u \in \cV}\xi_u}$. Since the $(\xi_u)_{u \in \cV}$ are positive, this implies that $x_v=0$, hence $\ux=0$. Finally, $\Lambda_\xi$ is injective, as claimed. Hence the result.
\end{proof}

We can now prove the main result of this section, which states that HBL inequalities of the form~\eqref{eq: HBL} hold for some particular exponents if $G$ is $2$-edge-connected.

\begin{prop}[HBL inequalities for $2$-edge-connected graphs]
\label{prop: HBL graphs}
Let $G=(\cV,\un)$ be a $2$-edge-connected graph such that $\card(\cV)\geq 2$. Let $\ul=(\ell_a)_{a \in \cV \sqcup \cE_G}$ be the family of surjections defined above. There exist exponents $(p_e)_{e \in \cE_G} \in [2,+\infty]^{\cE_G}$, and positive constants $C$ and $C'$ such that, for any Borel-measurable functions $\parentheses*{\varphi_a}_{a \in \cV \sqcup \cE_G}$ on $\R^d$, we have:
\begin{equation}
\label{eq: HBL L 2}
\int_{\ux \in (\R^d)^\cV} \prod_{a \in \cV \sqcup \cE_G} \norm*{\varphi_a(\ell_a(\ux))}\dx \ux \leq C \parentheses*{\prod_{v \in \cV} \Norm{\varphi_v}_2} \parentheses*{\prod_{e \in \cE_G} \Norm{\varphi_e}_\frac{p_e}{n_e}},
\end{equation}
and, letting $q_e = \frac{\norm{\cV}}{\norm{\cV}-1}\frac{p_e}{2}$ for all $e \in \cE_G$,
\begin{equation}
\label{eq: HBL L p/p-1}
\int_{\ux \in (\R^d)^\cV} \prod_{a \in \cV \sqcup \cE_G} \norm*{\varphi_a(\ell_a(\ux))}\dx \ux \leq C' \parentheses*{\prod_{v \in \cV} \Norm{\varphi_v}_{\norm{\cV}}} \parentheses*{\prod_{e \in \cE_G} \Norm{\varphi_e}_\frac{q_e}{n_e}}.
\end{equation}
Moreover, for all $\xi \in (0,+\infty)^\cV$, there exists $C_\xi >0$ such that for any Borel-measurable functions $\parentheses*{\varphi_e}_{e \in \cE_G}$ on $\R^d$, we have:
\begin{equation}
\label{eq: HBL xi}
\int_{\ux \in V_\xi} \prod_{e \in \cE_G} \norm*{\varphi_e(\ell_e(\ux))}\dx \ux \leq C_\xi \prod_{e \in \cE_G} \Norm{\varphi_e}_\frac{q_e}{n_e}.
\end{equation}
\end{prop}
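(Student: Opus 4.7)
The strategy is to apply the convex interpolation of Corollary~\ref{cor: interpolation of HBL inequalities} to well-chosen points of $\fC_\ul$ and $\fC_{\ul^{(\xi)}}$, all derived from the family of spanning trees produced by Lemma~\ref{lem: existence of spanning trees}. I begin by selecting such a family $(T_v)_{v \in \cV}$, writing $T_v = (\cV, \um^{(v)})$, so that for every $e \in \cE_G$ some $v \in \cV$ satisfies $\um^{(v)}_e \leq n_e - 1$. Since each $T_v$ is a tree, $\um^{(v)}_e \in \brackets{0,1}$, hence $\sum_v \um^{(v)}_e$ is at most $\card(\cV)-1$ when $n_e = 1$ (by the above property forcing at least one $v$ with $\um^{(v)}_e = 0$) and at most $\card(\cV) \leq 2(\card(\cV)-1)$ when $n_e \geq 2$. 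In either case, $\sum_v \um^{(v)}_e \leq (\card(\cV)-1)n_e$. I then set $q_e := n_e\card(\cV)/\sum_v \um^{(v)}_e$ (with the convention $q_e = +\infty$ if the sum vanishes) and $p_e := \frac{2(\card(\cV)-1)}{\card(\cV)} q_e$; the bound above translates precisely into $p_e \geq 2$, as required.

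Next, Lemma~\ref{lem: spanning tree and HBL} yields $\ualpha_v := \one_v + \um^{(v)} \in \fC_\ul$ for every $v$. The average $\ualpha^{(1)} := \frac{1}{\card(\cV)}\sum_{v \in \cV} \ualpha_v$ belongs to $\fC_\ul$ by Corollary~\ref{cor: interpolation of HBL inequalities}, and its components are $\alpha^{(1)}_v = 1/\card(\cV)$ at each vertex and $\alpha^{(1)}_e = n_e/q_e$ at each edge. Unpacking Definition~\ref{def: HBL inequality} with $\ualpha^{(1)}$ gives exactly inequality \eqref{eq: HBL L p/p-1}. For the third inequality, Lemma~\ref{lem: spanning tree and HBL} also gives $\um^{(v)} \in \fC_{\ul^{(\xi)}}$ for every $\xi \in (0,+\infty)^\cV$, so the average $\frac{1}{\card(\cV)}\sum_v \um^{(v)}$ lies in $\fC_{\ul^{(\xi)}}$, with $e$-coordinate $n_e/q_e$; this yields \eqref{eq: HBL xi}.

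The main obstacle is to obtain \eqref{eq: HBL L 2}, whose vertex exponent $2$ cannot be reached as a convex combination of the $\ualpha_v$'s alone, since those averages force vertex exponent $\card(\cV)$. To bridge this gap, I introduce the vector $\one_\cV := \sum_{v \in \cV}\one_v \in \R^{\cV \sqcup \cE_G}$ (ones on all vertex coordinates, zero on edge coordinates) and verify that $\one_\cV \in \fC_\ul$ via Theorem~\ref{thm: BCCT}: for every subspace $W \subset (\R^d)^\cV$, the natural embedding $W \hookrightarrow \prod_{v \in \cV} \ell_v(W)$ gives $\dim W \leq \sum_v \dim \ell_v(W)$, with equality when $W = (\R^d)^\cV$. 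A direct computation then shows that the vector $\ualpha^{(2)}$ with $\alpha^{(2)}_v = 1/2$ and $\alpha^{(2)}_e = n_e/p_e$ satisfies
\begin{equation*}
\ualpha^{(2)} = \frac{\card(\cV)}{2(\card(\cV)-1)} \ualpha^{(1)} + \frac{\card(\cV)-2}{2(\card(\cV)-1)} \one_\cV,
\end{equation*}
with both weights non-negative (thanks to $\card(\cV) \geq 2$) and summing to $1$. By Corollary~\ref{cor: interpolation of HBL inequalities} I conclude $\ualpha^{(2)} \in \fC_\ul$, whose associated HBL inequality is precisely \eqref{eq: HBL L 2}. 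The constants $C$, $C'$, $C_\xi$ in the three inequalities are those furnished by the respective applications of Theorem~\ref{thm: BCCT}.
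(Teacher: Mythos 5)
Your proof is correct and follows essentially the same strategy as the paper: use the family of spanning trees from Lemma~\ref{lem: existence of spanning trees} to get the points $\one_v + \um^{(v)} \in \fC_\ul$ via Lemma~\ref{lem: spanning tree and HBL}, average them to produce the exponent vector for~\eqref{eq: HBL L p/p-1}, and then interpolate with the ``all-vertices'' vector $\one_\cV = \sum_{v\in\cV}\one_v$ to reach vertex exponent $2$ for~\eqref{eq: HBL L 2}. Your $\ualpha^{(1)}$ and $\ualpha^{(2)}$ coincide exactly with the paper's $\ubeta$ and $\ualpha$, and your choice of $p_e, q_e$ matches theirs. The one small difference is how $\one_\cV \in \fC_\ul$ is established: you appeal to Theorem~\ref{thm: BCCT} and the dimension criterion, whereas the paper verifies the HBL inequality directly by bounding the edge factors by $\Norm{\varphi_e}_\infty$ and applying Fubini, which is a more elementary route that avoids invoking the full Bennett--Carbery--Christ--Tao characterization for a case where the inequality is immediate. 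Both arguments are sound; this is a cosmetic variation, not a different proof.
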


\begin{proof}
The strategy is to find relevant points in the set $\fC_{\ul}$ (resp.~$\fC_{\ul^{(\xi)}}$) defined by~\eqref{eq: def convex HBL interpolation}, which satisfy some additional constraints. For any non-negative Borel-measurable functions $(\varphi_a)_{a \in \cV \sqcup \cE_G}$, we have:
\begin{equation*}
\int_{(\R^d)^\cV} \prod_{a \in \cV \sqcup \cE_G} \varphi_a(\ell_a(\ux))\dx \ux \leq \parentheses*{\prod_{e \in \cE_G}\Norm{\varphi_e}_\infty} \int_{(\R^d)^\cV} \prod_{v \in \cV} \varphi_v(x_v)\dx \ux = \parentheses*{\prod_{v \in \cV}\Norm{\varphi_v}_1} \parentheses*{\prod_{e \in \cE_G}\Norm{\varphi_e}_\infty}.
\end{equation*}
This shows that $\ugamma=\sum_{v \in \cV}\one_v \in \fC_{\ul}$. Besides, since $G=(\cV,\un)$ is $2$-edge-connected, it admits a family $(T_v)_{v \in \cV}$ of spanning trees given by Lemma~\ref{lem: existence of spanning trees}. For all $v \in \cV$, letting $T_v=(\cV,\um^{(v)})$, we have $\one_v+\um^{(v)} \in \fC_{\ul}$ by Lemma~\ref{lem: spanning tree and HBL}.

Recall that $\fC_{\ul}$ is convex, see Corollary~\ref{cor: interpolation of HBL inequalities}. We define $\ualpha=(\alpha_a)_{a \in \cV \sqcup \cE_G}$ and $\ubeta=(\beta_a)_{a \in \cV \sqcup \cE_G} \in \fC_{\ul}$ by $\ubeta = \frac{1}{\norm{\cV}} \sum_{v \in \cV} (\one_v+\um^{(v)})$ and $\ualpha = \frac{\norm{\cV}}{2(\norm{\cV}-1)}\ubeta+\parentheses*{1-\frac{\norm{\cV}}{2(\norm{\cV}-1)}}\ugamma$. Indeed, since we assumed $\norm{\cV}\geq 2$, we have $\frac{\norm{\cV}}{2(\norm{\cV}-1)} \in [0,1]$.

For all $v\in \cV$, we have $\um^{(v)} \in \N^{\cE_G} \subset \N^{\cV \sqcup \cE_G}$, and $\um^{(v)}_\cV=0$. Hence $\ubeta_\cV = \frac{1}{\norm{\cV}}\sum_{v \in \cV} \one_v=\frac{1}{\norm{\cV}}\ugamma_\cV$. Then,
\begin{equation*}
\ualpha_\cV = \frac{\norm{\cV}}{2(\norm{\cV}-1)}\ubeta_\cV + \parentheses*{1-\frac{\norm{\cV}}{2(\norm{\cV}-1)}}\ugamma_\cV = \ugamma_\cV \parentheses*{1+\frac{1}{2(\norm{\cV}-1)}-\frac{\norm{\cV}}{2(\norm{\cV}-1)}} = \frac{1}{2}\ugamma_\cV.
\end{equation*}
That is, for all $v \in \cV$, we have $\alpha_v=\frac{1}{2}$ and $\beta_v=\frac{1}{\norm{\cV}}$.

Let $e \in \cE_G$, we have $n_e \geq 1$ by definition of $\cE_G$. For all $v \in \cV$, since $T_v$ is a tree it cannot have multiple edges, and hence $m^{(v)}_e \leq 1$. Moreover, since we obtained $(T_v)_{v \in \cV}$ from Lemma~\ref{lem: existence of spanning trees}, there exists $u \in \cV$ such that $m_e^{(u)}<n_e$. If $n_e=1$, then $m_e^{(u)}=0$, and
\begin{equation*}
\beta_e = \frac{1}{\norm{\cV}}\sum_{v \in \cV} m_e^{(v)} \leq \frac{1}{\norm{\cV}}\card(\cV \setminus \brackets{u}) = n_e\frac{\norm{\cV}-1}{\norm{\cV}}.
\end{equation*}
If $n_e \geq 2$, we have $\beta_e= \frac{1}{\norm{\cV}}\sum_{v \in \cV} m_e^{(v)} \leq 1 \leq 2\frac{\norm{\cV}-1}{\norm{\cV}} \leq n_e\frac{\norm{\cV}-1}{\norm{\cV}}$, since we assumed that $\norm{\cV} \geq 2$. Thus, for all $e \in \cE_G$, we have $\beta_e \leq n_e\frac{\norm{\cV}-1}{\norm{\cV}}$ and $\alpha_e = \frac{\norm{\cV}}{2(\norm{\cV}-1)}\beta_e \leq \frac{n_e}{2}$.

For all $e \in \cE_G$, we define $p_e = \frac{n_e}{\alpha_e} \in [2,+\infty]$ and $q_e=\frac{\norm{\cV}}{\norm{\cV}-1}\frac{p_e}{2}=\frac{n_e}{\beta_e}$. Recalling Definition~\ref{def: HBL inequality} and Equation~\eqref{eq: def convex HBL interpolation}, since $\alpha \in \fC_{\ul}$, a HBL inequality associated with $(\ul,\frac{1}{\ualpha})$ holds for some constant $C>0$. Since $\frac{1}{\ualpha}= \sum_{v \in \cV} 2 \one_v+\sum_{e \in \cE_G}\frac{p_e}{n_e}\one_e$, this concludes the proof of~\eqref{eq: HBL L 2}. Similarly, we have $\frac{1}{\ubeta}= \sum_{v \in \cV} \norm{\cV} \one_v+\sum_{e \in \cE_G}\frac{q_e}{n_e}\one_e$, and $\ubeta \in \fC_{\ul}$, which proves~\eqref{eq: HBL L p/p-1}.

Let $\xi \in (0,+\infty)^{\cV}$, by Lemma~\ref{lem: spanning tree and HBL} we have $\um^{(v)} \in\fC_{\ul^{(\xi)}}$ for all $v \in \cV$. Considering the projections of the previous multi-indices on $\N^{\cE_G}$, we have: $\ubeta_{\cE_G} = \frac{1}{\norm{\cV}}\sum_{v \in \cV} \um^{(v)} \in \fC_{\ul^{(\xi)}}$ by convexity. Thus a HBL inequality associated with $\ul^{(\xi)}$ and $\frac{1}{\ubeta_{\cE_G}} = \sum_{e \in \cE_G}\frac{q_e}{n_e}\one_e$ holds, which proves the existence of $C_\xi>0$ such that~\eqref{eq: HBL xi} holds.
\end{proof}


\subsection{Integral estimates for the universal part of the Kac--Rice density}
\label{subsec: integral estimates for the universal part of the Kac--Rice density}

This section is concerned with proving upper bounds for some integrals where the universal part of the Kac--Rice density appears. Recall that the thick diagonals $\diag_{\cI,\eta}$ were defined in Definition~\ref{def: diag I eta}.

\begin{lem}[Local mass of the singularity]
\label{lem: integral of Upsilon J}
Let $A$ be a non-empty finite set, $\cJ \in \cP_A$ and $\eta>0$. Then, the following positive constant is finite:
\begin{equation*}
C_{\cJ,\eta} = \int_{\ux \in \odiag}\ \prod_{J \in \cJ} \Upsilon_J(\ux_J) \dx \ux <+\infty,
\end{equation*}
where $\odiag = \brackets*{\ux \in \diag_{\brackets{A},\eta} \mvert \flat(\ux)=0}$, the functions $\Upsilon_J$ were introduced in Definition~\ref{def: Upsilon}, and $\dx \ux$ is the Lebesgue measure on the subspace $\flat^{-1}(0) = \brackets*{\ux \in (\R^d)^A \mvert \sum_{a \in A}x_a =0}$.
\end{lem}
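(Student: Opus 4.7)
The plan is to exploit two features of the integrand: the compactness of the domain $\odiag$ (Corollary~\ref{cor: compactness diag A eta 0}), and the translation-invariance of each factor $\Upsilon_J$ (Lemma~\ref{lem: regularity Upsilon}), which allows us to reduce the finiteness of $C_{\cJ,\eta}$ to the already established local integrability of each $\Upsilon_J$ on $(\R^d)^J$.

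Concretely, for each $J \in \cJ$ I would use the orthogonal splitting $(\R^d)^J = V_0^J \oplus V_1^J$, where $V_0^J \simeq \R^d$ is the diagonal subspace and $V_1^J = \brackets*{\uy \in (\R^d)^J \mvert \sum_{j \in J} y_j = 0}$. Writing $\ux_J = \tau_J \cdot \mathbf{1}_J + \oux_J$ with $\tau_J = \flat(\ux_J) \in V_0^J$ and $\oux_J \in V_1^J$, the constraint $\flat(\ux)=0$ becomes $\sum_{J \in \cJ} \norm{J} \tau_J = 0$, which defines a hyperplane $H \subset (\R^d)^\cJ$ of dimension $d(\norm{\cJ}-1)$. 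The linear map $\ux \mapsto \parentheses*{(\tau_J)_{J \in \cJ},(\oux_J)_{J \in \cJ}}$ is an isomorphism from $\flat^{-1}(0)$ onto $H \times \prod_{J \in \cJ} V_1^J$, transforming the Lebesgue measure into a constant multiple of the product Lebesgue measure on each factor.

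Since $\Upsilon_J(\ux_J) = \Upsilon_J(\oux_J)$ by translation-invariance, the integrand does not depend on $(\tau_J)$. The compactness of $\odiag$ combined with Lemma~\ref{lem: bound on oux I} ensures that its image under this change of variables lies in $K_H \times \prod_{J \in \cJ} K_J$ for some compact $K_H \subset H$ and compact $K_J \subset V_1^J$. Applying Fubini yields
\begin{equation*}
C_{\cJ,\eta} \leq c_\cJ \vol(K_H) \prod_{J \in \cJ} \int_{K_J} \Upsilon_J(\oux_J) \dx \oux_J
\end{equation*}
for some positive constant $c_\cJ$ coming from the Jacobian. Each factor on the right is finite by the translation-invariance trick: for any compact $L \subset V_0^J$ with positive volume, the invariance of $\Upsilon_J$ gives $\vol(L) \int_{K_J} \Upsilon_J(\oux_J) \dx \oux_J = \int_{L + K_J} \Upsilon_J(\ux_J) \dx \ux_J$, and the right-hand side is finite because $L + K_J$ is a compact subset of $(\R^d)^J$ on which $\Upsilon_J$ is locally integrable (Lemma~\ref{lem: regularity Upsilon}). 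Positivity of $C_{\cJ,\eta}$ will follow from $\Upsilon_J > 0$ and from the fact that $\odiag$ contains a neighborhood of $0$ in $\flat^{-1}(0)$, on which the integrand is positive almost everywhere.

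The only mildly delicate point is the measure-theoretic bookkeeping in the change of variables, in particular checking that the Jacobian is nonzero and that the pushforward of Lebesgue on $\flat^{-1}(0)$ is indeed a positive constant times Lebesgue on $H \times \prod_J V_1^J$; this is routine linear algebra. The conceptual content of the argument is simply that the $d$-dimensional translation symmetry within each block $J$ absorbs the codimension of $\flat^{-1}(0)$ in $(\R^d)^A$, so that the Lemma~\ref{lem: regularity Upsilon} local integrability (which is stated on the full space $(\R^d)^J$) suffices.
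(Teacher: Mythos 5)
Your proposal is correct and relies on exactly the same three ingredients as the paper's proof: translation-invariance of each $\Upsilon_J$ (Lemma~\ref{lem: regularity Upsilon}), compactness of $\odiag$ (Corollary~\ref{cor: compactness diag A eta 0}), and local integrability of $\Upsilon_J$ on $(\R^d)^J$ (Lemma~\ref{lem: regularity Upsilon}). The only difference is organizational: the paper's proof of Lemma~\ref{lem: integral of Upsilon J} first "fattens" the whole domain at once by writing $C_{\cJ,\eta} = \int_{\tau \in \B}\int_{\odiag}\prod_J \Upsilon_J(\tau\cdot\ux_J)\dx\ux\dx\tau$ (with $\B$ of volume $1$), changes variables $\uy = \tau\cdot\ux$ with Jacobian $\norm{A}^{-d/2}$, and then bounds the resulting integral over the compact $\Gamma = \{\uy\in\diag_{\brackets{A},\eta}\mid\flat(\uy)\in\B\}$ by the product $\prod_J\int_{\Gamma_J}\Upsilon_J$, whereas you first split $\flat^{-1}(0)$ as $H\times\prod_J V_1^J$ via Fubini and then fatten each factor $V_1^J$ separately by a translate $L\subset V_0^J$. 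Both routes are valid; yours is in fact the decomposition the paper uses a moment later in the proof of Lemma~\ref{lem: integrable dominating function}, so nothing is lost. Your handling of the Jacobian and the identification of $\Upsilon_J(\ux_J)$ with $\Upsilon_J(\obullet{\overgroup{\ux_J}})$ are both sound; just be aware that in the paper's own notation $\oux_J$ means $(x_j-\flat(\ux))_{j\in J}$ rather than $(x_j-\flat(\ux_J))_{j\in J}$, so what you call $\oux_J$ is written $\obullet{\overgroup{\ux_J}}$ there (Remark~\ref{rem: barycenter}).
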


\begin{proof}
By Lemma~\ref{lem: regularity Upsilon}, for all $J \in \cJ$, the function $\Upsilon_J$ is translation-invariant. Hence, letting $\B$ denote the closed ball of center $0$ and volume $1$ in $\R^d$, we have:
\begin{equation*}
C_{\cJ,\eta} = \int_{\tau \in \B} \int_{\ux \in \odiag} \prod_{J \in \cJ} \Upsilon_J(\tau \cdot \ux_J) \dx \ux\dx \tau = \norm{A}^{-\frac{d}{2}}\int_{\uy \in \Gamma} \prod_{J \in \cJ} \Upsilon_J(\uy_J) \dx \uy,
\end{equation*}
where $\Gamma = \brackets{\tau \cdot \ux \mid \ux \in \odiag \ \text{and}\ \tau \in \B} = \brackets{\uy \in \diag_{\brackets{A},\eta} \mid \flat(\uy) \in \B}$ and we performed the linear change of variable $\uy =\tau \cdot \ux$, whose Jacobian is $\norm{A}^{-\frac{d}{2}}$. Denoting by $\Gamma_J$ the projection of $\Gamma$ onto $(\R^d)^J$ for all $J \in \cJ$, we have $\Gamma \subset \prod_{J \in \cJ} \Gamma_J$, and hence:
\begin{equation*}
C_{\cJ,\eta}\leq \prod_{J \in \cJ} \int_{\uy_J \in \Gamma_J} \Upsilon_J(\uy_J) \dx \uy_J.
\end{equation*}
By Corollary~\ref{cor: compactness diag A eta 0}, the set $\odiag$ is compact. Therefore, $\Gamma$ is compact, and so are its projections $(\Gamma_J)_{J \in \cJ}$. By Lemma~\ref{lem: regularity Upsilon}, the functions $(\Upsilon_J)_{J \in \cJ}$ are locally integrable. Hence, $C_{\cJ,\eta}<+\infty$.
\end{proof}

\begin{dfn}[An integral transform]
\label{def: integral transform}
Let $A$ be a non-empty finite set, $\cJ \in \cP_A$ and $\eta>0$. Let $\uphi = (\phi_a)_{a \in A}$ be a family of Borel-measurable functions from $\R^d$ to $(-\infty,+\infty]$. We define a function $\cT_{\cJ,\eta}(\uphi)$ on $\R^d$ by the following formula, whenever it makes sense:
\begin{equation*}
\cT_{\cJ,\eta}(\uphi) : \tau \longmapsto \int_{\ux \in \odiag} \parentheses*{\prod_{a \in A}\phi_a(\tau+x_a)}\parentheses*{\prod_{J \in \cJ} \Upsilon_J(\ux_J)} \dx \ux,
\end{equation*}
where $\dx \ux$ is the Lebesgue measure on the Euclidean subspace $\flat^{-1}(0) \subset (\R^d)^A$.
\end{dfn}

For reasons that will be apparent later on, it is convenient for us to state the following estimates as the continuity of some multilinear maps between Lebesgue spaces.

\begin{lem}[Regularity of $\cT_{\cJ,\eta}$]
\label{lem: regularity T J eta}
Let $A \neq \emptyset$ be finite, $\cJ \in \cP_A$ and $\eta>0$. For all $s \in [1,+\infty]$, the map $\cT_{\cJ,\eta}:\parentheses*{L^{s\norm{A}}(\R^d)}^A \to L^s(\R^d)$ is multilinear continuous of norm at most~$C_{\cJ,\eta}$.
\end{lem}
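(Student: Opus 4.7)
\medskip

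\noindent\textbf{Plan.} Multilinearity of $\cT_{\cJ,\eta}$ in the arguments $(\phi_a)_{a \in A}$ is immediate from its definition, so the content of the lemma is the continuity bound
\begin{equation*}
\Norm{\cT_{\cJ,\eta}(\uphi)}_s \leq C_{\cJ,\eta} \prod_{a \in A} \Norm{\phi_a}_{s\norm{A}}.
\end{equation*}
Up to replacing each $\phi_a$ by $\norm{\phi_a}$, we may assume that all $\phi_a$ are non-negative, so every integral below can be manipulated freely by Tonelli's Theorem and all the quantities involved are well-defined in $[0,+\infty]$.

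\medskip

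\noindent\textbf{Main steps.} The idea is to view $\dx \mu(\ux) := \parentheses*{\prod_{J \in \cJ} \Upsilon_J(\ux_J)} \dx \ux$ as a non-negative finite measure on $\odiag$, whose total mass is exactly $C_{\cJ,\eta}$ by Lemma~\ref{lem: integral of Upsilon J}. Then $\cT_{\cJ,\eta}(\uphi)(\tau) = \int_{\odiag} \parentheses*{\prod_{a \in A} \phi_a(\tau+x_a)} \dx \mu(\ux)$, and Minkowski's integral inequality (valid for any $s \in [1,+\infty]$) gives
\begin{equation*}
\Norm{\cT_{\cJ,\eta}(\uphi)}_s \leq \int_{\odiag} \Norm*{\prod_{a \in A} \phi_a(\cdot + x_a)}_s \dx \mu(\ux).
\end{equation*}
Next, applying the generalized Hölder inequality with $\norm{A}$ equal exponents $s\norm{A} \in [s,+\infty]$ (which satisfy $\sum_{a \in A} \frac{1}{s\norm{A}} = \frac{1}{s}$), together with the translation-invariance of the Lebesgue measure on $\R^d$, yields
\begin{equation*}
\Norm*{\prod_{a \in A} \phi_a(\cdot + x_a)}_s \leq \prod_{a \in A} \Norm{\phi_a(\cdot + x_a)}_{s\norm{A}} = \prod_{a \in A} \Norm{\phi_a}_{s\norm{A}}.
\end{equation*}
The right-hand side no longer depends on $\ux$, hence integrating against $\dx \mu$ produces the factor $C_{\cJ,\eta}$ and gives the desired bound.

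\medskip

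\noindent\textbf{Main obstacle.} There is no real difficulty here, only bookkeeping: one must check that the map $\ux \mapsto \Norm{\prod_{a \in A} \phi_a(\cdot + x_a)}_s$ is Borel-measurable so that Minkowski's integral inequality applies, and handle the endpoint cases $s=1$ and $s=\infty$ (where the bound reduces respectively to Tonelli--Hölder and to a direct pointwise estimate). These are standard and follow from the non-negativity reduction made at the beginning.
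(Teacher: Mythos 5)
Your proof is correct, and it takes a somewhat different route from the paper's. The paper treats $s=\infty$ separately, and for $s<\infty$ it applies Jensen's inequality \emph{pointwise in $\tau$} to the probability measure $\frac{1}{C_{\cJ,\eta}}\prod_{J}\Upsilon_J(\ux_J)\dx\ux$ on $\odiag$, obtaining $\parentheses{\cT_{\cJ,\eta}(\norm{\uphi})(\tau)}^s \leq (C_{\cJ,\eta})^{s-1}\int_{\odiag}\prod_a\norm{\phi_a(\tau+x_a)}^s\,\dx\mu(\ux)$, and only then integrates over $\tau$ and uses H\"older to split the product. You instead use Minkowski's integral inequality to pull the $L^s(\dx\tau)$-norm inside the $\dx\mu(\ux)$-integral first, and then apply the generalized H\"older inequality with $\norm{A}$ equal exponents $s\norm{A}$ plus translation-invariance. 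The two arguments are, unsurprisingly, close relatives (both are manifestations of convexity together with H\"older), but they are not the same computation. Your version has the mild advantage of covering $s=\infty$ without a separate case, since Minkowski's integral inequality applies verbatim there; it also makes the role of $C_{\cJ,\eta}$ as a total mass more transparent. You correctly flag that one should verify Borel measurability of $\ux\mapsto\Norm{\prod_a\phi_a(\cdot+x_a)}_s$ for Minkowski to apply; this is indeed the only bookkeeping point, and your reduction to non-negative $\phi_a$ together with Tonelli takes care of it. No gaps.
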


\begin{proof}
Since $\cT_{\cJ,\eta}$ is multilinear, the proof of its continuity boils down to proving an integral estimate. Let $\uphi=(\phi_a)_{a \in A}$ be Borel-measurable functions on $\R^d$. For all $\tau \in \R^d$, we have:
\begin{equation*}
\int_{\ux \in \odiag} \parentheses*{\prod_{a \in A}\norm*{\phi_a(\tau+x_a)}}\parentheses*{\prod_{J \in \cJ} \Upsilon_J(\ux_J)} \dx \ux \leq C_{\cJ,\eta} \prod_{a \in A}\Norm{\phi_a}_\infty.
\end{equation*}
In particular, if $\phi_a \in L^\infty(\R^d)$ for all $a \in A$, then $\cT_{\cJ,\eta}(\uphi):\R^d \to \R$ is well-defined and bounded by $C_{\cJ,\eta} \prod_{a \in A}\Norm{\phi_a}_\infty$. This proves the result in the case $s=+\infty$.

Let us now assume that $s \in [1,+\infty)$. In the following, we denote by $\norm{\uphi}=(\norm{\phi_a})_{a \in A}$. Applying Jensen's inequality to the probability measure with density $\ux \mapsto \frac{1}{C_{\cJ,\eta}}\prod_{J \in \cJ} \Upsilon_J(\ux_J)$, we get:
\begin{equation*}
\parentheses*{\cT_{\cJ,\eta}(\norm{\uphi})(\tau)}^s \leq (C_{\cJ,\eta})^{s-1}\int_{\ux \in \odiag} \prod_{a \in A}\norm*{\phi_a(\tau+x_a)}^s\prod_{J \in \cJ} \Upsilon_J(\ux_J) \dx \ux
\end{equation*}
for all $\tau \in \R^d$. Integrating with respect to $\tau$ and applying Hölder's inequality, we obtain:
\begin{align*}
\int_{\tau \in \R^d}\parentheses*{\cT_{\cJ,\eta}(\norm{\uphi})(\tau)}^s \dx \tau &\leq (C_{\cJ,\eta})^{s-1} \int_{\ux \in \odiag} \Norm*{\prod_{a \in A} \phi_a(\cdot+x_a)}_s^s \prod_{J \in \cJ} \Upsilon_J(\ux_J) \dx \ux\\
&\leq (C_{\cJ,\eta})^{s-1} \int_{\ux \in \odiag} \parentheses*{\prod_{a \in A} \Norm*{\phi_a(\cdot+x_a)}_{s\norm{A}}^s} \prod_{J \in \cJ} \Upsilon_J(\ux_J) \dx \ux\\
&\leq (C_{\cJ,\eta})^s \prod_{a \in A} \Norm*{\phi_a}_{s\norm{A}}^s.
\end{align*}
Thus, if $\phi_a \in L^{s\norm{A}}(\R^d)$ for all $a \in A$, then $\cT_{\cJ,\eta}(\norm{\uphi})(\tau) <+\infty$ for almost every $\tau \in \R^d$. In this case, $\cT_{\cJ,\eta}(\uphi)$ is well-defined almost everywhere, and $\Norm{\cT_{\cJ,\eta}(\uphi)}_s \leq \Norm{\cT_{\cJ,\eta}(\norm{\uphi})}_s \leq C_{\cJ,\eta} \prod_{a \in A}\Norm{\phi_a}_{s\norm{A}}$. This proves the result if $s \in [1,+\infty)$.
\end{proof}

Let $A$ be a non-empty finite set and $\cI \in \cP_A$. As in Section~\ref{subsec: HBL inequalities and edge-connected graphs}, we define $\ell_I:(y_J)_{J \in \cI} \mapsto y_I$ from $(\R^d)^\cI$ to $\R^d$ for all $I \in \cI$. Similarly, for all $e = \brackets{I,J} \in \cI^{\cwedge}$ we define $\ell_e$ either as $\ell_I -\ell_J$ or as $\ell_J-\ell_I$ arbitrarily. Recalling Definition~\ref{def: barycenter}, we let $\flat_\cI:\ux \mapsto (\flat(\ux_I))_{I \in \cI}$ from $(\R^d)^A$ to $(\R^d)^\cI$.

\begin{dfn}[Integration functionals]
\label{def: H G J eta}
Let $A$ be a non-empty finite set, $\cI,\cJ \in \cP_A$ be such that $\cJ \leq \cI$ and $\eta >0$. Let $G=(\cI,\un)$ be a graph. For every families $\uphi=(\phi_a)_{a \in A}$ and $\ug=\parentheses*{g_e}_{e \in \cE_G}$ of Borel-measurable functions from $\R^d$ to $(-\infty,+\infty]$, we denote by
\begin{equation*}
\cH_{G,\cJ,\eta}(\uphi,\ug) = \int_{\ux \in \diag_{\cI,\eta}} \parentheses*{\prod_{a \in A}\phi_a(x_a)}\parentheses*{\prod_{J \in \cJ} \Upsilon_J(\ux_J)} \parentheses*{\prod_{e \in \cE_G} g_e\parentheses*{\strut\ell_e\parentheses*{\flat_\cI(\ux)}}^{n_e}} \dx \ux,
\end{equation*}
if this integral is well-defined.
\end{dfn}

Recalling Proposition~\ref{prop: Kac-Rice cumulants}, Corollary~\ref{cor: key estimate graphs} and Lemma~\ref{lem: distance between covariances of twisted interpolants}, quantities of the form $\cH_{G,\cJ,\eta}(\uphi,\ug)$ are typically what we need to control in order to prove Theorem~\ref{thm: cumulants asymptotics for zero sets}. The estimates we need are summarized in the following lemma.

\begin{lem}[Integral estimates for $\cH_{G,\cJ,\eta}$]
\label{lem: estimates H G J eta}
If $\card(\cI)\geq 2$ and the graph $G=(\cI,\un)$ is $2$-edge-connected, then the map $\ug \mapsto \cH_{G,\cJ,\eta}(\cdot,\ug)$ is well-defined and continuous in the following three cases, where the exponents $(p_e)_{e \in \cE_G}$ and $(q_e)_{e \in \cE_G}$ are given by Proposition~\ref{prop: HBL graphs}:
\begin{enumerate}
\item \label{item: estimate H infty} from $\prod_{e \in \cE_G} L^\infty(\R^d)$ to the space of the continuous multilinear forms on $\prod_{I \in \cI} \parentheses*{L^{\norm{I}}(\R^d)}^I$;

\item \label{item: estimate H p} from $\prod_{e \in \cE_G} L^{p_e}(\R^d)$ to the space of the continuous multilinear forms on $\prod_{I \in \cI} \parentheses*{L^{2\norm{I}}(\R^d)}^I$;

\item \label{item: estimate H q} from $\prod_{e \in \cE_G} L^{q_e}(\R^d)$ to the space of the continuous multilinear forms on $\prod_{I \in \cI} \parentheses*{L^{\norm{\cI}\norm{I}}(\R^d)}^I$.
\end{enumerate}
\end{lem}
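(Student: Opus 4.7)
The plan is to decompose the integral defining $\cH_{G,\cJ,\eta}$ according to the partition $\cI$, separating the inner geometry of each cluster $I \in \cI$ from the tuple of barycenters $\uy = (\flat(\ux_I))_{I \in \cI}$. The inner integrals will be recognized as values of the transforms $\cT_{\cJ_I,\eta}$ of Definition~\ref{def: integral transform}, whose $L^s$-continuity is provided by Lemma~\ref{lem: regularity T J eta}, while the remaining integral over $\uy \in (\R^d)^\cI$ will be controlled using the Hölder--Brascamp--Lieb inequalities of Proposition~\ref{prop: HBL graphs}.

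Concretely, for each $I \in \cI$ I will perform the linear change of variables $\ux_I = y_I \mathbf{1}_I + \tilde{\ux}_I$ with $y_I = \flat(\ux_I) \in \R^d$ and $\flat(\tilde{\ux}_I) = 0$, whose Jacobian is the constant $\norm{I}^{d/2}$. By Lemma~\ref{lem: projection diag I eta} applied with $\cJ$ replaced by $\cI$, the constraint $\ux \in \diag_{\cI,\eta}$ forces $\ux_I \in \diag_{\brackets{I},\eta}$ for every $I$; the translation invariance of the clustering partitions $\cQ_\eta$ then rewrites this as $\tilde{\ux}_I \in \obullet{\diag}_{\brackets{I},\eta}$. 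I will then majorize $\norm{\cH_{G,\cJ,\eta}(\uphi,\ug)}$ by moving absolute values inside the integral, dropping the remaining separation constraint between the barycenters (which only enlarges the integral), and using the translation invariance of $\Upsilon_J$ from Lemma~\ref{lem: regularity Upsilon} to rewrite $\Upsilon_J(\ux_J) = \Upsilon_J((\tilde{\ux}_{[J]_\cI})_J)$. Since $\cJ \leq \cI$ gives $\cJ = \bigsqcup_{I \in \cI} \cJ_I$, the inner integral over $\tilde{\ux}_I$ with $y_I$ frozen will be exactly $\cT_{\cJ_I,\eta}(\uphi_I')(y_I)$ for $\uphi_I' = (\norm{\phi_a})_{a \in I}$, leading to an estimate of the form
\begin{equation*}
\norm{\cH_{G,\cJ,\eta}(\uphi,\ug)} \leq C \int_{(\R^d)^\cI} \parentheses*{\prod_{I \in \cI} \cT_{\cJ_I,\eta}(\uphi_I')(y_I)} \prod_{e \in \cE_G} \norm{g_e(\ell_e(\uy))}^{n_e} \dx \uy,
\end{equation*}
with $C = \prod_{I \in \cI} \norm{I}^{d/2}$.

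It will then remain to apply Proposition~\ref{prop: HBL graphs} to this integral, with $\varphi_I = \cT_{\cJ_I,\eta}(\uphi_I')$ for each vertex $I \in \cI$ and $\varphi_e = \norm{g_e}^{n_e}$ for each edge $e \in \cE_G$, noting that $\Norm{\varphi_e}_{p_e/n_e} = \Norm{g_e}_{p_e}^{n_e}$ and similarly for $q_e$. Case~\ref{item: estimate H infty} will follow from a trivial $L^\infty$-bound on the $g_e$'s followed by Fubini, which reduces the matter to controlling $\prod_{I \in \cI} \Norm{\cT_{\cJ_I,\eta}(\uphi_I')}_1$ via Lemma~\ref{lem: regularity T J eta} at $s = 1$; Case~\ref{item: estimate H p} will follow from inequality~\eqref{eq: HBL L 2} combined with Lemma~\ref{lem: regularity T J eta} at $s = 2$; and Case~\ref{item: estimate H q} from inequality~\eqref{eq: HBL L p/p-1} combined with Lemma~\ref{lem: regularity T J eta} at $s = \norm{\cI}$. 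The main subtlety will be to verify that the barycenter change of variables interacts cleanly both with the translation invariant universal factor $\Upsilon_J$ and with the thick diagonal constraint, so that the inter-cluster separation of the $(y_I)_{I \in \cI}$ can be forgotten while still retaining precisely what is needed to recognize the $\cT_{\cJ_I,\eta}$'s; once this reduction is in place, the three cases fall out directly because the exponents appearing on the vertex side of Proposition~\ref{prop: HBL graphs} are exactly those handled by Lemma~\ref{lem: regularity T J eta}.
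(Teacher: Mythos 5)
Your proposal is correct and follows essentially the same route as the paper's proof: change variables cluster-by-cluster into barycentre plus centred part, use Lemma~\ref{lem: projection diag I eta} and translation invariance of $\Upsilon_J$ to recognize the inner integrals as values of $\cT_{\cJ_I,\eta}$, drop the inter-cluster separation constraint to enlarge the domain, and then feed the resulting bound into Proposition~\ref{prop: HBL graphs} (respectively the trivial $L^\infty$-bound for Item~\ref{item: estimate H infty}), closing with Lemma~\ref{lem: regularity T J eta} at $s=1,2,\norm{\cI}$ and the continuity of $g_e \mapsto g_e^{n_e}$ between the appropriate $L^p$-spaces.
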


\begin{proof}
For any family $\uvarphi=(\varphi_a)_{a \in A \sqcup \cE_G}$ of Borel-measurables functions on $\R^d$, we define:
\begin{equation*}
\tilde{\cH}_{G,\cJ,\eta}(\uvarphi) = \int_{\ux \in \diag_{\cI,\eta}} \parentheses*{\prod_{a \in A}\varphi_a(x_a)}\parentheses*{\prod_{J \in \cJ} \Upsilon_J(\ux_J)} \parentheses*{\prod_{e \in \cE_G} \varphi_e\parentheses*{\strut \ell_e\parentheses*{\flat_\cI(\ux)}}} \dx \ux,
\end{equation*}
if this integral makes sense, so that $\cH_{G,\cJ,\eta}\parentheses*{\strut(\phi_a)_{a \in A},(g_e)_{e \in \cE_G}} =\tilde{\cH}_{G,\cJ,\eta}\parentheses*{\strut (\phi_a)_{a \in A},(g_e^{n_e})_{e \in \cE_G}}$. The first step of the proof is to bound $\tilde{\cH}_{G,\cJ,\eta}(\uvarphi)$ by an integral quantity where the functions $(\cT_{\cJ_I,\eta})_{I \in \cI}$ from Definition~\ref{def: integral transform} appear. Then, the claimed continuities follow from Proposition~\ref{prop: HBL graphs} and Lemma~\ref{lem: regularity T J eta}.

\paragraph*{Step 1: Bounding $\tilde{\cH}_{G,\cJ,\eta}(\uvarphi)$.}
Let $\ux \in \diag_{\cI,\eta}$, for all $I \in \cI$, we have $\ux_I \in \diag_{\brackets{I},\eta}$ by Lemma~\ref{lem: projection diag I eta}. Hence, we have $\ux_I = \flat(\ux_I) + \obullet{\overgroup{\ux_I}}$, where $\flat(\ux_I)\in \R^d$ and $\obullet{\overgroup{\ux_I}} \in \obullet{\diag}_{\brackets{I},\eta}$ were defined in Definition~\ref{def: barycenter}. Thus $\ux \mapsto \parentheses*{\flat_\cI(\ux),(\obullet{\overgroup{\ux_I}})_{I \in \cI}}$ is an isomorphism from $(\R^d)^A$ to $(\R^d)^\cI \times \flat_\cI^{-1}(0)$ that maps $\diag_{\cI,\eta}$ to a subset of $(\R^d) ^\cI \times \prod_{I \in \cI}\obullet{\diag}_{\brackets{I},\eta}$. One can check that the inverse of its Jacobian is $\prod_{I \in \cI} \norm{I}^{\frac{d}{2}} \leq \norm{A}^{\frac{d\norm{A}}{2}}$.

Let us consider a family $\uvarphi=(\varphi_a)_{a \in A\sqcup \cE_G}$ of Borel-measurables functions from $\R^d$ to $(-\infty,+\infty]$, and denote $\norm{\uvarphi} = (\norm{\varphi_a})_{a \in A}$. Starting from the definition of $\tilde{\cH}_{G,\cJ,\eta}(\norm{\uvarphi})$, we perform the linear change of variable $\parentheses{\tau_I,\uz_I}_{I \in \cI}= \parentheses{\flat(\ux_I),\obullet{\overgroup{\ux_I}}}_{I \in \cI}$. Recalling that $\cJ \leq \cI$ and that the functions $(\Upsilon_J)_{J \in \cJ}$ are non-negative and translation-invariant (see Lemma~\ref{lem: regularity Upsilon}), we obtain:
\begin{multline}
\label{eq: bound H G J eta}
\tilde{\cH}_{G,\cJ,\eta}\parentheses*{\norm{\uvarphi}} =\int_{\ux \in \diag_{\cI,\eta}} \prod_{I \in \cI} \parentheses*{\prod_{i \in I}\norm{\varphi_i(x_i)}\prod_{J \in \cJ_I} \Upsilon_J(\ux_J)} \prod_{e\in\cE_G} \norm*{\varphi_e\parentheses*{\strut \ell_e\parentheses*{\flat_\cI(\ux)}}} \dx \ux\\
\leq \norm{A}^{\frac{d\norm{A}}{2}} \int_{\utau \in (\R^d)^\cI} \int_{\uz \in \prod_{I \in \cI}\obullet{\diag}_{\brackets{I},\eta}} \prod_{I \in \cI} \parentheses*{\prod_{i \in I}\norm{\varphi_i(\tau_I+z_i)}\prod_{J \in \cJ_I} \Upsilon_J(\uz_J)} \prod_{e\in\cE_G} \norm*{\varphi_e\parentheses*{\strut \ell_e(\utau)}} \dx \uz \dx \utau\\
\leq \norm{A}^{\frac{d\norm{A}}{2}} \int_{\utau \in (\R^d)^\cI} \prod_{I \in \cI} \cT_{\cJ_I,\eta}\parentheses{\norm{\uvarphi_I}}(\tau_I) \prod_{e\in\cE_G} \norm*{\varphi_e\parentheses*{\ell_e(\utau)}}\dx \utau.
\end{multline}

\paragraph*{Step 2: Deducing the continuity from HBL inequalities.} Let us first prove Item~\ref{item: estimate H p}. By Proposition~\ref{prop: HBL graphs} and Lemma~\ref{lem: regularity T J eta} with $s=2$, we deduce from~\eqref{eq: bound H G J eta} that:
\begin{align*}
\tilde{\cH}_{G,\cJ,\eta}\parentheses*{\norm{\uvarphi}} &\leq C\norm{A}^{\frac{d\norm{A}}{2}} \parentheses*{\prod_{I \in \cI} \Norm*{\cT_{\cJ_I,\eta}\parentheses{\norm{\uvarphi_I}}}_2}\parentheses*{\prod_{e \in \cE_G} \Norm{\varphi_e}_\frac{p_e}{n_e}}\\
&\leq C\norm{A}^{\frac{d\norm{A}}{2}}\parentheses*{\prod_{I \in \cI} C_{\cJ_I,\eta}} \parentheses*{\prod_{I \in \cI} \prod_{i \in \cI} \Norm{\varphi_i}_{2\norm{I}}}\parentheses*{\prod_{e \in \cE_G} \Norm{\varphi_e}_\frac{p_e}{n_e}}.
\end{align*}
If $\uvarphi \in \prod_{I \in \cI}\parentheses*{L^{2\norm{I}}(\R^d)}^I \times \prod_{e \in \cE_G}L^\frac{p_e}{n_e}(\R^d)$ then $\tilde{\cH}_{G,\cJ,\eta}\parentheses*{\norm{\uvarphi}} <+\infty$, and $\tilde{\cH}_{G,\cJ,\eta}\parentheses*{\uvarphi}$ is well-defined. Thus, $\tilde{\cH}_{G,\cJ,\eta}$ is a well-defined multilinear form on $\prod_{I \in \cI}\parentheses*{L^{2\norm{I}}(\R^d)}^I \times \prod_{e \in \cE_G}L^\frac{p_e}{n_e}(\R^d)$. For all $\uvarphi$ in this space, we have $\norm*{\tilde{\cH}_{G,\cJ,\eta}\parentheses*{\uvarphi}} \leq \tilde{\cH}_{G,\cJ,\eta}\parentheses*{\norm{\uvarphi}}$, and the previous inequality shows that $\tilde{\cH}_{G,\cJ,\eta}$ is continuous. Hence, $\uvarphi_{\cE_G} \mapsto \tilde{\cH}_{G,\cJ,\eta}\parentheses{\cdot,\uvarphi_{\cE_G}}$ is multilinear continuous from $\prod_{e \in \cE_G} L^\frac{p_e}{n_e}(\R^d)$ to the space of continuous multilinear forms on $\prod_{I \in \cI}\parentheses*{L^{2\norm{I}}(\R^d)}^I$.

For all $n \in \N^*$ and $p \in [1,+\infty]$, the map $(\varphi_1,\dots,\varphi_n) \mapsto \prod_{i=1}^{n}\varphi_i$ is multilinear continuous from $\parentheses*{L^p(\R^d)}^n$ to $L^\frac{p}{n}(\R^d)$, by Hölder's inequality. Hence, for all $e \in \cE_G$, the map $g_e \mapsto g_e^{n_e}$ is continuous from $L^{p_e}(\R^d)$ to $L^\frac{p_e}{n_e}(\R^d)$, as the restriction of one the previous to the deepest diagonal. Thus, the map $\ug \mapsto \cH_{G,\cJ,\eta}(\cdot,\ug)$ is well-defined and continuous between the space appearing in Item~\ref{item: estimate H p}, as the composition of $(g_e)_{e \in \cE_G} \mapsto (g_e^{n_e})_{e \in \cE_G}$ and $(\varphi_e)_{e \in \cE_G} \mapsto \cH_{G,\cJ,\eta}\parentheses*{\cdot,(\varphi_e)_{e \in \cE_G}}$.

The proof of Item~\ref{item: estimate H q} is similar. We just replace $(p_e)_{e \in \cE_G}$ by $(q_e)_{e \in \cE_G}$ in the previous argument, use~Equation~\eqref{eq: HBL L p/p-1} instead of~\eqref{eq: HBL L 2} and use, this time, the continuity of $T_{\cJ_I,\eta}$ from $\parentheses*{L^{\norm{\cI}\norm{I}}(\R^d)}^I$ to $L^{\norm{\cI}}(\R^d)$ for all $I \in \cI$.

Concerning Item~\ref{item: estimate H infty}, we start from Equation~\eqref{eq: bound H G J eta} and bound the functions $(\varphi_e)_{e \in \cE_G}$ by their sup-norm. Using Lemma~\ref{lem: regularity T J eta} with $s=1$ to bound $\Norm*{\cT_{\cJ_I,\eta}\parentheses{\norm{\uvarphi_I}}}_1$ for all $I \in \cI$, we obtain:
\begin{equation*}
\tilde{\cH}_{G,\cJ,\eta}\parentheses*{\norm{\uvarphi}} \leq \norm{A}^{\frac{d\norm{A}}{2}}\parentheses*{\prod_{I \in \cI} C_{\cJ_I,\eta}} \parentheses*{\prod_{I \in \cI} \prod_{i \in \cI} \Norm{\varphi_i}_{\norm{I}}}\parentheses*{\prod_{e \in \cE_G} \Norm{\varphi_e}_\infty}.
\end{equation*}
The conclusion follows by the same argument as in the other two cases.
\end{proof}

We need a similar estimate for an integral over $\obullet{\diag}_{\cI,\eta} := \diag_{\cI,\eta} \cap \flat^{-1}(0)$, with respect to the Lebesgue measure on Euclidean the subspace $\flat^{-1}(0)$. The integrand in Lemma~\ref{lem: integrable dominating function} will naturally appear as the dominating function in a dominated convergence later on.

\begin{lem}[Integrable dominating function]
\label{lem: integrable dominating function}
Let $A$ be a non-empty finite set, let $\cI,\cJ \in \cP_A$ be such that $\cJ \leq \cI$ and $\eta >0$. Let $G=(\cI,\un)$ be a $2$-edge-connected graph. For every $e \in \cE_G$, let $g_e \in L^{q_e}(\R^d)$ be non-negative, where $(q_e)_{e \in \cE_G}$ is given by Proposition~\ref{prop: HBL graphs}. Then,
\begin{equation*}
\int_{\ux \in \obullet{\diag}_{\cI,\eta}} \parentheses*{\prod_{J \in \cJ} \Upsilon_J(\ux_J)} \parentheses*{\prod_{e \in \cE_G} g_e\parentheses*{\ell_e\parentheses*{\flat_\cI(\ux)}}^{n_e}} \dx \ux <+\infty,
\end{equation*}
where $\dx \ux$ is the Lebesgue measure on $\flat^{-1}(0) = \brackets*{(x_a)_{a \in A} \in (\R^d)^A \mvert \sum_{a \in A} x_a =0}$.
\end{lem}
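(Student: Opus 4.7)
The plan is to follow the strategy of Lemma~\ref{lem: estimates H G J eta}: split each variable $\ux_I$ into its barycenter $\tau_I = \flat(\ux_I) \in \R^d$ and its centered part $\uz_I = \obullet{\overgroup{\ux_I}}$, then factor the integral into a ``local'' part controlled by Lemma~\ref{lem: integral of Upsilon J} and a ``global'' part on the hyperplane $V_\xi$ controlled by the HBL inequality~\eqref{eq: HBL xi}.

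First, dispose of the trivial case $\card(\cI)=1$. Then $\cI = \brackets{A}$, $\cE_G = \emptyset$, and $\obullet{\diag}_{\cI,\eta} = \obullet{\diag}_{\brackets{A},\eta}$, so the integral equals $\int_{\obullet{\diag}_{\brackets{A},\eta}} \prod_{J \in \cJ} \Upsilon_J(\ux_J)\dx\ux = C_{\cJ,\eta} < +\infty$ by Lemma~\ref{lem: integral of Upsilon J}. Assume now $\card(\cI) \geq 2$. The linear map $\ux \mapsto (\utau,\uz)$ with $\tau_I = \flat(\ux_I)$ and $\uz_I = \obullet{\overgroup{\ux_I}}$ is an isomorphism from $(\R^d)^A$ onto $(\R^d)^\cI \times \prod_{I \in \cI} \brackets*{\uz_I \in (\R^d)^I \mid \flat(\uz_I)=0}$ with Jacobian inverse $\prod_{I \in \cI} \norm{I}^{d/2}$. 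Under this map, the constraint $\flat(\ux)=0$ translates to $\sum_{I \in \cI}\norm{I}\tau_I = 0$, i.e.\ $\utau \in V_\xi$ for $\xi = (\norm{I})_{I \in \cI} \in (0,+\infty)^\cI$; and the constraint $\ux \in \diag_{\cI,\eta}$ forces $\uz_I \in \obullet{\diag}_{\brackets{I},\eta}$ for every $I \in \cI$ by Lemma~\ref{lem: projection diag I eta}.

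Next, use the translation invariance of $\Upsilon_J$ (Lemma~\ref{lem: regularity Upsilon}) and the partition equality $\cJ = \bigsqcup_{I \in \cI} \cJ_I$ (since $\cJ \leq \cI$) to rewrite $\prod_{J \in \cJ} \Upsilon_J(\ux_J) = \prod_{I \in \cI}\prod_{J \in \cJ_I}\Upsilon_J(\uz_J)$, and observe that $\ell_e(\flat_\cI(\ux)) = \ell_e(\utau)$ for every $e \in \cE_G$. Since the integrand is non-negative, we may enlarge the image domain to $V_\xi \times \prod_I \obullet{\diag}_{\brackets{I},\eta}$ and obtain the upper bound
\[
\norm{A}^{d\norm{A}/2}\parentheses*{\prod_{I \in \cI} \int_{\obullet{\diag}_{\brackets{I},\eta}} \prod_{J \in \cJ_I}\Upsilon_J(\uz_J)\dx\uz_I}\int_{\utau \in V_\xi}\prod_{e \in \cE_G}g_e(\ell_e(\utau))^{n_e}\dx\utau.
\]

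Finally, each factor $\int_{\obullet{\diag}_{\brackets{I},\eta}}\prod_{J \in \cJ_I}\Upsilon_J(\uz_J)\dx\uz_I$ equals the finite constant $C_{\cJ_I,\eta}$ of Lemma~\ref{lem: integral of Upsilon J} (applied to $I$ and $\cJ_I \in \cP_I$). Since $G$ is $2$-edge-connected with $\card(\cI) \geq 2$, the HBL inequality~\eqref{eq: HBL xi} of Proposition~\ref{prop: HBL graphs} bounds the $\utau$-integral by $C_\xi\prod_{e \in \cE_G}\Norm{g_e^{n_e}}_{q_e/n_e} = C_\xi\prod_{e \in \cE_G}\Norm{g_e}_{q_e}^{n_e}$, finite by the assumption $g_e \in L^{q_e}(\R^d)$. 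There is no genuinely new obstacle beyond the bookkeeping of the change of variables and the correct identification of the image of $\obullet{\diag}_{\cI,\eta}$, both of which proceed exactly as in the proof of Lemma~\ref{lem: estimates H G J eta}.
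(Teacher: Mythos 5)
Your proof is correct and follows essentially the same route as the paper's: restrict the change of variables $\ux \mapsto (\flat_\cI(\ux),(\obullet{\overgroup{\ux_I}})_{I\in\cI})$ to $\flat^{-1}(0)\to V_\xi\times\flat_\cI^{-1}(0)$, enlarge the image domain, factor the local part via Lemma~\ref{lem: integral of Upsilon J} and control the $V_\xi$-integral via~\eqref{eq: HBL xi}. The only slight imprecision is that $\norm{A}^{d\norm{A}/2}$ bounds the inverse Jacobian of the \emph{unrestricted} isomorphism $(\R^d)^A\to(\R^d)^\cI\times\flat_\cI^{-1}(0)$, not of its restriction to the hyperplane $\flat^{-1}(0)$ (the paper simply calls the correct restricted constant $C$); since finiteness of the constant is all that is needed, this does not affect the argument.
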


\begin{proof}
Let us denote by $\xi=\parentheses*{\norm{I}}_{I \in \cI} \in (0,+\infty)^{\cI}$ and $V_\xi = \brackets*{(\tau_I)_{I \in \cI} \in(\R^d)^\cI \mvert \sum_{I \in \cI} \norm{I}\tau_I=0}$. We have checked, in the first step of the proof of Lemma~\ref{lem: estimates H G J eta}, that $\ux \mapsto \parentheses*{\flat_\cI(\ux),(\obullet{\overgroup{\ux_I}})_{I \in \cI}}$ is an isomorphism from $(\R^d)^A$ to $(\R^d)^\cI \times \flat_{\cI}^{-1}(0)$ that maps $\diag_{\cI,\eta}$ onto a subset of $(\R^d)^\cI \times \prod_{I \in \cI} \obullet{\diag}_{\brackets{I},\eta}$. For all $\ux=(x_a)_{a \in A} \in (\R^d)^A$, we have $\norm{A}\flat(\ux) = \sum_{I \in \cI}\sum_{i \in I}x_i = \sum_{I \in \cI}\norm{I}\flat(\ux_I)$. Hence, the previous isomorphism restricts into an isomorphism from $\flat^{-1}(0)$ to $V_\xi \times \flat_\cI^{-1}(0)$, that maps $\obullet{\diag}_{\cI,\eta}$ onto a subset of $V_\xi \times \prod_{I \in \cI} \obullet{\diag}_{\brackets{I},\eta}$. Letting $C>0$ denote the inverse of its Jacobian, we perform the corresponding linear change of variable. Then, the integral we are interested in is bounded by:
\begin{multline*}
C\int_{\utau \in V_\xi} \parentheses*{\prod_{I \in \cI} \int_{\uz_I \in \obullet{\diag}_{\brackets{I},\eta}} \prod_{J \in \cJ_I} \Upsilon_J(\uz_J) \dx \uz_I} \parentheses*{\prod_{e \in \cE_G} g_e\parentheses*{\ell_e(\tau_I)}^{n_e}}\dx \utau\\
= C\parentheses*{\prod_{I \in \cI}C_{\cJ_I,\eta}}\int_{\utau \in V_\xi} \prod_{e \in \cE_G} g_e\parentheses*{\ell_e(\tau_I)}^{n_e}\dx \utau\leq C\parentheses*{\prod_{I \in` \cI}C_{\cJ_I,\eta}}C_\xi \prod_{e \in \cE_G} \Norm{g_e^{n_e}}_\frac{q_e}{n_e},
\end{multline*}
by Proposition~\ref{prop: HBL graphs}. Since $g_e \in L^{q_e}(\R^d)$ for all $e \in \cE_G$, the right-hand side is finite.
\end{proof}


\section{Proof of Theorem~\ref{thm: cumulants asymptotics for zero sets}: cumulants asymptotics for zero sets}
\label{sec: proof of thm cumulants asymptotics zero sets}

This section is concerned with the proof of the cumulants asymptotics of Theorem~\ref{thm: cumulants asymptotics for zero sets}. Let us recall the setting. The following notation will remain fixed in this whole section.

Let $d \in \N^*$ and $k \in \ssquarebrackets{1}{d}$, we denote by $f:\R^d \to \R^k$ a centered stationary Gaussian field and by~$r$ its covariance kernel. Let $\Omega \subset U \subset \R^d$ be open subsets such that $U$ is convex and $\dist(\Omega,\R^d\setminus U) >0$. Let $\cR \subset (0,+\infty)$ be unbounded. For all $R \in \cR$, we consider a centered Gaussian field $f_R:RU \to \R^k$ and denote by $r_R$ its covariance kernel. In order to simplify some statements, we let $f_\infty=f$ and use the convention that $\infty \Omega = \R^d= \infty U$. In the following, $R$ is always implicitly assumed to be an element of $\cR \sqcup \brackets{\infty}$.

Let $p \in \N^*$, we assume in all this section that the following hold: \hypReg{\max(2,2p-1)}, \hypScL{2p-1}, \hypND{2p-1} and \hypDC{2p-1}{s}, for some $s \in [1,+\infty]$. Let $g:\R^d \to [0,+\infty)$ and $\omega>0$ be those appearing in Hypothesis~\hypDC{2p-1}{s}, recall that the function $g_\omega$ is defined as $g_\omega:x \mapsto \sup_{\Norm{y}\leq \omega} g(x+y)$. Recall also that, under these assumptions, \hypDCL{2p-1}{s} also holds for the same $\omega$ and $g$, see Section~\ref{subsec: discussion of the main hypotheses}.


\subsection{Validity of the Kac--Rice formula under \texorpdfstring{\hypDC{2p-1}{\infty}}{}}
\label{subsec: validity of Kac-Rice}

In this section, we work under the assumption that~\hypDC{2p-1}{\infty} holds, that is $g$ is bounded and goes to $0$ at infinity. The parameter $\omega$ might seem irrelevant in this case, yet it will be useful later on to have it appear in the statements of this section. We will prove that the fields $(f_R)_{R \in \cR \sqcup \brackets{\infty}}$ are uniformly non-degenerate in the sense of Corollary~\ref{cor: uniform non degeneracy f R} and that the hypotheses of the Kac--Rice formula for cumulants are satisfied, see Proposition~\ref{prop: Kac-Rice cumulants}. As a consequence, we prove Theorem~\ref{thm: cumulants asymptotics for zero sets}.\ref{item: cumulants asymptotics infty}.

\begin{lem}[Existence of a good scale parameter]
\label{lem: existence eta}
There exists $\eta \in (0,\frac{\omega}{2p}]$ and $R_\eta \in \cR$ such that the following holds. For all non-empty $A \subset \ssquarebrackets{1}{p}$, for all $\cK \in \cP_A$, for all $\cJ \leq \cI$ in $\cP_\cK$, there exists a compact $\Gamma_{\cI,\cJ,\eta} \subset \cM_\cJ \times \cL^\dagger_{\cJ,\cI}\times \cS_\cI^+$ such that, for all $R \geq R_\eta$ and $\ux \in \diag_{\cI,\eta}\cap (R\Omega)^\cK$,
\begin{equation*}
\parentheses*{\parentheses*{\cG_J(\ux_J)}_{J \in \cJ},\parentheses*{\theta_J(\ux_J)}_{J \in \cJ},\parentheses*{\Pi_{2J}^{2[J]_\cI}(\cdot,\ux_{2[J]_\cI})}_{J \in \cJ},\Sigma_{2\cI}(f_R,\ux_{2\cK})} \in \Gamma_{\cI,\cJ,\eta}.
\end{equation*}
\end{lem}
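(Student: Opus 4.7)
The plan is to assemble $\Gamma_{\cI,\cJ,\eta}$ as a product of four compact sets, one for each coordinate of the tuple. Since the collection of triples $(A, \cK, \cI)$ with $\emptyset \neq A \subset \ssquarebrackets{1}{p}$, $\cK \in \cP_A$ and $\cI \in \cP_\cK$ is finite, it suffices to exhibit parameters $\eta(\cK,\cI) > 0$ and $R(\cK, \cI) \in \cR$ that work for each triple, and then set $\eta = \min\parentheses*{\omega/(2p), \min_{(\cK,\cI)} \eta(\cK,\cI)}$ and $R_\eta = \max_{(\cK,\cI)} R(\cK, \cI)$.

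The first two coordinates $(\cG_J(\ux_J))_{J \in \cJ}$ and $(\theta_J(\ux_J))_{J \in \cJ}$ land in the compact parameter space $\cM_\cJ$ of Definition~\ref{def: M I} by construction, so nothing needs to be shown for them. For the third coordinate, I will fix $J \in \cJ$, set $I = [J]_\cI$, and exploit the translation-invariance of $\Pi_{2J}^{2I}(\cdot, \ux_{2I})$ given by Lemma~\ref{lem: prop Pi BA}.\ref{item: Pi translation inv}. When $\ux \in \diag_{\cI,\eta}$, Lemma~\ref{lem: projection diag I eta} applied with its coarser partition equal to $\cI$ itself yields $\ux_I \in \diag_{\brackets{I},\eta}$, and Remark~\ref{rem: diag I eta doubled points} promotes this to $\ux_{2I} \in \diag_{\brackets{2I},\eta}$. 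Translating so that $\flat(\ux_{2I}) = 0$ puts $\ux_{2I}$ in the compact set provided by Corollary~\ref{cor: compactness diag A eta 0}, and Lemma~\ref{lem: prop Pi BA}.\ref{item: Pi continuity} then shows that the image is compact in $\cL^\dagger\parentheses*{\R_{2\norm{I}-1}[X]^k, \R_{2\norm{J}-1}[X]^k}$. Ranging over $J \in \cJ$ and taking the product yields a compact subset of $\cL^\dagger_{\cJ,\cI}$; this construction works for any $\eta > 0$, uniformly in $R$.

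The fourth coordinate, $\Sigma_{2\cI}(f_R, \ux_{2\cK})$, is where Corollary~\ref{cor: uniform non degeneracy f R} intervenes. I will apply it with its set $A$ equal to $2\cK$ (of cardinality $2\norm{\cK} \leq 2p$) and its partition equal to $2\cI \in \cP_{2\cK}$. The standing hypotheses~\hypReg{\max(2,2p-1)}, \hypND{2p-1}, \hypScL{2p-1} and \hypDC{2p-1}{\infty} imply the corresponding hypotheses with parameter $2\norm{\cK} - 1 \leq 2p-1$ by the monotonicity discussed in Section~\ref{subsec: discussion of the main hypotheses}; Remark~\ref{rem: diag I eta doubled points} transports the constraint $\ux \in \diag_{\cI,\eta}$ into $\ux_{2\cK} \in \diag_{2\cI,\eta}$. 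The corollary thus furnishes $\eta_0(\cK, \cI) > 0$ and $R(\cK, \cI) \in \cR$ such that, for every $\eta \leq \eta_0(\cK, \cI)$ and every $R \in \cR \cap [R(\cK, \cI), +\infty)$, the set of operators $\Sigma_{2\cI}(f_R, \ux_{2\cK})$ with $\ux \in \diag_{\cI,\eta} \cap (R\Omega)^\cK$ is relatively compact in $\cS_\cI^+$.

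Setting $\Gamma_{\cI,\cJ,\eta}$ to be the product of the four compact sets just constructed finishes the proof. Conceptually, the only substantive input is Corollary~\ref{cor: uniform non degeneracy f R}, which constitutes the main obstacle; the other three coordinates reduce to compactness of $\cM_\cJ$ and to translation-invariance combined with the compactness of the centered deepest stratum (Corollary~\ref{cor: compactness diag A eta 0}). Lemma~\ref{lem: existence eta} is therefore essentially a careful bookkeeping of the compactness results of Section~\ref{subsec: uniform estimates for Sigma I}.
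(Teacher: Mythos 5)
Your proof follows the same strategy as the paper's: assemble $\Gamma_{\cI,\cJ,\eta}$ coordinate-by-coordinate, with the compactness of $\cM_\cJ$ handling the first two slots, the combination of Lemma~\ref{lem: prop Pi BA}.\ref{item: Pi translation inv}--\ref{item: Pi continuity}, Lemma~\ref{lem: projection diag I eta}, Remark~\ref{rem: diag I eta doubled points} and Corollary~\ref{cor: compactness diag A eta 0} handling the projectors, and Corollary~\ref{cor: uniform non degeneracy f R} applied to $2\cK$ with partition $2\cI$ handling the variance operator. The constituent lemmas you cite are exactly the right ones, and the observation that only the fourth coordinate requires substantive work is correct.

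However, there is a quantifier slip in the way you invoke Corollary~\ref{cor: uniform non degeneracy f R}. You write that it furnishes $\eta_0(\cK,\cI)$ \emph{and} $R(\cK,\cI)$ such that the relative compactness holds for \emph{every} $\eta \leq \eta_0(\cK,\cI)$ and \emph{every} $R \geq R(\cK,\cI)$. That is not what the corollary says: it provides an $\eta_0$ such that, for \emph{each} $\eta \leq \eta_0$, there \emph{exists} an $R_\eta$ (a priori depending on $\eta$) with the desired property. This matters because the strata $\diag_{\cI,\eta}$ are not nested as $\eta$ decreases (cf.\ Lemma~\ref{lem: monotonicity diag I eta}), so a threshold $R$ that works at scale $\eta_0(\cK,\cI)$ need not work at the smaller final scale $\eta$ you set at the end. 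Your plan of choosing $\eta(\cK,\cI)$ and $R(\cK,\cI)$ upfront for each triple and then taking the min and max therefore does not go through as stated. The fix is minor and is what the paper actually does: first apply Corollary~\ref{cor: uniform non degeneracy f R} to each $2\cK$ to get thresholds $\eta_\cK$, set $\eta = \min\parentheses*{\omega/(2p),\min_\cK \eta_\cK}$, and only \emph{then} extract the $R_{\eta,\cK}$ for this fixed $\eta$, finally taking $R_\eta = \max_\cK R_{\eta,\cK}$.
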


\begin{proof}
For all non-empty $A \subset \ssquarebrackets{1}{p}$ and $\cK \in \cP_A$, we have $\norm{\cK}\leq p$. Under the hypotheses of this section, we can apply Corollary~\ref{cor: uniform non degeneracy f R} to the set $2\cK$. This yields the existence of $\eta_\cK >0$ such that, for all $\eta \in (0,\eta_\cK]$, there exists $R_{\eta,\cK} \in \cR$ such that, for all $\tilde{\cI} \in \cP_{2\cK}$, the set
\begin{equation*}
\brackets*{\Sigma_{\tilde{\cI}}(f_R,\ux) \mvert R \geq R_{\eta,\cK} \ \text{and} \  \ux \in \diag_{\tilde{\cI},\eta} \cap (R\Omega)^{2\cK}}
\end{equation*}
is relatively compact in $\sym^+\parentheses*{\prod_{I \in \tilde{\cI}}\R_{\norm{I}-1}[X]^k}$. If $\tilde{\cI} \in \cP_{2\cK}$ is of the form $2\cI$ with $\cI \in \cP_\cK$, then $\sym^+\parentheses*{\prod_{I \in \tilde{\cI}}\R_{\norm{I}-1}[X]^k} = \cS_\cI^+$, see Definition~\ref{def: M I}. Moreover, for all $\ux \in \diag_{\cI,\eta} \cap (R\Omega)^\cK$, we have $\ux_{2\cK} \in \diag_{\tilde{\cI},\eta} \cap (R\Omega)^{2\cK}$, see Remark~\ref{rem: diag I eta doubled points}. Hence, $\brackets*{\Sigma_{2\cI}(f_R,\ux_{2\cK}) \mvert R \geq R_{\eta,\cK} \ \text{and} \  \ux \in \diag_{\cI,\eta} \cap (R\Omega)^\cK}$ is relatively compact in $\cS_\cI^+$ for all $\cI \in \cP_\cK$. We define
\begin{align*}
& & \eta &= \min \brackets*{\strut \eta_\cK \mvert \emptyset \neq A \subset \ssquarebrackets{1}{p} \ \text{and} \ \cK \in \cP_A} \cup \brackets*{\frac{\omega}{2p}}>0\\
&\text{and} & R_\eta &= \max \brackets*{\strut R_{\eta,\cK} \mvert \emptyset \neq A \subset \ssquarebrackets{1}{p} \ \text{and} \ \cK \in \cP_A}.
\end{align*}
Then, for all non-empty $A \subset \ssquarebrackets{1}{p}$, all $\cK \in \cP_A$ and all $\cI \in \cP_\cK$, there exists a compact $\Gamma'_{\cI,\eta} \subset \cS_\cI^+$ such that $\brackets*{\Sigma_{2\cI}(f_R,\ux_{2\cK}) \mvert R \geq R_\eta \ \text{and} \  \ux \in \diag_{\cI,\eta} \cap (R\Omega)^\cK} \subset \Gamma'_{\cI,\eta}$.

Let $A \subset \ssquarebrackets{1}{p}$ be non-empty, $\cK \in \cP_A$ and $\cJ \leq \cI$ in $\cP_\cK$. Let $J \in \cJ$ and $I = [J]_\cI$. For all $\ux \in \diag_{\cI,\eta}$, we have $\ux_I \in \diag_{\brackets{I},\eta}$, by Lemma~\ref{lem: projection diag I eta}. Then, by Lemma~\ref{lem: prop Pi BA} and Corollary~\ref{cor: compactness diag A eta 0},
\begin{equation*}
\brackets*{\Pi_{2J}^{2[J]_\cI}(\cdot,\ux_{2[J]_\cI}) \mvert \ux \in \diag_{\cI,\eta}} \subset \brackets*{\Pi_{2J}^{2I}(\cdot,\ux_{2I}) \mvert \ux_I \in \obullet{\diag}_{\brackets{I},\eta}} = \Gamma''_{J,\eta},
\end{equation*}
and $\Gamma''_{J,\eta}$ is compact in $\cL^\dagger\parentheses*{\R_{2\norm{[J]_\cI}-1}[X]^k,\R_{2\norm{J}-1}[X]^k}$ as the continuous image of a compact. We define $\Gamma_{\cI,\cJ,\eta}= \cM_\cJ \times \prod_{J \in \cJ} \Gamma''_{J,\eta} \times \Gamma'_{\cI,\eta}$, which is a compact subset of $\cM_\cJ \times \cL^\dagger_{\cJ,\cI} \times \cS_\cI^+$ (see Definitions~\ref{def: M I} and~\ref{def: L J}) with the claimed property.
\end{proof}

A first consequence of Lemma~\ref{lem: existence eta} is the following.

\begin{lem}[Uniform domination of $\tilde{F}_{\cI,\cJ}$]
\label{lem: uniform domination FIJ}
Let $\eta \in (0,\frac{\omega}{2p}]$ and $R_\eta \in \cR$ be given by Lemma~\ref{lem: existence eta}. There exists $C_\eta>0$ such that, for all non-empty $A \subset \ssquarebrackets{1}{p}$, all $\cK \in \cP_A$, all $\cJ \leq \cI$ in $\cP_\cK$, all $R \geq R_\eta$ and all $\ux \in \diag_{\cI,\eta} \cap (R\Omega)^\cK$ we have:
\begin{equation*}
\norm*{\tilde{F}_{\cI,\cJ}(f_R,\ux)} \leq C_\eta \sum_{\un \in \G_\cI} \prod_{e \in \cI^{\cwedge}} g_\omega\parentheses*{\strut\ell_e(\flat_\cI(\ux))}^{n_e},
\end{equation*}
where $\tilde{F}_{\cI,\cJ}$ is the map appearing in Lemma~\ref{lem: relation between cumulant densities}, we defined $\G_\cI$ in Definition~\ref{def G V} and, for all $e = \brackets{I,J} \in \cI^{\cwedge}$, we have $g_\omega\parentheses*{\strut\ell_e(\flat_\cI(\ux))} = g_\omega\parentheses*{\flat(\ux_I)-\flat(\ux_J)}=g_\omega\parentheses*{\flat(\ux_J)-\flat(\ux_I)}$ since $g$ is even.
\end{lem}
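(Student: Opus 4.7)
The plan is to chain three uniform estimates valid on the compact set $\Gamma_{\cI,\cJ,\eta}$ supplied by Lemma~\ref{lem: existence eta}: first, the polynomial majorization of $F_{\cI,\cJ}$ given by Corollary~\ref{cor: key estimate graphs}; second, the bound on the covariance blocks of twisted Kergin interpolants in terms of partial derivatives of $r_R$ provided by Lemma~\ref{lem: distance between covariances of twisted interpolants} applied with $\tilde{f}=0$; and third, the decay hypothesis \hypDC{2p-1}{\infty}, which I will upgrade from $g$ to $g_\omega$ using the convex-hull diameter estimate of Lemma~\ref{lem: bound on oux I} together with the choice $\eta \leq \frac{\omega}{2p}$.

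Concretely, fix data $A \subset \ssquarebrackets{1}{p}$ non-empty, $\cK \in \cP_A$ and $\cJ \leq \cI$ in $\cP_\cK$. For $R \geq R_\eta$ and $\ux \in \diag_{\cI,\eta} \cap (R\Omega)^\cK$, Lemma~\ref{lem: existence eta} places the tuple of arguments of $F_{\cI,\cJ}$ inside the fixed compact $\Gamma_{\cI,\cJ,\eta}$, so Corollary~\ref{cor: key estimate graphs} yields a constant $C_{\cI,\cJ,\eta}$ with
\begin{equation*}
\norm{\tilde{F}_{\cI,\cJ}(f_R,\ux)} \leq C_{\cI,\cJ,\eta} \sum_{\un \in \G_\cI} \prod_{\brackets{I,I'} \in \cI^{\cwedge}} \Norm{\Sigma_{2I}^{2I'}(f_R,\ux_{2\cK})}^{n_{\brackets{I,I'}}},
\end{equation*}
where the blocks of $\Sigma_{2\cI}(f_R,\ux_{2\cK}) \in \cS_\cI^+$ are naturally indexed by $\cI \times \cI$ since $\R_{\norm{2I}-1}[X]^k = \R_{2\norm{I}-1}[X]^k$.

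Next, for each off-diagonal block, I will invoke Lemma~\ref{lem: distance between covariances of twisted interpolants} with the set $2\cK$, the partition $2\cI$, $q = 2\card(\cK)-1 \leq 2p-1$, and $\tilde{f}=0$; the hypothesis $\ux_{2\cK} \in \diag_{2\cI,\eta}$ holds by Remark~\ref{rem: diag I eta doubled points}. This produces a constant $C'$ (depending only on $p$ and $d$) with
\begin{equation*}
\Norm{\Sigma_{2I}^{2I'}(f_R,\ux_{2\cK})} \leq C' \max_{\norm{\alpha},\norm{\beta}\leq 2p-1}\ \max_{w \in \conv(\ux_I),\, z \in \conv(\ux_{I'})} \Norm{\partial^{\alpha,\beta}r_R(w,z)} \leq C' \!\!\max_{\substack{w \in \conv(\ux_I)\\ z \in \conv(\ux_{I'})}} g(z-w),
\end{equation*}
where $\conv(\ux_{2I})=\conv(\ux_I) \subset R\Omega \subset RU$ allows the use of \hypDC{2p-1}{\infty}. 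To replace $g(z-w)$ by $g_\omega(\ell_e(\flat_\cI(\ux)))$ with $e=\brackets{I,I'}$, Lemma~\ref{lem: projection diag I eta} gives $\ux_I \in \diag_{\brackets{I},\eta}$, whence Lemma~\ref{lem: bound on oux I} yields $\Norm{w-\flat(\ux_I)} \leq \card(\cK)\eta \leq p\eta$ for every $w \in \conv(\ux_I)$, and similarly for $z$. Consequently
\begin{equation*}
\Norm{(z-w) - (\flat(\ux_{I'})-\flat(\ux_I))} \leq 2p\eta \leq \omega,
\end{equation*}
so the very definition of $g_\omega$ will deliver $g(z-w) \leq g_\omega(\flat(\ux_{I'})-\flat(\ux_I)) = g_\omega(\ell_e(\flat_\cI(\ux)))$.

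Combining the three displays and absorbing the $(C')^{n_{\brackets{I,I'}}}$ factors into the constant (which is harmless since $n_{\brackets{I,I'}} \leq 2$ by Lemma~\ref{lem: description of NB}, so these factors are bounded by $\max(1,C'^2)^{\card(\cI^{\cwedge})}$) proves the estimate for the fixed data $(A,\cK,\cI,\cJ)$. Since there are only finitely many choices of these data, taking the maximum of the resulting constants gives the uniform $C_\eta$. The only genuinely delicate point is the bookkeeping of the identification between the natural indexing of the blocks of $\Sigma_{2\cI}(f_R,\ux_{2\cK})$ by $2\cI$ and their indexing by $\cI$ used in $\cS_\cI^+$ and Corollary~\ref{cor: key estimate graphs}; everything else is a direct concatenation of the lemmas already in place.
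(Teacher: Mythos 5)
Your proof follows the same route as the paper: invoke Lemma~\ref{lem: existence eta} to land in a fixed compact, apply Corollary~\ref{cor: key estimate graphs} to majorize $\tilde{F}_{\cI,\cJ}$ by a polynomial in the off-diagonal block norms, bound those norms by $g$ via Lemma~\ref{lem: distance between covariances of twisted interpolants} with $\tilde{f}=0$ and Hypothesis~\hypDC{2p-1}{\infty}, and finally upgrade $g$ to $g_\omega$ using $\eta \leq \frac{\omega}{2p}$ and the diameter bound of Lemma~\ref{lem: bound on oux I}. The only cosmetic differences are that the paper applies Lemma~\ref{lem: bound on oux I} directly to $\ux \in \diag_{\cI,\eta}$ (your detour through Lemma~\ref{lem: projection diag I eta} is unnecessary but harmless), and the paper controls the constant via the bound $\norm{\un}\leq 2p$ rather than your per-edge bound $n_e\leq 2$; both are valid and the proof is correct.
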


\begin{proof}
For all non-empty $A \subset \ssquarebrackets{1}{p}$, all $\cK \in \cP_A$ and all $\cJ \leq \cI$ in $\cP_\cK$, let $\Gamma_{\cI,\cJ,\eta}$ be a compact set given by Lemma~\ref{lem: existence eta}. Let $C_{\cI,\cJ,\eta} \geq 0$ be the constant associated with $\Gamma_{\cI,\cJ,\eta}$ by Corollary~\ref{cor: key estimate graphs}. We define $C_1 = \max \brackets*{C_{\cI,\cJ,\eta}\mvert \emptyset \neq A \subset \ssquarebrackets{1}{p}; \cK \in \cP_A; \cJ \leq \cI \ \text{in} \ \cP_\cK}$. Recalling the relation between $F_{\cI,\cJ}$ and $\tilde{F}_{\cI,\cJ}$ stated in Lemma~\ref{lem: relation between cumulant densities}, for all $R \geq R_\eta$ and $\ux \in \diag_{\cI,\eta}\cap (R\Omega)^\cK$, we have:
\begin{equation*}
\norm*{\tilde{F}_{\cI,\cJ}\parentheses*{f_R,\ux}} \leq C_1 \sum_{\un \in \G_\cI}\ \prod_{\brackets{I,J} \in \cI^{\cwedge}} \Norm*{\Sigma_{2I}^{2J}\parentheses*{f_R,\ux_{2\cK}}}^{n_{\brackets{I,J}}}.
\end{equation*}

By Lemma~\ref{lem: distance between covariances of twisted interpolants}, there exists $C_2 \geq 1$, depending only on $p$ and $\eta$, such that for all $I,J \in \cI$ and $\ux \in \diag_{\cI,\eta} \cap (R\Omega)^\cK$, we have:
\begin{equation*}
\Norm*{\Sigma_{2I}^{2J}\parentheses*{f_R,\ux_{2\cK}}} \leq C_2 \max_{\substack{\norm{\alpha}< 2p\\ \norm{\beta}<2p}} \ \max_{\substack{w \in \conv(\ux_I)\\ z \in \conv(\ux_J)}} \ \Norm*{\strut \partial^{\alpha,\beta}r_R(w,z)} \leq C_2 \max_{\substack{w \in \conv(\ux_I)\\ z \in \conv(\ux_J)}} g(z-w),
\end{equation*}
where the second inequality is given by~\hypDC{2p-1}{\infty} and~\hypDCL{2p-1}{\infty}. If $\ux \in \diag_{\cI,\eta}$ then, by Lemma~\ref{lem: bound on oux I}, for all $i \in I$ we have $\Norm{x_i-\flat(\ux_I)} \leq p\eta$. Hence, $\Norm*{w - \flat(\ux_I)} \leq p\eta$ for all $w \in \conv(\ux_I)$, and similarly $\Norm{z-\flat(\ux_J)}\leq p \eta$ for all $z \in \conv(\ux_J)$. Thus,
\begin{equation*}
\Norm*{(z-w)-(\flat(\ux_J)-\flat(\ux_J))} \leq \Norm{w-\flat(\ux_I)} + \Norm{z-\flat(\ux_J)} \leq 2p\eta \leq \omega.
\end{equation*}
Finally, for all $I,J \in \cI$, we have $\max_{(w,z) \in \conv(\ux_I)\times \conv(\ux_J)} g(z-w) \leq g_\omega\parentheses*{\flat(\ux_I)-\flat(\ux_J)}$. Recalling Lemmas~\ref{lem: description of NB} and~\ref{lem: characterization 2 edge connected}, we have $\norm{\un}\leq 2p$ for all $\un \in \G_\cI$. The result follows, with $C_\eta= C_1 C_2^{2p}$.
\end{proof}

Under the hypotheses of this section, Lemma~\ref{lem: uniform domination FIJ} implies that we can indeed use the Kac--Rice formula for cumulants, see Proposition~\ref{prop: Kac-Rice cumulants}.

\begin{lem}[Validity of the Kac--Rice formula for cumulants]
\label{lem: validity of Kac Rice cumulants}
Let $\eta \in (0,\frac{\omega}{2p}]$ and $R_\eta \in \cR$ be given by Lemma~\ref{lem: existence eta}. Let $R \geq R_\eta$ and $\phi_1,\dots,\phi_p \in L^1(R\Omega)\cap L^\infty(R\Omega)$. For all non-empty $A \subset \ssquarebrackets{1}{p}$, all $\cK\in \cP_A$ and all $\cI, \cJ \in \cP_\cK$ such that $\cJ \leq \cI$, the following function is integrable over $(R\Omega)^{\cK} \cap \diag_{\cI,\eta}$:
\begin{equation*}
\ux \longmapsto \parentheses*{\phi^\otimes_A \circ \iota_{\cK}(\ux)} \tilde{F}_{\cI,\cJ}(f_R,\ux)\prod_{J \in \cJ}  \Upsilon_J(\ux_J).
\end{equation*}
\end{lem}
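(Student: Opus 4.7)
The plan is to dominate the integrand pointwise by a nonnegative function whose integral is finite, by combining the pointwise estimate on $\tilde{F}_{\cI,\cJ}$ from Lemma~\ref{lem: uniform domination FIJ} with the functional inequalities established in Section~\ref{sec: HBL inequalities}.

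First I would group the test-functions block-wise: for each $K \in \cK$, set $\tilde{\Phi}_K := (\prod_{a \in K} \phi_a)\,\one_{R\Omega}$, so that
\begin{equation*}
\norm*{\phi_A^\otimes \circ \iota_\cK(\ux)}\,\one_{(R\Omega)^\cK}(\ux) = \prod_{K \in \cK}\norm*{\tilde{\Phi}_K(x_K)}.
\end{equation*}
Since each $\phi_a$ lies in $L^1(R\Omega) \cap L^\infty(R\Omega)$, a standard interpolation argument shows that $\tilde{\Phi}_K \in L^p(\R^d)$ for every $p \in [1,+\infty]$. Moreover, \hypDC{2p-1}{\infty} implies that $g$ is bounded, hence $g_\omega \in L^\infty(\R^d)$. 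Since $R \geq R_\eta$, Lemma~\ref{lem: uniform domination FIJ} bounds the absolute value of the integrand by
\begin{equation*}
C_\eta \sum_{\un \in \G_\cI}\ \prod_{K \in \cK}\norm*{\tilde{\Phi}_K(x_K)}\,\prod_{J \in \cJ}\Upsilon_J(\ux_J)\,\prod_{e \in \cI^{\cwedge}}g_\omega\bigl(\ell_e(\flat_\cI(\ux))\bigr)^{n_e}.
\end{equation*}
It is therefore enough to show that each summand, integrated over $\diag_{\cI,\eta}$, is finite.

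If $\norm{\cI} \geq 2$, then by Definition~\ref{def G V} and Lemma~\ref{lem: characterization 2 edge connected} every $\un \in \G_\cI$ corresponds to a $2$-edge-connected graph $G = (\cI,\un)$, and the summand is precisely $\cH_{G,\cJ,\eta}((\tilde{\Phi}_K)_{K \in \cK},\ug)$ with $g_e := g_\omega$ for every $e \in \cE_G$, in the notation of Definition~\ref{def: H G J eta}. As $\tilde{\Phi}_K \in L^{\norm{K}}(\R^d)$ for every $K$ and $g_\omega \in L^\infty(\R^d)$, the continuity statement in Lemma~\ref{lem: estimates H G J eta}.\ref{item: estimate H infty} yields the desired finiteness.

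In the remaining degenerate case $\cI = \brackets{\cK}$, we have $\cI^{\cwedge} = \emptyset$, so the sum reduces to a single summand without the $g_\omega$-factor. I would then perform the linear change of variables $\ux \mapsto (\flat(\ux),\oux)$, analogous to the first step of the proof of Lemma~\ref{lem: estimates H G J eta}, and invoke the translation-invariance of each $\Upsilon_J$ (Lemma~\ref{lem: regularity Upsilon}) to rewrite the integral, up to a positive Jacobian constant, as $\int_{\R^d}\cT_{\cJ,\eta}((\tilde{\Phi}_K)_{K \in \cK})(\tau)\,d\tau$. By Lemma~\ref{lem: regularity T J eta} applied with $s = 1$, this quantity is bounded by $C_{\cJ,\eta}\prod_{K \in \cK}\Norm{\tilde{\Phi}_K}_{\norm{\cK}}$, which is finite. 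The only real subtlety is the need to separate the case $\norm{\cI} = 1$, since the HBL machinery of Lemma~\ref{lem: estimates H G J eta} is stated only for $\norm{\cI} \geq 2$; here the singularity is handled directly by the transform $\cT_{\cJ,\eta}$ from Section~\ref{subsec: integral estimates for the universal part of the Kac--Rice density}.
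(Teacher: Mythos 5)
Your proposal is correct and follows essentially the same route as the paper: form the block-wise products of test-functions, apply Lemma~\ref{lem: uniform domination FIJ} for the pointwise domination of $\tilde{F}_{\cI,\cJ}$, then invoke Lemma~\ref{lem: estimates H G J eta}.\ref{item: estimate H infty} for $\card(\cI)\geq 2$ and the transform $\cT_{\cJ,\eta}$ of Lemma~\ref{lem: regularity T J eta} for the degenerate case $\cI=\brackets{\cK}$. The only superficial difference is a slight notational slip (you write $L^{\norm{K}}$ where the multilinear-form bound actually calls for $L^{\norm{I}}$ with $K\in I$), but this is harmless since you have already noted $\tilde{\Phi}_K\in L^p(\R^d)$ for every $p\in[1,+\infty]$.
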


\begin{proof}
Let $\emptyset \neq A \subset \ssquarebrackets{1}{p}$ and $\cK \in \cP_A$. For all $K \in \cK$, the function $\varphi_K= \prod_{i \in K}\norm{\phi_i}$ is non-negative, and $\varphi_K \in L^1(R\Omega)\cap L^\infty(R\Omega)$ by Hölder's inequality. Extending this function by $0$ outside of $R\Omega$, we obtain $\varphi_K \in L^1(\R^d)\cap L^\infty(\R^d)$. Moreover, for all $\ux =(x_K)_{K \in \cK} \in (R\Omega)^\cK$, we have $\norm*{\phi_A^\otimes \circ \iota_\cK(\ux)} = \prod_{K \in \cK} \prod_{i \in K}\norm{\phi_i(x_K)} = \varphi^\otimes_\cK(\ux)$.

Let $\cI, \cJ \in \cP_\cK$ be such that $\cJ \leq \cI$. By Lemma~\ref{lem: uniform domination FIJ}, for all $\ux \in (R\Omega)^\cK \cap \diag_{\cI,\eta}$ we have:
\begin{equation}
\label{eq: validity of Kac Rice cumulants}
\norm*{\parentheses*{\phi^\otimes_A \circ \iota_{\cK}(\ux)} \tilde{F}_{\cI,\cJ}(f_R,\ux)\prod_{J \in \cJ}  \Upsilon_J(\ux_J)} \leq C_\eta \sum_{\un \in \G_\cI} \varphi^\otimes_\cK(\ux)\parentheses*{\prod_{J \in \cJ}  \Upsilon_J(\ux_J)}\parentheses*{\prod_{e \in \cI^{\cwedge}}g_\omega\parentheses*{\strut\ell_e(\flat_\cI(\ux))}^{n_e}}.
\end{equation}
We need to check that each term on the right-hand side is integrable over $(R\Omega)^\cK \cap \diag_{\cI,\eta}$, or equivalently over $\diag_{\cI,\eta}$.

Let us first assume that $\card(\cI) \geq 2$. Let $G=(\cI,\un) \in \G_\cI$. Recalling Definition~\ref{def: H G J eta}, we deduce from Lemma~\ref{lem: estimates H G J eta}.\ref{item: estimate H infty} that
\begin{multline*}
\int_{\ux \in \diag_{\cI,\eta}}\varphi^\otimes_\cK(\ux) \parentheses*{\prod_{J \in \cJ}  \Upsilon_J(\ux_J)}\parentheses*{\prod_{e \in \cI^{\cwedge}} g_\omega\parentheses*{\strut\ell_e(\flat_\cI(\ux))}^{n_e}} \dx \ux = \cH_{G,\cJ,\eta}(\uvarphi,\ug) \\ \leq \Norm*{\cH_{G,\cJ,\eta}(\cdot,\ug)} \prod_{I \in \cI}\prod_{K \in I} \Norm{\varphi_K}_{\norm{I}} <+\infty,
\end{multline*}
where $\uvarphi=(\varphi_K)_{K \in \cK}$, $\ug=(g_\omega,\dots,g_\omega) \in L^\infty(\R^d)^{\cE_G}$ and $\Norm*{\cH_{G,\cJ,\eta}(\cdot,\ug)}$ is the operator norm of $\cH_{G,\cJ,\eta}(\cdot,\ug)$ as a multilinear map on $\prod_{I \in \cI} L^{\norm{I}}(\R^d)^I$. Note that $n_e =0$ for all $e \in \cI^{\cwedge}\setminus \cE_G$. If $\card(\cI)=1$, then $\cI=\brackets{\cK}$ and $\cI^{\cwedge}=\emptyset$. In this case, we have
\begin{equation*}
\int_{\ux \in \diag_{\brackets{\cK},\eta}}\parentheses*{\prod_{K \in \cK}\varphi_K(x_K)} \parentheses*{\prod_{J \in \cJ}  \Upsilon_J(\ux_J)} \dx \ux \leq p^{\frac{dp}{2}} \int_{y \in \R^d} \cT_{\cJ,\eta}(\uvarphi)(y) \dx y \leq p^{\frac{dp}{2}} \Norm*{\cT_{\cJ,\eta}(\uvarphi)}_1
\end{equation*}
by the same kind of computation as~\eqref{eq: bound H G J eta}. The right-hand side is finite by Lemma~\ref{lem: regularity T J eta}. Thus, in both cases, each term on the right-hand side of~\eqref{eq: validity of Kac Rice cumulants} is integrable over $(R\Omega)^\cK \cap \diag_{\cI,\eta}$. Hence the result.
\end{proof}

Let $\eta \in (0,\frac{\omega}{2p}]$ and $R_\eta \in \cR$ be given by Lemma~\ref{lem: existence eta}. By Lemma~\ref{lem: relation between notions of non-degeneracy}, for all $R \geq R_\eta$ and $x \in R\Omega$, the centered Gaussian vector $\parentheses*{f_R(x),D_xf_R}$ is non-degenerate. Then, by Proposition~\ref{prop: Bulinskaya}, for all $R \geq R_\eta$, the set $Z_R = f_R^{-1}(0)\cap (R\Omega)$ is almost-surely a codimension-$k$ submanifold. In particular, the random measure $\nu_R$ on $\Omega$ is almost-surely well-defined by Equation~\eqref{eq: def linear statistics nuR}. Thanks to Lemma~\ref{lem: validity of Kac Rice cumulants}, we can express the cumulants $\kappa(\nu_R)(\uphi)$ by the Kac--Rice formula.

\begin{cor}
\label{cor: validity of Kac Rice cumulants}
Let $\eta \in (0,\frac{\omega}{2p}]$ and $R_\eta \in \cR$ be given by Lemma~\ref{lem: existence eta}. Let $\uphi=(\phi_i)_{1 \leq i \leq p}$ where $\phi_i \in L^1(\Omega)\cap L^\infty(\Omega)$ for all $i \in \ssquarebrackets{1}{p}$. For all $R \in \cR \cap [R_\eta,+\infty]$, we have:
\begin{equation*}
\kappa(\nu_R)(\uphi) = \frac{1}{R^{pd}}\sum_{\cK \in \cP_{p,d-k}} \sum_{\substack{\cI,\cJ \in \cP_\cK\\ \cJ \leq \cI}}\ \int_{(R\Omega)^\cK \cap \diag_{\cI,\eta}} \parentheses*{\prod_{K \in \cK}\prod_{i \in K}\phi_i\parentheses*{\frac{x_K}{R}}\!} \tilde{F}_{\cI,\cJ}(f_R,\ux)\prod_{J \in \cJ}  \Upsilon_J(\ux_J) \dx \ux.
\end{equation*}
In particular, this proves Theorem~\ref{thm: cumulants asymptotics for zero sets}.\ref{item: cumulants asymptotics infty}.
\end{cor}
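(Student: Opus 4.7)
The plan is to apply Proposition~\ref{prop: Kac-Rice cumulants} to the field $f_R$ on $RU$, observation window $R\Omega$, and test functions $\tilde\phi_i(x) := \phi_i(x/R)$, then transport the resulting identity to $\nu_R$ via the multilinearity of cumulants. Fix $\eta$ and $R_\eta$ as in Lemma~\ref{lem: existence eta}, let $R \in \cR \cap [R_\eta,+\infty)$, and let $\mu_R$ denote the $(d-k)$-dimensional Hausdorff measure on $Z_R = f_R^{-1}(0) \cap R\Omega$. By the very definition~\eqref{eq: def linear statistics nuR} of $\nu_R$, one has $\prsc{\nu_R}{\phi_i} = R^{-d} \prsc{\mu_R}{\tilde\phi_i}$ for each $i \in \ssquarebrackets{1}{p}$, and $\tilde\phi_i \in L^1(R\Omega)\cap L^\infty(R\Omega)$ since $\phi_i \in L^1(\Omega)\cap L^\infty(\Omega)$.

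The first step is to check that the hypotheses of Proposition~\ref{prop: Kac-Rice cumulants} hold for $f_R$ on $RU$ with observation window $R\Omega$ and test functions $\tilde\uphi = (\tilde\phi_1,\dots,\tilde\phi_p)$. Regularity $f_R \in \cC^{\max(2,2p-1)}(RU,\R^k)$ is \hypReg{\max(2,2p-1)}. The required positivity $\Sigma_{2\cI}(f_R,\ux_{2A}) \in \cS_\cI^+$ for every $\cI \in \cP_A$ and $\ux \in \diag_{\cI,\eta} \cap (R\Omega)^A$ (with $A = \ssquarebrackets{1}{p}$) is precisely what Lemma~\ref{lem: existence eta} delivers for $R \geq R_\eta$ (take $\cJ=\cI$ there to read off non-degeneracy of $\Sigma_{2\cI}(f_R,\cdot)$ from the compactness of $\Gamma_{\cI,\cI,\eta}$ inside $\cM_\cI \times \cL^\dagger_{\cI,\cI}\times\cS_\cI^+$). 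The integrability condition imposed in Proposition~\ref{prop: Kac-Rice cumulants}---namely that $(\tilde\phi_B^\otimes \circ \iota_\cK)\,\tilde F_{\cI,\cJ}(f_R,\cdot)\prod_{J \in \cJ}\Upsilon_J$ be integrable on $(R\Omega)^\cK \cap \diag_{\cI,\eta}$ for every non-empty $B \subset A$, every $\cK \in \cP_{B,d-k}$, and every $\cJ \leq \cI$ in $\cP_\cK$---is exactly the content of Lemma~\ref{lem: validity of Kac Rice cumulants}, applied to the (restricted) test functions $(\tilde\phi_i)_{i \in B}$.

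With hypotheses verified, the second displayed identity of Proposition~\ref{prop: Kac-Rice cumulants} applied to $\mu_R$ gives
\begin{equation*}
\kappa(\mu_R)(\tilde\uphi) = \sum_{\cK \in \cP_{p,d-k}}\ \sum_{\substack{\cI,\cJ \in \cP_\cK\\ \cJ \leq \cI}} \int_{(R\Omega)^\cK \cap \diag_{\cI,\eta}} \parentheses*{\prod_{K \in \cK}\prod_{i \in K}\phi_i\parentheses*{\frac{x_K}{R}}} \tilde F_{\cI,\cJ}(f_R,\ux) \prod_{J \in \cJ}\Upsilon_J(\ux_J) \dx\ux,
\end{equation*}
after substituting $\tilde\phi_A^\otimes \circ \iota_\cK(\ux) = \prod_{K \in \cK}\prod_{i \in K}\phi_i(x_K/R)$. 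Finally, the multilinearity of mixed cumulants with respect to scalar rescaling of each argument (immediate from the defining formula in Definition~\ref{def: moments and cumulants}, since each moment $\esp{\prod_{j \in J}Y_j}$ is multilinear and the factorial partition indexing covers each $i \in \ssquarebrackets{1}{p}$ exactly once) yields $\kappa(\nu_R)(\uphi) = R^{-pd}\,\kappa(\mu_R)(\tilde\uphi)$. Combining the two produces the announced formula, and the bound $R_p := R_\eta$ works uniformly in $(\phi_1,\dots,\phi_p)$, proving Theorem~\ref{thm: cumulants asymptotics for zero sets}.\ref{item: cumulants asymptotics infty}.

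Essentially no obstacle remains: all hard work was done in the earlier sections. The only points deserving attention are the careful bookkeeping of the scaling $R^{-d}$ from $\nu_R$ to $\mu_R$ under the cumulant (which is a one-line invocation of multilinearity) and ensuring that the domain constraint in Lemma~\ref{lem: existence eta} (points in $(R\Omega)^A$) matches the integration domain appearing in Proposition~\ref{prop: Kac-Rice cumulants} (where the field's non-degeneracy must be checked on every cluster configuration encountered). Both match by design, so the corollary follows by assembly.
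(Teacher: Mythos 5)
Your proof is correct and follows essentially the same route as the paper: relate $\kappa(\nu_R)(\uphi)$ to $\kappa(Z_R)$ of the rescaled test functions via multilinearity of cumulants, verify the non-degeneracy and integrability hypotheses of Proposition~\ref{prop: Kac-Rice cumulants} via Lemmas~\ref{lem: existence eta} and~\ref{lem: validity of Kac Rice cumulants}, and assemble. The only cosmetic remark is that the paper states the claim for $R \in \cR\cap[R_\eta,+\infty]$, which (since $\cR\subset(0,+\infty)$) coincides with your finite range, so there is no gap.
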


\begin{proof}
Let $R \in \cR \cap [R_\eta,+\infty]$. As explained above, the definition on $\eta$ and $R_\eta$ ensure that $\nu_R$ and $Z_R$ are well-defined Radon measures on $\Omega$ and $R\Omega$ respectively. Since these measures are related by~\eqref{eq: def linear statistics nuR}, we have:
\begin{equation*}
\kappa(\nu_R)(\uphi) = \kappa\parentheses*{\parentheses*{\prsc{\nu_R}{\phi_i}}_{1 \leq i \leq p}} = \kappa\parentheses*{\parentheses*{\frac{\prsc*{Z_R}{\phi_i\parentheses*{\frac{\cdot}{R}}}}{R^d}}_{1 \leq i \leq p}}=\frac{\kappa(Z_R)\parentheses*{\phi_1\parentheses*{\frac{\cdot}{R}},\dots,\phi_p\parentheses*{\frac{\cdot}{R}}}}{R^{pd}},
\end{equation*}
By Lemma~\ref{lem: existence eta}, for all $\cI \in \cP_p$ and $\ux \in \diag_{\cI,\eta}\cap (R\Omega)^p$, we have $\Sigma_{2\cI}(f_R,\ux,\ux) \in \cS_\cI^+$. For all $i \in \ssquarebrackets{1}{p}$, we have $\phi_i\parentheses*{\frac{\cdot}{R}} \in L^1(R\Omega) \cap L^\infty(R\Omega)$, so that the integrability hypotheses in Proposition~\ref{prop: Kac-Rice cumulants} are satisfied, by Lemma~\ref{lem: validity of Kac Rice cumulants}.

We obtain the claimed expression of $\kappa(\nu_R)(\uphi)$ by applying Proposition~\ref{prop: Kac-Rice cumulants} to $f_R$ over $R\Omega$ with the test-functions $\parentheses*{\phi_i\parentheses*{\frac{\cdot}{R}}}_{1 \leq i \leq p}$. By Lemma~\ref{lem: validity of Kac Rice cumulants}, each integral on the right-hand side is well-defined and finite, which proves Theorem~\ref{thm: cumulants asymptotics for zero sets}.\ref{item: cumulants asymptotics infty}.
\end{proof}


\subsection{Cumulants asymptotics under \texorpdfstring{\hypDC{2p-1}{\frac{p}{p-1}}}{}}
\label{subsec: cumulants asymptotics under DC}

In this section, we work under the assumption that $p\geq 2$ and \hypDC{2p-1}{\frac{p}{p-1}} holds. In particular, everything we proved in Section~\ref{subsec: validity of Kac-Rice} is still valid. Our goal is to prove Theorem~\ref{thm: cumulants asymptotics for zero sets}.\ref{item: cumulants asymptotics p}, i.e., derive the exact asymptotics of the cumulants of order $p$ for the linear statistics of $\nu_R$, or equivalently $Z_R$. It is enough to derive the asymptotics of each term in the expression of the cumulants given by Corollary~\ref{cor: validity of Kac Rice cumulants}.

In the following, given $\varphi \in L^1(\Omega) \cap L^\infty(\Omega)$, we consider $\varphi$ as a function in $L^1(\R^d)\cap L^\infty(\R^d)$ by extending it by $0$ outside of $\Omega$. We will use repeatedly the fact that, for all $R>0$ and $s \in [1,+\infty)$, we have:
\begin{equation}
\label{eq: Lp norms and scaling}
\Norm*{\varphi\parentheses*{\frac{\cdot}{R}}}_s = \parentheses*{\int_{\R^d} \norm*{\varphi\parentheses*{\frac{x}{R}}}^s \dx x}^\frac{1}{s} = \parentheses*{R^d \int_{\R^d} \norm*{\varphi\parentheses*{y}}^s \dx y}^\frac{1}{s} = R^\frac{d}{s} \Norm{\varphi}_s.
\end{equation}

\begin{lem}[Exact asymptotics]
\label{lem: exact asymptotics}
Let $\eta \in (0,\frac{\omega}{2p}]$ and $R_\eta \in \cR$ be given by Lemma~\ref{lem: existence eta}. Let $\cK \in \cP_p$ and $\cI,\cJ \in \cP_\cK$ be such that $\cJ \leq \cI$. Let $\uphi = \parentheses{\phi_i}_{1 \leq i \leq p}$ be functions in $L^1(\Omega)\cap L^\infty(\Omega)$, we have:
\begin{multline}
\label{eq: exact asymptotics}
\frac{1}{R^d}\int_{\ux \in \diag_{\cI,\eta} \cap (R\Omega)^\cK} \parentheses*{\prod_{K \in \cK} \prod_{i\in K}\phi_i\parentheses*{\frac{x_K}{R}}} \tilde{F}_{\cI,\cJ}\parentheses{f_R,\ux} \prod_{J \in \cJ}\Upsilon_J(\ux_J)\dx \ux\\
\xrightarrow[R \to +\infty]{} \norm{\cK}^\frac{d}{2} \parentheses*{\int_{\Omega} \prod_{i=1}^p\phi_i(y)\dx y} \int_{\uz \in  \obullet{\diag}_{\cI,\eta}} \tilde{F}_{\cI,\cJ}\parentheses*{f,\uz}\prod_{J \in \cJ}\Upsilon_J(\uz_J) \dx \uz,
\end{multline}
where $\obullet{\diag}_{\cI,\eta} = \brackets*{\ux \in \diag_{\cI,\eta} \mvert \flat(\ux)=0} \subset (\R^d)^\cK$ and $\dx \uz$ is the Lebesgue measure on $\flat^{-1}(0)$.
\end{lem}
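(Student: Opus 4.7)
The proof is a scale-separation change of variables followed by dominated convergence. I will write $\ux=R\tau\cdot\one+\uz$ via the orthogonal decomposition $(\R^d)^\cK=\R^d\oplus\flat^{-1}(0)$, where $\tau\in\R^d$ encodes the macroscopic ``position'' and $\uz\in\flat^{-1}(0)$ the cluster-internal geometry. The Jacobian of this parametrization is $R^d\norm{\cK}^{d/2}$, and since the thick diagonals are invariant under diagonal translations, $\ux\in\diag_{\cI,\eta}$ iff $\uz\in\obullet{\diag}_{\cI,\eta}$. By the translation-invariance statements in Lemmas~\ref{lem: regularity Upsilon} and~\ref{lem: prop Pi BA}.\ref{item: Pi translation inv} (and by construction of $\cG_J,\theta_J$ in Definitions~\ref{def: G kernel ev}--\ref{def: N theta}), all of $\Upsilon_J(\ux_J)$, $\cG_J(\ux_J)$, $\theta_J(\ux_J)$, and $\Pi_{2J}^{2[J]_\cI}(\cdot,\ux_{2[J]_\cI})$ depend only on $\uz$. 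Extending each $\phi_i$ by zero outside $\Omega$, the LHS of~\eqref{eq: exact asymptotics} rewrites as
\begin{equation*}
\norm{\cK}^{d/2}\int_{\obullet{\diag}_{\cI,\eta}}\!\parentheses*{\prod_{J\in\cJ}\Upsilon_J(\uz_J)}\mathfrak{I}_R(\uz)\dx\uz,
\end{equation*}
where $\mathfrak{I}_R(\uz):=\int_{\R^d}\prod_{K\in\cK}\prod_{i\in K}\phi_i(\tau+z_K/R)\,\tilde{F}_{\cI,\cJ}(f_R,R\tau\cdot\one+\uz)\dx\tau$.

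For fixed $\uz$ I will show $\mathfrak{I}_R(\uz)\to\tilde{F}_{\cI,\cJ}(f,\uz)\int_\Omega\prod_i\phi_i(y)\dx y$. Splitting $\mathfrak{I}_R(\uz)=\tilde{F}_{\cI,\cJ}(f,\uz)\mathfrak{J}_R(\uz)+\mathfrak{E}_R(\uz)$ with $\mathfrak{J}_R(\uz):=\int\prod_K\prod_{i\in K}\phi_i(\tau+z_K/R)\dx\tau$, a telescoping argument using continuity of translations in $L^1$ (each $\phi_i\in L^1\cap L^\infty$) gives $\mathfrak{J}_R(\uz)\to\int_\Omega\prod_i\phi_i$. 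For the error, stationarity of $f$ yields $\Sigma_{2\cI}(f,\uz_{2\cK})=\Sigma_{2\cI}(f,R\tau\cdot\one_{2\cK}+\uz_{2\cK})$, and Proposition~\ref{prop: uniform convergence Sigma I} then delivers $\Sigma_{2\cI}(f_R,R\tau\cdot\one_{2\cK}+\uz_{2\cK})\to\Sigma_{2\cI}(f,\uz_{2\cK})$ uniformly over the admissible $\tau$. Combined with continuity of $F_{\cI,\cJ}$ on compacts (Lemma~\ref{lem: regularity F I J}), this produces $|\mathfrak{E}_R(\uz)|\leq\epsilon_R(\uz)\Norm{\phi_1}_1\prod_{i\geq 2}\Norm{\phi_i}_\infty$ with $\epsilon_R(\uz)\to 0$.

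To pass to the limit under the outer integral I apply dominated convergence. The crucial observation is $\ell_e(\flat_\cI(R\tau\cdot\one+\uz))=\ell_e(\flat_\cI(\uz))$, so Lemma~\ref{lem: uniform domination FIJ} supplies the $\tau$-free bound $|\tilde{F}_{\cI,\cJ}(f_R,R\tau\cdot\one+\uz)|\leq C_\eta\sum_{\un\in\G_\cI}\prod_{e\in\cI^{\cwedge}}g_\omega(\ell_e(\flat_\cI(\uz)))^{n_e}$; integrating the test-function factors in $\tau$ contributes at most $\Norm{\phi_1}_1\prod_{i\geq 2}\Norm{\phi_i}_\infty$ by Hölder. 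When $\card(\cI)\geq 2$ the resulting dominating function $\prod_J\Upsilon_J(\uz_J)\sum_{\un}\prod_e g_\omega(\ell_e(\flat_\cI(\uz)))^{n_e}$ is integrable over $\obullet{\diag}_{\cI,\eta}$ by Lemma~\ref{lem: integrable dominating function}, since the exponents $q_e$ of Proposition~\ref{prop: HBL graphs} satisfy $q_e\geq\card(\cI)/(\card(\cI)-1)\geq p/(p-1)$ and \hypDC{2p-1}{\frac{p}{p-1}} precisely provides $g_\omega\in L^{p/(p-1)}\cap L^\infty\subset L^{q_e}$. The case $\cI=\brackets{\cK}$ is immediate since $\obullet{\diag}_{\brackets{\cK},\eta}$ is compact by Corollary~\ref{cor: compactness diag A eta 0} and $\tilde{F}_{\cI,\cJ}$ is uniformly bounded there.

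The main obstacle is reconciling two different scales within a single dominated convergence: the inner-integral convergence uses that the translations $z_K/R$ go to $0$ and that $f_R$ admits a local scaling limit around $R\tau$, while integrability on the potentially unbounded set $\obullet{\diag}_{\cI,\eta}$ demands an $R$-uniform bound that decays at infinity in $\uz$. The mechanism that makes both work simultaneously is the cancellation of the large diagonal translation $R\tau\cdot\one$ in both the universal factor $\prod_J\Upsilon_J$ and the decay-of-correlations estimate of Lemma~\ref{lem: uniform domination FIJ}, leaving purely $\uz$-dependent data amenable to the HBL bound of Lemma~\ref{lem: integrable dominating function}.
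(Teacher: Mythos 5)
Your proposal is correct and takes essentially the same approach as the paper. The paper organizes the argument into two steps — first replacing $f_R$ by $f$ via the uniform convergence of $\Sigma_{2\cI}$ (Proposition~\ref{prop: uniform convergence Sigma I}), then handling the limit field by translation invariance and $L^p$-continuity of translations — whereas you merge these into a single dominated-convergence pass after the change of variables $\ux=(\flat(\ux),\oux)$; the change of variables, the cancellation of the diagonal translation in both $\prod_J\Upsilon_J$ and the decay-of-correlations bound of Lemma~\ref{lem: uniform domination FIJ}, and the appeal to the HBL-based majorant of Lemma~\ref{lem: integrable dominating function} are all the same as the paper's, and your explicit note on the compact case $\cI=\brackets{\cK}$ (where $\cI^{\cwedge}=\emptyset$) is a useful clarification that the paper leaves implicit.
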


\begin{proof}
There are two steps to the proof. The first one is to replace $f_R$ by the limit field $f$ in the integral on the left-hand side of~\eqref{eq: exact asymptotics} and prove that the error goes to $0$ as $R \to +\infty$. The second one is to prove the claimed convergence once we replaced $f_R$ by $f$.

\paragraph*{Step 1: Replacing $f_R$ by $f$.} By definition of $\eta$ and $R_\eta$, see Lemma~\ref{lem: existence eta}, for all $R \geq R_\eta$ and $\ux \in \diag_{\cI,\eta} \cap (R\Omega)^\cK$, we have
\begin{equation*}
\parentheses*{\parentheses*{\cG_J(\ux_J)}_{J \in \cJ},\parentheses*{\theta_J(\ux_J)}_{J \in \cJ},\parentheses*{\Pi_{2J}^{2[J]_\cI}(\cdot,\ux_{2[J]_\cI})}_{J \in \cJ},\Sigma_{2\cI}(f_R,\ux_{2\cK})} \in \Gamma_{\cI,\cJ,\eta}.
\end{equation*}
Recall that, if $\ux \in \diag_{\cI,\eta} \cap (R\Omega)^\cK$, then $\ux_{2\cK} \in \diag_{2\cI,\eta} \cap (R\Omega)^{2\cK}$, see Remark~\ref{rem: diag I eta doubled points}. Applying Proposition~\ref{prop: uniform convergence Sigma I} with $A = 2\cK$ yields that
\begin{equation*}
\sup_{\ux \in \diag_{\cI,\eta} \cap (R\Omega)^\cK} \Norm*{\Sigma_{2\cI}(f_R,\ux_{2\cK})-\Sigma_{2\cI}(f,\ux_{2\cK})} \xrightarrow[R \to +\infty]{}0.
\end{equation*}
By Lemma~\ref{lem: regularity F I J}, the function $F_{\cI,\cJ}$ is continuous, and hence uniformly continuous on the compact $\Gamma_{\cI,\cJ,\eta}$. Thus, recalling the definition of $\tilde{F}_{\cI,\cJ}$ in Lemma~\ref{lem: relation between cumulant densities}, we have:
\begin{equation*}
\sup_{\ux \in \diag_{\cI,\eta} \cap (R\Omega)^\cK} \norm*{\tilde{F}_{\cI,\cJ}(f_R,\ux)-\tilde{F}_{\cI,\cJ}(f,\ux)} \xrightarrow[R \to +\infty]{}0.
\end{equation*}
For all $R \geq R_\eta$ and $\uz \in \obullet{\diag}_{\cI,\eta}$ we let $\epsilon_R(\uz)=0$ if $\brackets*{\tau \in \R^d \mvert \tau \cdot \uz \in (R\Omega)^\cK}= \emptyset$ and
\begin{equation*}
\epsilon_R(\uz) = \sup \brackets*{\norm*{\tilde{F}_{\cI,\cJ}(f_R,\tau \cdot \uz)-\tilde{F}_{\cI,\cJ}(f,\tau \cdot \uz)} \mvert \tau \in \R^d \ \text{such that} \ \tau\cdot \uz \in (R\Omega)^\cK}
\end{equation*}
otherwise. Then, $\epsilon_R(\uz) \leq \sup_{\ux \in \diag_{\cI,\eta} \cap (R\Omega)^\cK} \norm*{\tilde{F}_{\cI,\cJ}(f_R,\ux)-\tilde{F}_{\cI,\cJ}(f,\ux)} \xrightarrow[R \to +\infty]{}0$ for all $\uz \in \obullet{\diag}_{\cI,\eta}$.

Recalling that we extended each $\phi_i$ by $0$ outside of $\Omega$, the change of variable $(\tau,\uz) = (\flat(\ux),\oux)$ and the translation-invariance of the $(\Upsilon_J)_{J \in \cJ}$, see Lemma~\ref{lem: regularity Upsilon}, yield:
\begin{multline}
\label{eq: exact asymptotics replace tilde f}
\frac{1}{R^d}\int_{\ux \in (R\Omega)^\cK \cap \diag_{\cI,\eta}} \norm*{\prod_{K \in \cK} \prod_{i\in K}\phi_i\parentheses*{\frac{x_K}{R}}} \norm*{\tilde{F}_{\cI,\cJ}\parentheses*{f_R,\ux}-\tilde{F}_{\cI,\cJ}\parentheses*{f,\ux}} \prod_{J \in \cJ}\Upsilon_J(\ux_J) \dx \ux\\
\begin{aligned}
&\leq \frac{1}{R^d}\int_{\diag_{\cI,\eta}} \norm*{\prod_{K \in \cK} \prod_{i\in K}\phi_i\parentheses*{\frac{x_K}{R}}} \epsilon_R(\oux) \prod_{J \in \cJ}\Upsilon_J(\oux_J) \dx \ux\\
&\leq \norm{\cK}^\frac{d}{2} \int_{\uz \in \obullet{\diag}_{\cI,\eta}} \parentheses*{\frac{1}{R^d}\int_{\tau \in \R^d} \prod_{K \in \cK} \prod_{i\in K}\norm*{\phi_i\parentheses*{\frac{\tau + z_K}{R}}}\dx \tau} \epsilon_R(\uz)\prod_{J \in \cJ}\Upsilon_J(\uz_J) \dx \uz.
\end{aligned}
\end{multline}
For all $\uz \in \obullet{\diag}_{\cI,\eta}$, by Hölder's inequality and Equation~\eqref{eq: Lp norms and scaling}, we have:
\begin{equation*}
\frac{1}{R^d}\int_{\tau \in \R^d} \prod_{K \in \cK} \prod_{i\in K}\norm*{\phi_i\parentheses*{\frac{\tau + z_K}{R}}}\dx \tau \leq  \prod_{K \in \cK} \prod_{i\in K} \frac{1}{R^\frac{d}{p}} \Norm*{\phi_i\parentheses*{\frac{\cdot + z_K}{R}}}_p = \prod_{i=1}^p \Norm{\phi_i}_p <+\infty,
\end{equation*}
recalling that $\ssquarebrackets{1}{p} = \bigsqcup_{K \in \cK}K$. Thus, considering the integral over $\uz \in \obullet{\diag}_{\cI,\eta}$ on the last line of~\eqref{eq: exact asymptotics replace tilde f}, the integrand is bounded by $\parentheses*{\prod_{i=1}^p \Norm{\phi_i}_p} \epsilon_R(\uz) \prod_{J \in \cJ}\Upsilon_J(\uz_J) \xrightarrow[R \to +\infty]{}0$.

We need a dominating function in order to apply Lebesgue's dominated convergence Theorem. By Lemma~\ref{lem: uniform domination FIJ}, for all $R \geq R_\eta$ and $\uz \in \obullet{\diag}_{\cI,\eta}$ we have:
\begin{equation}
\label{eq: exact asymptotics domination}
\epsilon_R(\uz) \leq \sup_{\brackets*{\tau \in \R^d \mvert \tau\cdot \uz \in (R\Omega)^\cK}}\parentheses*{\norm*{\tilde{F}_{\cI,\cJ}(f_R,\tau\cdot \uz)}+\norm*{\tilde{F}_{\cI,\cJ}(f,\tau\cdot \uz)}} \leq 2C_\eta \sum_{\un \in \G_\cI} \prod_{e \in \cI^{\cwedge}} g_\omega\parentheses*{\strut\ell_e(\flat_\cI(\uz))}^{n_e}.
\end{equation}
Recall that $\G_\cI$ is finite by Lemmas~\ref{lem: description of NB} and~\ref{lem: characterization 2 edge connected}. Let $G = (\cI,\un) \in \G_\cI$, and let $(q_e)_{e \in \cE_G}$ be given by Proposition~\ref{prop: HBL graphs}. For all $e \in \cI^{\cwedge}$, we have either $n_e=0$, or $e \in \cE_G$ and
\begin{equation*}
q_e = \frac{\norm{\cI}}{\norm{\cI}-1}\frac{p_e}{2} \geq 1+\frac{1}{\norm{\cI}-1} \geq 1+\frac{1}{p-1}=\frac{p}{p-1},
\end{equation*}
so that $g_\omega \in L^\frac{p}{p-1}(\R^d) \cap L^\infty(\R^d) \subset L^{q_e}(\R^d)$. Thus, multiplying the right-hand side of~\eqref{eq: exact asymptotics domination} by $\prod_{J \in \cJ}\Upsilon_J(\uz_J)$, we obtain a function which is independent of $R$, and integrable over $\obullet{\diag}_{\cI,\eta}$ by Lemma~\ref{lem: integrable dominating function}. Then, by Lebesgue's Theorem, the last term in~\eqref{eq: exact asymptotics replace tilde f} goes to $0$ as $R\to +\infty$, and hence so does the first. This proves that, in order to prove~\eqref{eq: exact asymptotics}, it is enough to prove that this asymptotics holds when $f_R$ is replaced by $f$ on the left-hand side.

\paragraph*{Step 2: Convergence for the limit field $f$.} Since the limit field $f$ is stationary, the covariance operator $\Sigma_{2\cI}(f,\cdot)$ is invariant by diagonal translations, see Remark~\ref{rem: Sigma I C0}. By Lemma~\ref{lem: prop Pi BA}.\ref{item: Pi translation inv} and Section~\ref{subsec: evaluation maps and their kernels}, the maps $\parentheses*{\cG_J}_{J \in \cJ}$, $\parentheses*{\theta_J}_{J \in \cJ}$, and $\parentheses*{\Pi_{2J}^{2[J]_\cI}}_{J \in \cJ}$ all are translation-invariant. Hence $\tilde{F}_{\cI,\cJ}(f,\cdot)$ is also translation-invariant. By the same change of variable as in the first step, we obtain:
\begin{equation}
\label{eq: convergence limit field}
\begin{aligned}
\frac{1}{R^d}\int_{\ux \in (R\Omega)^\cK \cap \diag_{\cI,\eta}} &\parentheses*{\prod_{K \in \cK} \prod_{i\in K}\phi_i\parentheses*{\frac{x_K}{R}}} \tilde{F}_{\cI,\cJ}\parentheses*{f,\ux} \prod_{J \in \cJ}\Upsilon_J(\ux_J) \dx \ux\\
=\norm{\cK}^\frac{d}{2}& \int_{\uz \in \obullet{\diag}_{\cI,\eta}} \parentheses*{\frac{1}{R^d}\int_{\tau \in \R^d} \prod_{K \in \cK} \prod_{i\in K}\phi_i\parentheses*{\frac{\tau + z_K}{R}}\dx \tau} \tilde{F}_{\cI,\cJ}\parentheses*{f,\uz}\prod_{J \in \cJ}\Upsilon_J(\uz_J) \dx \uz\\
=\norm{\cK}^\frac{d}{2}& \int_{\uz \in \obullet{\diag}_{\cI,\eta}} \parentheses*{\int_{y \in \R^d} \prod_{K \in \cK} \prod_{i\in K}\phi_i\parentheses*{y+\frac{z_K}{R}}\dx y} \tilde{F}_{\cI,\cJ}\parentheses*{f,\uz} \prod_{J \in \cJ}\Upsilon_J(\uz_J) \dx \uz.
\end{aligned}
\end{equation}

Let us fix $\uz=(z_K)_{K \in \cK} \in \obullet{\diag}_{\cI,\eta}$. Let $K \in \cK$ and $i \in K$. By continuity of the translations we have $\phi_i\parentheses*{\cdot+\frac{z_K}{R}} \xrightarrow[R \to +\infty]{}\phi_i$ in $L^p(\R^d)$. By continuity of the product from $\parentheses*{L^p(\R^d)}^p$ to $L^1(\R^d)$, we obtain $\prod_{K \in \cK}\prod_{i \in K}\phi_i\parentheses*{\cdot+\frac{z_K}{R}} \xrightarrow[R \to +\infty]{} \prod_{i=1}^p\phi_i$ in $L^1(\R^d)$. Thus, in the last line of~\eqref{eq: convergence limit field}, the integrand converges pointwise to $\parentheses*{\int_{y \in \R^d}\prod_{i=1}^p\phi_i(y) \dx y}\tilde{F}_{\cI,\cJ}(f,\uz)\prod_{J \in \cJ}\Upsilon_J(\uz_J)$. Recalling that the integral over $y \in \R^d$ is bounded by $\prod_{i=1}^p\Norm{\phi_i}_p$, uniformly in $\uz \in \obullet{\diag}_{\cI,\eta}$ and $R \geq R_\eta$, we can use the same dominating function as in the first step, see Lemmas~\ref{lem: uniform domination FIJ} and~\ref{lem: integrable dominating function}, and the conclusion follows by another dominated convergence.
\end{proof}

\begin{proof}[Proof of Theorem~\ref{thm: cumulants asymptotics for zero sets}.\ref{item: cumulants asymptotics p}]
We start by dealing with the case $p\geq 2$ under the hypothesis that \hypDC{2p-1}{\frac{p}{p-1}} holds. Then, we deal with the case $p=1$, for the sake of completeness.

\paragraph*{Case $p \geq 2$.}
Let $\phi_1,\dots,\phi_p \in L^1(\Omega) \cap L^\infty(\Omega)$. Let $\eta>0$ and $R_\eta \in \cR$ be given by Lemma~\ref{lem: existence eta}. For all $R \geq R_\eta$, we use the expression of $\kappa(\nu_R)(\uphi)$ derived in Corollary~\ref{cor: validity of Kac Rice cumulants}. Multiplying by $R^{d(p-1)}$ and using Lemma~\ref{lem: exact asymptotics} to compute the asymptotics of each term in this expression, we obtain:
\begin{equation*}
R^{d(p-1)}\kappa(\nu_R)(\phi_1,\dots,\phi_p) \xrightarrow[R \to +\infty]{} \gamma_p(f) \int_{x \in \Omega} \prod_{i=1}^p \phi_i(x) \dx x,
\end{equation*}
where
\begin{equation*}
\gamma_p(f) = \sum_{\cK \in \cP_{p,d-k}}\norm{\cK}^\frac{d}{2} \sum_{\cI\in \cP_\cK} \int_{\uz \in  \obullet{\diag}_{\cI,\eta}} \sum_{\cJ \leq \cI} \tilde{F}_{\cI,\cJ}\parentheses*{f,\uz}\prod_{J \in \cJ}\Upsilon_J(\uz_J) \dx \uz.
\end{equation*}
Recalling Definition~\ref{def: F A} and Lemma~\ref{lem: relation between cumulant densities}, we can simplify the expression of this constant:
\begin{equation*}
\gamma_p(f) = \sum_{\cK \in \cP_{p,d-k}}\norm{\cK}^\frac{d}{2} \sum_{\cI\in \cP_\cK} \int_{\uz \in  \obullet{\diag}_{\cI,\eta}} \cF_{\cK}(f,\uz) \dx \uz = \sum_{\cK \in \cP_{p,d-k}}\norm{\cK}^\frac{d}{2} \int_{\uz \in  \flat^{-1}(0)} \cF_{\cK}(f,\uz) \dx \uz.
\end{equation*}
Observing that $\cF_\cK$ actually only depends on $\norm{\cK}$ and recalling Definition~\ref{def: P A n}, we finally obtain:
\begin{equation}
\label{eq: expression gamma p f}
\gamma_p(f) = \begin{cases} \displaystyle p^\frac{d}{2} \int_{\brackets*{\uz \in (\R^d)^p \mvert \flat(\uz)=0}} \cF_p(f,\uz)\dx \uz, & \text{if} \ k <d,\\
\displaystyle \sum_{q=1}^p \card\parentheses*{\brackets*{\cI \in \cP_p \mvert \norm{\cI}=q}}\, q^\frac{d}{2} \int_{\brackets*{\uz \in (\R^d)^q \mvert \flat(\uz)=0}} \cF_q(f,\uz)\dx \uz, & \text{if} \ k =d. \end{cases}
\end{equation}

\paragraph*{Case $p=1$.} In this case, we work under the only assumption that \hypReg{2}, \hypND{1} and \hypScL{1} hold. In particular, we cannot use the results from Section~\ref{subsec: validity of Kac-Rice}. We denote by $\cI = \brackets{\brackets{1}}$ the only partition of $\brackets{1}$ for simplicity.

Let $R \in \cR \sqcup \brackets{\infty}$, for all $x \in RU$, we have $\obullet{(x,x)}=0$ and, by Definitions~\ref{def: divided differences}, \ref{def: isomorphisms Delta Psi} and~\ref{def: Sigma I} and Lemma~\ref{lem: oK in terms of Delta and Psi} we have:
\begin{equation}
\label{eq: variance case p=1}
\Sigma_{2\cI}(f_R,x,x) = \var{\oK(f_R,x,x)} = \var{\Psi_0\circ \Delta_{(x,x)}(f_R)} = \Psi_0\var{f_R(x),D_xf_R} \Psi_0^*.
\end{equation}
Since $f_\infty=f$ is stationary, the function $x \mapsto\Sigma_{2\cI}(f,x,x)$ is constant. By \hypND{1}, the Gaussian vector $(f(0),D_0f)$ is non-degenerate. Since $\Psi_0$ is an isomorphism, we have $\Sigma_{2\cI}(f,0,0) \in \cS_\cI^+$. Then, we deduce from \hypScL{1} that $\sup_{x \in R\Omega} \Norm*{\Sigma_{2\cI}(f_R,x,x) - \Sigma_{2\cI}(f,0,0)} \xrightarrow[R \to +\infty]{}0$. Hence,~there exists $R_0 \in \cR$ and a compact $\Gamma_0 \subset \cS_\cI^+$ such that $\Sigma_{2\cI}(f_R,x,x) \in \Gamma_0$ for all $R \geq R_0$ and $x \in R\Omega$.

The maps $\cG_{\brackets{1}}$, $\theta_{\brackets{1}}$ and $\Upsilon_{\brackets{1}}$ are translation-invariant, see Section~\ref{subsec: evaluation maps and their kernels} and Lemma~\ref{lem: regularity Upsilon}, hence constant. The map $\sigma_\cI$ is uniformly continuous on $\brackets*{\parentheses*{\cG_{\brackets{1}}(0),\theta_{\brackets{1}}(0)}}\times \Gamma_0$ by Lemma~\ref{lem: regularity sigma I}. Hence, letting $\gamma_1(f)=\rho_{\brackets{1}}(f,0)$, the previous uniform convergence and Lemma~\ref{lem: Kac-Rice revisited} imply that
\begin{equation}
\label{eq: uniform CV rho 1}
\sup_{x \in R\Omega} \norm*{\rho_{\brackets{1}}(f_R,x)- \gamma_1(f)} \xrightarrow[R \to +\infty]{}0.
\end{equation}

Let $\phi \in L^1(\Omega)$ and $R \geq R_0$. By Equation~\eqref{eq: variance case p=1}, for all $x \in R\Omega$, the Gaussian vector $\parentheses*{f_R(x),D_xf_R}$ is non-degenerate. Moreover, $\rho_{\brackets{1}}(f_R,\cdot)$ is bounded over $R\Omega$ by~\eqref{eq: uniform CV rho 1}, hence $\phi\parentheses*{\frac{\cdot}{R}}\rho_{\brackets{1}}(f_R,\cdot) \in L^1(R\Omega)$. Using the Kac--Rice formula for the expectation, see Theorem~\ref{thm: Kac-Rice}, and the uniform convergence derived in Equation~\eqref{eq: uniform CV rho 1}, we have:
\begin{align*}
\kappa(\nu_R)(\phi) &= \esp{\prsc*{\nu_R}{\phi}} = \esp{\prsc*{\frac{Z_R}{R^d}}{\phi\parentheses*{\frac{\cdot}{R}}}} = \frac{1}{R^d}\int_{R\Omega} \phi\parentheses*{\frac{x}{R}}\rho_{\brackets{1}}(f_R,x) \dx x\\
&= \int_{\Omega}\phi(x)\rho_{\brackets{1}}(f_R,Rx)\dx x \xrightarrow[R \to +\infty]{} \gamma_1(f) \int_\Omega \phi(x) \dx x.
\end{align*}
Finally, since $\parentheses*{f(0),D_0f}$ is non-degenerate, we have $\gamma_1(f) = \rho_{\brackets{1}}(f,0) >0$, see Definition~\ref{def: rho A}.
\end{proof}


\subsection{Asymptotic upper bound for cumulants under \texorpdfstring{\hypDC{2p-1}{2}}{}}
\label{subsec: asymptotic upper bound for cumulants}

The goal of this section is to prove Theorem~\ref{thm: cumulants asymptotics for zero sets}.\ref{item: cumulants asymptotics 2}. We work under the assumption that only \hypDC{2p-1}{2} holds. In particular, the results from Section~\ref{subsec: validity of Kac-Rice} are still valid, but not those of Section~\ref{subsec: cumulants asymptotics under DC}.

Let $\eta >0$ and $R_\eta \in \cR$ be given by Lemma~\ref{lem: existence eta}. Let $\cK \in \cP_p$ and $\cI,\cJ \in \cP_\cK$ be such that $\cJ \leq \cI$. Finally, let $\uphi = \parentheses{\phi_i}_{1 \leq i \leq p}$ be functions in $L^1(\Omega)\cap L^\infty(\Omega)$, that we extend by $0$ outside of $\Omega$.

\begin{lem}[Continuity with respect to the decay function]
\label{lem: continuity decay function}
Let us assume that $\card(\cI) \geq 2$. Let $G=(\cI,\un) \in \G_\cI$ and let $(p_e)_{e \in \cE_G}$ be the associated exponents given by Lemma~\ref{lem: estimates H G J eta}. For all $R \geq R_\eta$, for all $\ug=(g_e)_{e \in \cE_G}$ and $\tilde{\ug}=(\tilde{g}_e)_{e \in \cE_G} \in \prod_{e \in \cE_G}L^{p_e}(\R^d)$, we have:
\begin{multline*}
\norm*{\int_{\diag_{\cI,\eta}} \norm*{\prod_{K \in \cK}\prod_{i \in K}\phi_i\parentheses*{\frac{x_K}{R}}}\parentheses*{\prod_{e \in \cE_G} g_e\parentheses*{\ell_e\parentheses*{\strut\flat_\cI(\ux)}}^{n_e}-\prod_{e \in \cE_G} \tilde{g}_e\parentheses*{\ell_e\parentheses*{\strut\flat_\cI(\ux)}}^{n_e}} \prod_{J \in \cJ}\Upsilon_J(\ux_J)\dx \ux}\\
\leq R^\frac{d\norm{\cI}}{2} \Norm*{\strut \cH_{G,\cJ,\eta}(\cdot,\ug) - \cH_{G,\cJ,\eta}(\cdot,\tilde{\ug})}_{\mathrm{op},2} \prod_{I \in \cI} \prod_{K \in I}\Norm*{\prod_{i \in K}\phi_i}_{2\norm{I}},
\end{multline*}
where $\Norm{\cdot}_{\mathrm{op},2}$ is the operator norm for multilinear forms on $\prod_{I \in \cI}\parentheses*{L^{2\norm{I}}(\R^d)}^I$.
\end{lem}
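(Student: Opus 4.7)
The approach is to recognize the integral on the left-hand side as a difference of two values of the multilinear functional $\cH_{G,\cJ,\eta}$ of Definition~\ref{def: H G J eta}, and then apply the continuity statement of Lemma~\ref{lem: estimates H G J eta}.\ref{item: estimate H p} together with a direct scaling identity. Concretely, for each $K \in \cK$ I would introduce $\varphi_K:\R^d \to \R$ defined by $\varphi_K(y) = \left\vert \prod_{i \in K}\phi_i(y/R)\right\vert$, with each $\phi_i$ extended by $0$ outside $\Omega$. By Hölder's inequality, $\prod_{i \in K}\phi_i \in L^s(\R^d)$ for every $s \in [1,+\infty]$, so in particular $\varphi_K \in L^{2\norm{[K]_\cI}}(\R^d)$, and Equation~\eqref{eq: Lp norms and scaling} gives the scaling identity
\begin{equation*}
\Norm{\varphi_K}_{2\norm{[K]_\cI}} = R^{d/(2\norm{[K]_\cI})}\Norm{\prod_{i \in K}\phi_i}_{2\norm{[K]_\cI}}.
\end{equation*}

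Next, unwinding Definition~\ref{def: H G J eta} with indexing set $A = \cK$ and using linearity of the integral, the integrand inside the outer absolute value on the left-hand side equals
\begin{equation*}
\cH_{G,\cJ,\eta}(\uvarphi,\ug) - \cH_{G,\cJ,\eta}(\uvarphi,\tilde{\ug}),
\end{equation*}
where $\uvarphi = (\varphi_K)_{K \in \cK}$. Since $\card(\cI)\geq 2$ and $G$ is $2$-edge-connected, Lemma~\ref{lem: estimates H G J eta}.\ref{item: estimate H p} asserts that $\ug \mapsto \cH_{G,\cJ,\eta}(\cdot,\ug)$ takes values in the space of continuous multilinear forms on $\prod_{I \in \cI}(L^{2\norm{I}}(\R^d))^I$, a space in which $\uvarphi$ naturally lives (via the identification $\prod_{K \in \cK} L^{2\norm{[K]_\cI}} \simeq \prod_{I \in \cI}(L^{2\norm{I}})^I$). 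Evaluating the corresponding operator-norm bound at $\uvarphi$ therefore gives
\begin{equation*}
\left\vert \cH_{G,\cJ,\eta}(\uvarphi,\ug) - \cH_{G,\cJ,\eta}(\uvarphi,\tilde{\ug})\right\vert \leq \Norm{\cH_{G,\cJ,\eta}(\cdot,\ug) - \cH_{G,\cJ,\eta}(\cdot,\tilde{\ug})}_{\mathrm{op},2}\prod_{I \in \cI}\prod_{K \in I}\Norm{\varphi_K}_{2\norm{I}}.
\end{equation*}

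To close, I would substitute the scaling identity into the product of $L^{2\norm{I}}$-norms: for each fixed $I \in \cI$, the $\norm{I}$ factors arising from $K \in I$ contribute $R^{\norm{I}\cdot d/(2\norm{I})} = R^{d/2}$, and taking the product over the $\norm{\cI}$ blocks of $\cI$ produces the global factor $R^{d\norm{\cI}/2}$, which reproduces the stated inequality exactly. Beyond this essentially routine bookkeeping there is no real obstacle: all the analytic content, namely the continuity of $\cH_{G,\cJ,\eta}$ as a function of $\ug$, has already been packaged into Lemma~\ref{lem: estimates H G J eta} via the Hölder--Brascamp--Lieb inequalities associated with the $2$-edge-connected graph $G$.
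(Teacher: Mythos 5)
Your proposal is correct and follows essentially the same route as the paper's proof: set $\varphi_K = \prod_{i\in K}\lvert\phi_i(\cdot/R)\rvert$, recognize the integral as $\lvert\cH_{G,\cJ,\eta}(\uvarphi,\ug)-\cH_{G,\cJ,\eta}(\uvarphi,\tilde{\ug})\rvert$, apply the operator-norm bound from Lemma~\ref{lem: estimates H G J eta}.\ref{item: estimate H p}, and finish with the scaling identity~\eqref{eq: Lp norms and scaling} and the summation $\sum_{I\in\cI}\sum_{K\in I}\frac{1}{2\norm{I}}=\frac{\norm{\cI}}{2}$. No gaps.
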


\begin{proof}
Let $R \geq R_\eta$ and $\ug,\tilde{\ug} \in \prod_{e \in \cE_G} L^{p_e}(\R^d)$. We let $\varphi_K = \prod_{i \in K}\norm{\phi_i\parentheses*{\frac{\cdot}{R}}} \in L^1(\R^d) \cap L^\infty(\R^d)$ for all $K \in \cK$ and $\uvarphi=(\varphi_K)_{K \in \cK}$. Recalling Definition~\ref{def: H G J eta}, and using Lemma~\ref{lem: estimates H G J eta}.\ref{item: estimate H p}, the integral we want to control is
\begin{align*}
\norm*{\cH_{G,\cJ,\eta}\parentheses*{\uvarphi,\ug}-\cH_{G,\cJ,\eta}\parentheses*{\uvarphi,\tilde{\ug}}} &= \norm*{\parentheses*{\cH_{G,\cJ,\eta}\parentheses*{\cdot,\ug}-\cH_{G,\cJ,\eta}\parentheses*{\cdot,\tilde{\ug}}}(\uvarphi)}\\
&\leq \Norm*{\parentheses*{\cH_{G,\cJ,\eta}\parentheses*{\cdot,\ug}-\cH_{G,\cJ,\eta}\parentheses*{\cdot,\tilde{\ug}}}}_{\mathrm{op},2}\prod_{I \in \cI} \prod_{K \in I}\Norm{\varphi_K}_{2\norm{I}}.
\end{align*}
For all $I \in \cI$ and $K \in I$, we have $\Norm{\varphi_K}_{2\norm{I}} = R^\frac{d}{2\norm{I}} \Norm*{\prod_{i\in K}\phi_i}_{2\norm{I}}$, see Equation~\eqref{eq: Lp norms and scaling}. Then, the conclusion follows from $\sum_{I \in \cI} \sum_{K \in I} \frac{1}{2\norm{I}} = \sum_{I \in \cI} \frac{1}{2} = \frac{\norm{\cI}}{2}$.
\end{proof}

\begin{cor}[Asymptotic upper bound]
\label{cor: asymptotic upper bound L2}
If $\card(\cI)\geq 3$ then, for all $G=(\cI,\un) \in \G_\cI$, we~have:
\begin{equation*}
\frac{1}{R^\frac{d\norm{\cI}}{2}}\int_{\diag_{\cI,\eta}} \norm*{\prod_{K \in \cK}\prod_{i \in K}\phi_i\parentheses*{\frac{x_K}{R}}}\parentheses*{\prod_{J \in \cJ}\Upsilon_J(\ux_J)}\parentheses*{\prod_{e \in \cE_G} g_w\parentheses*{\ell_e\parentheses*{\strut\flat_\cI(\ux)}}^{n_e}} \dx \ux \xrightarrow[R \to +\infty]{}0.
\end{equation*}
\end{cor}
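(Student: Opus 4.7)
The strategy is to approximate $g_\omega$ by a compactly supported truncation and control separately the main integral and the approximation error. For $T > 0$, let $\tilde g := g_\omega \one_{B_T}$, where $B_T \subset \R^d$ is the closed ball of radius~$T$ centered at the origin, and set $\ug := (g_\omega)_{e \in \cE_G}$ and $\tilde\ug := (\tilde g)_{e \in \cE_G}$. Let $\Lambda_R$ and $\tilde \Lambda_R$ denote respectively the integral to be estimated and the analogous integral obtained by replacing each $g_\omega$ by $\tilde g$. The plan is to show that $R^{-d\norm{\cI}/2} \norm{\Lambda_R - \tilde\Lambda_R}$ can be made arbitrarily small, uniformly in $R \geq R_\eta$, by taking $T$ large enough, while for every fixed $T$ we have $R^{-d\norm{\cI}/2} \tilde\Lambda_R \to 0$ as $R \to +\infty$.

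For the approximation error, recall that $g_\omega \in L^2(\R^d) \cap L^\infty(\R^d)$ and that $g_\omega$ vanishes at infinity, as a consequence of the analogous property of $g$ in~\hypDC{2p-1}{2}. By dominated convergence when $p_e < +\infty$, and by the decay at infinity when $p_e = +\infty$, we get $\Norm{g_\omega - \tilde g}_{p_e} \to 0$ as $T \to +\infty$ for every $e \in \cE_G$. Using the multilinearity of $\cH_{G,\cJ,\eta}(\cdot,\cdot)$ in its second argument together with the continuity in Lemma~\ref{lem: estimates H G J eta}.\ref{item: estimate H p}, we deduce $\Norm{\cH_{G,\cJ,\eta}(\cdot, \ug) - \cH_{G,\cJ,\eta}(\cdot, \tilde\ug)}_{\mathrm{op},2} \to 0$ as $T \to +\infty$. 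Applying Lemma~\ref{lem: continuity decay function} and dividing by $R^{d\norm{\cI}/2}$ yields
\begin{equation*}
\sup_{R \geq R_\eta} \frac{\norm{\Lambda_R - \tilde\Lambda_R}}{R^{d\norm{\cI}/2}} \leq \Norm{\cH_{G,\cJ,\eta}(\cdot, \ug) - \cH_{G,\cJ,\eta}(\cdot, \tilde\ug)}_{\mathrm{op},2} \prod_{I \in \cI} \prod_{K \in I} \Norm*{\prod_{i \in K} \phi_i}_{2\norm{I}},
\end{equation*}
which is arbitrarily small when $T$ is large enough.

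For $\tilde\Lambda_R$, the compact support of $\tilde g$ is exploited by a direct computation. Performing the linear change of variable used in the first step of the proof of Lemma~\ref{lem: estimates H G J eta}, which produces cluster centers $\utau = (\tau_I)_{I \in \cI} \in (\R^d)^\cI$ together with centered internal coordinates $\uz$, then splitting $\utau$ into a common parameter $u := \tau_{I_0}$ for some arbitrary fixed $I_0 \in \cI$ and relative parameters $v_I := \tau_I - \tau_{I_0}$ for $I \in \cI \setminus \brackets{I_0}$, one sees that the factor $\prod_{e \in \cE_G} \tilde g(\ell_e(\utau))^{n_e}$ depends only on $\underline{v} := (v_I)_{I \neq I_0}$. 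Since $G$ is $2$-edge-connected hence connected, its non-vanishing forces $\Norm{v_I} \leq \norm{\cI} T$ for every $I \neq I_0$. For fixed $\underline{v}, \uz$, each argument $x_K/R$ is of the form $(u + c_K)/R$ with $c_K$ depending only on $\underline{v}, \uz$; the change of variable $y = u/R$ together with Hölder's inequality with $p$ equal conjugate exponents $p$ yields
\begin{equation*}
\int_{\R^d} \prod_{i=1}^p \norm*{\phi_i\parentheses*{\frac{u + c_{K_i}}{R}}} \dx u \leq R^d \prod_{i=1}^p \Norm{\phi_i}_p,
\end{equation*}
where $K_i \in \cK$ denotes the block containing $i$. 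The integration over $\underline{v}$ is on a set of finite $R$-independent measure, and the integration over $\uz$ contributes the finite factor $\prod_{I \in \cI} C_{\cJ_I, \eta}$ by Lemma~\ref{lem: integral of Upsilon J}. Combined with the uniform bound $\Norm{\tilde g}_\infty^{\sum_{e \in \cE_G} n_e}$, this yields $\tilde\Lambda_R \leq C(T, \uphi) R^d$, so that $R^{-d\norm{\cI}/2}\tilde\Lambda_R = O\parentheses*{R^{-d(\norm{\cI} - 2)/2}} \to 0$ since $\norm{\cI} \geq 3$.

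Taking $R \to +\infty$ first and $T \to +\infty$ afterwards then establishes the corollary. The main subtle point is ensuring that the $L^{p_e}$-approximation is simultaneous across all edges $e \in \cE_G$: the case $p_e = +\infty$ (which can occur in Proposition~\ref{prop: HBL graphs}) is precisely the one that requires the pointwise vanishing of $g_\omega$ at infinity, rather than just $L^s$-integrability, and this is why the hypothesis built into \hypDC{2p-1}{2} is used in full strength.
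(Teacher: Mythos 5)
Your proof is correct, and it differs from the paper's in a meaningful way on the treatment of the "nice" part of the approximation. Both arguments control the error term identically: via Lemma~\ref{lem: continuity decay function}, which reduces to showing that $\Norm{\cH_{G,\cJ,\eta}(\cdot,\ug) - \cH_{G,\cJ,\eta}(\cdot,\tilde\ug)}_{\mathrm{op},2}$ is small, using the continuity statement in Lemma~\ref{lem: estimates H G J eta}.\ref{item: estimate H p} and the fact that $g_\omega \in L^{p_e}(\R^d)$ for all $e$, since $p_e\geq 2$ and $g_\omega \in L^2 \cap L^\infty$. Where the two diverge is in how the $O(R^d)$ bound on $\tilde\Lambda_R$ is obtained. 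The paper picks $\tilde{\ug}$ from the dense subspace $\prod_{e} L^{q_e}(\R^d)\cap L^{p_e}(\R^d)$ and invokes Lemma~\ref{lem: estimates H G J eta}.\ref{item: estimate H q}, which is the Hölder--Brascamp--Lieb estimate~\eqref{eq: HBL L p/p-1} with the exponents $q_e = \frac{\norm{\cI}}{\norm{\cI}-1}\frac{p_e}{2}$, and observes that the homogeneity of the $L^{\norm{\cI}\norm{I}}$ norms under $\phi_i \mapsto \phi_i(\cdot/R)$ produces exactly one factor of $R^d$. You instead truncate $g_\omega$ to a compactly supported $\tilde g$ and make the $R^d$ bound explicit: after passing to the cluster-barycenter coordinates $\utau$, you isolate a single common translation $u = \tau_{I_0}$, note that the factor $\prod_e \tilde g(\ell_e(\utau))^{n_e}$ confines the relative offsets $(v_I)_{I\neq I_0}$ to a bounded region using only connectedness of $G$, and squeeze the $u$-integral with Hölder. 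This is more elementary in that it bypasses the second HBL inequality~\eqref{eq: HBL L p/p-1} and the entire $q_e$ machinery, at the price of a slightly longer hands-on computation; it also makes the geometric reason for the $R^d$ scaling transparent (only the overall cluster position is unconstrained). Your concluding remark about $p_e = +\infty$ is well taken: truncation in $L^\infty$ requires pointwise vanishing of $g_\omega$ at infinity, which is indeed built into \hypDC{2p-1}{2}, and this is the one place where the paper's density argument and yours both lean on the full strength of the hypothesis rather than on mere $L^s$-membership.
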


\begin{proof}
Let $G=(\cI,\un) \in \G_\cI$ and let $(p_e)_{e \in \cE_G}$ be the exponents from Proposition~\ref{prop: HBL graphs}. For all $e \in \cE_G$, we have $p_e \geq 2$, hence $g_\omega \in L^2(\R^d)\cap L^\infty(\R^d) \subset L^{p_e}(\R^d)$. Let $\ug = (g_\omega,\dots,g_\omega) \in \prod_{e \in \cE_G}L^{p_e}(\R^d)$. Let $\tilde{\ug}=(\tilde{g}_e)_{e \in \cE_G} \in \prod_{e \in \cE_G} L^{q_e}(\R^d) \cap L^{p_e}(\R^d)$, where $q_e = \frac{\norm{\cI}}{\norm{\cI}-1}\frac{p_e}{2}$ for all $e \in \cE_G$. For all $K \in \cK$ let $\varphi_K = \prod_{i \in K}\norm*{\phi_i\parentheses*{\frac{\cdot}{R}}}$. Applying Lemma~\ref{lem: estimates H G J eta}.\ref{item: estimate H q} to $\tilde{\ug}$ and proceeding as above, we obtain:
\begin{multline*}
\norm*{\int_{\diag_{\cI,\eta}} \norm*{\prod_{K \in \cK}\prod_{i \in K}\phi_i\parentheses*{\frac{x_K}{R}}}\parentheses*{\prod_{J \in \cJ}\Upsilon_J(\ux_J)}\parentheses*{\prod_{e \in \cE_G} \tilde{g}_e\parentheses*{\ell_e\parentheses*{\strut\flat_\cI(\ux)}}^{n_e}} \dx \ux} = \Norm*{\cH_{G,\cJ,\eta}(\uvarphi,\tilde{\ug})}\\
\leq \Norm*{\cH_{G,\cJ,\eta}(\cdot,\tilde{\ug})}_{\mathrm{op},\cI} \prod_{I \in \cI} \prod_{K \in I} \Norm*{\varphi_K}_{\norm{\cI}\norm{I}} \leq R^d \Norm*{\cH_{G,\cJ,\eta}(\cdot,\tilde{\ug})}_{\mathrm{op},\cI} \prod_{I \in \cI} \prod_{K \in I} \Norm*{\prod_{i \in K}\phi_i}_{\norm{\cI}\norm{I}}
\end{multline*}
by~\eqref{eq: Lp norms and scaling}, since $\sum_{I \in \cI} \sum_{i \in I} \frac{1}{\norm{\cI}\norm{I}}=1$. Here $\Norm*{\cdot}_{\mathrm{op},\cI}$ is the operator norm for multilinear forms on $\prod_{I \in \cI} L^{\norm{\cI}\norm{I}}(\R^d)^I$. Then, by Lemma~\ref{lem: continuity decay function}, we have:
\begin{multline}
\label{eq: asymptotic upper bound}
\frac{1}{R^\frac{d\norm{\cI}}{2}}\norm*{\int_{\diag_{\cI,\eta}} \norm*{\prod_{K \in \cK}\prod_{a \in K}\phi_a\parentheses*{\frac{x_K}{R}}}\parentheses*{\prod_{J \in \cJ}\Upsilon_J(\ux_J)}\parentheses*{\prod_{e \in \cE_G} g_w\parentheses*{\ell_e\parentheses*{\strut\flat_\cI(\ux)}}^{n_e}} \dx \ux}\\
\leq C\parentheses*{\Norm*{\strut \cH_{G,\cJ,\eta}(\cdot,\ug) - \cH_{G,\cJ,\eta}(\cdot,\tilde{\ug})}_{\mathrm{op},2}+ R^{d(1-\frac{\norm{\cI}}{2})} \Norm*{\cH_{G,\cJ,\eta}(\cdot,\tilde{\ug})}_{\mathrm{op},\cI}},
\end{multline}
where $C\geq 0$ is some constant depending on $\uphi$.

For all $e \in \cE_G$, the subspace $L^{q_e}(\R^d) \cap L^{p_e}(\R^d)$ is dense in $L^{p_e}(\R^d)$. Indeed, if $p_e=+\infty$ then $q_e=+\infty$; and if $p_e<+\infty$ then $q_e<+\infty$, and $L^{q_e}(\R^d) \cap L^{p_e}(\R^d)$ contains the space of continuous functions with compact support, which is dense in $L^{p_e}(\R^d)$ in this case. Hence, $\prod_{e \in \cE_G} L^{q_e}(\R^d) \cap L^{p_e}(\R^d)$ is dense in $\prod_{e \in \cE_G}L^{p_e}(\R^d)$.

Let $\epsilon>0$. The density we just established and Lemma~\ref{lem: estimates H G J eta}.\ref{item: estimate H p} prove that there exists a function $\tilde{\ug} \in \prod_{e \in \cE_G} L^{q_e}(\R^d) \cap L^{p_e}(\R^d)$ such that $\Norm*{\strut \cH_{G,\cJ,\eta}(\cdot,\ug) - \cH_{G,\cJ,\eta}(\cdot,\tilde{\ug})}_{\textrm{op},2} \leq \epsilon$. Then, since $\card(\cI)\geq 3$, for all $R \in \cR$ large enough the right-hand side of~\eqref{eq: asymptotic upper bound} is less than $2C\epsilon$, which concludes the proof.
\end{proof}

\begin{proof}[Proof of Theorem~\ref{thm: cumulants asymptotics for zero sets}.\ref{item: cumulants asymptotics 2}]
Let $p \geq 3$. Let $\phi_1,\dots,\phi_p \in L^1(\Omega) \cap L^\infty(\Omega)$, we extend them by $0$ outside of $\Omega$. Let $\eta>0$ and $R_\eta \in \cR$ be given by Lemma~\ref{lem: existence eta}. For all $R \geq R_\eta$, we can express $\kappa(\nu_R)(\uphi)$ by Corollary~\ref{cor: validity of Kac Rice cumulants}, where $\uphi=(\phi_i)_{1 \leq i \leq p}$. Then, we are reduced to upper bounding a finite sum of terms of the form:
\begin{equation*}
\frac{1}{R^{pd}} \int_{\diag_{\cI,\eta}} \norm*{\prod_{K \in \cK}\prod_{i \in K}\phi_i\parentheses*{\frac{x_K}{R}}} \norm*{\tilde{F}_{\cI,\cJ}(f_R,\ux)} \prod_{J \in \cJ}  \Upsilon_J(\ux_J) \dx \ux,
\end{equation*}
where $\cK \in \cP_p$ and $\cJ \leq \cI$ in $\cP_\cK$. By Lemma~\ref{lem: uniform domination FIJ}, it is actually enough to upper bound a finite number of terms of the following form, where $\un \in \G_\cI$:
\begin{equation}
\label{eq: cumulant asymptotics L2}
\frac{1}{R^{pd}} \int_{\diag_{\cI,\eta}} \norm*{\prod_{K \in \cK}\prod_{i \in K}\phi_i\parentheses*{\frac{x_K}{R}}} \parentheses*{\prod_{J \in \cJ}  \Upsilon_J(\ux_J)} \parentheses*{\prod_{e \in \cI^{\cwedge}} g_\omega\parentheses*{\strut\ell_e(\flat_\cI(\ux))}^{n_e}} \dx \ux.
\end{equation}

If $\card(\cI) \geq 3$, for all $\un \in \G_\cI$, the term in \eqref{eq: cumulant asymptotics L2} is $o\parentheses{R^{d\parentheses{\frac{\norm{\cI}}{2}-p}}}$, by Corollary~\ref{cor: asymptotic upper bound L2}. Since $\norm{\cI} \leq \norm{\cK}\leq p$, these terms are indeed $o\parentheses{R^{-\frac{pd}{2}}}$.

If $\card(\cI)=2$, let $G=(\cI,\un) \in \G_\cI$ and let $(p_e)_{e \in \cE_G}$ be the associated exponents given by Proposition~\ref{prop: HBL graphs}. As in the proof of Corollary~\ref{cor: asymptotic upper bound L2}, we have $\ug = (g_\omega,\dots,g_\omega) \in \prod_{e \in \cE_G}L^{p_e}(\R^d)$ since we assumed that \hypDC{2p-1}{2} holds. Applying Lemma~\ref{lem: continuity decay function} with $\tilde{\ug}=0$, we obtain that the term in~\eqref{eq: cumulant asymptotics L2} is $O\parentheses{R^{d\parentheses{\frac{\norm{\cI}}{2}-p}}}$. Since $\norm{\cI}=2$ and $p \geq 3$, we have $\frac{\norm{\cI}}{2}-p< -\frac{p}{2}$. Thus, terms of the form~\eqref{eq: cumulant asymptotics L2} with $\card(\cI)=2$ are also $o\parentheses{R^{-\frac{pd}{2}}}$.

If $\card(\cI)=1$, then $\cI= \brackets{\cK}$ and $\cI^{\cwedge}=\emptyset$. In this case, the integral in~\eqref{eq: cumulant asymptotics L2} can be dealt with by the same change of variable as in~\eqref{eq: bound H G J eta}, recall Definition~\ref{def: integral transform}. Letting $\varphi_K =\prod_{i \in K}\norm*{\phi_i\parentheses*{\frac{\cdot}{R}}}$ for all $K \in \cK$ and $\uvarphi=\parentheses*{\varphi_K}_{K \in \cK}$, we obtain:
\begin{equation*}
\int_{\diag_{\brackets{\cK},\eta}} \parentheses*{\prod_{K \in \cK}\varphi_K(x_K)} \parentheses*{\prod_{J \in \cJ}  \Upsilon_J(\ux_J)} \dx \ux = \norm{\cK}^{\frac{d}{2}}\int_{\tau \in \R^d} \cT_{\cJ,\eta}(\uvarphi)(\tau) \dx \tau = \norm{\cK}^{\frac{d}{2}}\Norm*{\cT_{\cJ,\eta}(\uvarphi)}_1.
\end{equation*}
By Lemma~\ref{lem: regularity T J eta} and Equation~\eqref{eq: Lp norms and scaling}, we have
\begin{equation*}
\Norm*{\cT_{\cJ,\eta}(\uvarphi)}_1\leq C_{\cJ,\eta}\prod_{K \in \cK}\Norm*{\varphi_K}_{\norm{\cK}} \leq C_{\cJ,\eta}\prod_{K \in \cK}R^\frac{d}{\norm{\cK}} \Norm*{\prod_{i \in K}\phi_i}_{\norm{\cK}}=O(R^d).
\end{equation*}
Then, terms of the form~\eqref{eq: cumulant asymptotics L2} with $\card(\cI)=1$ are $O\parentheses{R^{d(1-p)}}$. Since $p \geq 3$, these terms are $o\parentheses{R^{-\frac{pd}{2}}}$, which concludes the proof.
\end{proof}


\section{The case of critical points}
\label{sec: the case of critical points}

As explained in the introduction, Theorem~\ref{thm: cumulants asymptotics for critical points} is not a consequence of Theorem~\ref{thm: cumulants asymptotics for zero sets} because gradient fields never satisfy Hypothesis \hypND{q} for $q \geq 1$. The purpose of this section is to explain how to adapt the contents of Sections~\ref{sec: partitions cumulants and diagonals} to~\ref{sec: proof of thm cumulants asymptotics zero sets} in order to prove Theorem~\ref{thm: cumulants asymptotics for critical points}. We deal with modifications concerning Kergin interpolation in Section~\ref{subsec: Kergin interpolation for gradient fields} and with those concerning Kac--Rice formulas in Section~\ref{subsec: Kac--Rice formulas for gradient fields}. The few other adaptations we need are dealt with in Section~\ref{subsec: proof of thm cumulants asymptotics for critical points}.


\subsection{Kergin interpolation for gradient fields}
\label{subsec: Kergin interpolation for gradient fields}

The goal of this section is to explain what should be adapted in Section~\ref{sec: polynomial interpolation and Gaussian fields} in order to deal with the case of gradient fields. The key point is to remark that the Kergin interpolant of a gradient is itself the gradient of some polynomial, see~\cite{GS2024}.

Let $q \in \N$, recall that $\R_q[X]$ is the space of polynomials of degree at most $q$ in $d$ variables $X=(X_1,\dots,X_d)$. If $Q \in \R_{q+1}[X]$, then $\nabla Q \in \R_q[X]^d$ is a polynomial map from $\R^d$ to itself.

\begin{dfn}[Polynomial gradients]
\label{def: polynomial gradients}
We denote by $\nabla \R_{q+1}[X] = \brackets*{\nabla Q \mvert\strut Q \in \R_{q+1}[X]}$.
\end{dfn}

\begin{lem}[Translation-invariance of {$\nabla \R_{q+1}[X]$}]
\label{lem: polynomial gradients}
For all $q \in \N$, the set $\nabla \R_{q+1}[X]$ is a vector subspace of $\R_q[X]^d$, which is invariant under the action of $\R^d$ by translation.
\end{lem}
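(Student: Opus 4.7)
The plan is to verify the three assertions in order: that $\nabla\mathbb{R}_{q+1}[X]$ sits inside $\mathbb{R}_q[X]^d$, that it is a linear subspace, and that it is stable under diagonal translation.

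First I would observe that differentiation lowers degree by one: for any $Q \in \mathbb{R}_{q+1}[X]$ and any $i \in \ssquarebrackets{1}{d}$, the partial derivative $\partial_i Q$ is a polynomial of degree at most $q$, so $\nabla Q = (\partial_1 Q, \dots, \partial_d Q) \in \mathbb{R}_q[X]^d$. This gives the inclusion $\nabla\mathbb{R}_{q+1}[X] \subset \mathbb{R}_q[X]^d$. The subspace property is then immediate since $\nabla : \mathbb{R}_{q+1}[X] \to \mathbb{R}_q[X]^d$ is a linear map, so its image is a vector subspace.

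For the translation invariance, I would use the chain rule (equivalently, the first identity of Lemma~\ref{lem: translation equivariance} applied to $Q$): for all $Q \in \mathbb{R}_{q+1}[X]$ and $\tau \in \mathbb{R}^d$, the polynomial $\tau \cdot Q : y \mapsto Q(y+\tau)$ again belongs to $\mathbb{R}_{q+1}[X]$, and differentiating yields
\begin{equation*}
\nabla(\tau \cdot Q) = (\nabla Q)(\cdot + \tau) = \tau \cdot (\nabla Q).
\end{equation*}
Hence $\tau \cdot (\nabla Q) = \nabla(\tau \cdot Q) \in \nabla\mathbb{R}_{q+1}[X]$, which establishes stability under translation.

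There is no real obstacle here: the statement is essentially a bookkeeping remark recording that gradients of polynomials are polynomials of one lower degree and that the gradient operator intertwines translations. I would keep the argument to a few lines, since its only purpose is to set up later definitions specific to gradient fields in Section~\ref{subsec: Kergin interpolation for gradient fields}.
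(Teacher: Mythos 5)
Your argument is correct and matches the paper's proof, which relies on exactly the same identity $\nabla(\tau\cdot Q)=\tau\cdot(\nabla Q)$ (the first point of Lemma~\ref{lem: translation equivariance}) to conclude translation invariance. You spell out the subspace property and the degree bookkeeping a bit more explicitly than the paper, which only records the chain-rule step, but the content is the same.
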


\begin{proof}
Let $P \in \nabla \R_{q+1}[X]$ and $\tau \in \R^d$, let $Q \in \R_{q+1}[X]$ be such that $\nabla Q =P$, we have:
\begin{equation*}
\tau \cdot P = \nabla Q(X+\tau) = \nabla (\tau \cdot Q) \in \nabla \R_{q+1}[X].\qedhere
\end{equation*}
\end{proof}

Let $U \subset \R^d$ be a convex open subset. The Poincaré Lemma implies the following result.

\begin{lem}[Kergin interpolant of gradients]
\label{lem: Kergin interpolant of gradients}
Let $A$ be a non-empty finite set and $h \in \cC^{\norm{A}}(U,\R)$. For all $\ux \in U^A$, we have $K(\nabla h,\ux) \in \nabla \R_{\norm{A}}[X]$ and $\oK(\nabla h,\ux) \in \nabla \R_{\norm{A}}[X]$.
\end{lem}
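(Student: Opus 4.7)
The plan is to invoke the Poincaré lemma on $\R^d$: a polynomial vector field $P \in \R_{\norm{A}-1}[X]^d$ lies in $\nabla \R_{\norm{A}}[X]$ if and only if its Jacobian $D_xP$ is symmetric as a matrix for every $x \in \R^d$. Indeed, granted curl-freeness, the explicit primitive $Q(X) = \int_0^1 P(tX) \cdot X \dx t$ is a polynomial of degree at most $\norm{A}$ as soon as $P$ is a polynomial of degree at most $\norm{A}-1$, and one checks $\nabla Q = P$. The whole argument therefore reduces to verifying that $K(\nabla h, \ux)$ is curl-free.

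For this, I fix WLOG $A = \ssquarebrackets{0}{q}$ with $q = \norm{A}-1$ and expand $K(\nabla h, \ux)(X) = \sum_{i=0}^{q} T_i(X-x_0, \dots, X-x_{i-1})$ via~\eqref{eq: Kergin explicit}, where $T_i = (\nabla h)[x_0, \dots, x_i] \in \sym^i(\R^d, \R^d)$, and check curl-freeness of each summand separately. The crucial observation is that each $T_i$ enjoys an additional symmetry beyond the symmetry in its $i$ vector slots: the $(i+1)$-linear form $(v_1, \dots, v_i, w) \mapsto \prsc{T_i(v_1, \dots, v_i)}{w}$ is symmetric in \emph{all} $i+1$ arguments. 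This is immediate from Definition~\ref{def: divided differences}, the canonical identification $\prsc{D^i_z(\nabla h)\cdot (v_1, \dots, v_i)}{w} = D^{i+1}_z h(v_1, \dots, v_i, w)$, and Schwarz's theorem applied to $h$.

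Granted this total symmetry, the curl-freeness of each summand $F_i(X) = T_i(X-x_0, \dots, X-x_{i-1})$ becomes a one-line calculation. Differentiating $F_i$ in direction $h$ produces $D_XF_i \cdot h = \sum_{j=0}^{i-1} T_i(X-x_0, \dots, h_{(j)}, \dots, X-x_{i-1})$ (the subscript $(j)$ indicating the slot occupied by $h$); pairing with $w$ and using the total symmetry to swap $h$ (in slot $j$ of $T_i$) with $w$ (in the outer ``$\prsc{\cdot}{w}$'' slot) summand by summand yields $\prsc{D_XF_i \cdot h}{w} = \prsc{D_XF_i \cdot w}{h}$. Summing over $i$ transfers this symmetry to $K(\nabla h, \ux)$, as required.

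The assertion for the twisted interpolant follows at once: by Definition~\ref{def: twisted Kergin interpolant}, $\oK(\nabla h, \ux) = \flat(\ux) \cdot K(\nabla h, \ux)$, and Lemma~\ref{lem: polynomial gradients} states precisely that $\nabla \R_{\norm{A}}[X]$ is stable under translation by any $\tau \in \R^d$. No substantive obstacle is expected; the only care-demanding point is tracking the canonical identification between $\sym^i(\R^d, \R^d)$ and the fully-symmetric subspace of $\sym^{i+1}(\R^d, \R)$, but the short algebra above keeps this bookkeeping innocuous.
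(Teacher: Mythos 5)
Your proof is correct, and it takes a genuinely different route from the paper. The paper's proof of the first assertion consists only of a citation to \cite[Lem.~2.5]{GS2024}; it then deduces $\oK(\nabla h,\ux)\in\nabla\R_{\norm{A}}[X]$ from Lemma~\ref{lem: polynomial gradients} exactly as you do. You, by contrast, supply a self-contained argument: reduce membership in $\nabla\R_{\norm{A}}[X]$ to symmetry of the Jacobian of $K(\nabla h,\ux)$ via a polynomial Poincar\'e lemma, with the explicit degree-respecting primitive $Q(X)=\int_0^1 P(tX)\cdot X\,\dmesure t$; expand $K(\nabla h,\ux)$ term by term via~\eqref{eq: Kergin explicit}; note that, by Schwarz's theorem and the canonical identification $\prsc{D^i_z(\nabla h)(v_1,\dots,v_i)}{w}=D^{i+1}_z h(v_1,\dots,v_i,w)$, the $(i+1)$-linear form $(v_1,\dots,v_i,w)\mapsto\prsc{(\nabla h)[x_0,\dots,x_i](v_1,\dots,v_i)}{w}$ is totally symmetric (this passes through the integral in Definition~\ref{def: divided differences}); and conclude that each summand $T_i(X-x_0,\dots,X-x_{i-1})$ has symmetric Jacobian, hence so does their sum. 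Each step checks out. What this buys over the paper's version is verifiability in place, at the price of a few lines; the paper's citation is shorter but opaque without consulting \cite{GS2024}. One cosmetic remark: you reuse the letter $h$ both for the scalar field of the statement and as the direction of differentiation in the curl-freeness computation; in a final write-up the direction should be renamed to avoid a clash.
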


\begin{proof}
Let $\ux \in U^A$, the fact that $K(\nabla h,\ux) \in \nabla \R_{\norm{A}}[X]$ is proved in~\cite[Lem.~2.5]{GS2024}. Then, we deduce that $\oK(\nabla h,\ux) = \flat(\ux)\cdot K(\nabla h,\ux) \in \nabla \R_{\norm{A}}[X]$ by translation-invariance, see Lemma~\ref{lem: polynomial gradients}.
\end{proof}

Thanks to Lemma~\ref{lem: Kergin interpolant of gradients}, if $k=d$ and $f :U \to\R^k$ is the gradient of $h:U \to \R$, we can replace spaces of the form $\R_q[X]^k$ by the subspaces $\nabla \R_{q+1}[X]$ in Sections~\ref{sec: polynomial interpolation and Gaussian fields} to~\ref{sec: proof of thm cumulants asymptotics zero sets}. Regularity results and estimates remain true by restriction. We need to check that surjectivity and non-degeneracy results also remain true after restricting to these subspaces of polynomial gradients. The following two results are the counterparts for gradients of Lemma~\ref{lem: surjectivity blockwise Kergin} and Corollary~\ref{cor: surjectivity 1-jets} respectively.

\begin{lem}[Surjectivity of block-wise interpolation for gradients]
\label{lem: surjectivity blockwise gradients}
Let $A$ be a non-empty finite set, let $\ux \in (\R^d)^A$ and $\cI \in \cQ_0(\ux)$. The linear map $P \mapsto \parentheses*{K(P,\ux_I)}_{I \in \cI}$ is surjective from $\nabla\R_{\norm{A}}[X]$ to $\prod_{I \in \cI} \nabla\R_{\norm{I}}[X]$.
\end{lem}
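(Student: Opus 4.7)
The plan is to adapt directly the proof of Lemma~\ref{lem: surjectivity blockwise Kergin} to the gradient setting, using Lemma~\ref{lem: Kergin interpolant of gradients} to ensure the interpolant produced by the construction lies in the correct subspace $\nabla \R_{\norm{A}}[X]$.

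More precisely, given $(P_I)_{I \in \cI} \in \prod_{I \in \cI} \nabla \R_{\norm{I}}[X]$, for each $I \in \cI$ I would first choose $Q_I \in \R_{\norm{I}}[X]$ such that $\nabla Q_I = P_I$. Since $\cI \in \cQ_0(\ux)$, the compact sets $(\conv(\ux_I))_{I \in \cI}$ are pairwise disjoint, so I can pick smooth cut-off functions $\chi_I \in \cC^\infty(\R^d,\R)$ with $\chi_I \equiv 1$ on a neighborhood of $\conv(\ux_I)$ and $\chi_I \equiv 0$ on a neighborhood of $\bigsqcup_{J \in \cI \setminus \brackets{I}} \conv(\ux_J)$, exactly as in the proof of Lemma~\ref{lem: surjectivity blockwise Kergin}. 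Then I define the scalar field $h = \sum_{I \in \cI} \chi_I Q_I \in \cC^\infty(\R^d,\R)$ and set $f = \nabla h \in \cC^\infty(\R^d,\R^d)$.

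The key step is then to consider $P = K(f,\ux) \in \R_{\norm{A}-1}[X]^d$ and invoke Lemma~\ref{lem: Kergin interpolant of gradients}, which guarantees that $P \in \nabla \R_{\norm{A}}[X]$; this is what ensures the preimage lies in the prescribed subspace. For each $I \in \cI$ and every non-empty $J \subset I$, the function $h$ coincides with $Q_I$ on a neighborhood of $\conv(\ux_J) \subset \conv(\ux_I)$, hence $f = \nabla Q_I = P_I$ there. By Definition~\ref{def: divided differences}, this yields
\begin{equation*}
K(P,\ux_I)[\ux_J] = P[\ux_J] = f[\ux_J] = P_I[\ux_J],
\end{equation*}
and the uniqueness part of Theorem~\ref{thm: Kergin interpolation} forces $K(P,\ux_I) = P_I$ for every $I \in \cI$. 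This proves that $(P_I)_{I \in \cI}$ is in the image, establishing surjectivity.

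There is no real obstacle here: the whole argument is a near-verbatim copy of the proof of Lemma~\ref{lem: surjectivity blockwise Kergin}, with the single additional ingredient that $K$ preserves the subspace of polynomial gradients (Lemma~\ref{lem: Kergin interpolant of gradients}). The minor technical point worth noting is that the convex hulls $\conv(\ux_I)$ are pairwise disjoint (as a consequence of $\cI \in \cQ_0(\ux)$, i.e.\ they are at positive distance from one another by Definition~\ref{def: Q eta}), which guarantees the existence of the separating cut-offs $(\chi_I)$.
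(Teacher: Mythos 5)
Your proof is correct and follows essentially the same route as the paper's: lift each $P_I$ to $Q_I$, glue with cut-offs into $h$, take $P=K(\nabla h,\ux)$, and use Lemma~\ref{lem: Kergin interpolant of gradients} to land in $\nabla\R_{\norm{A}}[X]$. The only cosmetic difference is that you re-run the divided-difference argument from the proof of Lemma~\ref{lem: surjectivity blockwise Kergin} to deduce $K(P,\ux_I)=P_I$, whereas the paper compresses this into the chain $K(P,\ux_I)=K(K(\nabla h,\ux),\ux_I)=K(\nabla h,\ux_I)=K(\nabla Q_I,\ux_I)=P_I$.
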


\begin{proof}
We use the same partition of unity $(\chi_I)_{I \in \cI}$ as in the proof of Lemma~\ref{lem: surjectivity blockwise Kergin}. For all $I \in \cI$, let $P_I \in \nabla\R_{\norm{I}}[X]$ and $Q_I \in \R_{\norm{I}}[X]$ be such that $P_I = \nabla Q_I$. Let $h = \sum_{I \in \cI}\chi_I Q_I$ and $P=K(\nabla h,\ux)$. By Lemma~\ref{lem: Kergin interpolant of gradients}, we have $P \in \nabla\R_{\norm{A}}[X]$. For all $I \in \cI$, we have:
\begin{equation*}
K(P,\ux_I) = K(K(\nabla h,\ux),\ux_I) = K(\nabla h,\ux_I) = K(\nabla Q_I,\ux_I) = K(P_I,\ux_I) = P_I,
\end{equation*}
since $h =Q_I$ on a neighborhood of $\conv(\ux_I)$ and $K(\cdot,\ux_I)$ is the identity on $\nabla \R_{\norm{I}}[X] \subset \R_{\norm{I}-1}[X]^d$, see Theorem~\ref{thm: Kergin interpolation}. Hence the result.
\end{proof}

Let $h:U\to \R$ be a $\cC^2$ function. By definition, we have $Dh:x \mapsto \prsc{\nabla h(x)}{\cdot}$, where $\prsc{\cdot}{\cdot}$ stands for the canonical inner product on $\R^d$. Then, for all $x \in U$ and all $u,v \in \R^d$, we have
\begin{equation}
\label{eq: differential of gradient}
D^2_xh(u,v) = \prsc{D_x(\nabla h)\cdot u}{v}.
\end{equation}
That is, $D_x(\nabla h)\in \sym(\R^d)$ is the self-adjoint operator corresponding to the symmetric bilinear form $D_x^2h$ through the Euclidean structure.

\begin{cor}[$1$-jets surjectivity for gradients]
\label{cor: surjectivity 1-jets gradients}
Let $A$ be a non-empty finite set and $\ux \in (\R^d)^A \setminus \diag$, the linear map $P \mapsto \parentheses*{\strut \parentheses*{P(x_a),D_{x_a}P}}_{a \in A}$ is surjective from $\nabla\R_{2\norm{A}}[X]$ to $\parentheses*{\R^d \times \sym(\R^d)}^A$.
\end{cor}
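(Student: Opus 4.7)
The plan is to mimic the proof of Corollary~\ref{cor: surjectivity 1-jets} in the gradient setting, replacing the ambient space $\R_{2\norm{A}-1}[X]^k$ by the subspace $\nabla\R_{2\norm{A}}[X]$ at every step. The key observation is that thanks to Lemma~\ref{lem: Kergin interpolant of gradients}, the Kergin interpolation machinery restricts naturally to polynomial gradients: if $P = \nabla Q \in \nabla\R_{2\norm{A}}[X]$, then for any $y \in \R^d$ the degree-$1$ Taylor polynomial $K(P,y,y) = P(y) + D_yP\cdot(X-y)$ again lies in $\nabla\R_{2}[X]$, because it is itself the Kergin interpolant of a gradient.

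First, since $\ux \notin \diag$ we have $\bigwedge \cP_A \in \cQ_0(\ux)$, so $2\bigwedge \cP_A \in \cQ_0(\ux_{2A})$ by Remark~\ref{rem: diag I eta doubled points}. I would then apply Lemma~\ref{lem: surjectivity blockwise gradients} to the partition $2\bigwedge\cP_A = \brackets{\brackets{0,1}\times\brackets{a}\mid a \in A}$ of $2A$ and the tuple $\ux_{2A}$, which yields surjectivity of the map
\begin{equation*}
\varpi_{\ux} : P \longmapsto \parentheses*{K(P, x_a, x_a)}_{a \in A}
\end{equation*}
from $\nabla \R_{2\norm{A}}[X]$ onto $\prod_{a \in A}\nabla\R_2[X]$.

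Next, I would observe that, for each $y \in \R^d$, the evaluation $P \mapsto (P(y), D_yP)$ is an isomorphism from $\nabla\R_2[X]$ to $\R^d \times \sym(\R^d)$: any $P \in \nabla\R_2[X]$ has the form $P(X) = b + 2SX$ with $b \in \R^d$ and $S \in \sym(\R^d)$ (since the gradient of a symmetric quadratic form is linear with symmetric matrix), and the system $b + 2Sy = v$, $2S = T$ has the unique solution $S = T/2$, $b = v - Ty$ for any prescribed $(v,T) \in \R^d \times \sym(\R^d)$. Putting these together yields an isomorphism
\begin{equation*}
\vartheta_{\ux} : \parentheses*{P_a}_{a \in A} \longmapsto \parentheses*{(P_a(x_a), D_{x_a}P_a)}_{a \in A}
\end{equation*}
from $\prod_{a \in A}\nabla\R_2[X]$ onto $\parentheses*{\R^d \times \sym(\R^d)}^A$.

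Finally, since $K(P, x_a, x_a)$ is the degree-$1$ Taylor polynomial of $P$ at $x_a$, we have $\vartheta_{\ux} \circ \varpi_{\ux}(P) = \parentheses*{(P(x_a), D_{x_a}P)}_{a \in A}$, so the map in the statement is the composition of a surjection and an isomorphism, hence surjective. There is no real obstacle here: the entire content is packaged in Lemma~\ref{lem: surjectivity blockwise gradients} and in the target-space bookkeeping that $D_{x_a}P$ is forced to be symmetric precisely because $P$ is a polynomial gradient (Schwarz's theorem at the polynomial level), which explains why the target shrinks from $\cL(\R^d,\R^d)$ to $\sym(\R^d)$ compared with Corollary~\ref{cor: surjectivity 1-jets}.
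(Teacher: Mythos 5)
Your proof is correct and takes essentially the same route as the paper: apply Lemma~\ref{lem: surjectivity blockwise gradients} to $\ux_{2A}$ with the partition $2\bigwedge\cP_A$ to get surjectivity of $P \mapsto (K(P,x_a,x_a))_{a\in A}$ onto $\prod_{a\in A}\nabla\R_2[X]$, then compose with the block-wise isomorphism $\vartheta_y:\nabla\R_2[X]\to\R^d\times\sym(\R^d)$. The only cosmetic difference is that you establish that $\vartheta_y$ is an isomorphism by writing $P$ in the explicit form $b+2SX$ and producing the inverse, whereas the paper checks injectivity directly and then concludes by a dimension count ($\dim(\nabla\R_2[X])=\dim\R_2[X]-1=d+\tfrac{d(d+1)}{2}$); both are fine.
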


\begin{proof}
The proof is similar to that of Corollary~\ref{cor: surjectivity 1-jets}. Since $\ux \notin \diag$, Lemma~\ref{lem: surjectivity blockwise gradients} shows that the map $P \mapsto \parentheses*{\strut K(P,x_a,x_a)}_{a \in A}$ is surjective from $\nabla\R_{2\norm{A}}[X]$ to $(\nabla\R_2[X])^A$. For all $a \in A$, the polynomials $P$ and $K(P,x_a,x_a)$ share the same value and the same differential at $x_a$. Thus, to conclude it is enough to prove that, for all $y \in \R^d$, the map $\vartheta_y:P \mapsto (P(y),D_yP)$ is an isomorphism from $\nabla \R_2[X]$ to $\R^d \times \sym(\R^d)$.

Let $y \in \R^d$ and $Q \in \R_2[X]$, we have $\vartheta_y(\nabla Q)=\parentheses*{\nabla Q(y),D_y(\nabla Q)} \in \R^d \times \sym(\R^d)$, see~\eqref{eq: differential of gradient}. Hence $\vartheta_y\parentheses*{\nabla \R_2[X]}\subset \R^d \times \sym(\R^d)$. For all $P \in \nabla \R_2[X]$, we have $P = P(y) + D_yP(X-y)$, since $P$ is a polynomial map of degree $1$. Hence $\vartheta_y$ is injective. The kernel of $\nabla:\R_2[X] \to \nabla\R_2[X]$ is $\R_0[X] \simeq \R$, so that $\dim(\nabla \R_2[X])=\dim(\R_2[X])-1 = d + \frac{d(d+1)}{2}$. Hence $\vartheta_y$ is actually an isomorphism by a dimension argument.
\end{proof}

Let $A$ be a non-empty finite set and $\cI \in \cP_A$. Let $h \in \cC^{\norm{A}}(U,\R)$ be a centered Gaussian field and let $f \in \cC^{\norm{A}-1}(U,\R^d)$ be the centered Gaussian field $f = \nabla h$. For all $\ux \in U^A$, the random vector $\parentheses*{\oK(f,\ux_I)}_{I \in \cI} \in \prod_{I \in \cI} \R_{\norm{I}-1}[X]^d$ belongs deterministically to the subspace $\prod_{I \in \cI} \nabla \R_{\norm{I}}[X]$, see Lemma~\ref{lem: Kergin interpolant of gradients}. Thus, in the case where $f$ is a gradient field, we can consider $\parentheses*{\oK(f,\ux_I)}_{I \in \cI}$ as a centered Gaussian vector in $\prod_{I \in \cI} \nabla \R_{\norm{I}}[X]$, and its variance operator $\Sigma_\cI(f,\ux)=\Sigma_\cI(\nabla h,\ux)$ as an element of $\sym\parentheses*{\prod_{I \in \cI} \nabla \R_{\norm{I}}[X]}$, compare Definition~\ref{def: Sigma I}. With this in mind, we can state the following counterpart of Lemma~\ref{lem: uniform ND eta delta} for gradient fields.

\begin{lem}[Uniform non-degeneracy for gradient fields]
\label{lem: uniform ND eta delta gradient}
Let $A$ be a finite set of cardinality $q+1$. Let $h\in \cC^{q+1}(\R^d,\R)$ be a stationary centered Gaussian field and let $f = \nabla h$. We assume that $f$ satisfies \hypNND{q} and \hypDCL{q}{\infty}. Let $\cI \in \cP_A$ and $\delta>0$, there exists $\eta \in (0,\delta]$ such that $\brackets*{\Sigma_\cI(f,\ux) \mvert \ux \in \diag_{\cI,\eta}\cap \diag_{\cI,\delta}}$ is relatively compact in $\sym^+\parentheses*{\prod_{I \in \cI}\nabla\R_{\norm{I}}[X]}$.
\end{lem}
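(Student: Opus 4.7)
The plan is to mimic the three-step proof of Lemma~\ref{lem: uniform ND eta delta}, substituting the role of \hypND{q} with \hypNND{q} in the final step. The boundedness of $\brackets*{\Sigma_\cI(f,\ux) \mvert \ux \in \diag_{\cI,\eta}\cap \diag_{\cI,\delta}}$ in the ambient space $\sym\parentheses*{\prod_{I \in \cI}\R_{\norm{I}-1}[X]^d}$ follows from Lemma~\ref{lem: boundedness Sigma I}, and hence holds a fortiori in $\sym\parentheses*{\prod_{I \in \cI} \nabla\R_{\norm{I}}[X]}$ after restriction to the invariant subspace. Thus we only need to rule out arbitrarily small determinants. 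Arguing by contradiction, suppose there exists a sequence $(\ux^{(n)})_{n \in \N^*}$ with $\ux^{(n)} \in \diag_{\cI,\frac{1}{n}}\cap \diag_{\cI,\delta}$ and $\det\parentheses*{\Sigma_\cI(f,\ux^{(n)})}\to 0$ in $\sym\parentheses*{\prod_{I \in \cI} \nabla\R_{\norm{I}}[X]}$.

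The first two steps of the proof of Lemma~\ref{lem: uniform ND eta delta} carry over verbatim, as they use only the stationarity of $f$ and the decay assumption \hypDCL{q}{\infty}. Namely, up to extracting a subsequence, we can assume that for every $a,b \in A$ either $\Norm{x_a^{(n)}-x_b^{(n)}}\to +\infty$ or $(x_a^{(n)}-x_b^{(n)})$ converges; then the partition $\cJ \in \cP_A$ defined by $[a]_\cJ =[b]_\cJ$ iff $(x_a^{(n)}-x_b^{(n)})$ converges is coarser than $\cI$, the off-diagonal blocks $\Sigma_I^{I'}(f,\ux^{(n)})$ with $[I]_\cJ \neq [I']_\cJ$ tend to $0$ (by \hypDCL{q}{\infty}), and for each $J \in \cJ$ the blocks $\parentheses*{\Sigma_I^{I'}(f,\ux^{(n)})}_{I,I' \in \cI_J}=\Sigma_{\cI_J}(f,\ux_J^{(n)})$ converge to $\Sigma_{\cI_J}(f,\ux_J)$ for some $\ux_J \in \diag_{\cI_J}\subset (\R^d)^J$. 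Hence there exist $J \in \cJ$ and $\ux_J \in \diag_{\cI_J}$ such that $\det \Sigma_{\cI_J}(f,\ux_J)=0$ in $\sym\parentheses*{\prod_{I \in \cI_J} \nabla\R_{\norm{I}}[X]}$.

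The decisive step is to show that this is impossible. Write $\ux_J=\iota_{\cI_J}(\uy)$ with $\uy=(y_I)_{I \in \cI_J} \in (\R^d)^{\cI_J}\setminus \diag$, so that $\ux_I=(y_I,\dots,y_I)$ for each $I \in \cI_J$. Then $\oK(f,\ux_I)=\flat(\ux_I)\cdot K(f,\ux_I)$ is the degree-$(\norm{I}-1)$ Taylor polynomial of $f=\nabla h$ at $y_I$, translated to the origin:
\begin{equation*}
\oK(f,\ux_I) = \sum_{\norm{\alpha}<\norm{I}} \frac{1}{\alpha!}\,\partial^\alpha(\nabla h)(y_I)\,X^\alpha = \nabla Q_I \qquad \text{with}\qquad Q_I = \sum_{0 < \norm{\beta} \leq \norm{I}} \frac{1}{\beta!}\,\partial^\beta h(y_I)\,X^\beta.
\end{equation*}
Since $\nabla:\R_{\norm{I}}[X]/\R_0[X]\to \nabla \R_{\norm{I}}[X]$ is an isomorphism, the map $\nabla Q_I \mapsto \parentheses*{\partial^\beta h(y_I)}_{0 < \norm{\beta} \leq \norm{I}}$ is a linear isomorphism from $\nabla \R_{\norm{I}}[X]$ to $\R^{\#\brackets*{\beta \in \N^d \mvert 0 < \norm{\beta} \leq \norm{I}}}$. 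Consequently, the Gaussian vector $\parentheses*{\oK(f,\ux_I)}_{I \in \cI_J}$ is isomorphic to $\parentheses*{\partial^\beta h(y_I)}_{I \in \cI_J,\, 0 < \norm{\beta} \leq \norm{I}}$. Applying \hypNND{q} with $l=\norm{\cI_J}\leq \norm{J}\leq q+1$, the multiplicities $m_i=\norm{I_i}$ (which satisfy $\sum m_i = \norm{J}\leq q+1$ — monotonicity of \hypNND{\cdot} in $q$ allows us to raise the total mass to $q+1$ by inflating one of the $m_i$'s if necessary), and the pairwise distinct points $(y_I)_{I \in \cI_J}$, this vector is non-degenerate. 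Hence $\Sigma_{\cI_J}(f,\ux_J)$ is positive-definite in $\sym\parentheses*{\prod_{I \in \cI_J} \nabla\R_{\norm{I}}[X]}$, contradicting $\det \Sigma_{\cI_J}(f,\ux_J)=0$.

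The only real subtlety lies in the bookkeeping of the third step: checking that the natural isomorphism $\nabla \R_{\norm{I}}[X] \simeq \prod_{0 < \norm{\beta}\leq \norm{I}}\R$ correctly identifies the variance $\Sigma_{\cI_J}(f,\ux_J)$ as the variance operator of the tuple of partial derivatives of $h$, so that \hypNND{q} can be invoked. Once this identification is clear, the argument proceeds exactly as in the non-gradient case, and the rest of Section~\ref{sec: polynomial interpolation and Gaussian fields} (including the analogues of Proposition~\ref{prop: uniform non degeneracy Sigma I} and Corollary~\ref{cor: uniform non degeneracy f R}) can be adapted by replacing $\R_{\norm{I}-1}[X]^k$ with $\nabla\R_{\norm{I}}[X]$ throughout.
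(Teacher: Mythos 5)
Your proof is correct and takes essentially the same approach as the paper's: Steps 1--2 of Lemma~\ref{lem: uniform ND eta delta} are invoked unchanged, and Step 3 is modified by expanding $\oK(f,\ux_I)$ in the basis $\parentheses*{\nabla(X^\beta)}_{0<\norm{\beta}\leq\norm{I}}$ of $\nabla\R_{\norm{I}}[X]$ to transfer non-degeneracy to the vector $\parentheses*{\partial^\beta h(y_I)}_{I\in\cI_J,\,0<\norm{\beta}\leq\norm{I}}$, to which \hypNND{q} applies. The only cosmetic difference is that you justify the use of \hypNND{q} by ``inflating'' a multiplicity when $\norm{J}<q+1$, whereas the paper invokes the monotonicity \hypNND{q}$\Rightarrow$\hypNND{\norm{J}-1} directly; these are the same observation stated in different words.
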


\begin{proof}
The proof is similar to that of Lemma~\ref{lem: uniform ND eta delta}, except for Step 3 which needs to be adapted in order to use \hypNND{q} instead of \hypND{q}.

First, we observe that, for all $p \in \N^*$, the map $\nabla:\R_p[X] \to \nabla \R_p[X]$ is surjective and its kernel is $\R_0[X]$. Hence, it restricts into an isomorphism from $\Span\brackets*{X^\beta \mvert 0 < \norm{\beta}\leq p}$ to $\nabla \R_p[X]$. In particular, $\parentheses*{\nabla(X^\beta)}_{0 < \norm{\beta}\leq p}$ is a basis of $\nabla \R_p[X]$.

Then, let $(e_1,\dots,e_d)$ denote the standard basis of $\R^d$. With the same notation as in the proof of Lemma~\ref{lem: uniform ND eta delta}, we modify Step 3 as follows. For all $I \in \cI_J$, we have $\ux_I=(y_I,\dots,y_I) \in (\R^d)^I$. Hence, recalling that $f=\nabla h$, we have:
\begin{align*}
\oK(f,\ux_I) &= \sum_{\norm{\alpha}<\norm{I}} \frac{\partial^\alpha f(y_I)}{\alpha !}X^\alpha = \sum_{\substack{1 \leq i \leq d\\ \norm{\alpha}<\norm{I}}} \frac{\partial^\alpha \partial_i h(y_I)}{\alpha !}X^\alpha \otimes e_i = \sum_{\substack{1 \leq i \leq d\\ \norm{\beta}\leq \norm{I} ; \beta_i>0}} \frac{\partial^\beta h(y_I)}{\beta !} \beta_i X^{\beta-\one_i} \otimes e_i\\
& = \sum_{\substack{1 \leq i \leq d\\ 0 <\norm{\beta}\leq \norm{I}}} \frac{\partial^\beta h(y_I)}{\beta !} \beta_i X^{\beta-\one_i} \otimes e_i = \sum_{0 <\norm{\beta}\leq \norm{I}} \frac{\partial^\beta h(y_I)}{\beta !} \nabla(X^\beta).
\end{align*}
Since $\parentheses*{\nabla(X^\beta)}_{0<\norm{\beta}\leq \norm{I}}$ is a basis of $\nabla\R_{\norm{I}}[X]$ for all $I \in \cI_J$, we have that $\parentheses*{\oK(f,\ux_I)}_{I \in \cI_J}$ is non-degenerate in $\prod_{I \in \cI_J} \nabla\R_{\norm{I}}[X]$ if and only if the centered Gaussian vector $\parentheses*{\partial^\beta h(y_I)}_{I \in \cI_J; 0<\norm{\beta}\leq \norm{I}}$ is non-degenerate. Since $J \subset A$, we have $\norm{J}\leq q+1$, and the last condition is implied by \hypNND{q}, because $\uy = (y_I)_{I \in \cI_J} \notin \diag$.

Finally, we obtain that $\Sigma_{\cI_J}(f,\ux_J) \in \sym^+\parentheses*{\prod_{I \in \cI_J}\nabla\R_{\norm{I}}[X]}$, and we can conclude by contradiction as in the proof of Lemma~\ref{lem: uniform ND eta delta}.
\end{proof}

Except for the three results we just discussed, the content of Section~\ref{sec: polynomial interpolation and Gaussian fields} adapts directly to the case of gradient fields by replacing spaces of the form $\R_q[X]^k$ by the corresponding $\nabla \R_{q+1}[X]$; replacing the use of Hypothesis \hypND{q} by that of Hypothesis \hypNND{q}; and bearing in mind that the field $f$ (resp.~$f_R$) is the gradient of a field $h$ (resp.~$h_R$). 


\subsection{Kac--Rice formulas for gradient fields}
\label{subsec: Kac--Rice formulas for gradient fields}

In this section, we discuss the changes we need to make in order to adapt the content of Section~\ref{sec: Kac-Rice formulas revisited} to the case of gradient fields.

The first point is inherited from the changes discussed in Section~\ref{subsec: Kergin interpolation for gradient fields}: we replace spaces of the form $\R_{2\norm{A}-1}[X]^k$, where $A$ is a non-empty finite set, by $\nabla \R_{2\norm{A}}[X]$. For example, for all $\ux \in (\R^d)^A$, we now have $\ev_{\ux} \in \cL^\dagger\parentheses*{\nabla\R_{2\norm{A}}[X],(\R^d)^A}$ and $\cG_A(\ux) \in \gr{d\norm{A}}{\nabla\R_{2\norm{A}}[X]}$, cf.~Lemmas~\ref{lem: regularity of ev} and~\ref{lem: regularity GA}. These changes affect similarly Definitions~\ref{def: N theta}, \ref{def: Upsilon}, \ref{def: M I}, \ref{def: sigma I}, \ref{def: L J} and~\ref{def: F I J}. In particular, if $f$ is the gradient of some field $h$ and $\cI \in \cP_A$, we now have:
\begin{equation*}
\Sigma_{2\cI}(f,\ux) =\Sigma_{2\cI}(\nabla h,\ux) \in \cS_\cI := \sym\parentheses*{\prod_{I \in \cI} \nabla\R_{2\norm{I}}[X]}.
\end{equation*}

Then, we need to relax the hypotheses of the Bulinskaya Lemma (Proposition~\ref{prop: Bulinskaya}) and the Kac--Rice formula (Theorem~\ref{thm: Kac-Rice}). Indeed, if the centered Gaussian field $f:U \to \R^d$ is the gradient of some field $h:U \to \R$ then, for all $x \in U$, the Gaussian vector $\parentheses*{f(x),D_xf}\in\R^d \times \sym(\R^d)$ is degenerate as a random vector in $\R^d \times \cL(\R^d)$. It turns out that, for fields from $\R^d$ to itself, and in particular for gradient fields, we can replace Proposition~\ref{prop: Bulinskaya} by the following.

\begin{prop}[Bulinskaya Lemma, maximal codimension case]
\label{prop: Bulinskaya gradient}
Let $U \subset \R^d$ be open and $f\in\cC^2(U,\R^d)$ be a centered Gaussian field such that, for all $x \in U$, the Gaussian vector $f(x)$ is non-degenerate. Then, almost-surely,
\begin{equation*}
\brackets*{x \in U \mvert f(x)=0 \ \text{and} \ \jac{D_xf}=0} = \emptyset.
\end{equation*}
In particular, $Z=f^{-1}(0)$ is almost-surely a closed discrete subset of $U$.
\end{prop}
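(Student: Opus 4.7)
The plan is to apply a Bulinskaya-type dimension-counting argument to the $\cC^1$ random map
\begin{equation*}
\Psi : U \longrightarrow \R^{d+1}, \qquad \Psi(x) = \parentheses*{f(x), \det(D_xf)}.
\end{equation*}
Since $\dim(U) = d < d+1$, one expects $\Psi^{-1}(0) = \emptyset$ almost-surely, which is precisely the desired conclusion. The weaker hypothesis here (only $f(x)$ non-degenerate rather than $(f(x),D_xf)$ as in Proposition~\ref{prop: Bulinskaya}) is compensated by the extra ambient dimension gained by incorporating $\det(D_xf)$ into the target.

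First I would reduce to a compact subset $K \subset U$ using $\sigma$-additivity. For $\epsilon > 0$, cover $K$ with $N_\epsilon \lesssim \epsilon^{-d}$ balls of radius $\epsilon$, centered at points $x_i$. The $\cC^1$-regularity of $\Psi$ (which follows from $f \in \cC^2$) combined with a standard a.s.~finite upper bound on $\sup_K \Norm{\nabla \Psi}$ implies that the event $\Psi^{-1}(0) \cap B(x_i,\epsilon) \neq \emptyset$ forces $\Norm{\Psi(x_i)} \leq C\epsilon$ for some random, a.s.~finite $C$. By a union bound, it then suffices to show
\begin{equation*}
\P\parentheses*{\Norm{\Psi(x_0)} \leq C\epsilon} = O\parentheses*{\epsilon^{d+\alpha}}
\end{equation*}
uniformly in $x_0 \in K$, for some $\alpha > 0$.

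The factor $O(\epsilon^d)$ comes from the non-degenerate Gaussian $f(x_0)$ in $\R^d$: $\P\parentheses*{\Norm{f(x_0)} \leq C\epsilon} = O(\epsilon^d)$. The extra factor $\epsilon^\alpha$ comes from an anticoncentration estimate: conditionally on $f(x_0) = v$, the quantity $\det(D_{x_0}f)$ is a polynomial of degree $d$ in Gaussian variables (the entries of $D_{x_0}f$), and a Carbery--Wright-type inequality yields $\P\parentheses*{\norm{\det(D_{x_0}f)} \leq C\epsilon \mvert f(x_0) = v} \leq C\epsilon^{1/d}$ provided this conditional polynomial is not almost-surely zero. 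The main obstacle will be the degenerate case where $\det(D_{x_0}f) = 0$ almost-surely conditionally on $f(x_0)=v$ for $v$ near $0$; there $D_xf$ is almost-surely rank-deficient throughout a neighborhood of $x_0$, which combined with the non-degeneracy of $f(x)$ forces $f^{-1}(0)$ itself to be almost-surely empty near $x_0$ by a separate dimension-counting argument (essentially the classical Bulinskaya applied on the lower-dimensional image), so that the bad event is trivially empty too.

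Combining the two estimates, the union bound gives $\P\parentheses*{\Psi^{-1}(0) \cap K \neq \emptyset} = O(\epsilon^\alpha) \to 0$; letting $\epsilon = 2^{-n}$ and applying Borel--Cantelli shows $\Psi^{-1}(0) \cap K$ is almost-surely empty. The ``in particular'' clause is then immediate: at each $x \in f^{-1}(0)$ we now have $\jac{D_xf}\neq 0$, so by the implicit function theorem $x$ is an isolated zero of $f$; continuity of $f$ makes $f^{-1}(0)$ closed, hence $Z = f^{-1}(0)$ is almost-surely a closed discrete subset of $U$.
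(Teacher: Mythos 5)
The paper dispatches this proposition by citing Azaïs--Wschebor~\cite[Prop.~6.5]{AW2009}, so you are effectively reproving it from scratch. Your covering-and-union-bound framework is the right shape, but the step on which everything rests contains a genuine gap. Because the hypothesis of Proposition~\ref{prop: Bulinskaya gradient} is strictly weaker than that of Proposition~\ref{prop: Bulinskaya} --- only $f(x)$, not $\parentheses*{f(x),D_xf}$, is assumed non-degenerate --- the conditional Gaussian of $D_{x_0}f$ given $f(x_0)=v$ can be degenerate, and it can even happen that $\det(D_{x_0}f)=0$ almost surely: take $d=1$ and $f\equiv a$ a single standard Gaussian, so that every $f(x)$ is non-degenerate yet $f'\equiv 0$. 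In that regime Carbery--Wright is vacuous, and even in the non-degenerate regime the constant in Carbery--Wright blows up as the conditional $L^1$- or $L^2$-norm of $\det(D_{x_0}f)$ tends to zero, so a uniform $O(\epsilon^{1/d})$ bound over $x_0\in K$ and $v$ near $0$ would require a uniform positive lower bound on that norm, which the hypotheses do not supply. Your fallback for the degenerate case does not close the gap: the fact that $D_{x_0}f$ almost surely lies in the non-linear variety $\brackets*{\det=0}$ does not propagate to $D_xf$ for nearby $x$ (degeneracy of a Gaussian supported inside an algebraic variety is not stable under perturbing the covariance), and the subsequent appeal to ``the classical Bulinskaya applied on the lower-dimensional image'' is itself an unproved claim of the same nature as the one you are trying to establish.

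What Azaïs--Wschebor do instead is instructive, and it sidesteps anticoncentration of the Jacobian entirely. If $f(t)=0$ and $\det D_tf=0$ for some $t$ in an $\epsilon$-cube with vertex $t_0$, then the first-order Taylor expansion of $f$ at $t_0$, together with the modulus of continuity of $Df$ on a compact, shows that $f(t_0)$ lies in an $o(\epsilon)$-thickening of a ball of radius $O(\epsilon)$ inside a $(d-1)$-dimensional affine subspace (the image of the rank-deficient linear approximation $D_tf$). Such a slab has Lebesgue measure $o(\epsilon^d)$, and the uniform bound on the density of $f(t_0)$ over compacts, which follows from non-degeneracy of $f(x)$ alone, makes the probability of this event $o(\epsilon^d)$. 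This already beats the $O(\epsilon^{-d})$ cubes in the union bound; no conditioning, no lower bound on the variance of the Jacobian, and no special treatment of the degenerate case are needed. To make your argument rigorous you would essentially have to carry out this geometric flattening estimate --- the appeal to Carbery--Wright, appealing as it is, does not survive the weakness of the non-degeneracy hypothesis.
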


\begin{proof}
This is a particular case of \cite[Prop.~6.5]{AW2009}.
\end{proof}

The gradient fields we consider satisfy these relaxed hypotheses, so we can replace the use of Proposition~\ref{prop: Bulinskaya} by that of Proposition~\ref{prop: Bulinskaya gradient} in their case. Similarly, we can relax the hypotheses of Theorem~\ref{thm: Kac-Rice} in the case where $f:U \to \R^d$, by removing the assumption that $\parentheses*{f(x),D_xf}$ be non-degenerate for all $x \in U$. This is because this non-degeneracy hypothesis is only there to ensure that we can use the Bulinskaya Lemma (see~\cite[Thm.~6.2 and~6.3]{AW2009}), which is already ensured in this case by the assumption that $\parentheses*{f(x_a)}_{a \in A}$ be non-degenerate for all $\ux \notin \diag$.

\begin{rem}
\label{rem: rho A gradient}
Let $A$ be a non-empty finite set and $f:U \to \R^d$ be a centered Gaussian field. The expressions of the Kac--Rice densities $\rho_A(f,\cdot)$ and $\cF_A(f,\cdot)$, see Definitions~\ref{def: rho A} and~\ref{def: F A}, do not depend on whether $f$ is a gradient field or not. As a consequence, the same is true of the constants $\gamma_p(f)$ in Theorems~\ref{thm: cumulants asymptotics for zero sets} and~\ref{thm: cumulants asymptotics for critical points}, see~\eqref{eq: expression gamma p f}. On the contrary, the expressions of $\Upsilon_A$, $\sigma_\cI$ and $\cF_{\cI,\cJ}$, see Definitions~\ref{def: Upsilon}, \ref{def: sigma I} and~\ref{def: F I J}, depend on whether we consider gradient fields or not.
\end{rem}

We conclude this section by pointing out three other points where the content of Section~\ref{sec: Kac-Rice formulas revisited} needs to be mildly modified in order to deal with gradient fields. Everything else adapts seamlessly.
\begin{itemize}
\item In the proof of Lemma~\ref{lem: non-degeneracy differentials}, we need to choose $\Lambda_0 \in \sym^+(\R^d)$, for example $\Lambda_0 = \Id$. Then, using Corollary~\ref{cor: surjectivity 1-jets gradients} instead of Corollary~\ref{cor: surjectivity 1-jets}, we obtain the existence of $P \in \nabla \R_{2\norm{A}}[X]$ such that $P(x_a-\flat(\ux))=0$ and $D_{x_a -\flat(\ux)}P=\Id$ for all $a \in A$.

\item The conclusion of Lemma~\ref{lem: relation between notions of non-degeneracy} needs to be weakened: for gradient fields we can only conclude that $\parentheses*{f(x_a)}_{a \in A}$ is non-degenerate. This is not an issue, since Lemma~\ref{lem: relation between notions of non-degeneracy} is only used to check the non-degeneracy hypotheses in the Bulinskaya Lemma and the Kac--Rice formula and we just relaxed these hypotheses for gradient fields. The proof of this weaker version is similar to that of Lemma~\ref{lem: relation between notions of non-degeneracy}, using Corollary~\ref{cor: surjectivity 1-jets gradients} instead of Corollary~\ref{cor: surjectivity 1-jets}.

\item In Step 1 of the proof of Lemma~\ref{lem: regularity Upsilon}, we have to change the definition of $f_0$. Let $h_0$ be a standard Gaussian vector in $\R_{2\norm{A}}[X]$. We let $f_0 = \nabla h_0$, so that $f_0$ is both a gradient field on $\R^d$ and a centered Gaussian vector in $\nabla \R_{2\norm{A}}[X]$, which is non-degenerate since $\nabla$ is surjective and $h_0$ is non-degenerate. Then, we deduce the local integrability of $\rho_A(f_0,\cdot)$ from~\cite[Thm.~1.13 and Lem.~2.10]{GS2024}, compare also~\cite[Thm.~7.7]{AL2025}. The remainder of the proof is similar to that of Lemma~\ref{lem: regularity Upsilon}.
\end{itemize}


\subsection{Proof of Theorem~\ref{thm: cumulants asymptotics for critical points}: cumulants asymptotics for critical points}
\label{subsec: proof of thm cumulants asymptotics for critical points}

In this section, we discuss the changes to implement in order to turn the proof of Theorem~\ref{thm: cumulants asymptotics for zero sets} into a proof of Theorem~\ref{thm: cumulants asymptotics for critical points}. We already discussed above the changes to implement in Section~\ref{sec: polynomial interpolation and Gaussian fields} and~\ref{sec: Kac-Rice formulas revisited}. The contents of Sections~\ref{sec: partitions cumulants and diagonals}, \ref{sec: edge-connected graphs} and~\ref{sec: HBL inequalities} are completely unaffected by whether the Gaussian fields we consider are gradient fields or not. However, we need to modify slightly Sections~\ref{sec: refined Hadamard lemma} and~\ref{sec: proof of thm cumulants asymptotics zero sets}.

At the beginning of Section~\ref{subsec: refined Hadamard lemma for FIJ}, due to the changes already discussed in Section~\ref{subsec: Kergin interpolation for gradient fields}, for all $I \in \cI$ the space $\R_{2\norm{I}-1}[X]^k$ is replaced by $\nabla \R_{2\norm{I}}[X]$ in the case of gradient fields. Since the linear map $\nabla:\R_{2\norm{I}}[X] \to \nabla \R_{2\norm{I}}[X]$ is surjective with kernel $\R_0[X]$, a basis of $\nabla \R_{2\norm{I}}[X]$ is $\parentheses*{\nabla(X^\beta)}_{0 < \norm{\beta} \leq 2\norm{I}}$. We choose to work with the inner product on $\nabla \R_{2\norm{I}}[X]$ making the previous basis orthonormal, this choice being of no consequence to the rest of the argument. We also change the definition of $\cA_I$, see Equation~\eqref{eq: def AI}, into
\begin{equation*}
\cA_I = \brackets*{(I,\beta) \in \brackets{I}\times \N^d \mvert 0 < \norm{\beta} \leq 2\norm{I}}.
\end{equation*}
Then, proceeding as in Section~\ref{subsec: refined Hadamard lemma for FIJ}, we obtain the exact analogue of Proposition~\ref{prop: key estimate FIJ} in the case of gradient fields. Recall that the definition of $\cF_{\cI,\cJ}$ depends on whether we consider gradient fields or not, see Remark~\ref{rem: rho A gradient}. No other modification is needed in Section~\ref{sec: refined Hadamard lemma}.

In Section~\ref{sec: proof of thm cumulants asymptotics zero sets}, we mostly have to implement changes inherited from those of the previous sections: replace $\R_{2\norm{A}-1}[X]^k$ by $\nabla \R_{2\norm{A}}[X]$ where needed, replace the use of Proposition~\ref{prop: Bulinskaya} by that of Proposition~\ref{prop: Bulinskaya gradient}, replace Hypothesis~\hypND{2p-1} by \hypNND{2p-1}, etc. Apart from these, we only need to take care of the Case $p=1$ in the proof of Theorem~\ref{thm: cumulants asymptotics for critical points}.\ref{item: cumulants asymptotics p crit}. The proof is similar to the same case in the proof of Theorem~\ref{thm: cumulants asymptotics for zero sets}.\ref{item: cumulants asymptotics p}, except for the fact that $\parentheses*{f(0),D_0f}$ is non-degenerate as a Gaussian vector in $\R^d\times \sym(\R^d)$, as a consequence of Hypothesis~\hypNND{1}, but not as a Gaussian vector in $\R^d \times \cL(\R^d)$. In the case where $f$ and $(f_R)_{R \in \cR}$ are gradient fields, this is enough for the argument to go through. Finally, we have $\gamma_2(f)>0$ by~\cite[thm.~1.7]{Gas2025}.


\section{Proofs of limit theorems and other corollaries}
\label{sec: proofs limit theorems and other corollaries}

In this section, we deduce Corollaries~\ref{cor: moment asymptotics}, \ref{cor: concentration} and~\ref{cor: hole probability} and Theorems~\ref{thm: law of large numbers} and~\ref{thm: central limit theorem} from our cumulants asymptotics.

Let us briefly recall our framework. We consider an unbounded set $\cR \subset (0,+\infty)$, as well as a convex open set $U \subset \R^d$ and a non-empty open subset $\Omega \subset U$ at positive distance from $\R^d \setminus U$. Let $k \in \ssquarebrackets{1}{d}$ and $f:\R^d \to \R^k$ be a centered stationary Gaussian field. For all $R \in \cR$, let $f_R:RU \to \R^k$ be a centered Gaussian field. We denote by $Z_R$ the random measure of integration over $f_R^{-1}(0)$ with respect to its $(d-k)$-dimensional volume measure, which is almost-surely well-defined under our assumptions. Finally, we let $\nu_R$ be the random measure on $U$ defined by~\eqref{eq: def linear statistics nuR}.

In the setting of Theorem~\ref{thm: cumulants asymptotics for zero sets}, the limit field $f$ satisfies a non-degeneracy hypothesis of the type \hypND{q}, among other assumptions. In the setting of Theorem~\ref{thm: cumulants asymptotics for critical points}, we assume that $f$ and the $(f_R)_{R \in \cR}$ are gradient fields. That is, $k=d$ and there exist centered Gaussian fields $h$ and $(h_R)_{R \in \cR}$ such that $f=\nabla h$ and $f_R = \nabla h_R$ for all $R \in \cR$. In this case, the limit field $f$ satisfies, among other things, a non-degeneracy hypothesis of the type \hypNND{q}. Since both settings are formally identical, we can deal with both of them at once in the following.


\subsection{Moment asymptotics, concentration and hole probability}
\label{subsec: moment asymptotics concentration and hole probability}

This section deals with the proofs of Corollaries~\ref{cor: moment asymptotics}, \ref{cor: concentration} and~\ref{cor: hole probability}. We start by deducing the asymptotics of the joint moments of the linear statistics of $\nu_R$ from that of their joint cumulants.

\begin{proof}[Proof of Corollary~\ref{cor: moment asymptotics}.\ref{item: moment asymptotics infty}]
Let $p \in \N^*$, we assume that \hypReg{\max(2,2p-1)}, \hypScL{2p-1} and \hypDC{2p-1}{\infty} hold. Moreover, we assume that either \hypNND{2p-1} or \hypND{2p-1} holds, depending on whether we consider gradient fields or not.

Let $\phi_1,\dots,\phi_p \in L^1(\Omega) \cap L^\infty(\Omega)$. Let $R \in \cR$, for all $i \in \ssquarebrackets{1}{p}$ we have $\norm*{\prsc{\nu_R}{\phi_i}} \leq \prsc*{\nu_R}{\norm*{\phi_i}}$. Thus, it is enough to prove that $\esp{\prod_{i=1}^p \prsc{\nu_R}{\norm*{\phi_i}}}<+\infty$ for all $R \in \cR$ large enough. By Definition~\ref{def: moments and cumulants} and Corollary~\ref{cor: moments in terms of cumulants}, we have:
\begin{equation*}
\esp{\prod_{i=1}^p \prsc{\nu_R}{\norm*{\phi_i}}} = \sum_{\cJ \in \cP_p} \prod_{J \in \cJ} \kappa\parentheses*{\parentheses*{\prsc{\nu_R}{\norm*{\phi_j}}}_{j \in J}} = \sum_{\cJ \in \cP_p} \prod_{J \in \cJ} \kappa(\nu_R)\parentheses*{\parentheses*{\norm*{\phi_j}}_{j \in J}}
\end{equation*}
Under our assumptions, either by Theorem~\ref{thm: cumulants asymptotics for zero sets}.\ref{item: cumulants asymptotics infty} or Theorem~\ref{thm: cumulants asymptotics for critical points}.\ref{item: cumulants asymptotics infty crit}, for all non-empty $J \subset \ssquarebrackets{1}{p}$ we have that $\kappa(\nu_R)\parentheses*{\parentheses*{\norm*{\phi_j}}_{j \in J}}$ is well-defined and finite for $R$ large enough, not depending on $J$ nor on $(\phi_1,\dots,\phi_p)$. Hence the result.
\end{proof}

\begin{proof}[Proof of Corollary~\ref{cor: moment asymptotics}.\ref{item: moment asymptotics 2}]
Let $p \in \N^*$, we work under the same assumptions as above, except that Hypothesis \hypDC{2p-1}{\infty} is replaced by the stronger Hypothesis~\hypDC{2p-1}{2}.

Let $\phi_1,\dots,\phi_p \in L^1(\Omega) \cap L^\infty(\Omega)$. By Corollary~\ref{cor: moments in terms of cumulants}, we have:
\begin{equation}
\label{eq: proof moments}
\esp{\prod_{i=1}^p \prsc{\nu_R}{\phi_i}} = \sum_{\cJ \in \cP_p} \prod_{J \in \cJ} \kappa(\nu_R)\parentheses*{\parentheses*{\phi_j}_{j \in J}}.
\end{equation}
Under our hypotheses, by Theorems~\ref{thm: cumulants asymptotics for zero sets}.\ref{item: cumulants asymptotics 2} and~\ref{thm: cumulants asymptotics for zero sets}.\ref{item: cumulants asymptotics p} (resp. Theorems~\ref{thm: cumulants asymptotics for critical points}.\ref{item: cumulants asymptotics 2 crit} and~\ref{thm: cumulants asymptotics for critical points}.\ref{item: cumulants asymptotics p crit} in the case of gradient fields), for all $J \subset \ssquarebrackets{1}{p}$ such that $\norm{J}\geq 2$, we have $\kappa(\nu_R)\parentheses*{\parentheses*{\phi_j}_{j \in J}} \xrightarrow[R \to +\infty]{} 0$. The terms associated with singletons being convergent as $R \to +\infty$, all the products on the right-hand side of~\eqref{eq: proof moments} vanish in the limit, except maybe for the one indexed by $\brackets*{\strut \brackets{i}\mvert 1 \leq i \leq p} = \bigwedge \cP_p$. Thus, we have:
\begin{equation*}
\esp{\prod_{i=1}^p \prsc{\nu_R}{\phi_i}} = \prod_{i=1}^p \kappa(\nu_R)(\phi_i) +o(1) \xrightarrow[R \to +\infty]{} \gamma_1(f)^p \prod_{i=1}^p \int_{x \in \Omega} \phi_i(x) \dx x,
\end{equation*}
which proves~\eqref{eq: moment asymptotics}.

Let us now deal with central moments. Let $J \subset \ssquarebrackets{1}{p}$, developing the joint cumulant by multi-linearity and using Lemma~\ref{lem: cancellation property} to cancel the terms where an expectation appears, we have:
\begin{align*}
R^{\norm{J}\frac{d}{2}}\kappa\parentheses*{\parentheses*{\prsc{\nu_R}{\phi_j}\strut-\esp{\prsc{\nu_R}{\phi_j}}}_{j \in J}} &= \begin{cases} R^{\norm{J}\frac{d}{2}}\kappa(\nu_R)\parentheses*{\parentheses*{\phi_j}_{j \in J}}, & \text{if} \ \norm{J}\geq 2;\\ 0, & \text{if} \ \norm{J}=1;\end{cases}\\ 
&\xrightarrow[R \to +\infty]{} \begin{cases}
\gamma_2(f) \displaystyle\int_{x \in \Omega} \prod_{j \in J} \phi_j(x) \dx x, & \text{if} \ \norm{J}=2;\\
0, & \text{otherwise};
\end{cases}
\end{align*}
by Theorems~\ref{thm: cumulants asymptotics for zero sets}.\ref{item: cumulants asymptotics 2} and~\ref{thm: cumulants asymptotics for zero sets}.\ref{item: cumulants asymptotics p} (resp. Theorems~\ref{thm: cumulants asymptotics for critical points}.\ref{item: cumulants asymptotics 2 crit} and~\ref{thm: cumulants asymptotics for critical points}.\ref{item: cumulants asymptotics p crit} in the case of gradient fields). Then, proceeding as in the case of non-central moments, by Corollary~\ref{cor: moments in terms of cumulants} we have:
\begin{align*}
R^\frac{pd}{2} \esp{\prod_{i=1}^p \parentheses*{\prsc{\nu_R}{\phi_i}-\esp{\prsc{\nu_R}{\phi_i}\strut}}} &= \sum_{\cJ \in \cP_p} \prod_{J \in \cJ} R^{\norm{J}\frac{d}{2}}\kappa\parentheses*{\parentheses*{\prsc{\nu_R}{\phi_j}\strut-\esp{\prsc{\nu_R}{\phi_j}}}_{j \in J}}\\
&\xrightarrow[R \to +\infty]{} \sum_{\substack{\cJ \in \cP_p\\ \forall J \in \cJ, \norm{J}= 2}} \prod_{J \in \cJ} \parentheses*{\gamma_2(f) \int_{x \in \Omega} \prod_{j \in J}\phi_j(x) \dx x},
\end{align*}
which is a reformulation of~\eqref{eq: central moment asymptotics}.
\end{proof}

Classically, the asymptotics we just obtained for central moments imply that the linear statistics of $\nu_R$ concentrate around their mean as $R \to +\infty$. We now prove that they also concentrate around the asymptotics of their mean.

\begin{proof}[Proof of Corollary~\ref{cor: concentration}]
Let $\phi \in L^1(\Omega) \cap L^\infty(\Omega)$ and $\epsilon>0$. We let $p \in \N^*$ and assume that \hypReg{4p-1}, \hypScL{4p-1}, \hypDC{4p-1}{2} and \hypND{4p-1} (resp.~\hypNND{4p-1} in the case of gradient fields) hold.

Either by Theorem~\ref{thm: cumulants asymptotics for zero sets}.\ref{item: cumulants asymptotics p} or Theorem~\ref{thm: cumulants asymptotics for critical points}.\ref{item: cumulants asymptotics p crit}, for all $R \in \cR$ large enough, we have:
\begin{equation*}
\norm*{\esp{\prsc{\nu_R}{\phi}}-\gamma_1(f) \int_{x \in \Omega}\phi(x) \dx x} =\norm*{\kappa(\nu_R)(\phi)-\gamma_1(f) \int_{x \in \Omega}\phi(x) \dx x} < \frac{\epsilon}{2}.
\end{equation*}
Thus, $\norm*{\prsc{\nu_R}{\phi}-\gamma_1(f) \int_{x \in \Omega}\phi(x) \dx x} \geq \epsilon$ implies that $\norm*{\prsc{\nu_R}{\phi}-\esp{\prsc{\nu_R}{\phi}}\strut} \geq \frac{\epsilon}{2}$. Using Markov's inequality and the estimates for the central moment of order $2p$ given by Corollary~\ref{cor: moment asymptotics}.\ref{item: moment asymptotics 2}, we obtain:
\begin{multline*}
\P\parentheses*{\norm*{\prsc{\nu_R}{\phi}-\gamma_1(f) \int_{x \in \Omega}\phi(x) \dx x} \geq \epsilon} \leq \P\parentheses*{\norm*{\prsc{\nu_R}{\phi}-\esp{\prsc{\nu_R}{\phi}}} \geq \frac{\epsilon}{2}}\\
\leq \parentheses*{\frac{2}{\epsilon}}^{2p} \esp{\parentheses*{\prsc{\nu_R}{\phi}-\esp{\prsc{\nu_R}{\phi}\strut}}^{2p}} = O\parentheses*{\frac{1}{R^{pd}}},
\end{multline*}
which concludes the proof.
\end{proof}

The concentration result of Corollary~\ref{cor: concentration} translates into an estimate for the hole probability of $\frac{1}{R}Z_R$, where $Z_R$ stands for the vanishing locus of $f_R$.

\begin{proof}[Proof of Corollary~\ref{cor: hole probability}]
Let $p \in \N^*$, we assume that \hypReg{4p-1}, \hypScL{4p-1}, \hypDC{4p-1}{2} and \hypND{4p-1} (resp.~\hypNND{4p-1} in the case of gradient fields) hold.

Let $\cO \subset \Omega$ be a non-empty open set. Let $\cB \subset \cO$ be a closed ball of positive radius, so that its indicator function $\one_\cB$ belongs to $L^1(\Omega) \cap L^\infty(\Omega)$ and is compactly supported in $\cO$. For all $R \in \cR$, if $\frac{1}{R}Z_R \cap \cO = \emptyset$, then $Z_R \cap R\cB=\emptyset$, and $\prsc{\nu_R}{\one_\cB}=0$ by Equation~\eqref{eq: relation nuR volume}. In this case, $\norm*{\prsc{\nu_R}{\one_\cB}-\gamma_1(f)\int_{x \in \Omega}\one_\cB(x)\dx x} = \gamma_1(f) \int_{x \in \cB}\dx x$. Hence, applying Corollary~\ref{cor: concentration} with $\epsilon = \gamma_1(f) \int_{x \in \cB}\dx x>0$, we obtain:
\begin{equation*}
\P\parentheses*{\frac{1}{R}Z_R \cap \cO=\emptyset } \leq \P\parentheses*{\norm*{\prsc{\nu_R}{\one_\cB}-\gamma_1(f)\int_{x \in \Omega}\one_\cB(x)\dx x} \geq \epsilon} = O\parentheses*{R^{-pd}}.\qedhere
\end{equation*}
\end{proof}


\subsection{Proof of Theorems~\ref{thm: law of large numbers stationary case} and~\ref{thm: law of large numbers}: Law of Large Numbers}
\label{subsec: proof of thm law of large numbers}

We start by proving Theorem~\ref{thm: law of large numbers}. Let $p \in \N^*$, in the following, we assume that \hypReg{4p-1}, \hypScL{4p-1}, \hypDC{4p-1}{2} and \hypND{4p-1} (resp.~\hypNND{4p-1} in the case of gradient fields) hold. We also fix a sequence $(R_n)_{n \in \N^*}$ of elements of $\cR$ such that $\sum_{n \geq 1} R_n^{-pd}<+\infty$. Let us first prove the Law of Large Numbers for the linear statistics of $\nu_R$.

\begin{proof}[Proof of Theorem~\ref{thm: law of large numbers}: the case of linear statistics]
Let $\phi \in L^1(\Omega) \cap L^\infty(\Omega)$. Under the assumptions of Theorem~\ref{thm: law of large numbers}, we can apply Corollary~\ref{cor: moment asymptotics} to derive the asymptotics of the central moment of order $2p$ of $\prsc{\nu_R}{\phi}$.

Since $\sum_{n \geq 1}R_n^{-pd}<+\infty$, we have $R_n \xrightarrow[n \to +\infty]{}+\infty$. Hence, by Corollary~\ref{cor: moment asymptotics}.\ref{item: moment asymptotics infty}, there exists $n_0 \in \N^*$ such that, for all $n \geq n_0$, the random variable $\prsc{\nu_{R_n}}{\phi}$ has a finite moment of order~$2p$. Then,
\begin{equation*}
\esp{\sum_{n \geq n_0} \parentheses*{\prsc{\nu_{R_n}}{\phi}-\esp{\prsc{\nu_{R_n}}{\phi}\strut}}^{2p}} = \sum_{n \geq n_0} \esp{\parentheses*{\prsc{\nu_{R_n}}{\phi}-\esp{\prsc{\nu_{R_n}}{\phi}\strut}}^{2p}} <+\infty,
\end{equation*}
since $\esp{\parentheses*{\prsc{\nu_{R_n}}{\phi}-\esp{\prsc{\nu_{R_n}}{\phi}\strut}}^{2p}} = O(R_n^{-pd})$ by~Corollary~\ref{cor: moment asymptotics}.\ref{item: moment asymptotics 2}. Thus, almost-surely, we have
\begin{equation*}
\sum_{n \geq n_0} \parentheses*{\prsc{\nu_{R_n}}{\phi}-\esp{\prsc{\nu_{R_n}}{\phi}}\strut}^{2p} <+\infty,
\end{equation*}
which implies $\norm*{\prsc{\nu_{R_n}}{\phi}-\esp{\prsc{\nu_{R_n}}{\phi}\strut}} \xrightarrow[n \to +\infty]{}0$. We have $\esp{\prsc{\nu_{R_n}}{\phi}} \xrightarrow[n \to +\infty]{} \gamma_1(f) \displaystyle\int_{\Omega}\phi(x) \dx x$ by~\eqref{eq: moment asymptotics}. Hence, $\prsc{\nu_{R_n}}{\phi} \xrightarrow[n \to +\infty]{\text{a.s.}} \gamma_1(f) \displaystyle\int_{\Omega}\phi(x) \dx x$.
\end{proof}

We can now prove a functional Law of Large Numbers, that is, for the random measures $\nu_R$ themselves.

\begin{proof}[Proof of Theorem~\ref{thm: law of large numbers}: the case of random measures]
We want to show that $\nu_{R_n}$ almost-surely converges to $\gamma_1(f) \dx x$ in the sense of the weak-$*$ convergence for Radon measures on $\Omega$; i.e., that almost-surely, for all $\phi \in \cC^0_c(\Omega)$, we have $\prsc{\nu_{R_n}}{\phi} \xrightarrow[n \to +\infty]{} \gamma_1(f) \int_{\Omega}\phi \dx x$. This follows from the first part of Theorem~\ref{thm: law of large numbers} and an approximation argument.

Recall that the usual topology of $\cC^0_c(\Omega)$ is characterized by the fact that $\phi_j \xrightarrow[j \to +\infty]{}\phi$ if and only if there exists a compact $K \subset \Omega$ containing the supports of all the $(\phi_j)_{j \in \N}$ and $\Norm{\phi_j - \phi}_\infty \xrightarrow[j \to +\infty]{}0$. Recall also that $\cC^0_c(\R)$ endowed with this topology is separable. Let $(\phi_j)_{j \in \N^*}$ be a dense sequence in this space. Let $\phi_0 = \one_\Omega$ denote the indicator function of $\Omega$.

Let $(K_i)_{i \in \N}$ be an increasing sequence of compact subsets of $\Omega$ such that $\bigcup_{i \geq 0} K_i = \Omega$. For all $i \in \N$, there exists a non-negative $\chi_i \in \cC^0_c(\Omega)$ such that $\chi_i$ is constant at $1$ on $K_i$. Then, for all $(i,j) \in \N^2$, we have $\chi_i\phi_j \in \cC_c^0(\Omega) \subset L^1(\Omega) \cap L^\infty(\Omega)$. By the first part of Theorem~\ref{thm: law of large numbers} and countability of $\N^2$, we have almost-surely:
\begin{equation}
\label{eq: as CV}
\forall (i,j) \in \N^2, \qquad \prsc{\nu_{R_n}}{\chi_i\phi_j} \xrightarrow[n \to +\infty]{} \gamma_1(f) \int_{\Omega} \chi_i\phi_j \dx x.
\end{equation}

Let us fix a realization of the sequence $(\nu_{R_n})_{n \in \N^*}$ such that \eqref{eq: as CV} holds, and work with this deterministic sequence of measures. Let $\phi \in \cC^0_c(\Omega)$, for all $(i,j) \in \N^2$, we have:
\begin{multline*}
\norm*{\prsc{\nu_{R_n}}{\phi} - \gamma_1(f) \int_\Omega \phi \dx x} \\
\leq \prsc{\nu_{R_n}}{\norm*{\phi-\chi_i\phi_j}}+\norm*{\prsc{\nu_{R_n}}{\chi_i\phi_j} - \gamma_1(f) \int_\Omega \chi_i\phi_j \dx x}+\gamma_1(f) \int_\Omega \norm*{\phi - \chi_i\phi_j}\dx x.
\end{multline*}

Let $i \in \N$ be such that the support of $\phi$ is contained in $K_i$. Then $\phi = \chi_i\phi$, so that we have $\norm{\phi - \chi_i\phi_j} = \chi_i \norm{\phi-\phi_j}\leq \chi_i\phi_0 \Norm{\phi-\phi_j}_\infty$ for all $j \in \N$. Thus, for all $j \in \N$,
\begin{multline*}
\norm*{\prsc{\nu_{R_n}}{\phi} - \gamma_1(f) \int_\Omega \phi \dx x} \\
\leq \Norm{\phi-\phi_j}_\infty \parentheses*{\prsc{\nu_{R_n}}{\chi_i\phi_0}+\gamma_1(f)\int_\Omega \chi_i\phi_0\dx x}+\norm*{\prsc{\nu_{R_n}}{\chi_i\phi_j} - \gamma_1(f) \int_\Omega \chi_i\phi_j \dx x}.
\end{multline*}
Since we assume that \eqref{eq: as CV} holds, the sequence $\parentheses*{\prsc{\nu_{R_n}}{\chi_i\phi_0}}_{n \in \N^*}$ is convergent, hence bounded. Thus, there exists $C \geq 0$ such that, for all $j \in \N$,
\begin{equation*}
\norm*{\prsc{\nu_{R_n}}{\phi} - \gamma_1(f) \int_\Omega \phi \dx x} \leq C \Norm{\phi-\phi_j}_\infty +\norm*{\prsc{\nu_{R_n}}{\chi_i\phi_j} - \gamma_1(f) \int_\Omega \chi_i\phi_j \dx x}.
\end{equation*}

Let $\epsilon>0$, there exists $j \in \N^*$ such that $\Norm{\phi-\phi_j}_\infty \leq \epsilon$. Equation~\eqref{eq: as CV} for $(i,j)$ implies that the second term on the right-hand side goes to $0$ as $n \to +\infty$. Hence, for all $n$ large enough, we have
\begin{equation*}
\norm*{\prsc{\nu_{R_n}}{\phi} - \gamma_1(f) \int_\Omega \phi \dx x} \leq (C+1)\epsilon.
\end{equation*}
This proves that $\prsc{\nu_{R_n}}{\phi} \xrightarrow[n \to +\infty]{}\gamma_1(f) \int_\Omega \phi \dx x$ for all $\phi \in \cC_c^0(\Omega)$, under the assumption that \eqref{eq: as CV} holds. This condition being satisfied almost-surely, this concludes the proof.
\end{proof}

We conclude this section with the proof of Theorem~\ref{thm: law of large numbers stationary case}. It consists in putting together a special case of Theorem~\ref{thm: law of large numbers} with a monotonicity argument.

\begin{proof}[Proof of Theorem~\ref{thm: law of large numbers stationary case}]
Let us consider a centered stationary Gaussian field $f:\R^d \to \R^k$. We let $\cR=(0,+\infty)$, $U=\R^d =\Omega$ and $f_R=f$ for all $R >0$. Then, as explained in Section~\ref{subsec: discussion of the main hypotheses}, the sequence $(f_R)_{R \in \cR}$ fits in our general framework. Moreover, under the hypotheses of Theorem~\ref{thm: law of large numbers stationary case}, it satisfies \hypReg{3}, \hypScL{3}, \hypDC{3}{2} and either \hypNND{3} or \hypND{3}, depending on whether $f$ is gradient field or not.

Let $\cB \subset \R^d$ be a bounded Borel subset which is star-shaped with respect to $0$. We denote by $\one_\cB \in L^1(\R^d) \cap L^\infty(\R^d)$ its indicator function. Recalling Equation~\eqref{eq: relation nuR volume}, we apply Theorem~\ref{thm: law of large numbers} with $\phi=\one_\cB$, $p=1$ and $R_n=n^2$ for all $n \in \N^*$. This yields that, almost-surely,
\begin{equation}
\label{eq: as CV volume}
\frac{\Vol_{d-k}(Z \cap n^2\cB)}{n^{2d}} = \prsc{\nu_{n^2}}{\one_\cB} \xrightarrow[n \to +\infty]{} \gamma_1(f) \int_{\R^d} \one_\cB(x) \dx x = \gamma_1(f) \Vol_d(\cB).
\end{equation}

For all $R \geq 1$, let us denote by $n_R = \floor{\sqrt{R}}$, so that $n_R^2 \leq R < (n_R+1)^2$. Since $\cB$ is star-shaped with respect to $0$, we have $n_R^2 \cB \subset R \cB \subset (n_R+1)^2\cB$, and hence:
\begin{equation*}
\frac{\Vol_{d-k}\parentheses*{Z \cap n_R^2\cB}}{R^d} \leq \frac{\Vol_{d-k}\parentheses*{Z \cap R\cB}}{R^d} \leq \frac{\Vol_{d-k}\parentheses*{Z \cap (n_R+1)^2\cB}}{R^d}.
\end{equation*}
Besides, $\frac{1}{(n_R+1)^2} \leq \frac{1}{R} \leq \frac{1}{n_R^2}$, so that
\begin{equation*}
\parentheses*{1-\frac{1}{n_R+1}}^{2d}\frac{\Vol_{d-k}\parentheses*{Z \cap n_R^2\cB}}{(n_R)^{2d}} \leq \frac{\Vol_{d-k}\parentheses*{Z \cap R\cB}}{R^d} \leq \frac{\Vol_{d-k}\parentheses*{Z \cap (n_R+1)^2\cB}}{(n_R+1)^{2d}}\parentheses*{1+\frac{1}{n_R}}^{2d}.
\end{equation*}
Since $n_R \xrightarrow[R \to +\infty]{}+\infty$, if~\eqref{eq: as CV volume} holds, we obtain that $\frac{\Vol_{d-k}\parentheses*{Z \cap R\cB}}{R^d}\xrightarrow[R \to +\infty]{}\gamma_1(f) \Vol_d(\cB)$. This concludes the proof, since we know that~\eqref{eq: as CV volume} holds almost-surely.
\end{proof}


\subsection{Proof of Theorem~\ref{thm: central limit theorem}: Central Limit Theorem}
\label{subsec: proof of central limit theorem}

In this section, we deduce the Central Limit Theorem~\ref{thm: central limit theorem} from our cumulants asymptotics, under the assumptions that \hypReg{q}, \hypScL{q}, \hypDC{q}{2} and \hypND{q} (resp.~\hypNND{q} in the case of gradient fields) hold for all $q \in \N$. We start by proving the CLT for linear statistics.

\begin{proof}[Proof of Theorem~\ref{thm: central limit theorem}: the case of linear statistics]
We start by the proving the result for a single linear statistics, that is, for $n=1$. Let $\phi \in L^1(\Omega) \cap L^\infty(\Omega)$. For all $R \in \cR$ large enough, we have:
\begin{equation*}
\kappa_1\parentheses*{R^\frac{d}{2}\parentheses*{\prsc{\nu_R}{\phi}-\esp{\prsc{\nu_R}{\phi}\strut}}} = 0.
\end{equation*}

Let $p \geq 2$ be an integer. Under the hypotheses of Theorem~\ref{thm: central limit theorem}, for all $R \in \cR$ large enough, the $p$-th cumulant of $\prsc{\nu_R}{\phi}$ is well-defined, see Theorem~\ref{thm: cumulants asymptotics for zero sets}.\ref{item: cumulants asymptotics infty} (resp.~Theorem~\ref{thm: cumulants asymptotics for critical points}.\ref{item: cumulants asymptotics infty crit} in the case of gradient fields). Using the multi-linearity of joint cumulants, and the cancellation property of Lemma~\ref{lem: cancellation property} to remove the terms where an expectation appears, we have:
\begin{equation*}
\kappa_p\parentheses*{R^\frac{d}{2}\parentheses*{\prsc{\nu_R}{\phi}-\esp{\prsc{\nu_R}{\phi}\strut}}} = R^\frac{pd}{2}\kappa_p\parentheses*{\prsc{\nu_R}{\phi}}.
\end{equation*}
Then, by Theorem~\ref{thm: cumulants asymptotics for zero sets}.\ref{item: cumulants asymptotics 2} and~\ref{thm: cumulants asymptotics for zero sets}.\ref{item: cumulants asymptotics p} (resp.~Theorem~\ref{thm: cumulants asymptotics for critical points}.\ref{item: cumulants asymptotics 2 crit} and~\ref{thm: cumulants asymptotics for critical points}.\ref{item: cumulants asymptotics p crit}), we have:
\begin{equation*}
\kappa_p\parentheses*{R^\frac{d}{2}\parentheses*{\prsc{\nu_R}{\phi}-\esp{\prsc{\nu_R}{\phi}\strut}}} \xrightarrow[R \to +\infty]{} \begin{cases}\gamma_2(f)\displaystyle\int_{x \in \Omega} \phi(x)^2 \dx x, & \text{if} \ p =2;\\ 0, & \text{otherwise.} \end{cases}
\end{equation*}
Hence, we have: $R^\frac{d}{2}\parentheses*{\prsc{\nu_R}{\phi}-\esp{\prsc{\nu_R}{\phi}\strut}} \xrightarrow[R \to +\infty]{\text{law}}\gauss{\gamma_2(f)\Norm{\phi}_2^2}$ by Proposition~\ref{prop: method of cumulants}.

Now, let $n \in \N^*$ let $\phi_1,\dots,\phi_n \in L^1(\Omega) \cap L^\infty(\Omega)$ and $\Lambda=\gamma_2(f)\begin{pmatrix}
\int_{\Omega}\phi_i\phi_j\dx x
\end{pmatrix}_{1 \leq i,j\leq n}$. For all $R \in \cR$ large enough, let $X_R = R^\frac{d}{2}\parentheses*{\prsc*{\nu_R}{\phi_i}-\esp{\prsc*{\nu_R}{\phi_i}\strut}}_{1 \leq i \leq n} \in \R^n$. We want to prove that $X_R$ converges in distribution towards the centered Gaussian of variance $\Lambda$. By Lévy's Theorem, it is enough to show that $\esp{e^{i\prsc{z}{X_R}}}\xrightarrow[R \to +\infty]{}e^{-\frac{1}{2}\prsc{z}{\Lambda z}}$ for all $z \in \R^n$, where $\prsc{\cdot}{\cdot}$ stands for the standard Euclidean inner product on $\R^n$.

Let $z=(z_1,\dots,z_n) \in \R^n$ and $\phi = \sum_{i=1}^n z_i\phi_i\in L^1(\Omega) \cap L^\infty(\Omega)$. By the previous case, we have
\begin{align*}
\prsc{z}{X_R} &= R^\frac{d}{2}\sum_{i=1}^n z_i\parentheses*{\prsc*{\nu_R}{\phi_i}-\esp{\prsc*{\nu_R}{\phi_i}\strut}}\\
&= R^\frac{d}{2}\parentheses*{\prsc{\nu_R}{\phi}-\esp{\prsc{\nu_R}{\phi}\strut}} \xrightarrow[R \to +\infty]{\text{law}} \gauss{\gamma_2(f)\Norm{\phi}_2^2}.
\end{align*}
In particular, $\esp{e^{i\prsc{z}{X_R}}}\xrightarrow[R \to +\infty]{}e^{-\frac{1}{2}\gamma_2(f)\Norm{\phi}_2^2}$. The conclusion follows, since
\begin{equation*}
\gamma_2(f)\Norm*{\phi}_2^2 = \gamma_2(f) \int_{x \in \Omega} \parentheses*{\sum_{i=1}^n z_i\phi_i(x)}^2\dx x = \sum_{1 \leq i,j \leq n}z_iz_j \gamma_2(f) \int_{x \in \Omega} \phi_i(x)\phi_j(x) \dx x = \prsc{z}{\Lambda z}.\qedhere
\end{equation*}
\end{proof}

The remainder of this section is dedicated to proving the functional version of the Central Limit Theorem~\ref{thm: central limit theorem}. First, we need to recall a few facts about generalized random fields. Note that we use the term ``generalized function'' instead of ``distribution'' in order to avoid any possible confusion with the concept of distribution of a random variable.

\begin{dfn}[Test-functions and generalized functions]
\label{def: D and D'}
Recall that $\Omega \subset \R^d$ is open.
\begin{itemize}
\item A \emph{test-function} on $\Omega$ is a $\cC^\infty$ function $\phi:\Omega\to\R$ with compact support. The space $\cD(\Omega)$ of test-functions is endowed with its standard topology, which we do not need to describe for our purpose. We refer to~\cite[Chap.~I.2]{Sch1966} for more details on this matter.

\item A \emph{generalized function} on $\Omega$ is a linear form $T$ on $\cD(\Omega)$, which is continuous in the sense~that: for any compact $K \subset \Omega$, there exist $C \geq 0$ and $n \in \N$ such that $\norm*{T(\phi)}\leq C \sum_{\norm{\alpha}\leq n} \Norm*{\partial^\alpha \phi}_\infty$ for all $\phi \in \cD(\Omega)$ supported in~$K$. We denote by $\cD'(\Omega)$ the space of generalized functions.

\item The canonical pairing between $\cD'(\Omega)$ and $\cD(\Omega)$ is denoted by $\prsc{\cdot}{\cdot}$, so that we denote $\prsc{T}{\phi}$ instead of $T(\phi)$ for all $T \in \cD'(\Omega)$ and $\phi \in \cD(\Omega)$.

\item A \emph{generalized random field} on $\Omega$ is a random variable taking values in $\cD'(\Omega)$, endowed with the Borel $\sigma$-field induced by its usual topology, see~\cite[Sect.~III]{Fer1967} for details.
\end{itemize}
\end{dfn}

Radon measures on $\Omega$ are examples of generalized functions, see~\cite[Chap.~I.2]{Sch1966}. As discussed in Section~\ref{subsec: validity of Kac-Rice}, under the hypotheses of Theorem~\ref{thm: central limit theorem}, for all $R \in \cR$ large enough, $\nu_R$ is an almost-surely well-defined Radon measure on $\Omega$. The characterization of~\cite[p.~61]{Fer1967} shows that these $\nu_R$ define generalized random fields in the sense of Definition~\ref{def: D and D'}.

\begin{lem}(Mean measure)
\label{lem: E nuR distrib}
Under the assumptions of Theorem~\ref{thm: central limit theorem}, for all $R \in \cR$ large enough, the linear form $\esp{\nu_R}:\phi \mapsto \esp{\prsc{\nu_R}{\phi}}$ on $\cD(\Omega)$ defines a generalized function.
\end{lem}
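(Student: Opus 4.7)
The plan is to verify the two defining properties of a generalized function: the well-definedness (and linearity) of $\esp{\nu_R}$ as a linear form on $\cD(\Omega)$, and the continuity bound required in Definition~\ref{def: D and D'}.

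First, for any test function $\phi \in \cD(\Omega) \subset L^1(\Omega) \cap L^\infty(\Omega)$, the case $p = 1$ of Theorem~\ref{thm: cumulants asymptotics for zero sets}.\ref{item: cumulants asymptotics infty} (respectively Theorem~\ref{thm: cumulants asymptotics for critical points}.\ref{item: cumulants asymptotics infty crit}) guarantees that, for all $R \in \cR$ sufficiently large independently of $\phi$, the first cumulant $\kappa_1(\nu_R)(\phi) = \esp{\prsc{\nu_R}{\phi}}$ is well-defined and finite. Linearity of $\phi \mapsto \esp{\prsc{\nu_R}{\phi}}$ follows from the linearity of expectation together with the linearity of $\phi \mapsto \prsc{\nu_R}{\phi}$, see~\eqref{eq: def linear statistics nuR}.

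Second, the continuity bound comes from a uniform bound on the Kac--Rice density of order one. Arguing exactly as in the case $p = 1$ of the proof of Theorem~\ref{thm: cumulants asymptotics for zero sets}.\ref{item: cumulants asymptotics p} (see Section~\ref{subsec: cumulants asymptotics under DC}), the hypotheses \hypReg{2}, \hypND{1} (respectively \hypNND{1}), \hypScL{1} and \hypDC{1}{2} together yield the uniform convergence $\sup_{x \in R\Omega}\norm{\rho_1(f_R,x) - \gamma_1(f)} \to 0$ as $R \to +\infty$. In particular, there exist $R_0 \in \cR$ and $M > 0$ such that $\rho_1(f_R, x) \leq M$ for all $R \in \cR \cap [R_0, +\infty)$ and $x \in R\Omega$. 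Applying Theorem~\ref{thm: Kac-Rice} to $A = \brackets{1}$ and to the test function $\phi(\cdot / R)$ and then performing the change of variables $y = x/R$, one obtains
\begin{equation*}
\esp{\prsc{\nu_R}{\phi}} = \frac{1}{R^d} \int_{R\Omega} \phi\parentheses*{\frac{x}{R}} \rho_1(f_R, x) \dx x = \int_\Omega \phi(y)\, \rho_1(f_R, Ry) \dx y.
\end{equation*}
Hence, for every compact $K \subset \Omega$ and every $\phi \in \cD(\Omega)$ supported in $K$,
\begin{equation*}
\norm*{\esp{\prsc{\nu_R}{\phi}}} \leq M\, \Vol_d(K)\, \Norm{\phi}_\infty,
\end{equation*}
which is the continuity bound of Definition~\ref{def: D and D'} with $n = 0$.

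The only potentially nontrivial ingredient is the uniform bound on $\rho_1(f_R, \cdot)$ over $R\Omega$, but this has essentially been recorded already in Section~\ref{subsec: cumulants asymptotics under DC}, so the proof reduces to invoking the previously established cumulants asymptotics and the Kac--Rice formula for the expectation.
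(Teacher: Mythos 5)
Your proof is correct, but it takes a detour that the paper avoids. For the continuity bound of Definition~\ref{def: D and D'}, you re-derive a uniform bound $\rho_1(f_R,\cdot) \leq M$ on $R\Omega$ for $R$ large (drawing on the case $p=1$ of the proof of Theorem~\ref{thm: cumulants asymptotics for zero sets}.\ref{item: cumulants asymptotics p}), then apply the Kac--Rice formula and a change of variables to obtain $\norm{\esp{\prsc{\nu_R}{\phi}}} \leq M\,\Vol_d(K)\,\Norm{\phi}_\infty$. The paper instead exploits the positivity of the random measure $\nu_R$ directly: for $\phi$ supported in a compact $K$, one has $\norm{\phi} \leq \Norm{\phi}_\infty \one_K$ pointwise, whence
\begin{equation*}
\norm*{\esp{\prsc{\nu_R}{\phi}}} \leq \esp{\prsc{\nu_R}{\norm{\phi}}} \leq \Norm{\phi}_\infty\, \esp{\prsc{\nu_R}{\one_K}},
\end{equation*}
and the constant $\esp{\prsc{\nu_R}{\one_K}}$ is finite by the very same item~\ref{item: cumulants asymptotics infty} of Theorem~\ref{thm: cumulants asymptotics for zero sets} (resp.~item~\ref{item: cumulants asymptotics infty crit} of Theorem~\ref{thm: cumulants asymptotics for critical points}) already used for well-definedness, applied to the test function $\one_K \in L^1(\Omega)\cap L^\infty(\Omega)$. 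This avoids any mention of $\rho_1$ or the uniform convergence of the scaled Kac--Rice density. Your argument buys an explicit constant $M\,\Vol_d(K)$ in place of the (finite but not further described) $\esp{\prsc{\nu_R}{\one_K}}$, at the cost of invoking more machinery and, as a small blemish, listing \hypDC{1}{2} among the hypotheses you use, which is in fact not needed for the $p=1$ uniform estimate (only \hypReg{2}, \hypND{1} or \hypNND{1}, and \hypScL{1} are used there; \hypDC{1}{2} is of course available under the assumptions of Theorem~\ref{thm: central limit theorem}, so this is not an error, just superfluous).
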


\begin{proof}
Let $R_1 \in \cR$ be given by Theorem~\ref{thm: cumulants asymptotics for zero sets}.\ref{item: cumulants asymptotics infty} (resp.~Theorem~\ref{thm: cumulants asymptotics for critical points}.\ref{item: cumulants asymptotics infty crit} in the case of gradient fields). Then, $\esp{\nu_R}$ is a well-defined linear form on $\cD(\Omega)$ for all $R \in \cR$ such that $R \geq R_1$. 

Let us fix such a $R\geq R_1$. Let $K \subset \Omega$ be compact and $\one_K$ denote its indicator function. For all $\phi \in \cD(\Omega)$ supported in $K$, we have $\norm{\phi}\leq \Norm{\phi}_\infty \one_K$, so that
\begin{equation*}
\norm*{\prsc{\esp{\nu_R}}{\phi}} = \norm*{\esp{\prsc{\nu_R}{\phi}}}\leq \esp{\prsc{\nu_R}{\norm{\phi}}} \leq \Norm{\phi}_\infty \esp{\prsc{\nu_R}{\one_K}}.
\end{equation*}
Thus, $\esp{\nu_R} \in \cD'(\Omega)$.
\end{proof}

A generalized random field on $\Omega$ can be described by its characteristic functional, which play the same role as the characteristic function for random variables.

\begin{dfn}[Characteristic functional]
\label{dfn: characteristic functional}
Let $T \in \cD'(\Omega)$ be a generalized random field. Its \emph{characteristic functional} is the map $\Theta_T:\phi \mapsto \esp{e^{i\prsc{T}{\phi}}}$ from $\cD(\Omega)$ to $\C$.
\end{dfn}

\begin{prop}[Standard Gaussian White Noise]
\label{prop: white noise}
There exists a generalized random field~$W$ such that $\Theta_W:\phi \mapsto e^{-\frac{1}{2}\Norm{\phi}_2^2}$, or equivalently such that $\prsc{W}{\phi} \sim \gauss{\Norm{\phi}_2^2}$ for all $\phi \in \cD(\Omega)$. The field $W$, called the \emph{standard Gaussian White Noise} on $\Omega$, is unique in distribution.
\end{prop}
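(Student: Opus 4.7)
The plan is to apply the Bochner--Minlos theorem, which characterizes characteristic functionals of generalized random fields on the nuclear space $\cD(\Omega)$: a map $\Theta:\cD(\Omega)\to\C$ is of the form $\Theta_T$ for some generalized random field $T$ if and only if $\Theta(0)=1$, $\Theta$ is continuous on $\cD(\Omega)$, and $\Theta$ is of positive type, i.e.~$\sum_{j,k=1}^n c_j \bar c_k\,\Theta(\phi_j-\phi_k)\geq 0$ for every finite choice of $\phi_1,\dots,\phi_n\in\cD(\Omega)$ and $c_1,\dots,c_n\in\C$. Moreover, the resulting probability distribution on $\cD'(\Omega)$ is unique. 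This is a classical statement that can be found, for instance, in the work of Gel'fand and Vilenkin.

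First, I would verify the three required properties of the candidate functional $\Theta:\phi\mapsto e^{-\frac{1}{2}\Norm{\phi}_2^2}$. Normalization is immediate. For continuity, if $\phi_n\to\phi$ in $\cD(\Omega)$ then the supports of the $\phi_n$ are contained in a common compact $K\subset\Omega$ and $\Norm{\phi_n-\phi}_\infty\to 0$, so $\Norm{\phi_n-\phi}_2^2\leq \Vol_d(K)\Norm{\phi_n-\phi}_\infty^2\to 0$, whence $\Theta(\phi_n)\to\Theta(\phi)$. Positive-definiteness reduces to showing that on any finite-dimensional subspace $V\subset L^2(\Omega)$ the map $v\mapsto e^{-\frac{1}{2}\Norm{v}_2^2}$ is of positive type; this holds because it is precisely the characteristic function of the centered Gaussian distribution on $V$ whose covariance operator is the identity with respect to the restricted $L^2$ inner product.

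Bochner--Minlos then delivers a probability measure on $\cD'(\Omega)$ with characteristic functional $\Theta$. Denoting by $W$ a random variable with this distribution, for every $\phi\in\cD(\Omega)$ and $t\in\R$ we have $\esp{e^{it\prsc{W}{\phi}}}=\Theta(t\phi)=e^{-\frac{t^2}{2}\Norm{\phi}_2^2}$, which identifies the distribution of $\prsc{W}{\phi}$ as $\gauss{\Norm{\phi}_2^2}$.

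For uniqueness in distribution, I would argue that the Borel $\sigma$-algebra on $\cD'(\Omega)$ is generated by the family of evaluation maps $T\mapsto\prsc{T}{\phi}$, so that the distribution of a generalized random field is determined by the joint laws of $(\prsc{T}{\phi_i})_{1\leq i\leq n}$ for all finite families of test-functions; by the Cram\'er--Wold device, these joint laws are in turn determined by the values of $\Theta_T$ on linear combinations of the $\phi_i$. The main obstacle is the verification of positive-definiteness, together with the invocation of Bochner--Minlos in a form adapted to the nuclear space $\cD(\Omega)$; once this machinery is in place, the remainder is bookkeeping.
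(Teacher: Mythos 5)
Your proposal follows essentially the same route as the paper: both invoke the Bochner--Minlos theorem, verify continuity and positive-definiteness of the candidate functional $\phi \mapsto e^{-\frac{1}{2}\Norm{\phi}_2^2}$, obtain the field $W$, and then derive uniqueness from the fact that the distribution of a generalized random field is determined by its finite-dimensional marginals. The only differences are in the level of detail: the paper outsources positive-definiteness to a citation of Fernique, while you sketch it directly via restriction to finite-dimensional subspaces and Gaussian characteristic functions; and for uniqueness the paper cites Fernique's proposition that the law is determined by marginals, while you spell out the intermediate steps (generation of the Borel $\sigma$-algebra by evaluation maps, Cram\'er--Wold). These are the same arguments, just unpacked to different degrees.
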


\begin{proof}
Since convergence in $\cD(\Omega)$ implies convergence in the $L^2$ sense, the map $\phi \mapsto e^{-\frac{1}{2}\Norm{\phi}_2^2}$ is continuous on $\cD(\Omega)$. By~\cite[Sect.~II.5.4]{Fer1967}, this map is of positive type. Hence, by the Bochner--Minlos Theorem~\cite[Thm.~II.5.3.(a)]{Fer1967}, there exists a generalized random field $W$ on $\Omega$ such that $\Theta_W:\phi \mapsto e^{-\frac{1}{2}\Norm{\phi}_2^2}$.

Let $W$ be such a generalized random field. Let $n \in \N^*$ and $\phi_1,\dots,\phi_n \in \cD(\Omega)$. For all $z =(z_1,\dots,z_n) \in \R^n$, we have:
\begin{equation*}
\esp{e^{i \sum_{j=1}^n z_j \prsc{W}{\phi_j}}} = \Theta_W\parentheses*{\sum_{j=1}^n z_j \phi_j} = \exp\parentheses*{-\frac{1}{2}\sum_{1\leq i,j \leq n}z_iz_j \int_{\Omega} \phi_i\phi_j \dx x}.
\end{equation*}
Thus, $\parentheses*{\prsc{W}{\phi_i}}_{1 \leq i\leq n}$ is a centered Gaussian vector with covariance matrix $\begin{pmatrix}
\int_\Omega \phi_i\phi_j\dx x
\end{pmatrix}_{1 \leq i,j \leq n}$. By~\cite[Prop.~II.4.2.(b)]{Fer1967}, the distribution of $W$ is uniquely determined by the distributions of its marginals $\parentheses*{\prsc{W}{\phi_i}}_{1 \leq i\leq n}$, for all $n \in \N^*$ and $\phi_1,\dots,\phi_n \in \cD(\Omega)$. Hence its uniqueness.

We have just seen that, if $\Theta_W:\phi \mapsto e^{-\frac{1}{2}\Norm{\phi}_2^2}$, then $\prsc{W}{\phi}\sim \gauss{\Norm{\phi}_2^2}$ for all $\phi \in \cD(\Omega)$. Conversely, for all $\phi \in \cD(\Omega)$, if $\prsc{W}{\phi}\sim \gauss{\Norm{\phi}_2^2}$, then $\Theta_W(\phi)=\esp{e^{i\prsc{W}{\phi}}} = e^{-\frac{1}{2}\Norm{\phi}_2^2}$.
\end{proof}

Now that we established the existence of the standard Gaussian White Noise and characterized its distribution, our functional CLT follows from the CLT for linear statistics and the Lévy--Fernique Theorem, which extends Lévy's Continuity Theorem to the setting of generalized random fields.

\begin{proof}[Proof of Theorem~\ref{thm: central limit theorem}: the functional case]
Under the hypotheses of Theorem~\ref{thm: central limit theorem}, by the preceding discussion and Lemma~\ref{lem: E nuR distrib}, for all $R \in \cR$ large enough, $T_R = R^\frac{d}{2}\parentheses*{\nu_R-\esp{\nu_R}}$ is a well-defined generalized random field on $\Omega$.

Let $\phi \in \cD(\Omega) \subset L^1(\Omega) \cap L^\infty(\Omega)$. By Theorem~\ref{thm: central limit theorem} in the case of linear statistics, we have:
\begin{equation*}
\prsc{T_R}{\phi} = R^\frac{d}{2}\parentheses*{\prsc{\nu_R}{\phi}-\esp{\prsc{\nu_R}{\phi}\strut}} \xrightarrow[R \to +\infty]{\text{law}} \gauss{\gamma_2(f)\Norm{\phi}_2^2} = \sqrt{\gamma_2(f)}\prsc{W}{\phi},
\end{equation*}
hence $\Theta_{T_R}(\phi) = \esp{e^{i\prsc{T_R}{\phi}}} \xrightarrow[R \to +\infty]{} \esp{e^{i\sqrt{\gamma_2(f)}\prsc{W}{\phi}}} = \Theta_{\sqrt{\gamma_2(f)}W}(\phi)$. By the Lévy--Fernique Theorem~\cite[Thm.~III.6.5]{Fer1967}, this pointwise convergence of $\Theta_{T_R}$ implies that $T_R \xrightarrow[R \to +\infty]{}\sqrt{\gamma_2(f)}W$ in distribution in $\cD'(\Omega)$.
\end{proof}


\bibliographystyle{amsplain}
\bibliography{CumulantsAsymptotics}


\end{document}